 \numberwithin{equation}{section}
 \newtheorem{thm}{Theorem}[section]
 \newtheorem{lem}[thm]{Lemma}
 \newtheorem{prop}[thm]{Proposition}
\theoremstyle{definition}
 \newtheorem{defn}[thm]{Definition}
\theoremstyle{remark}
 \newtheorem{nota}[thm]{Notation}
 \newtheorem{exam}[thm]{Example}
 \newtheorem{rem}[thm]{Remark}
\title{A Potapov-type approach to a truncated matricial Stieltjes-type power moment problem}
\author{B.~Fritzsche \and B.~Kirstein \and C.~M\"adler \and T.~Makarevich}
\begin{document}
\maketitle

\begin{abstract}
 The paper gives a parametrization of the solution set of a matricial Stieltjes-type truncated power moment problem in the non-degenerate and degenerate cases.
 The key role plays the solution of the corresponding system of Potapov's fundamental matrix inequalities.
\end{abstract}

\begin{description}
 \item[Keywords:] Stieltjes moment problem, Potapov's fundamental matrix inequalities, Herglotz--Nevanlinna functions, Stieltjes functions
\end{description}

\section{Introduction and preliminaries}
 The starting point of studying power moment problems on semi-infinite intervals was the famous two part memoir of  T.~J.~Stieltjes~\zitas{MR1508747,MR1508159}.
 A complete theory of the treatment of power moment problems on semi-infinite intervals in the scalar case was developed by M.~G.~Krein in collaboration  with A.~A.~Nudelman (see~\zitaa{MR0044591}{\cSect{10}},~\zita{MR0233157},~\zitaa{MR0458081}{\cchap{V}}).
 What concerns an operator-theoretic treatment of the power moment problems named after Hamburger and Stieltjes and its interrelations, we refer the reader to Simon~\zita{MR1627806}.
 
 In the 1970's, V.~P.~Potapov developed a special approach to discuss matrix versions of classical interpolation and moment problems.
 The main idea of his method is based on transforming such problems into equivalent matrix inequalities with respect to the L\"owner semi-ordering.
 Using this strategy, several matricial interpolation and moment problems could successfully be handled (see, \teg{}~\zitas{MR1362524,CR01,MR2222521,MR1152328,Dyu01,MR2570113,MR645305,MR752057,Mak14,MR1395706,Thi06,Dub,MR686076,G1,MR722914,MR645308,MR701996,MR734686,MR738449,MR777324,MR1473259,MR703593,MR1009144}).
 L.~A.~Sakhnovich enriched Potapov's method by unifying the particular instances of Potapov's procedure under the framework of one type of operator identities~\zitas{MR1310360,MR1631843,MR1722780}.

 Matrix versions of the classical Stieltjes moment problem were studied by Adamyan/Tkachenko~\zitas{MR2155645,MR2215856}, And\^o~\zita{MR0290157}, Bolotnikov~\zitas{MR975671,MR1362524,MR1433234}, Bolotnikov/Sakhnovich~\zita{MR1722780}, Chen/Hu~\zita{MR1807884}, Chen/Li~\zita{MR1670527}, Dyukarev~\zitas{Dyu81,MR686076}, Dyukarev/Katsnelson~\zitas{MR645305,MR752057}, and Hu/Chen~\zita{MR2038751}.
 The considerations of this paper deal with the more general case of an arbitrary semi-infinite interval \(\rhl \), where \(\ug\) is an arbitrarily given real number.

 In order to formulate the moment problem,  we are going to study, we first review some notation.
 Throughout this paper, let \(p\)\index{p@\(p\)} and \(q\)\index{q@\(q\)} be positive integers.
 Let \(\C\)\index{c@\(\C\)}, \(\R\)\index{r@\(\R\)}, \(\NO\)\index{n@\(\NO\)}, and \(\N\)\index{n@\(\N\)} be the set of all complex numbers, the set of all real numbers, the set of all \tnn{} integers, and the set of all positive integers, respectively.
 For every choice of \(\upsilon,\omega\in\R\cup\set{-\infty,\infp}\), let \(\mn{\upsilon}{\omega}\)\index{z@\(\mn{\alpha}{\beta}\)} be the set of all integers \(k\) for which \(\upsilon\leq k\leq\omega\) holds.
 If \(\cX\) is a \tne{} set, then \(\cX^\pxq\)\index{\(\cX^\pxq\)} stands for the set of all \tpqa{matrices} each entry of which belongs to \(\cX\), and \(\cX^p\)\index{\(\cX^p\)} is short for \(\cX^\xx{p}{1}\).
 If \((\Omega,\gA)\) is a measurable space, then each countably additive mapping whose domain is \(\gA\) and whose values belong to the set \(\Cggq\)\index{c@\(\Cggq\)} of all \tnnH{} complex \tqqa{matrices} is called a \tnnH{} \tqqa{measure} on \((\Omega, \gA)\).
 By  \(\Mggqa{\Omega,\gA}\) we denote the set of all \tnnH{} \tqqa{measures} on \((\Omega,\gA)\). 
 For the integration theory for \tnnH{} measures, we refer to~\zitas{MR0080280,MR0163346}.
 If \(\mu =\mat{\mu_{jk}}_{j,k=0}^q\) is a \tnnH{} \tqqa{measure} on a measurable space \((\Omega, \gA)\) and if \(\mathbb{K}\in\set{\R,\C}\), then we use \(\LoaaaK{1}{\Omega}{\gA}{\mu}\) to denote the set of all Borel-measurable functions  \(f\colon\Omega \to\mathbb{K}\) for which the integral exists, \tie{}, that \(\int_{\Omega} \abs{f}\dif\tilde{\mu}_{jk} < \infp\) for every choice of \(j\) and \(k\) in \(\mn{1}{q}\), where \(\tilde{\mu}_{jk}\) is the variation of the complex measure  \(\mu_{jk}\).
 If \(f\in\LoaaaK{1}{\Omega}{\gA}{\mu}\), then let \(\int_A f\dif\mu \defeq\mat{\int_\Omega 1_A f\dif\mu_{jk}}_{j,k=1}^q\) for all \(A\in\gA\) and we  will also write \(\int_A f(\omega) \mu (\dif\omega)\) for this integral.

 Let \(\BorR\)\index{b@\(\BorR\)} (\tresp{}\ \(\BorC \)) be the \(\sigma\)\nobreakdash-algebra of all Borel subsets of \(\R\) (\tresp{}\ \(\C \)).
 For all \(\Omega\in\BorR\setminus\set{\emptyset}\), let \(\BorO\)\index{b@\(\Bori{\Omega}\)} be the \(\sigma\)\nobreakdash-algebra of all Borel subsets of \(\Omega\), let \(\Mggqa{\Omega} \defeq  \Mggqa{\Omega,\BorO}\)\index{m@\(\Mggqa{\Omega}\)}  and, for all \(\kappa\in\NOinf \), let \(\Mgguqa{\kappa}{\Omega}\)\index{m@\(\Mgguqa{\kappa}{\Omega}\)} be the set of all \(\sigma\in\Mggqa{\Omega}\) such that for all \(j\in\mn{0}{\kappa}\) the function \(f_j\colon\Omega\to\C\) defined by \(f_j (t) \defeq  t^j\) belongs to \(\LoaaaC{1}{\Omega}{\BorO}{\sigma}\).
 If \(\kappa\in\NOinf \) and if  \(\sigma\in\Mgguqa{\kappa}{\Omega}\), then we set
\begin{align}\label{FR1148}
 \suo{j}{\sigma}
 &\defeq\int_\Omega t^j\sigma(\dif t)&\text{for each }j&\in\mn{0}{\kappa}.
\end{align}
 The following matricial power moment problem lies in the background of our considerations:

\begin{Problem}[\mprob{\Omega}{m}{\lleq}]
 Let \(\Omega\in\BorR\setminus\set{\emptyset}\), let \(m\in\NO\), and let \(\seqs{m}\) be a sequence of complex \tqqa{matrices}.
 Describe the set \(\Mggqaakg{\Omega}{\seqs{m}}\)\index{m@\(\Mggqaakg{\Omega}{\seqs{m}}\)} of all \(\sigma\in\Mgguqa{m}{\Omega}\) for which the matrix \(s_{m}-\suo{m}{\sigma}\) is \tnnH{} and for which, in the case \(m>0\), moreover \(\suo{j}{\sigma}=s_{j}\) is fulfilled for all \(j\in\mn{0}{m-1}\).\index{m@\mprob{\Omega}{m}{\leq}}
\end{Problem}

 Note that we also sometimes turn our attention to the following power moment problem:

\begin{Problem}[\mprob{\Omega}{\kappa}{=}]
 Let \(\Omega\in\BorR\setminus\set{\emptyset}\), let \(\kappa\in\NOinf \), and let \(\seqska \) be a sequence of complex \tqqa{matrices}.
 Describe the set \(\Mggqaag{\Omega}{\seqska }\)\index{m@\(\Mggqaag{\Omega}{\seqska }\)} of all \(\sigma\in\Mgguqa{\kappa}{\Omega}\) for which \(\suo{j}{\sigma}=s_{j}\) is fulfilled for all \(j\in\mn{0}{\kappa}\).\index{m@\mprob{\Omega}{\kappa}{=}}
\end{Problem}

 The considerations of this paper are mostly concentrated on the case that the set \(\Omega\) is a one-sided bounded and closed infinite interval of the real axis.
 Such moment problems are called to be of Stieltjes type.
 
 The key role for solving the moment problem \mprob{\rhl}{m}{\lleq}, where \(\ug\) is an arbitrarily given real number, is \rthm{MT4315} below.
 It will turn out that the solution set of the moment problem (obtained via Stieltjes transformation) coincides with the solution set of a certain system of Potapov's fundamental matrix inequalities.
 The considerations in this paper are aimed to solve these inequalities.
 In \rsec{S1313}, we give a parametrization of the solution set \(\MggqKskg{2n+1}\) of Problem \mprob{\rhl}{2n+1}{\lleq}, where \(\ug\) is an arbitrarily given real number and where \(n\) is an arbitrarily given \tnn{} integer.
 Note that Problem~\mprob{\rhl}{2n}{\lleq} can be discussed by similar methods.
 This will be done somewhere else.
 
 In \rsec{sec2}, we recall necessary and sufficient conditions of solvability of the moment problems in question.
 In \rsec{sec3}, we reformulate these problems in the language of certain matrix-valued functions.
 \rsec{S1415} is aimed at showing that every solution of the moment problem fulfills necessarily the corresponding system of Potapov's fundamental matrix inequalities. Some integral estimates for the scalar case are given in \rsec{S1639}.
 In \rsec{S0846}, we will prove that each solution of the system of Potapov's fundamental matrix is a solution of the moment problem as well.
 \rsec{S1709} is aimed to give some identities for block Hankel matrices.
 In \rsec{S1710}, we study special subspaces of \(\Cq \), so-called Dubovoj  subspaces.
 Particular matrix polynomials in connection with a signature matrix are considered in \rsec{S1711}.
 In \rsecss{S1223}{S1321}, we study distinguished classes of meromorphic functions, which occur as parameters in our description of the solution set, which is stated in \rsec{S1313}.
 
 At the end of this section, let us introduce some further notations, which are useful for our considerations.
 We will write \(\Iq\) for the identity matrix in \(\Cqq \), whereas \(\Opq \) is the null matrix belonging to \(\Cpq \).
 If the size of the identity matrix or the null matrix is obvious, then we will also omit the indexes.
 The notations \(\CHq \) and \(\Cggq \) stand for the set of all \tH{} complex \tqqa{matrices} and the set of all \tnnH{} complex matrices, respectively.
 If \(A\) and \(B\) are complex \tqqa{matrices}, then we will write \(A\lleq B\) or \(B\lgeq A\) to indicate that \(A\) and \(B\) are  \tH{} matrices such that the matrix \(B-A\) is \tnnH{}.
 For each \(A\in\Cpq \), let \(\nul{A}\) be the null space of \(A\), let \(\ran{A}\) be the column space of \(A\),  and let \(\rank  A\) be the rank of \(A\).
 For each \(A\in\Cqq \), we will use \(\Re  A\) and \(\Im A\) to denote the real part of \(A\) and the imaginary part of \(A\), respectively:
 \(\Re  A \defeq  \frac{1}{2} (A + A^\ad)\) and \(\Im A \defeq  \frac{1}{2\iu} (A-A^\ad)\).
 Furthermore, for each \(A\in\Cpq \), let \(\normF{A}\) be the Frobenius norm of \(A\) and let \(\normS{A}\) be the operator norm of \(A\).
 For each \(x\in\Cq\), we write \(\normE{x}\) for the Euclidean norm of \(x\).
 If \(A\in\Cqq \), then \(\det A\) stands for the determinant of \(A\). 
 
 For each complex \tpqa{matrix} \(A\), let \(A\{1\}\defeq  \setaca{X \in \Cqp}{AXA=A}\).
 Obviously, for each \(A\in\Cpq \), the Moore--Penrose inverse \(A^\mpi \) of \(A\) belongs to \(A\{1\}\).
 
 If \(n\in\N \), if \((p_j)_{j=1}^n\) is a sequence of positive integers, and if \(x_j\in\Coo{p_j}{q}\) for each \(j\in\mn{1}{n}\), then let \(\col (x_j)_{j=1}^n  \defeq \smat{x_1\\ x_2 \\ \vdots \\ x_n}\).
 If \(n \in \N\), if \((q_k)_{k=1}^n\) is a sequence of positive integers, and if \(y_k\in\Coo{p}{q_k}\) for each \(k\in\mn{1}{n}\), then let \(\row (y_k)_{k=1}^n \defeq\mat{y_1, y_2,\dotsc, y_n}\).
 If \(n\in\N\), if \((p_j)_{j=1}^n\) and \((q_j)_{j=1}^n\) are sequences of positive integers, and if \(A_j\in\Coo{p_j}{q_j}\) for every choice of \(j\) in \(\mn{1}{n}\), then let \(\diag (A_1, A_2,\dotsc, A_n)\defeq\mat{\Kronu{jk} A_j}_{j,k=1}^n\), where \(\Kronu{jk}\) is the Kronecker delta:
 \(\Kronu{jk} \defeq 1\) in the case \(j=k\) and \(\Kronu{jk} \defeq 0\) if \(j\ne k\).
 We also use the notation \(\diag (A_j)_{j=1}^n\) instead of \(\diag (A_1,A_2,\dotsc, A_n)\).
 For each \(n\in\N\) and each \(A\in\Cpq \), we will also write \(\Iu{n}\otimes A\) for \(\diag (A)_{j=1}^n\).
 
 If \(\mathcal{M}\) is a \tne{} subset of \(\Cq \), then let \(\mathcal{M}^\orth\) be the set of all vectors in \(\Cq \) which are orthogonal to \(\mathcal{M}\) (with respect to the Euclidean inner product).
 If \(\mathcal{X}\), \(\mathcal{Y}\), and \(\mathcal{Z}\) are \tne{} sets with \(\mathcal{Z}\subseteq\mathcal{X}\) and if \(f\colon\mathcal{X}\to\mathcal{Y}\) is a mapping, then \(\Rstr_\mathcal{Z}f\) stands for the restriction of \(f\) onto \(\mathcal{Z}\).
 
 Furthermore, let \(\uhp  \defeq  \setaca{z\in\C}{\Im z\in (0,\infp)}\) and let \(\lhp  \defeq  \setaca{z\in\C}{\Im z\in (-\infty, 0)}\).

\section{On the solvability of matricial power moment problems}\label{sec2}
 In this section, we recall necessary and sufficient conditions for the solvability of the Stieltjes moment problems \mprob{\rhl }{m}{\lleq} and \mprob{\rhl }{m}{=}, where \(\ug\) is an arbitrarily given real number and where \(m\) is an arbitrarily given \tnn{} integer.
 First we introduce certain sets of sequences of complex \tqqa{matrices}, which are determined by the properties of particular block Hankel matrices built of them.
 For each \(n\in\NO \), let \(\Hggq{2n}\) be the set of all sequences \(\seqs{2n}\) of complex \tqqa{matrices} such that the block Hankel matrix \(  \Hu{n} \defeq\mat{s_{j+k}}_{j,k=0}^n\)   \index{h@\(\Hu{n}\)}is \tnnH{}.
 Furthermore, let \(\Hggqinf\)\index{h@\(\Hggqinf\)} be the set of all sequences \(\seqsinf \) of complex \tqqa{matrices} such that, for all \(n\in\NO\), the sequence \(\seqs{2n}\) belongs to \(\Hggq{2n}\).
 The elements of the set \(\Hggq{2\kappa}\),  where \(\kappa\in\NOinf \), are called \emph{\tHnnd} sequences.
 For all \(n\in\NO\), let \(\Hggeq{2n}\)\index{h@\(\Hggeq{2n}\)} be the set of all sequences \(\seqs{2n}\) of complex \tqqa{matrices} for which there are matrices \(s_{2n+1}\in\Cqq\) and \(s_{2n+2}\in\Cqq\) such that \(\seqs{2(n+1)}\) belongs to \(\Hggq{2(n+1)}\).
 Furthermore, for all \(n\in\NO\), we will use \(\Hggeq{2n+1}\)\index{h@\(\Hggeq{2n+1}\)} to denote the set of sequences \(\seqs{2n+1}\) of complex \tqqa{matrices} for which there is some \(s_{2n+2}\in\Cqq\) such that \(\seqs{2(n+1)}\) belongs to \(\Hggq{2(n+1)}\).
 For all \(m\in\NO\), the elements of the set \(\Hggeq{m}\) are called \emph{\tHnnde} sequences.
 For technical reasons, we set \(\Hggeqinf\defeq\Hggqinf\)\index{h@\(\Hggeqinf\)}.
 Observe that the solvability of the matricial Hamburger moment problems can be characterized  by the introduced classes of sequences of complex \tqqa{matrices}:

\begin{thm}[see, \teg{}~\zitaa{MR1624548}{\cthm{3.2}} or~\zitaa{MR2570113}{\cthm{4.16}}]\label{T1.1}
 Let \(n\in\NO\) and let \(\seqs{2n}\) be a sequence of complex \tqqa{matrices}.
 Then \(\MggqRskg{2n}\neq\emptyset\) if and only if \(\seqs{2n}\in\Hggq{2n}\).
\end{thm}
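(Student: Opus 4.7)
The proof splits into an easy necessity part and a more substantive sufficiency part.

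For necessity, assume $\sigma \in \MggqRskg{2n}$ and set $\tilde s_j \defeq \suo{j}{\sigma}$ for $j \in \mn{0}{2n}$. For every $v = \col(v_j)_{j=0}^n$ with $v_j \in \Cq$, put $p(t) \defeq \sum_{j=0}^n t^j v_j$; then
\[
 v^\ad \tilde{H}_n v
 = \sum_{j,k=0}^n v_j^\ad \int_\R t^{j+k} \sigma(\dif t) v_k
 = \int_\R p(t)^\ad \sigma(\dif t) p(t)
 \geq 0,
\]
where $\tilde{H}_n \defeq (\tilde s_{j+k})_{j,k=0}^n$. Hence $\tilde{H}_n$ is \tnnH{}. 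The matrix $\Hu{n}$ agrees with $\tilde{H}_n$ except at block position $(n,n)$, where the difference equals $s_{2n} - \tilde s_{2n}$; by the defining property of $\MggqRskg{2n}$ we have $s_{2n} - \tilde s_{2n} \lgeq \Oqq$, so $\Hu{n} - \tilde{H}_n$ is \tnnH{}. Therefore $\Hu{n}$ is \tnnH{}, i.e., $\seqs{2n} \in \Hggq{2n}$.

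For sufficiency, my plan is to construct a molecular solution via a matricial Gaussian quadrature. Given $\seqs{2n} \in \Hggq{2n}$, first reduce to the non-degenerate case by replacing $s_{2n}$ with $s_{2n} + \epsilon \Iq$ and ultimately passing to a weak-$\ast$ limit as $\epsilon \downarrow 0$. In the non-degenerate case, construct right matrix orthogonal polynomials $P_0, \dotsc, P_{n+1}$ with leading coefficient $\Iq$ with respect to the sesquilinear form induced by $\Hu{n}$ together with an arbitrarily chosen positive-definite Hankel extension $s_{2n+1}, s_{2n+2}$. The block Jacobi companion matrix associated with $P_{n+1}$ is Hermitian; its spectrum consists of real nodes $t_1 < \dotsb < t_N$, each carrying a matricial weight $A_k \lgeq \Oqq$ prescribed by the Christoffel--Darboux kernel. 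The resulting molecular measure $\sigma_\epsilon \defeq \sum_{k=1}^N \delta_{t_k} A_k$ reproduces $s_0, \dotsc, s_{2n-1}$ exactly and satisfies $\suo{2n}{\sigma_\epsilon} \lleq s_{2n} + \epsilon \Iq$. Uniform tightness of the family $(\sigma_\epsilon)_{\epsilon \in (0,1]}$ follows from $\suo{0}{\sigma_\epsilon} = s_0$ combined with the $2n$-th moment bound via a Markov-type estimate, so every weak-$\ast$ accumulation point as $\epsilon \downarrow 0$ lies in $\MggqRskg{2n}$.

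The main obstacle is the passage through the degenerate stratum, where the matricial orthogonal-polynomial recursion may break down and the block Jacobi construction becomes delicate. A cleaner intrinsic alternative, which I would ultimately prefer, is operator-theoretic: equip $\C^{(n+1)q}$ with the Gram form $\langle u, v \rangle \defeq v^\ad \Hu{n} u$, quotient by $\nul{\Hu{n}}$ to obtain a finite-dimensional Hilbert space $\mathcal{H}_n$, and consider on it the symmetric operator $T$ induced by the polynomial shift $p(t) \mapsto t p(t)$. Since $T$ has finite equal defect indices, von Neumann's theorem provides a selfadjoint extension $\hat T$ on a possibly larger Hilbert space, and sandwiching the spectral measure of $\hat T$ between the natural embedding $\Cq \hookrightarrow \mathcal{H}_n$ of constants and its adjoint produces a \tnnH{} $q \times q$-measure $\sigma$ on $\R$ whose moments $\suo{j}{\sigma}$ agree with $s_j$ for $j \in \mn{0}{2n-1}$ and for which $s_{2n} - \suo{2n}{\sigma}$ equals a squared defect norm, hence is automatically \tnnH{}.
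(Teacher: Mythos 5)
The paper does not prove this theorem at all: it is quoted from the literature (Chen/Hu, and Dyukarev et al.), and the authors even remark that they never use it later, so there is no in-house argument to compare yours with. Your necessity half is correct and standard: the moment Hankel matrix $\tilde H_n$ of a solution is \tnnH{} by the quadratic-form computation, and $H_n-\tilde H_n$ is the block-diagonal matrix whose only nonzero block is $s_{2n}-\suo{2n}{\sigma}\lgeq\NM$, so $H_n\lgeq\NM$.

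The sufficiency half has a concrete gap in each of your two routes. In the quadrature route, replacing $s_{2n}$ by $s_{2n}+\epsilon\Iq$ changes only the $(n,n)$ block of $H_n$ and therefore does \emph{not} reduce to the non-degenerate case: if, say, $s_0=\Oqq$, the perturbed Hankel matrix is still singular, and the orthogonal-polynomial/block-Jacobi machinery you invoke is unavailable. To make this work you must perturb the whole sequence, e.g.\ $s_j\mapsto s_j+\epsilon\gamma_j\Iq$ with $\gamma_j$ the moments of a fixed scalar measure whose Hankel matrices are positive definite (a Gaussian); then $H_n^{(\epsilon)}>0$ for $\epsilon>0$, and your tightness and lower-semicontinuity argument in the limit $\epsilon\downarrow0$ does go through (the moments of order $\le 2n-1$ converge by uniform integrability, and the top moment is controlled by Fatou's lemma applied to $u^\ad\sigma_\epsilon u$ for each $u\in\Cq$).

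The operator-theoretic alternative you say you would prefer fails outright in the degenerate case, for a reason the paper itself records: the shift need not map $\nul{H_n}$ into itself, so the operator $T$ is not well defined on the quotient $\mathcal H_n$. Take $q=1$, $n=1$, $s_0=0$, $s_1=0$, $s_2=1$ (Thiele's example, cited in the section on Dubovoj subspaces). Then $H_1=\diag(0,1)\lgeq\NM$, the vector with components $1,0$ is annihilated by the Gram form, but its shift, the vector with components $0,1$, has squared norm $1$; equivalently, no Dubovoj subspace for $(H_1,T_{1,1})$ exists. The theorem is nevertheless true there ($\sigma=0$ solves the relaxed problem), but your construction of $T$ collapses. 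The hypothesis that would make the shift descend to the quotient is precisely Hankel nonnegative definite \emph{extendability}, which characterises the solvability of the $=$\nobreakdash-problem (Theorem~\ref{T1.2}), not of the $\lleq$\nobreakdash-problem treated here; so this route cannot be repaired without changing the statement.
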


\begin{thm}[see~\zitaa{MR2570113}{\cthm{4.17}},~\zitaa{MR2805417}{\cthm{6.6}}]\label{T1.2}
 Let \(\kappa\in\NOinf \) and let \(\seqska \) be a sequence of complex \tqqa{matrices}.
 Then \(\MggqRsg{\kappa}\neq\emptyset\) if and only if \(\seqska \in\Hggeq{\kappa}\).
\end{thm}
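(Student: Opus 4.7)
The plan is to prove the two implications separately, with sufficiency reduced to the already-given \rthm{T1.1} and necessity split into a direct positive-semidefiniteness computation plus a limiting argument.

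For sufficiency (\(\Leftarrow\)), suppose \(\seqska \in \Hggeq{\kappa}\). If \(\kappa\) is finite, write \(\kappa = 2n\) or \(\kappa = 2n+1\); the very definition of \(\Hggeq{\kappa}\) furnishes matrices completing \(\seqska\) to some \((s_j)_{j=0}^{2(n+1)} \in \Hggq{2(n+1)}\). Applying \rthm{T1.1} with \(n+1\) in place of \(n\), I obtain \(\sigma \in \MggqRskg{2(n+1)}\), which by definition satisfies \(\suo{j}{\sigma} = s_j\) for all \(j \in \mn{0}{2n+1} \supseteq \mn{0}{\kappa}\) and belongs to \(\Mgguqa{2(n+1)}{\R} \subseteq \Mgguqa{\kappa}{\R}\); hence \(\sigma \in \MggqRsg{\kappa}\). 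For \(\kappa = \infty\), I would apply the argument to each truncation \(\seqs{2n}\) to obtain approximate solutions \(\sigma_n\), then extract a weak\(^\ast\) accumulation point \(\sigma\) via Helly's selection theorem, using the uniform moment control supplied by the block Hankel positivity to verify \(\suo{j}{\sigma} = s_j\) for every \(j\).

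For necessity (\(\Rightarrow\)), assume \(\sigma \in \MggqRsg{\kappa}\). In the case \(\kappa = \infty\), one writes \(\sigma = M \cdot \tau\) via a Radon--Nikodym decomposition with respect to a scalar trace measure \(\tau\), where \(M(t) \lgeq \Oqq\) for \(\tau\)-a.e.\ \(t\), and obtains for all \(n \in \NO\) and all \(v_0, \dotsc, v_n \in \Cq\) the identity
\begin{equation*}
 \sum_{j,k=0}^n v_j^\ad s_{j+k} v_k = \int_\R \biggl(\sum_{j=0}^n v_j t^j\biggr)^\ad M(t) \biggl(\sum_{k=0}^n v_k t^k\biggr) \tau(\dif t) \geq 0,
\end{equation*}
whence \(\Hu{n} \lgeq \Oqq\) for every \(n\) and therefore \(\seqsinf \in \Hggqinf = \Hggeqinf\). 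In the case of finite \(\kappa\), I would introduce the compactly supported truncation \(\sigma_N \in \Mgguqa{\infty}{\R}\) defined by \(\sigma_N(A) \defeq \sigma(A \cap [-N,N])\) for \(A \in \BorR\); the preceding case applied to \(\sigma_N\) gives \((\suo{j}{\sigma_N})_{j=0}^{\kappa} \in \Hggeq{\kappa}\) for every \(N\), and dominated convergence (with dominating function \(1 + \abs{t}^\kappa\)) yields \(\suo{j}{\sigma_N} \to s_j\) for each \(j \in \mn{0}{\kappa}\).

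The main obstacle is then to convert these approximating extendable sequences into an extension of \(\seqska\) itself. Since the higher moments \(\suo{\kappa+1}{\sigma_N}\) and, when \(\kappa\) is even, \(\suo{\kappa+2}{\sigma_N}\) need not remain bounded in \(N\), one cannot simply extract a convergent subsequence of the concrete extensions supplied by \(\sigma_N\). The remedy is to establish independently that \(\Hggeq{\kappa}\) is closed in \((\Cqq)^{\kappa+1}\); once closedness is granted, the limit relation just established immediately forces \(\seqska \in \Hggeq{\kappa}\). Closedness itself is the technical heart of the argument and would be verified by reformulating the extendability criterion as a Schur-complement condition on \(\Hu{n}\), the partial column built from \(s_{n+1}, \dotsc, s_{2n}\), and the Moore--Penrose inverse \(\Hu{n}^\mpi\); the delicate point is the case analysis at rank drops of \(\Hu{n}\), where continuity of the range condition requires careful stratification.
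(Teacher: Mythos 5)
The paper itself offers no proof of \rthm{T1.2}; it is imported verbatim from the cited references, so there is nothing internal to compare against. On the merits: your sufficiency direction (complete the data to a sequence in \(\Hggq{2(n+1)}\), apply \rthm{T1.1}, and observe that every \(\sigma\in\MggqRskg{2(n+1)}\) matches the first \(2n+2\) moments exactly) is correct, and your handling of \(\kappa=\infi\) in both directions is the standard matricial Helly--Prohorov argument that the paper itself invokes for the Stieltjes analogue \rthm{T1121}.

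The necessity direction for finite \(\kappa\) is where the proposal breaks. The lemma on which you hang the whole argument --- that \(\Hggeq{\kappa}\) is closed in \((\Cqq)^{\kappa+1}\) --- is false, so no amount of Schur-complement bookkeeping or stratification at rank drops can establish it. Already for \(q=1\) and \(\kappa=1\) one computes \(\Hggeq{1}=\setaca{(s_0,s_1)\in[0,\infp)\times\R}{s_0>0}\cup\set{(0,0)}\): for \(s_0>0\) the choice \(s_2\defeq s_1^2/s_0\) works, whereas \(s_0=0\) forces \(s_1=0\) because \(\smat{0&s_1\\ s_1&s_2}\lgeq 0\) requires \(-s_1^2\geq 0\). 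Hence \((1/k,1)\in\Hggeq{1}\) for every \(k\in\N\), while the limit \((0,1)\) does not belong to \(\Hggeq{1}\) (and is indeed not a moment sequence, since \(s_0=0\) forces \(\sigma=0\)). Your truncation scheme therefore only places \(\seqska\) in the closure of \(\Hggeq{\kappa}\), which is strictly larger and even contains non-moment sequences such as Thiele's \((0,0,1)\) from Section 7. The route actually taken in the literature avoids limits entirely: one characterizes \(\Hggeq{\kappa}\) by explicit algebraic conditions --- Hermitian-ness, nonnegativity of the relevant block Hankel matrix, and range inclusions such as \(\ran{y_{n+1,2n+1}}\subseteq\ran{H_n}\) obtained from \rlem{DFK1} --- and verifies each condition directly from the integral representations \(H_n=\int_\R E_{q,n}(t)\sigma(\dif t)E_{q,n}^\ad(t)\) and \(y_{n+1,2n+1}=\int_\R E_{q,n}(t)t^{n+1}\sigma(\dif t)\): if \(H_nv=0\), then \(v^\ad E_{q,n}(t)\mu'_\tau(t)=0\) for \(\tau\)-almost every \(t\), whence \(v^\ad y_{n+1,2n+1}=0\). (For even \(\kappa=2n\) one must additionally exhibit an admissible Hermitian \(s_{2n+1}\), which is precisely where the cited characterization of \(\Hggeq{2n}\) is needed.) As written, the finite-\(\kappa\) half of your necessity argument does not go through.
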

 
 Let \(\ug\in\C\), let \(\kappa\in\Ninf \), and let \(\seqska \) be a sequence of complex \tpqa{matrices}.
 Then let the sequence \(\seqsa{\kappa-1}\) be defined by
\begin{align}\label{FR1733}
 \sau{j}&\defeq-\ug s_{j}+s_{j+1}&\text{for all }j&\in\mn{0}{\kappa-1}.
\end{align}
 The sequence  \(\seqsa{\kappa-1}\) is called the \emph{sequence generated from \(\seqska \) by right-sided \(\ug\)\nobreakdash-shifting}.
 (An analogous left-sided version is discussed in~\zitaa{MR3014201}{\cdefn{2.1}}.)
 The sequence \(\seqsa{\kappa-1}\) is used to define further sets of sequences of complex matrices, which are useful to discuss the Stieltjes moment problems we consider.
 Let \(\Kggq{0}\defeq\Hggq{0}\)\index{k@\(\Kggq{0}\)}.
 For every choice of \(n\in\N\), let \( \Kggq{2n}   \defeq\setaca{\seqs{2n}\in\Hggq{2n}}{\seqsa{2(n-1)}\in\Hggq{2(n-1)}}\).\index{k@\(\Kggq{2n}\)}
 For all \(m\in\NO\), by \(\seqset{m}{\Cqq}\) we denote the set of all sequences \(\seqs{m}\) of complex \tqqa{matrices}.
 Then we set  \(\Kggq{2n+1} \defeq  \setaca{\seqs{2n+1} \in  \seqset{2n+1}{\Cqq}}{\set{\seqs{2n}, \seqsa{2n}} \subseteq \Hggq{2n}}\).  \index{k@\(\Kggq{2n+1}\)}
 For all \(m\in\NO\), let \(\Kggeq{m}\)\index{k@\(\Kggeq{m}\)} be the set of all sequences \(\seqs{m}\) of complex \tqqa{matrices} for which there exists a complex \tqqa{matrix} \(s_{m+1}\) such that \(\seqs{m+1}\) belongs to \(\Kggq{m+1}\).
 Obviously, we  have  \( \Kggeq{2n}   =\setaca{\seqs{2n}\in\Hggq{2n}}{\seqsa{2n-1}\in\Hggeq{2n-1}}\) for all \(n\in\N\) and \(\Kggeq{2n+1} =\setaca{\seqs{2n+1}\in\Hggeq{2n+1}}{\seqsa{2n}\in\Hggq{2n}}\) for all \(n\in\NO\).
 
\breml{R1440}
 Let \(\ug\in\R\) and let \(m\in\NO\).
 Then \(\Kggeq{m} \subseteq \Kggq{m}\).
 Furthermore, if  \(\seqs{m}\in\Kggq{m}\) (\tresp{}\ \(\Kggeq{m}\)), then we easily see that \(\seqs{\ell}\in\Kggq{\ell}\) (\tresp{} \(\seqs{\ell}\in\Kggeq{\ell} \)) holds true for all \(\ell\in\mn{0}{m}\).
\erem

 In view of \rrem{R1440}, for all \(\ug\in\R\), let \(\Kggqinf \)\index{k@\(\Kggqinf\)} be the set of all sequences \(\seqsinf \) of complex \tqqa{matrices} such that \(\seqs{m}\) belongs to \(\Kggq{m}\) for all \(m\in\NO\), and let \(\Kggeqinf \defeq\Kggqinf \)\index{k@\(\Kggeqinf\)}.
 For all \(\kappa\in\NOinf \), we call a sequence \(\seqs{\kappa}\) \emph{\taSrsnnd{\ug}} (\tresp{}\ \emph{\taSrsnnde{\ug}}) if it belongs to \(\Kggq{\kappa}\) (\tresp{}\ to \(\Kggeq{\kappa}\)).
 Note that left versions of these notions are used in~\zitaa{MR3014201}{\cdefn{1.3}}.

 Using the introduced sets of sequences of complex \tqqa{matrices}, we are able to recall solvability criterions of the problems~\mprob{\rhl }{m}{\lleq} and \mprob{\rhl }{m}{=}:

\begin{thm}[\zitaa{MR2735313}{\cthm{1.4}}]\label{T1122}
 Let \(\ug\in\R\), let \(m\in\NO\), and let \(\seqs{m}\) be a sequence of complex \tqqa{matrices}.
 Then \(\MggqKskg{m}\neq\emptyset\) if and only if \(\seqs{m}\in\Kggq{m}\).
\end{thm}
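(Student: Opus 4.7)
My approach separates the two implications, which require quite different techniques.

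For the necessity direction, I would proceed by direct integration. Suppose \(\sigma\in\MggqKskg{m}\) and set \(\tilde{s}_j\defeq\suo{j}{\sigma}\) for \(j\in\mn{0}{m}\); then \(\tilde{s}_j=s_j\) for \(j\in\mn{0}{m-1}\) and \(s_m-\tilde{s}_m\in\Cggq\). With the block column \(u_n(t)\defeq\col(t^j\Iq)_{j=0}^n\), one has \(\int_{\rhl}u_n(t)u_n(t)^\ad\sigma(\dif t)=\mat{\tilde{s}_{j+k}}_{j,k=0}^n\). Writing \(\Hu{n}\) as this integral plus a \tnnH{} correction supported in the lower-right block (of value \(s_m-\tilde{s}_m\) when \(m=2n\), and zero when \(m=2n+1\)) yields \(\Hu{n}\in\Cggq\). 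For the shifted side, the positivity \(t-\ug\geq0\) on \(\rhl\) turns \((t-\ug)\sigma(\dif t)\) into a \tnnH{} \tqqa{measure} on \(\rhl\), so the block Hankel matrix of the right-\(\ug\)-shifted sequence admits an analogous integral representation, with a parallel \tnnH{} corner correction in the odd case \(m=2n+1\). Distinguishing the two parities of \(m\) yields \(\seqs{m}\in\Kggq{m}\).

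For the sufficiency direction, I would reduce to the equality moment problem \mprob{\rhl}{m}{=}. The companion Stieltjes equality theorem --- the \(\rhl\)-analogue of \rthm{T1.2} --- asserts that \(\Mggqaag{\rhl}{\seqs{m}}\neq\emptyset\) if and only if \(\seqs{m}\in\Kggeq{m}\). Taking this result as granted, sufficiency follows from the reduction claim: \emph{for every \(\seqs{m}\in\Kggq{m}\) there exists \(s_m^\#\in\CHq\) with \(s_m^\#\lleq s_m\) such that \((s_0,\dotsc,s_{m-1},s_m^\#)\in\Kggeq{m}\).} Indeed, applying the equality theorem to the modified sequence produces some \(\sigma\in\Mggqaag{\rhl}{s_0,\dotsc,s_{m-1},s_m^\#}\), and \(s_m-\suo{m}{\sigma}=s_m-s_m^\#\in\Cggq\) then places \(\sigma\) in \(\MggqKskg{m}\).

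The main obstacle is the reduction claim itself. The natural candidate for \(s_m^\#\) comes from Schur-complement minimization. For \(m=2n\), decomposing \(\Hu{n}=\bigl(\begin{smallmatrix}\Hu{n-1}&y_n\\y_n^\ad&s_{2n}\end{smallmatrix}\bigr)\) along the last block row and column and setting \(s_{2n}^\#\defeq y_n^\ad\Hu{n-1}^\mpi y_n\) gives the smallest \tH{} matrix (in the Löwner order) that preserves \(\Hu{n}\in\Cggq\); the hypothesis \(\seqs{2n}\in\Hggq{2n}\) supplies both \(s_{2n}^\#\lleq s_{2n}\) and the range inclusion \(\ran y_n\subseteq\ran\Hu{n-1}\). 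For \(m=2n+1\), the parallel construction is carried out on the block Hankel matrix of the right-\(\ug\)-shifted sequence, producing some \(a_{2n}^\#\), from which \(s_{2n+1}^\#\defeq\ug s_{2n}+a_{2n}^\#\) is read off. In both cases, the remaining technical work is to check that the modified sequence actually lies in \(\Kggeq{m}\); via block-Hankel Schur complementation this reduces to further range-inclusion conditions which follow automatically from \(\seqs{m}\in\Kggq{m}\). Carrying out these verifications --- while keeping careful track of the interaction between the original and the right-\(\ug\)-shifted block Hankel structures --- is the technical heart of the proof and is worked out in full in the cited reference.
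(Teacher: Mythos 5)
First, a caveat about the comparison: the paper does not prove \rthm{T1122} at all --- it is imported verbatim from \zitaa{MR2735313}{\cthm{1.4}}, and the authors state explicitly that they do not need it for their further considerations. The only route toward it that the paper sketches (for odd \(m\), in the closing remark of \rsec{S1313}) is quite different from yours: necessity via \rprop{lemM4213-2} (the Stieltjes transform of any solution satisfies the Potapov inequalities, whose upper-left blocks are \(\Hu{n}\) and \(H_{\at{n}}\)), and sufficiency via the reduction \rthm{NT1} followed by the explicit parametrization of \(\SFOqskg{2n+1}\), which exhibits a solution whenever the data lie in \(\Kggeq{2n+1}\). Your necessity argument is complete and correct; it is exactly the integral computation underlying \rprop{lemM4213-2}, with the right bookkeeping of which \tnnH{} corner corrections appear for which parity of \(m\).

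The gap is in the sufficiency half, and it sits precisely where you locate ``the technical heart.'' Your reduction claim is not an auxiliary lemma: it is (a mild weakening of) \rthm{NT1}, i.e.\ \zitaa{MR2735313}{\cthm{5.2}}, and together with \rthm{T1121} it makes the theorem a one-line consequence of two other results of the same reference. So the substance of the proof is the reduction claim itself, and there your candidate is not justified. For \(m=2n\), the choice \(s_{2n}^\#\defeq z_{n,2n-1}\Hu{n-1}^\mpi y_{n,2n-1}\) is indeed the L\"owner-minimal \tH{} matrix keeping \(\Hu{n}\) \tnnH{}, but membership in \(\Kggeq{2n}\) additionally requires \(\seqsa{2n-1}\in\Hggeq{2n-1}\), hence the range inclusion \(\ran{\col (\sau{j})_{j=n}^{2n-1}}\subseteq\ran{H_{\at{n-1}}}\); the bottom entry of that column is \(-\ug s_{2n-1}+s_{2n}^\#\), and nothing in the hypothesis \(\seqs{2n}\in\Kggq{2n}\) controls this column --- that control is exactly what separates \(\Kggeq{2n}\) from \(\Kggq{2n}\). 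The assertion that these conditions ``follow automatically'' is therefore the entire difficulty rather than a routine verification: the correct completion must couple the two block Hankel structures instead of minimizing the Schur complement of one of them, and an analogous objection applies to your odd case (where \(\Hggeq{2n+1}\)-membership imposes a range condition on \(\col(s_j)_{j=n+1}^{2n+1}\) involving the modified \(s_{2n+1}^\#\)). As written, the argument reduces \rthm{T1122} to two uncited-in-detail theorems of \zitaa{MR2735313}{} without supplying the construction that makes the reduction work.
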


\begin{thm}\label{T1121}
 Let \(\ug\in\R\), let \(\kappa\in\NOinf \), and let \(\seqska \) be a sequence of complex \tqqa{matrices}.
 Then \(\MggqKsg{\kappa}\neq\emptyset\) if and only if \(\seqska \in\Kggeq{\kappa}\).
\end{thm}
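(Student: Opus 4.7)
The plan is to bootstrap from \rthm{T1122} (the corresponding $\lleq$-result) together with the Hamburger criteria \rthm{T1.1} and \rthm{T1.2}. For sufficiency with $\kappa$ finite, observe that $\seqska\in\Kggeq{\kappa}$ entails, by definition, the existence of $s_{\kappa+1}\in\Cqq$ with $\seqs{\kappa+1}\in\Kggq{\kappa+1}$; \rthm{T1122} applied at level $m=\kappa+1$ then produces a $\sigma\in\MggqKskg{\kappa+1}$, and the defining conditions of that problem force $\suo{j}{\sigma}=s_j$ for every $j\in\mn{0}{\kappa}$, whence $\sigma\in\MggqKsg{\kappa}$. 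The case $\kappa=\infty$ reduces to the finite one: each truncation $\seqs{m}$ lies in $\Kggeq{m}$ by \rrem{R1440}, so the finite result delivers solutions $\sigma_m$, and a standard Helly-type compactness argument (exploiting the uniform bound $\sigma_m(\rhl)=s_0$) extracts a weakly convergent subsequence whose limit solves the full problem.

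For necessity, the central construction is the auxiliary measure $\rho\defeq(t-\ug)\sigma$. Since $t-\ug\geq 0$ on $\rhl$, $\rho$ is a \tnnH{} \tqqa{measure} on $\rhl$; a routine $\abs{t-\ug}\leq\abs{t}+\abs{\ug}$ estimate yields $\rho\in\Mgguqa{\kappa-1}{\rhl}$ with $\suo{j}{\rho}=s_{j+1}-\ug s_j=\sau{j}$ for every $j\in\mn{0}{\kappa-1}$, so that $\rho$ realises the right-sided $\ug$\nobreakdash-shifted sequence in the Hamburger sense. Assuming $\kappa\geq 1$ and $\sigma\in\MggqKsg{\kappa}$, \rthm{T1.2} applied to $\sigma$ gives $\seqska\in\Hggeq{\kappa}$, while \rthm{T1.2} applied to $\rho$ gives $\seqsa{\kappa-1}\in\Hggeq{\kappa-1}$. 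These are precisely the two Hankel-type conditions underlying the alternative description of $\Kggeq{\kappa}$ recorded immediately after \rrem{R1440}: for $\kappa=2n$ with $n\geq 1$ one needs $\seqs{2n}\in\Hggq{2n}$ and $\seqsa{2n-1}\in\Hggeq{2n-1}$; for $\kappa=2n+1$ one needs $\seqs{2n+1}\in\Hggeq{2n+1}$ and $\seqsa{2n}\in\Hggq{2n}$. In each case an adequate $s_{\kappa+1}$ can be read off from the respective Hamburger extension. The boundary case $\kappa=0$ is immediate because $\Kggeq{0}=\Hggq{0}$ and $s_0=\sigma(\rhl)\in\Cggq$, and $\kappa=\infty$ reduces to every finite truncation via \rrem{R1440}.

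The main work is purely book-keeping: one splits on the parity of $\kappa$ to align with the two-part structure of $\Kggeq{\kappa}$, and one has to confirm that the shifted measure $\rho=(t-\ug)\sigma$ really lies in $\Mgguqa{\kappa-1}{\rhl}$ with the claimed moments. Once these are in place, the heavy lifting has already been done inside \rthm{T1122} and the Hamburger theorems, and the proof reduces to a clean bootstrap through the $\lleq$-problem at level $\kappa+1$ for sufficiency and to a Hamburger extension of either $\seqska$ or $\seqsa{\kappa-1}$ (depending on the parity of $\kappa$) for necessity.
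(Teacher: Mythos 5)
Your proposal is correct, and it supplies an actual argument where the paper itself gives essentially none: for finite \(\kappa\) the paper simply cites~\zitaa{MR2735313}{\cthm{1.3}}, and for \(\kappa=\infi\) it sketches exactly the reduction you describe, namely \(\MggqKsg{\infi}=\bigcap_{m=0}^\infi\MggqKsg{m}\) together with a matricial Helly--Prohorov argument. Your finite-case derivation is a genuine alternative route: sufficiency by bootstrapping through the \(\lleq\)\nobreakdash-problem at level \(\kappa+1\) via \rthm{T1122}, and necessity via the auxiliary measure \(\rho=(t-\ug)\sigma\) and two applications of \rthm{T1.2} (to the trivial extensions of \(\sigma\) and \(\rho\) to \(\R\)), matched against the parity-split characterizations of \(\Kggeq{2n}\) and \(\Kggeq{2n+1}\) recorded after \rrem{R1440} — I checked that the memberships you obtain, \(\seqska\in\Hggeq{\kappa}\) and \(\seqsa{\kappa-1}\in\Hggeq{\kappa-1}\), do reduce to exactly those characterizations in both parities, using \(\Hggeq{2n}\subseteq\Hggq{2n}\). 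What this buys is a clean logical reduction of \rthm{T1121} to \rthm{T1122} plus the Hamburger criteria, all of which are stated in the paper; the cost is that \rthm{T1122} is itself only cited here, so the argument is self-contained relative to this paper but not absolutely. Two small points you should make explicit: the identity \(\Kggeq{0}=\Hggq{0}\) (take \(s_1\defeq\ug s_0\), so that \(\sau{0}=\Opq\)), and, in the Helly step for \(\kappa=\infi\), the uniform-integrability estimate \(\int_{\abs{t}>R}\abs{t}^j\dif(\tr\sigma_m)\lleq R^{-2}\tr s_{j+2}\) for \(m\geq j+2\), which is what actually lets you pass the moments (and not merely the measures) to the limit; the uniform bound \(\sigma_m(\rhl)=s_0\) alone does not suffice.
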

 
 In the case \(\kappa\in\NO\), a proof of \rthm{T1121} is given in~\zitaa{MR2735313}{\cthm{1.3}}.
 If \(\kappa=\infi\), then the asserted equivalence can be proved using the equation \(\MggqKsg{\infi}=\bigcap_{m=0}^\infi\MggqKsg{m}\) and a matricial version of the Helly--Prohorov theorem (see~\zitaa{MR975253}{\cSatz{9}}).
 We omit the details of the proof, the essential idea of which is originated in~\zitaa{MR0184042}{proof of \cthm{2.1.1}}.
 
 We note that we do not need to apply Theorems~\ref{T1.2},~\ref{T1.1},~\ref{T1122}, and~\ref{T1121} for our further considerations in this paper.
 In the case of an odd integer \(m\), we will obtain a new proof of \rthm{T1122}.
 
 For the description of the solution set \(\MggqKskg{m}\) of Problem~\mprob{\rhl}{m}{\lleq}, it is essential that one can suppose expendable data without loss of generality:

\begin{thm}[\zitaa{MR2735313}{\cthm{5.2}}]\label{NT1}
 Let \(\ug\in\R\), let \(m\in\NO\), and let \(\seqs{m}\in\Kggq{m}\).
 Then there is a unique sequence \((\tilde{s}_j)_{j=0}^m\in\Kggeq{m}\) such that \(\MggqKakg{(\tilde{s}_j)_{j=0}^m} = \MggqKskg{m} \).
\end{thm}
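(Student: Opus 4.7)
Since $\seqs{m}\in\Kggq{m}$, Theorem~\ref{T1122} gives $\MggqKskg{m}\neq\emptyset$. For any sequence $(\tilde s_j)_{j=0}^m$ producing the same (nonempty) solution set, the equality constraints applied at any $\sigma$ in the common set force $\tilde s_j=\suo{j}{\sigma}=s_j$ for all $j\in\mn{0}{m-1}$. Hence the task reduces to identifying a unique $\tilde s_m\in\CHq$ such that $(s_0,\dotsc,s_{m-1},\tilde s_m)\in\Kggeq{m}$ yields the same associated solution set.

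My plan is to take $\tilde s_m$ to be the L\"owner-maximum of the set of attainable $m$-th moments
\begin{equation*}
 \mathcal{A}\defeq\setaca{\suo{m}{\sigma}}{\sigma\in\MggqKskg{m}}\subseteq\setaca{X\in\CHq}{X\lleq s_m}.
\end{equation*}
Assuming this greatest element $\tilde s_m$ exists and is realised by some $\tilde\sigma\in\MggqKskg{m}$, both inclusions in $\MggqKakg{(\tilde s_j)_{j=0}^m}=\MggqKskg{m}$ are immediate. If $\sigma\in\MggqKakg{(\tilde s_j)_{j=0}^m}$, then $\suo{m}{\sigma}\lleq\tilde s_m\lleq s_m$, placing $\sigma$ in $\MggqKskg{m}$. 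Conversely, if $\sigma\in\MggqKskg{m}$, then $\suo{m}{\sigma}\in\mathcal{A}$, hence $\suo{m}{\sigma}\lleq\tilde s_m$. Uniqueness of the candidate in $\Kggeq{m}$ is then a direct consequence of this extremal characterization: any second $(\tilde s_j')_{j=0}^m\in\Kggeq{m}$ with the same solution set obeys $\tilde s_m=\suo{m}{\tilde\sigma}\lleq\tilde s_m'$, and swapping the roles yields the reverse inequality.

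The hard part is to show existence of the L\"owner-greatest element and to verify $(s_0,\dotsc,s_{m-1},\tilde s_m)\in\Kggeq{m}$. The approach I would take is to translate the admissibility of $(s_0,\dotsc,s_{m-1},X)\in\Kggq{m}$ into explicit L\"owner inequalities on $X$ by Schur-complementing the block Hankel matrices $\Hu{n}$ and the analogous block Hankel formed from the $\ug$-right-shifted sequence via \eqref{FR1733}; this should exhibit the set of admissible $X$ as a L\"owner half-space $\setaca{X\in\CHq}{X\lgeq X^-}$ with $X^-$ depending only on $s_0,\dotsc,s_{m-1}$. Intersecting with $\setaca{X\in\CHq}{X\lleq s_m}$ yields a L\"owner order interval, and applying the Schur-complement recipe to a further block Hankel enlargement should pin down a distinguished boundary value $\tilde s_m$ at which the extendability range condition becomes tight, simultaneously providing an explicit $s_{m+1}$ certifying $(s_0,\dotsc,s_{m-1},\tilde s_m)\in\Kggeq{m}$. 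The measure $\tilde\sigma$ realising $\tilde s_m$ is then obtained from Theorem~\ref{T1121} applied to the equality problem $\mprob{\rhl}{m}{=}$ with data $(s_0,\dotsc,s_{m-1},\tilde s_m)$.
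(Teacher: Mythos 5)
First, a remark on the ground truth: this paper does not prove Theorem~\ref{NT1} at all; it is imported verbatim from the cited reference. There is therefore no in-paper proof to compare you against, and I can only assess your argument on its own terms.

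Your reduction and your uniqueness argument are correct and essentially complete. The first \(m\) entries are forced to equal \(s_0,\dotsc,s_{m-1}\); and since any competitor \((\tilde{s}_j')_{j=0}^m\in\Kggeq{m}\) admits, by \rthm{T1121}, a realizing measure for the \emph{equality} problem, feeding each realizing measure into the other solution set yields both L\"owner inequalities between the \(m\)-th entries. Moreover, \rthm{T1121} shows that your set \(\mathcal{A}\) coincides with \(\setaca{X\in\CHq}{(s_0,\dotsc,s_{m-1},X)\in\Kggeq{m}\text{ and }X\lleq s_m}\), and the theorem is \emph{equivalent} to the assertion that this set has a L\"owner-greatest element. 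That assertion is the entire content of the result, and it is exactly the step your proposal does not establish. The sketch you offer for it is also not accurate as stated: membership of \((s_0,\dotsc,s_{m-1},X)\) in \(\Kggq{m}\) does reduce, via Schur complements as in \rrem{DFKMT2.1}, to a single one-sided L\"owner bound on \(X\); but membership in \(\Kggeq{m}\) additionally imposes a column-space inclusion of the type \(\ran{\col (s_{n+1},\dotsc,X)}\subseteq\ran{\Hu{n}}\) (respectively its \(\ug\)-shifted analogue), which is not a L\"owner inequality. Consequently the feasible set is \emph{not} a L\"owner order interval; if it were, its greatest element would simply be \(s_m\) itself and the theorem would be trivial with \(\tilde{s}_m=s_m\), which is false precisely in the degenerate case the theorem exists to handle.

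Concretely, what is missing is the existence of a maximum of a set of the form \(\setaca{X\in\CHq}{X^-\lleq X\lleq s_m,\ \ran{X-c}\subseteq\mathcal{R}}\). Already for \(m=1\), \(n=0\), this set is \(\setaca{X\in\CHq}{\ug s_0\lleq X\lleq s_1,\ \ran{X}\subseteq\ran{s_0}}\); the naive candidate \(Ps_1P\) (with \(P\) the orthoprojection onto \(\ran{s_0}\)) need not satisfy \(Ps_1P\lleq s_1\), and the correct maximum is \(\ug s_0\) plus the Anderson--Trapp shorted operator of \(s_1-\ug s_0\) to \(\ran{s_0}\), i.e.\ a Schur-complement expression \(A-AQ(QAQ)^\mpi QA\) with \(Q=\Iq-P\). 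Your proof becomes complete once you invoke (or reprove) the maximality property of the shorted operator and carry out the corresponding computation for general \(m\), treating the two parities separately; as written, the phrase ``should pin down a distinguished boundary value at which the extendability range condition becomes tight'' is a placeholder for exactly this argument rather than a proof of it.
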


\section{Some classes of holomorphic matrix-valued functions}\label{sec3}
 The class \(\RFq \) of all \tqqa{Herglotz--Nevanlinna} functions in the upper half-plane \(\uhp  \) consists of all matrix-valued functions \(F\colon\uhp  \to\Cqq \) which are holomorphic in \(\uhp \) and which satisfy \(\Im  [F(\uhp )] \subseteq \Cggq \).
 Detailed considerations of matrix-valued Herglotz--Nevanlinna functions can be found in~\cite{MR2988005,MR1784638}.
 In particular, the functions belonging to \(\RFq \) admit a well-known integral representation:

\begin{thm}\label{T31DN}
 \benui
  \item For each \(F\in\RFq \), there exist unique matrices \(A\in\CHq \) and \(B\in\Cggq \) and a unique \tnnH{} measure \(\nu\in\MggqR \) such that
\begin{align}\label{N31DN}
 F(z)&= A + zB + \int_\R  \frac{1 + tz}{t-z} \nu (\dif t)&\text{for each }z&\in\uhp .
\end{align}
  \item If \(A\in\CHq \), if \(B\in\Cggq \), and if \(\nu\in\MggqR \), then \(F\colon\uhp \to\Cqq \) defined by \eqref{N31DN} belongs to \(\RFq \).
 \eenui
\end{thm}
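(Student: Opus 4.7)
The plan is to handle the two directions separately: the converse (ii) by direct verification, and the representation (i) by reduction to the scalar case via polarization, with uniqueness established through Stieltjes-type inversion formulas.

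For (ii), holomorphy of $F$ on $\uhp$ follows by dominated convergence after observing that on every compact subset $K$ of $\uhp$ the integrand $z \mapsto \frac{1+tz}{t-z}$ is bounded uniformly in $t \in \R$: a short computation gives $\frac{1+tz}{t-z} = z + \frac{1+z^2}{t-z}$, and $|t-z|$ is bounded below by $\inf_{w \in K} \Im w > 0$. Since $\nu \in \MggqR$ is a finite matrix measure, differentiation under the integral is justified. The sign of $\Im F$ then follows from $\Im A = \Oq$, $\Im(zB) = (\Im z) B \in \Cggq$, and the elementary identity $\Im \frac{1+tz}{t-z} = \frac{(\Im z)(1+t^2)}{|t-z|^2}$, which shows the integral term contributes a nonnegative Hermitian imaginary part as well.

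For the existence half of (i), I would reduce to the scalar Herglotz--Nevanlinna representation theorem. For each $x \in \Cq$, the function $f_x(z) \defeq x^\ad F(z) x$ is scalar Herglotz--Nevanlinna and hence admits a unique representation $f_x(z) = \alpha_x + \beta_x z + \int_\R \frac{1+tz}{t-z} \mu_x(\dif t)$ with $\alpha_x \in \R$, $\beta_x \geq 0$, and $\mu_x$ a finite nonnegative Borel measure on $\R$. The polarization identity expresses $y^\ad F(z) x$ as a linear combination of the four diagonal forms $f_{x + \iu^k y}$ for $k \in \mn{0}{3}$; uniqueness of the scalar representations then produces sesquilinear forms $(x, y) \mapsto \alpha(x, y)$, $(x, y) \mapsto \beta(x, y)$, and, for each $E \in \BorR$, $(x, y) \mapsto \nu_{x,y}(E)$. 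These forms determine matrices $A \in \CHq$, $B \in \Cggq$, and a set function $\nu \colon \BorR \to \Cqq$. The identity $x^\ad \nu(E) x = \mu_x(E) \geq 0$ shows $\nu(E) \in \Cggq$, while $\sigma$-additivity of $\nu$ is inherited from the $\mu_x$.

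Uniqueness in (i) combines asymptotic analysis with Stieltjes inversion. Dividing the representation by $\iu y$ and letting $y \to \infty$, the integral and the $A$-term vanish, forcing $B = \lim_{y \to \infty} F(\iu y)/(\iu y)$. Evaluation at $z = \iu$ exploits the identity $\frac{1+t\iu}{t-\iu} = \iu$ for every $t \in \R$, whence $F(\iu) = A + \iu B + \iu \nu(\R)$ and therefore $A = \Re F(\iu)$. Finally, $\nu$ is recovered from the boundary behavior of $\Im F$ on $\R$ via the matrix-valued Stieltjes inversion formula, which reduces entrywise to the scalar inversion. The main technical obstacle is the polarization construction: one must carefully verify that the set function built from the family of scalar measures $(\mu_x)_{x \in \Cq}$ is genuinely a $\sigma$-additive nonnegative Hermitian matrix-valued measure, rather than merely a coherent family of complex set functions.
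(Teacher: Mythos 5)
The paper does not prove \rthm{T31DN} at all: it is stated as a ``well-known integral representation'' and the reader is referred to the literature (the references cited just before the theorem), so there is no in-paper argument to compare yours against. Judged on its own, your proposal is a correct and standard route: part~(ii) by direct verification using the decomposition \(\frac{1+tz}{t-z}=z+\frac{1+z^2}{t-z}\) and the identity \(\Im\frac{1+tz}{t-z}=\frac{(1+t^2)\Im z}{\abs{t-z}^2}\) (both of which check out), and part~(i) by polarization from the scalar Nevanlinna theorem together with the asymptotics \(B=\lim_{y\to\infp}F(\iu y)/(\iu y)\), the evaluation \(F(\iu)=A+\iu B+\iu\nu(\R)\) coming from \(\frac{1+\iu t}{t-\iu}=\iu\), and Stieltjes--Perron inversion for \(\nu\).

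One point deserves to be made explicit rather than deferred to the final sentence. To conclude that \((x,y)\mapsto\alpha(x,y)\), \((x,y)\mapsto\beta(x,y)\), and \((x,y)\mapsto\nu_{x,y}(E)\) are sesquilinear (and hence define matrices \(A\), \(B\) and a matrix-valued set function \(\nu\)), you need uniqueness of the decomposition \(c_0+c_1z+\int_\R\frac{1+tz}{t-z}\,\sigma(\dif t)\) for \emph{complex} constants \(c_0,c_1\) and finite \emph{complex} measures \(\sigma\) --- not merely the uniqueness statement of the scalar Herglotz--Nevanlinna theorem, which applies only to genuine Herglotz functions. This extension is true and follows from the same asymptotic and inversion arguments (or by splitting \(\sigma\) into Hermitian combinations), but it is the hinge on which the whole polarization construction turns: without it, additivity of \(z\mapsto y^\ad F(z)x\) in \(x\) does not transfer to additivity of the coefficients. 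Once that is in place, the remaining verifications you flag (that \(x^\ad\nu(E)x=\mu_x(E)\ge 0\) gives \(\nu(E)\in\Cggq\), that \(\nu(E)\) is Hermitian, and that \(\sigma\)-additivity is inherited entrywise) are routine, and your uniqueness argument for \(A\), \(B\), \(\nu\) is sound.
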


 For each \(F\in \RFq \), the unique triple \((A,B,\nu)\in\CHq  \times \Cggq  \times \MggqR \) for which the representation \eqref{N31DN} holds true is called the \emph{Nevanlinna parametrization of \(F\)} and we will also write \((A_F, B_F, \nu_F)\) for \((A,B,\nu)\).
 In particular, \(\nu_F\) is said to be the \emph{Nevanlinna measure  of \(F\)}.
 If \(F\) belongs to \(\RF{1}\), then \(\mu_F\colon \BorR  \to [0, \infp]\) defined by 
\begin{align}\label{N1052D}
 \mu_F (B)&\defeq  \int_B (1 + t^2) \nu_F (\dif t)&\text{for all }B&\in\BorR  
\end{align}
 is a measure, which is called the \emph{spectral measure of \(F\)}.
 By \(\RFsq \) we denote the set of all \(F\in\RFq \) for which \(g\colon\R\to\R\) defined by \(g(t) \defeq  1+t^2\) belongs to \(\LoaaaR{1}{\R}{\BorR}{\nu_F}\).
 Obviously, \(\RFsq  =\setaca{F\in\RFq }{\nu_F \in\Mgguqa{2}{\R}}\).
 If \(F\) belongs to \(\RFsq \), then \(\mu_F\colon \BorR  \to\Cggq \) given by \eqref{N1052D} is a well-defined \tnnH{} \tqqa{measure} belonging to \(\MggqR \), which is said to be the \emph{matricial spectral measure of \(F\)}.
 Obviously, for functions which belong to \(\RFs{1}\), the notions spectral measure and matricial spectral measure coincide.

 For our considerations, the class \(\RFOq \) of all \(F\in\RFq \) for which
\beql{N1302D}
 \sup_{y\in[1,\infp)} y\normS*{F (\iu y)}
 < \infp
\eeq 
 holds true plays an essential role.
 The class \(\RFOq \) is a subclass of \(\RFsq \) (see, \teg{}~\cite[\clem{6.1}]{MR2988005}).
 Furthermore, the functions belonging to \(\RFOq \) admit a particular integral representation:

\begin{thm}\label{N87DN}
 \benui
  \item For each \(F\in\RFOq \), there is a unique  \(\mu\in\MggqR \) such that
\begin{align}\label{N1140D}
 F(z)&=  \int_\R  \frac{1}{t-z} \mu (\dif t)&\text{for each }z&\in\uhp ,
\end{align}
 namely the matricial spectral measure of \(F\), and
\[
 \mu (\R )
 = \lim_{y\to\infp}\rk*{y \Im\ek*{F(\iu y)}}
 = - \iu \lim_{y\to\infp}\ek*{y F(\iu y)}
 =  \iu \lim_{y\to\infp}\ek*{y F^\ad (\iu y)}.
\]
 \item If \(F\colon\uhp  \to \Cqq \) is a matrix-valued function for which there exists a \tnnH{} measure \(\mu\in \MggqR \) such that \eqref{N1140D} holds true, then \(F\) belongs to  \(\RFOq \).
 \eenui
\end{thm}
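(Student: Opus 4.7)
The plan is to exploit the Nevanlinna parametrization of Theorem \ref{T31DN} and to show that the growth hypothesis \eqref{N1302D} is precisely what forces the ``polynomial'' part of the representation to vanish.

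For the easier direction (ii), given $\mu \in \MggqR$ and $F$ defined by \eqref{N1140D}, I would verify holomorphy on $\uhp$ entrywise via differentiation under the integral sign, and, for any $x \in \Cq$, compute
\[
x^\ad \Im[F(z)] x = (\Im z) \int_\R \frac{x^\ad \mu(\dif t) x}{|t-z|^2} \geq 0,
\]
which shows $F \in \RFq$. The crude bound $|t-\iu y|^{-1} \leq y^{-1}$ then yields the numerical estimate $|x^\ad (y F(\iu y)) x| \leq x^\ad \mu(\R) x$ uniformly in $y \geq 1$, from which a bound on $y \normS{F(\iu y)}$ in terms of $\normS{\mu(\R)}$ follows via standard matrix-measure arguments.

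For the main direction (i), Theorem \ref{T31DN} supplies the Nevanlinna parametrization $(A,B,\nu) \in \CHq \times \Cggq \times \MggqR$ of $F$, and the earlier-stated inclusion $\RFOq \subseteq \RFsq$ ensures $\nu \in \Mgguqa{2}{\R}$, so that $C \defeq \int_\R t\,\nu(\dif t)$ is a well-defined Hermitian matrix and the matricial spectral measure $\mu$ from \eqref{N1052D} is finite. The algebraic identity $\frac{1+tz}{t-z} = \frac{1+t^2}{t-z} - t$ then rewrites the Nevanlinna integral and produces
\[
F(z) = (A-C) + zB + \int_\R \frac{1}{t-z}\mu(\dif t).
\]
The crux is to show that \eqref{N1302D} forces $B = 0$ and $A = C$. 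Evaluating at $z = \iu y$ and decomposing $\frac{y}{t-\iu y} = \frac{yt}{t^2+y^2} + \iu\frac{y^2}{t^2+y^2}$, I would use dominated convergence (with the bounds $|yt/(t^2+y^2)| \leq 1/2$ and $0 \leq y^2/(t^2+y^2) \leq 1$) to obtain $\int_\R \frac{yt}{t^2+y^2}\mu(\dif t) \to 0$ and $\int_\R \frac{y^2}{t^2+y^2}\mu(\dif t) \to \mu(\R)$. Splitting $yF(\iu y)$ into its Hermitian and anti-Hermitian parts (all of $A$, $B$, $C$ are Hermitian), the Hermitian part equals $y(A-C) + o(1)$ and the anti-Hermitian part equals $\iu[y^2 B + \mu(\R)] + o(1)$; boundedness of each forces $A = C$ and $B = 0$.

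Once the integral representation \eqref{N1140D} is established, the three limit identities follow immediately: $y\Im[F(\iu y)] \to \mu(\R)$ from the second DCT limit above, and $-\iu y F(\iu y) \to \mu(\R)$ from combining both, with the adjoint version $\iu y F^\ad(\iu y) \to \mu(\R)$ holding because $\mu(\R) \in \CHq$. Uniqueness of $\mu$ is inherited from the uniqueness of the Nevanlinna parametrization: any representing measure $\tilde\mu$ corresponds to the triple $(0,0,\tilde\nu)$ with $\tilde\nu(\dif t) = (1+t^2)^{-1}\tilde\mu(\dif t)$, so $\tilde\nu = \nu$ and therefore $\tilde\mu = \mu$. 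The main subtlety I anticipate is the careful bookkeeping required for dominated convergence with $\Cggq$-valued measures, which reduces to the scalar case by passing to entries $\mu_{jk}$ or forms $x^\ad \mu x$.
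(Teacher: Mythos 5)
The paper does not prove \rthm{N87DN} at all: it is quoted from the literature (the text immediately following the statement refers to~\zitaa{MR2222521}{\cthm{8.7}}), so there is no in-paper argument to compare against. Your proposal is the standard proof of this representation theorem and it is correct: part~(ii) by differentiating under the integral and the bound \(\abs{t-\iu y}^{-1}\le y^{-1}\); part~(i) by inserting the identity \(\frac{1+tz}{t-z}=\frac{1+t^2}{t-z}-t\) into the Nevanlinna representation (legitimate because \(\RFOq\subseteq\RFsq\) makes \(t\) \(\nu\)-integrable and \(\mu\) finite) and then using the boundedness of \(yF(\iu y)\) together with the splitting \(\frac{y}{t-\iu y}=\frac{yt}{t^2+y^2}+\iu\frac{y^2}{t^2+y^2}\) and dominated convergence to kill \(B\) and \(A-C\); the three limit formulas then drop out of the same computation. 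One small inaccuracy: a measure \(\tilde\mu\) satisfying \eqref{N1140D} has Nevanlinna triple \(\rk{\int_\R t\,\tilde\nu(\dif t),\, 0,\, \tilde\nu}\) with \(\tilde\nu(\dif t)=(1+t^2)^\inv\tilde\mu(\dif t)\), not \((0,0,\tilde\nu)\); this is harmless, since uniqueness of the measure component alone already forces \(\tilde\nu=\nu\) and hence \(\tilde\mu=\mu\).
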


 A proof of \rthm{N87DN} is given, \teg{}, in~\cite[\cthm{8.7}]{MR2222521}.
 If \(F\in\RFOq \), then the unique \(\mu\in\MggqR \) for which \eqref{N1140D} holds true is also called the \emph{Stieltjes measure of \(F\)}.
 If a \tnnH{} \tqqa{measure} \(\mu\in\MggqR \) is given, then \(F\colon\uhp  \to \Cqq \) defined by \eqref{N1140D} is said to be the \emph{Stieltjes transform of \(\mu\)}.

\begin{lem}\label{N1219DN}
 Let \(M\in\Cqq \) and let \(F\colon\uhp  \to\Cqq \) be a matrix-valued function which is holomorphic in \(\uhp \) and which satisfies the inequality
\[
\begin{bmatrix}
 M & F(z)\\
 F^\ad (z) & \frac{F(z) - F^\ad (z)}{z-\ko{z}}
\end{bmatrix}
\lgeq 0
\]
 for each \(z\in\uhp \).
 Then \(F\) belongs to \(\RFOq \) and the inequality  \(\sup_{y\in (0,\infp)} y\normS{F (\iu y)}\lleq \normS{M}\) holds true. 
 Furthermore, the Stieltjes measure \(\mu\) of \(F\) fulfills \(\mu (\R )\lleq M\).
\end{lem}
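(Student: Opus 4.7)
The plan is the following. For $z\in\uhp$ one has $z-\ko{z} = 2\iu\Im z$ with $\Im z > 0$, so reading off the $(2,2)$-block of the hypothesis immediately gives $\Im F(z)\lgeq 0$ for every $z\in\uhp$. Together with the assumed holomorphy of $F$, this shows $F\in\RFq$. The $(1,1)$-block of the hypothesis simultaneously forces $M\in\Cggq$, so both diagonal blocks are nonnegative Hermitian and their square roots are well defined.

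Next I would invoke the classical \v{S}mul'jan-type factorization for nonnegative Hermitian $2\times 2$-block matrices: whenever $\begin{bmatrix} A & B \\ B^\ad & C \end{bmatrix}\lgeq 0$, there exists a contraction $K$ with $B = A^{1/2} K C^{1/2}$. Applied to $z=\iu y$ with $y\in(0,\infp)$, this yields some $K(y)$ satisfying $\normS{K(y)}\lleq 1$ and
\[
 F(\iu y) = M^{1/2} K(y) \bigl(\Im F(\iu y)/y\bigr)^{1/2}.
\]
Taking operator norms and using $\normS{\Im F(\iu y)}\lleq\normS{F(\iu y)}$ gives
\[
 \normS{F(\iu y)}^2 \lleq \frac{\normS{M}\cdot\normS{\Im F(\iu y)}}{y} \lleq \frac{\normS{M}\cdot\normS{F(\iu y)}}{y},
\]
hence $y\normS{F(\iu y)} \lleq \normS{M}$ for all $y>0$ (the case $F(\iu y)=0$ being trivial). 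This simultaneously establishes the stated supremum inequality and condition~\eqref{N1302D}, so $F\in\RFOq$; by \rthm{N87DN}, $F$ then admits a Stieltjes measure $\mu\in\MggqR$.

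For the remaining inequality $\mu(\R)\lleq M$, I rescale the hypothesized block matrix inequality at $z=\iu y$ by the diagonal congruence $\diag(\Iq, y\Iq)$ to obtain
\[
 \begin{bmatrix} M & y F(\iu y) \\ y F^\ad(\iu y) & y\Im F(\iu y) \end{bmatrix}\lgeq 0
 \qquad\text{for every } y\in(0,\infp).
\]
By \rthm{N87DN}, $\lim_{y\to\infp} y F(\iu y) = \iu\mu(\R)$ and $\lim_{y\to\infp} y\Im F(\iu y) = \mu(\R)$, and since nonnegativity is preserved under entrywise limits, one obtains
\[
 \begin{bmatrix} M & \iu\mu(\R) \\ -\iu\mu(\R) & \mu(\R) \end{bmatrix}\lgeq 0.
\]
Testing this matrix against $\begin{bmatrix} v \\ \iu v \end{bmatrix}$ for an arbitrary $v\in\Cq$ collapses the quadratic form to $v^\ad M v - v^\ad\mu(\R)v\geq 0$ (the bottom component of the action cancels), which is the asserted $\mu(\R)\lleq M$.

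The main obstacle I anticipate is the norm-bound step: since $\Im F(\iu y)$ need not be invertible, a direct Schur complement argument is unavailable. The \v{S}mul'jan factorization is exactly what circumvents this, and the resulting single squared-norm inequality simultaneously delivers membership in $\RFOq$ and the quantitative estimate $y\normS{F(\iu y)}\lleq\normS{M}$. Once these are in hand, the diagonal-rescaling-and-limit argument for $\mu(\R)\lleq M$ is essentially routine.
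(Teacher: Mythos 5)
Your proof is correct. Note that the paper does not actually contain its own argument for this lemma: it defers entirely to the citation~\zitaa{MR2222521}{\clem{8.9}}, so there is no in-text proof to compare against. Your route is a clean, self-contained alternative to the argument usually given there, which proceeds via the Moore--Penrose/Schur-complement characterization of \tnnH{} block matrices (as in \rrem{remark4.5}): one extracts \(\ran{F(z)}\subseteq\ran{M}\) and \(\frac{F(z)-F^\ad(z)}{z-\ko{z}}-F^\ad(z)M^\mpi F(z)\lgeq 0\) and then estimates norms. Your use of the \v{S}mul'jan factorization \(F(\iu y)=M^{1/2}K(y)\bigl(\Im F(\iu y)/y\bigr)^{1/2}\) with \(\normS{K(y)}\lleq 1\) reaches the same inequality \(\normS{F(\iu y)}^2\lleq y^\inv\normS{M}\,\normS{\Im F(\iu y)}\) in one step and correctly sidesteps the non-invertibility of \(\Im F(\iu y)\); combined with \(\normS{\Im F(\iu y)}\lleq\normS{F(\iu y)}\) this gives both membership in \(\RFOq\) and the quantitative bound. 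The final step --- congruence by \(\diag(\Iq,y\Iq)\), passage to the limit \(y\to\infp\) using \rthm{N87DN}, and testing the limiting \tnnH{} matrix against \(\smash{\tmatp{v}{\iu v}}\) to collapse the form to \(v^\ad(M-\mu(\R))v\geq 0\) --- is also correct; the limit relations \(yF(\iu y)\to\iu\mu(\R)\) and \(y\Im F(\iu y)\to\mu(\R)\) are exactly those recorded in \rthm{N87DN}, and \tnn{} Hermiticity passes to entrywise limits. No gaps.
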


 A proof of \rlem{N1219DN} is given, \teg{}, in~\cite[\clem{8.9}]{MR2222521}.

 In view of the Stieltjes moment problem, a further class of matrix-valued functions plays a key role:
 For each \(\ug\in\R \), let \(\SFq \) be the set of all matrix-valued functions \(S\colon\Cs  \to \Cqq \) which are holomorphic in \(\Cs  \) and which satisfy \(\Im  [S (\uhp )]\subseteq \Cggq \) as well as  \(S (\crhl ) \subseteq \Cggq \).

 In~\cite[Theorems~3.1 and~3.6, Proposition~2.16]{MR3644521}, integral representations of functions  belonging to \(\SFq \) are proved.
 Furthermore, several characterizations of the class \(\SFq \) are given in~\cite[Section~4]{MR3644521}.

 For each \(\ug\in\R \), let \(\SFOq \) be the class of all \(F\in \SFq \) which satisfy \eqref{N1302D}.
 The functions belonging to \(\SFOq \) admit a particular  integral representation.
 Before we state this, let us note the following: 

\begin{rem}\label{L1.53}
 For every choice of \(\ug \in \R \) and \(z\in\Cs  \), the function \(b_{\ug, z}\colon\rhl  \to\C \) given by \(b_{\ug, z} (t) \defeq1/(t-z)\) is a bounded and continuous function which, in particular, belongs to \(\LoaaaC{1}{\rhl}{\BorK}{\sigma}\) for all \(\sigma\in\MggqK\).
\end{rem}

\begin{thm}[\zitaa{MR3644521}{\cthm{5.1}}]\label{T1316D}  
 Let \(\ug\in\R\).
\benui
\item If \(S\in \SFOq \), then there is a unique \(\sigma\in\MggqK \) such that 
\begin{align}\label{N1319D}
 S(z)&=\int_{\rhl } \frac{1}{t-z} \sigma (\dif t)&\text{for each }z&\in\Cs.
\end{align}
\item If  \(\sigma\in\MggqK \) is such that \(S\colon\Cs   \to \Cqq \) can be represented via \eqref{N1319D}, then \(S\) belongs to \(\SFOq \).
\eenui
\end{thm}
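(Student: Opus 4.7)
\medskip

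\textbf{Proof sketch (plan).} The plan is to treat the two parts separately, using \rthm{N87DN} as the main engine and extending/locating the measure on the half-line $\rhl$ via the Stieltjes inversion formula together with the holomorphy of $S$ across $\crhl$.

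For part~(ii), given $\sigma\in\MggqK$, I would first extend $\sigma$ to a measure $\tilde{\sigma}\in\MggqR$ by setting $\tilde{\sigma}(B)\defeq\sigma(B\cap\rhl)$ for $B\in\BorR$. Then for $z\in\uhp$ one has $S(z)=\int_\R(t-z)^{-1}\tilde{\sigma}(\dif t)$, so \rthm{N87DN}(ii) shows $\Rstr_\uhp S\in\RFOq$; in particular $S$ is holomorphic in $\uhp$, satisfies \eqref{N1302D}, and $\Im[S(\uhp)]\subseteq\Cggq$. Next I would check that the integral in \eqref{N1319D} actually defines a holomorphic function on all of $\Cs$ (and not only on $\uhp$): since the integrand $(t-z)^{-1}$ is, for fixed $t\in\rhl$, holomorphic in $z\in\Cs$ and locally uniformly bounded (use \rrem{L1.53} on compact subsets of $\Cs$), differentiation under the integral sign applies. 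Finally, for $x\in\crhl=(-\infty,\ug)$ we have $t-x>0$ for every $t\in\rhl$, so $S(x)$ is a scalar-positive integral of the nonnegative Hermitian measure $\sigma$, hence $S(x)\in\Cggq$. This gives $S\in\SFOq$.

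For part~(i), I would start from $S\in\SFOq$. The restriction $\Rstr_\uhp S$ belongs to $\RFq$ (by definition of $\SFq$) and satisfies \eqref{N1302D}, hence lies in $\RFOq$. By \rthm{N87DN}(i), there is a unique $\mu\in\MggqR$ with $S(z)=\int_\R(t-z)^{-1}\mu(\dif t)$ for every $z\in\uhp$. The heart of the argument is then to show that $\mu$ is carried by $\rhl$, \tie{}, $\mu(\crhl)=0$. For this I would invoke the matricial Stieltjes inversion formula: for any interval $(a,b)\subseteq\crhl$ with $\mu(\set{a})=\mu(\set{b})=0$,
\[
 \mu\bigl((a,b)\bigr)
 =\lim_{\varepsilon\to 0^+}\frac{1}{\pi}\int_a^b\Im S(x+\iu\varepsilon)\,\dif x.
\]
Because $S$ is holomorphic on all of $\Cs\supseteq\crhl\cup\uhp$, we have $S(x+\iu\varepsilon)\to S(x)$ uniformly on the compact interval $[a,b]$ as $\varepsilon\to 0^+$; and since $S(x)\in\Cggq\subseteq\CHq$ on $\crhl$, $\Im S(x)=\Ouu{q}{q}$ there. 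Hence $\mu((a,b))=\Ouu{q}{q}$ for every such $(a,b)$, and therefore $\mu(\crhl)=\Ouu{q}{q}$. Setting $\sigma\defeq\Rstr_{\BorK}\mu\in\MggqK$, the integral representation \eqref{N1319D} then holds on $\uhp$; both sides being holomorphic on $\Cs$ (by part~(ii) for the right-hand side), the identity theorem yields \eqref{N1319D} on all of $\Cs$. Uniqueness of $\sigma$ is inherited from the uniqueness of $\mu$ in \rthm{N87DN}(i): any two candidates extend by zero on $\crhl$ to the matricial spectral measure of $\Rstr_\uhp S$.

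The one delicate point is the justification of the Stieltjes-inversion step. An alternative, if one wishes to avoid citing a matricial inversion formula, is to argue entrywise on the scalar measures $\mu_{jk}$ and exploit the standard inversion theorem for complex Herglotz--Nevanlinna functions; this yields the same conclusion. A second subtlety is the verification that the right-hand side of \eqref{N1319D} is holomorphic on $\Cs$ in part~(ii), where some care with dominated convergence (or Morera's theorem combined with Fubini) is needed, but this is routine once the local uniform bound from \rrem{L1.53} is in hand.
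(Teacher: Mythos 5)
The paper does not prove \rthm{T1316D} itself: it imports the statement verbatim from \zitaa{MR3644521}{\cthm{5.1}}, so there is no in-paper proof to compare against. Your argument is correct and is exactly the standard route that the paper's own auxiliary results embody: part~(ii) via zero-extension of \(\sigma\) to \(\R\), \rthm{N87DN}(ii), holomorphy of the integral on \(\Cs\), and positivity of \(1/(t-x)\) on \(\crhl\); part~(i) via \rthm{N87DN}(i) followed by the Stieltjes--Perron inversion argument showing the spectral measure charges nothing on \(\crhl\), which is precisely the mechanism of \rlem{ML2217} (there carried out entrywise on \(u^\ad S u\), as you suggest in your alternative) and is recorded for \(\SFOq\) in \rrem{R1457D}. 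The two subtleties you flag are genuine but routine and handled as you indicate, so the proposal stands as a complete proof consistent with the cited source.
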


 If \(F\in \SFOq \) is given, then the unique \(\sigma\in\MggqK \) which fulfills the representation \eqref{N1319D} of \(F\) is called the \emph{{F}}.
 If  \(\sigma\in\MggqK \) is given, then \(F\colon\Cs   \to \Cqq \) defined by \eqref{N1319D} is said to be the \emph{\taSto{\sigma}}.
 In view of \rthm{T1316D}, the moment problems \mprob{\rhl }{m}{\lleq} and \mprob{\rhl}{\kappa}{=} admit reformulations in the  language of \taSt{s}:
 
\begin{Problem}[\iproblem{\rhl}{m}{\lleq}]
 Let \(\ug\in\R \), let \(m\in\NO \), and let \(\seqs{m}\) be a sequence of complex \tqqa{matrices}.
 Describe the set \(\SFOqskg{m}\) of all  \(F\in \SFOq \) the \taSm{} of which belongs to \(\MggqKskg{m}\).
\end{Problem}

\begin{Problem}[\iproblem{\rhl}{\kappa}{=}]
 Let \(\ug\in\R \), let \(\kappa\in\NOinf \), and let \(\seqska \) be a sequence of complex \tqqa{matrices}.
 Describe the set \(\SFOqsg{\kappa}\) of all  \(F\in \SFOq \) the \taSm{} of which belongs to \(\MggqKsg{\kappa}\).
\end{Problem}

\begin{rem}\label{R1457D}
 Let \(\ug\in\R \) and let \(F\in \SFOq \).
 Then \(F_\square \defeq  \Rstr_{\uhp } F\) belongs to \(\RFOq \),  the matricial spectral measure \(\mu_\square\) of \(F_\square\) fulfills \(\mu_\square (\crhl ) =\NM\), and \(\sigma \defeq  \Rstr_{\BorK } \mu_\square\) is exactly the \taSmo{F} (see~\cite[Proposition~2.16]{MR3644521}).
\end{rem}

 At the end of this section, we state two results, which are essential to discuss the so-called degenerate case.

\begin{prop}[cf.~\zitaa{MR3644521}{Proposition~5.3}]\label{P1525}
 Let \(\ug\in\R \) and let \(F\in \SFOq \).
 Then the \taSm{} \(\sigma\) of \(F\) fulfills \(\sigma (\rhl ) = -\iu \lim_{y\to\infp} y F (\iu y)\) and, for each \(z\in \Cs  \), furthermore \(\ran{F(z)}=\ran{\sigma (\rhl)}\)  and \(\nul{F(z)} = \nul{\sigma (\rhl)}\).
\end{prop}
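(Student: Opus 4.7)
The plan is to leverage \rrem{R1457D}, which realizes $F_\square \defeq \Rstr_\uhp F$ as an element of $\RFOq$ whose matricial spectral measure $\mu_\square$ is concentrated on $\rhl$ with $\Rstr_\BorK \mu_\square = \sigma$, so that $\mu_\square(\R) = \sigma(\rhl)$. Since $F(\iu y) = F_\square(\iu y)$ for all $y > 0$, the first assertion $\sigma(\rhl) = -\iu\lim_{y\to\infp} yF(\iu y)$ is an immediate consequence of the asymptotic formula from \rthm{N87DN} applied to $F_\square$. The core of the argument therefore concerns the kernel and range identities at each individual $z \in \Cs$.

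Fix $z \in \Cs$. The inclusion $\nul{\sigma(\rhl)} \subseteq \nul{F(z)}$ is obtained as follows. Given $v \in \Cq$ with $\sigma(\rhl) v = \NM$, the fact that $\sigma$ is a \tnnH{} measure yields the L\"owner inequality $\NM \lleq \sigma(B) \lleq \sigma(\rhl)$ for every $B \in \BorK$; sandwiching gives $0 \leq v^\ad \sigma(B) v \leq v^\ad \sigma(\rhl) v = 0$, and the positive semidefiniteness of $\sigma(B)$ then forces $\sigma(B) v = \NM$ for all $B \in \BorK$. Substituting into the integral representation \eqref{N1319D} yields $F(z) v = \NM$.

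For the converse inclusion, suppose $F(z) v = \NM$ and consider the \tnn{} scalar Borel measure $\mu_v$ on $\rhl$ defined by $\mu_v(B) \defeq v^\ad \sigma(B) v$. Then $v^\ad F(z) v = \int_\rhl (t - z)^{-1} \mu_v(\dif t) = 0$. If $\Im z \neq 0$, the imaginary part of this identity reads $\Im(z) \int_\rhl |t - z|^{-2} \mu_v(\dif t) = 0$, whence $\mu_v = 0$; if $z \in (-\infty, \ug)$, then the integrand $(t-z)^{-1}$ is strictly positive on $\rhl$ and the identity again forces $\mu_v = 0$. In either case $v^\ad \sigma(\rhl) v = \mu_v(\rhl) = 0$, so $\sigma(\rhl) v = \NM$ by positive semidefiniteness. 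This proves $\nul{F(z)} = \nul{\sigma(\rhl)}$. Since each $\sigma(B)$ is Hermitian, the representation \eqref{N1319D} implies $F(z)^\ad = F(\ko{z})$ on $\Cs$; consequently $\ran{F(z)} = \nul{F(z)^\ad}^\orth = \nul{F(\ko{z})}^\orth = \nul{\sigma(\rhl)}^\orth = \ran{\sigma(\rhl)}$, the last equality using that $\sigma(\rhl) \in \Cggq$ is Hermitian.

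The main obstacle I expect is the measure-theoretic bookkeeping: rigorously verifying the L\"owner inequality $\sigma(B) \lleq \sigma(\rhl)$ for all Borel $B \subseteq \rhl$ and justifying that $v^\ad F(z) v$ equals the scalar Stieltjes integral of $\mu_v$. Both facts are standard for \tnnH{} matrix measures but deserve an explicit reference; everything else reduces to elementary linear algebra for positive semidefinite matrices together with the positivity of the Stieltjes kernel.
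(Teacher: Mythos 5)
Your argument is correct, but note that the paper itself offers no proof of \rprop{P1525}: it simply cites \zitaa{MR3644521}{Proposition~5.3}, so there is no internal proof to compare against. Your route is the natural self-contained one and all the ingredients you invoke are available in the paper. The first assertion does follow by combining \rrem{R1457D} (which gives \(\mu_\square(\R\setminus\rhl)=\NM\) and \(\sigma=\Rstr_{\BorK}\mu_\square\), hence \(\sigma(\rhl)=\mu_\square(\R)\)) with the limit formula in \rthm{N87DN}. The kernel inclusions are sound: for \(\nul{\sigma(\rhl)}\subseteq\nul{F(z)}\), the step from \(\sigma(B)v=\NM\) for all \(B\) to \(F(z)v=\NM\) is justified by testing against arbitrary \(u\in\Cq\) via \rrem{B8}/\rlem{B26} (the complex measures \(u^\ad\sigma v\) all vanish); for the converse, your case split over \(z\in\Cs\) is exactly right, since \(\Cs=\uhp\cup\lhp\cup(-\infty,\ug)\) and the Stieltjes kernel has strictly positive imaginary part (resp.\ is strictly positive) on \(\rhl\) in the two cases, forcing the scalar measure \(v^\ad\sigma v\) to vanish. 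The passage to ranges via \(F^\ad(z)=F(\ko{z})\) and \rrem{A.1.} is also correct, using that \(\Cs\) is stable under conjugation and \(\sigma(\rhl)\in\Cggq\) is \tH{}. The only points deserving an explicit citation are the ones you already flagged, and both are covered by the appendix results \rrem{B8} and \rlem{B26}.
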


 If \(\cG\) is a region of \(\C\), \tie{}, a \tne{} open connected subset of \(\C\), then a matrix-valued function \(S\colon\cG\to\Cpq \) is called \emph{\tpqa{Schur} function in \(\cG\)} if \(S\) is both holomorphic and contractive in \(\cG\).
 The set of all \tpqa{Schur} functions in a region \(\cG\) of \(\C\) will be denoted by \(\SchF{p}{q}{\cG}\). 
 
\begin{rem} \label{TS415}
 Let \(\cG\) be a region of \(\C\) and let \(S\in\SchF{q}{q}{\cG}\).
 Then \(\cG^\lor \defeq \setaca{ z\in\C}{\ko{z} \in \cG }\) is a region of \(\C\) and \(S^\lor \colon\cG^\lor\to \Cqp \) given by \(S^\lor (z) \defeq \ek{S(\ko{z})}^\ad\) belongs to \(\SchF{q}{p}{\cG^\lor}\).
\end{rem}

\begin{lem} \label{TS417}
 Let \(\cG\) be a region of \(\C\) and let \(S\in\SchF{q}{q}{\cG}\).
 Then:
\begin{enui}
 \item\label{TS417.a} If \(U\in\Cpq \) fulfills \(U^\ad U =\Iq \), then \(\nul{U + S(w)} = \nul{U+S(z)}\) for every choice of \(w\) and \(z\) in \(\cG\).
 \item\label{TS417.b} If \(V\in\Cpq \) fulfills \(VV^\ad =\Ip \), then \(\ran{V + S(w)}= \ran{V+S(z)}\) for every choice of \(w\) and \(z\) in \(\cG\). 
\end{enui}
\end{lem}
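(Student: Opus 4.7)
The plan is to reduce part (a) to a maximum-modulus argument on the connected region $\cG$, and to deduce part (b) from (a) by passing to adjoints via \rrem{TS415}.

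For part (a), fix $z_0\in\cG$ and a nonzero $x\in\nul{U+S(z_0)}$ (the zero vector is trivial). Since $U^\ad U=\Iq$ implies $\normE{Ux}=\normE{x}$, the vector $y\defeq-Ux\in\Cp$ satisfies $\normE{y}=\normE{x}$, hence $y\ne 0$, and by assumption $S(z_0)x=y$. Consider the scalar function $\phi\colon\cG\to\C$ given by $\phi(z)\defeq\langle S(z)x,y\rangle$ with respect to the Euclidean inner product on $\Cp$. Then $\phi$ is holomorphic in $\cG$, and Cauchy--Schwarz combined with $\normS{S(z)}\leq 1$ yields
\[
 \abs{\phi(z)}\leq\normE{S(z)x}\cdot\normE{y}\leq\normE{x}\cdot\normE{y}=\normE{y}^2
\]
for every $z\in\cG$. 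Since $\phi(z_0)=\normE{y}^2$, the function $\phi$ attains its supremum modulus at the interior point $z_0$; by the maximum modulus principle applied on the connected set $\cG$ it follows that $\phi(z)\equiv\normE{y}^2$. Combined with $\normE{S(z)x}\leq\normE{y}$, this gives
\[
 \normE{S(z)x-y}^2=\normE{S(z)x}^2-2\Re\phi(z)+\normE{y}^2\leq\normE{y}^2-2\normE{y}^2+\normE{y}^2=0
\]
for every $z\in\cG$, so $S(z)x=y=-Ux$, \tie{}, $x\in\nul{U+S(z)}$. This shows $\nul{U+S(z_0)}\subseteq\nul{U+S(z)}$, and exchanging the roles of $z$ and $z_0$ yields equality.

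For part (b), \rrem{TS415} shows that $S^\lor\in\SchF{q}{p}{\cG^\lor}$, and the matrix $V^\ad\in\Cqp$ satisfies $(V^\ad)^\ad V^\ad=VV^\ad=\Ip$. Applying part (a) to $S^\lor$ and $V^\ad$ on the region $\cG^\lor$ yields that $\nul{V^\ad+S^\lor(\zeta)}$ is independent of $\zeta\in\cG^\lor$. Specializing to $\zeta=\ko{w}$ with $w\in\cG$ and using $V^\ad+S^\lor(\ko{w})=\ek{V+S(w)}^\ad$, we conclude that $\nul{\ek{V+S(w)}^\ad}$ is independent of $w\in\cG$, and taking orthogonal complements gives the same conclusion for $\ran{V+S(w)}$.

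The only substantive step is the maximum-modulus reduction in part (a); part (b) is then a formal adjoint manipulation based on the duality between kernel and range.
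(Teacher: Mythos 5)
Your proof is correct and takes essentially the same route as the paper: part~(a) is reduced to the rigidity statement that a Schur function attaining the norm bound at one point on a fixed vector must be constant on that vector, and part~(b) follows from (a) by passing to adjoints via \rrem{TS415} together with the duality between kernel and range. The only difference is that the paper simply cites the rigidity statement from Dubovoj--Fritzsche--Kirstein (Lemma~2.1.2 there), whereas you prove it on the spot with the maximum modulus principle -- which is exactly the standard proof of that cited lemma.
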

\begin{proof}
 \eqref{TS417.a} Let \(U\in\Cpq \) be such that \(U^\ad U =\Iq \), let \(w\in\cG\), and let \(z\in\cG\).
 We consider an arbitrary \(v\in\nul{U + S(w)}\setminus \set{\Ouu{q}{1}}\).
 Then \(S(w) v = -Uv\).
 Consequently, 
\(
 \normEs{S (w) v}
 =\normEs{Uv}
 = v^\ad U^\ad Uv
 = v^\ad v
 =\normEs{v}
\).
 According to~\cite[\clem{2.1.2}, \cpage{61}]{MR1152328}, this implies \(S(w) v = S(z) v\).
 Hence, \( \ek{U + S(z)} v  = Uv + S(w) v = \Ouu{p}{1}\).
 Thus, \(v\in\nul{U + S(z)}\).
 Therefore, \(\nul{U + S(w)}\subseteq\nul{U + S(z)}\) is proved.
 For reasons of symmetry, we also have \(\nul{U + S(z)}\subseteq \nul{U + S(w)}\).
 \rPart{TS417.a} is checked.
 
 \eqref{TS417.b} In order to prove \rpart{TS417.b}, we first apply \rrem{TS415}.
 Thus, we see that \(S^\lor\) belongs to \(\SchF{q}{p}{\cG^\lor}\).
 Consequently, \rpart{TS417.a} yields
\[
 \Nul{\ek*{V + S(w)}^\ad}
 = \Nul{V^\ad + S^\lor (\ko{w})}
 =\Nul{V^\ad + S^\lor (\ko{z})}
 =\Nul{\ek*{V + S(z)}^\ad }
\]
 for every choice of \(w\) and \(z\) in \(\cG\).
 In view of \rrem{A.1.}, the proof is complete.
\end{proof}

\section{From the Stieltjes moment problem to the system of Potapov's fundamental inequalities}\label{S1415}
 In this section, we introduce the system of Potapov's fundamental matrices corresponding to the matricial Stieltjes moment problem \mprob{\rhl }{m}{\lleq}.
 We will see that each solution of this moment problem fulfills necessarily the system of Potapov's fundamental matrix inequalities.
 First we are going to introduce further notations and, in particular, several block Hankel matrices which will play a key role in our considerations.
 For technical reason, let \(s_{-1} \defeq  \Opq \).

 Let \(\kappa\in\NOinf  \) and let \(\seqska \) be a sequence of complex \tpqa{matrices}.
 For each \(n\in\NO \) with \(2n\le \kappa\), let \(H_n \defeq\mat{s_{j+k}}_{j,k=0}^n\), for each \(n\in\NO \) with \(2n + 1 \le \kappa\), let \(K_n \defeq\mat{s_{j+k+1}}_{j,k=0}^n\), and, for each \(n\in\NO \) with \(2n +2\le \kappa\), let \(G_n \defeq\mat{s_{j+k+2}}_{j,k=0}^n\).
 If \(m\) and \(n\) are integers such that \(-1 \le m \le n\le \kappa\), then we set
\begin{align}\label{YZ}
 y_{m,n}&\defeq  \col (s_j)_{j=m}^n&
&\text{and}&
 z_{m,n} \defeq  & \row (s_k)_{k=m}^n.
\end{align}
 Let \(u_0 \defeq  \Opq\), \(\mathfrak{u}_0 \defeq  \Opq\), \(w_0 \defeq  \Opq \), and \(\mathfrak{w}_0 \defeq  \Opq \).
 For all \(n\in\N \) with \(n\le\kappa +1\), let \(u_n \defeq  -y_{-1,n-1}\), and \(w_{n} \defeq  z_{-1,n-1}\).
 Further, for each \(n\in\NO \) with \(2n\le \kappa\), let  \(\mathfrak{u}_n \defeq\tmat{-y_{n+1,2n}\\ \Opq}\) and \(\mathfrak{w}_{n} \defeq\mat{z_{n+1,2n}, \Opq}\).

 If a real number \(\ug\) is additionally given, then we continue to use the notation given by \eqref{FR1733}, and we set \(H_{\ug \triangleright n} \defeq\mat{s_{\ug\triangleright j + k}}_{j,k=0}^n\) for each \(n\in\NO\) with \(2n+1\le \kappa\).

\begin{rem} \label{ML415}
 Let \(\kappa\in\NOinf \) and let \(\seqska \) be a sequence of complex \tqqa{matrices}.
 If \(n\in\NO \) is such that \(2n\leq\kappa\), then \(H_n\in\CHo{(n+1)q}\) if and only if \(\setaca{ s_j}{j\in\mn{0}{2n}} \subseteq\CHq \).
 Furthermore, if \(\ug\in\R\), if \(\kappa\geq 1\), and if \(n\in\NO \) is such that \(2n+1\leq\kappa\), then \(\set{ H_n, H_{\at{n}} } \subseteq \CHo{(n+1)q}\) if and only if \(\setaca{ s_j}{ j\in\mn{0}{2n+1}} \subseteq\CHq \).
\end{rem}

\begin{rem}\label{R1502}
 Let \(n\in\N\) and let \(\seqs{2n}\) be a sequence of complex \tpqa{matrices}.
 Then the block Hankel matrix \(\Hu{n}\) admits the block representations
\begin{align} 
 H_n&=\bMat H_{n-1} & y_{n, 2n-1} \\ z_{n, 2n-1} & s_{2n}\eMat,& && H_n&=\bMat s_0 & z_{1, n} \\ y_{1,n} & G_{n-1}\eMat,\label{Nr.B6}\\
 H_n&=\bMat y_{0,n-1} & K_{n-1} \\ s_{n} & z_{n+1,2n}\eMat,& &\text{and}& H_n&=\bMat z_{0, n-1} & s_n \\ K_{n-1} & y_{n+1,2n}\eMat\notag.
\end{align}
\end{rem}

 For each \(n\in\NO \), we set
\begin{align*}
T_{q,n}&\defeq\mat{\Kronu{j,k+1}\Iq}_{j,k=0}^n,&
v_{q,n}&\defeq  \col (\Kronu{j,0}\Iq)_{j=0}^n,&
&\text{and}&
\mathfrak{v}_{q,n}&\defeq \col (\Kronu{n-j,0}\Iq)_{j=0}^n,
\end{align*}
 where \(\Kronu{j,k}\) is again the Kronecker delta.
 Obviously, \(\Tqn^\ad =\mat{\Kronu{j+1,k}\Iq}^{n}_{j,k=0}\) for each \(n\in\NO \).

 It seems to be useful to recall well-known Ljapunov identities for block Hankel matrices. 
 (These equations can be also easily proved by straightforward calculation.)

\begin{rem} \label{lemC21-1}
 Let \(\kappa \in \NOinf \) and let \(\seqska \) be a sequence of complex \tpqa{matrices}.
 \benui
  \item\label{lemC21-1.a} For each \(n \in \NO \) with \(2n \leq \kappa\), then \(H_n \Tqn^\ad -T_{p,n} H_n = u_n v^\ad _{q,n} - v_{p,n} w_n\) and \(H_n \Tqn-T_{p,n}^\ad  H_n = \su_n \sv^\ad _{q,n} - \sv_{p,n} \sw_n\).
 In particular, if \(p=q\) and if \(s_j^\ad = s_j\) for each \(j\in\mn{0}{\kappa}\), then \(H_n \Tqn^\ad -\Tqn H_n = u_n v^\ad _{q,n} - v_{q,n} u_n^\ad \) and \(H_n \Tqn-\Tqn^\ad  H_n = \su_n \sv^\ad _{q,n} - \sv_{q,n} \su_n^\ad \) for  each \(n \in \NO \) with \(2n \leq \kappa\).
  \item\label{lemC21-1.b} For each \(n \in \NO \) with \(2n+1\leq \kappa\), we have \(H_{\at{n}}= -\alpha H_n +K_n\), \(v_{p,n} v_{p,n}^\ad H_n = \ek*{R_{\Tpn}(\alpha)}^\inv H_n -\Tpn H_{\at {n}}\), and, in the case that \(p=q\) and \(s_{j}^\ad=s_{j}\) for each \(j\in\mn{0}{\kappa}\) hold true, moreover \(H_{\at{n}} T_{q,n}^\ad - T_{q,n} H_{\at{n}} = (-\alpha u_n - y_{0,n}) v_{q,n}^\ad - v_{q,n} (-\alpha u_n - y_{0,n})^\ad\) for each \(n \in \NO \) with \(2n+1\leq \kappa\).
  \item\label{lemC21-1.bb} For each \(n \in \NO \) with \(2n+2\leq \kappa\), we have \(H_{\at{n}}T_{q,n}-T_{p,n}^\ad H_{\at{n}}=(-\alpha \su_n - y_{n+2, 2n+2})\sv^\ad _{q,n} - \sv_{p,n}( -\alpha \sw_{n} - z_{n+2,2n+2})\) and, in particular, if \(p=q\) and if \(s_j^\ad = s_j\) for each \(j\in\mn{0}{\kappa}\), then \(H_{\at{n}} T_{q,n} - T_{q,n}^\ad H_{\at{n}} = (-\alpha\su_n - y_{n+2, 2n+2}) \sv_{q,n}^\ad - \sv_{q,n}  ( -\alpha \su_{n} - y_{n+2,2n+2})^\ad\) for each \(n \in \NO \) with \(2n+2\leq \kappa\).
  \item\label{lemC21-1.c} The equations   \(H_n v_{q,n} = y_{0,n}\) and \(- T_{p,n}H_n v_{q,n} = u_n\) hold true for each  \(n \in \NO \) with \(2n\leq \kappa\).
 \eenui
\end{rem}

\begin{rem}   \label{21112N}
 For each \(n \in \NO \), the matrix-valued functions \(R_{\Tqn}\colon\C \to \Coo{(n+1)q}{(n+1)q}\) and \(R_{\Tqn^\ad }\colon\C \to \Coo{(n+1)q}{(n+1)q}\)  given by 
\begin{align*}
 R_{\Tqn}(z)&\defeq (\Iu{(n+1)q}-z\Tqn)^\inv &
&\text{and}&
 R_{T^\ad _{q,n}}(z)&\defeq (\Iu{(n+1)q}-zT^\ad _{q,n})^\inv  
\end{align*}
 are well-defined matrix polynomials of degree \(n\), which can be represented, for each \(z\in\C\), via \(R_{\Tqn}(z)=\sum_{j=0}^n z^j \Tqn^j\) and   \(R_{\Tqn^\ad} (z)=\sum_{j=0}^n z^j (T_{q,n}^\ad)^j\), respectively.
 In particular,   \(R_{\Tqn^\ad} (z)=[R_{T_{q,n}} (\ko{z})]^\ad\) for all \(z\in\C \).
\end{rem}

 For each \(n \in \NO \), let \(E_{q,n}\colon\C \to \C^{(n+1)q \times q}\) and \(F_{q,n}\colon\C \to \C^{(n+1)q \times q}\) be defined by 
\begin{align} \label{3.64.-1}
 E_{q,n}(z)&\defeq  \col ( z^j \Iq)_{j=0}^{n}&
&\text{and}&
 F_{q,n} (z)&\defeq zE_{q,n}(z),
\end{align}
 respectively.
 Obviously, for each \(n\in \NO \) and each \(z\in\C\), we have \(R_{\Tqn}(z)v_{q,n}=E_{q,n}(z)\).

\begin{nota}\label{Note44}
 Let \(\alpha \in \R\), let \( \kappa \in \NOinf \), and let \(\seqska \) be a sequence of complex \tqqa{matrices}.
 Further, let \(\cG\) be a subset of  \(\C\) with \(\cG \setminus \R\neq \emptyset \) and let  \(f\colon\cG \to \Cqq \) be a matrix-valued function.
 Then, for each \(n \in \NO \) with \(2n \leq \kappa\), let \(P^{[f]}_{2n}\colon\cG \setminus \R \to \Coo{(n+2)q}{(n+2)q}\) be defined by
\beql{def-P2n}
 P^{[f]}_{2n} (z)\defeq 
 \begin{bmatrix}
 H_n & R_{\Tqn} (z) [v_{q,n} f(z)-u_n]\\
 (R_{\Tqn} (z) [v_{q,n} f(z)-u_n] )^\ad  &\frac{f(z)-f^\ad (z)}{z -\ko{z}}
 \end{bmatrix}.
\eeq  
 If \(\kappa \geq 1\), then, for each \(n \in \NO \) with \(2n+1 \leq \kappa\), let \(P^{[f]}_{2n+1}\colon\cG \setminus \R \to \Coo{(n+2)q}{(n+2)q}\) be given by 
\begin{multline}\label{Pf2n+1}
 P^{[f]}_{2n+1} (z)\\
 \defeq
 \begin{pmat}[{|}]
  H_{\at n}  &
  \begin{gathered}
   R_{\Tqn} (z) (v_{q,n}\ek{(z- \alpha) f(z)}\\
   \qquad-(- \alpha u_n-y_{0,n}))
  \end{gathered}\cr\-
 \ek{R_{\Tqn} (z)\rk{v_{q,n}\ek{(z- \alpha) f(z)}-(- \alpha u_n-y_{0,n})}}^\ad  & \frac{(z-\alpha)f(z)-[(z-\alpha)f(z)]^\ad }{z -\ko{z}}\cr
 \end{pmat}.
\end{multline}
 Furthermore, let \(P_{-1}^{[f]}\colon\cG\setminus\R \to \Cqq\) be defined by
\[
 P_{-1}^{[f]}(z)
 \defeq  \frac{(z-\alpha)f(z)-[(z-\alpha)f(z)]^\ad }{z -\ko{z}}.
\]
\end{nota}
 
 With respect to the Stieltjes moment problem \mprob{\rhl}{m}{\lleq} if \(\cG = \C\), then the functions \eqref{def-P2n} and \eqref{Pf2n+1} are called the Potapov fundamental matrix-valued functions connected to the Stieltjes moment problem (generated by \(f\)).
 If these matrices  are both \tnnH{}, then one says that the Potapov's fundamental matrix inequalities for the function \(f\) are fulfilled.

\begin{rem}\label{remark4.5}
 Let  \( \kappa \in \NOinf \) and let \(\seqska \) be a sequence  from \(\Cqq \).
 Furthermore, let \(\cG\)  be a subset of \(\C\) with \(\cG \setminus \R \neq \emptyset\), let \(f\colon\cG \to \Cqq\) be a matrix-valued function, and let \(z \in \cG \setminus \R\).
 Then \rrem{DFK1}  shows that the following statements hold true:
\benui
 \item Let \(n\in \NO \) with  \(2n \leq \kappa\).
 Then the matrix  \(P_{2n}^{[f]}(z)\) is \tnnH{} if and only if the following three conditions are fulfilled:
 \baeqii{0}
  \item \(\seqs{2n}\in \Hggq{2n}\).
  \item  \(\ran{R_{\Tqn} (z) [v_{q,n}f(z)-u_n]}\subseteq \ran{\Hu{n}}\).
  \item    The matrix
   \begin{multline}\label{N54}
    \Sigma_{2n}^{[f]}(z)
    \defeq\frac{f(z)-f^\ad (z)}{z-\ko{z}}\\
    -\rk*{R_{\Tqn} (z) [v_{q,n}f(z)-u_n]}^\ad  H_n^\mpi \rk*{R_{\Tqn} (z) [v_{q,n}f(z)-u_n]}
   \end{multline}
 is \tnnH{}.
 \eaeqii
 If \(P_{2n}^{[f]}(z)\in \Cggo{(n+2)q}\), then, for each \(H_n^{(1)} \in H_n\{1\}\), we have
 \begin{multline*}
    \Sigma_{2n}^{[f]}(z)=
    \frac{f(z)-f^\ad (z)}{z-\ko{z}}\\
    -\rk*{R_{\Tqn} (z) [v_{q,n}f(z)-u_n]}^\ad  H_n^{(1)} \rk*{R_{\Tqn} (z) [v_{q,n}f(z)-u_n]}.
   \end{multline*} 
 \item
 Let \(\alpha \in \R\), let \(\kappa\ge 1\), and let  \(n\in \NO \) with \(2n+1 \leq \kappa\).
 Then  the matrix \(P_{2n+1}^{[f]}(z)\) is \tnnH{} if and only if the following three conditions are valid:
 \baeqii{3}
  \item \((\seqsa{2n} \in \Hggq{2n}\).
  \item \(\ran{R_{\Tqn} (z) [v_{q,n}[(z-\alpha)f(z)] - (-\alpha u_n -y_{0,n})]} \subseteq \ran{H_{\at n}}\).
  \item The matrix
   \begin{multline}\label{N55}
    \Sigma_{2n+1}^{[f]}(z)
    \defeq\frac{(z-\alpha)f(z)-\ek{(z-\alpha)f(z)}^\ad }{z-\ko{z}}\\
    -\ek*{R_{\Tqn}(z)\rk*{v_{q,n}\ek*{(z-\alpha)f(z)} - (-\alpha u_n -y_{0,n})}}^\ad\\
    \times H_{\at n}^\mpi \ek*{R_{\Tqn} (z)\rk*{v_{q,n} \ek*{(z-\alpha)f(z)} - (-\alpha u_n -y_{0,n})}}
   \end{multline}
 is \tnnH{}.
 \eaeqii
 If \(P_{2n+1}^{[f]}(z)\in \Cggo{(n+2)q}\), then for each \(H_{\at n}^{(1)} \in H_{\at n}\{1\}\), we have
   \begin{multline*}
    \Sigma_{2n+1}^{[f]}(z)
    =\frac{(z-\alpha)f(z)-[(z-\alpha)f(z)]^\ad }{z-\ko{z}}\\
    -\ek*{R_{\Tqn}(z)\rk*{v_{q,n}\ek*{(z-\alpha)f(z)} - (-\alpha u_n -y_{0,n})}}^\ad\\
    \times H_{\at n}^{(1)} \ek*{R_{\Tqn} (z)\rk*{v_{q,n} \ek*{(z-\alpha)f(z)} - (-\alpha u_n -y_{0,n})}}.
   \end{multline*}
 \eenui
\end{rem}

\begin{rem} \label{lemM423}
 Let  \(\kappa \in \NOinf \), let \(\seqska \) be a sequence of complex \tqqa{matrices}, let \(\cG\) be a subset of  \(\C\) with \(\cG \setminus \R \neq  \emptyset\), and let  \(S\colon\cG \to \Cqq \) be a matrix-valued function.
 Straightforward calculations show then that the following statements hold true:
  \begin{enui} 
  \item For every choice of \(n \in \NO \) with \(2n \leq \kappa\) and \(z\in\cG\setminus\R\), we have 
   \beql{N68-1}
    \begin{bmatrix}
     s_0 & S (z)\\ 
     S^\ad (z) & \frac{S (z) - S^\ad  (z)}{z - \ko{z}}
    \end{bmatrix} 
    =\mat{v_{q,n+1},  \sv_{q,n+1}}^\ad       P^{[S]}_{2n}(z)\mat{v_{q,n+1},  \sv_{q,n+1}}.
  \eeq     
  \item If \(\kappa \geq 1\), for each \(n \in \NO \) with \(2n +1 \leq \kappa\) and each \(z\in\cG\setminus\R\), then
    \begin{multline} \label{N68-2}
     \begin{bmatrix}
       -\alpha s_0 + s_1   &  (z - \alpha) S(z)+s_0\\
       [(z-\alpha)S (z) +s_0 ] ^\ad  & \frac{(z-\alpha) S(z)- [(z-\alpha) S (z)]^\ad }{z- \ko{z}}
      \end{bmatrix}\\
      =\mat{v_{q,n+1},  \sv_{q,n+1}}^\ad       P^{[S]}_{2n+1}(z)\mat{v_{q,n+1},  \sv_{q,n+1}}.
    \end{multline}  
  \end{enui} 
\end{rem}

 In the following, for each \(k\in\NO\), let 
\begin{align}\label{N1435D}
 m_{2k}&\defeq  k&
&\text{and}&
 m_{2k+1}&\defeq  k.
\end{align}
 
\begin{lem} \label{MP36N}
 Let  \(\kappa \in \NOinf \) and let \(\seqska \) be a sequence of \tH{} complex \tqqa{matrices}.
 Let \(\cG\) be a subset of  \(\C\) with \(\cG \setminus \R \neq  \emptyset\).
 Further, let  \(f \colon\cG \to \Cqq \) be a matrix-valued function, let \(\cG^\vee \defeq  \setaca{z\in\C}{\ko{z}\in \cG}\), and let \(f^\vee \colon\cG^\vee \to \Cqq \) be defined by \(f^\vee (z) \defeq  f^\ad (\ko{z})\).
 For each \(k \in\mn{-1}{\kappa}\) and each \(z\in\cG^\vee  \setminus \R \), then there is a complex \taaa{(m_k +2)q}{(m_k + 2)q}{matrix} \(X_k (z)\) such that \(P_k^{[f^\vee]} (z) = X_k (z)P_k^{[f]} (\ko{z}) X_k^\ad (z)\).
\end{lem}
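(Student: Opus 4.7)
The plan is to construct the matrix \(X_k(z)\) explicitly, splitting into the three cases \(k=-1\), \(k=2n\), and \(k=2n+1\). For \(k=-1\), a short calculation using \(z-\ko z=-(\ko z-z)\) and \(\alpha\in\R\) shows that \(P_{-1}^{[f^\vee]}(z)\) and \(P_{-1}^{[f]}(\ko z)\) actually coincide, so the choice \(X_{-1}(z)\defeq\Iq\) does the job.

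For the even case \(k=2n\), I first observe that \(P_{2n}^{[f^\vee]}(z)\) and \(P_{2n}^{[f]}(\ko z)\) already share the same \((1,1)\)-block \(H_n\) and, after a quick sign check, the same \((2,2)\)-block \(r\defeq[f^\ad(\ko z)-f(\ko z)]/(z-\ko z)\); only the off-diagonal blocks differ. I then propose the upper block-triangular candidate
\[
 X_{2n}(z)\defeq
 \begin{pmatrix}
  R_{\Tqn}(z)\,[R_{\Tqn}(\ko z)]^{-1} & (z-\ko z)\,E_{q,n}(z)\\
  \Ouu{q}{(n+1)q} & \Iq
 \end{pmatrix},
\]
whose \((1,1)\)-block is well-defined because \(R_{\Tqn}(w)\) is invertible for every \(w\in\C\) (cf.\ \rrem{21112N}). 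When \(X_{2n}(z)\,P_{2n}^{[f]}(\ko z)\,X_{2n}^\ad(z)\) is expanded, the \((2,2)\)-block comes out as \(r\) at once, and the \((1,2)\)-block collapses to \(R_{\Tqn}(z)[v_{q,n}f^\ad(\ko z)-u_n]\) using \(E_{q,n}(z)=R_{\Tqn}(z)v_{q,n}\) together with \((z-\ko z)r=f^\ad(\ko z)-f(\ko z)\). The main step is to verify that the \((1,1)\)-block equals \(H_n\): after conjugating by \(R_{\Tqn}(z)\) on the left and \([R_{\Tqn}(z)]^\ad\) on the right, and using \((z-\ko z)+(\ko z-z)=0\) to cancel all \(f^\ad(\ko z)\)-dependent contributions, the identity boils down to
\[
 [R_{\Tqn}(z)]^{-1}H_n[R_{\Tqn}(z)]^{-\ad}-[R_{\Tqn}(\ko z)]^{-1}H_n[R_{\Tqn}(\ko z)]^{-\ad}=(z-\ko z)(H_n\Tqn^\ad-\Tqn H_n),
\]
which I confirm by substituting \([R_{\Tqn}(w)]^{-1}=\Iu{(n+1)q}-w\Tqn\), multiplying out, and invoking the Ljapunov identity \(H_n\Tqn^\ad-\Tqn H_n=u_nv_{q,n}^\ad-v_{q,n}u_n^\ad\) from \rrem{lemC21-1}.

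The odd case \(k=2n+1\) will be reduced to the even case. The key observation is that \(P_{2n+1}^{[f]}(z)\) built from \(\seqska\) coincides with \(P_{2n}^{[\tilde f]}(z)\) built from the \(\alpha\)-shifted sequence \(\seqsa{2n}\) together with the auxiliary function \(\tilde f(z)\defeq(z-\alpha)f(z)\): indeed \(H_{\at n}\) is the block Hankel matrix of the shifted sequence, and a short calculation using \(s_{-1}=\Opq\) identifies \(-\alpha u_n-y_{0,n}\) with the corresponding ``\(u_n\)''-vector of the shifted sequence. Since \(\alpha\in\R\), we furthermore have \((\tilde f)^\vee(z)=(z-\alpha)f^\ad(\ko z)=\widetilde{(f^\vee)}(z)\), so applying the already-established even-case congruence to the shifted data yields an \(X_{2n+1}(z)\) of exactly the same explicit form as \(X_{2n}(z)\). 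The one delicate point in the entire argument is the Ljapunov-identity bookkeeping in the \((1,1)\)-block check for the even case; everything else is mechanical.
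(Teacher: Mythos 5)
Your treatment of \(k=-1\) and \(k=2n\) is correct and is in substance the paper's own proof: your single upper block-triangular matrix \(X_{2n}(z)\) is exactly the product \(C_{2n}(z)B_{2n}(z)A_{2n}(z)\) of the three elementary factors used there, and your key \((1,1)\)-block identity is the paper's identity \eqref{Nr.FN1} combined with \rremp{lemC21-1}{lemC21-1.a}. The odd case is where you diverge from the paper (which simply runs the same direct computation again for \(P_{2n+1}^{[f]}\), with \(X_{2n+1}=X_{2n}\)), and it is where your argument has a genuine gap. You assert that \(-\alpha u_n-y_{0,n}\) is the ``\(u_n\)''-vector of the right-sided \(\alpha\)-shifted sequence, so that \(P_{2n+1}^{[f]}\) coincides with \(P_{2n}^{[\tilde f]}\) built from \(\seqsa{2n}\) and \(\tilde f(z)=(z-\alpha)f(z)\). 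This identification is false. Writing \(t_j\defeq s_{\at{j}}\) and applying the convention \(t_{-1}\defeq\Oqq\) to the shifted sequence, its \(u\)-vector is \(u_n^{[t]}=-\col(t_{j-1})_{j=0}^{n}=\col(\Oqq,-s_{\at{0}},\dotsc,-s_{\at{n-1}})\), whereas \(-\alpha u_n-y_{0,n}=\col(-s_0,-s_{\at{0}},\dotsc,-s_{\at{n-1}})\): the top \(q\times q\) blocks differ by \(s_0\). Hence, whenever \(s_0\neq\Oqq\),
\[
 P_{2n+1}^{[f]}(z)-P_{2n}^{[\tilde f]}(z)
 =
 \begin{bmatrix}
  \Ouu{(n+1)q}{(n+1)q} & E_{q,n}(z)s_0\\
  s_0E_{q,n}^\ad (z) & \Oqq
 \end{bmatrix}
 \neq\NM,
\]
and the even-case congruence does not transfer to \(P_{2n+1}^{[f]}\) as claimed.

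The repair is short, but it has to be made. Either (i) take \(g(z)\defeq(z-\alpha)f(z)+s_0\) as the auxiliary function: then \(P_{2n+1}^{[f]}=P_{2n}^{[g]}\) of the shifted data holds exactly (the \((2,2)\)-blocks agree because \(s_0^\ad=s_0\)), and \(g^\vee(z)=(z-\alpha)f^\vee(z)+s_0\) is again of the same form, so the even case applies verbatim — this ``\(+s_0\)'' is precisely the term that appears in \rrem{lemM423}; or (ii) keep your \(\tilde f\) and verify in addition that the correction term \(D(z)\) displayed above satisfies \(X_{2n}(z)D(\ko{z})X_{2n}^\ad (z)=D(z)\), which is true because \(R_{\Tqn}(z)\ek{R_{\Tqn}(\ko{z})}^\inv E_{q,n}(\ko{z})=E_{q,n}(z)\) and the two off-diagonal contributions to the resulting \((1,1)\)-block cancel via \((z-\ko{z})+(\ko{z}-z)=0\). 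With either fix your reduction is sound and produces the same \(X_{2n+1}=X_{2n}\) as the paper's direct computation; without it, the stated identification is simply wrong for \(s_0\neq\Oqq\).
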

\begin{proof}
 We give the proof which is stated with more details in~\cite[Lemma~3.6]{MP11}.
 We consider an arbitrary  \( z \in \cG^\vee \setminus \R\).
 From \rrem{lemC21-1}, for each \(n\in\NO \), we see that
\beql{Nr.FN1}
 \begin{split}
  &\ek*{R_{T_{q,n}}(\ko{z})}^\inv  H_n \ek*{R_{T_{q,n}}(\ko{z})}^\invad  + (\ko{z}-z)(v_{q,n}u^\ad _{n}-u_n v_{q,n}^\ad )\\
  &=  H_n-zH_nT_{q,n}^\ad -\ko{z}T_{q,n}H_n+\abs{z}^2T_{q,n}H_nT_{q,n}^\ad +(\ko{z}-z)(T_{q,n}H_n-H_nT^\ad _{q,n})\\
  &=  H_n+\abs{z}^2T_{q,n}H_nT_{q,n}^\ad -\ko{z}H_nT^\ad _{q,n}-zT_{q,n}H_n\\
  &=  \rk{\Iu{(n+1)q} - z T_{q,n}}  H_n   \rk{\Iu{(n+1)q} -z T_{q,n} }^\ad     
  =  \ek*{R_{T_{q,n}}(z)}^\inv    H_n  \ek*{R_{T_{q,n}}(z)}^\invad.
 \end{split}
\eeq 
 Obviously, for each \(n \in \NO \), we also get
\beql{Nr.54}
   (z-\ko{z})v_{q,n}[f^\ad (\ko{z})v^\ad _{q,n}-u^\ad _n] - (z-\ko{z})[v_{q,n} f^\ad (\ko{z})-u_n]v^\ad _{q,n}
   = (z-\ko{z}) (u_n v^\ad _{q,n} - v_{q,n} u^\ad _n)
\eeq 
 and
\beql{Nr.54-1}
   f^\ad (\ko{z}) v^\ad _{q,n} - u_n^\ad - [f^\ad (\ko{z})-f(\ko{z})] v^\ad _{q,n} 
   =     [v_{q,n} f^\ad (\ko{z}) - u_n]^\ad.
\eeq 
 For each \(n \in \NO \), let
\begin{align*}
  A_{2n}(z)&\defeq  \diag \rk*{ \ek*{ R_{T_{q,n}} (\ko{z}) }^\inv ,\Iq },&
  B_{2n}(z)&\defeq  \begin{bmatrix}                                        
                   \Iu{(n+1)q} & (z-\ko{z})v_{q,n}    \\
                    \Ouu{q}{(n+1)q} & \Iq
              \end{bmatrix},       
       \end{align*}   
 \(C_{2n}(z) \defeq   \diag \rk{ R_{T_{q,n}}(z),\Iq }\), \(A_{2n+1}(z) \defeq   A_{2n}(z)\), \( B_{2n+1}(z) \defeq   B_{2n}(z)\), and \(C_{2n+1}(z) \defeq   C_{2n}(z)\).
 For each \(m \in \NO \), let \(X_{m}(z) \defeq   C_{m}(z) B_m(z) A_m(z)\).\\ 
 First we observe now that \(P_{-1}^{[f^\vee]} (z) = X_{-1} (z) P_{-1}^{[f]} (\ko{z}) X_{-1}^\ad (z)\) is true with \(X_{-1} (z) \defeq  \Iq\).

 Now we consider an arbitrary \(n \in \NO \) with \(2n \leq \kappa\).
 Then we have
\[\begin{split}
  &P_{2n}^{[f]}(\ko{z})A^\ad _{2n}(z)\\
  &= \begin{bmatrix}
                        H_n               &                             R_{T_{q,n}} (\ko{z}) [v_{q,n} f(\ko{z})-u_n]           \\
                        \rk{R_{T_{q,n}} (\ko{z}) [v_{q,n} f(\ko{z})-u_n] }^\ad  &         \frac{f(\ko{z})-f^\ad (\ko{z})}{\ko{z} - z}
                    \end{bmatrix}     \cdot      \diag     \rk*{ \ek*{R_{T_{q,n}}(\ko{z}) }^\invad , \Iq}\\
  &= \begin{bmatrix}
                        H_n \ek{ R_{T_{q,n}}(\ko{z}) }^\invad   &  R_{T_{q,n}} (\ko{z}) [v_{q,n} f(\ko{z})-u_n]           \\
                        f^\ad (\ko{z}) v^\ad _{q,n} -u_n^\ad   & \frac{f(\ko{z})-f^\ad (\ko{z})}{\ko{z} - z}
                    \end{bmatrix}.
\end{split}\]
 Consequently, 
\begin{equation*}           
 A_{2n}(z) P_{2n}^{[f]}(\ko{z}) A_{2n}^\ad  (z)    
 = \begin{bmatrix}
                        \ek{ R_{T_{q,n}} (\ko{z}) }^\inv  H_n \ek{ R_{T_{q,n}}(\ko{z}) }^\invad  & v_{q,n} f(\ko{z})-u_n   \\
                        f^\ad (\ko{z}) v^\ad _{q,n} -u_n^\ad  & \frac{f^\ad (\ko{z})-f(\ko{z})}{z - \ko{z}} 
           \end{bmatrix}.                              
\end{equation*}
 Thus, we conclude
\[\begin{split}
 &B_{2n}(z) A_{2n}(z) P_{2n}^{[f]}(\ko{z}) A_{2n}^\ad  (z)\\                          
 &= \begin{bmatrix}                                        
                   \Iu{(n+1)q} & (z-\ko{z})v_{q,n}    \\
                    \Ouu{q}{(n+1)q} & \Iq
                 \end{bmatrix}
                 \begin{bmatrix}
                        \ek{ R_{T_{q,n}} (\ko{z}) }^\inv  H_n \ek{ R_{T_{q,n}}(\ko{z}) }^\invad  & v_{q,n} f(\ko{z})-u_n   \\
                        f^\ad (\ko{z}) v^\ad _{q,n} -u_n^\ad  & \frac{f^\ad (\ko{z})-f(\ko{z})}{z - \ko{z}} 
           \end{bmatrix}\\
           &= \begin{pmat}[{|}]
                        \ek{ R_{T_{q,n}} (\ko{z}) }^\inv  H_n \ek*{ R_{T_{q,n}}(\ko{z}) }^\invad+(z-\ko{z})v_{q,n} \ek*{ f^\ad (\ko{z}) v^\ad _{q,n} -u_n^\ad } & v_{q,n} f^\ad (\ko{z})-u_n  \cr\-
                        f^\ad (\ko{z}) v^\ad _{q,n} -u_n^\ad  & \frac{f^\ad (\ko{z})-f(\ko{z})}{z - \ko{z}}\cr
           \end{pmat}.
\end{split}\]
 Using \eqref{Nr.54}, \eqref{Nr.54-1}, and  \eqref{Nr.FN1}, we get then
\begin{equation*}
 \begin{split}
  &B_{2n}(z) A_{2n}(z) P_{2n}^{[f]}(\ko{z}) A_{2n}^\ad  (z) B^\ad _{2n}(z)\\
  &=
  \begin{pmat}[{|}]
   \begin{gathered}
    \ek{ R_{T_{q,n}} (\ko{z}) }^\inv  H_n \ek{ R_{T_{q,n}}(\ko{z}) }^\invad\\
    +(z-\ko{z})v_{q,n} \ek{ f^\ad (\ko{z}) v^\ad _{q,n} -u_n^\ad }\\
    -(z- \ko{z})\ek{ v_{q,n} f^\ad (\ko{z})-u_n } v_{q,n}^\ad
   \end{gathered}& v_{q,n} f^\ad (\ko{z})-u_n  \cr\-
   f^\ad (\ko{z}) v_{q,n}^\ad  -u_n^\ad- \ek*{ f^\ad (\ko{z})-f(\ko{z}) } v_{q,n}^\ad&\frac{f^\ad (\ko{z})-f(\ko{z})}{z - \ko{z}}   \cr
  \end{pmat}\\
  &=
  \begin{pmat}[{|}]   
   \ek{ R_{T_{q,n}} (\ko{z}) }^\inv  H_n \ek{ R_{T_{q,n}}(\ko{z}) }^\invad+(z-\ko{z}) \ek*{u_n v_{q,n}^\ad  - v_{q,n}u_n^\ad }&v_{q,n} f^\ad (\ko{z})-u_n\cr\-
   \ek{ v_{q,n} f^\ad (\ko{z})-u_n }^\ad& \frac{f^\ad (\ko{z})-f(\ko{z})}{z - \ko{z}}\cr
  \end{pmat}\\
  &=
  \begin{pmat}[{|}] 
   \ek{ R_{T_{q,n}} (z) }^\inv  H_n \ek{ R_{T_{q,n}}(z) }^\invad& v_{q,n} f^\ad (\ko{z})-u_n\cr\-
   \ek{ v_{q,n} f^\ad (\ko{z})-u_n }^\ad&\frac{f^\ad (\ko{z})-f(\ko{z})}{z - \ko{z}}\cr
  \end{pmat}.
 \end{split}
\end{equation*}
 Hence, we obtain
\[\begin{split}
 X_{2n}(z) P_{2n}^{[f]}(\ko{z}) & X_{2n}^\ad (z)
 = C_{2n}(z) B_{2n}(z) A_{2n}(z) P_{2n}^{[f]}(\ko{z}) A_{2n}^\ad  (z) B^\ad _{2n}(z) C^\ad _{2n}(z)                \\
 &=
 \begin{bmatrix}       
  H_n & R_{T_{q,n}}(z) \ek{ v_{q,n} f^\ad (\ko{z})-u_n }          \\
  \ek{ v_{q,n} f^\ad (\ko{z})-u_n }^\ad  R^\ad _{T_{q,n}}(z) & \frac{f^\ad (\ko{z})-f(\ko{z})}{z - \ko{z}}                
 \end{bmatrix}\\
 &=
 \begin{bmatrix}
  H_n & R_{T_{q,n}}(z) \ek{ v_{q,n} f^\vee (z)-u_n }          \\
  \rk{R_{T_{q,n}}(z) \ek*{ v_{q,n} f^\vee (z)-u_n } }^\ad  & \frac{f^\vee (z) - \ek*{ f^\vee (z) }^\ad  }{z - \ko{z}}     
 \end{bmatrix}                       =    P^{[f^\vee]}_{2n}(z).
\end{split}\]
 Now we consider the case that \(n \in \NO \) is such that  \(2n+1 \leq \kappa\).
 Then \rrem{lemC21-1} yields
\beql{Nr.FN2}
 \begin{split}
  &\ek*{R_{T_{q,n}}(\ko{z})}^\inv  H_{\at{n}}\ek*{R_{T_{q,n}}(\ko{z})}^\invad+ (\ko{z}-z) \ek*{ v_{q,n}(-\alpha u_{n} -y_{0,n})^\ad  - (-\alpha u_n - y_{0,n}) v_{q,n}^\ad }\\
  &= \rk{\Iu{(n+1)q}-\ko{z}T_{q,n}} H_{\at{n}}\rk{\Iu{(n+1)q}- z T_{q,n}^\ad }\\
  &\qquad+(\ko{z}-z) \ek*{ v_{q,n}(-\alpha u_{n} -y_{0,n})^\ad  - (-\alpha u_n - y_{0,n}) v_{q,n}^\ad }\\
  &= \rk{\Iu{(n+1)q}-\ko{z}T_{q,n}}  H_{\at{n}} \rk{\Iu{(n+1)q}- z T_{q,n}^\ad } + (\ko{z}-z) \rk{ T_{q,n} H_{\at{n}} - H_{\at{n}} T_{q,n}^\ad  }\\
  &=H_{\at{n}}+\abs{z}^2 T_{q,n} H_{\at{n}} T_{q,n}^\ad -\ko{z} H_{\at{n}} T_{q,n}^\ad  - z T_{q,n} H_{\at{n}}\\
  &= \rk{\Iu{(n+1)q}- z T_{q,n}} H_{\at{n}} \rk{\Iu{(n+1)q}-\ko{z} T_{q,n}^\ad }
  = \ek*{R_{T_{q,n}}(z)}^\inv  H_{\at{n}} \ek*{R_{T_{q,n}}(z)}^\invad.
 \end{split}
\eeq 
 Obviously,
\begin{align}
 &v_{q,n} (\ko{z}-\alpha) f(\ko{z}) - (-\alpha u_n-y_{0,n}) +(z-\ko{z}) v_{q,n} \frac{(z-\alpha)f^\ad (\ko{z})-(\ko{z}-\alpha)f(\ko{z})}{z-\ko{z}}\notag\\
 &= v_{q,n}(z-\alpha) f^\ad (\ko{z}) - (-\alpha u_n - y_{0,n}),\label{Nr.VQF}\\
&(z-\ko{z})v_{q,n} \ek*{ (z-\alpha) f^\ad (\ko{z})v^\ad _{q,n}- (-\alpha u^\ad _n-y^\ad _{0,n}) } \notag\\
&\qquad- (z-\ko{z}) \ek*{ v_{q,n}(z-\alpha)f^\ad (\ko{z}) - (-\alpha u_n-y_{0,n}) } v^\ad _{q,n}\notag\\
  &= (\ko{z}-z) ( v_{q,n} (-\alpha u^\ad _n - y^\ad _{0,n}) - (- \alpha u_n-y_{0,n}) v^\ad _{q,n} ),\label{Nr.Z-ZQ} 
\end{align}
 and
\begin{multline*}
 (z-\alpha) f^\ad (\ko{z}) v^\ad _{q,n} - (-\alpha u^\ad _n-y^\ad _{0,n}) - \ek*{ (z-\alpha)f^\ad (\ko{z}) - (\ko{z}-\alpha) f(\ko{z}) } v^\ad _{q,n}\\
 = \ek*{ (z-\alpha) v_{q,n} f^\ad (\ko{z}) - (-\alpha u_n-y_{0,n}) }^\ad.
\end{multline*}
 Taking into account
\begin{multline*}
 P_{2n+1}^{[f]}(\ko{z})A^\ad _{2n+1}(z)\\
 =
 \begin{pmat}[{|}]
  H_{\at{n}} \ek{ R_{T_{q,n}}(\ko{z}) }^\invad& R_{T_{q,n}} (\ko{z}) [v_{q,n} (\ko{z}- \alpha) f(\ko{z})-(- \alpha u_n-y_{0,n})]\cr\-
  (z - \alpha)f^\ad (\ko{z}) v^\ad _{q,n}-(- \alpha u_n^\ad  - y^\ad _{0,n})&\frac{(\ko{z}-\alpha)f(\ko{z})-(z-\alpha)f^\ad (\ko{z})}{\ko{z} -z}\cr
 \end{pmat}  ,                                                                         
\end{multline*}
 we conclude
\begin{multline*}           
 A_{2n+1}(z) P_{2n+1}^{[f]}(\ko{z}) A_{2n+1}^\ad  (z)\\
 =
 \begin{pmat}[{|}]
  \ek{ R_{T_{q,n}}(\ko{z})}^\inv H_{\at{n}} \ek{ R_{T_{q,n}}(\ko{z}) }^\invad& v_{q,n} (\ko{z}- \alpha) f(\ko{z})-(- \alpha u_n-y_{0,n}) \cr\-
  (z - \alpha)f^\ad (\ko{z}) v^\ad _{q,n}-(- \alpha u_n^\ad  - y^\ad _{0,n})&\frac{(z-\alpha)f^\ad (\ko{z})-(\ko{z}-\alpha)f(\ko{z})}{z -\ko{z}}\cr
 \end{pmat}.
\end{multline*}
 In  view of \eqref{Nr.VQF}, \eqref{Nr.Z-ZQ}, and \eqref{Nr.FN2}, it follows
\[\begin{split}
  &B_{2n+1}(z) A_{2n+1}(z) P_{2n+1}^{[f]}(\ko{z}) A_{2n+1}^\ad  (z) B^\ad _{2n+1}(z)\\
  &=
  \begin{pmat}[{|}]
   \begin{gathered}
    \ek{R_{T_{q,n}}(\ko{z})}^\inv H_{\at{n}}\ek{ R_{T_{q,n}}(\ko{z}) }^\invad\\
    +(z-\ko{z})v_{q,n}\ek{(z-\alpha)f^\ad (\ko{z})v^\ad _{q,n}-(- \alpha u_n^\ad  - y^\ad _{0,n})}\\
    -(z-\ko{z})\ek{v_{q,n}(z - \alpha)f^\ad (\ko{z})-(- \alpha u_n - y_{0,n})}v^\ad _{q,n}   
   \end{gathered}&
   \begin{gathered}
    v_{q,n}(z-\alpha)f^\ad (\ko{z})\\
    -(- \alpha u_n-y_{0,n})
   \end{gathered}\cr\-
   \begin{gathered}
    (z - \alpha)f^\ad (\ko{z}) v^\ad _{q,n}-(- \alpha u_n^\ad  - y^\ad _{0,n})\\
    -\ek{ (z- \alpha)f^\ad (\ko{z})-(\ko{z}-\alpha)f(\ko{z})} v^\ad _{q,n}
   \end{gathered}&\frac{(z-\alpha)f^\ad (\ko{z})-(\ko{z}-\alpha)f(\ko{z})}{z -\ko{z}}\cr
 \end{pmat}\\
 &=
 \begin{pmat}[{|}]
  \begin{gathered}
   \ek{R_{T_{q,n}}(\ko{z})}^\inv H_{\at{n}}\ek{ R_{T_{q,n}}(\ko{z}) }^\invad\\
   +(\ko{z}-z) \ek{ v_{q,n} (- \alpha u_n - y_{0,n})^\ad  - (- \alpha u_n - y_{0,n}) v^\ad _{q,n} }
  \end{gathered}&
  \begin{gathered}
   v_{q,n}(z-\alpha)f^\ad (\ko{z})\\
   -(- \alpha u_n-y_{0,n})
  \end{gathered}\cr\-
  \ek{ v_{q,n}(z-\alpha)f^\ad (\ko{z})-(- \alpha u_n-y_{0,n})}^\ad&\frac{(z-\alpha)f^\ad (\ko{z})-\ek{(z-\alpha)f^\ad (\ko{z})}^\ad }{z -\ko{z}}\cr
 \end{pmat}\\
 &=
 \begin{pmat}[{|}]
  \ek{R_{T_{q,n}}(z)}^\inv H_{\at{n}}\ek{ R_{T_{q,n}}(z) }^\invad&  v_{q,n}(z-\alpha)f^\ad (\ko{z})-(- \alpha u_n-y_{0,n})\cr\-
  \ek{ v_{q,n}(z-\alpha)f^\ad (\ko{z})-(- \alpha u_n-y_{0,n})}^\ad&\frac{(z-\alpha)f^\ad (\ko{z})-\ek{(z-\alpha)f^\ad (\ko{z})}^\ad }{z -\ko{z}}\cr
 \end{pmat}.
\end{split}\]
 Consequently,
\[\begin{split}
 &X_{2n+1}(z) P_{2n+1}^{[f]}(\ko{z}) X_{2n+1}^\ad (z)\\
 &=
 \begin{pmat}[{|}]
  H_{\at{n}}&
  \begin{gathered}
   R_{T_{q,n}}(z) [ v_{q,n}(z-\alpha)f^\ad (\ko{z})\\
   \qquad-(- \alpha u_n-y_{0,n}) ]
  \end{gathered}\cr\-
  \rk{ R_{T_{q,n}}(z) \ek{v_{q,n}(z-\alpha)f^\ad (\ko{z})-(- \alpha u_n-y_{0,n})} }^\ad&\frac{(z-\alpha)f^\ad (\ko{z})-\ek*{(z-\alpha)f^\ad (\ko{z})}^\ad }{z -\ko{z}}\cr
 \end{pmat}\\
 &= P^{[f^\vee]}_{2n+1}(z).\qedhere
\end{split}\]
\end{proof}

 In the following, we will use \(\Borpq \) to denote the \(\sigma\)\nobreakdash-algebra of all Borel subsets of \(\Cpq \).
 Let \((\Omega, \gA)\) be a measurable space and let \(\mu \in \Mggqa{\Omega,\gA}\).
 Then \(\mu\) is absolutely continuous with respect to its trace measure \(\tau \defeq\tr\mu\).
 Let \(\mu'_\tau\) be a version of the Radon--Nikodym derivative of \(\mu\) with respect to \(\tau\).
 A pair \([\Phi,\Psi]\) of an \(\gA\)\nobreakdash-\(\Borpq \)\nobreakdash-measurable mapping \(\Phi\colon\Omega\to\Cpq \) and an \(\gA\)\nobreakdash-\(\Boruu{r}{q}\)\nobreakdash-measurable mapping \(\Psi\colon\Omega\to\Coo{r}{q}\) is called left-integrable with respect to \(\mu\) if \(\Phi\mu'_\tau\Psi^\ad  \) belongs to  \([\LoaaaC{1}{\Omega}{\gA}{\tau}]^\xx{p}{r}\).
 In this case, the corresponding integral is defined by \(\int_\Omega \Phi\dif\mu  \Psi^\ad \defeq \int_\Omega\Phi\mu'_\tau \Psi^\ad  \dif\tau\) and we also use the notation \(\int_\Omega\Phi(\omega)\mu(\dif\omega)\Psi^\ad (\omega)\) for it.
 In the following, when we write such an integral \(\int_\Omega \Phi\dif\mu  \Psi^\ad \), then we also mean that the pair \([\Phi, \Psi]\) is left-integrable with respect to \(\mu\).
 By \(\pqLsaaaC{\Omega}{\gA}{\mu}\) we denote the set of all \(\gA\)\nobreakdash-\(\Borpq \)\nobreakdash-measurable mappings for which the pair \([\Phi, \Phi]\) is left-integrable which respect to \(\mu\).
 Furthermore, for each subset \(A\) of \(\Omega\), we will use \(1_A\) to denote the indicator function of the set \(A\) (defined on \(\Omega\)).

\begin{rem}\label{D1229}
 Let \(\Omega\in\BorR  \setminus\set{\emptyset}\), let \(m\in\NO \), and let \(\sigma\in\MggqO \).
 In view of \rlem{B26}, it is readily checked that \(\sigma\) belongs to \(\Mgguqa{2m}{\Omega}\) if and only if \(\Rstr_\Omega E_{q,m}\) belongs to \(\aaLsaaaC{(m+1) q}{q}{\Omega}{\BorO}{\sigma}\), where \(E_{q,m}\) is given by \eqref{3.64.-1}.
 If \(\sigma\in\Mgguqa{2m}{\Omega}\), then \rlem{B26} also shows that, for each \(n\in\NO \) with \(n\le m\), the block Hankel matrix \(H_n^{[\sigma]} \defeq\mat{s_{j+k}^{[\sigma]}}_{j,k=0}^n\) admits the integral representation
    \beql{Mi,L3.9-28}
      H_n^{[\sigma]} 
      = \int_\Omega E_{q,n}(t)\sigma(\dif t) E_{q,n}^\ad (t).
    \eeq  
\end{rem}

 If \(\alpha\in\R \), if \(\kappa\in\Ninf \), and if \(\sigma\in\Mgguqa{\kappa}{\rhl}\), then let \(H_{\at{n}}^{[\sigma]} \defeq\mat{s_{\at{j+k}}^{[\sigma]}}_{j,k=0}^n\) for each \(n\in\NO\) with \(2n+1 \le \kappa\).

\begin{rem} \label{Mi,L3.6}
 Let \(\alpha\in\R\) and let \(\sigma \in \Mgguqa{1}{\rhl}\).
 Using \rprop{M24} and \rrem{M22}, it is readily checked that the following statements hold true (for details see~\cite[Lemma~5.7]{Sch11} and~\zitaa{MP11}{\clem{3.12}}):
\begin{enui}
 \item\label{Mi,L3.6.a} The function \(\phi\colon\rhl\to\Cqq\) defined by \(\phi(t) \defeq \sqrt{t-\alpha}\Iq \) belongs to \(\aaLsaaaC{q}{q}{\rhl}{\BorK}{\sigma}\) and  \(\sigma^\#\colon\BorK \to \Cqq\) given by
  \beql{S1421N}
   \sigma^\#(B)
   \defeq \int_{B}(\sqrt{t-\alpha}\Iq)\sigma(\dif t)(\sqrt{t-\alpha}\Iq)^\ad 
  \eeq 
 belongs to \(\MggqK \).
 \item\label{Mi,L3.6.b} If \(n\in\NO \) and if \(\sigma\in\Mgguqa{2n+1}{\rhl}\), then 
   \beql{D2012}
      H_{\at{n}}^{[\sigma]}
      = \int_{\rhl}\ek*{\sqrt{t-\alpha}E_{q,n}(t)}\sigma(\dif t)\ek*{\sqrt{t-\alpha}E_{q,n}(t)}^\ad.
       \eeq 
 \item\label{Mi,L3.6.c} If \(n \in \NO \) and if \(\sigma^\#\in \Mgguqa{2n}{\rhl}\), then \(\sigma\) belongs to \(\Mgguqa{2n+1}{\rhl} \) and furthermore \(s_j^{[\sigma^\#]} =s_{j+1}^{[\sigma]}-\alpha s_j^{[\sigma]}\) for all  \(j\in\mn{0}{2n}\) and \(H_n^{[\sigma^\#]}= H_{\at{n}}^{[\sigma]}\).
\end{enui}
\end{rem}

 The next proposition shows that each solution of problem~\mprob{\rhl}{m}{\lleq}  fulfills necessarily the system of the corresponding Potapov's fundamental matrix inequalities.

\begin{prop}   \label{lemM4213-2}
 Let  \(\alpha \in \R\), let  \(m \in \NO \), and let \(\seqs{m}\) be a sequence of complex \tqqa{matrices} such that \(\MggqKskg{m} \neq \emptyset\).
 Let  \(\sigma \in \MggqKskg{m}\)  and let \(S\) be the  \taSto{\sigma}.
 For each \(j \in \mn{0}{m}\), let \(\suo{j}{\sigma}\) be given by \eqref{FR1148}.
Then
  \[
    P^{[S]}_{2n}(z)
    =\int_{\rhl}
     \begin{bmatrix}         E_{q,n}(t) \\       \frac{1}{\ko {t-z}} \Iq     \end{bmatrix}     \sigma(\dif t) 
     \begin{bmatrix}         E_{q,n}(t) \\       \frac{1}{\ko {t-z}} \Iq     \end{bmatrix}^\ad+
     \begin{bmatrix}         \sv_{q,n} \\       \Oqq     \end{bmatrix}     ( s_{2n}-s_{2n}^{[\sigma]} ) 
     \begin{bmatrix}         \sv_{q,n} \\       \Oqq     \end{bmatrix}^\ad 
  \]
 for each  \(n\in\NO \) with \(2n\le m\) and all \(z\in\C\setminus\R\), where \(E_{q,n}\) is given by \eqref{3.64.-1}, and 
  \begin{multline*}
    P^{[S]}_{2n+1}(z)
    = \int_{\rhl}      \rk*{ \sqrt{t-\alpha}     \begin{bmatrix}         E_{q,n}(t) \\       \frac{1}{\ko {t-z}} \Iq     \end{bmatrix}}     \sigma(\dif t)
    \rk*{ \sqrt{t-\alpha}        \begin{bmatrix}         E_{q,n}(t) \\       \frac{1}{\ko {t-z}} \Iq     \end{bmatrix}         }^\ad\\ +\begin{bmatrix}         \sv_{q,n} \\       \Oqq     \end{bmatrix}     ( s_{2n+1}-s_{2n+1}^{[\sigma]} ) 
     \begin{bmatrix}         \sv_{q,n} \\       \Oqq     \end{bmatrix}^\ad      
  \end{multline*}
 for each \(n\in\NO \) with \(2n + 1\le m\) and all \(z\in\C\setminus\R\).
 In particular, for every choice of  \(k\in \mn{0}{m}\) and \(z \in\C\setminus\R\), the matrix \(P^{[S]}_{k}(z)\) is \tnnH{}.
 
\end{prop}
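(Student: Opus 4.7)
The plan is to verify the two claimed integral formulas blockwise, then read off non-negativity. Using that $S$ is the $\alpha$-Stieltjes transform of $\sigma$ (Theorem~\ref{T1316D}), write $S(z)=\int_{\rhl}(t-z)^{-1}\sigma(\dif t)$, $S^\ad(z)=\int_{\rhl}(t-\bar z)^{-1}\sigma(\dif t)$, and then inspect each of the four blocks appearing in Notation~\ref{Note44}.

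For the even case, the $(1,1)$ block of the RHS integral equals $H_n^{[\sigma]}$ by \rrem{D1229}. Because $\sigma\in\MggqKskg{m}$, one has $s_j=s_j^{[\sigma]}$ for $j\in\mn{0}{m-1}$ and $2n\le m$, so $H_n-H_n^{[\sigma]}$ vanishes unless $m=2n$, in which case it equals $\mathfrak{v}_{q,n}(s_{2n}-s_{2n}^{[\sigma]})\mathfrak{v}_{q,n}^\ad$; this is exactly the correction term. The $(2,2)$ block uses the standard resolvent identity $(t-z)^{-1}-(t-\bar z)^{-1}=(z-\bar z)/[(t-z)(t-\bar z)]$, giving $\frac{S(z)-S^\ad(z)}{z-\bar z}=\int_{\rhl}\frac{1}{\overline{t-z}}\sigma(\dif t)\frac{1}{\overline{t-z}}^{\!\!\ad}$. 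The off-diagonal block amounts to establishing
\[
 R_{T_{q,n}}(z)[v_{q,n}S(z)-u_n]=\int_{\rhl}\frac{E_{q,n}(t)}{t-z}\,\sigma(\dif t).
\]
Since $R_{T_{q,n}}(z)v_{q,n}=E_{q,n}(z)$, this reduces to $R_{T_{q,n}}(z)u_n=-\int_{\rhl}\frac{E_{q,n}(z)-E_{q,n}(t)}{t-z}\sigma(\dif t)$. Expand the right-hand side via $\frac{z^j-t^j}{z-t}=\sum_{k=0}^{j-1}z^k t^{j-1-k}$ and compare blockwise with the direct expansion $R_{T_{q,n}}(z)u_n=\sum_{j=0}^n z^j T_{q,n}^j u_n$ (using $u_n=-\col(s_{j-1})_{j=0}^n$ with $s_{-1}=\Opq$); the equality holds because $s_j=s_j^{[\sigma]}$ for $j\le n-1\le m-1$.

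For the odd case, the same template is applied to the measure $\sigma^\#=(t-\alpha)\sigma$ of \rrem{Mi,L3.6}. The $(1,1)$ block is matched against $\int_{\rhl}(t-\alpha)E_{q,n}(t)\sigma(\dif t)E_{q,n}^\ad(t)=H_n^{[\sigma^\#]}=H_{\at n}^{[\sigma]}$ by \rrem{Mi,L3.6}\eqref{Mi,L3.6.b}\eqref{Mi,L3.6.c}, with residual $\mathfrak{v}_{q,n}(s_{2n+1}-s_{2n+1}^{[\sigma]})\mathfrak{v}_{q,n}^\ad$ (nonzero only when $m=2n+1$). The $(2,2)$ block rests on the refined resolvent identity
\[
 \frac{z-\alpha}{t-z}-\frac{\bar z-\alpha}{t-\bar z}=\frac{(z-\bar z)(t-\alpha)}{(t-z)(t-\bar z)},
\]
which yields $\frac{(z-\alpha)S(z)-[(z-\alpha)S(z)]^\ad}{z-\bar z}=\int_{\rhl}\frac{t-\alpha}{|t-z|^2}\sigma(\dif t)$. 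The off-diagonal block is reduced to the even case by the decomposition $\frac{t-\alpha}{t-z}=1+\frac{z-\alpha}{t-z}$, producing the sum $y_{0,n}^{[\sigma]}+(z-\alpha)\int\frac{E_{q,n}(t)}{t-z}\sigma(\dif t)$. Matching with $R_{T_{q,n}}(z)[v_{q,n}(z-\alpha)S(z)+\alpha u_n+y_{0,n}]$ collapses to the combinatorial identity $R_{T_{q,n}}(z)(y_{0,n}+zu_n)=y_{0,n}$, verifiable by a telescoping blockwise computation, and again one uses $s_j=s_j^{[\sigma]}$ for $j\le n\le m-1$.

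Once both integral representations are in hand, non-negativity is immediate: each integral is of the form $\int_{\rhl} A(t)\sigma(\dif t)A(t)^\ad$ with $\sigma$ taking values in $\Cggq$, and the rank-one correction $\mathfrak{v}_{q,n}(s_k-s_k^{[\sigma]})\mathfrak{v}_{q,n}^\ad$ is non-negative by the defining condition $s_m-s_m^{[\sigma]}\in\Cggq$ of $\MggqKskg{m}$. Thus $P_k^{[S]}(z)\in\Cggo{(m_k+2)q}$ for every $k\in\mn{0}{m}$. The main obstacle is bookkeeping in the odd case: carefully verifying that the extra $(t-\alpha)$ and $(z-\alpha)$ factors, combined with the shift in $H_{\at n}$ and $u_n, y_{0,n}$, conspire exactly so that the off-diagonal block reproduces the Stieltjes-transform integral against $\sqrt{t-\alpha}E_{q,n}(t)$. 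The crux is the identity $R_{T_{q,n}}(z)(y_{0,n}+zu_n)=y_{0,n}$, after which all pieces fall into place.
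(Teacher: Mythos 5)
Your overall strategy is precisely the ``standard integration-theoretic'' argument the paper alludes to when it omits the proof (pointing to \rlem{B26} and \rrem{M22}): blockwise comparison, the \((1,1)\)~blocks via \rrem{D1229} and \rrem{Mi,L3.6}, the resolvent identities for the \((2,2)\)~blocks, the reduction of the odd off-diagonal block to the even one via \(\frac{t-\alpha}{t-z}=1+\frac{z-\alpha}{t-z}\), and the key identity \(R_{T_{q,n}}(z)(y_{0,n}+zu_n)=y_{0,n}\) (equivalently \((\Iu{(n+1)q}-zT_{q,n})y_{0,n}=y_{0,n}+zu_n\)) together with \(y_{0,n}=y_{0,n}^{[\sigma]}\). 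All of these structural steps are correct, as is the final nonnegativity argument.

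There is, however, a sign error in your reduction of the even off-diagonal block. Since \(R_{T_{q,n}}(z)v_{q,n}S(z)=E_{q,n}(z)S(z)=\int_{\rhl}\frac{E_{q,n}(z)}{t-z}\,\sigma(\dif t)\), the identity that has to be verified is
\[
 R_{T_{q,n}}(z)\,u_n
 =\int_{\rhl}\frac{E_{q,n}(z)-E_{q,n}(t)}{t-z}\,\sigma(\dif t),
\]
\emph{without} the extra minus sign you wrote. Indeed, the \(j\)th block of the right-hand side equals \(\int_{\rhl}\frac{z^j-t^j}{t-z}\,\sigma(\dif t)=-\sum_{k=0}^{j-1}z^{j-1-k}\suo{k}{\sigma}\), while the \(j\)th block of \(R_{T_{q,n}}(z)u_n=-\sum_{\ell=0}^{n}z^{\ell}T_{q,n}^{\ell}\,\col(s_{i-1})_{i=0}^{n}\) equals \(-\sum_{k=0}^{j-1}z^{j-1-k}s_{k}\); these coincide because \(s_{k}=\suo{k}{\sigma}\) for \(k\le n-1\le m-1\). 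With the minus sign as you stated it, the two sides you propose to compare differ by a factor \(-1\), so the blockwise verification you describe would not close. This is a local slip rather than a structural flaw: once the sign is corrected, the rest of the argument — including nonnegativity, since each summand is of the form \(\int_{\rhl}A(t)\,\sigma(\dif t)\,A^\ad(t)\) and the correction term involves \(s_{k}-\suo{k}{\sigma}\), which is zero for \(k<m\) and \tnnH{} for \(k=m\) by the definition of \(\MggqKskg{m}\) — goes through.
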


 \rprop{lemM4213-2} can be proved using standard arguments of integration theory of \tnnH{} measures (\rlem{B26} and \rrem{M22}).
 We omit the details.

\section{Some integral estimates for the scalar case}\label{S1639}
 In this section, we state some integral representations and integral estimates in the scalar case \(q=1\).
 
\begin{lem} \label{MB2210}
 Let \(\alpha\in\R\) and let \(F\in\RF{1}\) with Nevanlinna parametrization \((A,B,\nu)\) and spectral measure \(\mu\).
 Then:
\begin{enui}
 \item\label{MB2210.a} For each \(w\in\uhp\), the integral \(\int_\R\abs{t-w}^{-2}\mu (\dif t)\)  is finite and
   \beql{N1943D}
   \Im F(w) 
   = (\Im w)\ek*{ B + \int_\R \frac{1}{\abs{t-w}^2} \mu (\dif t)}.
    \eeq 
 \item\label{MB2210.b} For each \(w\in\uhp\), the integral    \(\int_\R\abs{ t\ek{\abs{t-w}^{-2}-\rk{1+t^2}^\inv}-\alpha\abs{t-w}^{-2}}\mu(\dif t)\)  is finite and \(F^\#\colon\uhp\to\C\) defined by
   \beql{Nr.FLG}
    F^\#(w)
    \defeq (w-\alpha)F(w)
   \eeq 
  satisfies, for each \(w\in \uhp\), the equation
   \begin{multline}\label{N2032D}
    \Im F^\#(w) \\
    =(\Im w)\rk*{ A+B(2\Re w-\alpha) +\int_\R\ek*{ t\rk*{ \frac{1}{\abs{t-w}^2} - \frac{1}{1+t^2}}-\frac{\alpha}{\abs{t-w}^2}}\mu (\dif t)}.
   \end{multline}
 \end{enui}   
\end{lem}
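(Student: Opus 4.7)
The plan is to start from the Nevanlinna integral representation \eqref{N31DN} for $F\in\RF{1}$ and the definition \eqref{N1052D} of the spectral measure, which gives $\nu(\dif t)=(1+t^{2})^{-1}\mu(\dif t)$. Taking imaginary parts of the integrand and converting integrals from $\nu$ to $\mu$ should yield both formulas. I will handle the existence and finiteness of the integrals along the way, exploiting that the only singular contributions come from the behavior of the integrand near $\pm\infty$, controlled by the finite $\nu$-mass.

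For part \eqref{MB2210.a}, I would first write
\[
 \frac{1+tw}{t-w}=\frac{(1+tw)(t-\ko{w})}{|t-w|^{2}}
\]
and compute that $\Im[(1+tw)(t-\ko w)]=(1+t^{2})\Im w$ by expanding. This gives $\Im\bigl(\frac{1+tw}{t-w}\bigr)=\frac{(1+t^{2})\Im w}{|t-w|^{2}}$. Since $A$ is real and $\Im(wB)=B\,\Im w$, taking imaginary parts of \eqref{N31DN} leads to $\Im F(w)=(\Im w)\bigl[B+\int_{\R}\frac{1+t^{2}}{|t-w|^{2}}\nu(\dif t)\bigr]$. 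Substituting $\nu(\dif t)=\mu(\dif t)/(1+t^{2})$ produces \eqref{N1943D}. Finiteness of $\int|t-w|^{-2}\mu(\dif t)$ is automatic: both summands on the right of \eqref{N1943D} are nonnegative and $\Im F(w)$ is finite.

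For part \eqref{MB2210.b}, I would split $F^{\#}(w)=(w-\alpha)A+B\,w(w-\alpha)+\int_{\R}(w-\alpha)\frac{1+tw}{t-w}\nu(\dif t)$ and compute imaginary parts termwise. The first two terms contribute $A\,\Im w$ and $B\,\Im w\,(2\Re w-\alpha)$, using $\Im(w^{2})=2\Re w\,\Im w$. For the integral, writing $w-\alpha=(\Re w-\alpha)+\iu\Im w$ and combining with the real and imaginary parts of $\frac{1+tw}{t-w}$ already computed, one obtains after collecting terms
\[
 \Im\!\left[(w-\alpha)\tfrac{1+tw}{t-w}\right]=\tfrac{\Im w}{|t-w|^{2}}\bigl[(\Re w-\alpha)(1+t^{2})+t-\Re w+t^{2}\Re w-t|w|^{2}\bigr].
\]
The algebraic identity $2t\Re w-|w|^{2}=t^{2}-|t-w|^{2}$ collapses the bracket to $t(1+t^{2})-t|t-w|^{2}-\alpha(1+t^{2})$; converting from $\nu$ to $\mu$ then yields the integrand in \eqref{N2032D}. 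This algebraic consolidation is the step that most easily goes wrong and is what I view as the main obstacle.

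It remains to verify integrability of the $\mu$-integrand in \eqref{N2032D}. Writing $\frac{1}{|t-w|^{2}}-\frac{1}{1+t^{2}}=\frac{1+2t\Re w-|w|^{2}}{|t-w|^{2}(1+t^{2})}$, a triangle-inequality split gives
\[
 \left|t\!\left(\tfrac{1}{|t-w|^{2}}-\tfrac{1}{1+t^{2}}\right)\right|\lleq\tfrac{|t|(1+|w|^{2})}{|t-w|^{2}(1+t^{2})}+\tfrac{2t^{2}|\Re w|}{|t-w|^{2}(1+t^{2})}.
\]
Using the elementary bounds $|t|/(1+t^{2})\lleq 1/2$ and $t^{2}/(1+t^{2})\lleq 1$, both summands are dominated by a constant multiple of $|t-w|^{-2}$; together with the bound $|\alpha|/|t-w|^{2}$, integrability reduces to $\int|t-w|^{-2}\mu(\dif t)<\infty$, which is exactly part \eqref{MB2210.a}. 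This finishes the proof.
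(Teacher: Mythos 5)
Your proof is correct and follows essentially the same route as the paper: take imaginary parts of the Herglotz--Nevanlinna integral representation termwise and do the algebra, with finiteness of the integrals extracted from the $L^1$-ness of the integrand. The only cosmetic difference is that you start from the $\nu$-form \eqref{N31DN} and convert to $\mu$ via $\mu(\dif t)=(1+t^2)\nu(\dif t)$, whereas the paper invokes the equivalent $\mu$-form $F(w)=A+Bw+\int_\R\bigl(\frac{1}{t-w}-\frac{t}{1+t^2}\bigr)\mu(\dif t)$ directly and works with $\psi_w(t)=\frac{1}{t-w}-\frac{t}{1+t^2}$; the two are linked by the identity $\frac{1+tw}{(t-w)(1+t^2)}=\frac{1}{t-w}-\frac{t}{1+t^2}$.
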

\begin{proof}  
 In view of 
\[
 \int_\R \frac{1}{1+t^2} \mu (\dif t)
 =\int_\R \frac{1}{1+t^2}(1+t^2)\nu(\dif t)
 = \nu(\R) 
 < \infp,
\]
 we see that, for each \(w\in\uhp\), the function \(\psi_w\colon\R\to\C\) given by  the equation \(\psi_w(t)\defeq\rk{t-w}^\inv-t\rk{1+t^2}^\inv\) belongs to \(\LoaaaC{1}{\R}{\BorR}{\mu}\).
 By virtue of a result due to R.~Nevanlinna (see, \teg{}~\zitaa{MR0458081}{\cthm{A.2}}), for each  \(w\in\uhp\), we have
\beql{Nr.222}
       F(w)           =A+Bw+\int_\R\rk*{\frac{1}{t-w}-\frac{t}{1+t^2}}\mu (\dif t).
      \eeq 
 \eqref{MB2210.a} Let \(w\in\uhp\).
 For each \(t\in\R\), then \(\Im \psi_w(t)= (\Im w)\abs{t-w}^{-2}\).
 Thus, 
\begin{equation*}
 \begin{split}
  \int_\R \abs*{\frac{1}{\abs{t-w}^2}}\mu (\dif t)
  =  \frac{1}{\Im w}  \int_\R \Im  \psi_w(t)  \mu(\dif t) 
 &=  \frac{1}{\Im w}  \Im \ek*{\int_\R   \psi_w(t)  \mu (\dif t)}\\
 & \leq \frac{1}{\Im w}\int_\R\abs*{ \psi_w(t) } \mu (\dif t) 
 < \infp
 \end{split}
 \end{equation*}
 and 
\beql{Nr.45-1}
 \Im\ek*{\int_\R \psi_w(t) \mu (\dif t)}
 =\int_\R \Im \psi_w(t) \mu(\dif t)    
 = (\Im w)\int_\R \frac{1}{\abs{t-w}^2} \mu (\dif t).
\eeq 
 Because of \(A\in\R \) and \(B\in[0,\infp)\), we have \(\Im A=0\) and \(\Im(wB)=(\Im w) B\).
 Consequently, from  \eqref{Nr.222}, and \eqref{Nr.45-1} we get then \eqref{N1943D}.
 
 \eqref{MB2210.b} Let \(w\in\uhp\).
 In view of \eqref{Nr.FLG} and \eqref{Nr.222}, we obtain
\beql{M,S60-2}
 F^\#(w)
 =A(w-\alpha)+Bw(w-\alpha)+\int_\R\ek*{\frac{w-\alpha}{t-w}-\frac{t(w-\alpha)}{1+t^2}}\mu (\dif t).
\eeq 
 For each  \(t\in\R\), we see that \((w-\alpha)\psi_w(t)=\rk{w-\alpha}/\rk{t-w}-t(w-\alpha)/\rk{1+t^2}\) holds true.
 Hence, \((w-\alpha)\psi_w \in \LoaaaC{1}{\R}{\BorR}{\mu}\) and, for each \(t\in\R\), we have furthermore
\[
 2\iu  \Im\ek*{(w-\alpha)\psi_w(t)}
 =2\iu (\Im w)\ek*{ t\rk*{\frac{1}{\abs{t-w}^2} - \frac{1}{1+t^2}}-\frac{\alpha}{\abs{t-w}^2}}.
\]
 This implies
\begin{equation*}
 \begin{split}
 \int_\R \abs*{ t\rk*{\frac{1}{\abs{t-w}^2}-\frac{1}{1+t^2}}-\frac{\alpha}{\abs{t-w}^2}}\mu (\dif t)
 &= \frac{1}{\Im w} \int_\R  \abs*{\Im\ek*{(w-\alpha)\psi_w(t)}} \mu(\dif t)\\
 &\leq \frac{1}{\Im w}\int_\R\abs*{ (w-\alpha)\psi_w(t) }\mu (\dif t)  
 < \infp
 \end{split}
 \end{equation*}
 and
\beql{M,S61-2}
 \begin{split}
 \Im\ek*{\int_\R (w-\alpha)\psi_w(t) \mu (\dif t)}
 &=\int_\R \Im\ek*{(w-\alpha)\psi_w(t)}\mu (\dif t)\\
 &=(\Im w)\int_\R\ek*{ t\rk*{ \frac{1}{\abs{t-w}^2} - \frac{1}{1+t^2}}-\frac{\alpha}{\abs{t-w}^2}}\mu (\dif t).
 \end{split}
\eeq 
 Obviously,  \(\Im(w^2)= 2(\Re w)(\Im w)\).
 Consequently,  \(\Im[w(w-\alpha)] =\Im(w^2)-\Im(w\alpha)=(\Im w)(2\Re w-\alpha)\).
 Thus,  \(\Im[Bw(w-\alpha)]=B (\Im w)(2\Re w-\alpha)\).
 Then, by virtue of  \eqref{M,S60-2}, and \eqref{M,S61-2}, we get
\begin{equation*}
 \begin{split}
 &\Im F^\# (w)   
 =\Im\rk*{ A(w-\alpha)+Bw(w-\alpha)+\int_\R\ek*{\frac{w-\alpha}{t-w}-\frac{t(w-\alpha)}{1+t^2}}\mu (\dif t)} \\
 &= \Im\ek*{A (w-\alpha)} + \Im\ek*{Bw(w-\alpha)} + \Im\rk*{\int_\R\ek*{\frac{w-\alpha}{t-w}-\frac{t(w-\alpha)}{1+t^2}}\mu (\dif t)}\\
 &=A \Im w+B(\Im w)(2\Re w-\alpha)+(\Im w)\int_\R\ek*{ t\rk*{\frac{1}{\abs{t-w}^2} - \frac{1}{1+t^2}}-\frac{\alpha}{\abs{t-w}^2}}\mu (\dif t).
\end{split}
\end{equation*}
 Thus, \eqref{N2032D} follows.
\end{proof}

\begin{rem} \label{MB2212} 
 Let \(\alpha \in \R\) and let \(F \in \RF{1}\) with spectral measure \(\mu\).
 Further, let  \(\ell_1, \ell_2\in\R \) be such that  \(\ell_1<\ell_2 <\alpha\).
 Then it is readily checked that for every choice of \(a\in (-\infty, \ell_1)\) and \(b\in (\ell_2, \infp)\), there exists a \(K_{a,b}\in\R \) such that, for each \(x\in [\ell_1,\ell_2]\), the inequality  \(\int_{\R \setminus (a,b)}\rk{t-x}^{-2}\mu (\dif t)<K_{a,b}\) holds true.
\end{rem}

\begin{rem}  \label{Mi2213}
 Let \(r,s\in\R\).
 Then it is readily checked that the following statements hold true (for details, see also~\cite[Lemma~3.7]{Sch11}):
\begin{enui}
 \item If \(r < s\) and \(s\neq0\), then there exists a number  \(a\in(-\infty,r)\cap(-\infty,0)\) such that
 \beql{N948D}
  \abs*{t\ek*{\frac{1}{(t-x)^2+y^2}-\frac{1}{1+t^2}}} 
  <\rk*{2+\abs*{\frac{r}{s}}}\cdot\abs*{t\ek*{\frac{1}{(t-s)^2+1}-\frac{1}{1+t^2}}}
 \eeq 
 is valid for every choice of  \(x\in[r,s]\) and \(y\in(0,1)\) and \(t\in(-\infty,a]\).
 \item If \(s <  r\) and \(r\neq0\), then there exists a  \(b\in(r,\infp)\cap(0,\infp)\) such that, for every choice of  \(x\in[s,r]\) and \(y\in(0,1)\) and \(t\in[b,\infp)\), inequality \eqref{N948D} holds true.
\end{enui}
\end{rem}

\begin{lem} \label{MB2214}
 Let \(\alpha\in\R\) and let \(F\in\RF{1}\) with spectral measure \(\mu\).
 Further, let \(\ell_1\) and \(\ell_2\) be real numbers with \(\ell_1 < \ell_2 < \alpha\).
 Then there are real numbers \(a\), \(b\), and \(C\) with \(a<\ell_1\) and \(\ell_2 < b<\alpha\) such that
\[
 \int_{\R \setminus (a,b)}\abs*{ t\ek*{\frac{1}{(t-x)^2+y^2}-\frac{1}{1+t^2}}-\frac{\alpha}{(t-x)^2+y^2}}\mu (\dif t) 
 < C
\]
 holds true for every choice of  \(x\in[\ell_1,\ell_2]\) and  \(y\in(0,1)\).
\end{lem}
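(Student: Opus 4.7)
Split the integrand via the triangle inequality as
\[
 \left| t\left[\frac{1}{(t-x)^2+y^2}-\frac{1}{1+t^2}\right]-\frac{\alpha}{(t-x)^2+y^2}\right|
 \le \left| t\left[\frac{1}{(t-x)^2+y^2}-\frac{1}{1+t^2}\right]\right|+\frac{|\alpha|}{(t-x)^2+y^2},
\]
and bound the $\mu$-integral of each summand over $\mathbb{R}\setminus(a,b)$ uniformly in $x\in[\ell_1,\ell_2]$ and $y\in(0,1)$. For the second summand, since $(t-x)^2+y^2\ge(t-x)^2$, \rrem{MB2212} supplies, for any $a<\ell_1$ and $b>\ell_2$, a constant $K_{a,b}$ with $\int_{\mathbb{R}\setminus(a,b)}|\alpha|(t-x)^{-2}\,\mu(\dif t)\le|\alpha|K_{a,b}$ uniformly in $x$.

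For the first summand I would first enlarge $[\ell_1,\ell_2]$ to some $[\ell_1',\ell_2']\supseteq[\ell_1,\ell_2]$ with $\ell_1',\ell_2'\ne 0$ and $\ell_2'<\alpha$ (always possible, since $\ell_2<\alpha$), so that the hypotheses of \rrem{Mi2213} are met. \rrem{Mi2213}, part (i), with $r:=\ell_1'$ and $s:=\ell_2'$, then produces some $a_1<\ell_1'$ with $a_1<0$ such that, on $(-\infty,a_1]$,
\[
 \left| t\left[\frac{1}{(t-x)^2+y^2}-\frac{1}{1+t^2}\right]\right|
 < \left(2+\abs*{\tfrac{\ell_1'}{\ell_2'}}\right) \left| t\left[\frac{1}{(t-\ell_2')^2+1}-\frac{1}{1+t^2}\right]\right|
\]
uniformly in $x\in[\ell_1,\ell_2]$ and $y\in(0,1)$; symmetrically, \rrem{Mi2213}, part (ii), with $r:=\ell_2'$ and $s:=\ell_1'$, yields some $b_1>\ell_2'$ with $b_1>0$ giving the analogous bound on $[b_1,\infty)$, now with $(t-\ell_1')^2+1$ in place of $(t-\ell_2')^2+1$. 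Both comparison functions are $\mu$-integrable over all of $\mathbb{R}$: taking $w:=\ell_j'+\iu$ in \rlem{MB2210}, part (b), and combining with part (a) via the triangle inequality yields $\int_\mathbb{R}\bigl|t[|t-w|^{-2}-(1+t^2)^{-1}]\bigr|\,\mu(\dif t)<\infty$.

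It remains to choose $a,b,C$. Set $a:=a_1$ (so $a<\ell_1'\le\ell_1$) and pick any $b\in(\ell_2,\alpha)$. If $b\ge b_1$, the bound on $[b_1,\infty)$ already covers $[b,\infty)$. Otherwise, split $[b,\infty)=[b,b_1]\cup[b_1,\infty)$, handle the tail via the bound above, and observe that on the compact piece $[b,b_1]$ the estimate $|t-x|\ge b-\ell_2>0$ together with $|t|\le b_1$ yields an $x,y$-uniform pointwise bound of the whole integrand, while $\mu([b,b_1])<\infty$ because the finite Nevanlinna measure $\nu_F$ gives rise to the locally finite $\mu=(1+t^2)\nu_F$. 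Summing the (finitely many) uniform bounds produces the desired $C$. The main obstacle is exactly this mismatch between the $b_1$ produced by \rrem{Mi2213}, part (ii), and the constraint $b<\alpha$, which the bounded intermediate piece is tailored to absorb; keeping $\ell_1',\ell_2'$ non-vanishing is merely bookkeeping.
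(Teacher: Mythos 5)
Your proof is correct and uses exactly the ingredients the paper prescribes for this lemma (\rlem{MB2210} for integrability of the comparison functions, \rrem{MB2212} for the \(\alpha/((t-x)^2+y^2)\) term, and \rrem{Mi2213} for the two tails), with the compact intermediate piece \([b,b_1]\) absorbing the mismatch between the \(b_1\) from \rrem{Mi2213} and the constraint \(b<\alpha\). The paper itself only sketches the argument and refers to~\cite[Lemma~3.8]{Sch11} for the lengthy details, so your write-up is essentially the same approach, carried out explicitly.
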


 Using \rlem{MB2210} and \rremss{MB2212}{Mi2213}, \rlem{MB2214} can be proved analogous to the well-known special case \(\alpha =0\).
 However, in the general case of on arbitrary real number \(\alpha\), these straightforward calculations are very lengthy (see~\cite[Lemma~3.8]{Sch11} for details).

\begin{lem} \label{MB2215}
 Let \(\alpha\in\R\) and let \(F\in\RF{1}\) be such that \(F^\#\colon\uhp\to\C\) defined by \eqref{Nr.FLG} belongs to \(\RF{1}\).
 Further, let \(\mu\) be  the spectral measure of \(F\) and let \(\ell_1\) and \(\ell_2\) be real numbers with \(\ell_1< \ell_2 < \alpha\).
 Then there are real numbers \(a\), \(b\), and \(C\) with \(a < \ell_1\) and \(\ell_2 < b<\alpha\) such that
\begin{align*} %
 \int_{(a,b)}\abs*{\frac{t-\alpha}{(t-x)^2+y^2}} \sigma(\dif t)&< C&
&\text{and}&
 \int_{(a,b)}\abs*{\frac{1}{(t-x)^2}} \sigma(\dif t)&< C
\end{align*}
 hold true for every choice of  \(x\in[\ell_1,\ell_2]\) and  \(y\in [0,\infp)\).
\end{lem}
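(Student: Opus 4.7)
The plan is to combine the integral representation of $\Im F^\#(w)$ from part (b) of \rlem{MB2210} with the non-negativity of $\Im F^\#$ on $\uhp$ afforded by the hypothesis $F^\# \in \RF{1}$ and the tail estimate in \rlem{MB2214}. (I treat the $\sigma$ appearing in the displayed inequalities of the lemma as a typographical variant of the only measure introduced in the statement, namely $\mu$.)

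First I would apply part (b) of \rlem{MB2210} to $F$. Using the algebraic rewriting
\[
 t\rk*{\frac{1}{\abs{t-w}^2} - \frac{1}{1+t^2}} - \frac{\alpha}{\abs{t-w}^2} = \frac{t-\alpha}{\abs{t-w}^2} - \frac{t}{1+t^2}
\]
and substituting $w = x+\iu y$ with $y > 0$, the representation takes the form
\[
 \frac{\Im F^\#(w)}{y} = A + B(2x-\alpha) + \int_\R \ek*{\frac{t-\alpha}{(t-x)^2+y^2} - \frac{t}{1+t^2}}\mu(\dif t).
\]
The hypothesis $F^\# \in \RF{1}$ forces the left-hand side to be non-negative, which furnishes a lower bound on the integral on the right.

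Second I would invoke \rlem{MB2214} to fix reals $a<\ell_1$ and $\ell_2 < b < \alpha$ together with a constant $C_0$ bounding the absolute integral of the bracketed expression on $\R\setminus(a,b)$, uniformly in $x\in[\ell_1,\ell_2]$ and $y\in(0,1)$. Splitting the integral at $(a,b)$ and combining with the lower bound from the previous step yields
\[
 \int_{(a,b)}\ek*{\frac{t-\alpha}{(t-x)^2+y^2} - \frac{t}{1+t^2}}\mu(\dif t) \geq -A - B(2x-\alpha) - C_0.
\]
On $(a,b)\subseteq(-\infty,\alpha)$ the factor $t-\alpha$ is negative, so the summand $\tfrac{t-\alpha}{(t-x)^2+y^2}$ is non-positive; using additionally that $\mu$ is finite on the bounded interval $(a,b)$, so that $\int_{(a,b)}\abs{t}/(1+t^2)\,\mu(\dif t)$ is finite, the preceding inequality rearranges into a uniform upper bound on $\int_{(a,b)}\tfrac{\alpha-t}{(t-x)^2+y^2}\,\mu(\dif t)$ for $x\in[\ell_1,\ell_2]$ and $y\in(0,1)$. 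The range $y\geq 1$ is trivial via the estimate $\abs{t-\alpha}/[(t-x)^2+y^2]\le \abs{t-\alpha}/y^2$ on $(a,b)$, and the case $y=0$ follows by monotone convergence along any sequence $y_n\downarrow 0$ applied to the increasing family $t\mapsto [(t-x)^2+y_n^2]^{-1}$. This settles the first asserted inequality.

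For the second inequality, the elementary pointwise estimate $\alpha - t \geq \alpha - b > 0$ on $(a,b)$ gives
\[
 \frac{1}{(t-x)^2} \leq \frac{1}{\alpha-b}\cdot\frac{\alpha-t}{(t-x)^2},
\]
so integrating over $(a,b)$ against $\mu$ and appealing to the $y=0$ case of the first inequality completes the argument. I expect the main technical obstacle to be that \rlem{MB2210} only asserts $\mu$-integrability of the \emph{combined} integrand and not of its two summands separately, which is why the decomposition has to be performed on a bounded interval $(a,b)$ (on which the $\tfrac{t}{1+t^2}$-piece is manifestly $\mu$-integrable in absolute value) rather than term-by-term on all of $\R$.
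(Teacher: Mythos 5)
Your argument is correct and uses exactly the ingredients the paper indicates for this lemma, namely \rlem{MB2210}\eqref{MB2210.b}, the tail bound of \rlem{MB2214}, and Beppo Levi's monotone convergence theorem for the passage to \(y=0\); your reading of \(\sigma\) as a typographical variant of the spectral measure \(\mu\) is also the intended one. The observation that the integrand must be split only over the bounded interval \((a,b)\), where the \(t/(1+t^2)\)-part is separately \(\mu\)-integrable, is precisely the point that makes the term-by-term estimate legitimate.
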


 \rlem{MB2215} can be proved, using \rlemss{MB2210}{MB2214} and Beppo Levi's Theorem of monotone convergence (for details, see~\cite[Lemma~3.12]{Sch11}).
 
\begin{rem} \label{ML2216}
 Let \(\alpha\in\R\) and let \(F\in\RF{1}\) be such that \(F^\#\colon\uhp\to\C\)  defined by \eqref{Nr.FLG} belongs to \(\RF{1}\).
 Let \(\mu\) be the spectral measure of \(F\) and let \(\ell_1\) and \(\ell_2\) be real numbers with \(\ell_1 < \ell_2 < \alpha\).
 Then one can easily see from \rrem{MB2212} and \rlem{MB2215} that there is a real number \(C\) such that \(\int_\R(t-x)^{-2}\mu(\dif t) < C\) for all  \(x\in[\ell_1,\ell_2]\).
\end{rem}

\begin{lem} \label{ML2217}
 Let  \(\alpha\in\R\) and let \(F\in\RF{1}\) be such that  \(F^\#\colon\uhp\to\C\) defined by  \eqref{Nr.FLG} belongs to  \(\RF{1}\).
 Then the Nevanlinna   measure \(\nu\) of \(F\) and the spectral measure \(\mu\) of \(F\) fulfill \(\nu(\crhl )=0\) and \(\mu (\crhl )=0\).
\end{lem}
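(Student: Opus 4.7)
The plan is to reduce both assertions to the uniform integral estimate furnished by Remark~\ref{ML2216} via a Tonelli swap, and then to transfer the conclusion from $\mu$ to $\nu$ using the fact that $\nu$ is absolutely continuous with respect to $\mu$.

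First I would fix arbitrary real numbers $\ell_1 < \ell_2 < \alpha$. By Remark~\ref{ML2216} there is a constant $C \in \R$ with $\int_\R (t-x)^{-2} \mu(\dif t) < C$ for every $x \in [\ell_1,\ell_2]$. Integrating this inequality over $x \in [\ell_1,\ell_2]$ and invoking Tonelli's theorem (the integrand is nonnegative) yields
\[
 \int_\R \left( \int_{\ell_1}^{\ell_2} \frac{\dif x}{(t-x)^2}\right) \mu(\dif t)
 \leq C(\ell_2-\ell_1)
 < \infp.
\]
For any $t \in [\ell_1,\ell_2]$, however, the inner integral is $+\infty$, since $(t-x)^{-2}$ has a non-integrable singularity at $x=t$. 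Consequently $\mu([\ell_1,\ell_2]) = 0$.

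Since $\ell_1,\ell_2$ with $\ell_2 < \alpha$ are arbitrary, a countable exhaustion $\ell_1^{(k)} \searrow -\infty$, $\ell_2^{(k)} \nearrow \alpha$ together with continuity from below gives $\mu\bigl((-\infty,\alpha)\bigr) = 0$, which is exactly $\mu(\crhl) = 0$. To pass to the Nevanlinna measure, I would use \eqref{N1052D}, which exhibits $\nu$ as absolutely continuous with respect to $\mu$ with density $(1+t^2)^{-1}$, so
\[
 \nu(\crhl)
 = \int_\crhl \frac{1}{1+t^2}\, \mu(\dif t)
 = 0
\]
is immediate.

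I do not expect any serious obstacle here, since the genuine work has already been done in the preceding lemmas: Lemma~\ref{MB2210} furnished the Poisson-type identity relating $\Im F$ and $\Im F^\#$ to integrals of $(t-x)^{-2}$ against $\mu$, Remarks~\ref{MB2212} and~\ref{Mi2213} together with Lemmas~\ref{MB2214} and~\ref{MB2215} controlled these integrals on $\R\setminus(a,b)$ and on $(a,b)$ respectively, and Remark~\ref{ML2216} combined both estimates into the uniform bound. Once that bound is in hand, the present statement is just a clean Fubini/Tonelli argument combined with the elementary divergence of $\int (t-x)^{-2}\,\dif x$ across the diagonal.
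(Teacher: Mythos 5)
Your argument is correct, but it replaces the paper's key step by a genuinely different one. Both proofs start from the uniform bound of \rrem{ML2216}, namely a constant \(C\) with \(\int_\R (t-x)^{-2}\mu(\dif t)<C\) for all \(x\in[\ell_1,\ell_2]\). The paper then returns to the function \(F\): it feeds this bound into the Poisson-type identity of \rlem{MB2210} to get \(0\le\Im F(x+\iu\epsilon)<\epsilon(B+C)\), and invokes the Stieltjes--Perron inversion formula to conclude \(\mu((\ell_1,\ell_2))=0\). You instead stay entirely at the level of the measure: integrating the uniform bound over \(x\in[\ell_1,\ell_2]\) and applying Tonelli (legitimate, since \(\mu\) is \(\sigma\)-finite -- it has density \(1+t^2\) with respect to the finite measure \(\nu\) -- and the integrand is nonnegative and jointly measurable) gives \(\int_\R\bigl(\int_{\ell_1}^{\ell_2}(t-x)^{-2}\,\dif x\bigr)\mu(\dif t)<\infp\), while the inner integral is \(+\infp\) for every \(t\in[\ell_1,\ell_2]\), forcing \(\mu([\ell_1,\ell_2])=0\). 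This is more elementary (no complex-analytic inversion formula, and it even yields the closed interval directly rather than the open one), at the cost of being specific to the measure-theoretic formulation; the paper's route via the inversion formula is the more standard Herglotz--Nevanlinna argument. The exhaustion of \(\crhl\) and the passage from \(\mu\) to \(\nu\) via \(\nu(B)=\int_B(1+t^2)^{-1}\mu(\dif t)\) coincide in substance with the paper's step~(II).
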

\begin{proof}
 We give the proof stated in~\cite[Lemmata~3.13 and~3.14]{Sch11}.
 
 (I) In the first step of the proof, we consider arbitrary real numbers \(\ell_1\) and \(\ell_2\) with \(\ell_1<\ell_2<\alpha\).
 Let \((A,B,\nu)\) be the  Nevanlinna parametrization of  \(F\).
 Because of \rrem{ML2216}, there is a \(C\in\R\) such that  \(\int_\R(t-x)^{-2}\mu(\dif t) < C\) is true for all \(x\in[\ell_1,\ell_2]\).
 Since \(F\) belongs to  \(\RF{1}\), for each  \(x\in[\ell_1,\ell_2]\) and each \(\epsilon\in(0,\infp)\), from  \rlem{MB2210} we get then
\[
 0 
 \leq \Im F(x+\iu\epsilon)
 = \epsilon\ek*{ B+\int_\R\frac{1}{(t-x)^2+\epsilon^2}  \mu (\dif t)}
 <\epsilon(B+C)
\]
 and, consequently,
\beql{NM0947}
 0
 \le  \int_{[\ell_1,\ell_2]}\Im F(x+\iu\epsilon) \lambda^{(1)}(\dif x) 
 \leq  \epsilon(B+C)(\ell_2-\ell_1),
\eeq 
 where \(\lambda^{(1)}\) is the Lebesgue measure defined on \(\BorR \).
 In view of  \(F\in\RF{1}\), the inversion formula of Stieltjes--Perron (see, \teg{}~\zitaa{MR0458081}{Appendix, p.~390}) yields
\beql{M,S82-3}
 \frac{1}{2}\ek*{\sigma\rk*{\set{\ell_1}}+\sigma\rk*{\set{\ell_2}}}+\sigma\rk*{(\ell_1,\ell_2)}
 =\frac{1}{\pi} \lim_{\epsilon\to 0+0}\int_{[\ell_1,\ell_2]}\Im F(x+\iu\epsilon) \lambda^{(1)}(\dif x).
\eeq 
 Combining \eqref{M,S82-3} and \eqref{NM0947}, we obtain \(\sigma((\ell_1,\ell_2))=0\), from
\begin{equation*}
\begin{split}
 0     &\leq \sigma\rk*{(\ell_1,\ell_2)}
 \leq\frac{1}{2}\ek*{\sigma\rk*{\set{\ell_1}}+\sigma\rk*{\set{\ell_2}}}+\sigma\rk*{(\ell_1,\ell_2)}\\
 &= \frac{1}{\pi} \lim_{\epsilon\to 0+0} \int_{[\ell_1,\ell_2]}\Im F(x+\iu\epsilon) \lambda^{(1)}(\dif x)
 \leq \frac{1}{\pi}\lim_{\epsilon\to 0+0}\ek*{\epsilon(B+C)(\ell_2-\ell_1)}
 = 0.
\end{split}
\end{equation*}

 (II) For each \(n\in\N\), the real numbers  \(a_n\defeq \alpha-(1+n)\) and \(b_n\defeq \alpha-\frac{1}{n}\) fulfill \(a_n < b_n < \alpha\).
 Consequently, part~(I) of the proof provides us \(\mu ((a_n,b_n))=0\).
 Obviously,  \((a_n,b_n) \subseteq (a_{n+1},b_{n+1})\) for each \(n\in\N \) and \(\bigcup_{n=1}^\infi (a_n,b_n) = \crhl \).
 Hence, \(\mu (\crhl )=\mu(\bigcup_{n=1}^\infi (a_n,b_n))  = \lim_{n\to\infi} \mu ((a_n,b_n)) =  0\).
 Thus, \(\nu(\crhl )=0\) follows from
\[
 0 
 \leq \nu\rk*{\crhl }
 = \int_{\crhl }1\nu(\dif t)
 \leq \int_{\crhl }(1+t^2) \nu(\dif t)
 = \mu\rk*{\crhl }
 =0.\qedhere
\]
\end{proof}

\section{From the system of Potapov's fundamental matrix inequalities to the moment problem}\label{S0846}
 \rprop{lemM4213-2} showed that the Stieltjes transform of an arbitrary solution of problem~\mprob{\rhl}{m}{\lleq} fulfills necessarily the system of corresponding Potapov's fundamental matrix inequalities.
 In this section, we are going to prove that the validity of the system of Potapov's fundamental matrix inequalities for a holomorphic \tqqa{matrix-valued} function defined on \(\Cs \)  is also sufficient to be the Stieltjes transform of some solution of this matricial Stieltjes-type moment problem.
 For the convenience of the reader, first we state two well-known facts.

\begin{rem} \label{MS2221}
 Let \(\cD\) be a discrete subset of \(\uhp\) and let \(F\colon\uhp\setminus\cD\to\Cqq\) be a matrix-valued function which is holomorphic in \(\uhp\setminus\cD\)  and which fulfills \(\Im F(z) \in\Cggq\) for all  \(z\in\uhp\setminus\cD\).
 Then one can easily see from~\cite[Lemma~2.1.9]{MR1152328} that  there is a function \(F^\triangle\in\RFq\) such that \(\Rstr_{\uhp\setminus\cD}F^\triangle=F\).
\end{rem}

\begin{rem} \label{Th3}
 Let \(A,B\in\Cqq\), let \(M\) be an open subset of  \(\R\), and let \(\nu\in\Mggqa{\R\setminus M}\).
 In view of a well-known result on integrals which depend on a complex parameter (see, \teg{}~\cite[Satz~5.8]{MR2257838}), it is readily checked that  \(\phi\colon\uhp\cup M\cup\lhp \to\Cqq\) given by
\[
 \phi(z)
 \defeq  A +Bz +\int_{\R\setminus M}\frac{1+tz}{t-z}\nu(\dif t)
\]
 is holomorphic in \(\uhp\cup M\cup\lhp \) (see also, \teg{}~\cite[Lemma~3.17]{Sch11}).
\end{rem}

 In the following, for all \(\alpha\in\R\), let \(\lehpa \defeq\setaca{z\in\C}{\Re z\in\crhl }\)\index{c@\(\lehpa \)}.

\begin{lem} \label{ML2223}
 Let \(\alpha\in\R\) and let \(F\in\RFq\) be such that \(F^\#\colon\uhp\to\Cqq\) defined by \(F^\#(w)\defeq (w-\alpha)F(w)\) belongs to \(\RFq\).
 Further, let \(\nu\) be the Nevanlinna measure of  \(F\).
 Then \(\nu(\crhl )=\NM\) and the following two statements hold true:
\begin{enui}
 \item\label{ML2223.a} There is a function \(F_\alpha \colon\Cs \to \Cqq \) such that \(\Rstr_{\uhp } F_\alpha = F\) and \(F_\alpha(\crhl )\subseteq\CHq\) are fulfilled.
 \item\label{ML2223.b} There exists a unique function \(S\in\SFq\)  with \(\Rstr_{\uhp} S=F\).
\end{enui}
\end{lem}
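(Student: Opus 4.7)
The plan is to proceed in three stages, reducing matrix-valued statements to scalar ones via the quadratic forms $f_y(z) \defeq y^\ad F(z) y$.

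First, I would establish $\nu(\crhl) = \NM$. For each $y \in \Cq$, the scalar function $f_y$ belongs to $\RF{1}$ and its companion $f_y^\#(z) \defeq (z-\alpha) f_y(z) = y^\ad F^\#(z) y$ also belongs to $\RF{1}$, so \rlem{ML2217} applies to $f_y$ and tells us that the Nevanlinna measure of $f_y$ vanishes on $\crhl$. Since this scalar measure is precisely $y^\ad \nu y$, as can be read off from the Nevanlinna parametrization in \rthm{T31DN}, we conclude $y^\ad\nu(\crhl) y = 0$ for every $y \in \Cq$, and hence $\nu(\crhl) = \NM$.

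Second, for part~\eqref{ML2223.a}, with $\nu$ concentrated on $\rhl$, I would invoke \rrem{Th3} with $M \defeq \crhl$ to see that the function $F_\alpha \colon \Cs \to \Cqq$ defined by $F_\alpha(z) \defeq A + Bz + \int_{\rhl} \frac{1+tz}{t-z} \nu(\dif t)$ is holomorphic on $\Cs = \uhp \cup \crhl \cup \lhp$, and \rthm{T31DN} gives $\Rstr_{\uhp} F_\alpha = F$ directly. For $x \in \crhl$ and $t \in \rhl$ we have $t - x > 0$, so $(1+tx)/(t-x) \in \R$; thus the integral is \tH{}, and together with $A \in \CHq$ and $Bx \in \CHq$ this yields $F_\alpha(x) \in \CHq$.

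Third, for part~\eqref{ML2223.b}, uniqueness is immediate from the identity theorem, since $\uhp$ is a uniqueness set in the connected open set $\Cs$. For existence I would take $S \defeq F_\alpha$; holomorphy and the Herglotz condition on $\uhp$ are at hand, so only $F_\alpha(\crhl) \subseteq \Cggq$ remains. Reducing again to scalars, for $y \in \Cq$, $x \in \crhl$, and $\epsilon \in (0,\infp)$, the elementary identity $\Im f_y^\#(x + \iu\epsilon) = (x-\alpha)\Im f_y(x+\iu\epsilon) + \epsilon\,\Re f_y(x + \iu\epsilon)$, combined with $\Im f_y^\#(x+\iu\epsilon) \geq 0$ (from $f_y^\# \in \RF{1}$), $\Im f_y(x+\iu\epsilon) \geq 0$ (from $f_y \in \RF{1}$), and $x - \alpha < 0$ forces $\Re f_y(x + \iu\epsilon) \geq 0$. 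Since $F_\alpha$ is continuous at $x$ with $F_\alpha(x) \in \CHq$ from the previous step, letting $\epsilon \to 0+$ gives $y^\ad F_\alpha(x) y = \Re[y^\ad F_\alpha(x) y] \geq 0$, and the arbitrariness of $y$ yields $F_\alpha(x) \in \Cggq$.

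The only substantive obstacle is the final non-negativity on $\crhl$; the extension and the hermiticity are automatic from the integral formula once the matricial analogue of \rlem{ML2217} has been obtained. The key move is to read the two Herglotz hypotheses on $F$ and $F^\#$ jointly as a near-boundary sign condition on $\Re f_y$, which then survives the passage to the limit because $F_\alpha$ is already known to be \tH{}-valued on $\crhl$.
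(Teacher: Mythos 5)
Your proposal is correct and follows essentially the same route as the paper: the scalar reduction via $u^\ad F u$ and \rlem{ML2217} to get $\nu(\crhl)=\NM$, the restricted integral representation together with \rrem{Th3} for the holomorphic \tH{}-valued extension, and the sign identity linking $\Im F^\#$, $\Im F$, and $\Re F$ near the cut for the \tnn{ity}. The only cosmetic difference is that the paper runs the last step at the matrix level on $\lehpa\cap\uhp$ before passing to the boundary, whereas you phrase it through the quadratic forms $y^\ad F y$; the two are equivalent.
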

\begin{proof} 
 Since \(F\) and \(F^\#\) belong to \(\RFq\), for all \(u\in\Cq\), we see that \(\set{ u^\ad Fu, u^\ad F^\#u } \subseteq \RF{1}\) and that  \(u^\ad \nu u\) is the Nevanlinna measure of \(u^\ad Fu\).
 Because of \rlem{ML2217}, for  all \(u\in\Cq\), we have \(u^\ad  \nu(\crhl ) u =(u^\ad \nu u)(\crhl ) = 0  =u^\ad \Oqq u\).
 Consequently, \(\nu(\crhl )=\Oqq\).
 
 \eqref{ML2223.a} Obviously,  \(\tilde{\nu}\defeq \Rstr_{\BorK}\nu\) belongs to \(\MggqK \).
 By virtue of \(F\in\RFq\) and \rthm{T31DN}, there are matrices \(A\in\CHq\) and \(B\in\Cggq\) such that \eqref{N31DN} holds for each \(z\in\uhp\).
 \rrem{Th3} shows that \(F_{\alpha}\colon\Cs\to\Cqq\) given by
\beql{Mi87-1}
 F_\alpha (z)
 \defeq  A+Bz+\int_{\rhl} \frac{1 + tz}{t-z} \tilde{\nu} (\dif t)
\eeq 
 is holomorphic in \(\Cs\).
 Comparing \eqref{N31DN} and  \eqref{Mi87-1}, we get \(F_\alpha(z) = F(z)\) for each \(z\in\uhp \).
 For every choice of \(x\in\R \), we have
\[
 \ek*{  \int_{\rhl } \frac{1+tx}{t-x} \tilde{\nu} (\dif t)}^\ad 
 = \int_{\rhl } \ko{\rk*{\frac{1+tx}{t-x}}} \tilde{\nu} (\dif t)  
 = \int_{\rhl } \frac{1+tx}{t-x} \tilde{\nu} (\dif t).
\]
 In view of \eqref{Mi87-1}, \(A\in\CHq\),  and \(B\in\Cggq\), then \([F_\alpha (x)]^\ad = F_\alpha (x)\) follows for each  \(x\in\crhl \).

 \eqref{ML2223.b} Because of \rpart{ML2223.a}, there is a holomorphic function \(S\colon\Cs\to\Cqq\) such that 
\begin{align} \label{M,S89-1}
 \Rstr_{\uhp}S&= F&
&\text{and}&
 S(\crhl )&\subseteq \CHq
\end{align}
 hold true.
 According to \(\set{ F, F^\#} \subseteq \RFq\) and \eqref{M,S89-1}, for all \(z\in\uhp\), then
\begin{align} \label{M,S89-3}
 \Im S(z)&= \Im F(z) \in\Cggq&
&\text{and}&
 \Im\ek*{(z-\alpha)S(z)}& =  \Im F^\#(z) \in\Cggq.
\end{align} 
 For all \(z\in\lehpa \cap\uhp\), we have \(\Im[(z-\alpha)S(z)]= [\Re (z-\alpha)]\Im S(z) + (\Im z)\Re S(z)\) and, by virtue of \eqref{M,S89-3}, consequently, 
\beql{M,S89-13}
 \Re S(z)
 = \frac{\Im[(z-\alpha)S(z)]}{\Im z}+\ek*{-\Re (z-\alpha)}\frac{\Im S(z)}{\Im z}
 \in \Cggq.
\eeq 
 Now we consider an arbitrary monotonically nondecreasing sequence \((y_n)_{n=1}^\infi\) of positive real numbers with \(\lim_{n\to\infi}y_n = 0\).
 Since the function \(S\) is holomorphic in \(\Cs  \), the functions \( \Re S\) and \(\Im S\) are continuous in \(\Cs  \).
 Thus, for each \(x\in \crhl \), we have \(x + \iu y_n \in\C _{\alpha, -}\cap \uhp \) for all \(n\in\N \) and, hence, because of \eqref{M,S89-13}, and \eqref{M,S89-3}, then
\begin{align} \label{M,S90-3-1}
 \Re S(x)&=\lim_{n\to\infi}\Re S(x+\iu y_n) \in\Cggq&
&\text{and}&
 \Im S(x)&= \lim_{n\to\infi}\Im S(x+\iu y_n) \in\Cggq.
\end{align}
 Combining  \eqref{M,S89-1} and \eqref{M,S90-3-1}, for each \(x\in\crhl \), we get  \(\Re S(x)+\iu\Im S(x) =S(x)=[S(x)]^\ad = \Re S(x)-\iu\Im S(x) \) and, hence, \(\Im S(x)=0\).
 From \eqref{M,S90-3-1} then  \(S(x)\in \Cggq\) follows for each \(x\in \crhl \).
 Consequently,  \(S\in\SFq\).
 
 Now we consider an arbitrary \(S^\square\in\SFq\)  such that \(\Rstr_{\uhp} S^\square=F\).
 From  \eqref{M,S89-1} we get then \(S^\square(z)=F(z)=S(z)\) for each \(z\in\uhp \).
 Thus, the identity theorem for holomorphic functions provides us \(S^\square=S\).
\end{proof}

\begin{prop} \label{MT2224}
 Let \(\alpha\in\R\) and  let \(\cD\) be a discrete subset of \(\uhp\).
 Let \(F\colon\uhp\setminus\cD\to\Cqq\) be a holomorphic matrix-valued function and let \(F^\#\colon\uhp\to\Cqq\) be defined by \(F^\#(w)\defeq (w-\alpha)F(w)\).
 Suppose  \(\set{\Im F(w), \Im F^\#(w) } \subseteq \Cggq\) for all \(w\in\uhp\setminus\cD\).
 Then there is a unique \(S\in\SFq\) such that \(\Rstr_{\uhp\setminus\cD}S=F\).
\end{prop}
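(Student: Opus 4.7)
The plan is to reduce \rprop{MT2224} to \rlem{ML2223} by first extending $F$ and its companion $F^\#$ holomorphically across the discrete set $\cD$. Since $F\colon\uhp\setminus\cD\to\Cqq$ is holomorphic with $\Im F(w)\in\Cggq$ on $\uhp\setminus\cD$, \rrem{MS2221} yields a function $F^\triangle\in\RFq$ with $\Rstr_{\uhp\setminus\cD}F^\triangle=F$. Applied to $F^\#$, which is holomorphic on $\uhp\setminus\cD$ (as a product of holomorphic matrix-valued functions) and satisfies $\Im F^\#(w)\in\Cggq$ there, the same remark produces a function $G\in\RFq$ with $\Rstr_{\uhp\setminus\cD}G=F^\#$.

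Next I would verify that $G(z)=(z-\alpha)F^\triangle(z)$ for all $z\in\uhp$. Both sides are holomorphic on the region $\uhp$ and, by construction, they coincide on $\uhp\setminus\cD$. Since $\cD$ is discrete in $\uhp$, the open set $\uhp\setminus\cD$ has accumulation points in $\uhp$, so the identity theorem for holomorphic functions forces the equality throughout $\uhp$. In particular, $z\mapsto(z-\alpha)F^\triangle(z)$ belongs to $\RFq$, which is exactly the hypothesis of \rlem{ML2223} applied with $F^\triangle$ in place of $F$.

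Now \rpart{ML2223.b} of \rlem{ML2223} delivers a unique $S\in\SFq$ with $\Rstr_{\uhp}S=F^\triangle$, and restricting further to $\uhp\setminus\cD$ gives $\Rstr_{\uhp\setminus\cD}S=F$, which proves existence. For uniqueness, suppose $S^\square\in\SFq$ also satisfies $\Rstr_{\uhp\setminus\cD}S^\square=F$. Then $S$ and $S^\square$ are holomorphic on the region $\Cs$ and agree on the nonempty open subset $\uhp\setminus\cD$, which has accumulation points in $\Cs$; by the identity theorem, $S=S^\square$.

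I do not anticipate a genuine obstacle: the argument is essentially a chain of applications of \rrem{MS2221}, the identity theorem, and \rlem{ML2223}. The one place where care is needed is the use of the discreteness of $\cD$ in $\uhp$ to justify both the identification $G(z)=(z-\alpha)F^\triangle(z)$ on all of $\uhp$ and the uniqueness of $S$ on all of $\Cs$; once these two identity-theorem arguments are in place, the rest is entirely formal.
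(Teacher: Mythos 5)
Your proposal is correct and follows exactly the route the paper sketches (it explicitly says the proposition "can be easily proved using \rrem{MS2221}, \rlem{ML2223}, and the identity theorem for holomorphic functions" and omits the details). Your two identity-theorem steps — identifying \(G(z)=(z-\alpha)F^\triangle(z)\) on all of \(\uhp\) and deducing uniqueness of \(S\) on \(\Cs\) — are precisely the details the paper leaves to the reader, and they are handled correctly.
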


 \rprop{MT2224} can be easily proved using \rrem{MS2221}, \rlem{ML2223}, and the identity theorem for holomorphic functions (see also~\cite[Theorem~3.19]{Sch11}).
 We omit the details.

\begin{thm}\label{T1747F}
 Let \(\alpha\in\R \), let \(\kappa\in\NOinf \), let \(\seqska \) be a sequence of complex \tqqa{matrices}, and let \(m\in\mn{0}{\kappa}\).
 Further, let \(\cD\) be a discrete subset of \(\uhp \) and let \(F\colon\uhp  \setminus\cD\to \Cqq \) be a holomorphic matrix-valued function such that 
\begin{align}\label{N1838S}
 P_m^{[F]} (z)&\lgeq\NM&
&\text{and}&
 P_{m-1}^{[F]} (z)&\lgeq\NM&\text{for each }z&\in\uhp \setminus\cD.
\end{align}
 Then there exists a unique \(S\in\SFOq \) such that \(\Rstr_{\uhp \setminus\cD} S = F\).
 Moreover,  the inequality \(P_k^{[S]} (z)\lgeq\NM\) holds true for each \(k\in\mn{-1}{m}\) and each \(z\in\C\setminus\R\).
\end{thm}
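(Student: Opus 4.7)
The plan is to first show that, for every $k \in \mn{-1}{m}$ and every $z \in \uhp \setminus \cD$, the matrix $P_k^{[F]}(z)$ is \tnnH{}. This rests on a principal-submatrix observation: inspection of \eqref{def-P2n}, \eqref{Pf2n+1}, and the definition of $P_{-1}^{[f]}$, combined with the fact that $R_{\Tqn}(z)$ is block lower-triangular Toeplitz and that $v_{q,n}$, $u_n$, $y_{0,n}$, $H_n$, and $H_{\at n}$ have natural nested block structure, shows that for every $k \in \mn{1}{m}$ the matrix $P_{k-2}^{[F]}(z)$ coincides with the principal submatrix of $P_k^{[F]}(z)$ obtained by deleting the last block row and column of the Hankel part. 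Iterating this descent in steps of two, starting from the two hypotheses in \eqref{N1838S}, then yields $P_k^{[F]}(z) \lgeq \NM$ for every $k \in \mn{-1}{m}$ and $z \in \uhp \setminus \cD$, since principal submatrices inherit \tnnH{}-ness.

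Reading off the lower-right $q \times q$ blocks of $P_0^{[F]}(z)$ and $P_{-1}^{[F]}(z)$ then gives $\set{\Im F(z), \Im F^\#(z)} \subseteq \Cggq$ on $\uhp \setminus \cD$, where $F^\#(w) \defeq (w-\alpha) F(w)$. By \rprop{MT2224}, there is thus a unique $S \in \SFq$ with $\Rstr_{\uhp \setminus \cD} S = F$. Since each $P_k^{[S]}$ is holomorphic on $\uhp$ and coincides with $P_k^{[F]}$ on the dense open set $\uhp \setminus \cD$, continuity of the L\"owner order lets the previous inequalities extend to all of $\uhp$: $P_k^{[S]}(z) \lgeq \NM$ for every $k \in \mn{-1}{m}$ and $z \in \uhp$. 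In particular, the $k=0$ instance reads
\[
 \begin{bmatrix} s_0 & S(z) \\ S^\ad (z) & \frac{S(z)-S^\ad (z)}{z - \ko z} \end{bmatrix} \lgeq \NM
\]
for $z \in \uhp$, so \rlem{N1219DN} delivers $\Rstr_\uhp S \in \RFOq$, whence $S \in \SFOq$. Uniqueness of the extension within $\SFOq$ is then immediate from the identity theorem on the connected region $\Cs$.

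To propagate $P_k^{[S]}(z) \lgeq \NM$ from $\uhp$ to $\lhp$, I would exploit the symmetry of $S$. Since $S(\crhl) \subseteq \Cggq \subseteq \CHq$ and the region $\Cs$ is symmetric about $\R$, applying the identity theorem to $z \mapsto [S(\ko z)]^\ad$ on $\Cs$ shows $S^\lor = S$. The \tH{} character of $s_0, \dotsc, s_m$, which is needed in order to invoke \rlem{MP36N}, follows from the \tH{} character of $P_m^{[F]}(z)$ and $P_{m-1}^{[F]}(z)$ via \rrem{ML415}. \rlem{MP36N} then yields $P_k^{[S]}(z) = P_k^{[S^\lor]}(z) = X_k(z) P_k^{[S]}(\ko z) X_k^\ad (z) \lgeq \NM$ for $z \in \lhp$, using the already-established inequality at $\ko z \in \uhp$.

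The main obstacle I expect is the principal-submatrix reduction of the first step: one has to verify carefully, in both parities, that the top $nq$ blocks of $R_{\Tqn}(z)[v_{q,n} f(z) - u_n]$ and of $R_{\Tqn}(z)[v_{q,n}(z-\alpha)f(z) - (-\alpha u_n - y_{0,n})]$ reproduce the analogous shorter vectors attached to $P_{k-2}^{[f]}(z)$, which rests on the block lower-triangular Toeplitz structure of $R_{\Tqn}(z)$ and, in the odd case, on the identity $\alpha s_j - s_{j+1} = -s_{\at j}$. Once this bookkeeping is settled, the remainder of the argument is a direct assembly of results already available in the paper.
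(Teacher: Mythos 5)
Your proposal is correct and follows essentially the same route as the paper: deduce $\Im F(z)\lgeq\NM$ and $\Im[(z-\alpha)F(z)]\lgeq\NM$ from the given inequalities, invoke \rprop{MT2224} to obtain the unique $S\in\SFq$, pass to all of $\uhp$ by continuity, use the $k=0$ inequality together with \rlem{N1219DN} to conclude $S\in\SFOq$, and transport the inequalities to $\lhp$ via \rlem{MP36N}. Two small points of comparison: you spell out the principal-submatrix descent $P_k^{[F]}\rightsquigarrow P_{k-2}^{[F]}$ (which is indeed valid, by the block lower-triangular Toeplitz structure of $R_{\Tqn}$ and the nesting of $H_n$, $u_n$, $y_{0,n}$), a step the paper compresses into a single ``consequently''; and for the identity $S(z)=S^\ad(\ko{z})$ on $\lhp$, which is what makes \rlem{MP36N} applicable, you argue by the reflection principle from $S(\crhl)\subseteq\CHq$ and the identity theorem on $\Cs$, whereas the paper derives it from the integral representation of \rthm{T1316D}; both arguments are sound, and yours is slightly more elementary at that point.
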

\begin{proof}
 We give a version of the proof stated with more details in~\cite[Theorem~4.10]{Sch11}.
 From \eqref{N1838S}, \rnota{Note44}, and \eqref{Nr.B6}, we see that \(H_n\ge 0\) for each \(n\in\NO \) with \(2n\le m\), that \(H_{\at{n}} \lgeq\NM\) for each \(n\in\N \) with \(2n+1\le m\), that \(s_j^\ad = s_j\) for each \(j\in\mn{0}{m}\), and that \(\Im F(z) =(\Im z)\frac{F(z) - F^\ad (z)}{z - \ko{z}} \lgeq\NM\) and \(\Im [(z-\alpha) F(z)]  = (\Im z)\frac{(z-\alpha) F(z) - [(z-\alpha)F(z)]^\ad}{z - \ko{z}}\lgeq\NM\) hold true for each  \(z\in\uhp  \setminus\cD\).
 Thus, because of \rprop{MT2224}, there exists a unique \(S\in\SFq \) such that \(\Rstr_{\uhp \setminus \cD} S = F\).
 By continuity arguments, from \eqref{N1838S} we get then \(\set{ P_m^{[S]} (z), P_{m-1}^{[S]} (z)} \subseteq \Cggq \) for each \(z\in\uhp \) and, consequently,
\begin{align}\label{N2149S}
 P_k^{[S]} (z)&\ge 0&\text{for each }k&\in\mn{-1}{m}\text{ and each }z\in\uhp.
\end{align}
 In particular, \(\tilde{S} \defeq  \Rstr_{\uhp } S\) fulfills
\begin{align*}
 \begin{bmatrix}
  s_0 & \tilde{S} (z)\\ 
  \tilde{S}^\ad (z)  &  \frac{\tilde{S} (z) - \tilde{S}^\ad (z)}{z - \ko{z}}
 \end{bmatrix}
 &= P_0^{[S]} (z)\lgeq\NM&\text{for each }z&\in\uhp .
\end{align*}
 Consequently, \rlem{N1219DN} provides us \(\tilde{S} \in\RFOq \) and  \(\sup_{y\in [1,\infp)} y \normS{S (\iu y)} <  \infp\).
 Hence, \(S\) belongs to \(\SFOq \).
 Then \rthm{T1316D} shows that there is a \(\sigma\in\MggqK \) such that \eqref{N1319D} holds true.
 Let \(\tilde{S}^\vee \colon\lhp \to\Cqq \) be defined by \(\tilde{S}^\vee (z) \defeq  S^\ad (\ko{z})\).
 Thus, from \eqref{N1319D}, we get  
\[
 \tilde{S}^\vee (z)
 = \ek*{ \int_\rhl \frac{1}{t- \ko{z}} \sigma (\dif t)}^\ad
 = \int_{\rhl } \frac{1}{t-z} \sigma (\dif t)
 = S(z)
\] 
 for each \(z\in\lhp \).
 Taking into account \eqref{N2149S} and \rlem{MP36N}, we see then that, for each \(k\in\mn{-1}{m}\) and each \(z\in\lhp \), there exists a matrix \(X_k (z)\) such that \(P_k^{[S]} (z)=  P_{k}^{[\tilde{S}^\vee]} (z)= X_k (z) P_k^{[S]} (\ko{z}) X_k^\ad (z)\) is fulfilled for every choice of \(k\in\mn{-1}{m}\) and \(z\in\lhp \).
 In view of \eqref{N2149S}, this implies \(P_k^{[S]} (z)\lgeq\NM\) for each \(k\in\mn{-1}{m}\) and each \(z\in \lhp \).
 Because of \(\C \setminus\R  =\uhp \cup\lhp \), the proof is complete.
\end{proof}

\begin{rem}\label{D1407}
 For each \(n\in\NO \) and every choice of \(w\) and \(z\) in \(\C \), it is readily checked that
\[
 (z - \ko{w})\ek*{R_{T^\ad _{q,n}} (w)}^\ad  T_{q,n} R_{T_{q,n}}  (z)
 =  R_{T_{q,n}} (z)   - \ek*{R_{T^\ad _{q,n}} (w)}^\ad.
\]
\end{rem}

\begin{lem}  \label{D1451}
 Let \(\kappa \in \NOinf \) and let \(\seqska \) be a sequence of \tH{} complex \tqqa{matrices}.
 Then
\begin{multline} \label{MT24M}
 H_n T^\ad _{q,n} R_{T^\ad _{q,n}} (z) - \ek*{R_{T^\ad _{q,n}} (w)}^\ad  T_{q,n} H_n  +   \ek*{R_{T^\ad _{q,n}} (w)}^\ad  (v_{q,n} u^\ad _n - u_n v^\ad _{q,n}) R_{T^\ad _{q,n}} (z)\\
 = (z - \ko{w}) \ek*{R_{T^\ad _{q,n}} (w)}^\ad  T_{q,n} H_n T^\ad _{q,n} R_{T^\ad _{q,n}} (z)
\end{multline} 
 for  all \(n \in \NO \) with \(2n \le \kappa\) and every choice of \(w\) and \(z\) in \(\C \).
 Furthermore,
\begin{multline} \label{MT24N}
 H_{\at{n}} T^\ad _{q,n} R_{T^\ad _{q,n}} (z) - \ek*{R_{T^\ad _{q,n}} (w)}^\ad  T_{q,n} H_{\at{n}}\\
 + \ek*{R_{T^\ad _{q,n}} (w)}^\ad  \ek*{v_{q,n} (- \alpha u_n -y_{0,n})^\ad- (- \alpha u_n -y_{0,n}) v^\ad _{q,n}} R_{T^\ad _{q,n}} (z)\\
 = (z - \ko{w}) \ek*{R_{T^\ad _{q,n}} (w)}^\ad  T_{q,n} H_{\at{n}} T^\ad _{q,n} R_{T^\ad _{q,n}} (z)
\end{multline}   
 for all \(\alpha \in \R \), all \(n \in \NO \) with \(2n+1 \le \kappa\), and every choice of \(w\) and \(z\) in \(\C  \).
\end{lem}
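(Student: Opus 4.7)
The plan is to reduce both identities to purely algebraic manipulations by combining the Hermitian Ljapunov identities from Remark \ref{lemC21-1} with the elementary first resolvent identity $R_A(\zeta) = I + \zeta A R_A(\zeta)$, applied in turn to $A \in \set{T_{q,n}, T_{q,n}^\ad}$.

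For \eqref{MT24M}, set $L \defeq \ek{R_{T_{q,n}^\ad}(w)}^\ad$ and $R \defeq R_{T_{q,n}^\ad}(z)$. From Remark \ref{lemC21-1}\eqref{lemC21-1.a} in the Hermitian case I have
\[
 v_{q,n}u_n^\ad - u_n v_{q,n}^\ad = T_{q,n}H_n - H_n T_{q,n}^\ad.
\]
Substituting this equation into the third summand on the left-hand side of \eqref{MT24M} and collecting the four resulting terms gives
\[
 \text{LHS of \eqref{MT24M}} = (I_{(n+1)q}-L)\,H_n T_{q,n}^\ad R + L T_{q,n}H_n\,(R-I_{(n+1)q}).
\]
By Remark \ref{21112N}, $R$ is a polynomial in $T_{q,n}^\ad$ and $L = R_{T_{q,n}}(\ko w)$ is a polynomial in $T_{q,n}$; the defining relations $(I-zT_{q,n}^\ad)R=I$ and $(I-\ko w\,T_{q,n})L=I$ therefore yield
\[
 R-I_{(n+1)q} = zT_{q,n}^\ad R, \qquad I_{(n+1)q}-L = -\ko w\,L T_{q,n},
\]
where the second equality uses the commutativity of $L$ with $T_{q,n}$. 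Inserting both into the previous display gives $(z-\ko w)\,L T_{q,n}H_n T_{q,n}^\ad R$, which is the right-hand side of \eqref{MT24M}.

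The proof of \eqref{MT24N} is line for line the same, with $H_n$ replaced throughout by $H_{\at n}$ and with the Hankel Ljapunov relation taken from Remark \ref{lemC21-1}\eqref{lemC21-1.b}, namely
\[
 v_{q,n}(-\alpha u_n - y_{0,n})^\ad - (-\alpha u_n - y_{0,n})v_{q,n}^\ad = T_{q,n}H_{\at n} - H_{\at n}T_{q,n}^\ad,
\]
which is available because $2n+1\le\kappa$ and the entries $s_0,\dotsc,s_\kappa$ are Hermitian.

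I do not expect any real obstacle: the lemma is a purely algebraic identity, and both ingredients are off-the-shelf. The only bookkeeping care required is to keep $T_{q,n}$ and $T_{q,n}^\ad$ separate: $L$ commutes with $T_{q,n}$ (but not a priori with $T_{q,n}^\ad$), and $R$ commutes with $T_{q,n}^\ad$ (but not a priori with $T_{q,n}$); this is exactly why the summands on both sides of \eqref{MT24M} and \eqref{MT24N} carry $T_{q,n}$ and $T_{q,n}^\ad$ in the specific positions they do, and why the symmetric combination in the middle term of the left-hand side is needed to make the two resolvent expansions telescope into the single right-hand side.
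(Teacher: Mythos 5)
Your proof is correct and uses essentially the same ingredients as the paper's: the Hermitian Ljapunov identities of \rrem{lemC21-1} combined with the resolvent relations \((\Iu{(n+1)q}-zT_{q,n}^\ad)R_{T_{q,n}^\ad}(z)=\Iu{(n+1)q}\) and \(\ek{R_{T_{q,n}^\ad}(w)}^\ad=R_{T_{q,n}}(\ko{w})\). The paper merely organizes the algebra in the opposite order --- factoring \(\ek{R_{T_{q,n}^\ad}(w)}^\ad\) and \(R_{T_{q,n}^\ad}(z)\) out first and then cancelling via the Ljapunov identity, rather than substituting it first and telescoping --- so the two arguments coincide in substance.
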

\begin{proof} 
 By virtue of \rremp{lemC21-1}{lemC21-1.a}, we have 
\begin{equation*}
\begin{split}
 &H_n T^\ad _{q,n} R_{T^\ad _{q,n}} (z) - \ek*{R_{T^\ad _{q,n}} (w)}^\ad  T_{q,n} H_n + \ek*{R_{T^\ad _{q,n}} (w)}^\ad  (v_{q,n} u^\ad _n - u_n v^\ad _{q,n}) R_{T^\ad _{q,n}} (z)\\
 &= \ek*{ R_{T^\ad _{q,n}}(w) }^\ad  \rk*{ \ek*{ R_{T^\ad _{q,n}}(w) }^{-\ast} H_n T^\ad_{q,n} - T_{q,n} H_n R^\inv _{T^\ad _{q,n}}(z)    + (v_{q,n} u^\ad _n - u_n v^\ad _{q,n}) } R_{T^\ad _{q,n}}(z)  \\
 &= \ek*{ R_{T^\ad _{q,n}}(w) }^\ad\\
 &\times\ek*{(\Iu{(n+1)q} - \ko{w} T_{q,n}) H_n T^\ad _{q,n} - T_{q,n} H_n (\Iu{(n+1)q} - z T^\ad _{q,n})                  + (v_{q,n} u^\ad _n - u_n v^\ad _{q,n}) } R_{T^\ad _{q,n}}(z)  \\
 &= \ek*{ R_{T^\ad _{q,n}}(w) }^\ad  \Big[ (z - \ko{w}) T_{q,n} H_n T^\ad _{q,n} -(T_{q,n} H_n - H_n T^\ad _{q,n})       + (v_{q,n} u^\ad _n - u_n v^\ad _{q,n}) \Big] R_{T^\ad _{q,n}}(z)\\ 
 &= \ek*{ R_{T^\ad _{q,n}}(w) }^\ad  \ek*{(z - \ko{w}) T_{q,n} H_n T^\ad _{q,n}}R_{T^\ad _{q,n}}(z) 
  = (z - \ko{w}) \ek*{ R_{T^\ad _{q,n}}(w) }^\ad  T_{q,n} H_n T^\ad _{q,n} R_{T^\ad _{q,n}}(z).
\end{split}
\end{equation*}
 Using \rremp{lemC21-1}{lemC21-1.b}, equation~\eqref{MT24N} can be proved analogous to \eqref{MT24M}.
\end{proof}

\begin{nota} \label{Mi,B3.1}
 Let \(\alpha\in\R\), let \(\kappa\in\NOinf \), and let \(\seqska \) be a sequence from \(\Cqq \).
 Let \(\cG\) be a subset of \(\C\) with \(\cG\setminus\R\neq\emptyset\) and let \(f \colon\cG\to\Cqq \) be a matrix-valued function.
 For each  \(n\in\NO \) with \(2n\leq\kappa\), let  \(F_{2n}\colon\cG\to\Coo{(n+1)q}{(n+1)q}\) be given by  
\beql{Nr.F2N}
 F_{2n}(z)
 \defeq  H_n \Tqn ^\ad  R_{\Tqn ^\ad }(z) + R_{\Tqn }(z)\ek*{v_{q,n} f(z) - u_n} v_{q,n}^\ad  R_{\Tqn ^\ad }(z)
\eeq                
 and let \(Q_{2n}^{[f]}\colon\cG\setminus\R\to\Coo{(2n+2)q}{(2n+2)q}\) be defined by
\beql{Nr.Q2N}
 Q_{2n}^{[f]}(z)
 \defeq
 \begin{bmatrix}
  \Hu{n} & F_{2n}(z) \\
  F_{2n}^\ad (z) & \frac{F_{2n}(z) -F_{2n}^\ad (z)}{z-\ko z}
 \end{bmatrix}.
\eeq 
 If \(\kappa\geq1\), then, for all \(n\in\NO \) with \(2n+1\leq\kappa\), let \(F_{2n+1}\colon\cG\to\Coo{(n+1)q}{(n+1)q}\) be given by
\begin{multline}  \label{Nr.F2N+1}
 F_{2n+1}(z)
 \defeq H_{\at n}\Tqn ^\ad  R_{\Tqn ^\ad }(z)\\
  + R_{\Tqn }(z) \ek*{v_{q,n}(z-\alpha) f(z) - (-\alpha u_n - y_{0,n})} v_{q,n}^\ad  R_{\Tqn ^\ad }(z)
\end{multline}              
 and let  \(Q_{2n+1}^{[f]}\colon\cG\setminus\R\to\Coo{(2n+2)q}{(2n+2)q}\) be defined by
\beql{Nr.Q2N+1}
 Q_{2n+1}^{[f]}(z)
 \defeq
 \begin{bmatrix}
  H_{\at n} & F_{2n+1}(z) \\
  F_{2n+1}^\ad (z) & \frac{F_{2n+1}(z) -F_{2n+1}^\ad (z)}{z-\ko z}
 \end{bmatrix}.
 \eeq                
\end{nota}

\begin{prop} \label{Mi,L3.3}
 Let \(\alpha\in\R\), let \(\kappa\in\NOinf \), and let \(\seqska \) be a sequence of \tH{} complex \tqqa{matrices}.
 Let  \(f\colon\Cs\to\Cqq \) be a matrix-valued function.
 Further, for each \(k\in\NO \), let \(m_k\) be given by \eqref{N1435D} and let \(F_k\colon\Cs\to\Coo{(m_k +1)q}{(m_k +1)q}\)  be defined by  \rnota{Mi,B3.1}.
 For all \(k\in\mn{0}{\kappa}\), then there are functions \(\Gamma_k\colon\C \setminus \R \to\Coo{(m_k+2)q}{(2m_k +2)q}\) and \(\Delta_k\colon\C \setminus \R \to\Coo{(2m_k +2)q}{(m_k+2)q}\) such that \(P^{[f]}_k (z)=\Gamma_k(z) Q^{[f]}_k(z) \Gamma^\ad _k(z)\) and \(Q^{[f]}_k (z)=\Delta_k(z) P^{[f]}_k(z) \Delta^\ad _k(z)\) hold true for each \(z\in \C \setminus \R\).
\end{prop}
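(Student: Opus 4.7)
The plan is to exhibit explicit block-matrix representations of $\Gamma_k(z)$ and $\Delta_k(z)$ and verify both congruence identities by direct block-wise multiplication, handling the even and odd cases uniformly. For each $k\in\mn{0}{\kappa}$, writing $n\defeq m_k$, I would take
\[
 \Gamma_k(z) \defeq \begin{bmatrix} \Iu{(n+1)q} & \Ouu{(n+1)q}{(n+1)q} \\ \Ouu{q}{(n+1)q} & v_{q,n}^\ad \end{bmatrix}
 \quad\text{and}\quad
 \Delta_k(z) \defeq \begin{bmatrix} \Iu{(n+1)q} & \Ouu{(n+1)q}{q} \\ R_{T_{q,n}}(\ko z)\,T_{q,n} & R_{T_{q,n}}(\ko z)\,v_{q,n} \end{bmatrix}.
\]
Let $\tilde H_k$ denote the leading Hankel block ($H_n$ if $k=2n$, $H_{\at n}$ if $k=2n+1$) and $\tilde u_k$ the corresponding offset vector ($u_n$ resp. $-\alpha u_n - y_{0,n}$), so that both $P_k^{[f]}$ and $Q_k^{[f]}$ share the top-left block $\tilde H_k$.

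For the identity $P_k^{[f]}(z) = \Gamma_k(z)\,Q_k^{[f]}(z)\,\Gamma_k^\ad(z)$, the top-left block is immediate. The remaining three blocks reduce to evaluating $F_k(z)v_{q,n}$ (top-right and bottom-left by Hermitian symmetry) and $v_{q,n}^\ad\bigl(F_k(z) - F_k^\ad(z)\bigr)v_{q,n}/(z-\ko z)$ (bottom-right). These are handled by the elementary identities $T_{q,n}^\ad v_{q,n} = \Ouu{(n+1)q}{q}$ (top row of $T_{q,n}^\ad$ is zero), $v_{q,n}^\ad R_{T_{q,n}^\ad}(z) v_{q,n} = \Iq$, and $v_{q,n}^\ad u_n = \Oqq$ (since $s_{-1} = \Opq$), together with $v_{q,n}^\ad y_{0,n} = s_0$ in the odd case.

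For the identity $Q_k^{[f]}(z) = \Delta_k(z)\,P_k^{[f]}(z)\,\Delta_k^\ad(z)$, the top-left block is again immediate, and the top-right (together with the bottom-left by symmetry) matches the definition of $F_k(z)$ in \eqref{Nr.F2N}/\eqref{Nr.F2N+1} after using $[R_{T_{q,n}^\ad}(z)]^\ad = R_{T_{q,n}}(\ko z)$ from \rrem{21112N}. The substantive computation is the bottom-right block. Expanding, one obtains four summands, to which I would apply \rrem{D1407} with $w=z$ (to rewrite $R_{T_{q,n}}(\ko z) T_{q,n} R_{T_{q,n}}(z) = [R_{T_{q,n}}(z) - R_{T_{q,n}}(\ko z)]/(z-\ko z)$) and \rlem{D1451} with $w=z$ (to rewrite $R_{T_{q,n}}(\ko z) T_{q,n} \tilde H_k T_{q,n}^\ad R_{T_{q,n}^\ad}(z)$ in terms of lower-order quantities). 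The Hankel contributions then reassemble into $[F_k(z) - F_k^\ad(z)]/(z-\ko z)$, while the non-Hankel residue reduces to the algebraic identity
\[
 v_{q,n}\tilde u_k^\ad - \tilde u_k v_{q,n}^\ad - (v_{q,n} g_k - \tilde u_k)v_{q,n}^\ad + v_{q,n}(v_{q,n} g_k - \tilde u_k)^\ad + v_{q,n}(g_k - g_k^\ad)v_{q,n}^\ad = \Ouu{(n+1)q}{(n+1)q},
\]
where $g_k \defeq f(z)$ in the even case and $g_k \defeq (z-\alpha)f(z)$ in the odd case; this is verified by inspection through direct term-by-term cancellation.

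The main obstacle is bookkeeping in the bottom-right block of the $\Delta_k$-identity: each of the four summands $C\tilde H_k C^\ad$, $CrD^\ad$, $Dr^\ad C^\ad$, $D\,\tfrac{g_k - g_k^\ad}{z-\ko z}\,D^\ad$ (with $C = R_{T_{q,n}}(\ko z) T_{q,n}$, $D = R_{T_{q,n}}(\ko z) v_{q,n}$) splits into several pieces after the substitutions above, and one must carefully track that the pieces involving $\tilde H_k$ combine to give $[F_k(z) - F_k^\ad(z)]/(z-\ko z)$ while the remaining pieces cancel via the algebraic residue. The manipulation is elementary but requires care in managing conjugate pairs of resolvent factors $R_{T_{q,n}}(\ko z)$ versus $R_{T_{q,n}^\ad}(z)$, whose mutual transpose relation from \rrem{21112N} is used repeatedly.
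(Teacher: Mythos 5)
Your proposal is correct. The substantive half — the congruence $Q^{[f]}_k(z)=\Delta_k(z)P^{[f]}_k(z)\Delta_k^\ad(z)$ — uses exactly the paper's transformation matrix (your $R_{T_{q,n}}(\ko z)$ equals the paper's $[R_{T_{q,n}^\ad}(z)]^\ad$ by \rrem{21112N}) and the same two tools, \rrem{D1407} and \rlem{D1451} with $w=z$, to reassemble the bottom-right block; that part is essentially the paper's proof. The one genuine divergence is your choice of $\Gamma_k$: the paper takes $\Gamma_k(z)=\bigl[\begin{smallmatrix}\Iu{(n+1)q}&0\\ -v_{q,n}^\ad[R_{T_{q,n}^\ad}(z)]^\ad T_{q,n}&v_{q,n}^\ad\end{smallmatrix}\bigr]$, verifies $\Gamma_k(z)\Delta_k(z)=\Iu{(n+2)q}$, and gets the first identity for free from the second, whereas you take the constant compression $\Gamma_k=\diag(\Iu{(n+1)q},v_{q,n}^\ad)$ and verify $P^{[f]}_k=\Gamma_kQ^{[f]}_k\Gamma_k^\ad$ directly. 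Your route works and is arguably cleaner: the verification reduces to $T_{q,n}^\ad v_{q,n}=0$, $v_{q,n}^\ad R_{T_{q,n}}(z)=v_{q,n}^\ad$, $v_{q,n}^\ad u_n=\Oqq$, and $v_{q,n}^\ad y_{0,n}=s_0$, with the $s_0$ contribution cancelling in the difference quotient precisely because $s_0^\ad=s_0$ (which the Hermitian hypothesis supplies — worth stating explicitly, since this is the only place the hypothesis enters the $\Gamma$-half). The trade-off is that the paper's $\Gamma_k$ is a left inverse of $\Delta_k$, a fact it reuses later, while yours requires the short independent block computation but avoids any $z$-dependence in $\Gamma_k$.
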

\begin{proof}
 (I) In the trivial case  \(k=0\), choose  \(\Gamma_0\colon\C \setminus \R \to\Coo{2q}{2q}\) and \(\Delta_0\colon\C \setminus \R\to\Coo{2q}{ 2q}\) given by \(\Gamma_0 (z)\defeq \Iu{2q}\) and \(\Delta_0 (z)\defeq \Iu{2q}\).
 
 (II) Now we consider the case that \(\kappa\geq 1\) and that \(n\in\NO \) is such that \(2n+1\leq\kappa\).
 Let  \(\Delta_{2n+1} \colon\C \setminus\R \to \Coo{(2n+2)q}{(n+2)q}\) be defined by
\beql{Nr.D2N+1}
 \Delta_{2n+1}(z)
 \defeq
 \begin{bmatrix}
    \Iu{(n+1)q} &\Ouu{(n+1)q}{q} \\
    [R_{T_{q,n}^\ad }(z)]^\ad T_{q,n} & [R_{T_{q,n}^\ad }(z)]^\ad v_{q,n} 
    \end{bmatrix}
\eeq 
 and let \(\Gamma_{2n+1} \colon\C \setminus\R \to\Coo{(n+2)q}{(2n+2)q}\) be given by
\beql{Nr.G2N+1}
\Gamma_{2n+1}(z)
\defeq \begin{bmatrix}
    \Iu{(n+1)q} & \Ouu{(n+1)q}{(n+1)q} \\      
    -v_{q,n}^\ad [R_{T_{q,n}^\ad }(z)]^\ad T_{q,n} & v_{q,n}^\ad  
    \end{bmatrix}.
\eeq 
 Since \(s_j^\ad =s_j\) holds true for each \(j\in\mn{0}{\kappa}\), we have 
\beql{Nr.3.3.1-1}
H_ {\at{n}}^\ad  
= H_{\at{n}}.
\eeq 
 We consider an arbitrary \(z\in\C \setminus\R \).
 Let
\beql{Nr.3.3.1-2}
 B_{2n+1}(z)
 \defeq R_{T_{q,n}}(z) \ek*{v_{q,n}(z-\alpha) f(z) - (-\alpha u_n - y_{0,n})},
\eeq 
 let
\beql{N7.12N}
 C_{2n+1}(z)
 \defeq \frac{(z-\alpha)f(z) -[(z-\alpha)f(z)]^\ad }{z-\ko{z}},
\eeq 
 and let 
\beql{Nr.3.DPD}
 \Delta_{2n+1}(z) P^{[f]}_{2n+1}(z) \Delta_{2n+1}^\ad (z)
 =
 \begin{bmatrix}
  X_{2n+1}(z) & Y_{2n+1}(z) \\
  Z_{2n+1}(z) & W_{2n+1}(z) 
 \end{bmatrix}
\eeq 
 be the \taaa{(n+1)q}{(n+1)q}{block} representation of \(\Delta_{2n+1}(z) P^{[f]}_{2n+1}(z) \Delta_{2n+1}^\ad (z)\).
 Then 
\beql{Nr.3.3.2}
 P^{[f]}_{2n+1} (z)
 =
 \begin{bmatrix}
    H_{\at{n}} & B_{2n+1}(z) \\
    B_{2n+1}^\ad (z) & C_{2n+1}(z) 
 \end{bmatrix}.
\eeq 
 Using \eqref{Nr.3.DPD}, \eqref{Nr.D2N+1}, and \eqref{Nr.3.3.2}, straightforward calculations show that 
\begin{align} 
X_{2n+1}(z)&= H_{\at{n}},\label{Nr.3.3.3-1}\\
Y_{2n+1}(z)&= H_{\at{n}} T_{q,n}^\ad  R_{T_{q,n}^\ad }(z)+B_{2n+1}(z)v_{q,n}^\ad R_{T_{q,n}^\ad }(z),\label{Nr.3.3.3-2}\\
Z_{2n+1}(z)&=\ek*{R_{T_{q,n}^\ad }(z)}^\ad  T_{q,n} H_{\at{n}} +\ek*{R_{T_{q,n}^\ad }(z)}^\ad v_{q,n} B^\ad _{2n+1}(z),\label{Nr.3.3.3-3}
\end{align}
and
\begin{multline} \label{Nr.3.3.3-4}
 W_{2n+1}(z)  =\ek*{R_{T_{q,n}^\ad }(z)}^\ad T_{q,n} H_{\at{n}} T_{q,n}^\ad  R_{T_{q,n}^\ad }(z)  +\ek*{R_{T_{q,n}^\ad }(z)}^\ad T_{q,n} B_{2n+1}(z)v_{q,n}^\ad  R_{T_{q,n}^\ad }(z)\\
 +\ek*{R_{T_{q,n}^\ad }(z)}^\ad  v_{q,n} B^\ad _{2n+1}(z) T_{q,n}^\ad  R_{T_{q,n}^\ad }(z) +\ek*{R_{T_{q,n}^\ad }(z)}^\ad  v_{q,n} C_{2n+1}(z) v_{q,n}^\ad R_{T_{q,n}^\ad }(z) 
\end{multline}
 hold true.
 Because of \eqref{Nr.3.3.3-3}, \eqref{Nr.3.3.1-2}, and \eqref{Nr.F2N+1}, we see that
\begin{multline} \label{Nr.3.3.4}
 Y_{2n+1}(z)
 = H_{\at{n}} T_{q,n}^\ad  R_{T_{q,n}^\ad }(z)+R_{T_{q,n}}(z)\ek*{v_{q,n}(z-\alpha)f(z)-(-\alpha u_n -y_{0,n})}v_{q,n}^\ad R_{T_{q,n}^\ad }(z)\\
 =F_{2n+1}(z)
\end{multline}
 is valid.
 From \eqref{Nr.3.3.3-3}, \eqref{Nr.3.3.1-1}, \eqref{Nr.3.3.3-2}, and \eqref{Nr.3.3.4} we obtain then
\beql{Nr.3.3.5}
Z_{2n+1}(z)
=Y_{2n+1}^\ad (z)
=F_{2n+1}^\ad (z).
\eeq 
 Using \eqref{Nr.3.3.3-4}, it follows
\begin{multline} \label{Nr.3.3.6}
 W_{2n+1}(z)  =\ek*{R_{T_{q,n}^\ad }(z)}^\ad T_{q,n} H_{\at{n}} T_{q,n}^\ad  R_{T_{q,n}^\ad }(z)    
+\ek*{R_{T_{q,n}^\ad }(z)}^\ad T_{q,n}B_{2n+1}(z)v_{q,n}^\ad  R_{T_{q,n}^\ad }(z)\\
 +\rk*{\ek*{R_{T_{q,n}^\ad }(z)}^\ad T_{q,n}B_{2n+1}(z)v_{q,n}^\ad  R_{T_{q,n}^\ad }(z)}^\ad  
 +\ek*{R_{T_{q,n}^\ad }(z)}^\ad v_{q,n}C_{2n+1}(z) v_{q,n}^\ad R_{T_{q,n}^\ad }(z).
\end{multline}
 In view of \rlem{D1451}, we have
\beql{Nr.3.3.7}
 \begin{split}
 &(z-\ko z)\ek*{R_{T_{q,n}^\ad }(z)}^\ad T_{q,n} H_{\at{n}} T_{q,n}^\ad  R_{T_{q,n}^\ad }(z) \\
 &= H_{\at{n}} T_{q,n}^\ad  R_{T_{q,n}^\ad }(z)-\ek*{R_{T_{q,n}^\ad }(z)}^\ad T_{q,n} H_{\at{n}}\\
 &\qquad+\ek*{R_{T_{q,n}^\ad }(z)}^\ad \ek*{v_{q,n}(-\alpha u_n -y_{0,n})^\ad -(-\alpha u_n -y_{0,n})v_{q,n}^\ad }R_{T_{q,n}^\ad }(z).
 \end{split}
\eeq 
 By virtue of \eqref{Nr.3.3.1-2}, \rrem{D1407}, and \eqref{Nr.F2N+1}, we conclude
\beql{Nr.3.3.8}
 \begin{split}
  &(z-\ko z)\ek*{R_{T_{q,n}^\ad }(z)}^\ad T_{q,n} B_{2n+1}(z) v_{q,n}^\ad  R_{T_{q,n}^\ad }(z)\\
  &=(z-\ko z)\ek*{R_{T_{q,n}^\ad }(z)}^\ad T_{q,n}R_{T_{q,n}}(z)\ek*{v_{q,n}(z-\alpha)f(z)-(-\alpha u_n -y_{0,n})}v_{q,n}^\ad R_{T_{q,n}^\ad }(z)\\
  &=\rk*{R_{T_{q,n}}(z)-\ek*{R_{T_{q,n}^\ad }(z)}^\ad }\ek*{v_{q,n}(z-\alpha)f(z)-(-\alpha u_n -y_{0,n})}v_{q,n}^\ad R_{T_{q,n}^\ad }(z)\\
  &=R_{T_{q,n}}(z)\ek*{v_{q,n}(z-\alpha)f(z)-(-\alpha u_n -y_{0,n})}v_{q,n}^\ad R_{T_{q,n}^\ad }(z)\\
  &\qquad-\ek*{R_{T_{q,n}^\ad }(z)}^\ad \ek*{v_{q,n}(z-\alpha)f(z)-(-\alpha u_n -y_{0,n})}v_{q,n}^\ad R_{T_{q,n}^\ad }(z)\\
  &=F_{2n+1}(z)- H_{\at{n}} T_{q,n}^\ad  R_{T_{q,n}^\ad }(z)\\
  &\qquad-\ek*{R_{T_{q,n}^\ad }(z)}^\ad \ek*{v_{q,n}(z-\alpha)f(z)-(-\alpha u_n -y_{0,n})}v_{q,n}^\ad R_{T_{q,n}^\ad }(z)\\
  &=F_{2n+1}(z)- H_{\at{n}} T_{q,n}^\ad  R_{T_{q,n}^\ad }(z)-\ek*{R_{T_{q,n}^\ad }(z)}^\ad v_{q,n}(z-\alpha)f(z)v_{q,n}^\ad R_{T_{q,n}^\ad }(z)\\
  &\qquad+\ek*{R_{T_{q,n}^\ad }(z)}^\ad (-\alpha u_n -y_{0,n})v_{q,n}^\ad R_{T_{q,n}^\ad }(z),
 \end{split}
\eeq 
 which implies
\begin{multline} \label{Nr.3.3.9}
 (z-\ko z)\rk*{\ek*{R_{T_{q,n}^\ad }(z)}^\ad T_{q,n} B_{2n+1}(z) v_{q,n}^\ad  R_{T_{q,n}^\ad }(z)}^\ad\\
 =-F_{2n+1}^\ad (z)+\ek*{R_{T_{q,n}^\ad }(z)}^\ad  T_{q,n} H_{\at{n}}^\ad  + \ek*{R_{T_{q,n}^\ad }(z)}^\ad v_{q,n}\ek*{(z-\alpha)f(z)}^\ad v_{q,n}^\ad R_{T_{q,n}^\ad }(z)\\
 -\ek*{R_{T_{q,n}^\ad }(z)}^\ad v_{q,n}(-\alpha u_n -y_{0,n})^\ad R_{T_{q,n}^\ad }(z).
\end{multline}
 Taking into account \eqref{N7.12N} we get 
\begin{multline} \label{Nr.3.3.10}
 (z-\ko z)\ek*{R_{T_{q,n}^\ad }(z)}^\ad v_{q,n} C_{2n+1}(z) v_{q,n}^\ad  R_{T_{q,n}^\ad }(z)\\
 =\ek*{R_{T_{q,n}^\ad }(z)}^\ad v_{q,n}(z-\alpha)f(z)v_{q,n}^\ad R_{T_{q,n}^\ad }(z)\\
 -\ek*{R_{T_{q,n}^\ad }(z)}^\ad v_{q,n}\ek*{(z-\alpha)f(z)}^\ad v_{q,n}^\ad R_{T_{q,n}^\ad }(z),
\end{multline}
 and, taking \eqref{Nr.3.3.6}, \eqref{Nr.3.3.7}, \eqref{Nr.3.3.8}, \eqref{Nr.3.3.9}, \eqref{Nr.3.3.10}, and  \eqref{Nr.3.3.1-1} into account, furthermore
\beql{Nr.3.3.11}
 \begin{split}
  &(z-\ko z)W_{2n+1}(z) \\
  &=(z-\ko z)\ek*{R_{T_{q,n}^\ad }(z)}^\ad T_{q,n}  H_{\at{n}} T_{q,n}^\ad  R_{T_{q,n}^\ad }(z)\\
  &\qquad+(z-\ko z)\ek*{R_{T_{q,n}^\ad }(z)}^\ad T_{q,n}B_{2n+1}(z)v_{q,n}^\ad  R_{T_{q,n}^\ad }(z)\\
  &\qquad+(z-\ko z)\rk*{\ek*{R_{T_{q,n}^\ad }(z)}^\ad T_{q,n}B_{2n+1}(z)v_{q,n}^\ad  R_{T_{q,n}^\ad }(z)}^\ad\\
  &\qquad+(z-\ko z)\ek*{R_{T_{q,n}^\ad }(z)}^\ad v_{q,n}C_{2n+1}(z) v_{q,n}^\ad R_{T_{q,n}^\ad }(z)\\
  &= H_{\at{n}} T_{q,n}^\ad  R_{T_{q,n}^\ad }(z)-\ek*{R_{T_{q,n}^\ad }(z)}^\ad T_{q,n} H_{\at{n}}\\
  &\qquad+\ek*{R_{T_{q,n}^\ad }(z)}^\ad \ek*{v_{q,n}(-\alpha u_n -y_{0,n})^\ad -(-\alpha u_n -y_{0,n})v_{q,n}^\ad }R_{T_{q,n}^\ad }(z)+F_{2n+1}(z)\\
  &\qquad-  H_{\at{n}} T_{q,n}^\ad  R_{T_{q,n}^\ad }(z)-\ek*{R_{T_{q,n}^\ad }(z)}^\ad v_{q,n}(z-\alpha)f(z)v_{q,n}^\ad R_{T_{q,n}^\ad }(z)\\
  &\qquad+\ek*{R_{T_{q,n}^\ad }(z)}^\ad (-\alpha u_n -y_{0,n})v_{q,n}^\ad R_{T_{q,n}^\ad }(z)-F_{2n+1}^\ad (z)\\
  &\qquad+\ek*{R_{T_{q,n}^\ad }(z)}^\ad  T_{q,n}  H_{\at{n}}^\ad+\ek*{R_{T_{q,n}^\ad }(z)}^\ad v_{q,n}\ek*{(z-\alpha)f(z)}^\ad v_{q,n}^\ad R_{T_{q,n}^\ad }(z)\\
  &\qquad-\ek*{R_{T_{q,n}^\ad }(z)}^\ad v_{q,n}(-\alpha u_n -y_{0,n})^\ad R_{T_{q,n}^\ad }(z)  %
 +\ek*{R_{T_{q,n}^\ad }(z)}^\ad v_{q,n}(z-\alpha)f(z)v_{q,n}^\ad R_{T_{q,n}^\ad }(z)\\
  &\qquad-\ek*{R_{T_{q,n}^\ad }(z)}^\ad v_{q,n}\ek*{(z-\alpha)f(z)}^\ad v_{q,n}^\ad R_{T_{q,n}^\ad }(z)\\
  &=F_{2n+1}(z)-F_{2n+1}^\ad (z).
 \end{split}
\eeq 
 From \eqref{Nr.3.DPD}, \eqref{Nr.3.3.3-1}, \eqref{Nr.3.3.4}, \eqref{Nr.3.3.5}, \eqref{Nr.3.3.11},  and \eqref{Nr.Q2N+1} we infer
\beql{Nr.3.3.12}
 \Delta_{2n+1}(z) P^{[f]}_{2n+1}(z) \Delta_{2n+1}^\ad (z)
 = Q^{[f]}_{2n+1}(z).
\eeq 
 In view of \(v_{q,n}^\ad [R_{T_{q,n}^\ad }^\ad (z)]v_{q,n}=  \Iq\), we easily see that the matrices \(\Gamma_{2n+1} (z)\) and \(\Delta_{2n+1}(z)\) given by \eqref{Nr.G2N+1} and  \eqref{Nr.D2N+1} obviously fulfill
\beql{Nr.3.GD}
 \Gamma_{2n+1}(z)\Delta_{2n+1}(z)
 = \Iu{(n+2)q}.
\eeq 
 Thus, because of  \eqref{Nr.3.3.12}, we obtain
\begin{equation*}
 \begin{split}
  P^{[f]}_{2n+1}(z)
  &= \Iu{(n+2)q} P^{[f]}_{2n+1}(z) \Iu{(n+2)q}^\ad\\
  &=\Gamma_{2n+1}(z)\Delta_{2n+1}(z)P^{[f]}_{2n+1}(z)\Gamma_{2n+1}^\ad (z)\Delta_{2n+1}^\ad (z)
  =\Gamma_{2n+1}(z)Q^{[f]}_{2n+1}(z)\Gamma_{2n+1}^\ad (z). 
 \end{split}
\end{equation*}
 In this case \(k=2n+1\) with some \(n\in\NO \), the proof is complete.
 
 (III) Now  we consider the case that \(\kappa \ge 2\) and that there is an \(n\in\N \) such that \(k=2n\).
 Let \(\Gamma_{2n} \defeq  \Gamma_{2n+1}\) and let \(\Delta_{2n} \defeq  \Delta_{2n+1}\).
 We consider again an arbitrary \(z\in\C \setminus\R \).
 Let
\beql{Nr.3.3.15}
\Delta_{2n}(z) P^{[f]}_{2n}(z) \Delta_{2n}^\ad (z)
=
\begin{bmatrix}
    X_{2n}(z) & Y_{2n}(z) \\ 
    Z_{2n}(z) & W_{2n}(z) 
    \end{bmatrix}
\eeq  
 be the \taaa{(n+1)q}{(n+1)q}{block} representation of \(\Delta_{2n}(z) P^{[f]}_{2n}(z) \Delta_{2n}^\ad (z)\).
 Setting
\begin{align} \label{Nr.3.3.16}
 B_{2n}(z)&\defeq R_{T_{q,n}}(z)\ek*{v_{q,n} f(z)- u_n}&
&\text{and}&
 C_{2n}(z)&\defeq \frac{f(z)-f^\ad (z)}{z- \ko z},
\end{align}
 we have 
\beql{Nr.3.3.18}
P^{[f]}_{2n}(z)
=
\begin{bmatrix}
    \Hu{n} & B_{2n}(z) \\
    B_{2n}^\ad (z) & C_{2n}(z) 
    \end{bmatrix}.
\eeq 
 From \eqref{Nr.3.3.15} and  \eqref{Nr.3.3.18} we easily see then that
\begin{align} 
 X_{2n}(z)&=H_n,\qquad Y_{2n}(z)=H_n T_{q,n}^\ad  R_{T_{q,n}^\ad }(z)+B_{2n}(z)v_{q,n}^\ad  R_{T_{q,n}^\ad }(z) \label{Nr.3.3.19-1},\\
Z_{2n}(z)&=\ek*{R_{T_{q,n}^\ad }(z)}^\ad T_{q,n}H_n+\ek*{R_{T_{q,n}^\ad }(z)}^\ad v_{q,n}B_{2n}^\ad (z),\label{Nr.3.3.19-3} 
\end{align}
and
\begin{multline}
W_{2n}(z)
=\ek*{R_{T_{q,n}^\ad }(z)}^\ad T_{q,n}H_n T_{q,n}^\ad R_{T_{q,n}^\ad }(z)+\ek*{R_{T_{q,n}^\ad }(z)}^\ad v_{q,n}B_{2n}^\ad (z)T_{q,n}^\ad R_{T_{q,n}^\ad }(z)\\
+\ek*{R_{T_{q,n}^\ad }(z)}^\ad T_{q,n}B_{2n}(z)v^\ad _{q,n}R_{T_{q,n}^\ad }(z)+\ek*{R_{T_{q,n}^\ad }(z)}^\ad v_{q,n}C_{2n}(z)v_{q,n}^\ad R_{T_{q,n}^\ad }(z)  \label{Nr.3.3.19-4}
\end{multline}
 hold true.
 Because of \eqref{Nr.3.3.19-1}, \eqref{Nr.3.3.16}, and \eqref{Nr.F2N}, we obtain
\beql{Nr.3.3.20}
 Y_{2n}(z)
 =H_n T_{q,n}^\ad  R_{T_{q,n}^\ad }(z)+R_{T_{q,n}}(z)\ek*{v_{q,n} f(z)-u_n}v_{q,n}^\ad R_{T_{q,n}^\ad }(z)
 = F_{2n}(z).
\eeq 
 Since \(s_j^\ad =s_j\) is supposed for each \(j\in\mn{0}{\kappa}\), we get \(H_n^\ad =H_n\).
 Consequently, in view of  \eqref{Nr.3.3.19-3},  \eqref{Nr.3.3.19-1}, and \eqref{Nr.3.3.20},  then
\beql{Nr.3.3.22}
 Z_{2n}(z)
 = Y_{2n}^\ad (z)
 =F_{2n}^\ad (z)
\eeq 
 follows.
 By virtue of  \eqref{Nr.3.3.19-4} and \eqref{Nr.3.3.16}, we see that
\begin{multline} \label{Nr.3.3.24}
 W_{2n}(z)  
 =\ek*{R_{T_{q,n}^\ad }(z)}^\ad T_{q,n}H_n T_{q,n}^\ad R_{T_{q,n}^\ad }(z)\\
 +\ek*{R_{T_{q,n}^\ad }(z)}^\ad v_{q,n}\ek*{f^\ad (z) v^\ad _{q,n} - u_n^\ad} \ek*{R_{T_{q,n}}(z)}^\ad T_{q,n}^\ad R_{T_{q,n}^\ad }(z)\\
 +\ek*{R_{T_{q,n}^\ad }(z)}^\ad T_{q,n} R_{T_{q,n}}(z)\ek*{v_{q,n} f(z)- u_n} v^\ad _{q,n}R_{T_{q,n}^\ad }(z)\\
 +\ek*{R_{T_{q,n}^\ad }(z)}^\ad v_{q,n}\ek*{\frac{f(z)-f^\ad (z)}{z- \ko z}}v_{q,n}^\ad R_{T_{q,n}^\ad }(z) 
\end{multline}
 holds true.
 Taking into account  \eqref{Nr.3.3.24} and \rrem{D1407}, we conclude
\begin{multline} \label{Nr.3.3.24-3}
 W_{2n}(z)
 =\ek*{R_{T_{q,n}^\ad }(z)}^\ad T_{q,n}H_n T_{q,n}^\ad R_{T_{q,n}^\ad }(z)\\
 +\ek*{R_{T_{q,n}^\ad }(z)}^\ad v_{q,n}\ek*{f^\ad (z) v^\ad _{q,n} - u_n^\ad} \ek*{ \frac{1}{z- \ko z} \rk*{R_{T_{q,n}^\ad }(z)-\ek*{R_{T_{q,n}}(z)}^\ad }}\\
 +\ek*{ \frac{1}{z- \ko z} \rk*{R_{T_{q,n}}(z)-\ek*{R_{T_{q,n}^\ad }(z)}^\ad }} \ek*{v_{q,n} f(z)- u_n} v^\ad _{q,n}R_{T_{q,n}^\ad }(z)\\
 +\ek*{R_{T_{q,n}^\ad }(z)}^\ad v_{q,n}\ek*{\frac{f(z)-f^\ad (z)}{z- \ko z}}v_{q,n}^\ad R_{T_{q,n}^\ad }(z). 
\end{multline}
 Using \rlem{D1451},  the equation \(H_n^\ad = H_n\), \eqref{Nr.F2N}, and \eqref{Nr.3.3.24-3}, we infer
\beql{Nr.3.3.28}
 \begin{split}
 W_{2n}(z)
 &=\frac{1}{z- \ko z}\biggl\{H_n T_{q,n}^\ad R_{T_{q,n}^\ad }(z) - \ek*{R_{T_{q,n}^\ad }(z)}^\ad T_{q,n}H_n\\
 &\qquad+\ek*{R_{T_{q,n}^\ad }(z)}^\ad  (v_{q,n} u_n^\ad  - u_n v_{q,n}^\ad )R_{T_{q,n}^\ad }(z)\\
 &\qquad+\ek*{R_{T_{q,n}^\ad }(z)}^\ad v_{q,n}\ek*{f^\ad (z) v^\ad _{q,n} - u_n^\ad}\rk*{R_{T_{q,n}^\ad }(z)-\ek*{R_{T_{q,n}}(z)}^\ad }\\
 &\qquad+\rk*{R_{T_{q,n}}(z)-\ek*{R_{T_{q,n}^\ad }(z)}^\ad}\ek*{v_{q,n} f(z)- u_n} v^\ad _{q,n}R_{T_{q,n}^\ad }(z)\\
 &\qquad+\ek*{R_{T_{q,n}^\ad }(z)}^\ad v_{q,n}[f(z)-f^\ad (z)]v_{q,n}^\ad R_{T_{q,n}^\ad }(z)\biggr\}\\
 &=\frac{1}{z- \ko z}\biggl\{H_n T_{q,n}^\ad R_{T_{q,n}^\ad }(z) + R_{T_{q,n}}(z) \ek*{v_{q,n} f(z)- u_n } v_{q,n}^\ad  R_{T_{q,n}^\ad }(z)\\
 &\qquad-\rk*{H_n T_{q,n}^\ad R_{T_{q,n}^\ad }(z) + R_{T_{q,n}}(z) \ek*{v_{q,n} f(z) - u_n}v_{q,n}^\ad  R_{T_{q,n}^\ad }(z)}^\ad\biggr\}\\
 &=\frac{1}{z- \ko z} \ek*{F_{2n}(z)-F_{2n}^\ad (z) }. 
 \end{split}
\eeq 
 Thus, \eqref{Nr.3.3.15}, the first equation in  \eqref{Nr.3.3.19-1}, \eqref{Nr.3.3.20}, \eqref{Nr.3.3.22},  \eqref{Nr.3.3.28}, and \eqref{Nr.Q2N} show that
\beql{Nr.3.3.29}
 \Delta_{2n}(z) P^{[f]}_{2n}(z) \Delta_{2n}^\ad (z)
   =
   \begin{bmatrix}
    \Hu{n} & F_{2n}(z) \\
    F_{2n}^\ad (z) & \frac{F_{2n}(z) -F_{2n}^\ad (z)}{z-\ko{z}}
    \end{bmatrix}
    =Q_{2n}^{[f]}(z)
\eeq 
 is valid.
 Because of \(\Gamma_{2n} =\Gamma_{2n+1}\) and \(\Delta_{2n} =\Delta_{2n+1}\), equation~\eqref{Nr.3.GD} implies \(\Gamma_{2n}(z)\Delta_{2n}(z)= \Iu{(n+2)q}\).
 Consequently, from \eqref{Nr.3.3.29} we get 
\begin{equation*}
 \begin{split}
 P^{[f]}_{2n}(z)
 &=\Iu{(n+2)q} P^{[f]}_{2n}(z) \Iu{(n+2)q}
 =\Gamma_{2n}(z)\Delta_{2n}(z) P^{[f]}_{2n}(z) \ek*{\Gamma_{2n}(z)\Delta_{2n}(z)}^\ad\\
 &=\Gamma_{2n}(z)\Delta_{2n}(z) P^{[f]}_{2n}(z) \Delta_{2n}^\ad (z)\Gamma_{2n}^\ad (z)
 =\Gamma_{2n}(z)Q^{[f]}_{2n}(z) \Gamma_{2n}^\ad (z).\qedhere
 \end{split}
\end{equation*}
\end{proof}

\begin{lem} \label{Mi,L3.4}
 Let \(\alpha\in\R\), let \(\kappa\in\NOinf \), and let \(\seqska \) be a sequence from \(\Cqq \).
 Let  \(f\colon\Cs\to\Cqq \) be a holomorphic matrix-valued function.
 Then: 
\begin{enui}
 \item\label{Mi,L3.4.a} Let \(n \in \NO \) be such that  \(2n \leq \kappa\).
 If  \(P_{2n}^{[f]}(z) \in\Cggo{(n+2)q}\) holds true for each \(z\in\C \setminus\R \), then \(F_{2n}\colon\uhp \to\Coo{(n+1)q}{(n+1)q}\) given by \eqref{Nr.F2N} belongs to \(\RFO{(n+1)q}\) and the matricial spectral measure \(\mu_{2n}\) of \(F_{2n}\) fulfills  \(\mu_{2n}(\R) \lleq H_n\).
 \item\label{Mi,L3.4.b} Let \(n \in \NO \) be such that \(2n+1 \leq \kappa\).
 If \(P_{2n+1}^{[f]}(z)\in\Cggo{(n+2)q}\) for each \(z\in\C\setminus\R\), then \(F_{2n+1}\colon\uhp\to\Coo{(n+1)q}{(n+1)q}\) defined by \eqref{Nr.F2N+1} belongs to \(\RFO{(n+1)q}\) and the matricial spectral measure \(\mu_{2n+1}\) of \(F_{2n+1}\) fulfills \(\mu_{2n+1}(\R) \lleq H_{\at n}\).
\end{enui}
\end{lem}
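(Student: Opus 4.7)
The plan is to derive both parts from \rprop{Mi,L3.3} together with \rlem{N1219DN}, which already encapsulates exactly the transition from a Potapov-type block inequality to membership in \(\RFOq\) with the appropriate control of the Stieltjes measure.

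First, I would apply \rprop{Mi,L3.3} to convert the hypothesis on \(P_{2n}^{[f]}\) into a statement about \(Q_{2n}^{[f]}\). That proposition supplies a matrix-valued function \(\Delta_{2n}\) with
\[
 Q_{2n}^{[f]}(z) = \Delta_{2n}(z)\, P_{2n}^{[f]}(z)\, \Delta_{2n}^\ad(z) \qquad\text{for all } z\in\C\setminus\R,
\]
so the assumption \(P_{2n}^{[f]}(z)\in\Cggo{(n+2)q}\) yields immediately \(Q_{2n}^{[f]}(z)\in\Cggo{2(n+1)q}\) for every \(z\in\C\setminus\R\).

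Next, I would read off the block structure of \(Q_{2n}^{[f]}\) given by \eqref{Nr.Q2N}. Since \(f\) is holomorphic on \(\Cs\supseteq\uhp\) and the factors \(R_{T_{q,n}}(\cdot)\), \(R_{T_{q,n}^\ad}(\cdot)\) are matrix polynomials, the restriction \(\Rstr_\uhp F_{2n}\) is holomorphic. The \((1,1)\)-block is constantly \(H_n\), the off-diagonal blocks are \(F_{2n}(z)\) and \(F_{2n}^\ad(z)\), and the \((2,2)\)-block is \(\frac{F_{2n}(z)-F_{2n}^\ad(z)}{z-\ko z}\). This is exactly the configuration required by \rlem{N1219DN} with \(M\defeq H_n\). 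Thus \rlem{N1219DN} yields \(\Rstr_\uhp F_{2n}\in\RFO{(n+1)q}\) and the bound \(\mu_{2n}(\R)\lleq H_n\) for its Stieltjes measure, which by definition coincides with the matricial spectral measure of \(F_{2n}\).

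Part~\eqref{Mi,L3.4.b} is proved by the same two-step recipe, now using the representation \eqref{Nr.F2N+1} for \(F_{2n+1}\) and the block formula \eqref{Nr.Q2N+1}, with \(H_{\at n}\) replacing \(H_n\). Given the substantial preparatory work already carried out in \rprop{Mi,L3.3} and the ready-made conclusion of \rlem{N1219DN}, no step presents a genuine obstacle; the only point deserving a line of attention is that \(F_{2n}\) and \(F_{2n+1}\) are holomorphic on \(\uhp\), which is immediate from the holomorphy of \(f\) on \(\Cs\) and the polynomial nature of the remaining factors.
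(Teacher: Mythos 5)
Your proof is correct and follows essentially the same route as the paper: pass from \(P_{k}^{[f]}\) to \(Q_{k}^{[f]}\) via \rprop{Mi,L3.3}, observe that \(F_{k}\) is holomorphic in \(\uhp\) because \(R_{\Tqn}\) and \(R_{\Tqn^\ad}\) are matrix polynomials, and then invoke \rlem{N1219DN} with \(M=H_n\) (resp.\ \(M=H_{\at n}\)). The only detail worth a line that the paper includes and you omit is that \rprop{Mi,L3.3} is stated for sequences of \emph{Hermitian} matrices, so one should first note that \(P_{k}^{[f]}(z)\lgeq\NM\) forces \(H_n\lgeq\NM\) (resp.\ \(H_{\at n}\lgeq\NM\)) and hence \(s_j^\ad=s_j\) for the relevant indices \(j\).
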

\begin{proof}
 \eqref{Mi,L3.4.a} Let \(z\in\uhp \).
 Suppose that the matrix \(P_{2n}^{[f]} (z)\) is \tnnH{}.
 From \eqref{def-P2n} we see then that \(H_n\) is \tnnH{} as well.
 In particular, \(H_n^\ad = H_n\).
 This implies \(s_j^\ad = s_j\) for each \(j\in\mn{0}{2n}\).
 According to \rprop{Mi,L3.3}, there is a function \(\Delta_{2n} \colon\C \setminus\R \to\Coo{(2n+2)q}{(n+2)q}\) such that \(\Delta_{2n}(z) P^{[f]}_{2n}(z) \Delta_{2n}^\ad  (z) = Q^{[f]}_{2n}(z) \).
 Thus, since \( P^{[f]}_{2n}(z) \) is \tnnH{}, the matrix \( Q^{[f]}_{2n}(z) \) is \tnnH{} as well.
 Taking \rnota{Mi,B3.1} into account, this means that the block matrix on the right-hand side of \eqref{Nr.Q2N} is \tnnH{}.
 Since \(R_{T_{q,n}}\) and \(R_{T_{q,n}^\ad}\) are matrix polynomials, we see  from \eqref{Nr.F2N} that \(F_{2n}\) is holomorphic in \(\uhp \), and, thus, the application of \rlem{N1219DN} yields \(F_{2n} \in\RFO{(n+1)q}\) and \(\mu_{2n} (\R )\lleq H_n\).
 
 \eqref{Mi,L3.4.b}  \rPart{Mi,L3.4.b} can be proved analogously.
 We omit the details.
\end{proof}

\begin{lem} \label{Mi,L3.5}  
 Let  \(\alpha \in \R\), let \(f\colon\Cs\to\Cqq\) be a matrix-valued function, let  \( \kappa \in \NOinf \), and let  \(\seqska \) be a sequence of \tH{} complex \tqqa{matrices}.
 Then:
\begin{enui}
  \item\label{Mi,L3.5.a} Let \(n \in \NO  \) be such that \(2n \leq \kappa\), let \(F_{2n}\colon\uhp \to \Coo{(n+1)q}{(n+1)q}\) be defined by \eqref{Nr.F2N}, and let  \(\Psi_{2n}\colon\C \to\Coo{(n+1)q}{(n+1)q}\) be given by 
 \beql{Nr.PH2NN}
  \Psi_{2n}(z)\defeq R_{\Tqn}(z)(H_n\Tqn^\ad -u_n v_{q,n}^\ad -z\Tqn H_n\Tqn^\ad  )R_{\Tqn^\ad }(z).
 \eeq 
 Then  \(\Psi_{2n}\) is a continuous matrix-valued function such that  \(\Psi_{2n}(\R)\subseteq \CHo{(n+1)q}\).
 In view of \eqref{3.64.-1}, furthermore,
 \begin{align}\label{N1439D}
  F_{2n}(z)&=\Psi_{2n}(z) + E_{q,n}(z)f(z)E_{q,n}^\ad (\ko z)&\text{for each }z&\in \uhp.
 \end{align}
 \item\label{Mi,L3.5.b} Let \(n \in \NO  \) be such that \(2n+1 \leq \kappa\), let \(F_{2n+1}\colon\uhp \to \Coo{(n+1)q}{(n+1)q}\) be defined by  \eqref{Nr.F2N+1}, and let \(\Psi_{2n+1}\colon\C \to\Coo{(n+1)q}{(n+1)q}\) be given by
\beql{Nr.PH2N+1}
 \Psi_{2n+1}(z)
 \defeq  R_{\Tqn}(z)\ek*{H_{\at n}\Tqn^\ad -(-\alpha u_n-y_{0,n})v_{q,n}^\ad  -z\Tqn H_{\at n}\Tqn^\ad }R_{\Tqn^\ad }(z).
\eeq 
 Then \(\Psi_{2n+1}\) is a continuous matrix-valued function such that  \(\Psi_{2n+1}(\R)\subseteq \CHo{(n+1)q}\).
  Moreover,
 \begin{align*} %
  F_{2n+1}(z)&=\Psi_{2n+1}(z) + E_{q,n}(z)\ek*{(z-\alpha)f(z)}E_{q,n}^\ad (\ko z)&\text{for each }z&\in \uhp.
 \end{align*}
\end{enui}
\end{lem}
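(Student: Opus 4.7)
The plan is to verify the asserted decomposition of $F_{2n}$ and its analogue for $F_{2n+1}$ by a direct algebraic computation from the definitions, after which continuity and Hermiticity on $\R$ for $\Psi_{2n}$ and $\Psi_{2n+1}$ follow from the fact that they are matrix polynomials. For part~\eqref{Mi,L3.5.a}, I would subtract \eqref{Nr.PH2NN} from \eqref{Nr.F2N}. The $f$\nobreakdash-dependent part of the difference is $R_{\Tqn}(z)v_{q,n}f(z)v_{q,n}^\ad R_{\Tqn^\ad}(z)$, which collapses to $E_{q,n}(z)f(z)E_{q,n}^\ad(\ko z)$ via the identity $R_{\Tqn}(z)v_{q,n}=E_{q,n}(z)$ recorded after \eqref{3.64.-1} together with $R_{\Tqn^\ad}(z)=[R_{\Tqn}(\ko z)]^\ad$ from \rrem{21112N}. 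The $f$\nobreakdash-independent part combines to
\[
 [\Iu{(n+1)q}-R_{\Tqn}(z)+zR_{\Tqn}(z)T_{q,n}]\, H_n T_{q,n}^\ad R_{\Tqn^\ad}(z),
\]
whose bracket vanishes because $R_{\Tqn}(z)(\Iu{(n+1)q}-zT_{q,n})=\Iu{(n+1)q}$ by definition of $R_{\Tqn}$.

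Continuity of $\Psi_{2n}$ is immediate, since $R_{\Tqn}$ and $R_{\Tqn^\ad}$ are matrix polynomials on $\C$ by \rrem{21112N}, so $\Psi_{2n}$ is itself a matrix polynomial. For Hermiticity on $\R$, I fix $z\in\R$; then $R_{\Tqn^\ad}(z)=[R_{\Tqn}(z)]^\ad$, and using $H_n^\ad=H_n$ (a consequence of $s_j^\ad=s_j$, see \rrem{ML415}), the equality $\Psi_{2n}^\ad(z)=\Psi_{2n}(z)$ reduces to the Ljapunov relation $H_nT_{q,n}^\ad-T_{q,n}H_n=u_nv_{q,n}^\ad-v_{q,n}u_n^\ad$, which is precisely \rremp{lemC21-1}{lemC21-1.a}. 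Part~\eqref{Mi,L3.5.b} follows the same scheme verbatim, with $H_n$ replaced by $H_{\at n}$, the vector $u_n$ by $-\alpha u_n-y_{0,n}$, and $f(z)$ by $(z-\alpha)f(z)$; the closing symmetry step then invokes \rremp{lemC21-1}{lemC21-1.b} in place of \rremp{lemC21-1}{lemC21-1.a}, together with $H_{\at n}^\ad=H_{\at n}$ from \rrem{ML415}.

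The main obstacle is organizational rather than conceptual: the expressions are long and the cancellations require careful bookkeeping, but no new idea beyond the two structural identities in \rrem{21112N} and \rrem{lemC21-1} is needed. In effect, the formulas \eqref{Nr.PH2NN} and \eqref{Nr.PH2N+1} are engineered precisely so that the $f$\nobreakdash-independent part of $F_k-\Psi_k$ is annihilated by the trivial identity $\Iu{(n+1)q}-R_{\Tqn}(z)(\Iu{(n+1)q}-zT_{q,n})=\NM$, leaving only the single $f$\nobreakdash-involving tensor that yields \eqref{N1439D} (and its \eqref{Mi,L3.5.b}-analogue).
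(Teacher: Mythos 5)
Your proof is correct and follows essentially the same route as the paper: the decomposition is obtained by the identity $R_{\Tqn}(z)(\Iu{(n+1)q}-z\Tqn)=\Iu{(n+1)q}$ together with $R_{\Tqn}(z)v_{q,n}=E_{q,n}(z)$ and $R_{\Tqn^\ad}(z)=[R_{\Tqn}(\ko z)]^\ad$, and the Hermiticity of $\Psi_{2n}$ on $\R$ is reduced to the Ljapunov identity of \rremp{lemC21-1}{lemC21-1.a} (\tresp{}\ \rremp{lemC21-1}{lemC21-1.b} for part~\eqref{Mi,L3.5.b}), exactly as in the paper. The only cosmetic difference is that you subtract $\Psi_{2n}$ from $F_{2n}$ and show the $f$-independent remainder vanishes, whereas the paper expands $F_{2n}$ and regroups; these are the same computation.
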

\begin{proof}  
 \eqref{Mi,L3.5.a} The case \(n=0\)  is trivial.
 Suppose now \(0< 2n\leq\kappa\).
 \rrem{21112N} shows that \(\Psi_{2n}\) is a matrix polynomial.
 In particular, \(\Psi_{2n}\) is continuous.
 By assumption, we have \(s_j^\ad = s_j\) for each \(j\in\mn{0}{2n}\).
 Thus, \(H_n^\ad = H_n\).
 For each  \(x\in\R \), we have \( R_{\Tqn^\ad }(x)=[R_{\Tqn}(\ko {x})]^\ad =[R_{\Tqn}(x)]^\ad \) and, consequently,
\[
 \ek*{\Psi_{2n}(x)}^\ad
 =R_{\Tqn}(x)(\Tqn H_n-v_{q,n} u_n^\ad -x\Tqn H_n^\ad \Tqn^\ad )R_{\Tqn^\ad }(x),
\]
 which, in view of  \(H_n^\ad =H_n\), implies that
\[\begin{split}
 \ek*{\Psi_{2n}(x)}^\ad 
 &=R_{\Tqn}(x)\rk*{-[H_n\Tqn^\ad  - \Tqn H_n] +H_n\Tqn^\ad  - v_{q,n}u_n^\ad -x\Tqn H_n\Tqn^\ad }R_{\Tqn^\ad }(x) \\
 &=R_{\Tqn}(x)\rk*{-[u_n  v_{q,n}^\ad  - \vqn u_n^\ad ] +H_n\Tqn^\ad  - v_{q,n}u_n^\ad -x\Tqn H_n\Tqn^\ad }R_{\Tqn^\ad }(x) \\
 &=R_{\Tqn}(x)(H_n\Tqn^\ad  - u_n  v_{q,n}^\ad  - x\Tqn H_n\Tqn^\ad )R_{\Tqn^\ad }(x)
 =\Psi_{2n}(x)
\end{split}\]
 holds true for all  \(x\in\R\).
 Hence, \(\Psi_{2n}(\R) \subseteq \CHo{(n+1)q}\).
 Taking into account  \eqref{Nr.F2N}, \rrem{21112N}, and \eqref{Nr.PH2NN}, for all \(z\in\uhp\), we conclude
\[\begin{split}
 &F_{2n}(z)\\
 &=R_{\Tqn}(z) \ek*{R_{\Tqn}(z)}^\inv  H_n\Tqn^\ad R_{\Tqn^\ad }(z)\\
 &\qquad+R_{\Tqn}(z)v_{q,n}f(z)v_{q,n}^\ad R_{\Tqn^\ad }(z)-R_{\Tqn}(z) u_n v_{q,n}^\ad R_{\Tqn^\ad }(z)\\
 &=R_{\Tqn}(z)\ek*{\rk{\Iu{(n+1)q}-z\Tqn}H_n\Tqn^\ad -u_nv_{q,n}^\ad }R_{\Tqn^\ad }(z)+R_{\Tqn}(z)v_{q,n}f(z)v_{q,n}^\ad \ek*{R_{\Tqn}(\ko z)}^\ad  \\
 &=R_{\Tqn}(z)(H_n\Tqn^\ad -z\Tqn H_n\Tqn^\ad  -u_nv_{q,n}^\ad )R_{\Tqn^\ad }(z)+ R_{\Tqn}(z)v_{q,n}f(z)[R_{\Tqn}(\ko z)v_{q,n}]^\ad  \\
 &=\Psi_{2n}(z) + E_{q,n}(z)f(z)E_{q,n}^\ad (\ko z).
\end{split}
\]

 \eqref{Mi,L3.5.b} \rPart{Mi,L3.5.b} can be proved analogously.
\end{proof}

\begin{lem} \label{Mi,L3.7}
 Let \(\alpha\in\R\),  let \(\kappa\in\Ninf \),  let \(\seqska\) be a sequence from \(\Cqq\), and let \(n \in \NO \) be such that  \(2n+1 \leq \kappa\).
 Further, let \(S\in\SFOq  \) be such that 
\begin{align} \label{Nr.PS2}
 P_{2n}^{[S]}(z)&\in \Cggo{(n+2)q}&
 &\text{and}&
 P_{2n+1}^{[S]}(z)&\in\Cggo{(n+2)q} & \text{for all }z&\in\uhp.
\end{align}
 Then the \taSm{} \(\sigma\) of  \(S\) belongs to \(\Mgguqa{1}{\rhl}\).
\end{lem}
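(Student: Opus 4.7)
The plan is to extract from the matrix inequality \(P_{2n+1}^{[S]}(z)\lgeq\NM\) a scalar tail estimate for the Stieltjes transform of \(\sigma\) which, via monotone convergence, forces \(\sigma\) to possess a finite first moment. From \(P_{2n}^{[S]}(z)\lgeq\NM\) and \(P_{2n+1}^{[S]}(z)\lgeq\NM\) for \(z\in\uhp\), \rrem{remark4.5} provides \(\set{H_n,H_{\at n}}\subseteq\Cggo{(n+1)q}\) and hence \(s_j^\ad=s_j\) for every \(j\in\mn{0}{2n+1}\). Consequently \rprop{Mi,L3.3} applies and yields \(Q_{2n+1}^{[S]}(z)\lgeq\NM\) for each \(z\in\uhp\), and \rlem{Mi,L3.4}\eqref{Mi,L3.4.b} then gives \(F_{2n+1}\in\RFO{(n+1)q}\); in particular, \(y\Im F_{2n+1}(\iu y)\) stays bounded as \(y\to\infp\).

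The central algebraic step is the compression identity
\begin{equation*}
 v_{q,n}^\ad F_{2n+1}(z)v_{q,n}
 =(z-\alpha)S(z)+s_0
 \qquad\text{for every }z\in\uhp.
\end{equation*}
This follows by direct evaluation of the right-hand side of \eqref{Nr.F2N+1} using the elementary identities \(v_{q,n}^\ad T_{q,n}=\Ouu{q}{(n+1)q}\) and \(T_{q,n}^\ad v_{q,n}=\Ouu{(n+1)q}{q}\) (hence \(v_{q,n}^\ad R_{T_{q,n}}(z)=v_{q,n}^\ad\) and \(R_{T_{q,n}^\ad}(z)v_{q,n}=v_{q,n}\)), combined with \(v_{q,n}^\ad u_n=\Oqq\), \(v_{q,n}^\ad y_{0,n}=s_0\), and \(v_{q,n}^\ad v_{q,n}=\Iq\). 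Since \(\normS{v_{q,n}}=1\), the compression \(G\defeq v_{q,n}^\ad F_{2n+1}(\cdot)v_{q,n}\) inherits membership in \(\RFOq\) from \(F_{2n+1}\).

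Now the proof descends to the scalar level. Fix \(u\in\Cq\) and put \(\tau\defeq u^\ad\sigma u\), a finite nonnegative Borel measure on \(\rhl\). Set \(s(z)\defeq u^\ad S(z)u=\int_{\rhl}(t-z)^{-1}\tau(\dif t)\) and \(g(z)\defeq u^\ad G(z)u=(z-\alpha)s(z)+u^\ad s_0u\); the additive constant is real because \(s_0\in\CHq\). Computing imaginary parts at \(z=\iu y\) yields
\begin{equation*}
 y\Im g(\iu y)
 =y^2\Re s(\iu y)-\alpha y\Im s(\iu y)
 =\int_{\rhl}\frac{y^2 t}{t^2+y^2}\tau(\dif t)-\alpha\int_{\rhl}\frac{y^2}{t^2+y^2}\tau(\dif t).
\end{equation*}
Since \(G\in\RFOq\), the left-hand side is bounded as \(y\to\infp\); by monotone convergence the second term on the right tends to \(\alpha\tau(\rhl)\), so \(\int_{\rhl}y^2 t/(t^2+y^2)\,\tau(\dif t)\) stays bounded in \(y\). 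Splitting this integral at \(t=0\) and invoking monotone convergence on \([0,\infp)\) (where \(y^2t/(t^2+y^2)\uparrow t\)) together with dominated convergence on \([\alpha,0)\) (where the integrand is bounded by \(\abs{\alpha}\) and \(\tau\) is finite) yields \(\int_{\rhl}\abs{t}\tau(\dif t)<\infp\). Specializing \(u\) to the standard basis \((e_j)_{j=1}^q\) gives \(\int_{\rhl}\abs{t}\sigma_{jj}(\dif t)<\infp\), and the bound \(\tilde\sigma_{jk}(B)\lleq\tfrac12\ek{\sigma_{jj}(B)+\sigma_{kk}(B)}\) (valid for each \(B\in\BorK\) by Cauchy--Schwarz for nonnegative Hermitian matricial measures) transfers integrability of \(f_1(t)\defeq t\) to the variations of all off-diagonal entries. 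Hence \(f_1\in\LoaaaC{1}{\rhl}{\BorK}{\sigma}\), i.e.\ \(\sigma\in\Mgguqa{1}{\rhl}\). The main obstacle is the algebraic verification of the compression identity; once it is at hand, the rest is a routine monotone-convergence analysis of the Stieltjes transform.
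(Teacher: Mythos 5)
Your proof is correct, and its analytic half takes a genuinely different (and leaner) route than the paper's. The algebraic input is essentially the same: the paper reads the growth bound for \(G(z)\defeq(z-\alpha)S(z)+s_0\) off the compression identity \eqref{N68-2} of \rrem{lemM423} plus \rlem{N1219DN}, while you reach the same function via \(Q_{2n+1}^{[S]}\) and the compression \(v_{q,n}^\ad F_{2n+1}(z)v_{q,n}=(z-\alpha)S(z)+s_0\); your identity checks out (\(v_{q,n}^\ad T_{q,n}=0\), \(T_{q,n}^\ad v_{q,n}=0\), \(v_{q,n}^\ad u_n=0\), \(v_{q,n}^\ad y_{0,n}=s_0\)), it just introduces one more intermediate object. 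The real divergence comes afterwards. The paper first proves \(\sigma(\rhl)\lleq s_0\) from \eqref{N68-1}, needs the comparison inequality \eqref{BMR} to trade \(s_0\) for \(\sigma(\rhl)\) inside a modulus, evaluates at the shifted points \(\iu n+\alpha\) so that the decisive integrand \((t-\alpha)n^2/[(t-\alpha)^2+n^2]\) is \tnn{} on all of \(\rhl\), and finishes with Fatou's lemma. You instead take only the imaginary part of \(u^\ad G(\iu y)u\), where the Hermitian constant \(s_0\) vanishes, so the whole \(\sigma(\rhl)\lleq s_0\)/\eqref{BMR} detour becomes unnecessary; the price is the split of \(\int y^2t/(t^2+y^2)\,\tau(\dif t)\) at \(t=0\) with monotone convergence on \([0,\infp)\) and a trivial domination on \([\alpha,0)\). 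Both computations are sound. Two citation points worth fixing: \rlemp{Mi,L3.4}{Mi,L3.4.b} is stated with the hypothesis \(P_{2n+1}^{[f]}(z)\lgeq\NM\) on all of \(\C\setminus\R\), whereas you only have it on \(\uhp\) --- this is harmless, since the chain you actually need (\rprop{Mi,L3.3} followed by \rlem{N1219DN} applied to \(Q_{2n+1}^{[S]}(z)\lgeq\NM\) for \(z\in\uhp\)) uses only upper half-plane data, but you should say so explicitly rather than cite the lemma with a weaker hypothesis than it demands; and the final passage from \(\int_{\rhl}\abs{t}\,(u^\ad\sigma u)(\dif t)<\infp\) for all \(u\in\Cq\) to \(\sigma\in\Mgguqa{1}{\rhl}\) is precisely \rrem{B8}, so your Cauchy--Schwarz estimate for the variations of the off-diagonal entries, while valid, is not needed.
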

\begin{proof} 
 (I) For all  \(z\in\uhp\),  from \rrem{lemM423} we see that \eqref{N68-1} holds true and,  in view of  \eqref{Nr.PS2}, hence, that the block matrix on the left-hand side of \eqref{N68-1} is \tnnH{}.
 Consequently, since \(S\) is holomorphic in \(\Cs  \), \rlem{N1219DN} yields that  \(F\defeq \Rstr_{\uhp} S\) belongs to \(\RFOq \) and that the matricial spectral measure \(\mu\) of \(F\) fulfills \(\mu(\R)\lleq s_0\).
 Thus, \rrem{R1457D} provides us 
\beql{N1043F}
 \sigma\rk*{\rhl}
 =\Rstr_{\BorK }\mu\rk*{\rhl}
 =\mu\rk*{\rhl}
 \lleq \mu(\R) 
 \lleq s_0.
\eeq 
 Because of \eqref{Nr.PS2} and \eqref{def-P2n}, the matrix \(H_n\) is \tnnH{}. 
 In particular, \(s_0\in\Cggq\). 
 Hence,
\begin{align} \label{Nr.Mi,L3.7-4}
 s_0^\ad&= s_0&
 &\text{and}&
 \set*{ u^\ad \sigma\rk*{\rhl}u, u^\ad s_0u}&\subseteq[0,\infp)&\text{for all }u&\in \Cq.
\end{align}

 (II) In the second part of the proof, we consider an arbitrary \(n\in\N \) and an arbitrary \(u\in\Cq\).
 From \rrem{L1.53} we see then that
\beql{SR1}
 \int_{\rhl}\abs*{\frac{\iu n}{t-(\iu n+\alpha)}}(u^\ad \sigma u)(\dif t)
 = n\int_{\rhl}\abs*{\frac{1}{t-(\iu n+\alpha)}}(u^\ad \sigma u)(\dif t)
 < \infp.
\eeq 
In view of 
\beql{D01038}
 \frac{\iu n}{t-(\iu n+\alpha)}
 = -\frac{n^2}{\abs{t-\alpha-\iu n}^2} + \iu\frac{(t-\alpha)n}{\abs{t-\alpha-\iu n}^2}
\eeq 
 and \eqref{SR1}, we obtain
\[
 \int_{\rhl}\abs*{-\frac{n^2}{\abs{t-\alpha-\iu n}^2}} (u^\ad \sigma u)(\dif t)
 = \int_{\rhl}\abs*{\Re\ek*{\frac{\iu n}{t-(\iu n+\alpha)}}}(u^\ad \sigma u)(\dif t)
 < \infp
\]
and
\beql{N7.49}
 \int_{\rhl}\abs*{\frac{(t-\alpha)n}{\abs{t-\alpha-\iu n}^2}} (u^\ad \sigma u)(\dif t)
 =  \int_{\rhl}\abs*{\Im\ek*{\frac{\iu n}{t-(\iu n+\alpha)}}}(u^\ad \sigma u)(\dif t)
 < \infp.
\eeq 
 For each \(t\in\rhl\), we have
\beql{N2054F}
 n\ek*{\frac{\iu n}{t-(\iu n+\alpha)}+1}
 = \frac{(t-\alpha)n}{t-\alpha-\iu n} 
 = \frac{(t-\alpha)^2 n}{(t-\alpha)^2+n^2} + \iu\frac{(t-\alpha)n^2}{(t-\alpha)^2+n^2}.
\eeq 
 Consequently, we get  that
\[
 \abs*{\Re\rk*{ n\ek*{\frac{\iu n}{t-(\iu n+\alpha)}+1}}}
 = n\cdot\frac{(t-\alpha)^2}{(t-\alpha)^2+n^2}
 \leq n
 = n\cdot 1_{\rhl}(t)
\]
 and 
\[
 \abs*{\Im\rk*{ n\ek*{\frac{\iu n}{t-(\iu n+\alpha)}+1}}}
 = \frac{(t-\alpha)n^2}{(t-\alpha)^2+n^2}
 \leq n\cdot\frac{2\abs{t-\alpha}n}{(t-\alpha)^2+n^2}
 \leq n
 = n\cdot 1_{\rhl}(t)
\]
 hold true for each \(t\in \rhl \).
 This implies
\[\begin{split}
 \int_{\rhl}\abs*{\Re\rk*{ n\ek*{\frac{\iu n}{t-(\iu n+\alpha)}+1}}}(u^\ad \sigma u)(\dif t)
 &\leq \int_{\rhl}n\cdot 1_{\rhl}\dif(u^\ad \sigma u)\\
 &=  n u^\ad \sigma\rk*{\rhl}u 
 < \infp
\end{split}\] 
 and, analogously,
\[
 \int_{\rhl}\abs*{\Im\rk*{ n\ek*{\frac{\iu n}{t-(\iu n+\alpha)}+1}}}(u^\ad \sigma u)(\dif t)
 \leq  n  u^\ad \sigma\rk*{\rhl}u
 < \infp.
\]
 Thus, the function \(g_n\colon\rhl\to\C\) given by \(g_n(t)\defeq n\rk{\iu n/\ek{t-(\iu n+\alpha)}+1}\) fulfills 
\beql{Nr.UFF}
 g_n
 \in\LoaaaC{1}{\rhl}{\BorK}{u^\ad\sigma u}.
\eeq 
 Using \rthm{T1316D}, \rrem{B8},   \eqref{D01038}, and  \eqref{N7.49}, we conclude
\[\begin{split} 
 &u^\ad\ek*{\iu n\cdot S(\iu n+\alpha)}u
 = u^\ad \ek*{\iu n \int_{\rhl}\frac{1}{t-(\iu n+\alpha)}\sigma(\dif t)} u \\
 &= \int_{\rhl}\frac{\iu n}{t-(\iu n+\alpha)}(u^\ad \sigma u)(\dif t) \\
 &= \int_{\rhl}\ek*{ -\frac{n^2}{\abs{t-\alpha-\iu n}^2} + \iu\frac{(t-\alpha)n}{\abs{t-\alpha-\iu n}^2}}(u^\ad \sigma u)(\dif t) \\
 &= -n^2 \int_{\rhl} \frac{1}{\abs{t-\alpha-\iu n}^2}(u^\ad \sigma u)(\dif t) + \iu n\int_{\rhl}\frac{t-\alpha}{\abs{t-\alpha-\iu n}^2}(u^\ad \sigma u)(\dif t)
\end{split}\]
 and, in particular,
\beql{CN7-1}
 \Re\rk*{ u^\ad \ek*{\iu n\cdot S(\iu n+\alpha)}u}
 = -n^2 \int_{\rhl} \frac{1}{\abs{t-\alpha-\iu n}^2}(u^\ad \sigma u)(\dif t).
\eeq 
 Taking into account \eqref{N1043F}, \eqref{CN7-1}, and that  \(1 -n^2/\abs{t-\alpha-\iu n}   =(t-\alpha)^2/\ek{(t-\alpha)^2+n^2}\) holds true, for each \(t\in \rhl \), we get
\[\begin{split}
 &\Re \rk*{u^\ad \ek*{\iu n\cdot S(\iu n+\alpha)}u} + u^\ad s_0 u
 \geq \Re \rk*{u^\ad \ek*{\iu n\cdot S(\iu n+\alpha)}u} + u^\ad \sigma\rk*{\rhl} u \\
 &= -n^2\int_{\rhl}\frac{1}{\abs{t-\alpha-\iu n}^2}(u^\ad \sigma u)(\dif t)     + \int_{\rhl} 1_{\rhl}\dif(u^\ad \sigma u)\\
 &= \int_{\rhl}\rk*{ 1-\frac{n^2}{\abs{t-\alpha-\iu n}^2}}(u^\ad \sigma u)(\dif t)
 = \int_{\rhl}\frac{(t-\alpha)^2}{(t-\alpha)^2 +n^2}(u^\ad \sigma u)(\dif t)
 \geq 0
\end{split}\]
 and, consequently,
\begin{multline} \label{SRT2}
 \ek*{\Re \rk*{u^\ad \ek*{\iu n\cdot S(\iu n+\alpha)}u} + u^\ad s_0 u}^2  \geq \ek*{\Re \rk*{u^\ad \ek*{\iu n\cdot S(\iu n+\alpha)}u} + u^\ad \sigma\rk*{\rhl} u}^2.
\end{multline}
 Because of \eqref{Nr.Mi,L3.7-4}, \eqref{SRT2}, and again \eqref{Nr.Mi,L3.7-4}, it follows 
\[\begin{split}
 &\abs*{nu^\ad\ek*{\iu n\cdot S(\iu n+\alpha)+s_0} u}^2
 = n^2 \abs*{u^\ad \ek*{\iu n\cdot S(\iu n+\alpha)}u + u^\ad s_0 u}^2 \\
 &= n^2 \rk*{\ek*{\Re \rk*{u^\ad \ek*{\iu n\cdot S(\iu n+\alpha)}u} + u^\ad s_0 u}^2       + \ek*{\Im \rk*{u^\ad \ek*{\iu n\cdot S(\iu n+\alpha)}u} }^2 } \\
 &\geq n^2 \rk*{\ek*{\Re \rk*{u^\ad \ek*{\iu n\cdot S(\iu n+\alpha)}u} + u^\ad \sigma\rk*{\rhl} u}^2     + \ek*{\Im \rk*{u^\ad \ek*{\iu n\cdot S(\iu n+\alpha)}u} }^2} \\
 &= n^2 \abs*{u^\ad  \ek*{\iu n\cdot S(\iu n+\alpha)}u + u^\ad \sigma\rk*{\rhl} u }^2
 = \abs*{n u^\ad    \ek*{\iu n\cdot S(\iu n+\alpha) + \sigma\rk*{\rhl}} u}^2
\end{split}\]
 and, therefore,
\beql{BMR}
 \abs*{ n u^\ad\ek*{\iu n\cdot S(\iu n+\alpha)+s_0}u}
 \geq \abs*{  n u^\ad\ek*{\iu n\cdot S(\iu n+\alpha) + \sigma\rk*{\rhl}} u}.
\eeq 
 Since \(S\) belongs to \(\SFOq  \), the function \(G\colon\uhp\to\Cqq\) given by \(G(w)\defeq wS(w+\alpha)+s_0\) is holomorphic in \(\uhp\).
 From \rrem{lemM423} we know that, for all \(z\in\Cs  \), equation \eqref{N68-2} is true.
 Hence, from \eqref{Nr.PS2} we see that the block  matrix on the left-hand side of \eqref{N68-2} is \tnnH{}.
 Consequently, we conclude
\begin{multline} \label{588-2}
 \begin{bmatrix}  
  -\alpha s_0 + s_1  &  G(w) \\
  G^\ad (w)  &  \frac{G(w)-G^\ad (w)}{w-\ko w}
 \end{bmatrix} 
 =
 \begin{bmatrix}  
  -\alpha s_0 + s_1  &  wS(w+\alpha)+s_0 \\
  [wS(w+\alpha)+s_0]^\ad &  \frac{[wS(w+\alpha)+s_0]-[wS(w+\alpha)+s_0]^\ad }{w-\ko w}
 \end{bmatrix} \\
 =
 \begin{bmatrix}  
  -\alpha s_0 + s_1  &  [(w+\alpha)-\alpha]S(w+\alpha)+s_0 \\
  \rk*{[(w+\alpha)-\alpha]S(w+\alpha)+s_0}^\ad  &   \frac{[(w+\alpha)-\alpha]S(w+\alpha) -\rk*{[(w+\alpha)-\alpha]S(w+\alpha)}^\ad }{w-\ko w}  
 \end{bmatrix} 
 \in\Cggo{2q}.
\end{multline}
 Since \(G\) is holomorphic, from  \eqref{588-2} and \rlem{N1219DN} we infer  \(\sup_{y\in(0,\infp)}\rk*{ y\normS{G(\iu y)}} \leq \normS{-\alpha s_0+s_1}\) and, hence,  \(\sup_{n\in\N}\rk*{ n\normS{\iu n\cdot S(\iu n+\alpha)+s_0}} \leq \normS{-\alpha s_0+s_1}\).
 Thus, the Bunjakowski--Cauchy--Schwarz inequality provides us
\begin{multline} \label{CN13-1}
  \abs*{u^\ad \rk*{n\ek*{\iu n\cdot S(\iu n+\alpha)+s_0}}u}
  \leq \normE*{n\ek*{\iu n\cdot S(\iu n+\alpha)+s_0}u} \cdot \normE{u}\\
  \leq n \normS{\iu n\cdot S(\iu n+\alpha)+s_0} \cdot \normE{u}^2
  \leq \normS{-\alpha s_0+s_1}\cdot\normE{u}^2.
\end{multline}
 For each \(t\in\rhl\), we have \( \abs{t-\alpha}  = \liminf_{n\to\infi}(t-\alpha)n^2/\ek{(t-\alpha)^2+n^2}\).
 Fatou's lemma yields then 
\beql{CN3}
 \begin{split}
  \int_{\rhl}\abs{t-\alpha}(u^\ad \sigma u)(\dif t) 
  &= \int_{\rhl} \liminf_{n\to\infi}\frac{(t-\alpha)n^2}{(t-\alpha)^2+n^2}(u^\ad \sigma u)(\dif t)\\
  &\leq \liminf_{n\to\infi}\int_{\rhl}\frac{(t-\alpha)n^2}{(t-\alpha)^2+n^2}(u^\ad \sigma u)(\dif t).
 \end{split}
\eeq 
 Obviously, from \eqref{Nr.UFF} and \eqref{N2054F} it follows
\beql{CN15}
 \int_{\rhl}\Im\rk*{ n\ek*{\frac{\iu n}{t-(\iu n+\alpha)}+1}}(u^\ad \sigma u)(\dif t)
 =  \int_{\rhl}\frac{(t-\alpha)n^2}{(t-\alpha)^2+n^2}(u^\ad \sigma u)(\dif t) 
\eeq 
 and
\begin{multline} \label{CN16}
 \int_{\rhl}\Im \rk*{ n\ek*{\frac{\iu n}{t-(\iu n+\alpha)}+1}}(u^\ad \sigma u)(\dif t)\\
 =\Im\rk*{\int_{\rhl} n\ek*{\frac{\iu n}{t-(\iu n+\alpha)}+1}(u^\ad \sigma u)(\dif t)} \\
 \leq \abs*{\int_{\rhl} n\ek*{\frac{\iu n}{t-(\iu n+\alpha)}+1}(u^\ad \sigma u)(\dif t)}.
\end{multline}

 (III) Since \eqref{Nr.UFF} holds true for every choice of \(u\in\Cq\) and \(n\in\N \), \rrem{B8} yields 
\beql{Nr.GNL}
 g_n
 \in\LoaaaC{1}{\rhl}{\BorK}{\sigma}.
\eeq 
 Hence, \rrem{B8} shows that
\beql{CN17}
 \int_{\rhl} n\ek*{\frac{\iu n}{t-(\iu n+\alpha)}+1}(u^\ad \sigma u)(\dif t)
 = u^\ad \rk*{ \int_{\rhl} n\ek*{\frac{\iu n}{t-(\iu n+\alpha)}+1} \sigma(\dif t) } u
\eeq 
 is valid for each \(u\in\Cq\) and each \(n\in\N\).
 Combining \eqref{CN15},   \eqref{CN16}, and \eqref{CN17},  we have
\beql{CN18}
 0
 \le  \int_{\rhl}\frac{(t-\alpha)n^2}{(t-\alpha)^2+n^2}(u^\ad \sigma u)(\dif t)
 \leq \abs*{u^\ad \rk*{\int_{\rhl} n\ek*{\frac{\iu n}{t-(\iu n+\alpha)}+1}\sigma(\dif t)} u}
\eeq 
 for each \(u\in\Cq\) and each \(n\in\N \).
 For all \(n\in\N\) and all \(t\in\rhl\), we see that \(g_n(t)-n \cdot 1_{\rhl}(t)  = \tilde{g}_n(t)\)   holds true, where  \(\tilde{g}_n\colon\rhl\to\C\) is given by \(\tilde{g}_n(t)\defeq\iu n^2/\ek{t-(\iu n+\alpha)} \).
 Thus, for each \(n\in\N\), we get \(\tilde{g}_n = g_n -n \cdot 1_{\rhl}\), and, in view of \eqref{Nr.GNL}, then \( \tilde{g}_n \in \LoaaaC{1}{\rhl}{\BorK}{\sigma}\) and
\[
 \int_{\rhl}\tilde{g}_n\dif\sigma
 = \int_{\rhl}g_n \dif\sigma - n\int_{\rhl} 1_{\rhl}\dif\sigma
 =\int_{\rhl}g_n \dif\sigma - n\sigma\rk*{\rhl}.
\]
 Consequently, for each \(n\in\N \), we conclude
\[\begin{split}
 \int_{\rhl} n&\ek*{\frac{\iu n}{t-(\iu n+\alpha)}+1}\sigma(\dif t)
 = \int_{\rhl}\frac{\iu n^2}{t-(\iu n+\alpha)}\sigma(\dif t) + n\sigma\rk*{\rhl}\\
 &= \iu n^2\int_{\rhl}\frac{1}{t-(\iu n+\alpha)}\sigma(\dif t) + n\sigma\rk*{\rhl}\\
 &= \iu n^2\cdot S(\iu n+\alpha) + n\sigma\rk*{\rhl}= n\ek*{\iu n\cdot S(\iu n+\alpha) + \sigma\rk*{\rhl}}.
\end{split}\]
 Thus, because of \eqref{BMR}, for each \(u\in\Cq\) and each \(n\in\N\), we obtain
\beql{BM1}
 \abs*{u^\ad \rk*{\int_{\rhl}n\ek*{\frac{\iu n}{t-(\iu n+\alpha)}+1}\sigma(\dif t) } u}
 \leq \abs*{  nu^\ad\ek*{\iu n\cdot S(\iu n+\alpha) + s_0} u}.
\eeq 
 Taking into account \eqref{CN3}, \eqref{CN18},  \eqref{BM1}, and \eqref{CN13-1}, for each \(u\in\Cq\), we get
\beql{FAE}
 \begin{split}
  \int_{\rhl} & \abs{t-\alpha}(u^\ad \sigma u)(\dif t)
  \leq\liminf_{n\to\infi}\int_{\rhl}\frac{(t-\alpha)n^2}{(t-\alpha)^2+n^2}(u^\ad \sigma u)(\dif t)  \\
  &\leq \liminf_{n\to\infi} \abs*{u^\ad \rk*{\int_{\rhl} n\ek*{\frac{\iu n}{t-(\iu n+\alpha)}+1}\sigma(\dif t)} u} \\
  &\leq \liminf_{n\to\infi} \abs*{  nu^\ad\ek*{\iu n\cdot S(\iu n+\alpha) + s_0} u }\\
  &\leq \liminf _{n\to\infi} \normS{-\alpha s_0+s_1}\cdot\normE{u}^2 =\normS{-\alpha s_0+s_1}\cdot\normE{u}^2 
  < \infp.
 \end{split}
\eeq 
 Therefore, from \eqref{FAE} we obtain that 
\[\begin{split}
 \int_{\rhl}\abs{t}(u^\ad \sigma u)(\dif t)
 & \leq \int_{\rhl}\rk*{\abs{t-\alpha}+\abs{\alpha}}(u^\ad \sigma u)(\dif t) \\
 & = \int_{\rhl}\abs{t-\alpha}(u^\ad \sigma u)(\dif t) + \int_{\rhl}\abs{\alpha}(u^\ad \sigma u)(\dif t) \\
 &\leq \normS{-\alpha s_0+s_1}\cdot\normE{u}^2 + \abs{\alpha}(u^\ad \sigma u)\rk*{\rhl}
 < \infp
\end{split}\]
 is true for all \(u\in\Cq\).
 Thus, \rrem{B8} provides us \(\sigma\in\Mgguqa{1}{\rhl}\).
\end{proof}

\begin{lem} \label{Mi,L3.8}
 Let \(\alpha\in\R\), let \(\kappa\in\Ninf \), let \(\seqska\) be a sequence from \(\Cqq\), and let  \(n\in\NO \) be such that  \(2n+1 \leq \kappa\).
 Further, let \(S\in\SFOq  \) be such that  \(P_{2n}^{[S]}(z) \in\Cggo{(n+2)q}\) and  \( P_{2n+1}^{[S]}(z) \in\Cggo{(n+2)q}\) hold true for all  \(z\in\uhp\).
 Then: 
\begin{enui}
 \item\label{Mi,L3.8.a} The \taSm{} \(\sigma\) of \(S\) belongs to \(\Mgguqa{1}{\rhl}\).
 \item\label{Mi,L3.8.b} The function \(\phi\colon\rhl\to\Cqq\) given by \(\phi (t) \defeq  \sqrt{t-\alpha} \Iq\) belongs to \(\aaLsaaaC{q}{q}{\rhl}{\BorK}{\sigma}\) and  \(\sigma^\#\colon\BorK \to\Cqq\) defined by \eqref{S1421N} belongs to \(\MggqK \).
 \item\label{Mi,L3.8.c} The function \(\tilde{S} \colon\Cs\to\Cqq\) given by
\beql{Nr.SRD}
 \tilde{S} (z)
 \defeq (z-\alpha)S(z)
\eeq     
 and the \taSt{} \(S^{[\sigma^\#]}\) of \(\sigma^\#\) fulfill 
\begin{align} \label{Nr.SRG}
 \tilde{S}(z)&=S^{[\sigma^\#]}(z)-\sigma\rk*{\rhl}&\text{for each }z&\in\Cs.
\end{align}         
 \item\label{Mi,L3.8.d} The function \((\tilde{S})_\square\defeq \Rstr_{\uhp} \tilde{S}\) belongs to \(\RFsq\) and \((\tilde{\sigma})_\square\colon\BorR\to\Cqq\) given by \( (\tilde{\sigma})_\square(B)\defeq \sigma^\#\rk{ B\cap\rhl}\) is exactly the matricial spectral measure of \((\tilde{S})_\square\).
\end{enui}
\end{lem}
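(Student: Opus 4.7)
My plan is to prove the four assertions sequentially, with each one exploiting the preceding parts together with earlier results of the paper. Assertion \eqref{Mi,L3.8.a} is immediate from \rlem{Mi,L3.7}, whose hypotheses coincide with those of the present lemma. Assertion \eqref{Mi,L3.8.b} then follows at once by combining \eqref{Mi,L3.8.a} with \rremp{Mi,L3.6}{Mi,L3.6.a}, which asserts precisely the integrability of $\phi$ and the membership $\sigma^\#\in\MggqK$ whenever $\sigma\in\Mgguqa{1}{\rhl}$.

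The computational heart is part \eqref{Mi,L3.8.c}. I would start from the Stieltjes representation of $S$ provided by \rthm{T1316D}: for each $z\in\Cs$,
\[
 \tilde{S}(z)=(z-\alpha)\int_{\rhl}\frac{1}{t-z}\sigma(\dif t)=\int_{\rhl}\frac{z-\alpha}{t-z}\sigma(\dif t),
\]
where pulling the scalar $(z-\alpha)$ inside the matrix integral is legitimate. The elementary identity $\frac{z-\alpha}{t-z}=-1+\frac{t-\alpha}{t-z}$ combined with the $\sigma$\nobreakdash-integrability of $t\mapsto t$ on $\rhl$ secured in \eqref{Mi,L3.8.a} then allows the split
\[
 \tilde{S}(z)=-\sigma(\rhl)+\int_{\rhl}\frac{t-\alpha}{t-z}\sigma(\dif t).
\]
Finally, the defining formula \eqref{S1421N} for $\sigma^\#$ together with \rrem{B8} shows that the remaining integral equals $\int_{\rhl}(t-z)^{-1}\sigma^\#(\dif t)=S^{[\sigma^\#]}(z)$, from which \eqref{Nr.SRG} follows.

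For \eqref{Mi,L3.8.d} I would combine \eqref{Mi,L3.8.c} with known properties of Stieltjes transforms. Since $\sigma^\#\in\MggqK$ by \eqref{Mi,L3.8.b}, \rthm{T1316D} shows $S^{[\sigma^\#]}\in\SFOq$. Then \rrem{R1457D} gives $S^{[\sigma^\#]}|_\uhp\in\RFOq\subseteq\RFsq$ with matricial spectral measure equal to the trivial extension of $\sigma^\#$ to $\BorR$, which is precisely $(\tilde{\sigma})_\square$. From \eqref{Mi,L3.8.c}, the function $(\tilde{S})_\square$ differs from $S^{[\sigma^\#]}|_\uhp$ only by the constant Hermitian matrix $-\sigma(\rhl)\in\CHq$; by the uniqueness of the Nevanlinna parametrization in \rthm{T31DN}, such a shift modifies only the Hermitian summand $A$ and leaves both the Nevanlinna measure and the matricial spectral measure untouched. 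Hence $(\tilde{S})_\square\in\RFsq$ with matricial spectral measure $(\tilde{\sigma})_\square$.

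No step involves a genuinely delicate argument; the only point deserving care is the splitting of the matrix-valued integral in \eqref{Mi,L3.8.c}, where the integrability produced by \eqref{Mi,L3.8.a} and \eqref{Mi,L3.8.b} must be invoked to transfer the factor $(t-\alpha)$ between $\sigma$ and $\sigma^\#$ via \rrem{B8}.
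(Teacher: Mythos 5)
Your proposal is correct and follows essentially the same route as the paper: parts \eqref{Mi,L3.8.a} and \eqref{Mi,L3.8.b} are the same citations, part \eqref{Mi,L3.8.c} uses the same splitting identity \((z-\alpha)/(t-z)+1=(t-\alpha)/(t-z)\) followed by the transfer of the factor \(t-\alpha\) into \(\sigma^\#\), and part \eqref{Mi,L3.8.d} reduces, as in the paper, to the observation that adding the constant Hermitian matrix \(-\sigma(\rhl)\) leaves the matricial spectral measure unchanged. The only slip is a citation: the change of measure from \(\sigma\) to \(\sigma^\#\) in \eqref{Mi,L3.8.c} is governed by \rlem{B26} and \rprop{M24} (with the non-constant factor \(\sqrt{t-\alpha}\,\Iq\)), not by \rrem{B8}, which only covers conjugation by a constant matrix.
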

\begin{proof} 
 \eqref{Mi,L3.8.a} \rPart{Mi,L3.8.a} is proved in \rlem{Mi,L3.7}.
 
 \eqref{Mi,L3.8.b} In view of~\eqref{Mi,L3.8.a}, \rpart{Mi,L3.8.b} follows immediately from  \rrem{Mi,L3.6}.

 \eqref{Mi,L3.8.c} Let \(z\in\Cs\).
 According to \rrem{L1.53} and \rthm{T1316D}, the function \(g_{\alpha,z}\colon\rhl\to\C\) given by \(g_{\alpha, z} (t) \defeq  \rk{z-\alpha}/\rk{t-z}\) belongs to  \(\LoaaaC{1}{\rhl}{\BorK}{\sigma}\) and
\[
 (z-\alpha)S(z)
 =\int_{\rhl}\frac{z-\alpha}{t-z}\sigma(\dif t)
\]
 is true.
 Consequently, in view of \rlem{B26}, the pair \([g_{\alpha,z} \Iq,1_{\rhl}\Iq]\)  is left-integrable with respect to \(\sigma\) and 
\[
 \tilde{S} (z)
 =  (z-\alpha)S(z)
 =\int_{\rhl}\rk*{\frac{z-\alpha}{t-z}\Iq} \sigma(\dif t) \Iq^\ad 
\]
 is valid.
 Thus, \rrem{M22} shows that the pair  \([g_{\alpha,z}\Iq+ 1_{\rhl}\Iq,\Iq]\) is left-integrable with respect to \(\sigma\) and that 
\[
 \int_{\rhl}\ek*{\rk*{\frac{z-\alpha}{t-z}+1} \Iq}\sigma(\dif t) \Iq^\ad
 = \int_{\rhl}\rk*{\frac{z-\alpha}{t-z}\Iq}\sigma(\dif t) \Iq^\ad      + \int_{\rhl}\Iq \sigma(\dif t) \Iq^\ad 
\]
 is fulfilled.
 Taking into account 
\[
 \sigma \rk*{\rhl}
 =\int_{\rhl} 1_{\rhl} \dif\sigma
 =\int_{\rhl} ( 1_{\rhl}\Iq) \dif\sigma ( 1_{\rhl}\Iq)^\ad
 =\int_{\rhl} \Iq\sigma(\dif t)\Iq^\ad 
\]
 and that  \(\rk{z-\alpha}/\rk{t-z}+1 = \rk{t-\alpha}/\rk{t-z}\) holds true for each \(t\in\rhl\),  we get then
\beql{Nr.FSR}
 \tilde{S} (z)
 = \int_{\rhl}\rk*{\frac{t-\alpha}{t-z}\Iq}\sigma(\dif t) \Iq^\ad  - \sigma\rk*{\rhl}.
\eeq 
 Because of   \rlem{B26}, \rprop{M24}, and  \eqref{S1421N}, we have
\[\begin{split}
 \int_{\rhl}\rk*{\frac{t-\alpha}{t-z}\Iq}\sigma(\dif t) \Iq^\ad
 & = \int_{\rhl}\ek*{\rk*{\frac{1}{t-z}\Iq} (\sqrt{t-\alpha}\Iq)}\sigma(\dif t) \ek*{ \Iq\rk{\sqrt{t-\alpha}\Iq}}^\ad \\
 & =\int_{\rhl}\rk*{\frac{1}{t-z}\Iq} \sigma^\#(\dif t)\Iq^\ad  =\int_{\rhl} \frac{1}{t-z} \sigma^\# (\dif t).
\end{split}\]
 Thus, from \eqref{Nr.FSR} it follows
\[
 \tilde{S}(z)
 =\int_{\rhl}\frac{1}{t-z} \sigma^\#(\dif t) - \sigma\rk*{\rhl}
 = S^{[\sigma^\#]} (z) - \sigma (\rhl ).
\]

 \eqref{Mi,L3.8.d} In view of \rthm{T1316D},  the function \(S^{[\sigma^\#]}\)  belongs to \(\SFOq  \).
 Thus, \rrem{R1457D} shows that \( \Rstr_{\uhp} S^{[\sigma^\#]} \in\RFOq \subseteq\RFsq\), that the matricial spectral measure \(\mu^\#\) of \(\Rstr_{\uhp}S^{[\sigma^\#]}\) fulfills \( \sigma^\#=\Rstr_{\BorK }\mu^\#\), and that \(\mu^\#(\R\setminus\rhl )=\mu^\#(\crhl )=\Oqq\).
 Consequently,  \((\tilde{\sigma})_\square\) is the matricial spectral measure of \(\Rstr_{\uhp} S^{[\sigma^\#]}\).
 From \rthm{T31DN} one can easily see that the function \(F\colon\uhp\to\Cqq\) given by  \(F(z)\defeq -\sigma(\rhl)\) belongs to \(\RFsq\) and that the matricial spectral measure \(\theta\) of \(F\) fulfills \(\theta(B)=\Oqq \) for all \(B\in\BorR\) (see also~\cite[Beispiel~1.2.1]{CR01}).
 Because of \eqref{Nr.SRG}, we have \((\tilde{S})_\square=\Rstr_{\uhp} S^{[\sigma^\#]} + F\).
 Since \(\Rstr_{\uhp} S^{[\sigma^\#]}\) and \(F\) both belong to \(\RFsq\), from~\cite[Remark~4.4]{MR2988005} we see that  \((\tilde{S})_\square \in\RFsq\) and that \((\tilde{\sigma})_\square + \theta\)  is the matricial spectral measure of \((\tilde{S})_\square\).
 In view of \((\tilde{\sigma})_\square +\theta = (\tilde{\sigma})_\square\), the proof is complete.
\end{proof}

\begin{lem} \label{Mi,L3.9}
 Let \(\alpha\in\R\), let \(\kappa\in\NOinf \), and let \(\seqska\) be a sequence of complex \tqqa{matrices}.
 Then:
\begin{enui} 
 \item\label{Mi,L3.9.a} Let \( n \in \NO  \) be such that \(2n\leq\kappa\)  and let  \(S\in\SFOq  \) be such that 
\begin{align} \label{Mi,L3.9-1}
 P_{2n}^{[S]}(z)&\in \Cggo{(n+2)q}&\text{for all }z&\in\C\setminus\R.
\end{align}
 Then the \taSm{} \(\sigma\) of \(S\) belongs to \(\Mgguqa{2n}{\rhl}\) and the inequality  \(H_n^{[\sigma]} \lleq H_n\) holds true.
 \item\label{Mi,L3.9.b} Let \(n \in \NO \) be such that \(2n+1\leq\kappa\) and let \(S\in\SFOq  \) be such that 
\begin{align} \label{Mi,L3.9-30}
 \set*{ P_{2n}^{[S]}(z), P_{2n+1}^{[S]}(z)}&\subseteq \Cggo{(n+2)q}&\text{for all }z&\in\C\setminus\R.
\end{align} 
 Then the \taSm{} \(\sigma\) of \(S\) belongs to \( \Mgguqa{2n+1}{\rhl} \) and the inequality \( H_{\at{n}}^{[\sigma]}  \lleq H_{\at n}\) holds true.
 \end{enui}
\end{lem}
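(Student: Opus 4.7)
The plan is to identify (up to null sets in $\crhl$) the matricial spectral measure of the auxiliary function $F_{2n}$ (resp.\ $F_{2n+1}$) with the push-forward of $\sigma$ (resp.\ $\sigma^\#$) under $E_{q,n}$, and to combine this identification with the a~priori L\"owner bound supplied by \rlem{Mi,L3.4}.

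For part~\eqref{Mi,L3.9.a}, I would first invoke \rlem{Mi,L3.4} to deduce that $F_{2n}\in\RFO{(n+1)q}$ with matricial spectral measure $\mu_{2n}$ satisfying $\mu_{2n}(\R)\lleq H_n$, so that \rthm{N87DN} yields the integral representation $F_{2n}(z)=\int_\R(t-z)^\inv\mu_{2n}(\dif t)$ on $\uhp$.
In parallel, \rlem{Mi,L3.5} provides the decomposition $F_{2n}(z)=\Psi_{2n}(z)+E_{q,n}(z)S(z)E_{q,n}^\ad(\ko z)$ with $\Psi_{2n}$ a matrix polynomial that is Hermitian on $\R$.
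The main step is then to apply the matrix-valued Stieltjes--Perron inversion formula entry-wise: for every $\phi\in C_c(\rhl)$, one has $\int\phi\dif\mu_{2n}=\lim_{y\to 0+0}\frac{1}{\pi}\int\phi(x)\Im F_{2n}(x+\iu y)\dif x$.
The $\Psi_{2n}$-contribution vanishes in this limit because $\Psi_{2n}$ is continuous and Hermitian on $\R$; for the remaining summand, Fubini's theorem applied to $S(z)=\int_{\rhl}(t-z)^\inv\sigma(\dif t)$ together with the approximate-identity property of the Poisson kernel $y/[(t-x)^2+y^2]$ leads to $\int\phi\dif\mu_{2n}=\int_{\rhl}\phi(t)E_{q,n}(t)\sigma(\dif t)E_{q,n}^\ad(t)$ for all $\phi\in C_c(\rhl)$.
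Extending by monotone approximation, one obtains $\mu_{2n}(B)=\int_B E_{q,n}(t)\sigma(\dif t)E_{q,n}^\ad(t)$ for each bounded Borel set $B\subseteq\rhl$; specialising to $B=[\ug,N]$, letting $N\to\infi$, and using monotone convergence in the L\"owner order together with $\mu_{2n}(\R)\lleq H_n$ yields $\int_{\rhl}E_{q,n}(t)\sigma(\dif t)E_{q,n}^\ad(t)\lleq H_n$.
By \rrem{D1229} this finiteness is equivalent to $\sigma\in\Mgguqa{2n}{\rhl}$, and the displayed integral coincides with $H_n^{[\sigma]}$, so $H_n^{[\sigma]}\lleq H_n$.

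For part~\eqref{Mi,L3.9.b}, I would reduce to an analogue of part~\eqref{Mi,L3.9.a} applied to the shifted measure $\sigma^\#$.
Indeed, \rlem{Mi,L3.7} together with \rrem{Mi,L3.6} furnishes $\sigma\in\Mgguqa{1}{\rhl}$ and the well-definedness of $\sigma^\#\in\MggqK$, while \rlem{Mi,L3.8}, part~\eqref{Mi,L3.8.c}, supplies $(z-\ug)S(z)=S^{[\sigma^\#]}(z)-\sigma(\rhl)$ on $\Cs$.
Substituting this into the decomposition from \rlem{Mi,L3.5}, part~\eqref{Mi,L3.5.b}, absorbs the constant $\sigma(\rhl)$-term into the polynomial part and produces $F_{2n+1}(z)=\tilde\Psi_{2n+1}(z)+E_{q,n}(z)S^{[\sigma^\#]}(z)E_{q,n}^\ad(\ko z)$ with $\tilde\Psi_{2n+1}$ still continuous and Hermitian on $\R$.
\rlem{Mi,L3.4}, part~\eqref{Mi,L3.4.b}, then supplies the a~priori bound $\mu_{2n+1}(\R)\lleq H_{\at n}$, and the argument of part~\eqref{Mi,L3.9.a} applies verbatim with $\sigma^\#$ in place of $\sigma$ and $H_{\at n}$ in place of $H_n$, producing $\int_{\rhl}E_{q,n}(t)\sigma^\#(\dif t)E_{q,n}^\ad(t)\lleq H_{\at n}$.
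Since this integral equals $H_n^{[\sigma^\#]}$, the parts~\eqref{Mi,L3.6.b} and~\eqref{Mi,L3.6.c} of \rrem{Mi,L3.6} translate the bound back into $\sigma\in\Mgguqa{2n+1}{\rhl}$ with $H_{\at n}^{[\sigma]}\lleq H_{\at n}$.

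The principal technical obstacle is the measure-identification step in part~\eqref{Mi,L3.9.a}: justifying the interchange of the limit $y\to 0+0$ with both the $x$-integration and the $\sigma$-integration requires uniform estimates on the Poisson-type kernels $y/[(t-x)^2+y^2]$, which are available because $\sigma$ is a finite measure on $\rhl$ and the test function $\phi$ has compact support there.
A complementary subtlety is that $E_{q,n}(z)E_{q,n}^\ad(\ko z)$ is a matrix polynomial of degree $2n$ in $z$ rather than in $t$, so the passage to the polynomial $E_{q,n}(t)E_{q,n}^\ad(t)$ at the spectral level relies essentially on the continuity of $\Psi_{2n}$ on $\R$ and on splitting the numerator via $K(z)-K(t)$ (a polynomial in $(z,t)$), which is one of the reasons the Stieltjes--Perron route is cleaner than a direct partial-fraction decomposition.
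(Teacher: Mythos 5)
Your proposal is correct and follows essentially the same route as the paper: both rest on the a~priori bound $\mu_{2n}(\R)\lleq H_n$ from \rlem{Mi,L3.4}, the decomposition of $F_{2n}$ (resp.\ $F_{2n+1}$) from \rlem{Mi,L3.5}, and a matricial Stieltjes--Perron inversion to identify the spectral measure of $F_{2n}$ with $E_{q,n}\,\sigma\,E_{q,n}^\ad$, with part~(b) reduced to part~(a) via $\sigma^\#$ and \rlem{Mi,L3.8}. The only deviations are packaging choices (compactly supported test functions and monotone convergence where the paper uses atom-free intervals $(a_k,b_k)$ and Fatou's lemma applied to the trace measure), which do not change the substance of the argument.
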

\begin{proof}
 \eqref{Mi,L3.9.a} Because of \eqref{Mi,L3.9-1}, we get \(H_n\in\Cggo{(n+1)q}\subseteq\CHo{(n+1)q}\) and, in particular, \(s_j^\ad = s_j\) for each \( j\in \mn{0}{2n}\).
 In view of  \(S \in\SFOq  \),  we see that the function \(S\) is holomorphic in \(\Cs \) and, using additionally~\zitaa{MR2988005}{\cpropss{8.9}{8.8}}, we also obtain \(\Rstr_{\uhp}S\in\RFOq \subseteq\RFsq\).
 Let \(f\defeq  S\) and let  \(F_{2n}\colon\uhp \to \Coo{(n+1)q}{(n+1)q}\) be given by \eqref{Nr.F2N}.
 Using  \rlem{Mi,L3.4} and~\zitaa{MR2988005}{\cpropss{8.9}{8.8}}, we conclude that \( F_{2n} \in \RFO{(n+1)q} \subseteq \RFs{(n+1)q}\) and that the matricial spectral measure \(\mu_{2n}\) of \(F_{2n}\) fulfills \(\mu_{2n}(\R)\lleq H_n\).
 Let \(\Psi_{2n}\colon\C\to\Coo{(n+1)q}{(n+1)q}\) be given  \eqref{Nr.PH2NN}.
 Since \(s_j^\ad = s_j\) holds true for each \(j\in\mn{0}{2n}\), from  \rlem{Mi,L3.5} we see that  \(\Psi_{2n}\) is a continuous matrix-valued function with \(\Psi_{2n}(\R)\subseteq \CHo{(n+1)q}\).
 Furthermore,  \rlem{Mi,L3.5} yields \eqref{N1439D}.
 According to \rrem{R1457D}, the matricial spectral measure \(\sigma_{\square}\) of  \(\Rstr_{\uhp}S\) fulfills
\begin{align} \label{Mi,L3.9-8}
 \sigma&=\Rstr_{\BorK }\sigma_{\square}&
&\text{and}&
 \sigma_{\square}\rk*{\R\setminus\rhl}&=\NM.
\end{align}
 Standard arguments of measure theory show that we can choose sequences \((a_k)_{k=1}^\infi \) and \((b_k)_{k=1}^\infi \) of real numbers such that
\begin{gather} 
 \sigma_{\square}\rk*{\set{a_k}}=\NM,\quad\sigma_{\square}\rk*{\set{b_k}}=\NM,\quad\mu_{2n}\rk*{\set{a_k}}=\NM,\quad\mu_{2n}\rk*{\set{b_k}}=\NM,\label{7.81-11}\\
   a_k< b_k,\quad\text{and}\quad(a_k,b_k)\subseteq (a_{k+1},b_{k+1})\label{Mi,L3.9-13}
\end{gather}
 hold true for each \(k\in\N\) and that \( \bigcup_{k=1}^\infi (a_k,b_k) =\R\).
 In view of \(F_{2n}\in\RFs{(n+1)q}\), a matricial version of Stieltjes' inversion formula (see~\cite[Theorem~8.6]{MR2222521}),  and  \eqref{7.81-11} provide us
\beql{Mi,L3.9-16}
 \begin{split}
  \mu_{2n}\rk*{(a_k,b_k)}
  &=\frac{1}{2}\ek*{\mu_{2n}\rk*{\set{ a_k}}+\mu_{2n}\rk*{\set{ b_k}}}+\mu_{2n}\rk*{(a_k,b_k)}\\
  &=\frac{1}{\pi}\lim_{\epsilon\to 0+0}\int_{[a_k,b_k]}\Im F_{2n}(x+\iu\epsilon) \lambda^{(1)}(\dif x)
 \end{split}
\eeq 
 for all \(k\in\N\), where \(\lambda^{(1)}\) is the Lebesgue measure defined on \(\BorR \).
 The function \(E_{q,n}\colon\C\to\Coo{(n+1)q}{q}\) given by \eqref{3.64.-1} is holomorphic in \(\C\).
 Moreover, \(\Psi_{2n}\) is continuous with \(\Psi_{2n} (\R ) \subseteq\CHo{(n+1)q}\).
 Thus, for all \(k\in\N \), we get from \eqref{N1439D},  a matricial version of Stieltjes' inversion formula (see~\cite[Theorem~8.6]{MR2222521}) and  \eqref{7.81-11} that
\beql{Mi,L3.9-17}
 \begin{split}
  &\frac{1}{\pi}\lim_{\epsilon\to 0+0}  \int_{[a_k,b_k]}\Im F_{2n}(x+\iu\epsilon) \lambda ^{(1)}(\dif x)\\
  &= \frac{1}{2}\rk*{  E_{q,n}(a_k)\sigma_{\square}\rk*{\set{ a_k}}  \ek*{E_{q,n}(a_k)}^\ad+ E_{q,n}(b_k)\sigma_{\square}\rk*{\set{b_k}}\ek*{E_{q,n}(b_k)} ^\ad  }\\
  &\qquad  + \int_{(a_k,b_k)}E_{q,n}(t)\sigma_{\square}(\dif t) E_{q,n}^\ad (t)\\
  &=  \int_{(a_k,b_k)}E_{q,n}(t)\sigma_{\square}(\dif t) E_{q,n} ^\ad (t).
 \end{split}
\eeq 
 Combining \eqref{Mi,L3.9-17} and \eqref{Mi,L3.9-16}, we obtain 
\begin{align} \label{Mi,L3.9-18}
 \int_{(a_k,b_k)}E_{q,n}(t)\sigma_{\square}(\dif t) E_{q,n}^\ad (t)&=\mu_{2n}\rk*{(a_k,b_k)}\lleq\mu_{2n}(\R)&\text{for all }k&\in\N 
\end{align}
 and, consequently, 
\begin{align}  \label{Mi,L3.9-20}
 \tr\ek*{\int_{(a_k,b_k)} E_{q,n}(t) \sigma_\square(\dif t) E_{q,n}^\ad (t)}
 &\leq \tr\ek*{\mu_{2n}(\R)}
 < \infp&\text{for all }k&\in\N.
\end{align}
 The trace measure \(\tau \defeq  \tr \sigma_\square\) of \(\sigma_\square\) is a finite measure and \(\sigma_\square\) is absolutely continuous with respect to \(\tau\).
 We can choose a version \((\sigma_\square)'_\tau\) of the matricial Radon--Nikodym derivative of \(\sigma_\square\) with respect to \(\tau\) such that  \((\sigma_\square)_{\tau}'(t)\in\Cggq\) for all \(t\in\R\).
 For all \(k\in\N \), then  \(g_k \defeq\normF{1_{(a_k,b_k)}(\Rstr_\R E_{q,n}) \sqrt{(\sigma_\square)_{\tau}'}}^2\in\LoaaaC{1}{\R}{\BorR}{\tau}\) and  \(\tr  \ek{ \int_\R  \rk{  1_{(a_k,b_k)}\Rstr_\R E_{q,n}} \dif\sigma_\square \rk{  1_{(a_k,b_k)} \Rstr_\R E_{q,n} }^\ad } = \int_\R g_k \dif\tau\).
 Thus, by virtue of  \eqref{Mi,L3.9-20}, we get  
\beql{Mi,L3.9-22}
 \int_\R g_k \dif\tau
 \leq \tr\ek*{\mu_{2n}(\R)}
 <\infp
\eeq 
 for all \(k\in\N\).
 Obviously, \(g\colon\R\to\C\) defined by \(g(t) \defeq\normF{E_{q,n}(t)\sqrt{(\sigma_\square})'_\tau}^2\) is an \(\BorR\)\nobreakdash-\(\BorC\)\nobreakdash-measurable function  with \(g(\R) \subseteq [0,\infp)\).
 For  all \(t\in\R\), we see that 
\beql{Mi,L3.9-24}
 g(t)
 =\normF*{\ek*{\lim_{k\to\infi} 1_{(a_k,b_k)}(t)}\cdot\ek*{(\Rstr_\R E_{q,n})\sqrt{(\sigma_\square)_{\tau}'}}(t)}^2
 =\lim_{k\to\infi} g_k(t)
 =\liminf_{k\to\infi} g_k(t).
\eeq 
 In view of \eqref{Mi,L3.9-24} and \eqref{Mi,L3.9-22}, Fatou's lemma yields then
\[
 \int_\R  \abs*{g(t)} \tau (\dif t)
 =\int_\R\liminf_{k\to\infi} g_k(t) \tau(\dif t)
 \leq\liminf_{k\to\infi}\int_\R g_k(t)\tau (\dif t)
 \leq \tr \ek*{\mu_{2n}(\R)}
 <\infp,
\]
 and, consequently,  \(g\in\LoaaaC{1}{\R}{\BorR}{\tau}\).
 Because of \rlem{B26}, it follows
\beql{Mi,L3.9-25}
 \Rstr_\R E_{q,n}
 \in \aaLsaaaC{(n+1)q}{q}{\R}{\BorR}{\sigma_\square}.
\eeq 
 Hence, from   \eqref{Mi,L3.9-8} we obtain that \(\Rstr_{\rhl} E_{q,n}\) belongs to \( \aaLsaaaC{(n+1)q}{q}{\rhl}{\BorK}{\sigma}\) and  that
\beql{Mi,L3.9-27}
 \int_\R E_{q,n}(t)\sigma_\square(\dif t)E_{q,n}^\ad(t)
 =\int_{\rhl} E_{q,n}(t) \sigma(\dif t) E_{q,n}^\ad (t).
\eeq 
 Furthermore, applying  \rrem{D1229}, we get  \(\sigma \in  \Mgguqa{2n}{\rhl} \) and \eqref{Mi,L3.9-28}.
 Because of  \eqref{Mi,L3.9-25},  we see that \(\Theta_n\colon\BorR\to\Coo{(n+1)q}{(n+1)q}\) defined by
\beql{L3.9-28-1}
 \Theta_n(B)
 \defeq  \int_B  E_{q,n} (t) \sigma_\square (\dif t) E_{q,n}^\ad  (t)
\eeq 
 is a well-defined \tnnH{}  \taaa{(n+1)q}{(n+1)q}{measure} on  \((\R,\BorR)\).
 Using  \eqref{L3.9-28-1}, \( \bigcup_{k=1}^\infi (a_k, b_k) =\R\),  \eqref{Mi,L3.9-13}, \(\Theta_n \in \Mggoa{(n+1)q}{\R,\BorR}\),  \eqref{Mi,L3.9-18}, and \(\mu_{2n} \in \Mggoa{(n+1)q}{\R,\BorR}\), we conclude  
\beql{Mi,L3.9-29}
 \begin{split}
  \int_\R E_{q,n} &(t)\sigma_\square(\dif t)E_{q,n}^\ad(t)
  =\Theta_n(\R) 
  =\Theta_n \rk*{ \bigcup_{k=1}^\infi(a_k,b_k) } 
  =\lim_{k\to\infi}\Theta_n\rk*{(a_k,b_k)}\\
  &=\lim_{k\to\infi}\int_{(a_k,b_k)} E_{q,n}(t) \sigma_\square(\dif t) E_{q,n}^\ad (t)
  = \lim_{k\to\infi}\mu_{2n}\rk*{(a_k,b_k)}\\
  &     =\mu_{2n}\rk*{ \bigcup_{k=1}^\infi(a_k,b_k)} 
  =\mu_{2n}(\R).
 \end{split}
\eeq 
 The combination of \eqref{Mi,L3.9-28}, \eqref{Mi,L3.9-27}, \eqref{Mi,L3.9-29}, and \(\mu_{2n} (\R) \lleq H_n\) provides us then 
\[
 H_n^{[\sigma]}
 =\int_{\rhl} E_{q,n}(t)\sigma(\dif t) E_{q,n}^\ad (t)
 =\int_\R \Rstr_\R E_{q,n} \dif\sigma_\square\rk{\Rstr_\R E_{q,n}}^\ad
 =\mu_{2n}(\R)
 \lleq H_n.
\]

 \eqref{Mi,L3.9.b} Because of  \eqref{Mi,L3.9-30}, we have \(\set{ H_n,  H_{\at n}} \subseteq\Cggo{(n+1)q}\subseteq\CHo{(n+1)q}\) and, consequently, \(s_j^\ad = s_j\) for each \(j\in\mn{0}{2n+1}\).
 Since \(S\) belongs to \(\SFOq \), from \eqref{Mi,L3.9-30} and   \rlem{Mi,L3.8} we infer that \(\sigma\) belongs to \(\Mgguqa{1}{\rhl}\), that  \(\sigma^\#\colon\BorK \to\Cqq\) defined by \eqref{S1421N} belongs to \(\MggqK \), that \(\tilde{S} \colon\Cs \to\Cqq \) given by \eqref{Nr.SRD} is a function with \(\Rstr_{\uhp} \tilde{S} \in\RFsq\), and that   \((\sigma^\#)_\square\colon\BorR\to\Cqq\) given by \((\tilde{\sigma})_\square (B)\defeq  \sigma^\# (B \cap \rhl )\) is the matricial spectral measure of \((\tilde{S})_\square \defeq  \Rstr_{\uhp} \tilde{S}\).
 Observe that \rremp{Mi,L3.6}{Mi,L3.6.b} shows that \eqref{D2012} holds true.
 Now \rpart{Mi,L3.9.b} can be proved analogous to \rpart{Mi,L3.9.a}, where \(F_{2n+1} \colon\uhp\to\Coo{(n+1)q}{(n+1)q}\) given by \eqref{Nr.F2N+1} and \(\Psi_{2n+1} \colon\C \to \Coo{(n+1)q}{(n+1)q}\) defined by \eqref{Nr.PH2N+1} play the roles of \(F_{2n}\) and \(\Psi_{2n}\), respectively (for details, see~\cite[Lemma~7.9]{Mak14}).
\end{proof}

\begin{rem} \label{Mi,L3.13}
 Let \(\alpha\in\R\), let \(\kappa\in\NOinf \), and let\(\seqska\) be a sequence from \(\Cqq\).
 Using  \rrem{NR41N} and the definition of the class \(\SFOq \), it is readily checked that the following statements hold true:
\begin{enui} 
 \item\label{Mi,L3.13.a} If \(n \in \NO \) is such that  \(2n\leq\kappa\) and if  \(S\in\SFOq  \) is such that  \(P_{2n}^{[S]}(\iu y)\in \Cggo{(n+2)q}\) for all \(y\in (0,\infp)\), then 
\beql{M1836M}
 \lim_{y\to\infp}R_{\Tqn}(\iu y)\ek*{\vqn S(\iu y)-u_n}
 =\NM.
\eeq 
 \item\label{Mi,L3.13.b} If \(\kappa\geq 1\) and \( n \in \NO  \) are such that \(2n+1\leq\kappa\) and if \(S\in\SFOq \) is such that  \(P_{2n+1}^{[S]}(\iu y)\in\Cggo{(n+2)q}\) holds true for each \(y\in(0,\infp)\), then
\[
 \lim_{y\to\infp}R_{\Tqn}(\iu y)\ek*{\vqn (\iu y-\alpha)S(\iu y)-(-\alpha u_n-y_{0,n})} 
 =\NM. 
\]
\end{enui}
\end{rem}

\begin{rem} \label{Mi,L3.15}
 Let \( n \in \NO  \) and let \(y\in\R\).
 If \(u\in\Coo{(n+1)q}{p}\) is such that \(\lim_{y\to\infp}[u^\ad R_{\Tqn}(\iu y)u]=\NM\), then from \rrem{21112N} one can easily see that  \(u=\NM\).
\end{rem}

\begin{lem} \label{Mi,L3.16}
 Let \(\alpha\in\R\), let \(\kappa\in\NOinf \), and let \(\seqska\) be a sequence of complex \tqqa{matrices}.
 Then:
\begin{enui} 
 \item\label{Mi,L3.16.a} Let \(n\in\NO \) be such that \(2n\leq\kappa\) and let \(S\in\SFOq \) be such that  \(P_{2n}^{[S]}(z)\in \Cggo{(n+2)q}\) holds true for all  \(z\in\C\setminus\R\).
 Then the \taSm{} \(\sigma\) of \(S\) belongs to \(\Mgguqa{2n}{\rhl}\) and  \(S\) belongs to \(\SFOqskg{2n}\).
 \item\label{Mi,L3.16.b} Let \(n\in \NO \) be such that  \(2n+1\leq\kappa\) and let \(S\in\SFOq  \) be such that \(\set{ P_{2n}^{[S]}(z), P_{2n+1}^{[S]}(z)} \subseteq \Cggo{(n+2)q}\) holds true for each  \(z\in\C\setminus\R\).
 Then the \taSm{} \(\sigma\) of \(S\) belongs to \(\Mgguqa{2n+1}{\rhl} \) and \(S\) belongs to \(\SFOqskg{2n+1}\). 
\end{enui}
\end{lem}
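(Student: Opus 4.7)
The plan is to combine the matrix inequality supplied by \rlem{Mi,L3.9} with the limit information of \rrem{Mi,L3.13} to pin down the lowest moment, and then to exploit the nonnegativity of the resulting block Hankel difference in order to propagate moment matching all the way up.

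For \rpart{Mi,L3.16.a}, \rlem{Mi,L3.9} already yields \(\sigma\in\Mgguqa{2n}{\rhl}\) together with \(M\defeq H_n-H_n^{[\sigma]}\lgeq\NM\); it remains to upgrade this to \(s_j=s_j^{[\sigma]}\) for \(j\in\mn{0}{2n-1}\) and \(s_{2n}-s_{2n}^{[\sigma]}\lgeq\NM\), the case \(n=0\) being vacuous. For \(n\geq 1\), I would read off the block at position \(\ell=1\) of \(\lim_{y\to\infp}R_{\Tqn}(\iu y)[v_{q,n}S(\iu y)-u_n]=\NM\) from \rrem{Mi,L3.13}, obtaining \(\lim_{y\to\infp}[\iu y\,S(\iu y)+s_0]=\NM\); combined with \(-\iu\lim_{y\to\infp}y\,S(\iu y)=\sigma(\rhl)=s_0^{[\sigma]}\) from \rprop{P1525} this forces \(s_0=s_0^{[\sigma]}\), hence \(M_{0,0}=\Oqq\). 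The standard fact that a vanishing diagonal block of a \tnnH{} matrix annihilates the corresponding row and column then makes the first block row and column of \(M\) vanish, which yields \(s_j=s_j^{[\sigma]}\) for \(j\in\mn{0}{n}\). The principal submatrix of \(M\) obtained by deleting this row and column is again \tnnH{} with new top-left block \(s_2-s_2^{[\sigma]}=\Oqq\) (for \(n\geq 2\)); iterating the same reasoning \(n-1\) further times propagates the equalities through \(j=2n-1\) and leaves \(M_{n,n}=s_{2n}-s_{2n}^{[\sigma]}\lgeq\NM\) as the only residual block. Thus \(\sigma\in\MggqKskg{2n}\), \tie{}, \(S\in\SFOqskg{2n}\).

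For \rpart{Mi,L3.16.b}, the hypothesis contains that of \rpart{Mi,L3.16.a}, which therefore furnishes \(s_j=s_j^{[\sigma]}\) for \(j\in\mn{0}{2n-1}\) and \(s_{2n}-s_{2n}^{[\sigma]}\lgeq\NM\). \rlem{Mi,L3.9} additionally yields \(\sigma\in\Mgguqa{2n+1}{\rhl}\) together with \(N\defeq H_{\at n}-H_{\at n}^{[\sigma]}\lgeq\NM\). Since \(s_{\at{j}}=s_{j+1}-\alpha s_j\), the \((i,k)\)\nobreakdash-block of \(N\) equals \((s_{i+k+1}-s_{i+k+1}^{[\sigma]})-\alpha(s_{i+k}-s_{i+k}^{[\sigma]})\); by the equalities just obtained, this vanishes on every anti-diagonal \(i+k\leq 2n-2\), equals \(A\defeq s_{2n}-s_{2n}^{[\sigma]}\) on the anti-diagonal \(i+k=2n-1\), and at the single position \((n,n)\) equals \(B\defeq(s_{2n+1}-s_{2n+1}^{[\sigma]})-\alpha A\). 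Extracting the principal \(2q\times 2q\) submatrix of \(N\) at block indices \(n-1,n\) therefore exhibits
\[
 \begin{bmatrix}\Oqq & A\\ A^\ad & B\end{bmatrix}\lgeq\NM,
\]
which forces \(A=\Oqq\) and subsequently \(B\lgeq\NM\). Hence \(s_{2n}=s_{2n}^{[\sigma]}\) and \(s_{2n+1}-s_{2n+1}^{[\sigma]}\lgeq\NM\), so \(\sigma\in\MggqKskg{2n+1}\) and \(S\in\SFOqskg{2n+1}\).

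The main obstacle is the bookkeeping in the propagation step of \rpart{Mi,L3.16.a}: at each round the new diagonal block whose vanishing is to trigger the next iteration must already be among the moment equalities established so far, and one has to verify that this condition is indeed maintained long enough for the induction to reach index \(2n-1\) instead of stalling prematurely. Hermiticity of \(s_0,\dotsc,s_{2n}\), which is silently used when invoking the Cauchy--Schwarz--type vanishing argument, follows from \(H_n^{[\sigma]}\in\CHo{(n+1)q}\) together with \(M\lgeq\NM\).
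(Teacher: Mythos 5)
Your part~\eqref{Mi,L3.16.a} is correct and is essentially the paper's own argument: both routes reduce to producing \(d_0=\Oqq\) for \(d_j\defeq s_j-\suo{j}{\sigma}\) and then propagating through the \tnnH{} block Hankel matrix \(\mat{d_{j+k}}_{j,k=0}^n\); your iteration is exactly \rrem{CDFK06}, and it does reach index \(2n-1\) without stalling (at step \(m\le n-1\) the new top-left block \(d_{2m}\) is already known to vanish because \(2m\le n+m-1\)). The only difference is how \(s_0=\suo{0}{\sigma}\) is obtained: you read off the block at position \(1\) of the limit \eqref{M1836M} and invoke \rprop{P1525}, whereas the paper compares \eqref{M1836M} with the corresponding limit relation generated by \(\sigma\) itself (via \rprop{lemM4213-2}) and uses \rrem{Mi,L3.15} to conclude \(u_n^{[\sigma]}=u_n\). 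Both are valid.

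Part~\eqref{Mi,L3.16.b}, for which the paper only says ``analogously,'' is where you have a genuine gap: the case \(n=0\). Your reduction is clean for \(n\ge1\), but for \(n=0\) part~\eqref{Mi,L3.16.a} delivers only \(s_0-\suo{0}{\sigma}\lgeq\NM\) (no equality), the matrix \(N=H_{\at{0}}-H_{\at{0}}^{[\sigma]}=d_1-\alpha d_0\) is a single \(\xx{q}{q}\)~block, and there is no principal \(\xx{2q}{2q}\) submatrix at block indices \(n-1,n\) to extract. The two inequalities \(d_0\lgeq\NM\) and \(d_1-\alpha d_0\lgeq\NM\) alone do not force \(d_0=\Oqq\), which is precisely what \(\sigma\in\MggqKskg{1}\) demands. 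To close this you must use the odd-index limit relation of \rremp{Mi,L3.13}{Mi,L3.13.b}: for \(n=0\) it reads \(\lim_{y\to\infp}\ek{(\iu y-\alpha)S(\iu y)+s_0}=\NM\), and since \(\sup_{y\in[1,\infp)}y\normS{S(\iu y)}<\infp\) gives \(S(\iu y)\to\NM\), \rprop{P1525} again yields \(s_0=\suo{0}{\sigma}\). (Equivalently, run the paper's comparison argument in the \(\alpha\)\nobreakdash-shifted setting to get the analogue of \(u_n^{[\sigma]}=u_n\) and then apply \rrem{CDFK06} to the shifted differences \(\mat{d_{\at{j+k}}}_{j,k=0}^n\); that treats all \(n\in\NO\) uniformly and also reproduces your \(n\ge1\) case.)
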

\begin{proof}
 We give a shortened version of the detailed proof stated in~\cite[Lemma~5.15]{Sch11}.
 
 \eqref{Mi,L3.16.a} \rlem{Mi,L3.9} yields \(\sigma \in \Mgguqa{2n}{\rhl}\) and \(H_n^{[\sigma]} \lleq H_n\).
 If \(n=0\), then \(\sigma\in\MggqKskg{2n}\) follows.
 Suppose now \(n\ge 1\).
 \rrem{Mi,L3.13} shows that \eqref{M1836M} is valid.
 Obviously, \(\sigma\in\MggqKakg{\seq{\suo{j}{\sigma}}{j}{0}{2n}}\), where \(\seq{\suo{j}{\sigma}}{j}{0}{2n}\) is defined by \eqref{FR1148}.
 Thus, \rprop{lemM4213-2} and \rrem{Mi,L3.13} provide us
\beql{M1907M}
 \lim_{y\to\infp} R_{T_{q,n}} (\iu y) \ek*{ v_{q,n} S(\iu y) - u_n^{[\sigma]}}
 =\NM
\eeq 
 where \(s_{-1}^{[\sigma]} \defeq  \Oqq \) and where \(u_n^{[\sigma]} \defeq-\col (s_{j-1}^{[\sigma]})_{j=0}^n\).
 Combining \eqref{M1836M}  and \eqref{M1907M}, we get
\[
 \lim_{y\to\infp} \rk{u_n^{[\sigma]}-u_n}^\ad R_{T_{q,n}} (\iu y) \rk{ u_n^{[\sigma]} - u_n}
 =\NM.
\]
 Consequently, \rrem{Mi,L3.15} yields \(u_n^{[\sigma]} = u_n\).
 Let \(d_j \defeq  s_j - \suo{j}{\sigma}\) for each \(j\in\mn{0}{2n}\).
 Then \(u_n^{[\sigma]} = u_n\) and \(n\ge 1\) imply \(d_0 =\Oqq \).
 Furthermore, the inequality \(H_n^{[\sigma]}\lleq H_n\) shows that the block Hankel matrix \(\mat{d_{j+k}}_{j,k=0}^n\) is \tnnH{}.
 Thus, \(d_{2n} \in\Cggq \) and \rrem{CDFK06} yield \(d_j =\Oqq\) for each \(j\in\mn{0}{2n-1}\).
 Hence, \(\sigma\) belongs to \(\MggqKskg{2n}\).
 
 \eqref{Mi,L3.16.b} \rPart{Mi,L3.16.b} can be proved analogously.
\end{proof}

 Now we are able to prove that the solution set of the (reformulated) truncated Stieltjes-type moment problem and the solution set of the corresponding system of  the fundamental  Potapov's matrix inequalities coincide.

\begin{thm} \label{MT4315}
 Let \(\alpha\in\R\), let \(\kappa\in\NOinf \), and let \(\seqska \) be a sequence of complex \tqqa{matrices}.
 Let \(\cD\)  be a discrete subset of \(\uhp\) and let \(S\colon\Cs\to\Cqq\) be a holomorphic matrix-valued function.
 Then:
\begin{enui}
 \item\label{MT4315.a} Let \(n\in\NO \) be such that \(2n\leq \kappa\).
 Then the following statements are equivalent:
\begin{aeqii}{0}
 \item\label{MT4315.i} \(S\in\SFOqskg{2n}\).
 \item\label{MT4315.ii} \(P_{2n-1}^{[S]}(z) \in \Cggo{(n+1)q}\) and \(P_{2n}^{[S]}(z)\in \Cggo{(n+2)q}\)  for all \(z\in\uhp\setminus\cD\).
\end{aeqii}
 \item\label{MT4315.b} Let \(n\in\NO \) be such that  \(2n+1\leq \kappa\).
 Then the following statements are equivalent:
\begin{aeqii}{2}
 \item\label{MT4315.iii} \(S\in\SFOqskg{2n+1}\).
 \item\label{MT4315.iv} \(\set{ P_{2n}^{[S]}(z), P_{2n+1}^{[S]}(z)}\subseteq\Cggo{(n+2)q}\) for all \(z\in\uhp\setminus\cD\).
\end{aeqii}    
\end{enui}
\end{thm}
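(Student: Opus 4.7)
The plan is to reduce \rthm{MT4315} to three earlier results without any new computation: \rprop{lemM4213-2} (necessity of Potapov's inequalities for a Stieltjes transform), \rthm{T1747F} (extension from $\uhp \setminus \cD$ to $\Cs$ together with propagation of the inequalities to all of $\C \setminus \R$), and \rlem{Mi,L3.16} (identification of elements of $\SFOqskg{m}$ among $\SFOq$-functions via Potapov's inequalities on $\C \setminus \R$). Parts~\rpart{MT4315.a} and~\rpart{MT4315.b} are handled in parallel by setting $m\defeq 2n$ in the former and $m\defeq 2n+1$ in the latter.

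For the necessity direction in both parts, I would argue as follows: since $S \in \SFOqskg{m}$, its Stieltjes measure $\sigma$ belongs by definition to $\MggqKskg{m}$, and \rthm{T1316D} shows that $S$ coincides with the \taSto{\sigma}. Then \rprop{lemM4213-2} directly furnishes $P_k^{[S]}(z)\lgeq\NM$ for every $k\in\mn{0}{m}$ and every $z\in\C\setminus\R$, which covers the two stated Potapov inequalities on the smaller set $\uhp\setminus\cD$. The only index not addressed by \rprop{lemM4213-2} is $P_{-1}^{[S]}$, which is required solely in the case $n=0$ of part~\rpart{MT4315.a}; there, the integral representation~\eqref{N1319D} of $S\in\SFOq$ gives $\Im\ek{(z-\alpha)S(z)}\lgeq\NM$ for $z\in\uhp$, from which $P_{-1}^{[S]}(z)\lgeq\NM$ is immediate.

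For the sufficiency direction, I would apply \rthm{T1747F} to $F\defeq\Rstr_{\uhp\setminus\cD}S$ with the chosen value of $m$: the two Potapov inequalities assumed in the hypothesis are exactly the hypothesis~\eqref{N1838S} of that theorem. It produces a unique $\tilde{S}\in\SFOq$ extending $F$, together with the stronger conclusion $P_k^{[\tilde{S}]}(z)\lgeq\NM$ for every $k\in\mn{-1}{m}$ and every $z\in\C\setminus\R$. Since both $S$ and $\tilde{S}$ are holomorphic on $\Cs$ and coincide on the non-empty open subset $\uhp\setminus\cD$ (open because $\cD$ is discrete in $\uhp$), the identity theorem forces $\tilde{S}=S$ throughout $\Cs$. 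Consequently, $S$ itself belongs to $\SFOq$ and satisfies $P_k^{[S]}(z)\lgeq\NM$ for all $k\in\mn{-1}{m}$ on the whole of $\C\setminus\R$, and \rlem{Mi,L3.16} then yields $S\in\SFOqskg{m}$, completing the equivalence.

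No essentially new difficulty appears: the argument amounts to matching indices across the three cited results and upgrading the domain from $\uhp\setminus\cD$ to $\C\setminus\R$. The latter, the only potentially delicate step, is built into \rthm{T1747F}, whose own proof rests on the Schwarz-type symmetry of the Potapov matrices encoded in \rlem{MP36N} together with the holomorphic extension principle \rprop{MT2224}. All nontrivial analytic work has already been done in Sections~\ref{S1415} and~\ref{S0846}; the present theorem is the synthesis.
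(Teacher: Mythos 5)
Your proposal is correct and follows essentially the same route as the paper's own proof: \rprop{lemM4213-2} for necessity, and \rthm{T1747F} followed by \rlem{Mi,L3.16} for sufficiency, with the identity theorem identifying the extension with $S$. Your explicit treatment of the index $k=-1$ (needed only for $n=0$ in part~\rpart{MT4315.a}, and not literally covered by \rprop{lemM4213-2}) is a small point the paper leaves implicit, and your argument for it is sound.
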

\begin{proof}
 \impl{MT4315.i}{MT4315.ii}, \impl{MT4315.iii}{MT4315.iv}: Use \rprop{lemM4213-2}.

\bimp{MT4315.ii}{MT4315.i}
 Let \(m\defeq  2n\).
 Observe that the function \( F  \defeq  \Rstr_{\uhp \setminus \cD} S\) is holomorphic.
 Because of~\ref{MT4315.ii}, the inequalities \(P_{m-1}^{[ F ]} (z) \lgeq\NM\) and \(P_{m}^{[ F ]} (z) \lgeq\NM\) hold true for each \(z\in\uhp \setminus \cD\).
 From \rthm{T1747F} we get then that there is a unique function \(\hat S\in\SFOq \) such that \(\Rstr_{\uhp \setminus \cD}\hat S =  F \), namely \(\hat S=S\), and that \(P_{k}^{[\hat S]} (z) \lgeq\NM\)  are valid for all \(k\in\mn{-1}{m}\) and all  \(z\in\C\setminus\R\).
 Applying \rlem{Mi,L3.16}, we get then~\ref{MT4315.i}.
\eimp

\bimp{MT4315.iv}{MT4315.iii}
 Let \(m\defeq 2n+1\) and use the same argumentation as in the proof of the implication ``\impl{MT4315.ii}{MT4315.i}''.
\eimp
\end{proof}

\section{Some considerations on block Hankel matrices}\label{S1709}   
 First we introduce some further block Hankel matrices.
 In \rsec{S1415}, in particular in \rremss{ML415}{lemC21-1}, we already discussed some aspects on such matrices.
 
 Let \(\kappa \in \NOinf \) and let \(\seqska \) be a sequence of complex \tpqa{matrices}.
 Let \(H_n \defeq\mat{s_{j+k}}_{j,k=0}^n\) for each \(n \in \NO \) with \(2n\leq \kappa\), let \(K_n \defeq\mat{s_{j+k+1}}_{j,k=0}^n\) for each \(n \in \NO \) with \(2n+1 \leq \kappa\), and let \(G_n \defeq\mat{s_{j+k+2}}_{j,k=0}^n\) for each \(n\in\NO \) with \(2n + 2\le \kappa\).
 For every choice of integers \(m\) and \(n\) with \(0\le m\le n\le \kappa\), let \(y_{m,n}\) and \(z_{m,n}\) be given by \eqref{YZ}, let \(\hat{y}_{m,n} \defeq \col (s_{n-j})_{j=0}^{n-m}\), and let \(\hat{z}_{m,n} \defeq \row (s_{n-k})_{k=0}^{n-m}\).

 We will see that certain Schur complements play an essential role for our considerations.
 Let \(L_0 \defeq s_0\) and, for each \(n\in\N\) with \(2n \le \kappa\), furthermore 
\[
 L_n
 \defeq s_{2n} - z_{n,2n-1} H_{n-1}^\mpi  y_{n,2n-1}.
\]
 For every choice of integers \(m\) and \(n\) with \(0\le m\le n\le \kappa -1\), let \(y_{\alpha \triangleright m,n} \defeq \col (s_{\alpha \triangleright m+j})_{j=0}^{m-n}\) and \(z_{\alpha \triangleright m,n} \defeq \row (s_{\alpha \triangleright m+k})_{j=0}^{m-n}\).
 Let \(L_{\alpha\triangleright 0} \defeq s_{\alpha\triangleright 0}\) and, for each \(n\in\NO \) with \(2n + 1\le\kappa\), moreover \(L_{\at{n}} \defeq s_{\alpha\triangleright 2n} - z_{\alpha \triangleright n,2n-1} H_{\alpha\triangleright n-1}^\mpi  y_{\alpha \triangleright n, 2n-1}\).

\begin{rem} \label{lemM431-2}
 Let \(\kappa \in \NOinf \) and let \(\seqska \) be a sequence of \tH{} complex \tqqa{matrices}.
 In view of \rrem{lemC21-1}, it is readily checked that 
\begin{multline*} 
 \ek*{R_{\Tqn^\ad }(w)}^\invad H_n \Tqn^\ad -\Tqn H_n \ek*{R_{\Tqn^\ad}(z)}^\inv+(\ko{w}-z) \Tqn H_n \Tqn^\ad\\
 = v_{q,n}v_{q,n}^\ad H_n \Tqn^\ad -\Tqn H_n v_{q,n}v_{q,n}^\ad\label{SKP3-12-II}
\end{multline*} 
 and 
\beql{B204}
 \ek*{R_{\Tqn}(z)}^\inv H_n\Tqn^\ad -\Tqn H_n \ek*{R_{\Tqn}(\ko z)}^\invad
 = v_{q,n} v_{q,n}^\ad H_n \Tqn^\ad -\Tqn H_n v_{q,n} v_{q,n}^\ad 
\eeq
 are fulfilled for every choice of \(n\in\NO \) with \(2n\le \kappa\) and \(w,z\in\C\).
\end{rem}

\begin{rem} \label{ZF1}
 Let \(\alpha \in \R\), let \(\kappa \in \Ninf \), and let \(\seqska \) be a sequence of \tH{} complex \tqqa{matrices}.
 In view of \rremss{ML415}{lemC21-1}, then
\begin{multline} \label{FIDW3}
 \ek*{R_{\Tqn}(z)}^\inv H_{\at{n}}\Tqn^\ad- \Tqn H_{\at{n}}\ek*{R_{\Tqn}(\ko z)}^\invad\\
 = v_{q,n} v_{q,n}^\ad H_n\ek*{R_{\Tqn}(\alpha)}^\invad - \ek*{R_{\Tqn}(\alpha)}^\inv H_n v_{q,n} v_{q,n}^\ad 
\end{multline} 
 is valid for each \(n \in \NO \) with \(2n+1 \leq \kappa\) and each \(z\in\C\).
 \rrem{lemC21-1} yields 
\begin{align} 
 \ek*{R_{\Tqn}(\alpha)}^\inv H_n v_{q,n}&= \ek*{R_{\Tqn}(\alpha)}^\inv y_{0,n},&
 \ek*{R_{\Tqn}(\alpha)}^\inv H_n v_{q,n}& = \alpha u_n+ y_{0,n}\label{RHV-1}
\end{align}
 and
\begin{align*}
 z_{0,n} \ek*{R_{\Tqn}(\alpha)}^\invad&= v_{q,n}^\ad H_n \ek*{R_{\Tqn}(\alpha)}^\invad,&
 \alpha w_n+ z_{0,n}&= v_{q,n}^\ad H_n \ek*{R_{\Tqn}(\alpha)}^\invad  \label{RHV-2}
\end{align*} 
 for each \(n\in\NO \) with \(2n\le \kappa\).
\end{rem}

\begin{rem}[{\cite[\crem{2.1}]{MR2570113}}] \label{DFKMT2.1} 
 Let \(n \in \N\) and let \(\seqs{2n}\) be a sequence of complex \tqqa{matrices}.
 In view of \eqref{Nr.B6} and \rlem{DFK1}, one can easily see that \(\seqs{2n}\) belongs to \(\Hggq{2n}\) if and only if the four conditions \(\seqs{2(n-1)} \in \Hggq{2(n-1)}\), \(\ran{y_{n,2n-1}}\subseteq \ran{H_{n-1}}\), \(s_{2n-1}^\ad =s_{2n-1}\), and \(L_n \in \Cggq\) hold true.
 If \(\seqs{2n}\) belongs to \(\Hggq{2n}\), then \(\rank H_n = \sum_{j=1}^n \rank L_j\).
\end{rem}

\begin{rem} \label{1582012-2}
 Let \(\alpha \in \R\), let \(\kappa \in \Ninf \), and let \(\seqska \in \Kggqka \).
 By virtue of \rrem{ML415}, one can easily check then that \(s_j^\ad =s_j\) for each \(k\in \mn{0}{\kappa}\) and \(s_{\at k}^\ad = s_{\at k}\) for each \(k\in \mn{0}{\kappa-1}\).
 Furthermore, for each \(n\in \NO \) with \(2n\leq \kappa\), from \rrem{DFKMT2.1} one can see that the matrices \(s_{2n}\), \(H_n\), and \(L_n\) are \tnnH{} and, for each \(n\in \NO \) with \(2n+1\leq \kappa\), the matrices \(s_{\at 2n}\), \(H_{\at n}\), and \(L_{\at n}\) are \tnnH{} as well.
\end{rem}

\begin{rem} \label{ZS}
 Let \(\alpha \in \R\) and let \(\kappa \in \NOinf \).
 According to the definition of \(\Kggeqka \) and~\cite[\clem{4.7}]{MR2735313}, one can easily check that \(\Kggeqka \subseteq \Kggqka \cap \Hggeqka\).
 In particular, if \(\seqska \) belongs to \(\Kggeqka \), then, in view of \rrem{R1440} for each \(m\in \mn{0}{\kappa}\), the sequence \(\seqs{m}\) belongs to \(\Kggeq{m} \cap \Hggeqka\).
 Further, if \(\seqska \in \Kggeqka \), then the definition of the sets \(\Kggeqka \) and \(\Hggeqka \) and~\cite[\cprop{4.8} and \clem{4.11}]{MR2735313} show that, for each \(m\in\mn{0}{\kappa - 1}\), the sequence \(\seqsa{m}\) belongs to \(\Hggeq{m} \).
\end{rem}

\begin{rem} \label{R3.12}
 Let \(\alpha \in \R\), let \(\kappa \in \Ninf \), and let \(\seqska \in \Kggeqka \).
 For each \(m\in \mn{0}{\kappa}\), we have \(\seqs{m} \in \Kggeq{m}\).
 In view of \rremss{1582012-2}{ZS}, from ~\cite[\clemss{4.15}{4.16}]{MR2735313} one can see that 
\[
 \nul{L_0}
 \subseteq \nul{L_{\at{0}}}
 \subseteq \nul{L_1}
 \subseteq\dotsb
 \subseteq \nul{L_n}
 \subseteq \nul{L_{\at{n}}}
\]
 and that
\[
 \ran{L_0}
 \supseteq
 \ran{L_{\at{0}}}
 \supseteq\ran{L_1}
 \supseteq\dotsb
 \supseteq\ran{L_n}
 \supseteq\ran{L_{\at{n}}}
\]
 are valid for each \(n\in \NO \) with \(2n +1 \leq \kappa\) and 
\[
 \nul{L_0}
 \subseteq
 \nul{L_{\at{0}}}
 \subseteq\nul{L_1}
 \subseteq\dotsb
 \subseteq\nul{L_{\at{n-1}}}
 \subseteq\nul{L_n}
\]
 and 
\[
 \ran{L_0}
 \supseteq\ran{L_{\at{0}}}
 \supseteq\ran{L_1}
 \supseteq\dotsb
 \supseteq\ran{L_{\at{n-1}}}
 \supseteq\ran{L_n}
\]
 hold true for each \(n\in \N\) with \(2n \leq\kappa\).
\end{rem}

\section{Dubovoj  subspaces}\label{S1710} 
 If \(\cU\) and \(\cW\) are subspaces of \(\Cq \), then we write \(\cU +\cW\) for the sum of \(\cU\) and \(\cW\).
 To indicate that the sum \(\cU + \cW\) is a direct sum, \tie{}, that \(\cU\cap\cW = \set{\Ouu{q}{1}}\) is fulfilled, we use the notation \(\cU\msum \cW\).
 V.~K.~Dubovoj  studied in~\cite{Dub} particular invariant subspaces to discuss the matricial Schur problem.
 Having in mind this, we give the following definition:
 
\begin{defn}\label{D1039}
 We call a subspace \(\cD\) of \(\Cp\) a \emph{Dubovoj  subspace} corresponding to a given ordered pair \((H,T)\) of complex \tppa{matrices} if \(T^\ad (\cD)\subseteq \cD\) and \(\nul{H} \msum \cD =\Cp\) are fulfilled.
\end{defn}

 We are going to consider special Dubovoj subspaces.
 
\begin{nota} \label{N1543}
 Let \(\kappa \in \NOinf \) and let \(\seqska \) be a sequence of complex \tpqa{matrices}.
 For each \(n\in \NO \) with \(2n \leq \kappa\), let \(\cD_n\defeq \ran{\diag (L_0,\ L_1,\dotsc,L_n)}\).
 Furthermore, if \(\kappa \geq 1\), then, for every choice of \(\alpha \in \R\) and \(n\in \NO \) with \(2n+1 \leq \kappa\), let \(\cD_{\at{n}}\defeq \ran{\diag (L_{\at{0}},\ L_{\at{1}},\dotsc,L_{\at{n}} )}\).
\end{nota}

 Using the Kronecker delta, we set 
\begin{align*}
 V_{q,n}&\defeq\mat{\Kronu{j,k}\Iq}_{\substack{j = 0,\dotsc,n\\ k = 0,\dotsc,n-1}}&
&\text{and}&
 \sV_{q,n}&\defeq\mat{\Kronu{j,k+1}\Iq}_{\substack{j = 0,\dotsc,n\\ k = 0,\dotsc,n-1}}.
\end{align*}

\begin{lem} \label{R1103}
 Let \(\kappa \in \Ninf \), and let \(\seqska \in \Hggeqka \).
 For each \(n\in \N\) with \(2n \leq \kappa\), then \( \Tqn^\ad (\cD_{n})\subseteq\cD_n\), \(\sV_{q,n}^\ad (\cD_{n})\subseteq\cD_{n-1}\), and \(V_{q,n}^\ad (\cD_{n})\subseteq\cD_{n-1}\).
\end{lem}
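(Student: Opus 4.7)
The plan is to pin down the block structure of $\cD_n$ and then reduce the three claimed inclusions to a chain of range inclusions among the Schur complements $L_0, L_1, \dotsc, L_n$.

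First, I would observe that a vector $v\in\C^{(n+1)q}$ belongs to $\cD_n = \ran{\diag (L_0, L_1, \dotsc, L_n)}$ if and only if it admits a block decomposition $v = \col (v_j)_{j=0}^n$ with $v_j\in\ran{L_j}$ for each $j\in\mn{0}{n}$.
From the definitions of $T_{q,n}$, $V_{q,n}$, and $\sV_{q,n}$ via the Kronecker delta, direct block multiplication yields
\begin{align*}
 T_{q,n}^\ad v &= \col (v_1, v_2, \dotsc, v_n, \Ouu{q}{1}), \\
 V_{q,n}^\ad v &= \col (v_0, v_1, \dotsc, v_{n-1}), \\
 \sV_{q,n}^\ad v &= \col (v_1, v_2, \dotsc, v_n).
\end{align*}

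Second, the inclusion $V_{q,n}^\ad (\cD_n) \subseteq \cD_{n-1}$ is immediate from this description: truncating $v$ to its first $n$ block components leaves the $j$-th block in $\ran{L_j}$ for each $j\in\mn{0}{n-1}$, which is exactly the characterization of $\cD_{n-1}$.

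Third, the remaining inclusions $T_{q,n}^\ad (\cD_n) \subseteq \cD_n$ and $\sV_{q,n}^\ad (\cD_n) \subseteq \cD_{n-1}$ both reduce, via the explicit formulas above, to verifying the chain
\[
 \ran{L_n} \subseteq \ran{L_{n-1}} \subseteq \dotsb \subseteq \ran{L_1} \subseteq \ran{L_0},
\]
since in both cases the $(j-1)$-st block of the output is $v_j$, which a priori only lies in $\ran{L_j}$ but must lie in $\ran{L_{j-1}}$ to ensure membership in the target Dubovoj subspace.

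The main obstacle is thus to establish this range-monotonicity for $\seqska\in\Hggeqka$; it is the Hamburger analogue of the chains recorded in \rrem{R3.12} for $\Kggeqka$, and it is the sole place where the extendability hypothesis is used.
For each $j\in\N$ with $2j\leq\kappa$, by definition of $\Hggeqka$ one can extend $\seqs{2j}$ to some $\seqs{2(j+1)}\in\Hggq{2(j+1)}$; then a Schur complement computation, starting from the two alternative block decompositions of $H_{j+1}$ displayed in \rrem{R1502} and invoking \rrem{DFKMT2.1} (which supplies $\ran{y_{j+1, 2j+1}}\subseteq\ran{H_j}$ together with $L_{j+1}\lgeq\NM$), yields the inclusion $\ran{L_{j+1}}\subseteq\ran{L_j}$.
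This monotonicity of the Schur complements of \tHnnde{} block Hankel matrices is standard; once it is in hand, the three inclusions stated in the lemma are immediate consequences of the block bookkeeping in the second and third paragraphs above.
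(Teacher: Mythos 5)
Your block description of \(\cD_n\), the explicit formulas for the actions of \(\Tqn^\ad\), \(V_{q,n}^\ad\), and \(\sV_{q,n}^\ad\), and the reduction of all three inclusions to the range chain \(\ran{L_n}\subseteq\dotsb\subseteq\ran{L_0}\) coincide with the paper's argument. The gap is in your justification of that chain. You claim that extending \(\seqs{2j}\) to some \(\seqs{2(j+1)}\in\Hggq{2(j+1)}\) and running a Schur-complement computation based on \rrem{R1502} and \rrem{DFKMT2.1} yields \(\ran{L_{j+1}}\subseteq\ran{L_j}\). The data you invoke (\(H_{j+1}\lgeq\NM\), \(\ran{y_{j+1,2j+1}}\subseteq\ran{H_j}\), \(L_{j+1}\in\Cggq\)) are exactly the conditions characterizing \(\seqs{2(j+1)}\in\Hggq{2(j+1)}\), and Thiele's example quoted after \rprop{T1.24-1} --- \(s_0=0\), \(s_1=0\), \(s_2=1\), which belongs to \(\Hgguu{1}{2}\) --- has \(L_0=0\) and \(L_1=1\), so \(\ran{L_1}\not\subseteq\ran{L_0}\). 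Hence no such computation can exist. Worse, for the indices you actually need (\(j+1\le n\) with \(2n\le\kappa\)) the sequence \(\seqs{2(j+1)}\) is already part of the given data and lies in \(\Hggq{2(j+1)}\) without any appeal to extendability, so your ``sole use of the extendability hypothesis'' is vacuous exactly where the chain is required; if your derivation were valid, it would establish the chain for arbitrary \tHnnd{} sequences, which is false.

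The range monotonicity genuinely requires one further level of extendability and is not a routine consequence of the block-positivity criteria: what is true is that \(\seqs{2(j+1)}\in\Hggeq{2(j+1)}\) (not merely \(\Hggq{2(j+1)}\)) forces \(\nul{L_j}\subseteq\nul{L_{j+1}}\), and the paper imports this from \zitaa{MR2570113}{\cprop{2.13}}, then passes to ranges via \(L_j\in\Cggq\) and \(\ran{L_j}=\ek{\nul{L_j}}^\orth\) (\rrem{A.1.}). Your own base case shows the extra step that is needed: to get \(\ran{L_1}\subseteq\ran{L_0}\) one must use \(H_2\lgeq\NM\), i.e.\ the extension of \(\seqs{2}\), to obtain \(\ran{s_2}\subseteq\ran{s_0}\). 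Either cite that proposition, as the paper does, or supply an actual proof of the null-space/range nesting for extendable sequences; as written, the one nontrivial step of your argument is unsupported.
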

\begin{proof}
 Because of \(T_{q,n}^\ad\cdot \diag (L_0, L_1,\dotsc, L_n) = \smat{\Ouu{nq}{q} & \diag (L_1,L_2,\dotsc, L_n)\\ \Oqq  & \Ouu{q}{nq}} \), we have \(T_{q,n}^\ad (\cD_n) \subseteq \ran{\diag (L_1, L_2,\dotsc, L_n, \Oqq )}\).
 From \rrem{DFKMT2.1} we see that \(\set{L_0,L_1,\dotsc, L_n}\subseteq\Cggq \).
 Thus, using \rrem{A.1.} and~\cite[\cprop{2.13}]{MR2570113}, we get
\begin{align} \label{DM2}
 \ran{L_j}&=\ek*{\nul{L_j}}^\orth \subseteq\ek*{\nul{L_{j-1}}}^\orth = \ran{L_{j-1}}&\text{for each }j&\in\mn{1}{n},
\end{align}
 which implies \(\ran{\diag (L_1, L_2,\dotsc, L_n, \Oqq )}\subseteq \ran{\diag (L_0, L_1,\dotsc, L_{n-1}, \Oqq )}\subseteq \cD_n\).
 Consequently, \( \Tqn^\ad (\cD_{n})\subseteq\cD_n\).
 Obviously, we have \(\sV_{q,n}^\ad\cdot \diag (L_0, L_1,\dotsc, L_n) =\mat{ \Ouu{nq}{q}, \diag (L_1, L_2,\dotsc,L_n)}\) and \(V_{q,n}^\ad \cdot \diag (L_0,L_1,\dotsc, L_n) = (\diag (L_0,L_1,\dotsc, L_{n-1}), \Ouu{nq}{q})\).
 Because of \eqref{DM2}, consequently, \(\sV_{q,n}^\ad (\cD_n) \subseteq\ran{\diag (L_1,L_2,\dotsc, L_n)}\subseteq \ran{\diag (L_0,L_1,\dotsc, L_{n-1})} = \cD_{n-1}\) and \(V_{q,n}^\ad (\cD_n)\subseteq \cD_{n-1}\).
\end{proof}

 If \(n\in \NO \) and if \(\seqs{2n} \in \Hggeq{2n}\), then the existence of a Dubovoj  subspace corresponding to \((H_n, \Tqn)\) was proved in~\cite[\clem{3.2}]{MR1395706},~\cite[\cSatz{1.24}]{Thi06}, and~\zita{Dyu01}.
 An explicit construction of such a subspace gives the following result: 

\begin{prop} \label{T1.24-1}
 Let \(\kappa \in \NOinf \) and let \(\seqska \in \Hggeqka\).
 For each \(n\in \NO \) with \(2n \leq \kappa\), then \(\cD_n\) is a Dubovoj  subspace for \((H_n, \Tqn)\), where in particular \(\dim \cD_n=\rank H_n\) and \(\dim \cD_n =\sum_{j=0}^{n} \rank L_j\).
\end{prop}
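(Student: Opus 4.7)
The first of the two conditions in the definition of a Dubovoj subspace, namely $\Tqn^\ad(\cD_n) \subseteq \cD_n$, is already contained in \rlem{R1103}, so the task reduces to verifying the two dimension formulas and the direct sum decomposition. The identity $\dim \cD_n = \sum_{j=0}^n \rank L_j$ follows at once from the block-diagonal description of $\cD_n$. Since the hypothesis $\seqska \in \Hggeqka$ forces $\seqs{2n} \in \Hggq{2n}$ for every $n$ with $2n \leq \kappa$ (any admissible extension produces a \tnnH{} block Hankel matrix whose leading principal submatrix is $H_n$), iterating the rank identity of \rrem{DFKMT2.1} yields $\rank H_n = \sum_{j=0}^n \rank L_j$, so the two dimension assertions coincide.

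Because $H_n$ is \tnnH{}, the orthogonal decomposition $\C^{(n+1)q} = \nul{H_n} \oplus (\nul{H_n})^\orth = \nul{H_n} \oplus \ran{H_n}$ gives $\dim \nul{H_n} + \dim \cD_n = (n+1)q$. Consequently, to prove $\nul{H_n} \msum \cD_n = \C^{(n+1)q}$ it suffices to establish $\nul{H_n} \cap \cD_n = \set{\Ouu{(n+1)q}{1}}$, which I plan to do by induction on $n$. The base case $n = 0$ is immediate since $\cD_0 = \ran{s_0}$ and $\nul{s_0} = (\ran{s_0})^\orth$, the latter because $s_0 \in \Cggq$.

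For the inductive step I will invoke the Frobenius--Schur factorization
\begin{align*}
 H_n
 = \begin{pmatrix} \Iu{nq} & \Ouu{nq}{q} \\ z_{n,2n-1} H_{n-1}^\mpi & \Iq \end{pmatrix}
   \begin{pmatrix} H_{n-1} & \Ouu{nq}{q} \\ \Ouu{q}{nq} & L_n \end{pmatrix}
   \begin{pmatrix} \Iu{nq} & H_{n-1}^\mpi y_{n,2n-1} \\ \Ouu{q}{nq} & \Iq \end{pmatrix},
\end{align*}
whose validity rests on the range inclusion $\ran{y_{n,2n-1}} \subseteq \ran{H_{n-1}}$ from \rrem{DFKMT2.1} together with its Hermitian conjugate. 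Given $v = \col(v_{n-1}, v_n) \in \nul{H_n} \cap \cD_n$ with $v_{n-1}\in\C^{nq}$ and $v_n\in\Cq$, the block-diagonal form of $\cD_n$ places $v_{n-1}$ in $\cD_{n-1}$ and $v_n$ in $\ran{L_n}$. Since the two unitriangular factors are invertible, $H_n v = \Ouu{(n+1)q}{1}$ separates into $L_n v_n = \Ouu{q}{1}$ and $H_{n-1}(v_{n-1} + H_{n-1}^\mpi y_{n,2n-1} v_n) = \Ouu{nq}{1}$. The inclusion $L_n \in \Cggq$, again guaranteed by \rrem{DFKMT2.1}, gives $\nul{L_n} \cap \ran{L_n} = \set{\Ouu{q}{1}}$ and hence $v_n = \Ouu{q}{1}$; the first equation then collapses to $v_{n-1} \in \nul{H_{n-1}} \cap \cD_{n-1}$, which vanishes by the inductive hypothesis.

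The main delicate point is the Schur factorization above: the pseudoinverse $H_{n-1}^\mpi$ must interact correctly with the off-diagonal blocks, which requires both $H_{n-1} H_{n-1}^\mpi y_{n,2n-1} = y_{n,2n-1}$ and its Hermitian adjoint, and these are precisely the content of the range inclusions guaranteed by $\seqs{2n} \in \Hggq{2n}$. Once that is in place, the rest amounts to the short dimension count plus the clean inductive reduction of the intersection to its analogue at level $n-1$.
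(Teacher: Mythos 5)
Your proof is correct and follows essentially the same route as the paper: the invariance from \rlem{R1103}, the dimension count via the rank additivity of \rrem{DFKMT2.1}, and the trivial intersection by extracting the last \(\xx{q}{1}\)~block and using \(\nul{L_n}\cap\ran{L_n}=\set{\Ouu{q}{1}}\). The only difference is cosmetic: where the paper cites an external lemma to conclude that the last block of a null vector of \(H_n\) lies in \(\nul{L_n}\), you derive this explicitly from the Frobenius--Schur factorization (whose range-inclusion prerequisites you correctly trace back to \rrem{DFKMT2.1}), and you organize the peeling as a clean induction on \(n\).
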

\begin{proof}
 Let \(n\in\NO \) be such that \(2n\le \kappa\).
 Then \(\seqs{2n}\in\Hggeq{2n}\).
 Furthermore, \rlem{R1103} shows that \(T_{q,n}^\ad (\cD_n)\subseteq \cD_n\).
 Now we check that 
\beql{N136N}
 \dim \cD_k
 =\dim \ran{H_k}
\eeq
 holds true for each \(k\in\mn{0}{n}\).
 Because of \(L_0 = s_0 = H_0\), equation \eqref{N136N} is valid for \(k=0\).
 Thus, there is an \(m\in\mn{0}{n}\) such \eqref{N136N} is fulfilled for each \(k\in\mn{0}{m}\).
 We consider the case that \(2(m+1)\le \kappa\).
 Then from \rnota{N1543} and \eqref{N136N} we obtain
\beql{IBG}
 \dim \cD_{m+1}
 =\dim \cD_m + \dim \cR (L_{m+1})
 =\dim\cR (H_m) + \dim \cR (L_{m+1}).
\eeq
 Since we know from \rrem{DFKMT2.1} that the right-hand side of \eqref{IBG} coincides with   \(\dim \cR (H_{m+1})\), we see that \eqref{N136N} is true for \(k=m+1\) as well.
 Consequently, \eqref{N136N} holds for each \(k\in\mn{0}{n}\).
 This implies \(\dim\cD_n + \dim\nul{H_n} =\dim\Co{(n+1)q}\).
 Furthermore, \eqref{N136N} and \rrem{DFKMT2.1} show that \(\dim \cD_n = \sum_{j=0}^n \rank L_j\) holds true.
 It remains to prove that \(\cD_n\cap\nul{H_n}\subseteq \set{\Ouu{(n+1)q}{1}}\).
 We consider an arbitrary \(x\in\cD_n\cap\nul{H_n}\).
 Let \(x = \col (x_j)_{j=0}^n\) be the \taaa{q}{1}{block} representation of \(x\).
 Because of \(x\in\nul{H_n}\), from~\cite[\clem{A.2}]{MR2570113} we see that \(x_n\) belongs to \(\cN (L_n)\).
 Since we know from \rrem{DFKMT2.1} that \(L_n\) is \tnnH{}, we conclude \(x_n\in\cR (L_n)^\orth\).
 On the other hand, we have \(x\in\cD_n\), which implies \(\col (x_j)_{j=0}^n\in\ran{\diag (L_0, L_1,\dotsc, L_n)}\) and, consequently, \(x_n\in\cR (L_n)\).
 Thus, \(x_n\in\cR (L_n)\cap\cR (L_n)^\orth = \set{ \Ouu{q}{1}}\), \tie{}, \(x_n = \Ouu{q}{1}\).
 Inductively, then \(x_{n-j} = \Ouu{q}{1}\) follows for each \(j\in\mn{0}{n}\).
 Therefore, \(\cD_n\cap \nul{H_n}\subseteq \set{ \Ouu{(n+1) q}{1}}\). 
\end{proof}

 For each \(n \in \NO \) and each \(\seqs{2n}\in \Hggeq{2n}\), we will call \(\cD_n\) defined in \rnota{N1543} the \emph{canonical Dubovoj  subspace corresponding to \((H_n, T_{q,n})\)}.
 
 In~\cite[Abschnitt~1.4]{Thi06}, H.~C.~Thiele showed that \((s_j)_{j=0}^2\) given by \(s_0 \defeq 0\), \(s_1 \defeq 0\), and \(s_2 \defeq 1\) is a sequence belonging to \(\Hgguu{1}{2}\) for which no Dubovoj  subspace corresponding to \((H_1, T_{1,1})\) exists.

\begin{rem}\label{r5.4}
 Let \(\kappa \in \NOinf \), let \(\seqska  \in \Hggeqka \), and let \(n \in \NO \) be such that \(2n\leq \kappa\).
 Let \(\cD_n\) be the canonical Dubovoj  subspace corresponding to \((H_n, \Tqn)\).
 In view of \rprop{T1.24-1}, one can easily see that \(\dim \cD_n \geq 1\) if and only if \(s_0 \neq \Oqq \).
 Furthermore, it is readily checked that \(\dim \cD_n <(q+1)n\) if and only if \(\det H_n=0\).
\end{rem} 

\begin{rem} \label{7.7-1}
 Let \(\alpha \in \R\), let \(\kappa \in \NOinf \), and let \(\seqska \in \Kggeqka \).
 From \rrem{ZS} and \rprop{T1.24-1} one can see then that the following statements hold true:
\begin{enui}
 \item\label{7.7-1.a} For each \(n \in \NO \) with \(2n\leq \kappa\), the subspace \(\cD_n\) of \(\Co{(n+1)q}\) is a Dubovoj  subspace corresponding to \((H_n,\Tqn)\).
 \item\label{7.7-1.b} If \(\kappa \geq 1\), then for each \(n \in \NO \) with \(2n+1\leq \kappa\), the subspace \(\cD_{\at{n}}\) of \(\Co{(n+1)q}\) is a Dubovoj  subspace corresponding to \((H_{\at{n}},\Tqn)\).
\end{enui}
\end{rem}

\begin{prop} \label{DU}
 Let \(\alpha \in \R\), let \(\kappa \in \Ninf \), and let \(\seqska \in \Kggeqka\).
 Then:
\begin{enui}
 \item\label{DU.a} For each \(n\in \NO \) with \(2n+1 \leq \kappa\), then \(\Tqn^\ad (\cD_n)\subseteq \cD_{\at{n}} \subseteq \cD_n\),
\begin{align} \label{N59-111}
 \nul{H_n} \msum \cD_n&=\Co{(n+1)q},&
&\text{and}&
 \cN(H_{\at{n}}) \msum \cD_{\at{n}}&=\Co{(n+1)q}.
\end{align}
 \item\label{DU.b} For each \(n\in \N\) with \(2n \leq \kappa\), furthermore \(\sV_{q,n}^\ad (\cD_n) \subseteq \cD_{\at{n-1}}\), \(V_{q,n} (\cD_{\at{n-1}}) \subseteq \cD_n\), 
\begin{align}\label{N60}
 \nul{H_n} \msum \cD_n&=\Co{(n+1)q},&
&\text{and}&
 \cN(H_{\at{n-1}}) \msum \cD_{\at{n-1}}&=\Co{nq}.
\end{align}
\end{enui}
\end{prop}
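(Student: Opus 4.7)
The plan is to reduce the entire proposition to the definitions of $\cD_n$, $\cD_{\at n}$ from \rnota{N1543}, combined with the range/null-space chains for the Schur complements $L_j$ and $L_{\at j}$ recorded in \rrem{R3.12}, plus the Dubovoj property of the canonical subspaces from \rrem{7.7-1}. The direct sum equalities in \eqref{N59-111} and \eqref{N60} are \emph{not} new: the first equations in each pair come from \rremp{7.7-1}{7.7-1.a} applied to $\seqs{2n}$ and to $\seqs{2(n-1)}$, respectively, while the second equations come from \rremp{7.7-1}{7.7-1.b} applied to $\seqs{2n+1}$ and to $\seqs{2n-1}$. So the real work is to establish the stated inclusions.

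For \rpart{DU.a}, first I would observe that \rrem{R3.12} yields $\ran{L_{\at j}}\subseteq\ran{L_j}$ for every $j\in\mn{0}{n}$, hence
\[
 \cD_{\at n}
 =\ran{\diag (L_{\at 0},L_{\at 1},\dotsc,L_{\at n})}
 \subseteq\ran{\diag (L_0,L_1,\dotsc,L_n)}
 =\cD_n,
\]
which proves the second inclusion. For the first inclusion, I would compute, as in the proof of \rlem{R1103},
\[
 \Tqn^\ad\cdot\diag (L_0,L_1,\dotsc,L_n)
 =\bMat \Ouu{nq}{q} & \diag (L_1,L_2,\dotsc,L_n)\\ \Oqq & \Ouu{q}{nq}\eMat,
\]
so $\Tqn^\ad (\cD_n)\subseteq\ran{\diag (L_1,L_2,\dotsc,L_n,\Oqq)}$. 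Now \rrem{R3.12} gives $\ran{L_j}\subseteq\ran{L_{\at{j-1}}}$ for each $j\in\mn{1}{n}$, so this range is contained in $\ran{\diag (L_{\at 0},L_{\at 1},\dotsc,L_{\at{n-1}},\Oqq)}\subseteq\cD_{\at n}$.

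For \rpart{DU.b}, the argument is analogous. From the block structure
\[
 \sV_{q,n}^\ad\cdot\diag (L_0,L_1,\dotsc,L_n)
 =\mat{\Ouu{nq}{q},\ \diag (L_1,L_2,\dotsc,L_n)}
\]
together with $\ran{L_j}\subseteq\ran{L_{\at{j-1}}}$ for $j\in\mn{1}{n}$ from \rrem{R3.12}, I would conclude $\sV_{q,n}^\ad (\cD_n)\subseteq\ran{\diag (L_{\at 0},L_{\at 1},\dotsc,L_{\at{n-1}})}=\cD_{\at{n-1}}$. For the remaining inclusion, I would compute
\[
 V_{q,n}\cdot\diag (L_{\at 0},L_{\at 1},\dotsc,L_{\at{n-1}})
 =\bMat \diag (L_{\at 0},L_{\at 1},\dotsc,L_{\at{n-1}})\\ \Ouu{q}{nq}\eMat
\]
and apply $\ran{L_{\at j}}\subseteq\ran{L_j}$ from \rrem{R3.12} to obtain $V_{q,n}(\cD_{\at{n-1}})\subseteq\ran{\diag (L_0,L_1,\dotsc,L_{n-1},\Oqq)}\subseteq\cD_n$.

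Since the only tools involved are the defining range descriptions of the spaces, the explicit block forms of $\Tqn$, $V_{q,n}$, $\sV_{q,n}$, and the inclusion chains from \rrem{R3.12}, no serious obstacle is anticipated. The one place that requires care is bookkeeping the index shift in the chains: the relevant implications are $\ran{L_j}\subseteq\ran{L_{\at{j-1}}}$ (needed to push forward $\cD_n$ into an $L_{\at{\cdot}}$-space) and $\ran{L_{\at j}}\subseteq\ran{L_j}$ (needed to pull $\cD_{\at{\cdot}}$ back into an $L_{\cdot}$-space), both of which are guaranteed by \rrem{R3.12} under the hypothesis $\seqska\in\Kggeqka$.
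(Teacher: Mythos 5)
Your proof is correct and follows essentially the same route as the paper's: both reduce the inclusions to the explicit block forms of \(\Tqn^\ad\), \(\sV_{q,n}^\ad\), and \(V_{q,n}\) acting on the block-diagonal matrices defining \(\cD_n\) and \(\cD_{\at{n}}\), combined with the range chains \(\ran{L_{j+1}}\subseteq\ran{L_{\at{j}}}\) and \(\ran{L_{\at{j}}}\subseteq\ran{L_{j}}\) from \rrem{R3.12}, and both obtain the direct-sum equalities directly from \rrem{7.7-1}. The only cosmetic differences are that the paper treats the trivial case \(n=0\) of part~\eqref{DU.a} separately (your general block computation degenerates correctly there, since \(T_{q,0}=\Oqq\)), and that the first equation of \eqref{N60} is the instance of \rremp{7.7-1}{7.7-1.a} at index \(n\) rather than \(n-1\).
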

\begin{proof}
 According to \rrem{ZS}, we have \(\seqs{m} \in \Kggeq{m}\) for each \(m \in \mn{0}{\kappa}\).
 Consequently, \rrem{R3.12} yields
\begin{align} 
 \cR(L_{j+1})&\subseteq \cR(L_{\at{j}})&\text{for each }j&\in \NO\text{ with }2j+2\leq \kappa\label{RL2}
\intertext{and}
 \cR(L_{\at{j}})&\subseteq \cR(L_{j})&\text{for each }j&\in \NO\text{ with }2j+1\leq \kappa.\label{RL3} 
\end{align}

 \eqref{DU.a} Let \(n\in \NO \) be such that \(2n+1 \leq \kappa\).
 Because of \rrem{7.7-1} and the definition of a Dubovoj  subspace, we get \eqref{N59-111}.
 In view of \eqref{RL3}, we have \( \cD_{\at{n}}= \ran{\diag(L_{\at{j}})_{j=0}^n}\subseteq \ran{\diag\rk{L_j}_{j=0}^n})=\cD_n\).
 If \(n=0\), then \(\Tqn=\Oqq\) and, consequently, \(\Tqn^\ad (\cD_n) \subseteq \cD_{\at{n}}\).
 
 Now we assume that \(n \geq 1\).
 In view of \eqref{RL2}, then it is readily checked that 
\begin{align*} 
 \Tqn^\ad (\cD_n)
 =\Tqn^\ad\ek*{\Ran{\diag\rk{L_j}_{j=0}^n}}
 &\subseteq\Ran{\diag\ek*{ \diag(L_{j+1})_{j=0}^{n-1}, \Oqq}} \\
 &\subseteq \Ran{\diag(L_{\at{j}})_{j=0}^{n}} 
 =\cD_{\at{n}}.
 \end{align*}

 \eqref{DU.b} Let \(\kappa \geq 2\) and let \(n\in \N\) such that \(2n \leq \kappa\).
 Because of \rrem{7.7-1} and the definition of a Dubovoj  subspace, we get \eqref{N60}.
 From \( \sV_{q,n}^\ad \cdot \diag(L_{j})_{j=0}^{n} =\mat{ \Ouu{nq}{q},\diag(L_{j+1})_{j=0}^{n-1}}\) we conclude \(\sV_{q,n}^\ad (\cD_n)\subseteq \ran{ \diag(L_{j+1})_{j=0}^{n-1}}\).
 Using \eqref{RL2}, we obtain \(\ran{\diag(L_{j+1})_{j=0}^{n-1}}\subseteq \ran{\diag(L_{\at{j}})_{j=0}^{n-1}}=\cD_{\at{n-1}}\) and, consequently, \(\sV_{q,n}^\ad (\cD_n)\subseteq \cD_{\at{n-1}}\).
 Obviously, \(V_{q,n}\cdot \diag (L_{\alpha\triangleright j})_{j=0}^{n-1} =\diag \ek{ \diag(L_{\at{j}})_{j=0}^{n-1}, \Oqq }\cdot V_{q,n} \) and, hence, \(V_{q,n} (\cD_{\at{n-1}}) = \ran{\diag\ek{\diag(L_{\at{j}})_{j=0}^{n-1},\Oqq}}\).
 Since \eqref{RL3} implies \(\ran{\diag\ek{\diag(L_{\at{j}})_{j=0}^{n-1},\Oqq}}\subseteq \ran{\diag(L_{j})_{j=0}^{n}}=\cD_n\), we get \(V_{q,n} (\cD_{\at{n-1}}) \subseteq \cD_n\).
\end{proof}

\begin{rem}\label{8B}
 If \(A\in\Cpq \) and if \(\cU\) and \(\cV\) are subspaces of \(\Cq \) and \(\Cp \), respectively, such that \(\nul{A}\msum \cU =\Cq \) and \(\ran{A}\msum\cV =\Cp \) are fulfilled, then there is a unique \(X\in\Cqp \) such that
\begin{align*}
 AXA&= A,&
 XAX&=X,&
 \cR(X)&= \cU,&
 &\text{and}&
 \cN (X)&=\cV
\end{align*}
 (see, \teg{}~\cite[Chapter~2, \cthm{12(c)}]{MR587113}), and we will use \(A_{\cU,\cV}^{(1,2)}\) to denote this matrix \(X\).
 In particular, if \(A\) is a \tH{} complex \tqqa{matrix} and if \(\cU\) is a subspace of \(\Cq \) with \(\nul{A} \msum \cU =\Cq \), then \(\ran{A}\msum \cU^\orth =\Cq \) and we will also write \(A_\cU^\gi \) for \(A_{\cU,\cU^\orth}^{(1,2)}\).
 (In \rappe{A1556}, we turn our attention to the \tH{} case, in which special equations hold true.)
\end{rem}

 If \(\kappa \in \NOinf \) and a sequence \(\seqska  \in \Hggeqka \) is given, then, for each \(n \in \NO \) with \(2n\leq \kappa\), let \(H_n^\gi  \defeq H_{\cD_n, {\cD_n^\orth}}^{(1,2)}\), where \(\cD_n\) is given by \rnota{N1543}.
 (Note that \rrem{ZS} shows that \(\Kggeqka \subseteq \Hggeqka\) holds true for each \(\alpha \in \R\) and each \(\kappa \in \NOinf \).)
 If \(\alpha \in \R\), \(\kappa \in \Ninf \), and \(\seqska \in \Kggeqka \) are given, then, for each \(n\in \NO \) with \(2n+1\leq \kappa\), let \(H_{\at{n}}^\gi  \defeq H_{\cD_{\at n}, \cD_{\at n}^\orth}^{(1,2)}\), where \(\cD_{\at n}\) is also given by \rnota{N1543}.

\begin{rem} \label{310-1}
 Let \(\kappa \in \NOinf \) and let \(\seqska  \in \Hggeqka \).
 In view of \rlem{A.40}, for each \(n \in \NO \) with \(2n\leq \kappa\), then it is readily checked that \(H_n^\gi  \in \Cggo{(n+1)q}\),
\begin{align} \label{N73}
 H_n H_n^\gi  H_n&=H_n,&
&\text{and}&
 H_n^\gi  H_n H_n^\gi&=H_n^\gi .
\end{align} 
\end{rem}

\begin{lem} \label{311-1}
 Let \(\alpha \in \R\), let \(\kappa \in \Ninf \), and let \(\seqska  \in\Kggeqka \), and let \(n \in \NO \) be such that \(2n +1 \leq \kappa\).  Then the matrices \(H_n^\gi\) and \(H_{\at n}^\gi\) are both \tnnH{} and fulfill
\begin{align} \label{N39-RI}
 (H_n^\gi )^\ad&= \Hu{n}^\gi,&
 (H_{\at n}^\gi )^\ad&= H_{\at n}^\gi .
\end{align} 
 Furthermore, the equations in \eqref{N73} as well as the following four identities hold true: 
\begin{align} 
 H_{\at n} H_{\at n}^\gi  H_{\at n}&=H_{\at n},&
&&
 H_{\at n}^\gi  H_{\at n} H_{\at n}^\gi&=H_{\at n}^\gi,\label{PPI-2}\\
 H_n^\gi  H_n H_{\at{n}}^\gi&=H_{\at{n}}^\gi,&
&\text{and}&
 H_{\at{n}}^\gi  H_n H_n^\gi&=H_{\at{n}}^\gi .\label{Nr.ML6-2}
\end{align}
\end{lem}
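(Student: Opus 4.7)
The plan is to split the statement into two clusters of claims and attack them independently: first, the properties of \(H_n^\gi\) and \(H_{\at n}^\gi\) in isolation (Hermiticity, non-negativity, and the \(\{1,2\}\)-inverse identities \eqref{N73} and \eqref{PPI-2}); second, the mixed cross-identities \eqref{Nr.ML6-2}.

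For the first cluster, I would simply invoke \rrem{310-1} twice. Since \(\seqska \in \Kggeqka\), \rrem{ZS} yields \(\seqs{2n}\in\Hggeq{2n}\); from \(2n+1\leq\kappa\) and \rrem{ZS} I additionally obtain \(\seqsa{2n}\in\Hggeq{2n}\). Applying \rrem{310-1} to the original sequence \(\seqs{}\) gives \(H_n^\gi \in \Cggo{(n+1)q}\) together with the identities in \eqref{N73}. Applying \rrem{310-1} to the shifted sequence \(\seqsa{2n}\) --- whose block Hankel matrix of order \(n\) is exactly \(H_{\at n}\) and whose canonical Dubovoj subspace is \(\cD_{\at n}\) --- gives \(H_{\at n}^\gi \in \Cggo{(n+1)q}\) together with the identities in \eqref{PPI-2}. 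Non-negativity entails Hermiticity, so \eqref{N39-RI} follows at no extra cost.

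For the mixed identities in \eqref{Nr.ML6-2}, the crux is the inclusion \(\cD_{\at n}\subseteq\cD_n\), which is supplied by \rpart{DU.a} of \rprop{DU}. The argument would run as follows: fix an arbitrary \(x\in\Co{(n+1)q}\) and set \(y\defeq H_{\at n}^\gi x\); by \rrem{8B} one has \(y\in\ran{H_{\at n}^\gi}=\cD_{\at n}\subseteq\cD_n\). Since \(\nul{H_n}\msum\cD_n = \Co{(n+1)q}\) by \rprop{DU}, and since the first part of the proof shows that \(H_n^\gi H_n\) is an idempotent with range \(\cD_n\) and kernel \(\nul{H_n}\) (see \rrem{8B}), the operator \(H_n^\gi H_n\) acts as the identity on \(\cD_n\). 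Hence \(H_n^\gi H_n y = y\), which is precisely the first identity in \eqref{Nr.ML6-2}. The second identity follows by taking adjoints: since \(H_n\), \(H_n^\gi\), and \(H_{\at n}^\gi\) are all Hermitian, the adjoint of \(H_n^\gi H_n H_{\at n}^\gi = H_{\at n}^\gi\) is exactly \(H_{\at n}^\gi H_n H_n^\gi = H_{\at n}^\gi\).

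The only non-routine ingredient is the range nesting \(\cD_{\at n}\subseteq\cD_n\), which is precisely where the Stieltjes-type positivity \(\seqska \in \Kggeqka\) (as opposed to mere \(\Hggeqka\)) enters the picture; it rests on the chain \(\ran{L_{\at j}}\subseteq\ran{L_j}\) recorded in \rrem{R3.12}. Once this geometric fact has been identified as the heart of the matter, the remainder of the proof reduces to a short exercise in oblique projector manipulations and no further obstacle is to be expected.
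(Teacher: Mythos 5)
Your proposal is correct and follows essentially the same route as the paper: the non-negativity, Hermiticity, and the identities \eqref{N73} and \eqref{PPI-2} come from \rlem{A.40} (your detour through \rrem{310-1} applied to the shifted sequence is just a repackaging of that), and the cross-identities \eqref{Nr.ML6-2} are obtained exactly as in the paper from \(\cR(H_{\at n}^\gi)=\cD_{\at n}\subseteq\cD_n=\cN(\Iu{(n+1)q}-H_n^\gi H_n)\) via \rprop{DU}, followed by taking adjoints. No gaps.
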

\begin{proof}
 The matrices \(H_n\) and \(H_{\at{n}}\) are both \tnnH{}.
 \rlem{A.40} yields then that \(H_n^\gi\) and \(H_{\at n}^\gi\) are both \tnnH{} and that the equations in \eqref{N73}, \eqref{N39-RI}, and \eqref{PPI-2} hold true.
 In order to prove \eqref{Nr.ML6-2}, we consider an arbitrary \(n \in \NO \) such that \(2n+1\leq \kappa\).
 Taking into account \(H_n^\ad =H_n\), \rprop{DU}, \rlem{A.40}, and \rrem{A.44}, we conclude
\[
 \cR(H_{\at{n}}^\gi )
 = \cR(H_{\cD_{\at{n}}}^\gi )
 =\cD_{\at{n}}
 \subseteq \cD_n
 =\cN (\Iu{(n+1)q} -H_{\cD_n}^\gi  H_n)
 = \cN(\Iu{(n+1)q}-H_n^\gi  H_n).
\]
 For every choice of \(x\in \Co{(n+1)q}\), this implies \(\NM = (\Iu{(n+1)q}-H_n^\gi  H_n) H_{\at{n}}^\gi  x\) and, consequently, \(H_n^\gi  H_n H_{\at{n}}^\gi x=H_{\at{n}}^\gi x\).
 Thus, the first equation in \eqref{Nr.ML6-2} is verified.
 Hence \(H_n^\ad = H_n\), \eqref{N39-RI}, and the first equation in \eqref{Nr.ML6-2} yield \(H_{\at{n}}^\gi  H_n H_n^\gi  =(H_n^\gi  H_n H_{\at{n}}^\gi )^\ad =(H_{\at{n}}^\gi )^\ad =H_{\at{n}}^\gi \) as well.
\end{proof}

\begin{rem} \label{SBNNN}
 Let \(\alpha \in \R\), let \(\kappa \in \Ninf \), let \(\seqska \in \Kggeqka \), and let \(n \in \NO \) be such that \(2n+1 \leq \kappa\).
 Then \rremss{ZS}{1582012-2} yield \(H_n^\ad =H_n\) and \(H_{\at n}^\ad =H_{\at n}\).
 Thus,
\begin{align}\label{BL}
 H_n^\mpi  H_n&=H_n H_n^\mpi&
&\text{and}&
 H_{\at n}^\mpi  H_{\at n}&=H_{\at n} H_{\at n}^\mpi. 
\end{align}
 In view of \rlem{311-1}, consequently,
\begin{align*}
 (\Iu{(n+1)q} - H_n^\mpi  H_n)H_n&= \NM,&
 (\Iu{(n+1)q} - H_{\at n}^\mpi  H_{\at n})H_{\at n}&= \NM,
\end{align*}
\begin{multline*} 
 (\Iu{(n+1)q} - H_n H_n^\gi ) (\Iu{(n+1)q} -H_n^\mpi  H_n)
 = (\Iu{(n+1)q} - H_n H_n^\gi ) (\Iu{(n+1)q} - H_n H_n^\mpi )\\
 = \Iu{(n+1)q} -H_n H_n^\mpi  -H_n H_n^\gi  +H_n H_n^\gi  H_n H_n^\mpi
 = \Iu{(n+1)q}-H_n H_n^\gi,
\end{multline*}
 and
\[\begin{split}
 &(\Iu{(n+1)q} -H_{\at n} H_{\at n}^\gi ) (\Iu{(n+1)q} -H_{\at n}^\mpi  H_{\at n})\\
 &= (\Iu{(n+1)q}- H_{\at n} H_{\at n}^\gi ) (\Iu{(n+1)q} - H_{\at n} H_{\at n}^\mpi )\\
 &= \Iu{(n+1)q} - H_{\at n} H_{\at n}^\mpi  -H_{\at n} H_{\at n}^\gi  + H_{\at n} H_{\at n}^\gi H_{\at n} H_{\at n}^\mpi 
 = \Iu{(n+1)q} -H_{\at n} H_{\at n}^\gi .
\end{split}\]
\end{rem}

\begin{lem} \label{L1021-1}
 Let \(\alpha \in \R\), let \(\kappa\in\Ninf \), and let \(\seqska  \in\Kggeqka \).
 Further, let \(n\in\NO \) be such that \(2n+1 \leq \kappa\).
 For each \(k \in \NO \), then \( H_n^\gi  \Tqn^k (\Iu{(n+1)q}-H_nH_n^\gi ) =\NM\), \( H_{\at n}^\gi  \Tqn^k (\Iu{(n+1)q}-H_nH_n^\gi ) = \NM\), and \( H_{\at{n}}^\gi  \Tqn^k (\Iu{(n+1)q}-H_{\at{n}}H_{\at{n}}^\gi ) = \NM\) hold true.
 Furthermore, \(H_n^\gi  \Tqn^k (\Iu{(n+1)q}-H_{\at{n}}H_{\at{n}}^\gi ) =\NM\) for each \(k \in \N\).
\end{lem}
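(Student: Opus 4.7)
The plan is to reduce each of the four identities to a statement about invariance of the orthogonal complement of a suitable canonical Dubovoj subspace under $T_{q,n}$. First I would identify the ranges of the idempotents $\Iu{(n+1)q} - H_n H_n^\gi$ and $\Iu{(n+1)q} - H_{\at n} H_{\at n}^\gi$. By \rrem{8B} together with the definitions $H_n^\gi = H_{\cD_n,\cD_n^\orth}^{(1,2)}$ and $H_{\at n}^\gi = H_{\cD_{\at n},\cD_{\at n}^\orth}^{(1,2)}$, we have $\nul{H_n^\gi}=\cD_n^\orth$ and $\nul{H_{\at n}^\gi}=\cD_{\at n}^\orth$. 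Combining this with the identities \eqref{N73} and \eqref{PPI-2} shows that $H_n H_n^\gi$ and $H_{\at n} H_{\at n}^\gi$ are idempotents with kernels $\cD_n^\orth$ and $\cD_{\at n}^\orth$ respectively, so $\ran{\Iu{(n+1)q}-H_n H_n^\gi}=\cD_n^\orth$ and $\ran{\Iu{(n+1)q}-H_{\at n} H_{\at n}^\gi}=\cD_{\at n}^\orth$.

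Next, I would transfer the invariance relations of \rprop{DU}\eqref{DU.a}, namely $T_{q,n}^\ad(\cD_n)\subseteq\cD_n$, $T_{q,n}^\ad(\cD_{\at n})\subseteq\cD_{\at n}$, and $T_{q,n}^\ad(\cD_n)\subseteq\cD_{\at n}$, to $T_{q,n}$ via orthogonal duality: whenever $\cU,\cV$ are subspaces of $\Co{(n+1)q}$ with $T_{q,n}^\ad(\cU)\subseteq\cV$, the adjoint relation $\langle T_{q,n}y,x\rangle=\langle y,T_{q,n}^\ad x\rangle$ shows $T_{q,n}(\cV^\orth)\subseteq\cU^\orth$. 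This yields
\begin{align*}
 T_{q,n}(\cD_n^\orth)&\subseteq\cD_n^\orth,&
 T_{q,n}(\cD_{\at n}^\orth)&\subseteq\cD_{\at n}^\orth,&
 T_{q,n}(\cD_{\at n}^\orth)&\subseteq\cD_n^\orth.
\end{align*}
Iteration gives $T_{q,n}^k(\cD_n^\orth)\subseteq\cD_n^\orth$ and $T_{q,n}^k(\cD_{\at n}^\orth)\subseteq\cD_{\at n}^\orth$ for all $k\in\NO$, and one application of the third inclusion followed by $k-1$ iterations of the first gives $T_{q,n}^k(\cD_{\at n}^\orth)\subseteq\cD_n^\orth$ for all $k\in\N$. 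The inclusion $\cD_{\at n}\subseteq\cD_n$ from \rprop{DU}\eqref{DU.a} furthermore implies $\cD_n^\orth\subseteq\cD_{\at n}^\orth$.

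The four identities then follow by composition. For the first, $(\Iu{(n+1)q}-H_n H_n^\gi)x\in\cD_n^\orth$ for every $x$, so $T_{q,n}^k(\Iu{(n+1)q}-H_n H_n^\gi)x\in\cD_n^\orth=\nul{H_n^\gi}$. For the second, the same vector lies in $\cD_n^\orth\subseteq\cD_{\at n}^\orth=\nul{H_{\at n}^\gi}$. The third follows from $(\Iu{(n+1)q}-H_{\at n} H_{\at n}^\gi)x\in\cD_{\at n}^\orth$ and invariance of $\cD_{\at n}^\orth$ under $T_{q,n}$. The fourth uses $T_{q,n}^k(\cD_{\at n}^\orth)\subseteq\cD_n^\orth$, which requires $k\geq 1$, and this is precisely why the statement restricts to $k\in\N$ in the last case: since $\cD_{\at n}\subseteq\cD_n$ forces $\cD_{\at n}^\orth$ generally to be strictly larger than $\cD_n^\orth$, at least one application of $T_{q,n}$ is needed to descend from $\cD_{\at n}^\orth$ into $\cD_n^\orth$. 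No serious obstacle arises; the whole argument is careful bookkeeping of Dubovoj-subspace inclusions, the only subtle point being the aforementioned degree restriction in the fourth identity.
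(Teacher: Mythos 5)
Your argument is correct and is essentially the paper's own proof unfolded: the paper simply invokes \rlem{311-1}, \rprop{DU}, and the appendix result \rlem{A.46}, and the proof of \rlem{A.46} is exactly your computation — $\cR(\Iu{(n+1)q}-AA_{\cU}^{\gi})=\cU^{\orth}$ (\rlem{A.42}), the dualized invariance relations $T^{k}(\cV^{\orth})\subseteq\cU^{\orth}\subseteq\cV^{\orth}$ (\rrem{A.45}), and $\cN(A_{\cU}^{\gi})=\cU^{\orth}$ (\rlem{A.40}), specialized to $A=H_{n}$, $B=H_{\at{n}}$, $\cU=\cD_{n}$, $\cV=\cD_{\at{n}}$. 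Your observation about why the fourth identity needs $k\geq 1$ matches the degree restriction built into \rlem{A.46}.
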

\begin{proof}
 Use \rlem{311-1}, \rprop{DU}, and \rlem{A.46}.
\end{proof}

\begin{lem} \label{ML8-1}
 Let \(\alpha \in \R\), let \(\kappa \in \Ninf \), and let \(\seqska  \in \Kggeqka \).
 For each \(n \in \NO \) with \(2n+1\leq \kappa\), then the following statements hold true:
\begin{enui} 
 \item\label{ML8-1.a} For every choice of \(\zeta \in \C\) and \(k \in \NO \),
\begin{align} 
 H_n^\gi  R_{\Tqn} (\zeta) \Tqn^k (\Iu{(n+1)q}-H_nH_n^\gi )&=0,\notag\\
 (\Iu{(n+1)q}-H_n^\gi  H_n) (\Tqn^\ad )^k \ek*{R_{\Tqn} (\zeta)}^\ad H_n^\gi&=0,\label{N-71}\\
 H_{\at n}^\gi  R_{\Tqn} (\zeta) \Tqn^k (\Iu{(n+1)q}-H_{\at{n}}H_{\at{n}}^\gi )& =0,\label{NUC}
\intertext{and}
 (\Iu{(n+1)q}-H_{\at{n}}^\gi  H_{\at{n}}) (\Tqn^\ad )^k \ek*{R_{\Tqn} (\zeta)}^\ad H_{\at n}^\gi&=0.\label{N-72}
\end{align} 
 \item\label{ML8-1.b} For each \(\zeta \in \C\) and each \(k \in \N\),
\begin{align} 
 H_n^\gi  R_{\Tqn} (\zeta) \Tqn^k (\Iu{(n+1)q}-H_{\at{n}}H_{\at{n}}^\gi )&=0\label{N65MG}
\intertext{and}
 (\Iu{(n+1)q}-H_{\at{n}}^\gi  H_{\at{n}}) (\Tqn^\ad )^k \ek*{R_{\Tqn} (\zeta)}^\ad H_n^\gi&=0.\label{N65LB}
\end{align}
 \end{enui} 
\end{lem}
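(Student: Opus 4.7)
The plan is to reduce every identity to Lemma~\ref{L1021-1} via the polynomial expansion
\[
 R_{\Tqn}(\zeta)
 = \sum_{j=0}^n \zeta^j \Tqn^j
\]
from Remark~\ref{21112N}. First I would treat the four ``left-hand'' identities, namely the first and third equations of part~\ref{ML8-1.a} and the first equation of part~\ref{ML8-1.b}. Fix $\zeta\in\C$ and $k\in\NO$. Left-multiplying the displayed expansion of $R_{\Tqn}(\zeta)$ by $H_n^\gi$, right-multiplying by $\Tqn^k(\Iu{(n+1)q}-H_nH_n^\gi)$, and using that $\Tqn^j\Tqn^k = \Tqn^{j+k}$, I obtain
\[
 H_n^\gi R_{\Tqn}(\zeta)\Tqn^k\rk{\Iu{(n+1)q}-H_nH_n^\gi}
 = \sum_{j=0}^n \zeta^j\, H_n^\gi \Tqn^{j+k}\rk{\Iu{(n+1)q}-H_nH_n^\gi},
\]
and every summand vanishes by the first identity of Lemma~\ref{L1021-1} since $j+k\in\NO$. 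The same pattern yields equation~\eqref{NUC} using the third identity of Lemma~\ref{L1021-1}, and equation~\eqref{N65MG} using the fourth identity of Lemma~\ref{L1021-1}; for~\eqref{N65MG} the hypothesis $k\in\N$ is what makes $j+k\geq 1$, which is precisely the range in which that fourth identity is available.

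Next I would derive the four ``right-hand'' identities (the second equations in parts~\ref{ML8-1.a} and~\ref{ML8-1.b}) by taking adjoints of the just-established identities. By Lemma~\ref{311-1} one has $(H_n^\gi)^\ad=H_n^\gi$ and $(H_{\at n}^\gi)^\ad=H_{\at n}^\gi$, and by Remark~\ref{1582012-2} the matrices $H_n$ and $H_{\at n}$ are Hermitian. Consequently
\[
 \rk{\Iu{(n+1)q}-H_nH_n^\gi}^\ad
 = \Iu{(n+1)q}-H_n^\gi H_n
 \qquad\text{and}\qquad
 \rk{\Iu{(n+1)q}-H_{\at n}H_{\at n}^\gi}^\ad
 = \Iu{(n+1)q}-H_{\at n}^\gi H_{\at n},
\]
and since $\rk{\Tqn^k}^\ad=(\Tqn^\ad)^k$, taking adjoints of the three ``left-hand'' identities gives exactly \eqref{N-71}, \eqref{N-72}, and \eqref{N65LB} respectively. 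The remaining missing right-hand identity in part~\ref{ML8-1.a} (the adjoint of \eqref{NUC}) is handled identically.

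There is essentially no obstacle here: the proof is a bookkeeping argument that organises the polynomial expansion of $R_{\Tqn}(\zeta)$ and then appeals term-wise to Lemma~\ref{L1021-1}, together with the Hermiticity statements of Lemma~\ref{311-1}. The only subtlety worth flagging is the distinction between $k\in\NO$ and $k\in\N$: in the two identities that mix $H_n^\gi$ on the left with $\Iu{(n+1)q}-H_{\at n}H_{\at n}^\gi$ on the right (or the mirror on the other side), the $j=0$ term of the expansion forces $k\geq 1$, which is exactly why part~\ref{ML8-1.b} is stated for $k\in\N$ rather than $k\in\NO$.
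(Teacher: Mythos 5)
Your proof is correct and follows essentially the same route as the paper: expand \(R_{\Tqn}(\zeta)=\sum_{j=0}^n\zeta^j\Tqn^j\), kill each summand with \rlem{L1021-1}, and obtain the remaining identities by taking adjoints via the Hermiticity relations of \rrem{1582012-2} and \rlem{311-1}. The only blemish is the redundant final remark about "the remaining missing right-hand identity in part~\ref{ML8-1.a}": the adjoint of \eqref{NUC} is \eqref{N-72}, which you had already covered.
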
 
\begin{proof}
 Let \(n \in \NO \) be such that \(2n+1\leq \kappa\) and let \(\zeta \in \C\).
 Because of \(\Kggeqka \subseteq \Kggqka \), \rrem{1582012-2}, and \rlem{311-1}, we get
\begin{align} 
 H_n^\ad&=H_n,&(H_n^\gi )^\ad&=H_n^\gi,&
&&
 (H_n^\gi  H_n)^\ad&=H_n^\ad (H_n^\gi )^\ad = H_nH_n^\gi,\label{M77}\\
 H_{\at{n}}^\ad&=H_{\at{n}},&(H_{\at{n}}^\gi )^\ad&=H_{\at{n}}^\gi,&
&\text{and}&
 (H_{\at{n}}^\gi  H_{\at{n}})^\ad&= H_{\at{n}}H_{\at{n}}^\gi.\label{174-1}
\end{align}
 In view of \rrem{21112N} and \rlem{L1021-1}, for each \(k \in \NO \), we obtain
\beql{N65-111}\begin{split}
 H_n^\gi R_{\Tqn} (\zeta) \Tqn^k (\Iu{(n+1)q}-H_nH_n^\gi ) 
 &= H_n^\gi  \rk*{\sum_{j=0}^n \zeta ^j \Tqn ^j}\Tqn^k (\Iu{(n+1)q}-H_nH_n^\gi )\\
 &= \sum_{j=0}^n \zeta ^j H_n^\gi  \Tqn ^{j+k} (\Iu{(n+1)q}-H_nH_n^\gi ) 
 =0
\end{split}\eeq
 and, analogously, \eqref{NUC}.
 Furthermore, the same arguments imply that \eqref{N65MG} holds true for each \(k\in\N\).
 For all \(k\in\NO \), equation~\eqref{N-71} follows from \eqref{N65-111}, \eqref{M77}, and \eqref{174-1}.
 Moreover, for each \(k\in\NO \), equation~\eqref{N-72} is a consequence of \eqref{M77}, \eqref{174-1}, and \eqref{NUC}.
 Using \eqref{N65MG}, \eqref{M77}, and \eqref{174-1}, we see that \eqref{N65LB} holds true for each \(k\in\N\).
\end{proof}

\begin{lem} \label{bem-ML8}
 Let \(\alpha \in \R\), let \(\kappa \in \Ninf \), and let \(\seqska  \in \Kggeqka \).
 For each \(n \in \NO \) with \(2n+1\leq \kappa\) and every choice of \(\zeta \in \C\) and \(\eta \in \C\), then 
\begin{align} 
 H_n^\gi  R_{\Tqn} (\zeta) \ek*{R_{\Tqn^\ad } (\eta)}^\invad (\Iu{(n+1)q}-H_nH_n^\gi )&=0,\notag\\
 (\Iu{(n+1)q}-H_n^\gi  H_n) \ek*{R_{\Tqn^\ad } (\eta)}^\inv \ek{R_{\Tqn} (\zeta)}^\ad  H_n^\gi&=0,\label{N67}\\
 H_{\at n}^\gi  R_{\Tqn} (\zeta) \ek*{R_{\Tqn^\ad } (\eta)}^\invad (\Iu{(n+1)q}-H_{\at{n}}H_{\at{n}}^\gi )&=0,\label{UC}
\intertext{and}
 (\Iu{(n+1)q}-H_{\at{n}}^\gi  H_{\at{n}}) \ek*{R_{\Tqn^\ad } (\eta)}^\inv \ek{R_{\Tqn} (\zeta)}^\ad H_{\at n}^\gi&=0.\label{GG671}
\end{align} 
\end{lem}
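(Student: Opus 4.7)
The basic observation is that $\ek{R_{\Tqn^\ad}(\eta)}^\invad = (\Iu{(n+1)q}-\eta\Tqn^\ad)^\ad = \Iu{(n+1)q}-\ko{\eta}\Tqn$ is a degree-one matrix polynomial in $\Tqn$. Consequently, for each $\zeta,\eta\in\C$,
\[
R_{\Tqn}(\zeta)\ek*{R_{\Tqn^\ad}(\eta)}^\invad
= R_{\Tqn}(\zeta) - \ko{\eta}\, R_{\Tqn}(\zeta)\Tqn,
\]
so every product appearing in the lemma splits into a linear combination of the two ``building blocks'' $R_{\Tqn}(\zeta)\Tqn^{k}$ with $k\in\{0,1\}$. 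The plan is to reduce each of the four asserted identities to \rlem{ML8-1} applied with $k=0$ and $k=1$, and then to obtain the two ``mirror'' identities by taking adjoints.

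First I would prove \eqref{UC} (and simultaneously the analogous statement without the dagger on $H$, giving the first displayed identity). Using the expansion above,
\[
\begin{split}
 &H_{\at n}^\gi R_{\Tqn}(\zeta)\ek*{R_{\Tqn^\ad}(\eta)}^\invad\rk*{\Iu{(n+1)q}-H_{\at n}H_{\at n}^\gi}\\
 &\qquad= H_{\at n}^\gi R_{\Tqn}(\zeta)\rk*{\Iu{(n+1)q}-H_{\at n}H_{\at n}^\gi}
  -\ko{\eta}\, H_{\at n}^\gi R_{\Tqn}(\zeta)\Tqn\rk*{\Iu{(n+1)q}-H_{\at n}H_{\at n}^\gi},
\end{split}
\]
and both summands vanish by \eqref{NUC} (with $k=0$ and $k=1$ respectively). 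The same computation, with $H_n^\gi$ replacing $H_{\at n}^\gi$ and with the first identity of \rlemp{ML8-1}{ML8-1.a} used in place of \eqref{NUC}, yields the first displayed identity of the lemma.

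Next I would derive \eqref{N67} and \eqref{GG671} from the two identities just established by forming adjoints. Because $H_n^\ad=H_n$, $H_{\at n}^\ad=H_{\at n}$, and $(H_n^\gi)^\ad=H_n^\gi$, $(H_{\at n}^\gi)^\ad=H_{\at n}^\gi$ by \rrem{1582012-2} and \rlem{311-1}, one has $(H_nH_n^\gi)^\ad = H_n^\gi H_n$ and $(H_{\at n}H_{\at n}^\gi)^\ad = H_{\at n}^\gi H_{\at n}$. Taking the adjoint of the first displayed identity of the lemma therefore gives \eqref{N67}, and taking the adjoint of \eqref{UC} gives \eqref{GG671}.

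No real obstacle is expected: the proof is a short symbolic manipulation once one recognises that $\ek{R_{\Tqn^\ad}(\eta)}^\invad$ is a first-degree polynomial in $\Tqn$, so that the input \rlem{ML8-1} (which already handles $H_n^\gi R_{\Tqn}(\zeta)\Tqn^k(\cdot)$ for all $k\in\NO$) is strong enough. The only point requiring care is to track the self-adjointness of $H_n$, $H_{\at n}$, $H_n^\gi$, $H_{\at n}^\gi$ when passing to adjoints in the second pair of identities.
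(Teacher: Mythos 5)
Your proposal is correct and follows essentially the same route as the paper: expand \(\ek{R_{\Tqn^\ad}(\eta)}^\invad=\Iu{(n+1)q}-\ko{\eta}\Tqn\), split the product into the \(k=0\) and \(k=1\) instances of \rlemp{ML8-1}{ML8-1.a}, and obtain \eqref{N67} and \eqref{GG671} by taking adjoints using the self-adjointness of \(H_n\), \(H_{\at n}\), \(H_n^\gi\), and \(H_{\at n}^\gi\).
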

\begin{proof}
 Let \(n \in \NO \) be such that \(2n+1\leq \kappa\), and let \(\zeta,\eta \in \C\).
 Because of \rrem{1582012-2} and \rlem{311-1}, we obtain \eqref{M77} and \eqref{174-1}.
 From \rrem{21112N} and \rlemp{ML8-1}{ML8-1.a} we conclude
\beql{MF}\begin{split}
 &H_n^\gi R_{\Tqn} (\zeta) \ek*{R_{\Tqn^\ad } (\eta)}^\invad (\Iu{(n+1)q}-H_nH_n^\gi )\\
 &=H_n^\gi  R_{\Tqn} (\zeta) ( \Iu{(n+1)q} - \ko{\eta} \Tqn) (\Iu{(n+1)q}-H_nH_n^\gi )\\
 &=H_n^\gi  R_{\Tqn} (\zeta) (\Iu{(n+1)q}-H_nH_n^\gi ) - \ko{\eta} H_n^\gi  R_{\Tqn} (\zeta) \Tqn (\Iu{(n+1)q}-H_nH_n^\gi ) 
 = 0
\end{split}\eeq
 and, analogously, \eqref{UC}.
 Obviously, \eqref{MF} and \eqref{M77} imply \eqref{N67}.
 Furthermore, \eqref{M77}, \eqref{174-1}, and \eqref{UC} show that \eqref{GG671} holds true as well.
\end{proof}

\begin{lem} \label{ML9-1b}
 Let \(\alpha \in \R\), let \(\kappa \in \Ninf \), and let \(\seqska  \in \Kggeqka \).
 For each \(n \in \NO \) with \(2n+1\leq \kappa\), then \(H_n^\gi  R_{\Tqn} (\alpha) (v_{q,n} v_{q,n}^\ad H_n H_{\at{n}}^\gi +\Tqn) =H_{\at{n}}^\gi \) and 
\beql{UCL}
 H_{\at n}^\gi  \ek*{ \Iu{(n+1)q} - H_n v_{q,n} v_{q,n}^\ad R_{\Tqn^\ad } (\alpha) H_n^\gi }
 = \Tqn^\ad R_{\Tqn^\ad } (\alpha) H_n^\gi.
\eeq
\end{lem}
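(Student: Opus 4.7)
The plan is to derive the first identity by combining the Ljapunov-type identity from Remark \ref{lemC21-1}\eqref{lemC21-1.b}, the compatibility relation in \eqref{Nr.ML6-2}, and the vanishing result of Lemma \ref{ML8-1}\eqref{ML8-1.b}. The second identity will then fall out by taking adjoints and using the Hermiticity properties \eqref{N39-RI} together with the fact that $\alpha$ is real.

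More precisely, the key observation is that the identity $v_{q,n}v_{q,n}^\ad H_n = \ek{R_{\Tqn}(\alpha)}^\inv H_n - \Tqn H_{\at{n}}$ from Remark \ref{lemC21-1}\eqref{lemC21-1.b}, after left-multiplication by $R_{\Tqn}(\alpha)$, yields
\[
 R_{\Tqn}(\alpha) v_{q,n}v_{q,n}^\ad H_n
 = H_n - R_{\Tqn}(\alpha)\Tqn H_{\at{n}}.
\]
Multiplying this from the left by $H_n^\gi$ and from the right by $H_{\at{n}}^\gi$, the first equation in \eqref{Nr.ML6-2} collapses $H_n^\gi H_n H_{\at{n}}^\gi$ to $H_{\at{n}}^\gi$, so that
\[
 H_n^\gi R_{\Tqn}(\alpha) v_{q,n}v_{q,n}^\ad H_n H_{\at n}^\gi
 = H_{\at n}^\gi - H_n^\gi R_{\Tqn}(\alpha) \Tqn H_{\at{n}} H_{\at n}^\gi.
\]
Adding $H_n^\gi R_{\Tqn}(\alpha)\Tqn$ to both sides and grouping gives
\[
 H_n^\gi R_{\Tqn}(\alpha) \rk{v_{q,n}v_{q,n}^\ad H_n H_{\at{n}}^\gi+\Tqn}
 = H_{\at n}^\gi + H_n^\gi R_{\Tqn}(\alpha)\Tqn (\Iu{(n+1)q}-H_{\at{n}}H_{\at{n}}^\gi).
\]
The residual term vanishes by \eqref{N65MG} (applied with $\zeta=\alpha$ and $k=1$), finishing the first claim.

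For the second identity, I would take the adjoint of the first one. Since $\alpha\in\R$, Remark \ref{21112N} yields $\ek{R_{\Tqn}(\alpha)}^\ad = R_{\Tqn^\ad}(\alpha)$, while \eqref{N39-RI} gives $(H_n^\gi)^\ad=H_n^\gi$ and $(H_{\at{n}}^\gi)^\ad=H_{\at{n}}^\gi$. Taking adjoints of the established identity and isolating the $\Tqn^\ad R_{\Tqn^\ad}(\alpha)H_n^\gi$ contribution on one side produces \eqref{UCL} directly. No obstacle is anticipated here: both statements are purely algebraic identities, and the only mildly subtle point is keeping track of which projector identity from \rlem{311-1}/\rlem{ML8-1} applies to the leftover Dubovoj-complementary piece; this is handled automatically by \eqref{N65MG}.
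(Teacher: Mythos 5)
Your proof is correct and follows essentially the same route as the paper: substitute the Ljapunov identity \(v_{q,n}v_{q,n}^\ad H_n=\ek{R_{\Tqn}(\alpha)}^\inv H_n-\Tqn H_{\at{n}}\), collapse \(H_n^\gi H_nH_{\at{n}}^\gi\) to \(H_{\at{n}}^\gi\) via \eqref{Nr.ML6-2}, annihilate the leftover term \(H_n^\gi R_{\Tqn}(\alpha)\Tqn(\Iu{(n+1)q}-H_{\at{n}}H_{\at{n}}^\gi)\) by \rlem{ML8-1}, and obtain \eqref{UCL} by taking adjoints using \eqref{N39-RI} and \(\ek{R_{\Tqn}(\alpha)}^\ad=R_{\Tqn^\ad}(\alpha)\). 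Your citation of \eqref{N65MG} (part~\eqref{ML8-1.b} with \(k=1\)) is in fact the precise equation needed for the mixed term, which the paper attributes somewhat loosely to part~\eqref{ML8-1.a}.
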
 
\begin{proof}
 Let \(n \in \NO \) be such that \(2n+1\leq \kappa\).
 Because of \( \Kggeqka  \subseteq \Kggqka \), \rrem{1582012-2}, and \rlem{311-1}, we have \eqref{M77} and \eqref{174-1}.
 \rrem{lemC21-1}, \rlem{311-1}, and \rlemp{ML8-1}{ML8-1.a} yield 
\[ \begin{split}
 &H_n^\gi  R_{\Tqn} (\alpha)(v_{q,n} v_{q,n}^\ad H_n H_{\at{n}}^\gi +\Tqn)\\
 &= H_n^\gi  R_{\Tqn} (\alpha) \ek*{ \rk*{ \ek*{R_{\Tqn} (\alpha)}^\inv H_n -\Tqn H_{\at{n}}} H_{\at{n}}^\gi  +\Tqn }\\
 &=H_n^\gi  H_n H_{\at{n}}^\gi  - H_n^\gi  R_{\Tqn} (\alpha) \Tqn H_{\at{n}} H_{\at{n}}^\gi  + H_n^\gi  R_{\Tqn} (\alpha) \Tqn \\
 &=H_{\at{n}}^\gi  + H_n^\gi  R_{\Tqn} (\alpha) \Tqn (\Iu{(n+1)q}-H_{\at{n}} H_{\at{n}}^\gi )
 = H_{\at{n}}^\gi.
\end{split}\]
 This implies 
\beql{Nr.ML6-79}
 \ek*{ \Iu{(n+1)q} - H_n^\gi  R_{\Tqn} (\alpha) v_{q,n} v_{q,n}^\ad H_n}H_{\at{n}}^\gi
 =H_n^\gi  R_{\Tqn} (\alpha) \Tqn. 
\eeq
 In view of \eqref{M77}, \eqref{174-1}, \(\ek{R_{\Tqn} ( \alpha)}^\ad = R_{\Tqn^\ad } (\alpha)\), and \eqref{Nr.ML6-79}, it follows \eqref{UCL}.
\end{proof}

\section{Discussion of \(\Jimq\)-forms of particular matrix polynomials}\label{S1711}
 In this section, we construct special matrix polynomials, which are useful to describe the solution set of the matricial truncated Stieltjes power moment problem \mprob{\rhl}{m}{\lleq}.
 Particular interest is focused to representations of \(\Jimq\)\nobreakdash-forms, where 
\[
 \Jimq
 \defeq 
 \bMat \Oqq  & -\iu\Iq \\ \iu \Iq & \Oqq  \eMat.
\]
 Obviously, \(\Jimq\) is a \taaa{2q}{2q}{signature} matrix, \tie{}, \(\Jimq^\ad =\Jimq\) and \(\Jimq^2=\Iu{2q}\) hold true.
 We modify Bolotnikov's~\cite{MR1362524} approach, who considered the particular case \(\alpha=0\).
 However, the calculations in the general case \(\alpha \in \R\) are much more complicated.
 
\begin{rem} \label{J-1} 
 For every choice of \(A,B\in \Cqq\), we have \(\tmatp{A}{B}^\ad (-\Jimq) \tmatp{A}{B} = -\iu(B^\ad A-A^\ad B)\).
 In particular, \(\tmatp{A}{\Iq}^\ad (-\Jimq)\tmatp{A}{\Iq} =2 \Im A\).
\end{rem}

\begin{rem} \label{LG1}
 Let \(A \in \CHq\).
 Then the matrices \(B\defeq \tmat{ \Iq & \Oqq\\ A & \Iq} \) and \( C\defeq \tmat{ \Iq & A\\ \Oqq & \Iq} \) fulfill \(B^\ad \Jimq B =\Jimq\), \(C^\ad \Jimq C =\Jimq\), \(B\Jimq B^\ad =\Jimq\), and \(C \Jimq C^\ad =\Jimq\).
\end{rem}

\begin{rem} \label{JN}
 For each \(n\in \NO \) and each \(A \in \Coo{(n+1)q}{(n+1)q}\), we have
\begin{align} 
 \mat{\Iu{(n+1)q}, A} (\Iu{2} \otimes v_{q,n} ) \Jimq
 &= \iu\mat{A, -\Iu{(n+1)q} } (\Iu{2} \otimes v_{q,n} ), \label{SKP1} \\ 
 \mat{A, -\Iu{(n+1)q}} (\Iu{2} \otimes v_{q,n} ) \Jimq
 &= - \iu \mat{\Iu{(n+1)q}, A }(\Iu{2} \otimes v_{q,n} ),\notag\\
 \Jimq (\Iu{2} \otimes v_{q,n} )^\ad\mat{\Iu{(n+1)q}, A}^\ad
 &= -\iu (\Iu{2} \otimes v_{q,n} )^\ad\mat{A, -\Iu{(n+1)q}}^\ad,\label{SKP2} \\
 \Jimq (\Iu{2} \otimes v_{q,n} )^\ad\mat{A, -\Iu{(n+1)q}}^\ad
 &= \iu(\Iu{2} \otimes v_{q,n} )^\ad\mat{ \Iu{(n+1)q}, A}^\ad,\notag\\ 
 \mat{\Iu{(n+1)q}, A}(\Iu{2} \otimes v_{q,n} ) \Jimq (\Iu{2} \otimes v_{q,n} )^\ad\mat{\Iu{(n+1)q}, A}^\ad
 &= \iu(A v_{q,n} v_{q,n}^\ad - v_{q,n} v_{q,n}^\ad A^\ad ), \label{SKP3} \\
 \mat{A, -\Iu{(n+1)q}}(\Iu{2} \otimes v_{q,n} ) \Jimq (\Iu{2} \otimes v_{q,n} )^\ad\mat{A, -\Iu{(n+1)q}}^\ad
 &= \iu(A v_{q,n} v_{q,n}^\ad - v_{q,n} v_{q,n}^\ad A^\ad ),\label{SKP3-2}
\intertext{and}
 \mat{\Iu{(n+1)q}, A} (\Iu{2} \otimes v_{q,n} ) (\Iu{2} \otimes v_{q,n} )^\ad\mat{A, -\Iu{(n+1)q}}^\ad 
 &= -(A v_{q,n} v_{q,n}^\ad - v_{q,n} v_{q,n}^\ad A^\ad ).\label{SKP4} 
\end{align} 
\end{rem}

\begin{rem} \label{MB20} 
 Let \(\alpha \in \R\), let \(\kappa \in \NOinf \), and let \(\seqska \in\Hggeqka \).
 For each \(n\in \NO \) with \(2n \leq \kappa\), \rrem{21112N} shows that \(U_{n,\alpha}\colon\C \to\Coo{2q}{2q}\) defined by
\begin{multline} \label{U1}
 U_{n,\alpha} (\zeta)
 \defeq\Iu{2q} + (\zeta-\alpha) (\Iu{2} \otimes v_{q,n} )^\ad \mat{ \Tqn H_n, -\Iu{(n+1)q}}^\ad\\
 \times R_{\Tqn ^\ad } (\zeta) H_n^\gi  R_{\Tqn} (\alpha) \mat{ \Iu{(n+1)q}, \Tqn H_n} (\Iu{2} \otimes v_{q,n} )
\end{multline}
 is a matrix polynomial of degree not greater than \(n+1\), where \(H_n^\ad = H_n\) implies that, for each \(\zeta \in \C\), the matrix \(U_{n,\alpha} (\zeta)\) admits the block representation
\beql{U3}
 U_{n,\alpha} (\zeta)
 =
 \bMat
  A_n(\zeta)&B_n(\zeta)\\
  C_n(\zeta)&D_n(\zeta)
 \eMat
\eeq
 with
\begin{align}
 A_n(\zeta)&\defeq\Iq + (\zeta-\alpha) v_{q,n} ^\ad H_n \Tqn^\ad R_{\Tqn ^\ad } (\zeta) H_n^\gi  R_{\Tqn} (\alpha) v_{q,n}, \label{U3A}\\
 B_n(\zeta)&\defeq\phantom{\Iq}+(\zeta-\alpha) v_{q,n} ^\ad H_n \Tqn^\ad R_{\Tqn ^\ad } (\zeta) H_n^\gi  R_{\Tqn} (\alpha) \Tqn H_n v_{q,n},\label{U3B}\\
 C_n(\zeta)&\defeq\phantom{\Iq}-(\zeta-\alpha) v_{q,n} ^\ad R_{\Tqn ^\ad } (\zeta) H_n^\gi  R_{\Tqn} (\alpha) v_{q,n},\label{U3C}\\
 D_n(\zeta)&\defeq\Iq - (\zeta-\alpha) v_{q,n} ^\ad R_{\Tqn ^\ad } (\zeta) H_n^\gi  R_{\Tqn} (\alpha)\Tqn H_n v_{q,n}.\label{U3D}
\end{align}
\end{rem}

\begin{lem} \label{ML28-1}
 Let \(\alpha \in \R\), let \(\kappa \in \NOinf \), and let \(\seqska \in \Kggeqka \).
 For all of \(n\in \NO \) with \(2n \leq \kappa\) and all \(z, w\in \C \), the function \(U_{n,\alpha}\colon\C \to \Coo{2q}{2q}\) given by \eqref{U1} fulfills
\begin{multline*} 
 \Jimq -U_{n,\alpha} (z) \Jimq U_{n,\alpha}^\ad (w)
 = -\iu (z-\ko{w}) (\Iu{2} \otimes v_{q,n} )^\ad \mat{ \Tqn H_n, -\Iu{(n+1)q}}^\ad R_{\Tqn ^\ad } (z) H_n^\gi\\
 \times\ek*{R_{\Tqn^\ad } (w)}^\ad \mat{ \Tqn H_n, -\Iu{(n+1)q}} (\Iu{2} \otimes v_{q,n} ).
\end{multline*}
\end{lem}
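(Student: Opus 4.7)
Introduce the shorthand $L(z) \defeq R_{\Tqn^\ad}(z)$, $P \defeq R_{\Tqn}(\alpha)$, $X \defeq \mat{\Tqn H_n, -\Iu{(n+1)q}}(\Iu{2}\otimes v_{q,n})$, and $Y \defeq \mat{\Iu{(n+1)q}, \Tqn H_n}(\Iu{2}\otimes v_{q,n})$. By \rrem{MB20} then $U_{n,\alpha}(z) = \Iu{2q} + (z-\alpha)X^\ad L(z) H_n^\gi P Y$, and, since $\alpha\in\R$ gives $P^\ad = L(\alpha)$, and since $(H_n^\gi)^\ad = H_n^\gi$ holds by \rlem{311-1}, we have $U_{n,\alpha}^\ad(w) = \Iu{2q} + (\ko{w}-\alpha)Y^\ad L(\alpha) H_n^\gi L^\ad(w) X$. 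The decisive observation, obtained by specializing \eqref{SKP1} and \eqref{SKP2} with $A = \Tqn H_n$, is that $Y\Jimq = \iu X$ and $\Jimq Y^\ad = -\iu X^\ad$, which collapses every cross-term in the $\Jimq$-form into a sandwich of $X^\ad(\cdots)X$.

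Setting $W(z) \defeq U_{n,\alpha}(z) - \Iu{2q}$ and writing
\[
 \Jimq - U_{n,\alpha}(z)\Jimq U_{n,\alpha}^\ad(w)
 = -W(z)\Jimq - \Jimq W^\ad(w) - W(z)\Jimq W^\ad(w),
\]
the above identity yields the three pieces
\[
 -W(z)\Jimq = -\iu(z-\alpha)X^\ad L(z) H_n^\gi P X,\quad -\Jimq W^\ad(w) = \iu(\ko{w}-\alpha)X^\ad L(\alpha) H_n^\gi L^\ad(w) X,
\]
and $-W(z)\Jimq W^\ad(w) = -\iu(z-\alpha)(\ko{w}-\alpha)X^\ad L(z)H_n^\gi P(XY^\ad)L(\alpha)H_n^\gi L^\ad(w)X$. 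A direct block-column computation using \rremp{lemC21-1}{lemC21-1.c} gives $Y = \mat{v_{q,n},-u_n}$ and $X = \mat{-u_n,-v_{q,n}}$, whence $XY^\ad = v_{q,n}u_n^\ad - u_n v_{q,n}^\ad$; by the Hermitian Ljapunov identity of \rremp{lemC21-1}{lemC21-1.a}, which applies because $\seqska\in\Kggeqka$ forces $s_j^\ad=s_j$ via \rremss{ZS}{1582012-2}, this further equals $\Tqn H_n - H_n\Tqn^\ad$.

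The main obstacle is to show that the three pieces collapse to $-\iu(z-\ko{w})X^\ad L(z) H_n^\gi L^\ad(w) X$. The route is to rewrite
\[
 P(XY^\ad)L(\alpha) = P\Tqn H_n L(\alpha) - P H_n\Tqn^\ad L(\alpha) = \Tqn P H_n L(\alpha) - P H_n L(\alpha)\Tqn^\ad,
\]
using that $P$ commutes with $\Tqn$ and $L(\alpha)$ with $\Tqn^\ad$. Then one applies the Ljapunov identity \rlem{D1451} for $H_n$ sandwiched between $L(z)H_n^\gi$ on the left and $H_n^\gi L^\ad(w)$ on the right, to generate the factor $(z-\ko{w})L^\ad(w)\Tqn H_n\Tqn^\ad L(z)$. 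The residual terms $L^\ad(w)(v_{q,n}u_n^\ad - u_n v_{q,n}^\ad)L(z)$ produced by \rlem{D1451} are exactly killed when multiplied by $H_n^\gi$ on both sides, because \rlem{bem-ML8} and \rlem{ML8-1} together with $H_n H_n^\gi H_n = H_n$ (see \rrem{310-1}) imply that such sandwiches reduce to a boundary contribution which cancels the remaining linear terms in $(z-\alpha)$ and $(\ko{w}-\alpha)$.

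At that stage, all that is left is an application of the resolvent identity of \rrem{D1407} (and its adjoint obtained by transposition) to convert $(z-\alpha)L(z)H_n^\gi P - (\ko{w}-\alpha)L(\alpha) H_n^\gi L^\ad(w)$ into $(z-\ko{w}) L(z) H_n^\gi L^\ad(w)$ modulo terms already accounted for above. The principal difficulty throughout is the bookkeeping: each appeal to \rlem{D1451} generates residues involving $v_{q,n}$, $u_n$, and the factors $I - H_n H_n^\gi$ or $I - H_n^\gi H_n$, and these must be shown to vanish by the compatibility identities of \rsec{S1710}, most notably \rlemss{ML8-1}{bem-ML8}, which are in turn consequences of the Dubovoj-subspace structure established in \rprop{DU}.
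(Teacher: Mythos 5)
Your skeleton is the paper's: expand \(\Jimq -U_{n,\alpha}(z)\Jimq U_{n,\alpha}^\ad(w)\) into two linear pieces and one quadratic piece, use \eqref{SKP1} and \eqref{SKP2} to strip the \(\Jimq\)'s off the linear pieces, recognize the middle factor of the quadratic piece as the Ljapunov commutator \(v_{q,n}u_n^\ad-u_nv_{q,n}^\ad\), and finish with the \(H_n^\gi\)\nobreakdash-compatibility lemmas. Your identification \(XY^\ad=v_{q,n}u_n^\ad-u_nv_{q,n}^\ad\) via the block-column computation and \rremp{lemC21-1}{lemC21-1.a} is a harmless variant of the paper's route through \eqref{SKP3} and \eqref{B204}; up to that point the two arguments coincide.

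The step that would fail as written is your treatment of the quadratic piece via \rlem{D1451}. That lemma outputs \((z-\ko{w})\ek{R_{\Tqn^\ad}(w)}^\ad\Tqn H_n\Tqn^\ad R_{\Tqn^\ad}(z)\), a factor that never occurs here: in the cross term the commutator sits \emph{between} the resolvents, sandwiched as \(R_{\Tqn^\ad}(z)H_n^\gi(\cdots)H_n^\gi\ek{R_{\Tqn^\ad}(w)}^\ad\), whereas in \rlem{D1451} the resolvents multiply \(\Tqn H_n\) and \(H_n\Tqn^\ad\) separately from the outside; moreover the target's middle is \((z-\ko{w})H_n^\gi\), with no \(\Tqn H_n\Tqn^\ad\) anywhere. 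There is also no lemma in the paper under which sandwiches of \(v_{q,n}u_n^\ad-u_nv_{q,n}^\ad\) between \(H_n^\gi\ek{R_{\Tqn^\ad}(w)}^\ad\) and \(R_{\Tqn^\ad}(z)H_n^\gi\) are annihilated. What actually works (and is what the paper does): use \eqref{B204} at the point \(\alpha\) to get
\[
(z-\alpha)(\ko{w}-\alpha)R_{\Tqn}(\alpha)\rk{\Tqn H_nv_{q,n}v_{q,n}^\ad-v_{q,n}v_{q,n}^\ad H_n\Tqn^\ad}\ek{R_{\Tqn}(\alpha)}^\ad=(z-\alpha)(\ko{w}-\alpha)\rk{R_{\Tqn}(\alpha)\Tqn H_n-H_n\Tqn^\ad\ek{R_{\Tqn}(\alpha)}^\ad},
\]
then apply the scalar resolvent identities \eqref{S1-1} and \eqref{S1-2} of \rrem{lemmart} to each summand. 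This yields \((z-\ko{w})H_n\) plus the corrections \(-(z-\alpha)R_{\Tqn}(\alpha)\ek{R_{\Tqn^\ad}(w)}^\invad H_n\) and \((\ko{w}-\alpha)H_n\ek{R_{\Tqn^\ad}(z)}^\inv\ek{R_{\Tqn}(\alpha)}^\ad\); after conjugation by \(H_n^\gi\) and addition of the linear pieces these corrections assemble into \((z-\alpha)H_n^\gi R_{\Tqn}(\alpha)\ek{R_{\Tqn^\ad}(w)}^\invad(\Iu{(n+1)q}-H_nH_n^\gi)\) and \(-(\ko{w}-\alpha)(\Iu{(n+1)q}-H_n^\gi H_n)\ek{R_{\Tqn^\ad}(z)}^\inv\ek{R_{\Tqn}(\alpha)}^\ad H_n^\gi\), which vanish by \rlem{bem-ML8}, while \((z-\ko{w})H_n^\gi H_nH_n^\gi=(z-\ko{w})H_n^\gi\) by \rrem{310-1}. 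The cancellation mechanism you gesture at is real, but it runs through \eqref{B204}, \rrem{lemmart}, and \rlem{bem-ML8}, not through \rlem{D1451}; as stated, your middle stage does not produce the term \((z-\ko{w})H_n\) that the endgame requires.
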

\begin{proof}
 Let \(n\in \NO \) be such that \(2n \leq \kappa\) and let \(z\) and \(w\) be arbitrary complex numbers.
 \rrem{1582012-2} yields \(H_n^\ad =H_n\).
 \rlem{311-1} provides us \eqref{N39-RI} and \eqref{N73}.
 Using \eqref{U1}, \(\Jimq^2 =\Iu{2q}\), and \eqref{N39-RI}, we conclude 
\beql{CHI-2}\begin{split}
 &\Jimq - U_{n,\alpha} (z) \Jimq U_{n,\alpha}^\ad (w)\\
 &= \Jimq-\biggl\{\Iu{2q} + (z-\alpha) (\Iu{2} \otimes v_{q,n} )^\ad\mat{ \Tqn H_n, -\Iu{(n+1)q}}^\ad R_{\Tqn ^\ad } (z)H_n^\gi\\
 & \qquad\times   R_{\Tqn} (\alpha) \mat{ \Iu{(n+1)q}, \Tqn H_n} (\Iu{2} \otimes v_{q,n} ) \biggr\} \Jimq \biggl\{\Iu{2q} + (\ko{w}-\alpha) (\Iu{2} \otimes v_{q,n} )^\ad\\
 & \qquad\qquad\times \mat{ \Iu{(n+1)q}, \Tqn H_n}^\ad \ek*{R_{\Tqn} (\alpha)}^\ad H_n^\gi  \ek*{R_{\Tqn ^\ad } (w)}^\ad 
 \mat{ \Tqn H_n, -\Iu{(n+1)q}} (\Iu{2} \otimes v_{q,n} )\biggr\}\\ 
 & = S_1(z)+S_2(w)+S_3(z,w)
\end{split}\eeq
 where 
\begin{align}
 S_1(z)
 &\defeq -(z-\alpha) (\Iu{2} \otimes v_{q,n} )^\ad \mat{ \Tqn H_n, -\Iu{(n+1)q}}^\ad R_{\Tqn ^\ad } (z) H_n^\gi\notag  \\
 &\qquad\times R_{\Tqn} (\alpha)\mat{ \Iu{(n+1)q}, \Tqn H_n } (\Iu{2} \otimes v_{q,n}) \Jimq,\label{FIP} \\
 S_2(w)
 &\defeq -(\ko{w}-\alpha) \Jimq (\Iu{2} \otimes v_{q,n} )^\ad \mat{ \Iu{(n+1)q},\Tqn H_n }^\ad \ek*{R_{\Tqn} (\alpha)}^\ad H_n^\gi\notag   \\
 &\qquad\times\ek*{R_{\Tqn^\ad } (w)}^\ad \mat{ \Tqn H_n, -\Iu{(n+1)q}} (\Iu{2} \otimes v_{q,n} ),\label{FIPWW}
\end{align}
 and
\begin{multline} \label{FIDIM}
 S_3(z,w)
 \defeq -(z-\alpha) (\ko{w}-\alpha) (\Iu{2} \otimes v_{q,n} )^\ad \mat{ \Tqn H_n, -\Iu{(n+1)q}}^\ad R_{\Tqn ^\ad } (z) H_n^\gi  
 R_{\Tqn} (\alpha)\\
 \times\mat{ \Iu{(n+1)q},\Tqn H_n} (\Iu{2} \otimes v_{q,n} ) \Jimq (\Iu{2} \otimes v_{q,n} )^\ad 
 \mat{ \Iu{(n+1)q}, \Tqn H_n} ^\ad \\
 \times \ek*{R_{\Tqn} (\alpha)}^\ad H_n^\gi  \ek*{R_{\Tqn^\ad } (w)}^\ad \mat{ \Tqn H_n, -\Iu{(n+1)q}} (\Iu{2} \otimes v_{q,n} ). 
\end{multline}
 Because of \eqref{FIP}, \eqref{FIPWW}, \eqref{SKP1}, \eqref{SKP2}, and \rrem{21112N}, we get then
\begin{align}
 S_1(z) 
 & = -\iu (z-\alpha) (\Iu{2} \otimes v_{q,n} )^\ad \mat{ \Tqn H_n, -\Iu{(n+1)q}}^\ad R_{\Tqn ^\ad } (z)\notag\\
 & \qquad\times H_n^\gi  R_{\Tqn} (\alpha) \ek*{R_{\Tqn^\ad } (w)}^\invad \ek*{R_{\Tqn^\ad } (w)}^\ad \mat{ \Tqn H_n, -\Iu{(n+1)q}} (\Iu{2} \otimes v_{q,n}), \label{CHI-3}\\
 S_2(w)
 & = \iu (\ko{w}-\alpha) (\Iu{2} \otimes v_{q,n} )^\ad \mat{ \Tqn H_n,-\Iu{(n+1)q}}^\ad R_{\Tqn^\ad } (z)\ek*{R_{\Tqn^\ad } (z)}^\inv\ek*{R_{\Tqn} (\alpha)}^\ad\notag\\
 & \qquad\times  H_n^\gi   \ek*{R_{\Tqn^\ad } (w)}^\ad \mat{ \Tqn H_n, -\Iu{(n+1)q}} (\Iu{2} \otimes v_{q,n} ),\label{CH1}
\end{align}
 and, according to \eqref{FIDIM}, \eqref{SKP3}, and \(H_n^\ad =H_n\), we have
\begin{multline} \label{CH2}
 S_3(z,w)
 = -\iu (z-\alpha) (\ko{w}-\alpha) (\Iu{2} \otimes v_{q,n} )^\ad \mat{ \Tqn H_n, -\Iu{(n+1)q}}^\ad R_{\Tqn ^\ad } (z) H_n^\gi \\
 \times R_{\Tqn} (\alpha)\rk{\Tqn H_n v_{q,n}v_{q,n}^\ad - v_{q,n}v_{q,n}^\ad H_n \Tqn^\ad}\ek*{R_{\Tqn} (\alpha)}^\ad \\
 \times H_n^\gi  \ek*{R_{\Tqn^\ad } (w)}^\ad \mat{ \Tqn H_n, -\Iu{(n+1)q}} (\Iu{2} \otimes v_{q,n} ).
\end{multline}
 In view of \(H_n^\ad =H_n\), \(R_{\Tqn^\ad } (\alpha) = \ek{R_{\Tqn} (\alpha)}^\ad \), \eqref{B204}, \eqref{S1-2}, \eqref{S1-1}, and \(R_{\Tqn^\ad } (\alpha)= \ek{R_{\Tqn} (\alpha)}^\ad \), it follows
\[\begin{split} %
 &(z-\alpha) (\ko{w}-\alpha) R_{\Tqn} (\alpha)\rk{\Tqn H_n v_{q,n}v_{q,n}^\ad - v_{q,n}v_{q,n}^\ad H_n \Tqn^\ad}\ek*{R_{\Tqn} (\alpha)}^\ad \\
 &=(z-\alpha) (\ko{w}-\alpha) R_{\Tqn} (\alpha)\rk*{\Tqn H_n \ek{R_{\Tqn}(\alpha)}^\invad - \ek*{R_{\Tqn}(\alpha )}^\inv H_n\Tqn^\ad}\ek*{R_{\Tqn} (\alpha)}^\ad \\
 &= (z-\alpha) (\ko{w}-\alpha) R_{\Tqn} (\alpha) \Tqn H_n - (z-\alpha) (\ko{w}-\alpha)H_n\Tqn^\ad \ek*{R_{\Tqn} (\alpha)}^\ad \\
 &=- (z-\alpha)\rk*{R_{\Tqn} (\alpha)\ek*{R_{\Tqn^\ad }(w)}^\invad -\Iu{(n+1)q}} H_n\\
 &\qquad+ (\ko{w}-\alpha)H_n\rk*{\ek*{R_{\Tqn^\ad } (z)}^\inv \ek*{R_{\Tqn} (\alpha)}^\ad - \Iu{(n+1)q}} \\ 
 &= -(z-\alpha)R_{\Tqn} (\alpha)\ek*{R_{\Tqn^\ad }(w)}^\invad H_n + (\ko{w}-\alpha)H_n\ek*{R_{\Tqn^\ad } (z)}^\inv \ek*{R_{\Tqn} (\alpha)}^\ad+( z -\ko{w}) H_n.
\end{split}\]
 Consequently, from \eqref{CH2} we get then
\begin{multline} \label{CH2-2.5}
 S_3(z,w) 
 = -\iu (\Iu{2} \otimes v_{q,n} )^\ad \mat{\Tqn H_n, -\Iu{(n+1)q}}^\ad R_{\Tqn ^\ad } (z) H_n^\gi  \\
 \times \biggl \{ -(z-\alpha)R_{\Tqn} (\alpha)\ek*{R_{\Tqn^\ad }(w)}^\invad H_n +(\ko{w}-\alpha)H_n\ek*{R_{\Tqn^\ad } (z)}^\inv \ek{R_{\Tqn} (\alpha)}^\ad \\ 
 +(z- \ko{w}) H_n \biggr \} H_n^\gi  \ek*{R_{\Tqn^\ad } (w)}^\ad \mat{\Tqn H_n, -\Iu{(n+1)q}} (\Iu{2} \otimes v_{q,n} ).
\end{multline}
 The combination of \eqref{CHI-2}, \eqref{CHI-3}, \eqref{CH1}, and \eqref{CH2-2.5} yields 
\begin{multline} \label{CSNY1}
 \Jimq - U_{n,\alpha} (z) \Jimq U_{n,\alpha}^\ad (w) 
 = -\iu (\Iu{2} \otimes v_{q,n} )^\ad \mat{\Tqn H_n, -\Iu{(n+1)q}}^\ad R_{\Tqn ^\ad } (z) S(z,w)\\
 \times\ek*{R_{\Tqn^\ad } (w)}^\ad \mat{\Tqn H_n, -\Iu{(n+1)q}} (\Iu{2} \otimes v_{q,n} )
\end{multline}
 where
\begin{multline} \label{SW1}
 S(z,w)
 \defeq (z-\alpha) H_n^\gi  R_{\Tqn} (\alpha)\ek*{R_{\Tqn^\ad } (w)}^\invad - (\ko{w}-\alpha) \ek*{R_{\Tqn^\ad } (z)}^\inv \ek*{R_{\Tqn} (\alpha)}^\ad H_n^\gi\\
 -(z-\alpha)H_n^\gi  R_{\Tqn} (\alpha)\ek*{R_{\Tqn^\ad }(w)}^\invad H_n H_n^\gi+(\ko{w}-\alpha) H_n^\gi  H_n\ek*{R_{\Tqn^\ad } (z)}^\inv \ek*{R_{\Tqn} (\alpha)}^\ad H_n^\gi\\
 +(z- \ko{w})H_n^\gi H_n H_n^\gi. 
\end{multline}
 Using \eqref{SW1}, \rlem{bem-ML8}, and \rrem {310-1}, we infer \(S(z,w)=(z- \ko{w})H_n^\gi \).
 Hence, because of \eqref{CSNY1}, the proof is complete.
\end{proof}
 
\begin{rem} \label{86-1} 
 Let \(\alpha \in \R\), let \(\kappa \in \Ninf \), and let \(\seqska \in \Kggeqka \).
 For each \(n\in \NO \) with \(2n+1 \leq \kappa\), \rrem{21112N} shows then that \(\tilde{U}_{n,\alpha}\colon\C \to\Coo{2q}{2q}\) given by
\begin{multline} \label{U2}
 \tilde{U}_{n,\alpha} (\zeta)
 \defeq \Iu{2q} + (\zeta-\alpha) (\Iu{2} \otimes v_{q,n} )^\ad \mat*{\ek*{R_{\Tqn}(\alpha)}^\inv H_n, -\Iu{(n+1)q}}^\ad R_{\Tqn ^\ad } (\zeta) H_{\at{n}}^\gi\\
 \times   R_{\Tqn} (\alpha) \mat*{ \Iu{(n+1)q},\ek*{R_{\Tqn}(\alpha)}^\inv H_n} (\Iu{2} \otimes v_{q,n} )
\end{multline}
 is a matrix polynomial of degree not greater that \(n+1\), where \(H_n^\ad =H_n\) shows that, for each \(\zeta \in \C\), the matrix \(\tilde{U}_{n,\alpha} (\zeta) \) admits the block representation 
 \[
 \tilde{U}_{n,\alpha} (\zeta)
 =
 \bMat
  \tilde A_n(\zeta)&\tilde B_n(\zeta)\\
  \tilde C_n(\zeta)&\tilde D_n(\zeta)
 \eMat
\]
 with
\begin{align*}
 \tilde A_n(\zeta)&\defeq\Iq + (\zeta-\alpha) v_{q,n} ^\ad H_n \ek*{R_{\Tqn ^\ad }(\alpha)}^\inv R_{\Tqn ^\ad } (\zeta) H_{\at{n}}^\gi  R_{\Tqn} (\alpha) v_{q,n},\\
 \tilde B_n(\zeta)&\defeq\phantom{\Iq}+(\zeta-\alpha) v_{q,n} ^\ad H_n \ek*{R_{\Tqn ^\ad }(\alpha)}^\inv R_{\Tqn ^\ad } (\zeta) H_{\at{n}}^\gi  H_n v_{q,n},\\
 \tilde C_n(\zeta)&\defeq\phantom{\Iq}-(\zeta-\alpha) v_{q,n} ^\ad R_{\Tqn ^\ad } (\zeta) H_{\at{n}}^\gi  R_{\Tqn} (\alpha) v_{q,n},\\
 \tilde D_n(\zeta)&\defeq\Iq - (\zeta-\alpha) v_{q,n} ^\ad R_{\Tqn ^\ad } (\zeta) H_{\at{n}}^\gi  H_n v_{q,n}.
\end{align*}
\end{rem}
 
\begin{lem} \label{ML28-2}
 Let \(\alpha \in \R\), let \(\kappa \in \Ninf \), and let \(\seqska \in \Kggeqka \).
 Let \(\tilde {U}_{n,\alpha}\colon\C \to \Coo{2q}{2q}\) be given by \eqref{U2}.
 For all  \(n\in \NO \) with \(2n+1 \leq \kappa\) and all \(z, w\in\mathbb C\), then 
\begin{multline} \label{ZB}
 \Jimq -\tilde{U}_{n,\alpha} (z) \Jimq \tilde{U}_{n,\alpha}^\ad (w)
 = -\iu (z-\ko{w}) (\Iu{2} \otimes v_{q,n} )^\ad 
 \mat*{ \ek*{R_{\Tqn}(\alpha)}^\inv H_n, -\Iu{(n+1)q}}^\ad R_{\Tqn ^\ad } (z) H_{\at{n}}^\gi\\
 \times  \ek*{R_{\Tqn^\ad } (w)}^\ad \mat*{\ek*{R_{\Tqn}(\alpha)}^\inv H_n, -\Iu{(n+1)q}} (\Iu{2} \otimes v_{q,n} ).
\end{multline}
\end{lem}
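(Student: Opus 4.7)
The plan is to mirror the proof of \rlem{ML28-1}, substituting $\Tqn H_n$ by $\ek*{R_{\Tqn}(\alpha)}^\inv H_n$ throughout the outer matrices, $H_n$ by $H_{\at n}$ in the middle block, and $H_n^\gi$ by $H_{\at n}^\gi$; the role of the Ljapunov identity \eqref{B204} from \rrem{lemM431-2} will be taken by the odd-index identity \eqref{FIDW3} from \rrem{ZF1}.

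First I would expand $\Jimq - \tilde{U}_{n,\alpha}(z)\Jimq \tilde{U}_{n,\alpha}^\ad(w)$ via \eqref{U2}, using $\Jimq^2 = \Iu{2q}$ together with the self-adjointness relations $H_n^\ad = H_n$, $H_{\at n}^\ad = H_{\at n}$, $(H_{\at n}^\gi)^\ad = H_{\at n}^\gi$ (\rrem{1582012-2} and \rlem{311-1}), thereby obtaining a decomposition $\tilde{S}_1(z)+\tilde{S}_2(w)+\tilde{S}_3(z,w)$ analogous to \eqref{CHI-2}. For $\tilde{S}_1$ and $\tilde{S}_2$, the identities \eqref{SKP1} and \eqref{SKP2} of \rrem{JN} with $A = \ek*{R_{\Tqn}(\alpha)}^\inv H_n$ extract a factor of $\iu$ and reshape the outer matrices to match the shape of the right-hand side of \eqref{ZB}. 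For $\tilde{S}_3$, \eqref{SKP3-2} with the same $A$ collapses the middle block to
\[
 \iu\rk*{\ek*{R_{\Tqn}(\alpha)}^\inv H_n v_{q,n}v_{q,n}^\ad - v_{q,n}v_{q,n}^\ad H_n \ek*{R_{\Tqn}(\alpha)}^\invad},
\]
upon which \eqref{FIDW3}, with the dummy parameter in the Ljapunov identity taken equal to $\alpha$, rewrites the bracket as $\Tqn H_{\at n}\ek*{R_{\Tqn}(\alpha)}^\invad - \ek*{R_{\Tqn}(\alpha)}^\inv H_{\at n}\Tqn^\ad$. Conjugation by $R_{\Tqn}(\alpha)$ on the left and $\ek*{R_{\Tqn}(\alpha)}^\ad$ on the right then yields $R_{\Tqn}(\alpha)\Tqn H_{\at n} - H_{\at n}\Tqn^\ad\ek*{R_{\Tqn}(\alpha)}^\ad$, which is the precise odd-index analog of the expression appearing in \rlem{ML28-1}.

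From here the argument is a telescoping akin to the passage from \eqref{CH2-2.5} to \eqref{SW1}. Using the elementary identities $(\ko w - \alpha)\Tqn = \ek*{R_{\Tqn}(\alpha)}^\inv - \ek*{R_{\Tqn^\ad}(w)}^\invad$ and $(z-\alpha)\Tqn^\ad = \ek*{R_{\Tqn^\ad}(\alpha)}^\inv - \ek*{R_{\Tqn^\ad}(z)}^\inv$, the sum $\tilde{S}_1(z)+\tilde{S}_2(w)+\tilde{S}_3(z,w)$ is rewritten as $-\iu$ times the candidate right-hand side of \eqref{ZB}, with an inner kernel $\tilde{S}(z,w)$ in place of $(z-\ko w)H_{\at n}^\gi$. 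This kernel splits into a diagonal piece $(z-\ko w)H_{\at n}^\gi H_{\at n}H_{\at n}^\gi$, which collapses to $(z-\ko w)H_{\at n}^\gi$ via \eqref{PPI-2}, and cross terms of the shape $H_{\at n}^\gi R_{\Tqn}(\alpha)\ek*{R_{\Tqn^\ad}(w)}^\invad(\Iu{(n+1)q}-H_{\at n}H_{\at n}^\gi)$ and $(\Iu{(n+1)q}-H_{\at n}^\gi H_{\at n})\ek*{R_{\Tqn^\ad}(z)}^\inv\ek*{R_{\Tqn}(\alpha)}^\ad H_{\at n}^\gi$, which are annihilated by \eqref{UC} and \eqref{GG671} of \rlem{bem-ML8}.

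The principal obstacle will be the combinatorial bookkeeping in this telescoping step: ensuring that every cross term produced by the resolvent substitutions assembles into precisely one of the combinations that \rlem{bem-ML8} annihilates, with no $R_{\Tqn}(\alpha)$ factor stranded between $H_{\at n}^\gi$ and a matrix on the wrong side of the relevant range or nullspace. The choice of $\alpha$ as the dummy parameter in \eqref{FIDW3} is what makes the telescoping close neatly, since any other choice would leave residual $R_{\Tqn}(z')$ factors sandwiched between mismatched generalized inverses and would prevent the final collapse to $(z-\ko w)H_{\at n}^\gi$.
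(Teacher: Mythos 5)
Your proposal is correct and follows essentially the same route as the paper's proof: the same splitting into $\tilde{S}_1+\tilde{S}_2+\tilde{S}_3$, the same use of \rrem{JN} and \eqref{FIDW3} with dummy parameter $\alpha$, the same resolvent telescoping, and the same final collapse of the kernel $\tilde{S}(z,w)$ to $(z-\ko{w})H_{\at{n}}^\gi$ via \eqref{UC}, \eqref{GG671}, and \eqref{PPI-2}. The only (immaterial) discrepancy is that the middle block of $\tilde{S}_3$ has the form $\mat{\Iu{(n+1)q},A}(\Iu{2}\otimes v_{q,n})\Jimq(\Iu{2}\otimes v_{q,n})^\ad\mat{\Iu{(n+1)q},A}^\ad$, so the paper invokes \eqref{SKP3} rather than \eqref{SKP3-2}, though both yield the same right-hand side.
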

\begin{proof}
 Let \(n\in \NO \) be such that \(2n+1 \leq \kappa\).
 Because of \(\seqska \in \Kggeqka \subseteq \Kggqka \) and \rrem{1582012-2}, we have \(H_n^\ad =H_n\) and \(H_{\at{n}}^\ad =H_{\at{n}}\).
 \rlem{311-1} provides us \eqref{N39-RI} and \eqref{PPI-2}.
 Let \(z, w\in\C\).
 Taking into account \eqref{U2}, \(\Jimq^2 =\Iu{2q}\), and \eqref{N39-RI}, we get then
\beql{CHI-2N}\begin{split}
 &\Jimq - \tilde{U}_{n,\alpha} (z) \Jimq \tilde{U}_{n,\alpha}^\ad (w) 
 = \Jimq -\biggl\{ \Iu{2q} + (z-\alpha) (\Iu{2} \otimes v_{q,n} )^\ad 
 \mat*{ \ek*{R_{\Tqn}(\alpha)}^\inv H_n, -\Iu{(n+1)q} }^\ad \\
 &\qquad\times R_{\Tqn ^\ad } (z) H_{\at{n}}^\gi  R_{\Tqn} (\alpha) 
 \mat*{ \Iu{(n+1)q},\ek*{R_{\Tqn}(\alpha)}^\inv H_n } (\Iu{2} \otimes v_{q,n} ) \biggr\}\Jimq  \\
 & \qquad\times \biggl\{ \Iu{2q} + (\ko{w}-\alpha) (\Iu{2} \otimes v_{q,n} )^\ad \mat*{ \Iu{(n+1)q},\ek*{R_{\Tqn}(\alpha)}^\inv H_n}^\ad \\
 & \qquad\qquad\times \ek*{R_{\Tqn} (\alpha)}^\ad H_{\at{n}}^\gi  \ek*{R_{\Tqn ^\ad } (w)}^\ad \mat*{ \ek*{R_{\Tqn}(\alpha)}^\inv H_n, -\Iu{(n+1)q} } (\Iu{2} \otimes v_{q,n} )\biggr\} \\
 & = \tilde{S}_1(z)+\tilde{S}_2(w)+\tilde{S}_3(z,w)
\end{split}\eeq
 where
\begin{align} 
 \tilde{S}_1(z)
 &\defeq  - (z-\alpha) (\Iu{2} \otimes v_{q,n} )^\ad \mat*{\ek*{R_{\Tqn}(\alpha)}^\inv H_n, -\Iu{(n+1)q}}^\ad 
 R_{\Tqn ^\ad } (z) H_{\at{n}}^\gi\notag\\
 &\qquad\times  R_{\Tqn} (\alpha)  \mat*{ \Iu{(n+1)q}, \ek*{R_{\Tqn}(\alpha)}^\inv H_n } (\Iu{2} \otimes v_{q,n}) \Jimq,\label{FIPN}\\
 \tilde{S}_2(w)
 &\defeq  - (\ko{w}-\alpha) \Jimq (\Iu{2} \otimes v_{q,n} )^\ad 
 \mat*{ \Iu{(n+1)q},\ek*{R_{\Tqn}(\alpha)}^\inv H_n}^\ad \ek*{R_{\Tqn} (\alpha)}^\ad H_{\at{n}}^\gi\notag  \\ 
 &\qquad\times \ek*{R_{\Tqn^\ad } (w)}^\ad \mat*{\ek*{R_{\Tqn}(\alpha)}^\inv H_n, -\Iu{(n+1)q}} (\Iu{2} \otimes v_{q,n} ),\label{FIPN-2} 
\end{align} 
 and
\begin{multline} \label{FIDIMN}
 \tilde{S}_3(z,w)
 \defeq - (z-\alpha) (\ko{w}-\alpha)(\Iu{2} \otimes v_{q,n} )^\ad 
 \mat*{ \ek*{R_{\Tqn}(\alpha)}^\inv H_n, -\Iu{(n+1)q}}^\ad R_{\Tqn ^\ad } (z) H_{\at{n}}^\gi\\
 \times R_{\Tqn} (\alpha) \mat*{\Iu{(n+1)q},\ek*{R_{\Tqn}(\alpha)}^\inv H_n} (\Iu{2} \otimes v_{q,n} ) \Jimq\\
 \times(\Iu{2} \otimes v_{q,n} )^\ad \mat*{\Iu{(n+1)q}, \ek*{R_{\Tqn}(\alpha)}^\inv H_n}^\ad \ek*{R_{\Tqn} (\alpha)}^\ad H_{\at{n}}^\gi\\
 \times\ek*{R_{\Tqn^\ad } (w)}^\ad \mat*{\ek*{R_{\Tqn}(\alpha)}^\inv H_n, -\Iu{(n+1)q}} (\Iu{2} \otimes v_{q,n} ). 
\end{multline}
 In view of \eqref{SKP1}, \eqref{SKP2}, and \rrem{21112N}, from \eqref{FIPN} and \eqref{FIPN-2} we obtain
\begin{align}
 \tilde{S}_1(z)
 &= -\iu (z-\alpha) (\Iu{2} \otimes v_{q,n} )^\ad \mat*{\ek*{R_{\Tqn}(\alpha)}^\inv H_n, -\Iu{(n+1)q}}^\ad R_{\Tqn ^\ad } (z)
 H_{\at{n}}^\gi  R_{\Tqn} (\alpha)\notag \\
 &\times \ek*{R_{\Tqn^\ad } (w)}^\invad \ek*{R_{\Tqn^\ad } (w)}^\ad \mat*{ \ek*{R_{\Tqn}(\alpha)}^\inv H_n, -\Iu{(n+1)q}} (\Iu{2} \otimes v_{q,n}), \label{CHI-3N}\\ 
 \tilde{S}_2(w)
 &= \iu (\ko{w}-\alpha) (\Iu{2} \otimes v_{q,n} )^\ad  \mat*{ \ek*{R_{\Tqn}(\alpha)}^\inv H_n, -\Iu{(n+1)q}}^\ad R_{\Tqn^\ad }(z) \ek*{R_{\Tqn^\ad}(z)}^\inv\notag \\ 
 & \times\ek*{R_{\Tqn} (\alpha)}^\ad H_{\at{n}}^\gi  \ek*{R_{\Tqn^\ad } (w)}^\ad \mat*{\ek*{R_{\Tqn}(\alpha)}^\inv H_n, -\Iu{(n+1)q}} (\Iu{2} \otimes v_{q,n} ),\label{CH1N}
\end{align} 
 and, because of \eqref{FIDIMN}, \eqref{SKP3}, and \(H_n^\ad =H_n\), furthermore,
\begin{multline} \label{CH2N}
 \tilde{S}_3(z,w) 
 = -\iu (z-\alpha) (\ko{w}-\alpha) (\Iu{2} \otimes v_{q,n} )^\ad \mat*{\ek*{R_{\Tqn}(\alpha)}^\inv H_n, -\Iu{(n+1)q}}^\ad 
 R_{\Tqn ^\ad } (z) H_{\at{n}}^\gi  \\
 \times R_{\Tqn} (\alpha) \rk*{ \ek*{R_{\Tqn}(\alpha)}^\inv H_n v_{q,n}v_{q,n}^\ad - v_{q,n} v_{q,n}^\ad H_n \ek*{R_{\Tqn}(\alpha)}^\invad} \ek*{R_{\Tqn} (\alpha)}^\ad \\
 \times H_{\at{n}}^\gi  \ek*{R_{\Tqn^\ad } (w)}^\ad \mat*{\ek*{R_{\Tqn}(\alpha)}^\inv H_n, -\Iu{(n+1)q}} (\Iu{2} \otimes v_{q,n} ). 
\end{multline}
 Using \(H_n^\ad =H_n\), \eqref{FIDW3}, \eqref{S1-1}, \eqref{S1-2}, and \(R_{\Tqn^\ad } (\alpha)= \ek{R_{\Tqn} (\alpha)}^\ad \), we infer 
\[\begin{split} 
 &(z-\alpha) (\ko{w}-\alpha) R_{\Tqn} (\alpha)\rk*{\ek*{R_{\Tqn}(\alpha)}^\inv H_n v_{q,n}v_{q,n}^\ad - v_{q,n} v_{q,n}^\ad H_n \ek*{R_{\Tqn}(\alpha)}^\invad} \ek*{R_{\Tqn} (\alpha)}^\ad \\
 &= (z-\alpha) (\ko{w}-\alpha) R_{\Tqn} (\alpha) \rk*{ \Tqn H_{\at{n}}\ek*{R_{\Tqn}(\alpha)}^\invad - \ek*{R_{\Tqn}(\alpha)}^\inv H_{\at{n}}\Tqn^\ad} \ek*{R_{\Tqn} (\alpha)}^\ad \\
 &= (z-\alpha) (\ko{w}-\alpha)R_{\Tqn} (\alpha) \Tqn H_{\at{n}} - (z-\alpha) (\ko{w}-\alpha)H_{\at{n}} \Tqn ^\ad \ek*{R_{\Tqn} (\alpha)}^\ad \\
 &= -(z-\alpha)\rk*{R_{\Tqn} (\alpha)\ek*{R_{\Tqn^\ad }(w)}^\invad - \Iu{(n+1)q}} H_{\at{n}} \\
 & \qquad +(\ko{w}-\alpha)H_{\at{n}} \rk*{ \ek*{R_{\Tqn^\ad } (z)}^\inv \ek*{R_{\Tqn} (\alpha)}^\ad - \Iu{(n+1)q}}\\
 &= -(z-\alpha) R_{\Tqn} (\alpha)\ek*{R_{\Tqn^\ad }(w)}^\invad H_{\at{n}} +(\ko{w}-\alpha)H_{\at{n}} \ek*{R_{\Tqn^\ad } (z)}^\inv \ek*{R_{\Tqn} (\alpha)}^\ad \\ 
 & \qquad + (z- \ko{w})H_{\at{n}}.
\end{split}\]
 Consequently, from \eqref{CH2N} we get then 
\begin{multline} \label{LR1-1}
 \tilde{S}_3(z,w) 
 = -\iu (\Iu{2} \otimes v_{q,n} )^\ad \mat*{\ek*{R_{\Tqn}(\alpha)}^\inv H_n, -\Iu{(n+1)q}}^\ad R_{\Tqn ^\ad } (z)H_{\at{n}}^\gi \\
 \times\biggl\{-(z-\alpha) R_{\Tqn} (\alpha)\ek*{R_{\Tqn^\ad }(w)}^\invad H_{\at{n}}+(\ko{w}-\alpha)H_{\at{n}} \ek*{R_{\Tqn^\ad } (z)}^\inv \ek*{R_{\Tqn} (\alpha)}^\ad\\
 + (z- \ko{w})H_{\at{n}}\biggr\}H_{\at{n}}^\gi \ek*{R_{\Tqn^\ad } (w)}^\ad \mat*{\ek*{R_{\Tqn}(\alpha)}^\inv H_n, -\Iu{(n+1)q}} (\Iu{2} \otimes v_{q,n} ).
\end{multline}
 The combination of \eqref{CHI-2N}, \eqref{CHI-3N}, \eqref{CH1N}, and \eqref{LR1-1} provides us
\begin{multline} \label{NTMO}
 \Jimq -\tilde{U}_{n,\alpha} (z) \Jimq \tilde{U}_{n,\alpha}^\ad (w)
 = -\iu (\Iu{2} \otimes v_{q,n} )^\ad \mat*{\ek*{R_{\Tqn}(\alpha)}^\inv H_n, -\Iu{(n+1)q}}^\ad R_{\Tqn ^\ad } (z) \tilde{S}(z,w) \\ 
 \qquad\times \ek*{R_{\Tqn^\ad } (w)}^\ad \mat*{\ek*{R_{\Tqn}(\alpha)}^\inv H_n, -\Iu{(n+1)q}} (\Iu{2} \otimes v_{q,n} ) 
\end{multline}
 where
\begin{multline} \label{NCF}
 \tilde{S}(z,w)
 \defeq (z-\alpha) H_{\at{n}}^\gi  R_{\Tqn}(\alpha) \ek*{R_{\Tqn^\ad }(w)}^\invad - (\ko{w}-\alpha) \ek*{R_{\Tqn^\ad}(z)}^\inv \ek*{R_{\Tqn}(\alpha)}^\ad  H_{\at{n}}^\gi  \\
 -(z-\alpha) H_{\at{n}}^\gi  R_{\Tqn}(\alpha)\ek*{R_{\Tqn^\ad }(w)}^\invad H_{\at{n}} H_{\at{n}}^\gi  \\
 +(\ko{w}-\alpha) H_{\at{n}}^\gi  H_{\at{n}} \ek*{R_{\Tqn^\ad } (z)}^\inv \ek*{R_{\Tqn} (\alpha)}^\ad H_{\at{n}}^\gi  + (z- \ko{w})H_{\at{n}}^\gi  H_{\at{n}} H_{\at{n}}^\gi. 
\end{multline}
 From \eqref{NCF} and the \rlemss{bem-ML8}{311-1} we obtain \(\tilde{S}(z,w)=(z- \ko{w})H_{\at{n}}^\gi \).
 Hence, taking into account \eqref{NTMO}, we get \eqref{ZB}.
\end{proof}

\begin{rem} \label{ML26-1}
 Let \(\alpha \in \R\), let \(\kappa \in \Ninf \), and let \(\seqska \in \Kggeqka \).
 For each \(n\in \NO \) with \(2n+1 \leq \kappa\), in view of the \rremss{LG1}{1582012-2} and \rlem{311-1}, it is readily checked that the matrices 
\begin{align} 
 B_{n,\alpha}
 &\defeq
 \begin{pmat}[{|}]
 \Iq & v_{q,n}^\ad H_n H_{\at{n}}^\gi  H_n v_{q,n} \cr\-
 \Oqq & \Iq\cr
 \end{pmat}\label{CHI-1}
\intertext{and} 
 \tilde{B}_{n,\alpha}
 &\defeq
 \begin{pmat}[{|}]
 \Iq & \Oqq \cr\-
 -v_{q,n}^\ad R_{\Tqn^\ad }(\alpha) H_n^\gi  R_{\Tqn}(\alpha) v_{q,n} & \Iq\cr
 \end{pmat}\label{CHI-112}
\end{align}
 are \(\Jimq\)\nobreakdash-unitary, \tie{}, that \( B_{n,\alpha}\Jimq B_{n,\alpha}^\ad =\Jimq\), \(B_{n,\alpha}^\ad \Jimq B_{n,\alpha} =\Jimq\), \(\tilde{B}_{n,\alpha}\Jimq \tilde{B}_{n,\alpha}^\ad =\Jimq\), and \(\tilde{B}_{n,\alpha}^\ad \Jimq \tilde{B}_{n,\alpha} =\Jimq\) hold true.
\end{rem}

\begin{rem}\label{115} 
 Let \(\alpha \in \R\), let \(\kappa \in \Ninf \), and let \(\seqska  \in \Kggeqka \).
 In view of \eqref{CHI-1} and \eqref{CHI-112}, for each \(n\in\NO \) with \(2n+1 \leq \kappa\), then
\[
 \mat{\Iu{(n+1)q}, \Tqn H_n} (\Iu{2} \otimes v_{q,n} ) B_{n,\alpha}
 =  \mat*{ \Iu{(n+1)q},\rk{v_{q,n} v_{q,n}^\ad H_n H_{\at{n}}^\gi  +\Tqn} H_n } (\Iu{2} \otimes v_{q,n} )
\]
and
\begin{multline*} 
 \mat*{ \Iu{(n+1)q}, \ek*{R_{\Tqn}(\alpha)}^\inv H_n} (\Iu{2} \otimes v_{q,n} ) \tilde{B}_{n,\alpha} \\
 = \ek*{R_{\Tqn}(\alpha)}^\inv \mat*{\ek*{\Iu{(n+1)q} - H_n v_{q,n} v_{q,n}^\ad R_{\Tqn^\ad }(\alpha) H_n^\gi} R_{\Tqn}(\alpha), H_n } (\Iu{2} \otimes v_{q,n} ).
\end{multline*} 
\end{rem}

\begin{rem} \label{155} 
 Let \(\alpha \in \R\), let \(\kappa \in \Ninf \), and let \(\seqska  \in \Kggeqka \).
 In view of \rrem{115} and \rlem{ML9-1b}, it is readily checked that, for each \(n\in\NO \) with \(2n+1 \leq \kappa\), then
\[
 H_n^\gi  R_{\Tqn}(\alpha)\mat{ \Iu{(n+1)q}, \Tqn H_n} (\Iu{2} \otimes v_{q,n} ) B_{n,\alpha}
 =\mat*{ H_n^\gi  R_{\Tqn}(\alpha), H_{\at n}^\gi  H_n } (\Iu{2} \otimes v_{q,n} ).
\]
\end{rem}

\begin{rem} \label{R8.13}
 Let \(\alpha \in \R\) and let \(n \in \NO \).
 According to \rrem{21112N}, the matrix-valued functions \(\Omega_{q,n,\alpha}\colon\C \to\Coo{2(n+1)q}{2(n+1)q}\) and \(\tilde{\Omega}_{q,n,\alpha}\colon\C \to\Coo{2(n+1)q}{2(n+1)q}\) given by
\begin{align} 
 \Omega_{q,n,\alpha} (\zeta)
 &\defeq
 \begin{pmat}[{|}]
 (\zeta-\alpha)\Tqn^\ad & \ek{R_{\Tqn^\ad}(\alpha)}^\inv \cr\-
 -(\zeta-\alpha) \Iu{(n+1)q} & -(\zeta-\alpha) \Iu{(n+1)q}\cr
 \end{pmat}\ek*{\Iu{2} \otimes R_{\Tqn^\ad } (\zeta)}\label{MQN}
\intertext{and}
 \tilde{\Omega}_{q,n,\alpha} (\zeta)
 &\defeq
 \begin{pmat}[{|}]
 (\zeta-\alpha)\Tqn^\ad & (\zeta-\alpha) \ek{R_{\Tqn^\ad}(\alpha)}^\inv \cr\-
 -\Iu{(n+1)q} & -(\zeta-\alpha) \Iu{(n+1)q}\cr
 \end{pmat}\ek*{\Iu{2} \otimes R_{\Tqn^\ad } (\zeta)}\label{MQN-1}
\end{align}
 are both matrix polynomials of degree \(n+1\).
\end{rem}

\begin{lem} \label{RF1}
 Let \(\alpha \in \R\), let \(\kappa \in \Ninf \), and let \(\seqska  \in \Kggeqka \).
 Let \(n \in \NO \) be such that \(2n+1\leq \kappa\).
 In view of \eqref{MQN} and \eqref{MQN-1}, let \(\Theta_{n,\alpha}\colon\C \to\Coo{2q}{2q}\) and \(\tilde{\Theta}_{n,\alpha}\colon\C \to\Coo{2q}{2q}\) be given by
\begin{multline} \label{TD2-2}
 \Theta_{n,\alpha} (\zeta)
 \defeq \Iu{2q} + (\Iu{2} \otimes v_{q,n} )^\ad \cdot \diag\rk{H_n,\Iu{(n+1)q}} \cdot \Omega_{q,n,\alpha} (\zeta) \\
 \times \diag\rk{H_n^\gi,H_{\at n}^\gi } \cdot \diag\rk*{R_{\Tqn}(\alpha), H_n} \cdot(\Iu{2} \otimes v_{q,n} ) 
\end{multline}
 and 
\begin{multline} \label{TD2-1}
 \tilde{\Theta}_{n,\alpha} (\zeta)
 \defeq \Iu{2q} + (\Iu{2} \otimes v_{q,n} )^\ad \cdot \diag\rk{H_n,\Iu{(n+1)q}}\cdot \tilde{\Omega}_{q,n,\alpha} (\zeta) \\
 \times \diag\rk{H_n^\gi,H_{\at n}^\gi } \cdot\diag\rk*{R_{\Tqn}(\alpha),H_n } \cdot(\Iu{2} \otimes v_{q,n} ).
\end{multline}
 Then \(\Theta_{n,\alpha}\) and \(\tilde{\Theta}_{n,\alpha}\) are matrix polynomials of degree not greater than \(n+1\) and, for each \(\zeta \in \C\), the representations
\begin{align} \label{TUB-1}
 \Theta_{n,\alpha} (\zeta)&= U_{n,\alpha}(\zeta) B_{n,\alpha}&
&\text{and}&
 \tilde{\Theta}_{n,\alpha} (\zeta)&= \tilde{U}_{n,\alpha}(\zeta) \tilde{B}_{n,\alpha}
\end{align}
 hold true, where \(U_{n,\alpha}\colon\C \to\Coo{2q}{2q}\) and \(\tilde{U}_{n,\alpha}\colon\C \to\Coo{2q}{2q}\) are defined by \eqref{U1} and \eqref{U2}, and where \(B_{n,\alpha}\) and \(\tilde{B}_{n,\alpha}\) are given by \eqref{CHI-1} and \eqref{CHI-112}, respectively.
 If
\begin{align} \label{TT1}
 \Theta_{n,\alpha}&=\mat{\Theta_{n,\alpha}^{(j,k)}}_{j,k=1}^{2}&
&\text{and}&
 \tilde{\Theta}_{n,\alpha}&=\mat{\tilde{\Theta}_{n,\alpha}^{(j,k)}}_{j,k=1}^{2}
\end{align}
 are the \tqqa{block} representations of \(\Theta_{n,\alpha}\) and \(\tilde{\Theta}_{n,\alpha}\), respectively, for each \(\zeta \in \C\), then
\begin{align}
 \Theta_{n,\alpha}^{(1,1)} (\zeta)
 &= \Iq +(\zeta-\alpha)v_{q,n}^\ad H_n \Tqn^\ad R_{\Tqn^\ad }(\zeta)H_n^\gi  R_{\Tqn}(\alpha) v_{q,n}, \label{T-11}\\
 \Theta_{n,\alpha}^{(1,2)} (\zeta) 
 &= v_{q,n}^\ad H_n \ek*{R_{\Tqn^\ad }(\alpha)}^\inv R_{\Tqn^\ad }(\zeta)H_{\at n}^\gi H_n^\gi  v_{q,n}, \label{T-12} \\
 \Theta_{n,\alpha}^{(2,1)} (\zeta) 
 &= -(\zeta-\alpha)v_{q,n}^\ad R_{\Tqn^\ad }(\zeta)H_n^\gi  R_{\Tqn}(\alpha) v_{q,n}, \label{T-21}\\
 \Theta_{n,\alpha}^{(2,2)} (\zeta) 
 &= \Iq-(\zeta-\alpha)v_{q,n}^\ad R_{\Tqn^\ad }(\zeta) H_{\at n}^\gi  H_n v_{q,n},\label{T-22}\\
 \tilde{\Theta}_{n,\alpha}^{(1,1)} (\zeta) 
 &= \Iq +(\zeta-\alpha)v_{q,n}^\ad H_n \Tqn^\ad R_{\Tqn^\ad }(\zeta)H_n^\gi  R_{\Tqn}(\alpha) v_{q,n},\label{MD+11} \\
 \tilde{\Theta}_{n,\alpha}^{(1,2)} (\zeta) 
 &= (\zeta-\alpha) v_{q,n}^\ad H_n \ek*{R_{\Tqn^\ad}(\alpha)}^\inv R_{\Tqn^\ad }(\zeta)H_{\at n}^\gi H_n v_{q,n}, \label{MD+12} \\
 \tilde{\Theta}_{n,\alpha}^{(2,1)} (\zeta) 
 &= -v_{q,n}^\ad R_{\Tqn^\ad }(\zeta)H_n^\gi  R_{\Tqn}(\alpha) v_{q,n},\label{MD+21}
\intertext{and} 
 \tilde{\Theta}_{n,\alpha}^{(2,2)} (\zeta) 
 &= \Iq-(\zeta-\alpha)v_{q,n}^\ad R_{\Tqn^\ad }(\zeta) H_{\at n}^\gi  H_n v_{q,n}.\label{MD+22}
\end{align} 
\end{lem}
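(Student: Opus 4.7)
The plan is to prove \rlem{RF1} by direct computation, exploiting the polynomial structure of $R_{\Tqn^\ad}(\zeta)$ and the algebraic identities for the generalized inverses $H_n^\gi$ and $H_{\at n}^\gi$ collected in \rsec{S1710}. The polynomial character of $\Theta_{n,\alpha}$ and $\tilde{\Theta}_{n,\alpha}$ is immediate from \rrem{R8.13}: both $\Omega_{q,n,\alpha}$ and $\tilde{\Omega}_{q,n,\alpha}$ are matrix polynomials of degree $n+1$, so the sandwich construction in \eqref{TD2-2} and \eqref{TD2-1} (multiplication by constant matrices on the left and right, and subtraction of the identity) produces matrix polynomials of degree at most $n+1$.

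For the explicit block formulas \eqref{T-11}--\eqref{T-22}, I would compute the product in \eqref{TD2-2} directly. Concretely, $\diag(H_n^\gi, H_{\at n}^\gi) \cdot \diag(R_{\Tqn}(\alpha), H_n) = \diag(H_n^\gi R_{\Tqn}(\alpha), H_{\at n}^\gi H_n)$; multiplying on the left by $\Omega_{q,n,\alpha}(\zeta)$ using the explicit form \eqref{MQN}, then on the left again by $\diag(H_n, \Iu{(n+1)q})$, and finally sandwiching by $(\Iu{2} \otimes v_{q,n})$, reads off each of the four blocks of $\Theta_{n,\alpha}(\zeta) - \Iu{2q}$ directly. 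The same procedure with \eqref{MQN-1} in place of \eqref{MQN} gives the blocks of $\tilde{\Theta}_{n,\alpha}(\zeta)$ in \eqref{MD+11}--\eqref{MD+22}. No nontrivial identities enter at this step.

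To verify the factorization $\Theta_{n,\alpha}(\zeta) = U_{n,\alpha}(\zeta) B_{n,\alpha}$, I would compare blocks on both sides using the representation \eqref{U3}--\eqref{U3D} of $U_{n,\alpha}$ together with the shape \eqref{CHI-1} of $B_{n,\alpha}$. The (1,1) and (2,1) entries of the product are just $A_n(\zeta)$ and $C_n(\zeta)$, which by inspection coincide with \eqref{T-11} and \eqref{T-21}. The delicate entries are (1,2) and (2,2), where one must verify
\begin{align*}
 A_n(\zeta) v_{q,n}^\ad H_n H_{\at n}^\gi H_n v_{q,n} + B_n(\zeta) &= \Theta_{n,\alpha}^{(1,2)}(\zeta),\\
 C_n(\zeta) v_{q,n}^\ad H_n H_{\at n}^\gi H_n v_{q,n} + D_n(\zeta) &= \Theta_{n,\alpha}^{(2,2)}(\zeta).
\end{align*}
The tool for these cancellations is the first identity of \rlem{ML9-1b}, namely $H_n^\gi R_{\Tqn}(\alpha)(v_{q,n} v_{q,n}^\ad H_n H_{\at n}^\gi + \Tqn) = H_{\at n}^\gi$, applied together with the mixed equations $H_n^\gi H_n H_{\at n}^\gi = H_{\at n}^\gi$ and $H_{\at n}^\gi H_n H_n^\gi = H_{\at n}^\gi$ of \rlem{311-1}. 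The factorization $\tilde{\Theta}_{n,\alpha}(\zeta) = \tilde{U}_{n,\alpha}(\zeta) \tilde{B}_{n,\alpha}$ is handled similarly, with \eqref{UCL} of \rlem{ML9-1b} playing the analogous role for the bottom block-row of $\tilde{B}_{n,\alpha}$.

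The main obstacle is the bookkeeping in the last step: the cross terms involve several noncommuting products of $H_n$, $H_{\at n}$, their generalized inverses, and the resolvents $R_{\Tqn}(\alpha)$, $R_{\Tqn^\ad}(\zeta)$, and the simplification must insert the right ``identity'' factors of the form $H_n H_n^\gi H_n = H_n$ (\rrem{310-1}) at the appropriate places. Once the rotation formula of \rlem{ML9-1b} is invoked to rewrite $H_n^\gi R_{\Tqn}(\alpha) \Tqn$ inside $B_n(\zeta)$ and $D_n(\zeta)$ as $H_{\at n}^\gi - H_n^\gi R_{\Tqn}(\alpha) v_{q,n} v_{q,n}^\ad H_n H_{\at n}^\gi$, the cross terms collapse against $A_n(\zeta) v_{q,n}^\ad H_n H_{\at n}^\gi H_n v_{q,n}$ and $C_n(\zeta) v_{q,n}^\ad H_n H_{\at n}^\gi H_n v_{q,n}$, and the remaining steps reduce to routine algebraic manipulations.
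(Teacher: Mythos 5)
Your proposal is correct and follows essentially the same route as the paper: polynomial degree from \rrem{R8.13}, the block formulas by direct expansion of \eqref{TD2-2} and \eqref{TD2-1}, and the factorizations \eqref{TUB-1} by blockwise comparison, where the nontrivial (1,2)- and (2,2)-entries of \(U_{n,\alpha}B_{n,\alpha}\) (respectively the (1,1)- and (2,1)-entries of \(\tilde{U}_{n,\alpha}\tilde{B}_{n,\alpha}\)) collapse via \rlem{ML9-1b} and the resolvent identities of \rrem{lemmart}. No gaps.
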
 
\begin{proof}
 \rrem{R8.13} shows that \(\Theta_{n,\alpha}\) and \(\tilde{\Theta}_{n,\alpha}\) are matrix polynomials of degree not greater than \(n+1\).  Let \(\zeta\in\C\).
 Because of the \rremss{ZS}{1582012-2}, we have \(H_n^\ad =H_n\) and \(H_{\at{n}}^\ad =H_{\at{n}}\).
 Using \eqref{TD2-2} and \eqref{MQN}, one can easily check that \eqref{T-11}, \eqref{T-12}, \eqref{T-21}, and \eqref{T-22} hold true.    From \eqref{TD2-1}, \eqref{MQN-1}, and \eqref{TT1} we infer that \eqref{MD+11}, \eqref{MD+12}, \eqref{MD+21}, and \eqref{MD+22} are valid.
 Let
\begin{align} \label{PHBDS}
 \Phi_{n,\alpha}&=\mat{\Phi_{n,\alpha}^{(j,k)}}_{j,k=1}^{2}&
&\text{and}&
 \tilde{\Phi}_{n,\alpha}&=\mat{\tilde{\Phi}_{n,\alpha}^{(j,k)}}_{j,k=1}^{2}
\end{align} 
 be the \tqqa{block} representations of 
\begin{align} \label{DIXI}
 \Phi_{n,\alpha}&\defeq U_{n,\alpha} B_{n,\alpha}&
&\text{and}&
 \tilde{\Phi}_{n,\alpha}&\defeq \tilde{U}_{n,\alpha} \tilde{B}_{n,\alpha}.
\end{align}
 By virtue of \eqref{U3}--\eqref{U3D}, \eqref{CHI-1}, and \eqref{T-11}, then
\beql{BRW11}
 \Phi_{n,\alpha}^{(1,1)} (\zeta) 
 = \Iq + (\zeta-\alpha) v_{q,n} ^\ad H_n \Tqn^\ad R_{\Tqn ^\ad } (\zeta) H_n^\gi  R_{\Tqn} (\alpha) v_{q,n} 
 =\Theta_{n,\alpha}^{(1,1)} (\zeta), 
\eeq
 follows, whereas \eqref{U3}--\eqref{U3D}, \eqref{CHI-1}, and \eqref{T-21} show that
\beql{BRW21}
 \Phi_{n,\alpha}^{(2,1)} (\zeta) 
 = -(\zeta-\alpha) v_{q,n} ^\ad R_{\Tqn ^\ad } (\zeta)H_n^\gi  R_{\Tqn} (\alpha) v_{q,n} 
 =\Theta_{n,\alpha}^{(2,1)} (\zeta).
\eeq
 From \eqref{DIXI}, \eqref{U3}--\eqref{U3D}, \eqref{CHI-1}, \rlem{ML9-1b}, \rrem{lemmart}, and \eqref{T-12} we conclude
\[\begin{split}%
 &\Phi_{n,\alpha}^{(1,2)} (\zeta)
 = \ek*{\Iq + (\zeta-\alpha) v_{q,n} ^\ad H_n \Tqn^\ad R_{\Tqn ^\ad } (\zeta) H_n^\gi  R_{\Tqn} (\alpha) v_{q,n} } v_{q,n} ^\ad H_n H_{\at n}^\gi  H_n v_{q,n}\\
 &\qquad+ (\zeta-\alpha) v_{q,n} ^\ad H_n \Tqn^\ad R_{\Tqn ^\ad } (\zeta) H_n^\gi  R_{\Tqn} (\alpha) \Tqn H_n v_{q,n}\\ 
 &= v_{q,n} ^\ad H_n\ek*{H_{\at n}^\gi  +(\zeta-\alpha)\Tqn^\ad R_{\Tqn ^\ad } (\zeta) H_n^\gi  R_{\Tqn} (\alpha) \rk{v_{q,n}v_{q,n} ^\ad H_n H_{\at n}^\gi  +\Tqn }}H_n v_{q,n} \\
 &= v_{q,n} ^\ad H_n \ek*{\Iu{(n+1)q}+(\zeta-\alpha)\Tqn^\ad R_{\Tqn ^\ad } (\zeta) } H_{\at n}^\gi  H_n v_{q,n} \\
 &= v_{q,n} ^\ad H_n \ek*{R_{\Tqn^\ad } (\alpha)}^\inv R_{\Tqn ^\ad } (\zeta) H_{\at n}^\gi  H_n v_{q,n}
 =\Theta_{n,\alpha}^{(1,2)}(\zeta)
\end{split}\]
 and, using additionally \eqref{T-22} instead of \eqref{T-12}, furthermore
\[\begin{split}%
 \Phi_{n,\alpha}^{(2,2)} (\zeta) 
 &= - (\zeta-\alpha) v_{q,n} ^\ad R_{\Tqn ^\ad } (\zeta) H_n^\gi  R_{\Tqn} (\alpha) v_{q,n} v_{q,n} ^\ad H_n H_{\at n}^\gi  H_n v_{q,n} \\
 &\qquad+ \Iq - (\zeta-\alpha) v_{q,n} ^\ad R_{\Tqn ^\ad } (\zeta) H_n^\gi  R_{\Tqn} (\alpha) \Tqn H_n v_{q,n}\\ 
 &= \Iq -(\zeta-\alpha) v_{q,n} ^\ad R_{\Tqn ^\ad } (\zeta) H_n^\gi  R_{\Tqn} (\alpha) \rk{v_{q,n}v_{q,n} ^\ad H_n H_{\at n}^\gi  +\Tqn } H_n v_{q,n} \\
 &= \Iq- (\zeta-\alpha) v_{q,n} ^\ad R_{\Tqn ^\ad } (\zeta) H_{\at n}^\gi  H_n v_{q,n}
 =\Theta_{n,\alpha}^{(2,2)} (\zeta).
\end{split}\]
 Consequently, taking additionally into account \eqref{BRW11}, \eqref{BRW21}, \eqref{TT1}, \eqref{DIXI}, and \eqref{PHBDS}, we obtain the first equation in \eqref{TUB-1}.
 From \eqref{DIXI}, \rrem{86-1}, \eqref{CHI-112}, \eqref{PHBDS}, \rlem{ML9-1b}, \rrem{lemmart}, and \eqref{MD+11} we get
\beql{BRW11-2}\begin{split}
 &\tilde{\Phi}_{n,\alpha}^{(1,1)} (\zeta)\\
 &= \Iq + (\zeta-\alpha) v_{q,n} ^\ad H_n \ek*{R_{\Tqn^\ad } (\alpha)}^\inv R_{\Tqn ^\ad } (\zeta) H_{\at n }^\gi  R_{\Tqn} (\alpha) v_{q,n}\\
 &-(\zeta-\alpha) v_{q,n} ^\ad H_n \ek*{R_{\Tqn^\ad } (\alpha)}^\inv R_{\Tqn ^\ad } (\zeta) H_{\at n}^\gi  H_n v_{q,n} v_{q,n} ^\ad R_{\Tqn ^\ad } (\alpha) H_{ n }^\gi  R_{\Tqn} (\alpha) v_{q,n}\\
 &= \Iq + (\zeta-\alpha) v_{q,n} ^\ad H_n \ek*{R_{\Tqn^\ad } (\alpha)}^\inv R_{\Tqn ^\ad } (\zeta) H_{\at n }^\gi \\
 &\qquad\times\ek*{ \Iu{(n+1)q}- H_n v_{q,n} v_{q,n} ^\ad R_{\Tqn ^\ad } (\alpha) H_{ n }^\gi } R_{\Tqn} (\alpha) v_{q,n}\\
 &= \Iq + (\zeta-\alpha) v_{q,n} ^\ad H_n \ek*{R_{\Tqn^\ad } (\alpha)}^\inv R_{\Tqn ^\ad } (\zeta) \Tqn^\ad R_{\Tqn^\ad } (\alpha) H_n^\gi  R_{\Tqn} (\alpha) v_{q,n} \\
 &= \Iq + (\zeta-\alpha) v_{q,n} ^\ad H_n \Tqn^\ad R_{\Tqn ^\ad } (\zeta) H_n^\gi  R_{\Tqn} (\alpha) v_{q,n} 
 = \tilde{\Theta}_{n,\alpha}^{(1,1)} (\zeta),
\end{split}\eeq
 whereas \eqref{DIXI}, \rrem{86-1}, \eqref{CHI-112}, \eqref{PHBDS}, and \eqref{MD+12} show that 
\beql{BRW12-2}
 \tilde{\Phi}_{n,\alpha}^{(1,2)} (\zeta) 
 = (\zeta-\alpha) v_{q,n} ^\ad H_n \ek*{R_{\Tqn^\ad}(\alpha)}^\inv R_{\Tqn ^\ad } (\zeta) H_{\at{n}}^\gi  H_n v_{q,n} 
 =\tilde{\Theta}_{n,\alpha}^{(1,2)} (\zeta)
\eeq
 holds true.
 Using \eqref{DIXI}, \rrem{86-1}, \eqref{CHI-112}, \eqref{PHBDS}, \rlem{ML9-1b}, \rrem{21112N}, and \eqref{MD+21}, we conclude
\beql{BRW21-2}\begin{split}
 &\tilde{\Phi}_{n,\alpha}^{(2,1)} (\zeta)\\
 & =-(\zeta-\alpha) v_{q,n} ^\ad R_{\Tqn ^\ad } (\zeta) H_{\at{n}}^\gi  R_{\Tqn} (\alpha) v_{q,n} \\
 &\qquad- \ek*{\Iq - (\zeta-\alpha) v_{q,n} ^\ad R_{\Tqn ^\ad } (\zeta) H_{\at{n}}^\gi  H_n v_{q,n}} v_{q,n}^\ad R_{\Tqn^\ad }(\alpha) H_n^\gi  R_{\Tqn}(\alpha) v_{q,n} \\
 &= - v_{q,n} ^\ad R_{\Tqn ^\ad } (\zeta) \biggl\{(\zeta-\alpha) H_{\at{n}}^\gi  \ek*{\Iu{(n+1)q} - H_n v_{q,n} v_{q,n}^\ad R_{\Tqn^\ad }(\alpha) H_n^\gi  } \\
 &\qquad+ \ek*{R_{\Tqn ^\ad } (\zeta)}^\inv R_{\Tqn^\ad }(\alpha) H_n^\gi  \biggr\} R_{\Tqn}(\alpha) v_{q,n} \\
 &= - v_{q,n} ^\ad R_{\Tqn ^\ad } (\zeta)\rk*{(\zeta-\alpha) \Tqn^\ad R_{\Tqn^\ad }(\alpha) H_n^\gi  + \ek*{R_{\Tqn ^\ad } (\zeta)}^\inv R_{\Tqn^\ad }(\alpha) H_n^\gi} R_{\Tqn}(\alpha) v_{q,n} \\ 
 &= - v_{q,n} ^\ad R_{\Tqn ^\ad } (\zeta)\rk{\Iu{(n+1)q}-\alpha \Tqn^\ad}R_{\Tqn^\ad }(\alpha) H_n^\gi  R_{\Tqn}(\alpha) v_{q,n}\\ 
 &= - v_{q,n} ^\ad R_{\Tqn ^\ad } (\zeta) H_n^\gi  R_{\Tqn}(\alpha) v_{q,n} 
 =\tilde{\Theta}_{n,\alpha}^{(2,1)}(\zeta)
\end{split}\eeq
 and, in view of \eqref{MD+22}, furthermore 
\beql{BRW22-2}
 \tilde{\Phi}_{n,\alpha}^{(2,2)} (\zeta)
 = \Iq - (\zeta-\alpha) v_{q,n} ^\ad R_{\Tqn ^\ad } (\zeta) H_{\at{n}}^\gi  H_n v_{q,n}
 = \tilde{\Theta}_{n,\alpha}^{(2,2)} (\zeta).
\eeq
 From \eqref{BRW11-2}, \eqref{BRW12-2}, \eqref{BRW21-2}, \eqref{BRW22-2}, \eqref{TT1}, \eqref{DIXI}, and \eqref{PHBDS}, the second equation in \eqref{TUB-1} also follows.
\end{proof}

\begin{lem} \label{BL4.2-1}
 Let \(\alpha \in \R\), let \(\kappa \in \Ninf \), and let \(\seqska  \in \Kggeqka \).
 For each \(n \in \NO \) with \(2n+1\leq \kappa\), the functions \(\Theta_{n,\alpha}\colon\C \to\Coo{2q}{2q}\) given by \eqref{TD2-2} and \(\tilde{\Theta}_{n,\alpha}\colon\C \to\Coo{2q}{2q}\) given by \eqref{TD2-1} fulfill for each \(\zeta \in \C \setminus \set{\alpha}\) the identity 
\[
 \tilde{\Theta}_{n,\alpha} (\zeta)
 = \diag\rk*{(\zeta - \alpha)\Iq, \Iq }  \cdot \Theta_{n,\alpha} (\zeta) \cdot \diag\rk*{(\zeta - \alpha)^\inv \Iq, \Iq }.
\]
\end{lem}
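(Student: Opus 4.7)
The plan is to verify the identity by direct block-by-block computation, exploiting the explicit \tqqa{block} representations of \(\Theta_{n,\alpha}(\zeta)\) and \(\tilde\Theta_{n,\alpha}(\zeta)\) furnished by \rlem{RF1}. Since \(\zeta\neq\alpha\), both diagonal matrices on the right-hand side are well-defined, and conjugating \(\Theta_{n,\alpha}(\zeta)\) by \(\diag((\zeta-\alpha)\Iq,\Iq)\) acts on the \tqqa{blocks} by leaving the diagonal \tqqa{blocks} unchanged while multiplying the \((1,2)\)-block by \((\zeta-\alpha)\) and the \((2,1)\)-block by \((\zeta-\alpha)^\inv\). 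In other words, the plan reduces to checking the four scalar identities
\begin{align*}
 \Theta_{n,\alpha}^{(1,1)}(\zeta)&=\tilde\Theta_{n,\alpha}^{(1,1)}(\zeta),&
 (\zeta-\alpha)\Theta_{n,\alpha}^{(1,2)}(\zeta)&=\tilde\Theta_{n,\alpha}^{(1,2)}(\zeta),\\
 (\zeta-\alpha)^\inv\Theta_{n,\alpha}^{(2,1)}(\zeta)&=\tilde\Theta_{n,\alpha}^{(2,1)}(\zeta),&
 \Theta_{n,\alpha}^{(2,2)}(\zeta)&=\tilde\Theta_{n,\alpha}^{(2,2)}(\zeta).
\end{align*}

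Each of these is immediate from the formulas \eqref{T-11}--\eqref{T-22} and \eqref{MD+11}--\eqref{MD+22}. Indeed, the right-hand sides of \eqref{T-11} and \eqref{MD+11} are the same expression, as are those of \eqref{T-22} and \eqref{MD+22}, so the two diagonal block identities hold trivially. For the \((2,1)\)-block, multiplying \eqref{T-21} by \((\zeta-\alpha)^\inv\) simply cancels the leading factor and reproduces \eqref{MD+21}. For the \((1,2)\)-block, multiplying \eqref{T-12} by \((\zeta-\alpha)\) produces the factor that distinguishes \eqref{MD+12} from \eqref{T-12}; the remaining matrix product coincides with the one appearing in \eqref{MD+12}, as was already observed in the computation leading to \eqref{BRW12-2} in the proof of \rlem{RF1}.

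There is essentially no obstacle: once one writes down the two \taaa{2}{2}{block} representations given in \rlem{RF1}, the claim becomes a bookkeeping statement about how each of the four prefactors transforms under conjugation by \(\diag((\zeta-\alpha)\Iq,\Iq)\). The only care needed is to keep track of the \((\zeta-\alpha)\) factors—these are distributed differently between \(\Omega_{q,n,\alpha}(\zeta)\) and \(\tilde\Omega_{q,n,\alpha}(\zeta)\) in the defining formulas \eqref{MQN} and \eqref{MQN-1}, and the scaling identity in the statement of the lemma exactly compensates for this difference.
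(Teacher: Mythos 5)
Your proof is correct and takes essentially the same route as the paper's: the paper also verifies the identity by a direct computation, conjugating the factored definition \eqref{TD2-2} and showing via \eqref{MQN-2} that the middle factor \(\Omega_{q,n,\alpha}(\zeta)\) is carried onto \(\tilde{\Omega}_{q,n,\alpha}(\zeta)\) while the bordering factors commute with the diagonal scalings (equations \eqref{MQN-3}, \eqref{MQN-4}, \eqref{MQN-5}); your block-by-block comparison of \eqref{T-11}--\eqref{T-22} with \eqref{MD+11}--\eqref{MD+22} is the same bookkeeping read off after unpacking. One small caveat: as printed, \eqref{T-12} ends in \(H_{\at{n}}^\gi H_n^\gi v_{q,n}\) rather than \(H_{\at{n}}^\gi H_n v_{q,n}\), which appears to be a typo in the paper (both the definition \eqref{TD2-2} and the computation of \(\Phi_{n,\alpha}^{(1,2)}\) in the proof of \rlem{RF1} yield \(H_{\at{n}}^\gi H_n v_{q,n}\)); with the corrected formula your \((1,2)\)-block comparison goes through exactly as you describe.
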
 
\begin{proof}
 Let \(n \in \NO \) with \(2n+1\leq \kappa\).
 For each \(\zeta \in \C \setminus \set{\alpha}\), we have
\begin{align} 
 &\diag \rk*{(\zeta - \alpha)\Iq, \Iq }\cdot\Iu{2q} \cdot \diag\rk*{(\zeta - \alpha)^\inv \Iq, \Iq }
 =\Iu{2q},\label{MQN-3}\\
 &\diag \rk*{(\zeta - \alpha)\Iq, \Iq }\cdot (\Iu{2} \otimes v_{q,n} )^\ad \cdot \diag \rk{H_n, \Iu{(n+1)q}}\notag\\
 &=(\Iu{2} \otimes v_{q,n} )^\ad \cdot \diag \rk{H_n, \Iu{(n+1)q}} \cdot \diag\rk*{(\zeta - \alpha)\Iu{(n+1)q}, \Iu{(n+1)q} },\label{MQN-4}
\end{align} 
\begin{multline} \label{MQN-5}
 \diag \rk{H_n^\gi, H_{\at n}^\gi } \cdot \diag\rk*{R_{\Tqn}(\alpha), H_n} \cdot (\Iu{2} \otimes v_{q,n} )\cdot \diag\rk*{(\zeta - \alpha)^\inv \Iq, \Iq } \\
 = \diag\rk*{(\zeta - \alpha)^\inv \Iu{(n+1)q}, \Iu{(n+1)q} } \cdot \diag\rk{H_n^\gi, H_{\at n}^\gi }\cdot\diag\rk*{R_{\Tqn}(\alpha), H_n} \cdot (\Iu{2} \otimes v_{q,n} )
\end{multline} 
 and, in view of \eqref{MQN} and \eqref{MQN-1}, furthermore, 
\beql{MQN-2}\begin{split}
 &\diag \mat{(\zeta - \alpha)\Iu{(n+1)q}, \Iu{(n+1)q} } \cdot \Omega_{q,n,\alpha} (\zeta) \cdot \diag\rk{(\zeta - \alpha)^\inv \Iu{(n+1)q}, \Iu{(n+1)q} } \\
 &=
 \bMat
 (\zeta-\alpha)\Tqn^\ad R_{\Tqn^\ad } (\zeta)& (\zeta-\alpha) \ek{R_{\Tqn^\ad}(\alpha)}^\inv R_{\Tqn^\ad } (\zeta)\\
 -R_{\Tqn^\ad } (\zeta) & -(\zeta-\alpha) R_{\Tqn^\ad } (\zeta)
 \eMat\\
 &=
 \bMat
 (\zeta-\alpha)\Tqn^\ad & (\zeta-\alpha) \ek{R_{\Tqn^\ad}(\alpha)}^\inv \\
 - \Iu{(n+1)q}& -(\zeta-\alpha)\Iu{(n+1)q} 
 \eMat(\Iu{2} \otimes R_{\Tqn^\ad } (\zeta))
 = \tilde{\Omega}_{q,n,\alpha} (\zeta).
\end{split}\eeq
 Thus, \eqref{TD2-2}, \eqref{MQN-3}, \eqref{MQN-4}, \eqref{MQN-5}, \eqref{MQN-2}, and \eqref{TD2-1} imply
\[\begin{split}
 &\diag\rk*{(\zeta - \alpha)\Iq, \Iq } \cdot \Theta_{n,\alpha} (\zeta) \cdot \diag\rk*{(\zeta - \alpha)^\inv \Iq, \Iq }\\
 &=\diag\rk*{(\zeta - \alpha)\Iq, \Iq } \cdot\Iu{2q} \cdot \diag\rk*{(\zeta - \alpha)^\inv \Iq, \Iq }\\
 &\qquad+\diag\rk*{(\zeta - \alpha)\Iq, \Iq } \cdot (\Iu{2} \otimes v_{q,n} )^\ad \cdot \diag\rk{H_n, \Iu{(n+1)q}} \cdot \Omega_{q,n,\alpha} (\zeta)\\
 &\qquad\qquad\times \diag\rk{H_n^\gi, H_{\at n}^\gi } \cdot \diag\rk*{R_{\Tqn}(\alpha), H_n} \cdot (\Iu{2} \otimes v_{q,n} )\cdot \diag\rk*{(\zeta - \alpha)^\inv \Iq, \Iq }\\
 &=\Iu{2q} + (\Iu{2} \otimes v_{q,n} )^\ad \cdot \diag\rk{H_n, \Iu{(n+1)q}} \cdot \diag\rk*{(\zeta - \alpha)\Iu{(n+1)q}, \Iu{(n+1)q} } \cdot \Omega_{q,n,\alpha} (\zeta)\\
 &\times\diag\rk*{(\zeta - \alpha)^\inv \Iu{(n+1)q}, \Iu{(n+1)q} } \cdot \diag\rk{H_n^\gi, H_{\at n}^\gi }\cdot\diag\rk*{R_{\Tqn}(\alpha), H_n}\cdot(\Iu{2} \otimes v_{q,n} )\\
 &=\Iu{2q} + (\Iu{2} \otimes v_{q,n} )^\ad \cdot \diag\rk{H_n, \Iu{(n+1)q}}\cdot\tilde{\Omega}_{q,n,\alpha} (\zeta)\\
 &\qquad\times\diag\rk{H_n^\gi, H_{\at n}^\gi } \cdot \diag\rk*{R_{\Tqn}(\alpha), H_n} \cdot (\Iu{2} \otimes v_{q,n} )
 = \tilde{\Theta}_{n,\alpha} (\zeta).\qedhere
\end{split}\]
\end{proof}

 Now we obtain a generalization of a result which is, for the special case \(\alpha = 0\), due to Bolotnikov~\cite[\clem{4.2}]{MR1362524}.

\begin{lem} \label{BL4.14-16}
 Let \(\alpha \in \R\), let \(\kappa \in \Ninf \), and let \(\seqska  \in \Kggeqka \).
 For each \(n \in \NO \) with \(2n+1\leq \kappa\) and every choice of \(z\in \C\) and \(w\in \C\), then
\begin{multline*}
 \Jimq -\Theta_{n,\alpha} (z) \Jimq\Theta_{n,\alpha}^\ad (w)
 = -\iu (z-\ko{w}) (\Iu{2} \otimes v_{q,n} )^\ad \mat{\Tqn H_n, -\Iu{(n+1)q}}^\ad R_{\Tqn ^\ad } (z) H_n^\gi\\
 \times\ek*{R_{\Tqn^\ad } (w)}^\ad \mat{\Tqn H_n, -\Iu{(n+1)q}} (\Iu{2} \otimes v_{q,n} ) 
\end{multline*}
 and 
\begin{multline*}
 \Jimq -\tilde{\Theta}_{n,\alpha} (z) \Jimq \tilde{\Theta}_{n,\alpha}^\ad (w)
 = -\iu (z-\ko{w}) (\Iu{2} \otimes v_{q,n} )^\ad \mat*{\ek*{R_{\Tqn}(\alpha)}^\inv H_n, -\Iu{(n+1)q}}^\ad R_{\Tqn ^\ad } (z) H_{\at{n}}^\gi\\
 \times\ek*{R_{\Tqn^\ad } (w)}^\ad \mat*{\ek*{R_{\Tqn}(\alpha)}^\inv H_n, -\Iu{(n+1)q}} (\Iu{2} \otimes v_{q,n} ).
\end{multline*}
\end{lem}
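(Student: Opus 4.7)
The plan is to derive both identities by combining the factorizations from \rlem{RF1} with the $\Jimq$-unitarity of the matrices $B_{n,\alpha}$ and $\tilde B_{n,\alpha}$ noted in \rrem{ML26-1}, and then invoking the core identities already established in \rlemss{ML28-1}{ML28-2}. In this way essentially all the real work has been absorbed into the preceding lemmas, and the present lemma is a clean factorization/reduction step.

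First I would fix $n\in\NO$ with $2n+1\leq\kappa$ and arbitrary $z,w\in\C$. Applying \eqref{TUB-1} of \rlem{RF1}, I write
\[
 \Theta_{n,\alpha}(z)\Jimq\Theta_{n,\alpha}^\ad(w)
 = U_{n,\alpha}(z) B_{n,\alpha} \Jimq B_{n,\alpha}^\ad U_{n,\alpha}^\ad(w).
\]
By \rrem{ML26-1}, the matrix $B_{n,\alpha}$ is $\Jimq$\nobreakdash-unitary, so $B_{n,\alpha}\Jimq B_{n,\alpha}^\ad = \Jimq$. Consequently
\[
 \Theta_{n,\alpha}(z)\Jimq\Theta_{n,\alpha}^\ad(w)
 = U_{n,\alpha}(z)\Jimq U_{n,\alpha}^\ad(w),
\]
and therefore
\[
 \Jimq - \Theta_{n,\alpha}(z)\Jimq\Theta_{n,\alpha}^\ad(w)
 = \Jimq - U_{n,\alpha}(z)\Jimq U_{n,\alpha}^\ad(w).
\]
The first asserted identity now follows immediately from \rlem{ML28-1}.

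For the second identity I would argue analogously: using the second equation in \eqref{TUB-1} together with the $\Jimq$\nobreakdash-unitarity of $\tilde B_{n,\alpha}$ (again by \rrem{ML26-1}) yields
\[
 \tilde\Theta_{n,\alpha}(z)\Jimq\tilde\Theta_{n,\alpha}^\ad(w)
 = \tilde U_{n,\alpha}(z)\Jimq\tilde U_{n,\alpha}^\ad(w),
\]
and \rlem{ML28-2} then supplies the claimed formula.

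I do not expect any serious obstacle: the only things to check are that the factorization from \rlem{RF1} applies under the hypotheses (which it does, since $\seqska\in\Kggeqka$ and $2n+1\leq\kappa$) and that the $\Jimq$\nobreakdash-unitarity statements of \rrem{ML26-1} are used in the form $B_{n,\alpha}\Jimq B_{n,\alpha}^\ad = \Jimq$ and $\tilde B_{n,\alpha}\Jimq \tilde B_{n,\alpha}^\ad = \Jimq$, which is exactly what is stated there. The main technical content is really in the preceding \rlemss{ML28-1}{ML28-2}; the present lemma is essentially a bookkeeping consequence.
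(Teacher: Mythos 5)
Your proposal is correct and follows exactly the paper's own argument: factor \(\Theta_{n,\alpha}=U_{n,\alpha}B_{n,\alpha}\) and \(\tilde{\Theta}_{n,\alpha}=\tilde{U}_{n,\alpha}\tilde{B}_{n,\alpha}\) via \rlem{RF1}, cancel the \(\Jimq\)\nobreakdash-unitary factors \(B_{n,\alpha}\) and \(\tilde{B}_{n,\alpha}\) using \rrem{ML26-1}, and invoke \rlemss{ML28-1}{ML28-2}. Nothing further is needed.
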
 
\begin{proof}
 Let \(n\in \NO \) be such that \(2n+1\leq \kappa\) and let \(z, w \in \C\).
 Using \rlem{RF1}, \rrem{ML26-1}, and \rlem{ML28-1}, we get 
\[\begin{split}
 \Jimq -\Theta_{n,\alpha} (z)\Jimq\Theta_{n,\alpha}^\ad (w)
 &= \Jimq -U_{n,\alpha} (z) B_{n,\alpha} \Jimq \ek*{U_{n,\alpha} (w)B_{n,\alpha}}^\ad \\
 &= \Jimq -U_{n,\alpha} (z) B_{n,\alpha} \Jimq B_{n,\alpha} ^\ad U_{n,\alpha}^\ad (w)
 = \Jimq -U_{n,\alpha} (z) \Jimq ^\ad U_{n,\alpha}^\ad (w) \\ 
 &=-\iu (z-\ko{w}) (\Iu{2} \otimes v_{q,n} )^\ad \mat{\Tqn H_n, -\Iu{(n+1)q}}^\ad R_{\Tqn ^\ad } (z) H_n^\gi\\
 &\qquad\times\ek*{R_{\Tqn^\ad } (w)}^\ad \mat{\Tqn H_n, -\Iu{(n+1)q}} (\Iu{2} \otimes v_{q,n} ). 
\end{split}\]
 Analogously, from \rlem{RF1}, \rrem{ML26-1}, and \rlem{ML28-2} we conclude
\[\begin{split}
 &\Jimq -\tilde{\Theta}_{n,\alpha} (z) \Jimq \tilde{\Theta}_{n,\alpha}^\ad (w)
  =\Jimq -\tilde{U}_{n,\alpha}(z) \tilde{B}_{n,\alpha} \Jimq \ek*{\tilde{U}_{n,\alpha}(w) \tilde{B}_{n,\alpha}}^\ad \\
 &= \Jimq -\tilde{U}_{n,\alpha}(z) \tilde{B}_{n,\alpha} \Jimq \tilde{B}_{n,\alpha}^\ad \tilde{U}_{n,\alpha}^\ad (w)
 =\Jimq -\tilde{U}_{n,\alpha}(z) \Jimq \tilde{U}_{n,\alpha}^\ad (w) \\
 &= -\iu (z-\ko{w}) (\Iu{2} \otimes v_{q,n} )^\ad \mat*{\ek*{R_{\Tqn}(\alpha)}^\inv H_n, -\Iu{(n+1)q}}^\ad \\
 &\qquad\times R_{\Tqn ^\ad } (z) H_{\at{n}}^\gi \ek*{R_{\Tqn^\ad } (w)}^\ad \mat*{\ek*{R_{\Tqn}(\alpha)}^\inv H_n, -\Iu{(n+1)q}} (\Iu{2} \otimes v_{q,n} ).\qedhere
\end{split}\]
\end{proof}

\begin{lem} \label{816-1}
 Let \(\alpha \in \R\), let \(\kappa \in \Ninf \), and let \(\seqska  \in \Kggeqka \).
 For each \(n \in \NO \) with \(2n+1\leq \kappa\) and each \(z\in \C\setminus \R\), then
\begin{align*} 
 \frac{1}{2 \Im z}\ek*{\Jimq - \Theta_{n,\alpha} (z) \Jimq \Theta_{n,\alpha} ^\ad (z)}& \lgeq\NM&
&\text{and}&
 \frac{1}{2 \Im z}\ek*{\Jimq - \tilde{\Theta}_{n,\alpha} (z) \Jimq \tilde{\Theta}_{n,\alpha} ^\ad (z) }&\lgeq\NM.
\end{align*}
\end{lem}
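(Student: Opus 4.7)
The plan is to apply \rlem{BL4.14-16} with \(w=z\) and simplify. Setting \(w=z\) in the first identity of \rlem{BL4.14-16} yields
\[
 \Jimq -\Theta_{n,\alpha}(z)\Jimq\Theta_{n,\alpha}^\ad(z)
 = -\iu(z-\ko{z})\,A_n^\ad(z)\,H_n^\gi\,A_n(z),
\]
where
\[
 A_n(z)\defeq \ek*{R_{\Tqn^\ad}(z)}^\ad\mat{\Tqn H_n,-\Iu{(n+1)q}}(\Iu{2}\otimes v_{q,n}).
\]
Since \(-\iu(z-\ko z)=2\Im z\), dividing by \(2\Im z\) produces exactly \(A_n^\ad(z)H_n^\gi A_n(z)\). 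By \rrem{310-1} (or \rlem{311-1}), the matrix \(H_n^\gi\) is \tnnH{}, so \(A_n^\ad(z)H_n^\gi A_n(z)\lgeq\NM\), which establishes the first inequality.

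For the second inequality, I would proceed identically using the second identity in \rlem{BL4.14-16}. With \(w=z\) it gives
\[
 \Jimq -\tilde\Theta_{n,\alpha}(z)\Jimq\tilde\Theta_{n,\alpha}^\ad(z)
 = -\iu(z-\ko z)\,\tilde A_n^\ad(z)\,H_{\at n}^\gi\,\tilde A_n(z),
\]
where
\[
 \tilde A_n(z)\defeq\ek*{R_{\Tqn^\ad}(z)}^\ad\mat*{\ek*{R_{\Tqn}(\alpha)}^\inv H_n,-\Iu{(n+1)q}}(\Iu{2}\otimes v_{q,n}).
\]
Again \(-\iu(z-\ko z)=2\Im z\), and by \rlem{311-1} the matrix \(H_{\at n}^\gi\) is \tnnH{}. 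Dividing by \(2\Im z\) thus yields \(\tilde A_n^\ad(z)H_{\at n}^\gi \tilde A_n(z)\lgeq\NM\), as required.

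There is essentially no obstacle: all the genuine work was done in \rlem{BL4.14-16}, which in turn rests on \rlemss{ML28-1}{ML28-2} (the \(\Jimq\)-form identities for \(U_{n,\alpha}\) and \(\tilde U_{n,\alpha}\)) together with the \(\Jimq\)-unitarity of the constant factors \(B_{n,\alpha}\) and \(\tilde B_{n,\alpha}\) recorded in \rrem{ML26-1}. The only thing to check is the sign convention \(-\iu(z-\ko z)=2\Im z\) and the non-negativity of the canonical inverses \(H_n^\gi\) and \(H_{\at n}^\gi\), both of which are immediate from the preparatory material. Accordingly, the proof should be a short two-line argument that cites \rlem{BL4.14-16}, specialises to \(w=z\), and invokes \rlem{311-1} to conclude.
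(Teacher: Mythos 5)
Your proof is correct and follows exactly the paper's route: the paper's own proof also consists of citing \rlem{311-1} for the \tnn{ess} of \(H_n^\gi\) and \(H_{\at n}^\gi\) and then applying \rlem{BL4.14-16} with \(w=z\). The sign check \(-\iu(z-\ko{z})=2\Im z\) and the resulting form \(A_n^\ad(z)H_n^\gi A_n(z)\) are as you describe.
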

\begin{proof} 
 Let \(n\in \NO \) be such that \(2n +1\leq \kappa\).
 From \rlem{311-1} we get \(\set{ H_n^\gi,  H_{\at n}^\gi }\subseteq  \Cggo{(n+1)q}\).
 Applying  \rlem{BL4.14-16} completes the proof.
\end{proof}

\begin{lem} \label{BLR4.3-1}
 Let \(\alpha \in \R\), let \(\kappa \in \Ninf \), and let \(\seqska  \in \Kggeqka \).
 For each \(n \in \NO \) with \(2n+1\leq \kappa\), then the following statements hold true:
\begin{enui} 
 \item\label{BLR4.3-1.a} For each \(x\in \R\),
\begin{align*} 
 \Jimq -\Theta_{n,\alpha}(x) \Jimq\Theta_{n,\alpha}^\ad (x)&=\NM&
&\text{and}&
 \Jimq -\tilde{\Theta}_{n,\alpha} (x) \Jimq\tilde{\Theta}_{n,\alpha}^\ad (x)&=\NM. 
\end{align*} 
 \item\label{BLR4.3-1.b} For all \(z\in \C\), the matrices \( \Theta_{n,\alpha}(z)\) and \( \tilde{\Theta}_{n,\alpha}(z) \) are both non-singular and fulfill
\begin{align} \label{BLR4.3-B}
 \Theta_{n,\alpha}^\inv (z)&= \Jimq\Theta_{n,\alpha}^\ad (\ko {z}) \Jimq &
&\text{and}&
 \tilde{\Theta}_{n,\alpha}^\inv (z)&= \Jimq\tilde{\Theta}_{n,\alpha}^\ad (\ko {z}) \Jimq.
\end{align} 
 \item\label{BLR4.3-1.c} For every choice of \(z\) and \(w\) in \(\C\),
\begin{align*} 
 \Jimq -\Theta_{n,\alpha}^\invad (z) \Jimq \Theta_{n,\alpha}^\inv (w) 
 &=\Jimq\ek*{\Jimq - \Theta_{n,\alpha}(\ko{z}) \Jimq \Theta_{n,\alpha}^\ad (\ko{w})}\Jimq
\intertext{and} 
 \Jimq -\tilde{\Theta}_{n,\alpha}^\invad (z) \Jimq \tilde{\Theta}_{n,\alpha}^\inv (w) 
 &=\Jimq\ek*{\Jimq - \tilde{\Theta}_{n,\alpha}(\ko{z}) \Jimq \tilde{\Theta}_{n,\alpha}^\ad (\ko{w})}\Jimq.
\end{align*} 
\end{enui} 
\end{lem}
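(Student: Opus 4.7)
The statement has three parts, all of which will follow by short algebraic manipulations from \rlem{BL4.14-16}, which provides the basic $\Jimq$\nobreakdash-form identity for $\Theta_{n,\alpha}$ and $\tilde{\Theta}_{n,\alpha}$.

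For \rpart{BLR4.3-1.a}, the plan is to apply \rlem{BL4.14-16} with $z = w = x$ for an arbitrary real $x$. Then $z - \ko{w} = x - x = 0$, so the right-hand sides of both identities vanish and the asserted equalities follow immediately. No case analysis is needed, since the factor $z - \ko{w}$ kills everything.

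For \rpart{BLR4.3-1.b}, I would apply \rlem{BL4.14-16} with $w = \ko{z}$ for arbitrary $z \in \C$. Again $z - \ko{w} = z - z = 0$, which yields $\Theta_{n,\alpha}(z) \Jimq \Theta_{n,\alpha}^\ad(\ko{z}) = \Jimq$ and $\tilde{\Theta}_{n,\alpha}(z) \Jimq \tilde{\Theta}_{n,\alpha}^\ad(\ko{z}) = \Jimq$. Multiplying both sides on the right by $\Jimq$ and using $\Jimq^2 = \Iu{2q}$ gives
\[
 \Theta_{n,\alpha}(z) \cdot \ek*{\Jimq \Theta_{n,\alpha}^\ad(\ko{z}) \Jimq}
 = \Iu{2q}
\]
and analogously for $\tilde{\Theta}_{n,\alpha}$. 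Since $\Theta_{n,\alpha}(z)$ is a square matrix, having a right inverse makes it non-singular, and the formulas in \eqref{BLR4.3-B} follow.

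For \rpart{BLR4.3-1.c}, the plan is to substitute the explicit inverse formulas from \rpart{BLR4.3-1.b}. Taking the adjoint of the first equation in \eqref{BLR4.3-B} and using $\Jimq^\ad = \Jimq$ gives $\Theta_{n,\alpha}^{-\ad}(z) = \Jimq \Theta_{n,\alpha}(\ko{z}) \Jimq$. Then
\[
 \Theta_{n,\alpha}^{-\ad}(z) \Jimq \Theta_{n,\alpha}^\inv(w)
 = \Jimq \Theta_{n,\alpha}(\ko{z}) \Jimq \cdot \Jimq \cdot \Jimq \Theta_{n,\alpha}^\ad(\ko{w}) \Jimq
 = \Jimq \Theta_{n,\alpha}(\ko{z}) \Jimq \Theta_{n,\alpha}^\ad(\ko{w}) \Jimq,
\]
where I repeatedly use $\Jimq^2 = \Iu{2q}$. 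Writing $\Jimq = \Jimq \cdot \Iu{2q} \cdot \Jimq = \Jimq \Jimq \Jimq$ on the left of the desired identity, the claimed equation follows by factoring out $\Jimq$ on both sides. The analogous computation for $\tilde{\Theta}_{n,\alpha}$ is identical. There is no real obstacle here; the entire argument is a direct consequence of \rlem{BL4.14-16} combined with the signature-matrix properties $\Jimq^\ad = \Jimq$ and $\Jimq^2 = \Iu{2q}$.
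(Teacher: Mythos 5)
Your proposal is correct, and parts~\eqref{BLR4.3-1.a} and~\eqref{BLR4.3-1.c} proceed exactly as in the paper (set \(z=w=x\) in \rlem{BL4.14-16}, resp.\ substitute the inverse formulas and use \(\Jimq^\ad=\Jimq\), \(\Jimq^2=\Iu{2q}\)). The one genuine difference is in part~\eqref{BLR4.3-1.b}: you substitute \(w=\ko{z}\) directly into \rlem{BL4.14-16}, so that the factor \(z-\ko{w}=z-\ko{\ko{z}}=0\) kills the right-hand side and yields \(\Theta_{n,\alpha}(z)\Jimq\Theta_{n,\alpha}^\ad(\ko{z})=\Jimq\) for every \(z\in\C\) in one line. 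The paper instead only uses part~\eqref{BLR4.3-1.a} (the real-axis case), forms the matrix polynomial \(F\defeq\Jimq-\Theta_{n,\alpha}\Jimq\Theta_{n,\alpha}^{\vee}\) with \(\Theta_{n,\alpha}^{\vee}(\zeta)\defeq\Theta_{n,\alpha}^\ad(\ko{\zeta})\), observes that \(F\) vanishes on \(\R\), and invokes the identity theorem for holomorphic functions to conclude \(F\equiv\NM\). Since \rlem{BL4.14-16} is already stated for all complex \(z\) and \(w\), your direct substitution is legitimate and avoids the appeal to complex analysis entirely; the paper's detour buys nothing extra here and is presumably just a matter of building on the already-established part~\eqref{BLR4.3-1.a}. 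Both routes rest on the same key lemma and reach the same conclusion, so there is no gap.
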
 
\begin{proof}
 \eqref{BLR4.3-1.a} Use \rlem{BL4.14-16}.
 
 \eqref{BLR4.3-1.b} We know from \rlem{RF1} that \(\Theta_{n,\alpha}\) and \(\tilde{\Theta}_{n,\alpha}\) are matrix polynomials.
 Consequently, \(\Theta_{n,\alpha}^{\vee}\colon\C\to \Coo{2q}{2q}\) and \(\tilde{\Theta}_{n,\alpha}^{\vee}\colon\C\to\Coo{2q}{2q}\) defined by \(\Theta_{n,\alpha}^{\vee}(\zeta)\defeq \Theta_{n,\alpha}^\ad (\ko{\zeta})\) and \(\tilde{\Theta}_{n,\alpha}^{\vee}(\zeta)\defeq \tilde{\Theta}_{n,\alpha}^\ad (\ko{\zeta})\) are matrix polynomials as well.
 Thus, \(F\defeq \Jimq- \Theta_{n,\alpha} \Jimq \Theta_{n,\alpha}^{\vee} \) and \(\tilde{F}\defeq \Jimq- \tilde{\Theta}_{n,\alpha} \Jimq \tilde{\Theta}_{n,\alpha}^{\vee}\) are holomorphic in \(\C\).
 For each \(x \in \R\), we see from \rpart{BLR4.3-1.a} that 
\[
 F(x)
 = \Jimq- \Theta_{n,\alpha}(x) \Jimq \Theta_{n,\alpha}^{\vee}(x)
 =\Jimq- \Theta_{n,\alpha}(x) \Jimq \Theta_{n,\alpha}^\ad (\ko{x})
 =\NM
\]
 and, analogously, that \( \tilde{F}(x) =\NM\).
 The identity theorem for holomorphic functions implies 
\[
 \Theta_{n,\alpha}(z) \Jimq \Theta_{n,\alpha}^\ad (\ko{z})
 =\Theta_{n,\alpha}(z) \Jimq \Theta_{n,\alpha}^{\vee}(z)
 =\Jimq-F(z)
 =\Jimq
\]
 and, analogously, \( \tilde{\Theta}_{n,\alpha}(z) \Jimq \tilde{\Theta}_{n,\alpha}^\ad (\ko{z}) =\Jimq\) for all \(z\in \C\).
 Because of \(\Jimq^2 = \Iu{2q}\), we get \(\Theta_{n,\alpha}(z) \Jimq \Theta_{n,\alpha}^\ad (\ko{z}) \Jimq =\Jimq^2 =\Iu{2q}\) and \(\tilde{\Theta}_{n,\alpha}(z) \Jimq \tilde{\Theta}_{n,\alpha}^\ad (\ko{z})\Jimq =\Jimq^2 =\Iu{2q}\) for each \(z\in \C\).
 For all \(z\in\C\), then \(\det \Theta_{n,\alpha}(z) \neq 0\) and \(\det \tilde \Theta_{n,\alpha}(z) \neq 0\) and both equations in \eqref{BLR4.3-B} follow.

 \eqref{BLR4.3-1.c} Use \rpart{BLR4.3-1.b} and \(\Jimq^2 = \Iu{2q} \).
\end{proof}

\begin{lem} \label{BL4.17}
 Let \(\alpha \in \R\), let \(\kappa \in \Ninf \), and let \(\seqska  \in \Kggeqka \).
 For each \(n \in \NO \) with \(2n+1\leq \kappa\) and every choice of \(z\) and \(w\) in \(\C\), then
\begin{multline*}
 \Jimq -\Theta_{n,\alpha}^\invad (z)\Jimq\Theta_{n,\alpha}^\inv (w)
 = -\iu (\ko{z}-w) (\Iu{2} \otimes v_{q,n} )^\ad  \mat{ \Iu{(n+1)q},\Tqn H_n }^\ad R_{\Tqn ^\ad } (\ko{z}) H_n^\gi\\
 \times R_{\Tqn} (w)  \mat{ \Iu{(n+1)q}, \Tqn H_n}  (\Iu{2} \otimes v_{q,n} ) 
\end{multline*}
 and 
\begin{multline*}
 \Jimq -\tilde{\Theta}_{n,\alpha}^\invad (z) \Jimq \tilde{\Theta}_{n,\alpha}^\inv (w)
 = -\iu (\ko{z}-w) (\Iu{2} \otimes v_{q,n} )^\ad  \mat{ \Iu{(n+1)q}, \ek{R_{\Tqn}(\alpha)}^\inv H_n }^\ad R_{\Tqn ^\ad } (\ko{z}) H_{\at{n}}^\gi\\
 \times R_{\Tqn} (w) \mat{ \Iu{(n+1)q}, \ek{R_{\Tqn}(\alpha)}^\inv H_n } (\Iu{2} \otimes v_{q,n} ).
\end{multline*}
\end{lem}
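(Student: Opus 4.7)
The plan is to derive both identities by combining \rlemp{BLR4.3-1}{BLR4.3-1.c} with \rlem{BL4.14-16}, using \rrem{JN} to commute the outer \(\Jimq\)-factors past the block structures.

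First I would apply \rlemp{BLR4.3-1}{BLR4.3-1.c}, which already provides
\[
 \Jimq - \Theta_{n,\alpha}^\invad(z)\Jimq\Theta_{n,\alpha}^\inv(w)
 = \Jimq\ek*{\Jimq - \Theta_{n,\alpha}(\ko z)\Jimq\Theta_{n,\alpha}^\ad(\ko w)}\Jimq,
\]
and the analogous identity for \(\tilde{\Theta}_{n,\alpha}\). Then I would substitute \(z\mapsto\ko z\) and \(w\mapsto\ko w\) into the first formula of \rlem{BL4.14-16}, noting that \(\ek*{R_{\Tqn^\ad}(\ko w)}^\ad = R_{\Tqn}(w)\) by \rrem{21112N}. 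This yields the right-hand side
\[
 -\iu(\ko z - w)(\Iu{2}\otimes v_{q,n})^\ad\mat{\Tqn H_n,-\Iu{(n+1)q}}^\ad R_{\Tqn^\ad}(\ko z) H_n^\gi R_{\Tqn}(w)\mat{\Tqn H_n,-\Iu{(n+1)q}}(\Iu{2}\otimes v_{q,n}),
\]
which now needs to be sandwiched between two copies of \(\Jimq\).

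The essential observation is that \rrem{JN} with \(A\defeq \Tqn H_n\) gives
\[
 \Jimq(\Iu{2}\otimes v_{q,n})^\ad\mat{\Tqn H_n,-\Iu{(n+1)q}}^\ad = \iu(\Iu{2}\otimes v_{q,n})^\ad\mat{\Iu{(n+1)q},\Tqn H_n}^\ad
\]
together with its adjoint form
\[
 \mat{\Tqn H_n,-\Iu{(n+1)q}}(\Iu{2}\otimes v_{q,n})\Jimq = -\iu\mat{\Iu{(n+1)q},\Tqn H_n}(\Iu{2}\otimes v_{q,n}).
\]
The product of the two scalar factors is \(\iu\cdot(-\iu)=1\), so the conjugation by \(\Jimq\) simply replaces each block row \(\mat{\Tqn H_n,-\Iu{(n+1)q}}\) by \(\mat{\Iu{(n+1)q},\Tqn H_n}\), producing precisely the asserted identity for \(\Theta_{n,\alpha}\).

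For the second identity, the argument is strictly analogous: I would combine the \(\tilde{\Theta}_{n,\alpha}\) version of \rlemp{BLR4.3-1}{BLR4.3-1.c} with the second formula of \rlem{BL4.14-16}, and apply \rrem{JN} now with \(A\defeq \ek*{R_{\Tqn}(\alpha)}^\inv H_n\) to replace \(\mat{\ek*{R_{\Tqn}(\alpha)}^\inv H_n,-\Iu{(n+1)q}}\) by \(\mat{\Iu{(n+1)q},\ek*{R_{\Tqn}(\alpha)}^\inv H_n}\) on both sides. No step is truly a technical obstacle here; the only thing to verify carefully is the bookkeeping of the factors \(\pm\iu\) from \rrem{JN} together with the sign change \(-\iu(z-\ko w)\mapsto-\iu(\ko z-w)\) coming from the substitution \(z\mapsto\ko z\), \(w\mapsto\ko w\) in \rlem{BL4.14-16}.
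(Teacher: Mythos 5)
Your proof is correct and follows essentially the same route as the paper: it combines \rlemp{BLR4.3-1}{BLR4.3-1.c} with \rlem{BL4.14-16} evaluated at \(\ko z\), \(\ko w\), and then absorbs the two outer \(\Jimq\)-factors via the identities of \rrem{JN}, with the scalar factors \(\iu\cdot(-\iu)=1\) cancelling exactly as you describe.
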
 
\begin{proof}
 Let \(n\in \NO \) be such that \(2n+1\leq \kappa\) and let \(z, w \in \C\).
 Using \rlemp{BLR4.3-1}{BLR4.3-1.b}, \rlemp{BLR4.3-1}{BLR4.3-1.c}, \rlem{BL4.14-16}, and \rrem{JN}, we obtain
\[\begin{split}
 \Jimq - \Theta_{n,\alpha}^\invad (z) \Jimq\Theta_{n,\alpha}^\inv (w) 
 &= \Jimq\ek*{\Jimq - \Theta_{n,\alpha}(\ko{z}) \Jimq \Theta_{n,\alpha}^\ad (\ko{w})}\Jimq \\
 &=\Jimq\biggl\{-\iu (\ko{z}-w) (\Iu{2} \otimes v_{q,n} )^\ad\mat{\Tqn H_n, -\Iu{(n+1)q}}^\ad R_{\Tqn ^\ad } (\ko{z}) H_n^\gi\\
 &\qquad\times\ek*{R_{\Tqn^\ad } (\ko{w})}^\ad\mat{\Tqn H_n, -\Iu{(n+1)q}}(\Iu{2} \otimes v_{q,n} ) \biggr\}\Jimq \\
 &=-\iu (\ko{z}-w) \rk*{ \iu (\Iu{2} \otimes v_{q,n} )^\ad  \mat{ \Iu{(n+1)q},\Tqn H_n}^\ad } R_{\Tqn ^\ad } (\ko{z}) H_n^\gi\\
 &\qquad\times R_{\Tqn} (w) \rk*{(-\iu)\cdot \mat{ \Iu{(n+1)q}, \Tqn H_n} (\Iu{2} \otimes v_{q,n} ) } \\
 &=-\iu (\ko{z}-w) (\Iu{2} \otimes v_{q,n} )^\ad  \mat{ \Iu{(n+1)q},\Tqn H_n}^\ad R_{\Tqn ^\ad } (\ko{z}) H_n^\gi\\
 &\qquad\times R_{\Tqn} (w) \mat{ \Iu{(n+1)q}, \Tqn H_n} (\Iu{2} \otimes v_{q,n} ) 
\end{split}\]
and 
\[\begin{split}
 &\Jimq -\tilde{\Theta}_{n,\alpha}^\invad (z) \Jimq \tilde{\Theta}_{n,\alpha}^\inv (w) 
 =\Jimq\ek*{\Jimq - \tilde{\Theta}_{n,\alpha}(\ko{z}) \Jimq \tilde{\Theta}_{n,\alpha}^\ad (\ko{w})}\Jimq \\
 &= \Jimq\biggl\{-\iu (\ko{z}-w) (\Iu{2} \otimes v_{q,n} )^\ad\mat*{\ek*{R_{\Tqn}(\alpha)}^\inv H_n, -\Iu{(n+1)q}}^\ad R_{\Tqn ^\ad } (\ko{z}) H_{\at{n}}^\gi\\
 &\qquad\times\ek*{R_{\Tqn^\ad } (\ko{w})}^\ad\mat*{\ek*{R_{\Tqn}(\alpha)}^\inv H_n, -\Iu{(n+1)q}} (\Iu{2} \otimes v_{q,n} )\biggr\} \Jimq \\
 &= -\iu (\ko{z}-w) \rk*{ \iu(\Iu{2} \otimes v_{q,n} )^\ad  \mat*{ \Iu{(n+1)q}, \ek*{R_{\Tqn}(\alpha)}^\inv H_n}^\ad }  R_{\Tqn ^\ad } (\ko{z}) H_{\at{n}}^\gi\\
 &\qquad\times R_{\Tqn} (w) \rk*{(-\iu)\cdot \mat*{ \Iu{(n+1)q}, \ek*{R_{\Tqn}(\alpha)}^\inv H_n} (\Iu{2} \otimes v_{q,n} )}\\
 &= -\iu (\ko{z}-w) (\Iu{2} \otimes v_{q,n} )^\ad  \mat*{ \Iu{(n+1)q}, \ek*{R_{\Tqn}(\alpha)}^\inv H_n}^\ad  R_{\Tqn ^\ad } (\ko{z}) H_{\at{n}}^\gi\\
 &\qquad\times R_{\Tqn} (w)  \mat*{ \Iu{(n+1)q}, \ek*{R_{\Tqn}(\alpha)}^\inv H_n} (\Iu{2} \otimes v_{q,n} ).\qedhere
\end{split}\]
\end{proof}

\begin{lem} \label{ML28-12}
 Let \(\alpha \in \R\), let \(\kappa \in \NOinf \), and let \(\seqska \in \Kggeqka \).
 Let \(U_{n,\alpha}\colon\C \to \Coo{2q}{2q}\) be defined by \eqref{U1}.
 For  each \(n\in\NO \) with \(2n\le \kappa\) and all \(z, w\in\C\), then
\begin{multline} \label{LIG}
 \Jimq - U_{n,\alpha}^\ad (w) \Jimq U_{n,\alpha} (z)\\
 = \iu (\ko{w}-z) (\Iu{2} \otimes v_{q,n} )^\ad  \mat{ \Iu{(n+1)q}, \Tqn H_n}^\ad\ek*{R_{\Tqn} (\alpha)}^\ad  H_n^\gi  \ek*{R_{\Tqn ^\ad } (w)}^\ad\ek*{R_{\Tqn}(\alpha)}^\inv H_n \\
  \times \ek*{R_{\Tqn^\ad}(\alpha)}^\inv R_{\Tqn ^\ad } (z) H_n^\gi R_{\Tqn} (\alpha) \mat{ \Iu{(n+1)q}, \Tqn H_n} (\Iu{2} \otimes v_{q,n} ).
\end{multline}
\end{lem}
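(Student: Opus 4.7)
The plan is to follow the same scheme as the proof of Lemma \ref{ML28-1}, but adapted to the ``dual'' sandwich $U_{n,\alpha}^\ad(w)\Jimq U_{n,\alpha}(z)$. Introduce the shorthand $P \defeq \mat{\Tqn H_n, -\Iu{(n+1)q}}(\Iu{2}\otimes v_{q,n})$ and $Q \defeq \mat{\Iu{(n+1)q}, \Tqn H_n}(\Iu{2}\otimes v_{q,n})$, so that \eqref{U1} reads $U_{n,\alpha}(\zeta) = \Iu{2q} + (\zeta - \alpha) P^\ad R_{\Tqn^\ad}(\zeta) H_n^\gi R_{\Tqn}(\alpha) Q$. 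Expanding the product $U_{n,\alpha}^\ad(w)\Jimq U_{n,\alpha}(z)$ and subtracting $\Jimq$ yields three terms linear in $(z-\alpha)$, $(\ko{w}-\alpha)$, and bilinear in both.

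The next step is to apply the identities of \rrem{JN}. Under the above abbreviations, \eqref{SKP2} and \eqref{SKP1} give $\Jimq P^\ad = \iu Q^\ad$ and $P\Jimq = -\iu Q$, while \eqref{SKP3-2} yields $P\Jimq P^\ad = \iu(\Tqn H_n v_{q,n} v_{q,n}^\ad - v_{q,n} v_{q,n}^\ad H_n \Tqn^\ad)$. These identities collapse the outer $\Jimq$-factors and convert the three $P$'s into $Q$'s, so that each of the three remaining terms acquires the outer structure $Q^\ad \cdots Q$ demanded by \eqref{LIG}.

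The key analytic step is to absorb the scalar factors $(z-\alpha)$ and $(\ko w-\alpha)$ into the resolvents via the elementary polynomial identities
\[
 (z-\alpha)\Tqn^\ad R_{\Tqn^\ad}(z) = \ek*{R_{\Tqn^\ad}(\alpha)}^\inv R_{\Tqn^\ad}(z) - \Iu{(n+1)q}
\]
and its adjoint $(\ko w - \alpha)\ek{R_{\Tqn^\ad}(w)}^\ad \Tqn = \ek{R_{\Tqn^\ad}(w)}^\ad\ek{R_{\Tqn}(\alpha)}^\inv - \Iu{(n+1)q}$. Combining these with the Lyapunov identity \eqref{B204} evaluated at $z=\alpha$, namely
\[
 \Tqn H_n v_{q,n}v_{q,n}^\ad - v_{q,n}v_{q,n}^\ad H_n \Tqn^\ad = \Tqn H_n \ek*{R_{\Tqn}(\alpha)}^\invad - \ek*{R_{\Tqn}(\alpha)}^\inv H_n \Tqn^\ad,
\]
breaks up the bilinear term into pieces that can be matched with the linear terms.

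Finally, the generalized-inverse absorption identity $\ek{R_{\Tqn^\ad}(\alpha)}^\inv R_{\Tqn^\ad}(z) H_n^\gi = H_n^\gi H_n \ek{R_{\Tqn^\ad}(\alpha)}^\inv R_{\Tqn^\ad}(z) H_n^\gi$, which follows from \rlem{bem-ML8} (after writing $R_{\Tqn^\ad}(z) = \ek{R_{\Tqn}(\ko z)}^\ad$), together with $H_n^\gi H_n H_n^\gi = H_n^\gi$ from \rrem{310-1}, forces the cancellation of all cross-terms that do not carry the factor $(\ko w - z)$; what survives is exactly $\iu(\ko w - z)$ times the matrix sandwich on the right-hand side of \eqref{LIG}. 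The only genuine obstacle is the combinatorial bookkeeping of many similar-looking summands, which parallels that in the proof of \rlem{ML28-1} line by line; no new technical tool beyond those already established in Sections~\ref{S1710} and the preceding lemmas of Section~\ref{S1711} is required.
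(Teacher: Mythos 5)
Your proposal is correct and follows essentially the same route as the paper's proof: expand $\Jimq - U_{n,\alpha}^\ad(w)\Jimq U_{n,\alpha}(z)$ into the three terms $S_1,S_2,S_3$, collapse the $\Jimq$-factors via \rrem{JN}, absorb the scalar factors through the resolvent identities of \rrem{lemmart} together with \eqref{B204}, and cancel the cross-terms with the absorption identities from \rlem{bem-ML8} and \rrem{310-1}. The remaining work is exactly the bookkeeping you describe, carried out in the paper in the reduction of $S(z,w)$ from \eqref{SW1-II} to \eqref{N111T}.
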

\begin{proof}
 Let \(n\in \NO \) be such that \(2n \leq \kappa\).
 Since \(\seqska \in \Kggeqka \subseteq\Kggqka \) holds true, \rrem{1582012-2} yields \(H_n^\ad =H_n\) and from \rlem{311-1} we get \eqref{N39-RI}.
 Now let \(z, w\in \C\).
 In view of \eqref{U1}, \(\Jimq^2 =\Iu{2q}\), and \(H_{\at{n}}^\ad =H_{\at{n}}\), we conclude then 
\beql{CHI-2-II}\begin{split}
 &\Jimq -U_{n,\alpha}^\ad (w) \Jimq U_{n,\alpha} (z) \\
 &= \Jimq -\biggl\{\Iu{2q} + (\ko{w}-\alpha) (\Iu{2} \otimes v_{q,n} )^\ad  \mat{ \Iu{(n+1)q}, \Tqn H_n }^\ad\ek*{R_{\Tqn} (\alpha)}^\ad H_n^\gi \\
 & \qquad\times \ek*{R_{\Tqn^\ad } (w)}^\ad  \mat{ \Tqn H_n, -\Iu{(n+1)q}} (\Iu{2} \otimes v_{q,n} ) \biggr\}\Jimq\\
 &\qquad\times\biggl\{\Iu{2q} + (z-\alpha) (\Iu{2} \otimes v_{q,n} )^\ad  \mat{ \Tqn H_n, -\Iu{(n+1)q}}^\ad R_{\Tqn ^\ad } (z) H_n^\gi \\
 & \qquad\times   R_{\Tqn} (\alpha) \mat{ \Iu{(n+1)q}, \Tqn H_n} (\Iu{2} \otimes v_{q,n} ) \biggr\} \\
 & = S_1(w)+S_2(z)+S_3(z,w)
\end{split}\eeq
 where 
\begin{align*}
 S_1(w)
 &\defeq  -(\ko{w}-\alpha) (\Iu{2} \otimes v_{q,n} )^\ad  \mat{ \Iu{(n+1)q}, \Tqn H_n}^\ad  \ek*{R_{\Tqn} (\alpha)}^\ad H_n^\gi \\
 &\qquad\times\ek*{R_{\Tqn^\ad } (w)}^\ad \mat{ \Tqn H_n, -\Iu{(n+1)q}} (\Iu{2} \otimes v_{q,n} ) \Jimq,\\
 S_2(z)
 &\defeq  -(z-\alpha) \Jimq (\Iu{2} \otimes v_{q,n} )^\ad  \mat{ \Tqn H_n, -\Iu{(n+1)q}}^\ad R_{\Tqn ^\ad } (z) H_n^\gi\\
 &\qquad\times   R_{\Tqn} (\alpha)  \mat{  \Iu{(n+1)q}, \Tqn H_n} (\Iu{2} \otimes v_{q,n}),
\end{align*} 
 and
\[\begin{split}
 S_3(z,w)
 &\defeq - (\ko{w}-\alpha) (z-\alpha) (\Iu{2} \otimes v_{q,n} )^\ad  \mat{ \Iu{(n+1)q}, \Tqn H_n} ^\ad\ek*{R_{\Tqn} (\alpha)}^\ad H_n^\gi\\
 &\qquad\times\ek*{R_{\Tqn^\ad } (w)}^\ad  \mat{ \Tqn H_n, -\Iu{(n+1)q}} (\Iu{2} \otimes v_{q,n} )\Jimq(\Iu{2} \otimes v_{q,n} )^\ad\\
 &\qquad\times  \mat{ \Tqn H_n, -\Iu{(n+1)q}}^\ad  R_{\Tqn ^\ad } (z) H_n^\gi  R_{\Tqn} (\alpha)\mat{ \Iu{(n+1)q}, \Tqn H_n} (\Iu{2} \otimes v_{q,n} ). 
\end{split}\]
 Keeping in mind the \rremss{JN}{21112N}, we have 
\begin{align} 
 S_1(w)
 &= \iu (\ko{w}-\alpha) (\Iu{2} \otimes v_{q,n} )^\ad  \mat{ \Iu{(n+1)q},\Tqn H_n}^\ad  \ek*{R_{\Tqn} (\alpha)}^\ad H_n^\gi\notag \\ &\qquad\times\ek*{R_{\Tqn^\ad } (w)}^\ad\ek*{R_{\Tqn} (\alpha)}^\inv R_{\Tqn} (\alpha)\mat{ \Iu{(n+1)q},\Tqn H_n} (\Iu{2} \otimes v_{q,n} ),\label{CH1-II} \\
 S_2(z)
 &=  -\iu (z-\alpha) (\Iu{2} \otimes v_{q,n} )^\ad  \mat{ \Iu{(n+1)q},\Tqn H_n}^\ad  \ek*{R_{\Tqn} (\alpha)}^\ad \ek*{R_{\Tqn} (\alpha)}^\invad R_{\Tqn ^\ad } (z) H_n^\gi\notag\\
 &\qquad\times     R_{\Tqn} (\alpha) \mat{ \Iu{(n+1)q},\Tqn H_n} (\Iu{2} \otimes v_{q,n}),\label{CHI-3-II}
\end{align}
 and, by virtue of \eqref{SKP3-2} and \(H_n^\ad =H_n\), furthermore
\beql{CH2-II}\begin{split}
 S_3(z,w)
 &=- \iu(\ko{w}-\alpha) (z-\alpha) (\Iu{2} \otimes v_{q,n} )^\ad  \mat{ \Iu{(n+1)q}, \Tqn H_n } ^\ad \ek*{R_{\Tqn} (\alpha)}^\ad H_n^\gi  \\
 &\qquad\times \ek*{R_{\Tqn^\ad } (w)}^\ad\rk{\Tqn H_n v_{q,n}v_{q,n}^\ad - v_{q,n}v_{q,n}^\ad H_n \Tqn^\ad}R_{\Tqn ^\ad } (z) \\ 
 &\qquad\times H_n^\gi  R_{\Tqn} (\alpha) \mat{ \Iu{(n+1)q}, \Tqn H_n } (\Iu{2} \otimes v_{q,n} ).
\end{split}\eeq
 \rrem{lemmart} shows that 
\beql{215-1}
 (z-\alpha) \Tqn^\ad R_{\Tqn^\ad }(z)
 = \ek*{R_{\Tqn^\ad}(\alpha)}^\inv R_{\Tqn^\ad }(z)-\Iu{(n+1)q}
\eeq
 is valid and, because of \(\ek{ R_{T_{q,n}^\ad} (w)}^\ad = R_{T_{q,n}} (\ko{w})\), that
\beql{215-2}
 (\ko{w}-\alpha) \ek*{R_{\Tqn^\ad }(w)}^\ad \Tqn 
 =\ek*{R_{\Tqn^\ad }(w)}^\ad \ek*{R_{\Tqn}(\alpha)}^\inv - \Iu{(n+1)q}
\eeq
 is also true.
 In view of \eqref{B204}, \( \ek{R_{\Tqn} (\alpha)}^\invad = \ek{R_{\Tqn^\ad } (\alpha)}^\inv \), \eqref{215-1}, and \eqref{215-2}, we obtain
\[\begin{split} 
 &(\ko{w}- \alpha)(z-\alpha) \ek*{R_{\Tqn^\ad } (w)}^\ad\rk{\Tqn H_n v_{q,n}v_{q,n}^\ad - v_{q,n}v_{q,n}^\ad H_n \Tqn^\ad} R_{\Tqn ^\ad } (z) \\
 &=(\ko{w}- \alpha)(z-\alpha) \ek*{R_{\Tqn^\ad } (w)}^\ad\rk*{\Tqn H_n \ek*{R_{\Tqn^\ad}(\alpha)}^\inv -\ek*{R_{\Tqn}(\alpha)}^\inv H_n\Tqn^\ad} R_{\Tqn ^\ad } (z) \\
 &= (\ko{w}- \alpha)(z-\alpha) \ek*{R_{\Tqn^\ad } (w)}^\ad \Tqn H_n \ek*{R_{\Tqn^\ad}(\alpha)}^\inv R_{\Tqn ^\ad } (z) \\
 & \qquad -(\ko{w}- \alpha)(z-\alpha) \ek*{R_{\Tqn^\ad } (w)}^\ad \ek*{R_{\Tqn}(\alpha)}^\inv H_n\Tqn^\ad R_{\Tqn ^\ad } (z) \\
 &= (z-\alpha)\rk*{\ek*{R_{\Tqn^\ad }(w)}^\ad \ek*{R_{\Tqn}(\alpha)}^\inv - \Iu{(n+1)q}} H_n \ek*{R_{\Tqn^\ad}(\alpha)}^\inv R_{\Tqn ^\ad } (z) \\
 & \qquad -(\ko{w}- \alpha)\ek*{R_{\Tqn^\ad } (w)}^\ad \ek*{R_{\Tqn}(\alpha)}^\inv H_n\rk*{\ek*{R_{\Tqn^\ad}(\alpha)}^\inv R_{\Tqn^\ad }(z)-\Iu{(n+1)q}} \\
 &= (z-\alpha)\ek*{R_{\Tqn^\ad }(w)}^\ad \ek*{R_{\Tqn}(\alpha)}^\inv H_n \ek*{R_{\Tqn^\ad}(\alpha)}^\inv R_{\Tqn ^\ad } (z) \\
 & \qquad - (z-\alpha) H_n \ek*{R_{\Tqn^\ad}(\alpha)}^\inv R_{\Tqn ^\ad } (z) \\
 & \qquad -(\ko{w}- \alpha)\ek*{R_{\Tqn^\ad } (w)}^\ad \ek*{R_{\Tqn}(\alpha)}^\inv H_n \ek*{R_{\Tqn^\ad}(\alpha)}^\inv R_{\Tqn^\ad }(z) \\
 & \qquad +(\ko{w}- \alpha)\ek*{R_{\Tqn^\ad } (w)}^\ad \ek*{R_{\Tqn}(\alpha)}^\inv H_n \\
 &= (\ko{w}- \alpha)\ek*{R_{\Tqn^\ad } (w)}^\ad \ek*{R_{\Tqn}(\alpha)}^\inv H_n - (z-\alpha) H_n \ek*{R_{\Tqn^\ad}(\alpha)}^\inv R_{\Tqn ^\ad } (z) \\
 & \qquad -(\ko{w}- z)\ek*{R_{\Tqn^\ad } (w)}^\ad \ek*{R_{\Tqn}(\alpha)}^\inv H_n \ek*{R_{\Tqn^\ad}(\alpha)}^\inv R_{\Tqn^\ad }(z),
\end{split}\]
 which together with \eqref{CH2-II} implies
\beql{CH2-II-2}\begin{split} 
 &S_3(z,w)\\
 &=- \iu (\Iu{2} \otimes v_{q,n} )^\ad  \mat{ \Iu{(n+1)q}, \Tqn H_n} ^\ad \ek*{R_{\Tqn} (\alpha)}^\ad H_n^\gi  \\
 &\qquad\times \biggl\{ (\ko{w}- \alpha)\ek*{R_{\Tqn^\ad } (w)}^\ad \ek*{R_{\Tqn}(\alpha)}^\inv H_n - (z-\alpha) H_n \ek*{R_{\Tqn^\ad}(\alpha)}^\inv R_{\Tqn ^\ad } (z) \\
 & \qquad\qquad -(\ko{w}- z)\ek*{R_{\Tqn^\ad } (w)}^\ad \ek*{R_{\Tqn}(\alpha)}^\inv H_n \ek*{R_{\Tqn^\ad}(\alpha)}^\inv R_{\Tqn^\ad }(z) \biggr\} \\ 
 &\qquad\times H_n^\gi  R_{\Tqn} (\alpha)  \mat{ \Iu{(n+1)q}, \Tqn H_n} (\Iu{2} \otimes v_{q,n} ).
\end{split}\eeq
 The combination of \eqref{CHI-2-II}, \eqref{CH1-II}, \eqref{CHI-3-II}, and \eqref{CH2-II-2} provides us
\begin{multline} \label{CSNY1-II}
 \Jimq - U_{n,\alpha}^\ad (w) \Jimq  U_{n,\alpha} (z)
 = \iu (\Iu{2} \otimes v_{q,n} )^\ad  \mat{ \Iu{(n+1)q}, \Tqn H_n}^\ad \ek*{R_{\Tqn} (\alpha)}^\ad S(z,w)\\
 \times R_{\Tqn} (\alpha) \mat{ \Iu{(n+1)q}, \Tqn H_n} (\Iu{2} \otimes v_{q,n} )
\end{multline}
 where 
\beql{SW1-II}\begin{split} 
 S(z,w)
 &\defeq  (\ko{w}-\alpha) H_n^\gi  \ek*{R_{\Tqn^\ad } (w)}^\ad \ek*{R_{\Tqn} (\alpha)}^\inv\\
 &\qquad-(z-\alpha) \ek*{R_{\Tqn} (\alpha)}^\invad R_{\Tqn ^\ad } (z) H_n^\gi  \\
 &\qquad -(\ko{w}- \alpha) H_n^\gi  \ek*{R_{\Tqn^\ad } (w)}^\ad \ek*{R_{\Tqn}(\alpha)}^\inv H_n H_n^\gi  \\
 & \qquad + (z-\alpha) H_n^\gi  H_n \ek*{R_{\Tqn^\ad}(\alpha)}^\inv R_{\Tqn ^\ad } (z) H_n^\gi  \\
 & \qquad+(\ko{w}- z)H_n^\gi  \ek*{R_{\Tqn^\ad } (w)}^\ad  \ek*{R_{\Tqn}(\alpha)}^\inv H_n \ek*{R_{\Tqn^\ad}(\alpha)}^\inv R_{\Tqn^\ad }(z) H_n^\gi . 
\end{split}\eeq
 Because of \( R_{\Tqn ^\ad } (\zeta)= \ek{R_{\Tqn } (\ko \zeta)}^\ad \), which is true for each \(\zeta\in\C\), \rlem{bem-ML8} shows that
\beql{8117-1}
 H_n^\gi  \ek*{R_{\Tqn^\ad } (\zeta)}^\ad \ek*{R_{\Tqn}(\alpha)}^\inv H_n H_n^\gi
 = H_n^\gi  \ek*{R_{\Tqn^\ad } (\zeta)}^\ad \ek*{R_{\Tqn}(\alpha)}^\inv 
\eeq
 and 
\beql{8117-2}
 H_n^\gi  H_n \ek*{R_{\Tqn^\ad}(\alpha)}^\inv R_{\Tqn ^\ad } (\zeta) H_n^\gi
 = \ek*{R_{\Tqn}(\alpha)}^\invad R_{\Tqn ^\ad } (\zeta) H_n^\gi 
\eeq
 are valid for all \(\zeta\in\C\).
 Thus, from \eqref {SW1-II}, \eqref{8117-1}, and \eqref{8117-2} we get
\beql{N111T}
 S(z,w)
 = (\ko{w}- z)H_n^\gi  \ek*{R_{\Tqn^\ad } (w)}^\ad \ek*{R_{\Tqn}(\alpha)}^\inv H_n \ek*{R_{\Tqn^\ad}(\alpha)}^\inv R_{\Tqn^\ad }(z) H_n^\gi.
\eeq
 Taking into account \eqref{CSNY1-II} and \eqref{N111T}, we obtain \eqref{LIG}.
\end{proof}

\begin{lem} \label{8.17-1}
 Let \(\alpha \in \R\), let \(\kappa \in \Ninf \), and let \(\seqska \in \Kggeqka \).
 For each \(n\in \NO \) with \(2n +1\leq \kappa\) and all \(w,z\in \C\), then 
\begin{multline} \label{M111M}
 \Jimq - \tilde{U}_{n,\alpha}^\ad (w) \Jimq  \tilde{U}_{n,\alpha} (z)\\
 = \iu (\ko{w}-z) (\Iu{2} \otimes v_{q,n} )^\ad  \mat*{ \Iu{(n+1)q}, \ek*{R_{\Tqn}(\alpha)}^\inv H_n}^\ad \ek*{R_{\Tqn} (\alpha)}^\ad H_{\at n}^\gi\\
 \times   \ek*{R_{\Tqn ^\ad } (w)}^\ad \ek*{R_{\Tqn}(\alpha)}^\inv H_{\at n} \ek*{R_{\Tqn}(\alpha)}^\invad R_{\Tqn ^\ad } (z)   \\ 
 \times H_{\at n}^\gi R_{\Tqn} (\alpha)  \mat*{ \Iu{(n+1)q}, \ek*{R_{\Tqn}(\alpha)}^\inv H_n} (\Iu{2} \otimes v_{q,n} ).
\end{multline}
\end{lem}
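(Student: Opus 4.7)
The plan is to mirror the proof of \rlem{ML28-12} almost step for step, but now working with the function $\tilde{U}_{n,\alpha}$ in place of $U_{n,\alpha}$, and therefore replacing the appearances of the left resolvent block $\Tqn H_n$ by $\ek{R_{\Tqn}(\alpha)}^\inv H_n$ and the generalized inverse $H_n^\gi$ by $H_{\at n}^\gi$. First I would invoke \rremss{ZS}{1582012-2} together with \rlem{311-1} to record that $H_n^\ad=H_n$, $H_{\at n}^\ad=H_{\at n}$, and $(H_{\at n}^\gi)^\ad=H_{\at n}^\gi$. Then, using the definition \eqref{U2} of $\tilde{U}_{n,\alpha}$ and the identity $\Jimq^2=\Iu{2q}$, I would expand
\[
 \Jimq-\tilde{U}_{n,\alpha}^\ad(w)\Jimq\tilde{U}_{n,\alpha}(z)=\tilde{S}_1(w)+\tilde{S}_2(z)+\tilde{S}_3(z,w),
\]
where $\tilde{S}_1$ and $\tilde{S}_2$ collect the two terms linear in the rank-$2q$ correction and $\tilde{S}_3$ is the quadratic term.

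Next I would simplify the linear summands $\tilde{S}_1$ and $\tilde{S}_2$ by pulling the factor $\Jimq$ through $(\Iu{2}\otimes v_{q,n})$ via the identities \eqref{SKP1} and \eqref{SKP2} of \rrem{JN} and by rewriting $(\zeta-\alpha)\Iu{(n+1)q}$ against $\ek{R_{\Tqn^\ad}(\zeta)}^{\pm 1}$ through the resolvent identity of \rrem{lemmart}. For the quadratic summand $\tilde{S}_3(z,w)$ I would apply \eqref{SKP3-2} to produce the commutator
\[
 \ek*{R_{\Tqn}(\alpha)}^\inv H_n v_{q,n}v_{q,n}^\ad-v_{q,n}v_{q,n}^\ad H_n\ek*{R_{\Tqn}(\alpha)}^\invad
\]
and then apply the Lyapunov-type identity \eqref{FIDW3} from \rrem{ZF1} in order to convert this $H_n$-expression into one containing $\Tqn H_{\at n}\ek{R_{\Tqn}(\alpha)}^\invad-\ek{R_{\Tqn}(\alpha)}^\inv H_{\at n}\Tqn^\ad$. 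Combining this with the algebraic manipulation
\[
 (z-\alpha)\Tqn^\ad R_{\Tqn^\ad}(z)=\ek*{R_{\Tqn^\ad}(\alpha)}^\inv R_{\Tqn^\ad}(z)-\Iu{(n+1)q}
\]
(and its counterpart for $(\ko w-\alpha)\ek{R_{\Tqn^\ad}(w)}^\ad\Tqn$), analogous to equations \eqref{215-1} and \eqref{215-2} in the proof of \rlem{ML28-12}, one should arrive at a representation of the form
\[
 \Jimq-\tilde{U}_{n,\alpha}^\ad(w)\Jimq\tilde{U}_{n,\alpha}(z)=\iu(\Iu{2}\otimes v_{q,n})^\ad\mat*{\Iu{(n+1)q},\ek*{R_{\Tqn}(\alpha)}^\inv H_n}^\ad\ek*{R_{\Tqn}(\alpha)}^\ad\tilde{S}(z,w)R_{\Tqn}(\alpha)\mat*{\Iu{(n+1)q},\ek*{R_{\Tqn}(\alpha)}^\inv H_n}(\Iu{2}\otimes v_{q,n}),
\]
with an inner kernel $\tilde{S}(z,w)$ that is the exact analogue of $S(z,w)$ in \eqref{SW1-II} but with $H_n^\gi$ replaced by $H_{\at n}^\gi$ and $H_n$ replaced by $H_{\at n}$.

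Finally, to reduce $\tilde{S}(z,w)$ to the single term $(\ko w-z) H_{\at n}^\gi\ek{R_{\Tqn^\ad}(w)}^\ad\ek{R_{\Tqn}(\alpha)}^\inv H_{\at n}\ek{R_{\Tqn^\ad}(\alpha)}^\inv R_{\Tqn^\ad}(z) H_{\at n}^\gi$, I would appeal to the range/nullspace identities in \rlem{bem-ML8}, using equations \eqref{UC} and \eqref{GG671} in place of the ones for $H_n^\gi$ that were used for \eqref{8117-1}--\eqref{8117-2}. Together with $H_{\at n}^\gi H_{\at n} H_{\at n}^\gi=H_{\at n}^\gi$ from \eqref{PPI-2} in \rlem{311-1}, this yields the desired equation \eqref{M111M}. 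The main technical obstacle will be the bookkeeping in the quadratic summand $\tilde{S}_3$: the substitution of the Lyapunov identity \eqref{FIDW3} introduces four distinct $\ek{R_{\Tqn}(\alpha)}^{\pm 1}$ factors that must be recombined with the $R_{\Tqn^\ad}(z)$ and $\ek{R_{\Tqn^\ad}(w)}^\ad$ factors, and it is only after this combinatorial simplification that the absorbing effects of \rlem{bem-ML8} become applicable.
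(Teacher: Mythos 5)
Your proposal is correct and follows essentially the same route as the paper's proof: the same decomposition into $\tilde{S}_1+\tilde{S}_2+\tilde{S}_3$, the same use of \rrem{JN} and of identity \eqref{FIDW3} from \rrem{ZF1}, the same resolvent manipulations of type \eqref{215-1}--\eqref{215-2}, and the same final absorption of the kernel via \rlem{bem-ML8} (the paper's equations \eqref{LC1}--\eqref{LC2} are exactly the consequences of \eqref{UC} and \eqref{GG671} you name). The only cosmetic difference is that the paper does not need \eqref{PPI-2} in the last step, since the surviving term of the kernel is already in final form once the two cancellations from \rlem{bem-ML8} are carried out.
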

\begin{proof}
 Let \(n\in \NO \) be such that \(2n +1\leq \kappa\).
 Because of \(\seqska \in \Kggeqka \subseteq\Kggqka \) and \rrem{1582012-2}, we have \(H_n^\ad =H_n\) and \(H_{\at{n}}^\ad =H_{\at{n}}\).
 \rlem{311-1} shows that \eqref{N39-RI} holds true.
 Let \(w, z\in \C\).
 In view of \eqref{U2} and \eqref{N39-RI}, we get 
\beql{RT1-1}\begin{split} 
 &\Jimq -\tilde{U}_{n,\alpha}^\ad (w) \Jimq \tilde{U}_{n,\alpha} (z)\\
 &= \Jimq -\biggl\{\Iu{2q} + (\ko{w}-\alpha) (\Iu{2} \otimes v_{q,n} )^\ad  \mat*{ \Iu{(n+1)q}, \ek*{R_{\Tqn}(\alpha)}^\inv H_n }^\ad\ek*{R_{\Tqn} (\alpha)}^\ad \\
 & \qquad\qquad\times  H_{\at n}^\gi  \ek*{R_{\Tqn^\ad } (w)}^\ad  \mat*{ \ek*{R_{\Tqn}(\alpha)}^\inv H_n, -\Iu{(n+1)q}} (\Iu{2} \otimes v_{q,n} ) \biggr\}\Jimq  \\
 & \qquad\times \biggl\{\Iu{2q} + (z-\alpha) (\Iu{2} \otimes v_{q,n} )^\ad  \mat*{ \ek*{R_{\Tqn}(\alpha)}^\inv H_n, -\Iu{(n+1)q}}^\ad R_{\Tqn ^\ad } (z) H_{\at n}^\gi\\
 &\qquad\qquad\times   R_{\Tqn} (\alpha)  \mat*{ \Iu{(n+1)q}, \ek*{R_{\Tqn}(\alpha)}^\inv H_n} (\Iu{2} \otimes v_{q,n} ) \biggr\} \\
 &= \tilde{S}_1(w)+\tilde{S}_2(z)+\tilde{S}_3(z,w),
\end{split}\eeq
 where
\begin{align*}
 \tilde{S}_1(w
 &\defeq-(\ko{w}-\alpha) (\Iu{2} \otimes v_{q,n} )^\ad \mat*{ \Iu{(n+1)q}, \ek*{R_{\Tqn}(\alpha)}^\inv H_n }^\ad \ek*{R_{\Tqn} (\alpha)}^\ad H_{\at n}^\gi \\
 &\qquad\times  \ek*{R_{\Tqn^\ad } (w)}^\ad \mat*{ \ek*{R_{\Tqn}(\alpha)}^\inv H_n, -\Iu{(n+1)q}} (\Iu{2} \otimes v_{q,n} ) \Jimq, \\
 \tilde{S}_2(z)
 &\defeq-(z-\alpha) \Jimq (\Iu{2} \otimes v_{q,n} )^\ad  \mat*{ \ek*{R_{\Tqn}(\alpha)}^\inv H_n, -\Iu{(n+1)q}}^\ad R_{\Tqn ^\ad } (z) H_{\at n}^\gi\\
 &\qquad\times   R_{\Tqn} (\alpha)  \mat*{ \Iu{(n+1)q}, \ek*{R_{\Tqn}(\alpha)}^\inv H_n } (\Iu{2} \otimes v_{q,n}),
\end{align*} 
 and
\[\begin{split}
 \tilde{S}_3(z,w)
 &\defeq - (\ko{w}-\alpha) (z-\alpha) (\Iu{2} \otimes v_{q,n} )^\ad \mat*{ \Iu{(n+1)q}, \ek*{R_{\Tqn}(\alpha)}^\inv H_n } ^\ad \ek*{R_{\Tqn} (\alpha)}^\ad H_{\at n}^\gi\\
 &\qquad\times   \ek*{R_{\Tqn^\ad } (w)}^\ad  \mat*{ \ek*{R_{\Tqn}(\alpha)}^\inv H_n, -\Iu{(n+1)q}} (\Iu{2} \otimes v_{q,n} )\Jimq \\
 &\qquad\times  (\Iu{2} \otimes v_{q,n} )^\ad  \mat*{ \ek*{R_{\Tqn}(\alpha)}^\inv H_n, -\Iu{(n+1)q}}^\ad R_{\Tqn ^\ad } (z) H_{\at n}^\gi  \\
 &\qquad\times R_{\Tqn} (\alpha) \mat*{ \Iu{(n+1)q}, \ek*{R_{\Tqn}(\alpha)}^\inv H_n} (\Iu{2} \otimes v_{q,n} ). 
\end{split}\]
 From \rrem{JN} we obtain 
\begin{align} 
 \tilde{S}_1(w)
 &= \iu (\ko{w}-\alpha) (\Iu{2} \otimes v_{q,n} )^\ad  \mat*{ \Iu{(n+1)q},\ek*{R_{\Tqn}(\alpha)}^\inv H_n }^\ad \ek*{R_{\Tqn} (\alpha)}^\ad H_{\at n}^\gi \ek*{R_{\Tqn^\ad } (w)}^\ad \notag \\
 &\qquad\times  \ek*{R_{\Tqn}(\alpha)}^\inv R_{\Tqn}(\alpha)\mat*{ \Iu{(n+1)q},\ek*{R_{\Tqn}(\alpha)}^\inv H_n} (\Iu{2} \otimes v_{q,n} ),\label{LTR6} \\ 
 \tilde{S}_2(z)
 &=  -\iu (z-\alpha) (\Iu{2} \otimes v_{q,n} )^\ad \mat*{ \Iu{(n+1)q},\ek*{R_{\Tqn}(\alpha)}^\inv H_n }^\ad \ek*{R_{\Tqn}(\alpha)}^\ad  \ek*{R_{\Tqn}(\alpha)}^\invad  \notag \\
 &\qquad\times R_{\Tqn ^\ad } (z)H_{\at n}^\gi  R_{\Tqn} (\alpha) \mat*{ \Iu{(n+1)q},\ek*{R_{\Tqn}(\alpha)}^\inv H_n} (\Iu{2} \otimes v_{q,n}),\label{LTR7}
\end{align}
 and, in view of \eqref{SKP3-2} and \(H_n^\ad =H_n\), furthermore
\beql{MD1}\begin{split} 
 &\tilde{S}_3(z,w)\\
 &=- \iu(\ko{w}-\alpha) (z-\alpha) (\Iu{2} \otimes v_{q,n} )^\ad \mat*{\Iu{(n+1)q}, \ek*{R_{\Tqn}(\alpha)}^\inv H_n} ^\ad \ek*{R_{\Tqn} (\alpha)}^\ad H_{\at n}^\gi  \\
 &\qquad\times \ek*{R_{\Tqn^\ad } (w)}^\ad\rk*{ \ek*{R_{\Tqn}(\alpha)}^\inv H_n v_{q,n}v_{q,n}^\ad - v_{q,n}v_{q,n}^\ad H_n \ek*{R_{\Tqn}(\alpha)}^\invad } R_{\Tqn ^\ad } (z) \\ 
 &\qquad\times H_{\at n}^\gi  R_{\Tqn} (\alpha) \mat*{\Iu{(n+1)q}, \ek*{R_{\Tqn}(\alpha)}^\inv H_n} (\Iu{2} \otimes v_{q,n} ).
\end{split}\eeq
 With the aid of \rrem{lemmart} and \(\ek{R_{\Tqn^\ad}(\alpha)}^\invad =\ek{R_{\Tqn}(\alpha)}^\inv\), we see that \eqref{215-1} and \eqref{215-2} hold true.
 Using equation \eqref{FIDW3} in \rrem{ZF1}, \(\ek{R_{\Tqn^\ad}(\alpha)}^\inv =\ek{R_{\Tqn}(\alpha)}^\invad \), \eqref{215-1}, and \eqref{215-2}, we conclude 
\[\begin{split}
 &(\ko{w}-\alpha) (z-\alpha) \ek*{R_{\Tqn^\ad } (w)}^\ad\rk*{\ek*{R_{\Tqn}(\alpha)}^\inv H_n v_{q,n}v_{q,n}^\ad - v_{q,n}v_{q,n}^\ad H_n \ek*{R_{\Tqn}(\alpha)}^\invad} R_{\Tqn ^\ad } (z) \\ 
 &=(\ko{w}-\alpha) (z-\alpha) \ek*{R_{\Tqn^\ad } (w)}^\ad\rk*{\Tqn H_{\at{n}}\ek*{R_{\Tqn}(\alpha)}^\invad -\ek*{R_{\Tqn}(\alpha)}^\inv H_{\at{n}}\Tqn^\ad} R_{\Tqn ^\ad } (z) \\ 
 &= (\ko{w}-\alpha) (z-\alpha) \ek*{R_{\Tqn^\ad } (w)}^\ad \Tqn H_{\at{n}}\ek*{R_{\Tqn}(\alpha)}^\invad R_{\Tqn ^\ad } (z) \\ 
 & \qquad -(\ko{w}-\alpha) (z-\alpha) \ek*{R_{\Tqn^\ad } (w)}^\ad \ek*{R_{\Tqn}(\alpha)}^\inv H_{\at{n}}\Tqn^\ad R_{\Tqn ^\ad } (z) \\ 
 &= (z-\alpha)\rk*{ \ek*{R_{\Tqn^\ad } (w)}^\ad \ek*{R_{\Tqn}(\alpha)}^\inv -\Iu{(n+1)q}} H_{\at{n}} \ek*{R_{\Tqn}(\alpha)}^\invad R_{\Tqn ^\ad } (z) \\
 & \qquad -(\ko{w}-\alpha) \ek*{R_{\Tqn^\ad } (w)}^\ad \ek*{R_{\Tqn}(\alpha)}^\inv H_{\at{n}}\rk*{ \ek*{R_{\Tqn^\ad}(\alpha)}^\inv R_{\Tqn ^\ad } (z) -\Iu{(n+1)q}} \\ 
 &= (z-\alpha) \ek*{R_{\Tqn^\ad } (w)}^\ad \ek*{R_{\Tqn}(\alpha)}^\inv H_{\at{n}} \ek*{R_{\Tqn}(\alpha)}^\invad R_{\Tqn ^\ad } (z) \\ 
 & \qquad -(z-\alpha) H_{\at{n}} \ek*{R_{\Tqn}(\alpha)}^\invad R_{\Tqn ^\ad } (z) \\
 & \qquad -(\ko{w}-\alpha) \ek*{R_{\Tqn^\ad } (w)}^\ad \ek*{R_{\Tqn}(\alpha)}^\inv H_{\at{n}} \ek*{R_{\Tqn^\ad}(\alpha)}^\inv R_{\Tqn ^\ad } (z) \\ 
 & \qquad +(\ko{w}-\alpha) \ek*{R_{\Tqn^\ad } (w)}^\ad \ek*{R_{\Tqn}(\alpha)}^\inv H_{\at{n}} \\ 
 &= (\ko{w}-\alpha) \ek*{R_{\Tqn^\ad } (w)}^\ad \ek*{R_{\Tqn}(\alpha)}^\inv H_{\at{n}} -(z-\alpha) H_{\at{n}} \ek*{R_{\Tqn}(\alpha)}^\invad R_{\Tqn ^\ad } (z) \\
 & \qquad-(\ko{w}-z) \ek*{R_{\Tqn^\ad } (w)}^\ad \ek*{R_{\Tqn}(\alpha)}^\inv H_{\at{n}} \ek*{R_{\Tqn}(\alpha)}^\invad R_{\Tqn ^\ad } (z), 
\end{split}\]
 which, because of \eqref{MD1}, implies
\beql{MD2}\begin{split} 
 \tilde{S}_3(z,w)
 &=- \iu (\Iu{2} \otimes v_{q,n} )^\ad \mat*{\Iu{(n+1)q}, \ek*{R_{\Tqn}(\alpha)}^\inv H_n} ^\ad \ek*{R_{\Tqn} (\alpha)}^\ad H_{\at n}^\gi  \\ 
 &\qquad\times \biggl\{(\ko{w}-\alpha) \ek*{R_{\Tqn^\ad } (w)}^\ad \ek*{R_{\Tqn}(\alpha)}^\inv H_{\at{n}}\\
 & \qquad\qquad-(z-\alpha) H_{\at{n}} \ek*{R_{\Tqn}(\alpha)}^\invad R_{\Tqn ^\ad } (z) \\
 & \qquad\qquad-(\ko{w}-z) \ek*{R_{\Tqn^\ad } (w)}^\ad \ek*{R_{\Tqn}(\alpha)}^\inv H_{\at{n}} \ek*{R_{\Tqn}(\alpha)}^\invad R_{\Tqn ^\ad } (z) \biggr\} \\ 
 &\qquad\times H_{\at n}^\gi  R_{\Tqn} (\alpha) \mat*{\Iu{(n+1)q}, \ek*{R_{\Tqn}(\alpha)}^\inv H_n} (\Iu{2} \otimes v_{q,n} ).
\end{split}\eeq
 Combining \eqref{RT1-1}, \eqref{LTR6}, \eqref{LTR7}, and \eqref{MD2}, we see that
\begin{multline} \label{BRT} 
 \Jimq -\tilde{U}_{n,\alpha}^\ad (w) \Jimq \tilde{U}_{n,\alpha} (z)\\
 = \iu (\Iu{2} \otimes v_{q,n} )^\ad \mat*{\Iu{(n+1)q}, \ek*{R_{\Tqn}(\alpha)}^\inv H_n }^\ad \ek*{R_{\Tqn} (\alpha)}^\ad \tilde{S}(z,w)\\
 \times R_{\Tqn}(\alpha) \mat*{ \Iu{(n+1)q}, \ek*{R_{\Tqn}(\alpha)}^\inv H_n} (\Iu{2} \otimes v_{q,n} ) 
\end{multline}
 is valid, where
\beql{BTM1}\begin{split} 
 &\tilde S(z,w)\\
 &\defeq(\ko{w}-\alpha) H_{\at n }^\gi  \ek*{R_{\Tqn^\ad } (w)}^\ad \ek*{R_{\Tqn} (\alpha)}^\inv -(z-\alpha) \ek*{R_{\Tqn} (\alpha)}^\invad R_{\Tqn ^\ad } (z) H_{\at n}^\gi  \\
 &\qquad-(\ko{w}-\alpha) H_{\at n}^\gi  \ek*{R_{\Tqn^\ad } (w)}^\ad \ek*{R_{\Tqn}(\alpha)}^\inv H_{\at{n}} H_{\at n}^\gi  \\
 &\qquad +(z-\alpha) H_{\at n}^\gi  H_{\at{n}} \ek*{R_{\Tqn}(\alpha)}^\invad R_{\Tqn ^\ad } (z) H_{\at n}^\gi  \\
 &\qquad +(\ko{w}-z) H_{\at n}^\gi  \ek*{R_{\Tqn^\ad } (w)}^\ad \ek*{R_{\Tqn}(\alpha)}^\inv H_{\at{n}} \ek*{R_{\Tqn}(\alpha)}^\invad R_{\Tqn ^\ad } (z)H_{\at n}^\gi . 
\end{split}\eeq
 Since \(\ek{R_{\Tqn^\ad } (\eta)}^\ad = R_{\Tqn} (\ko{\eta})\) holds true for all \(\eta\in\C\), from \rlem{bem-ML8} we infer that
\beql{LC1}
 H_{\at n}^\gi  \ek*{R_{\Tqn^\ad } (\zeta)}^\ad \ek*{R_{\Tqn}(\alpha)}^\inv H_{\at n} H_{\at n}^\gi
 = H_{\at n}^\gi  \ek*{R_{\Tqn^\ad } (\zeta)}^\ad \ek*{R_{\Tqn}(\alpha)}^\inv 
\eeq
 and 
\beql{LC2}
 H_{\at n}^\gi   H_{\at n} \ek*{R_{\Tqn}(\alpha)}^\invad R_{\Tqn ^\ad } (\zeta) H_{\at n}^\gi
 = \ek*{R_{\Tqn}(\alpha)}^\invad R_{\Tqn ^\ad } (\zeta) H_{\at n}^\gi 
\eeq
 are fulfilled for each \(\zeta\in\C\).
 Using \eqref{LC1}, \eqref{LC2}, and \eqref{BTM1}, we get 
\[
 \tilde S(z,w)
 = (\ko{w}-z) H_{\at n}^\gi  \ek*{R_{\Tqn^\ad } (w)}^\ad \ek*{R_{\Tqn}(\alpha)}^\inv H_{\at{n}} \ek*{R_{\Tqn}(\alpha)}^\invad R_{\Tqn ^\ad } (z)H_{\at n}^\gi  
\]
 and, because of \eqref{BRT}, then \eqref{M111M} follows. 
\end{proof}

\begin{lem} \label{QB-BQ}
 Let \(\alpha \in \R\), let \(\kappa \in \Ninf \), and let \(\seqska  \in \Kggeqka \).
 For each \(n \in \NO \) with \(2n+1\leq \kappa\) and all \(w, z \in \C\), then 
\beql{Nr.QB-1}\begin{split}
 \Jimq - \Theta_{n,\alpha} ^\ad (w) \Jimq \Theta_{n,\alpha} (z)
 &  = \iu (\ko{w}-z) B_{n,\alpha}^\ad (\Iu{2} \otimes v_{q,n} )^\ad \mat{ \Iu{(n+1)q}, \Tqn H_n }^\ad \ek*{R_{\Tqn} (\alpha)}^\ad  H_n^\gi\\
 &\qquad\times  \ek*{R_{\Tqn ^\ad } (w)}^\ad \ek*{R_{\Tqn}(\alpha)}^\inv H_n \ek*{R_{\Tqn^\ad}(\alpha)}^\inv R_{\Tqn ^\ad } (z)  \\ 
 &\qquad\times H_n^\gi R_{\Tqn} (\alpha) \mat{ \Iu{(n+1)q}, \Tqn H_n } (\Iu{2} \otimes v_{q,n} ) B_{n,\alpha}
\end{split}\eeq 
 and
\beql{Nr.tQB-1}\begin{split}
 &\Jimq - \tilde{\Theta}_{n,\alpha} ^\ad (w) \Jimq \tilde{\Theta}_{n,\alpha} (z)\\
 &= \iu (\ko{w}-z) \tilde{B}_{n,\alpha}^\ad (\Iu{2} \otimes v_{q,n} )^\ad \mat*{\Iu{(n+1)q}, \ek*{R_{\Tqn}(\alpha)}^\inv H_n}^\ad \ek*{R_{\Tqn} (\alpha)}^\ad H_{\at n}^\gi\\
 &\qquad\times   \ek*{R_{\Tqn ^\ad } (w)}^\ad \ek*{R_{\Tqn}(\alpha)}^\inv H_{\at n} \ek*{R_{\Tqn^\ad}(\alpha)}^\inv R_{\Tqn ^\ad } (z) \\ 
 &\qquad\times H_{\at n}^\gi  R_{\Tqn} (\alpha) \mat*{\Iu{(n+1)q}, \ek*{R_{\Tqn}(\alpha)}^\inv H_n} (\Iu{2} \otimes v_{q,n} ) \tilde{B}_{n,\alpha}.
\end{split}\eeq 
\end{lem}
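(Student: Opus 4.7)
The plan is to reduce both identities in the lemma to the corresponding identities for $U_{n,\alpha}$ and $\tilde{U}_{n,\alpha}$ already established in \rlem{ML28-12} and \rlem{8.17-1}, using the factorizations $\Theta_{n,\alpha} = U_{n,\alpha} B_{n,\alpha}$ and $\tilde{\Theta}_{n,\alpha} = \tilde{U}_{n,\alpha} \tilde{B}_{n,\alpha}$ supplied by \rlem{RF1} together with the $\Jimq$\nobreakdash-unitarity of $B_{n,\alpha}$ and $\tilde{B}_{n,\alpha}$ noted in \rrem{ML26-1}.

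More precisely, fix $n\in\NO$ with $2n+1\le\kappa$ and $w,z\in\C$. I would first write
\[
 \Theta_{n,\alpha}^\ad(w)\Jimq\Theta_{n,\alpha}(z)
 = B_{n,\alpha}^\ad U_{n,\alpha}^\ad(w)\Jimq U_{n,\alpha}(z)B_{n,\alpha}
\]
by \rlem{RF1}. Since $B_{n,\alpha}^\ad \Jimq B_{n,\alpha} = \Jimq$ by \rrem{ML26-1}, inserting this into the left-hand side of the first assertion gives
\[
 \Jimq - \Theta_{n,\alpha}^\ad(w)\Jimq\Theta_{n,\alpha}(z)
 = B_{n,\alpha}^\ad \ek*{\Jimq - U_{n,\alpha}^\ad(w)\Jimq U_{n,\alpha}(z)} B_{n,\alpha}.
\]
Substituting the explicit formula for $\Jimq - U_{n,\alpha}^\ad(w)\Jimq U_{n,\alpha}(z)$ from \rlem{ML28-12} then yields \eqref{Nr.QB-1} at once.

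For the second identity, the same argument applies with $\tilde{\Theta}_{n,\alpha} = \tilde{U}_{n,\alpha}\tilde{B}_{n,\alpha}$ and $\tilde{B}_{n,\alpha}^\ad \Jimq \tilde{B}_{n,\alpha} = \Jimq$, reducing it to the formula for $\Jimq - \tilde{U}_{n,\alpha}^\ad(w)\Jimq\tilde{U}_{n,\alpha}(z)$ given by \rlem{8.17-1}, from which \eqref{Nr.tQB-1} follows immediately. There is no genuine obstacle here; the work has all been invested in establishing \rlemss{ML28-12}{8.17-1} and the factorizations of \rlem{RF1}.
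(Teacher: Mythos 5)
Your proposal is correct and follows exactly the paper's own argument: factor \(\Theta_{n,\alpha}=U_{n,\alpha}B_{n,\alpha}\) and \(\tilde{\Theta}_{n,\alpha}=\tilde{U}_{n,\alpha}\tilde{B}_{n,\alpha}\) via \rlem{RF1}, use the \(\Jimq\)\nobreakdash-unitarity of \(B_{n,\alpha}\) and \(\tilde{B}_{n,\alpha}\) from \rrem{ML26-1} to reduce the left-hand sides to \(B_{n,\alpha}^\ad[\Jimq-U_{n,\alpha}^\ad(w)\Jimq U_{n,\alpha}(z)]B_{n,\alpha}\) and its tilde analogue, and then invoke \rlemss{ML28-12}{8.17-1}. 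Nothing is missing.
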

\begin{proof}
 Let \(n\in \NO \) be such that \(2n +1\leq \kappa\).
 From \rrem{ML26-1} and \rlem{RF1} we get
\beql{M114Z}\begin{split} 
 \Jimq - \Theta_{n,\alpha} ^\ad (w)\Jimq \Theta_{n,\alpha} (z)
 &= B_{n,\alpha}^\ad \Jimq B_{n,\alpha}- \ek*{U_{n,\alpha} (w) B_{n,\alpha}}^\ad \Jimq U_{n,\alpha} (z) B_{n,\alpha} \\
 & = B_{n,\alpha}^\ad \ek*{\Jimq - U_{n,\alpha}^\ad (w) \Jimq U_{n,\alpha} (z) }B_{n,\alpha}
\end{split}\eeq
 and, analogously, 
\beql{M111W}
 \Jimq - \tilde{\Theta}_{n,\alpha} ^\ad (w) \Jimq \tilde{\Theta}_{n,\alpha} (z)
 = \tilde{B}_{n,\alpha} ^\ad \ek*{\Jimq - \tilde{U}_{n,\alpha}^\ad (w) \Jimq \tilde{U}_{n,\alpha} (z)} \tilde{B}_{n,\alpha}
\eeq
 for every choice of  \(z\) and \(w\) in \(\C\).
 Using \eqref{M114Z} and \rlem{ML28-12}, we obtain \eqref{Nr.QB-1}.
 Because of \eqref{M111W} and \rlem{8.17-1}, then \eqref{Nr.tQB-1} follows.
\end{proof}

\begin{lem} \label{QB-BQq}
 Let \(\alpha \in \R\), let \(\kappa \in \Ninf \), and let \(\seqska  \in \Kggeqka \).
 For each \(n \in \NO \) with \(2n+1\leq \kappa\) and each \(z\in\C\setminus \R\), then
\begin{align*} 
 \frac{1}{2 \Im z}\ek*{\Jimq - \Theta_{n,\alpha} ^\ad (z) \Jimq \Theta_{n,\alpha} (z)}&\ge 0&
&\text{and}&
 \frac{1}{2 \Im z}\ek*{\Jimq - \tilde{\Theta}_{n,\alpha} ^\ad (z) \Jimq \tilde{\Theta}_{n,\alpha} (z) }&\ge 0.
 \end{align*}
\end{lem}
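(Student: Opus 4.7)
The plan is to derive this lemma directly from Lemma \ref{QB-BQ} by the specialization $w = z$. The key observation is that setting $w = z$ in the right-hand sides of equations~\eqref{Nr.QB-1} and~\eqref{Nr.tQB-1} yields expressions of the form $\iu(\ko{z}-z)\cdot M^\ad H_n M$ (resp.\ $M^\ad H_{\at n} M$) for a suitable matrix $M$; since $\iu(\ko{z}-z) = 2\Im z$ and both $H_n$ and $H_{\at n}$ are \tnnH{} by \rrem{1582012-2}, the claim follows after dividing by $2\Im z$.

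More concretely, I would first apply \rlem{QB-BQ} with $w = z$ and rewrite the middle block of the resulting expression. For the $\Theta_{n,\alpha}$-case, the middle part reads
\[
 H_n^\gi \ek*{R_{\Tqn^\ad}(z)}^\ad \ek*{R_{\Tqn}(\alpha)}^\inv H_n \ek*{R_{\Tqn^\ad}(\alpha)}^\inv R_{\Tqn^\ad}(z) H_n^\gi,
\]
and using the identity $(\ek*{R_{\Tqn}(\alpha)}^\inv)^\ad = \ek*{R_{\Tqn^\ad}(\alpha)}^\inv$ (valid because $\alpha\in\R$) together with $(H_n^\gi)^\ad = H_n^\gi$ from \rlem{311-1}, this middle block can be written as $A(z)^\ad H_n A(z)$, where
\[
 A(z) \defeq \ek*{R_{\Tqn^\ad}(\alpha)}^\inv R_{\Tqn^\ad}(z) H_n^\gi R_{\Tqn}(\alpha) \mat{\Iu{(n+1)q},\Tqn H_n} (\Iu{2}\otimes v_{q,n}) B_{n,\alpha}.
\]

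Consequently, equation~\eqref{Nr.QB-1} of \rlem{QB-BQ} specializes (at $w = z$) to
\[
 \Jimq - \Theta_{n,\alpha}^\ad(z)\Jimq\Theta_{n,\alpha}(z) = 2(\Im z)\cdot A(z)^\ad H_n A(z).
\]
Since $H_n \in \Cggo{(n+1)q}$ by \rrem{1582012-2}, the matrix $A(z)^\ad H_n A(z)$ is \tnnH{}; dividing by $2\Im z$ gives the first asserted inequality. The second inequality, concerning $\tilde{\Theta}_{n,\alpha}$, is obtained in exactly the same way: starting from equation~\eqref{Nr.tQB-1} and repeating the factorization with $\tilde B_{n,\alpha}$, $H_{\at n}^\gi$, and the row $\mat{\Iu{(n+1)q},\ek*{R_{\Tqn}(\alpha)}^\inv H_n}$ in place of their unmodified counterparts, and invoking $H_{\at n}\in\Cggo{(n+1)q}$ (again via \rrem{1582012-2}).

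There is essentially no obstacle: once \rlem{QB-BQ} is in hand, this is a one-line specialization together with recognizing the Schur-type factorization $M_1 H M_2 = (\text{same})^\ad H (\text{same})$ when $w = z$. The only bookkeeping point worth verifying carefully is that $\ek*{R_{\Tqn}(\alpha)}^\ad$ and $\ek*{R_{\Tqn^\ad}(\alpha)}^\inv$ coincide (and similarly for the inverse), which is immediate from $\alpha\in\R$ and \rrem{21112N}.
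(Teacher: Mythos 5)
Your proposal is correct and follows exactly the paper's own argument: the paper likewise specializes Lemma~\ref{QB-BQ} to $w=z$, uses $\iu(\ko{z}-z)=2\Im z$ together with $(H_n^\gi)^\ad=H_n^\gi$ from Lemma~\ref{311-1} and the reality of $\alpha$ to recognize the right-hand side as $A(z)^\ad H_n A(z)$ (resp.\ $\tilde A(z)^\ad H_{\at n}\tilde A(z)$), and concludes from $H_n, H_{\at n}\in\Cggo{(n+1)q}$ via Remark~\ref{1582012-2}. No gaps.
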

\begin{proof}
 Let \(n\in \NO \) be such that \(2n +1\leq \kappa\).
 Because of \(\seqska \in \Kggeqka  \subseteq \Kggqka \) and \rrem{1582012-2}, we have \(H_n \in \Cggo{(n+1)q}\) and \(H_{\at n} \in \Cggo{(n+1)q}\).
 Using \rlem{QB-BQ} and \rlem{311-1}, for all \(z \in \C \setminus \R\), we get then 
\[\begin{split} 
 & \frac{1}{2 \Im z} \ek*{\Jimq - \Theta_{n,\alpha}^\ad (z) \Jimq \Theta_{n,\alpha} (z)}\\
 &=\frac{1}{2 \Im z}\biggl\{\iu (\ko{z}-z) B_{n,\alpha}^\ad (\Iu{2} \otimes v_{q,n} )^\ad\mat{ \Iu{(n+1)q}, \Tqn H_n }^\ad \ek*{R_{\Tqn} (\alpha)}^\ad H_n^\gi\\
 &\qquad\times  \ek*{R_{\Tqn ^\ad } (z)}^\ad  \ek*{R_{\Tqn}(\alpha)}^\inv H_n \ek*{R_{\Tqn^\ad}(\alpha)}^\inv R_{\Tqn ^\ad } (z)\\
 &\qquad\times H_n^\gi  R_{\Tqn} (\alpha) \mat{ \Iu{(n+1)q}, \Tqn H_n } (\Iu{2} \otimes v_{q,n} ) B_{n,\alpha}\biggr\} \\
 & =\rk*{\ek*{R_{\Tqn^\ad}(\alpha)}^\inv R_{\Tqn ^\ad } (z) H_n^\gi  R_{\Tqn} (\alpha) \mat{ \Iu{(n+1)q}, \Tqn H_n } (\Iu{2} \otimes v_{q,n} ) B_{n,\alpha}}^\ad \\
 & \qquad\times H_n\rk*{\ek*{R_{\Tqn^\ad}(\alpha)}^\inv R_{\Tqn ^\ad } (z) H_n^\gi  R_{\Tqn} (\alpha)  \mat{ \Iu{(n+1)q}, \Tqn H_n } (\Iu{2} \otimes v_{q,n} ) B_{n,\alpha} }
 \lgeq\NM
\end{split}\]
 and
\[\begin{split} 
 &\frac{1}{2 \Im z} \ek*{\Jimq - \tilde{\Theta}_{n,\alpha} ^\ad (z) \Jimq \tilde{\Theta}_{n,\alpha} (z) }\\
 &=\frac{1}{2 \Im z}\biggl\{ \iu (\ko{z}-z) \tilde{B}_{n,\alpha}^\ad (\Iu{2} \otimes v_{q,n} )^\ad\mat*{\Iu{(n+1)q}, \ek*{R_{\Tqn}(\alpha)}^\inv H_n}^\ad \ek*{R_{\Tqn} (\alpha)}^\ad H_{\at n}^\gi \\ 
 & \quad\times   \ek*{R_{\Tqn ^\ad } (z)}^\ad \ek*{R_{\Tqn}(\alpha)}^\inv H_{\at n}\ek*{R_{\Tqn^\ad}(\alpha)}^\inv R_{\Tqn ^\ad } (z) \\ 
 & \quad\times  H_{\at n}^\gi  R_{\Tqn} (\alpha) \mat*{\Iu{(n+1)q}, \ek*{R_{\Tqn}(\alpha)}^\inv H_n} (\Iu{2} \otimes v_{q,n} ) \tilde{B}_{n,\alpha} \biggr\} \\ 
 &=\rk*{ \ek*{R_{\Tqn^\ad}(\alpha)}^\inv R_{\Tqn ^\ad } (z) H_{\at n}^\gi  R_{\Tqn} (\alpha)  \mat*{\Iu{(n+1)q}, \ek*{R_{\Tqn}(\alpha)}^\inv H_n}(\Iu{2} \otimes v_{q,n} ) \tilde{B}_{n,\alpha} }^\ad H_{\at n} \\
 & \quad\times \rk*{ \ek*{R_{\Tqn^\ad}(\alpha)}^\inv R_{\Tqn ^\ad } (z) H_{\at n}^\gi  R_{\Tqn} (\alpha) \mat*{\Iu{(n+1)q}, \ek*{R_{\Tqn}(\alpha)}^\inv H_n} (\Iu{2} \otimes v_{q,n} )\tilde{B}_{n,\alpha}}
 \lgeq\NM.\qedhere
\end{split}\]
\end{proof}
 
 Let \(\cG\) be a \tne{} open subset of \(\C\) and let \(f =\mat{f_{jk}}_{\substack{j=1,\dotsc,p\\k=1,\dotsc,q}}\) be a \tpqa{matrix-valued} function which is meromorphic in \(\cG\).
 For every choice of \(j\) in \(\mn{1}{p}\) and \(k\) in \(\mn{1}{q}\), then let \(\hol{f_{jk}}\) be the set of all \(z\in\cG\) in which \(f_{jk}\) is holomorphic and let \(\pol{f_{jk}}\) be the set of all poles of \(f_{jk}\).
 Furthermore, let \(\hol{f} \defeq \bigcap_{j=1}^p\bigcap_{k=1}^q\hol{f_{jk}}\) and let \(\pol{f} \defeq \bigcup_{j=1}^p\bigcup_{k=1}^q\pol{f_{jk}}\).

\begin{nota} \label{bez-W} 
 Let \(\alpha \in \R\).
 By \(\PJif\) we denote the set of all \taaa{2q}{2q}{matrix-valued} functions \(\Theta\) which are holomorphic in \(\Cs \) and for which there exists a discrete subset \(\cD\) of \(\Cs \) such that the following three conditions are fulfilled:
\begin{Aeqi}{0}
 \item\label{bez-W.i} \(\Theta\) is holomorphic in \(\C \setminus (\rhl \cup \cD)\).
 \item\label{bez-W.ii} \(\Theta (z) \Jimq \Theta^\ad (z) \lleq \Jimq\) for each \(z \in \uhp \setminus \cD\).
 \item\label{bez-W.iii} \(\Theta (x) \Jimq \Theta^\ad (x) =\Jimq\) for each \(x \in \crhl  \setminus \cD\).
\end{Aeqi} 
\end{nota}

 Observe that continuity arguments show that conditions~\ref{bez-W.ii} and~\ref{bez-W.iii} in \rnota{bez-W} can be replaced equivalently by the following conditions~\ref{bez-W.ii'} and~\ref{bez-W.iii'}, respectively: 
\begin{Aeqip}{1}
 \item\label{bez-W.ii'} \(\Theta (z) \Jimq \Theta^\ad (z) \lleq \Jimq\) for each \(z \in \uhp \cap \hol{\Theta}\).
 \item\label{bez-W.iii'} \(\crhl \subseteq \hol{\Theta}\) and \( \Theta (x) \Jimq \Theta^\ad (x) =\Jimq\) for each \(x \in \crhl  \).
\end{Aeqip} 

\begin{rem} \label{WT}
 Let \(\alpha \in \R\), let \(\kappa \in \Ninf \), and let \(\seqska  \in \Kggeqka \).
 From the \rlemsss{RF1}{816-1}{BLR4.3-1} we see then that, for each \(n\in \NO \) with \(2n+1\leq \kappa\), the functions \(\hat \Theta_{n,\alpha}\defeq \Rstr_{\Cs }\Theta_{n,\alpha}\) and \(\hat{\tilde{\Theta}}_{n,\alpha}\defeq \Rstr_{\Cs }\tilde{\Theta}_{n,\alpha}\) given by \eqref{TD2-2} and \eqref{TD2-1} are holomorphic in \(\Cs \) and belong both to \(\PJif \).
\end{rem}

\begin{nota} \label{W-Jq-a} 
 Let \(\alpha\in \R\) and let the matrix-valued function \(P_\alpha \colon\Cs\to\Coo{2q}{2q} \) be defined by \(P_\alpha (z) \defeq \diag ((z- \alpha) \Iq, \Iq )\).
 Then let \(\PJisf\) be the set of all \(\Theta \in \PJif \) for which \(\tilde{\Theta} \defeq P_{\alpha} \Theta P_{\alpha}^\inv \) belongs to \(\PJif \).
\end{nota}

\begin{rem} \label{L238-1}
 Let \(\alpha \in \R\), let \(\kappa \in \Ninf \), and let \(\seqska  \in \Kggeqka \).
 From \rrem{WT} and \rlem{BL4.2-1} one can easily see  that, for each \(n \in \NO \) with \(2n+1\leq \kappa\), the matrix-valued function \(\Rstr_{\Cs }\Theta_{n,\alpha}\) given by \eqref{TD2-2} is holomorphic in \(\Cs \) and belongs to \(\PJisf \).
\end{rem}

\begin{lem} \label{NIN}
 Let \(\alpha \in \R\) and let \(\Theta \in \PJisf \).
 Then there is a discrete subset \(\cD\) of \(\Cs \) such that \(\Theta\) is holomorphic in \(\C \setminus (\rhl \cup \cD)\) and that \(\det \Theta (z)\neq 0\) holds true for each \(z \in \C \setminus (\rhl \cup \cD)\).
 Furthermore, \(\Theta^\inv \) is meromorphic in \(\Cs \) with \(\hol{\Theta^\inv } \supseteq \C \setminus (\rhl \cup \cD)\) and the identity \(\Theta^\inv (z)= \Jimq \Theta^\ad (\ko{z}) \Jimq\) holds true for each \(z \in \C \setminus (\rhl \cup \cD)\).
\end{lem}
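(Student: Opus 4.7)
The plan is to derive the stronger assertion that in fact $\Theta$ has nowhere-vanishing determinant on all of $\Cs$, so that one may simply take $\cD = \emptyset$. The key observation is that condition~\ref{bez-W.iii'} gives a $\Jimq$-unitarity identity on the real segment $\crhl$, which by the identity theorem for holomorphic functions propagates to all of the connected domain $\Cs$.

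First I would set $\Theta^\vee\colon\Cs\to\Coo{2q}{2q}$, $\Theta^\vee(z)\defeq[\Theta(\ko z)]^\ad$. Since $\Cs=\C\setminus[\alpha,\infty)$ is invariant under complex conjugation, $\Theta^\vee$ is well-defined, and since $z\mapsto\Theta(\ko z)$ is anti-holomorphic on $\Cs$, its entrywise adjoint $\Theta^\vee$ is holomorphic on $\Cs$. By the reformulated condition~\ref{bez-W.iii'} noted right after \rnota{bez-W}, we have $\crhl\subseteq\hol\Theta$ and
\[
 \Theta(x)\Jimq\Theta^\ad(x)
 =\Jimq
 \qquad\text{for every }x\in\crhl,
\]
and since $\ko x=x$ for real $x$, this rewrites as $\Theta(x)\Jimq\Theta^\vee(x)=\Jimq$ on $\crhl$. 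Consider then the holomorphic matrix-valued function $G\colon\Cs\to\Coo{2q}{2q}$ given by $G(z)\defeq\Theta(z)\Jimq\Theta^\vee(z)-\Jimq$. It vanishes on $\crhl$, which accumulates at every interior point of itself, so the identity theorem applied componentwise on the connected open set $\Cs$ yields $G\equiv\NM$, i.e.
\[
 \Theta(z)\Jimq\Theta^\vee(z)
 =\Jimq
 \qquad\text{for all }z\in\Cs.
\]

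Taking determinants and using $\det\Jimq=(-1)^q\neq0$ (computable by a block-row swap), we infer $\det\Theta(z)\cdot\det\Theta^\vee(z)=1$, so $\det\Theta(z)\neq0$ throughout $\Cs$. Choosing $\cD\defeq\emptyset$, both holomorphy and non-vanishing of $\det\Theta$ hold on $\C\setminus(\rhl\cup\cD)=\Cs$. Finally, multiplying the displayed identity on the left by $\Theta(z)^\inv$ and on the right by $\Jimq$, and using $\Jimq^2=\Iu{2q}$, gives
\[
 \Theta(z)^\inv
 =\Jimq\Theta^\vee(z)\Jimq
 =\Jimq\ek*{\Theta(\ko z)}^\ad\Jimq
 \qquad\text{for every }z\in\Cs.
\]
Since the right-hand side is holomorphic on $\Cs$, so is $\Theta^\inv$; in particular $\Theta^\inv$ is meromorphic on $\Cs$ with $\hol{\Theta^\inv}\supseteq\Cs=\C\setminus(\rhl\cup\cD)$, and the asserted identity is the one just displayed.

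The proof is essentially a single application of the identity theorem, so there is no real obstacle; the only points requiring mild care are checking that $\Cs$ is connected (so that the identity theorem applies globally) and verifying $\det\Jimq\neq0$, both of which are immediate. The lemma's statement with a generic discrete $\cD$ is in fact weaker than what we actually establish, but having $\cD=\emptyset$ certainly satisfies it.
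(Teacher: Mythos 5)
Your proof is correct and follows essentially the same route as the paper's: reflect to \(\Theta^\vee(z)\defeq\Theta^\ad(\ko{z})\), use the \(\Jimq\)-unitarity on \(\crhl\) together with the identity theorem on the connected set \(\Cs\), then take determinants and solve for \(\Theta^\inv\). The only difference is that you take \(\cD=\emptyset\) by reading \rnota{bez-W} literally (\(\Theta\) holomorphic on all of \(\Cs\)), whereas the paper's proof is phrased for possibly meromorphic \(\Theta\) and works on \(\hol{\Theta}\cap\hol{\Theta^\vee}\) with a discrete exceptional set; your strengthening is legitimate under the stated definition, and your argument adapts verbatim to the meromorphic reading by restricting to that intersection.
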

\begin{proof}
 Let \(\Rref{\hol{\Theta}}\defeq \setaca{ z \in \Cs}{\ko{z} \in \hol{\Theta}}\).
 Then \(\Rref{\Theta}\colon\Rref{\hol{\Theta}} \to \Coo{2q}{2q}\) given by \(\Rref{\Theta}(z)\defeq \Theta^\ad (\ko{z})\) is meromorphic in \(\Cs \) with \(\hol{\Rref{\Theta}}= (\hol{\Theta})^\lor\).
 Thus, \(\Omega\defeq \Jimq- \Theta \Jimq \Rref{\Theta}\) is meromorphic in \(\Cs \) with \(\hol{\Omega} \supseteq \hol{\Theta} \cap \hol{\Rref{\Theta}}=\hol{\Theta}\).
 Because of \(\Theta \in \PJisf \), there is a discrete subset \(\cD\) of \(\Cs \) with \(\C \setminus (\rhl \cup \cD)\subseteq \hol{\Theta} \) such that \(\Omega\) is holomorphic in \(\C \setminus (\rhl \cup \cD)\) and that
\[
 \Omega(x)
 = \Jimq -\Theta(x) \Jimq \Rref{\Theta}(x)
 = \Jimq -\Theta(x) \Jimq \Theta^\ad (\ko{x})
 =\Jimq- \Theta(x) \Jimq \Theta^\ad (x)
 =\NM
\]
 holds true for each \(x \in \crhl  \setminus \cD\).
 Consequently, the identity theorem for holomorphic functions shows that \(\Theta(z) \Jimq \Rref{\Theta}(z) = \Jimq\) is valid for each \(z \in \hol{\Theta} \cap \hol{\Rref{\Theta}}\), which implies \( \Theta(z) \Jimq \Theta^\ad (\ko{z})\Jimq=\Theta(z) \Jimq \Rref{\Theta}(z) \Jimq=\Jimq^2 =\Iu{2q} \) for each \(z \in \hol{\Theta} \cap \hol{\Rref{\Theta}}\) and, in particular, for each \(z \in \C \setminus (\rhl \cup \cD)\).
 The rest is plain.
\end{proof}

\begin{lem} \label{OQ-1}
 Let \(\alpha \in \R\), let \(\kappa \in \Ninf \), and let \(\seqska  \in \Kggeqka \).
 Let \(\cG\) be a subset of \(\C\) with \(\cG\setminus \R \neq \emptyset\) and let \(f\colon\cG \to \Cqq\) be a \tqqa{matrix-valued} function.
 For each \(n\in\NO \) with \(2n + 1\le \kappa\), then \(\Sigma_{2n}^{[f]}\colon\cG \setminus \R\to \Cqq\) and \(\Sigma_{2n+1}^{[f]}\colon\cG\setminus\R \to \Cqq\) given by \eqref{N54} and \eqref{N55}
 admit, for each \(z\in \cG \setminus \R\), the representations 
\[
 \Sigma_{2n}^{[f]}(z)
 = \matp{f(z)}{\Iq}^\ad \Theta_{n,\alpha}^\invad (z) \rk*{\frac{-\Jimq}{2 \Im z}} \Theta_{n,\alpha}^\inv (z) \matp{f(z)}{\Iq}
\]
 and
\[
 \Sigma_{2n+1}^{[f]}(z)
 = \matp{(z-\alpha)f(z)}{\Iq }^\ad \tilde{\Theta}_{n,\alpha}^\invad (z) \rk*{\frac{-\Jimq}{2 \Im z}} \tilde{\Theta}_{n,\alpha}^\inv (z) \matp{ (z-\alpha)f(z)}{\Iq }.
\]
\end{lem}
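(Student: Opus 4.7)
The plan is to specialize Lemma~\ref{BL4.17} to the case $w=z$ and then conjugate both sides by the appropriate column vector. Writing Lemma~\ref{BL4.17} as $\Jimq - \Theta_{n,\alpha}^\invad(z)\Jimq\Theta_{n,\alpha}^\inv(z) = -\iu(\ko z - z) M_n(z)$, where $M_n(z)$ abbreviates the middle block-matrix product, and using $\ko z - z = -2\iu\Im z$, I divide by $2\Im z$ and rearrange to
\[
 \Theta_{n,\alpha}^\invad(z)\frac{-\Jimq}{2\Im z}\Theta_{n,\alpha}^\inv(z) = \frac{-\Jimq}{2\Im z} - M_n(z).
\]
The analogous manipulation on the second identity in Lemma~\ref{BL4.17} yields the same relation for $\tilde\Theta_{n,\alpha}$ with a corresponding middle product $\tilde M_n(z)$. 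The residual task is to conjugate these equations by $\matp{f(z)}{\Iq}$ and $\matp{(z-\alpha)f(z)}{\Iq}$, respectively, and match the result with \eqref{N54} and \eqref{N55}.

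For the even case, \rrem{J-1} gives $\matp{f(z)}{\Iq}^\ad(-\Jimq)\matp{f(z)}{\Iq} = -\iu(f(z) - f^\ad(z))$, which after division by $2\Im z$ coincides with $[f(z) - f^\ad(z)]/(z-\ko z)$, producing the first summand of $\Sigma_{2n}^{[f]}(z)$. The key block identity for the second summand is
\[
 \mat{\Iu{(n+1)q}, \Tqn H_n}(\Iu{2}\otimes v_{q,n})\matp{f(z)}{\Iq} = v_{q,n}f(z) + \Tqn H_n v_{q,n} = v_{q,n}f(z) - u_n,
\]
where the final equality uses \rremp{lemC21-1}{lemC21-1.c}. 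Combined with $R_{\Tqn^\ad}(\ko z) = [R_{\Tqn}(z)]^\ad$ from \rrem{21112N}, the middle product contracts to $[R_{\Tqn}(z)(v_{q,n}f(z) - u_n)]^\ad H_n^\gi [R_{\Tqn}(z)(v_{q,n}f(z) - u_n)]$, which coincides with the second summand of \eqref{N54} after the identification of $H_n^\gi$ with $H_n^\mpi$ via \rrem{remark4.5} (both being $\{1\}$-inverses, they act identically on $\ran{H_n}$).

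The odd case proceeds in exact parallel, invoking the $\tilde\Theta_{n,\alpha}$ version of Lemma~\ref{BL4.17} and conjugating by $\matp{(z-\alpha)f(z)}{\Iq}$. The analogue of the block identity above reads
\[
 \mat*{\Iu{(n+1)q},\, \ek*{R_{\Tqn}(\alpha)}^\inv H_n}(\Iu{2}\otimes v_{q,n})\matp{(z-\alpha)f(z)}{\Iq} = v_{q,n}(z-\alpha)f(z) - (-\alpha u_n - y_{0,n}),
\]
which follows from $\ek*{R_{\Tqn}(\alpha)}^\inv H_n v_{q,n} = \alpha u_n + y_{0,n}$ proved in \rrem{ZF1}. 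The middle product then carries $H_{\at n}^\gi$ instead of $H_n^\gi$, reproducing \eqref{N55} after the same $\{1\}$-inverse identification.

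No substantive obstacle is anticipated: the entire argument is an algebraic specialization of Lemma~\ref{BL4.17}, and the only delicate point is sign bookkeeping\,---\,in particular, reconciling the factor $-\iu(\ko z - z)$ appearing in Lemma~\ref{BL4.17} with the denominators $z - \ko z$ in \eqref{N54}, \eqref{N55} and the factor $(-\Jimq)/(2\Im z)$ in the claim, which is settled once by the elementary relation $\ko z - z = -2\iu\Im z$.
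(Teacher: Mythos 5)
Your proposal is correct and follows essentially the same route as the paper, whose entire proof reads ``Use \rremsss{remark4.5}{LG1}{JN} and \rlem{BL4.17}''; your computation is exactly the expansion of that citation, specializing \rlem{BL4.17} to \(w=z\), using \(R_{\Tqn^\ad}(\ko z)=\ek{R_{\Tqn}(z)}^\ad\), \(\Tqn H_n v_{q,n}=-u_n\), and \(\ek{R_{\Tqn}(\alpha)}^\inv H_n v_{q,n}=\alpha u_n+y_{0,n}\) to contract the middle factors. The one caveat in your last step---that swapping \(H_n^\gi\) for \(H_n^\mpi\) (\tresp{} \(H_{\at n}^\gi\) for \(H_{\at n}^\mpi\)) in the quadratic form needs the column \(R_{\Tqn}(z)\ek{v_{q,n}f(z)-u_n}\) to lie in \(\ran{H_n}\), which for an arbitrary \(f\) holds only under the nonnegativity hypothesis built into \rrem{remark4.5}---is shared verbatim by the paper's own proof, so it is not a defect of your argument relative to the source.
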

\begin{proof}
 Use \rremsss{remark4.5}{LG1}{JN} and \rlem{BL4.17}.
\end{proof}

\begin{lem} \label{125} 
 Let \(\alpha \in \R\), let \(\kappa \in \Ninf \), and let \(\seqska  \in \Kggeqka \).
 For each \(n\in\NO \) with \(2n+1 \leq \kappa\) and all \(z \in \C\), then 
\begin{multline} \label{N264-1}
 (\Iu{(n+1)q} -  H_n^\mpi  H_n) R_{\Tqn}(z) \mat{ \Iu{(n+1)q}, \Tqn H_n } (\Iu{2} \otimes v_{q,n} ) \Theta_{n,\alpha}(z) \\
 =\ek*{ \Iu{(n+1)q}+(z-\alpha) \rk{\Iu{(n+1)q}-H_n^\mpi  H_n} \Tqn R_{\Tqn}(z) (\Iu{(n+1)q} -H_nH_n^\gi  )} \\
 \qquad\times \rk{\Iu{(n+1)q}-H_n^\mpi  H_n} R_{\Tqn} (\alpha) \mat{ \Iu{(n+1)q}, \Tqn H_n } (\Iu{2} \otimes v_{q,n} ) B_{n,\alpha}
\end{multline}
 and 
\begin{multline} \label{N264-2}
 (\Iu{(n+1)q} - H_{\at n}^\mpi H_{\at n}) R_{\Tqn}(z) \mat*{ \Iu{(n+1)q}, \ek*{R_{\Tqn}(\alpha)}^\inv H_n } (\Iu{2} \otimes v_{q,n} ) \tilde{\Theta}_{n,\alpha}(z) \\
 = \ek*{ \Iu{(n+1)q}+(z-\alpha) (\Iu{(n+1)q} -H_{\at n}^\mpi H_{\at n})\Tqn R_{\Tqn}(z) (\Iu{(n+1)q} - H_{\at n} H_{\at{n}}^\gi  )} \\ 
 \qquad\times (\Iu{(n+1)q} -H_{\at n}^\mpi H_{\at n}) R_{\Tqn} (\alpha) \mat*{ \Iu{(n+1)q}, \ek*{R_{\Tqn}(\alpha)}^\inv H_n } (\Iu{2} \otimes v_{q,n} ) \tilde{B}_{n,\alpha}.
\end{multline} 
\end{lem}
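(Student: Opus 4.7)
My plan is to prove both identities by a parallel computation that exploits the factorizations $\Theta_{n,\alpha}=U_{n,\alpha}B_{n,\alpha}$ and $\tilde\Theta_{n,\alpha}=\tilde U_{n,\alpha}\tilde B_{n,\alpha}$ from Lemma~\ref{RF1} (equation \eqref{TUB-1}), together with the resolvent identity $R_{\Tqn}(z)-R_{\Tqn}(\alpha)=(z-\alpha)\Tqn R_{\Tqn}(z)R_{\Tqn}(\alpha)$ (cf.\ Remark~\ref{lemmart}) and the bridging identity $(\Iu{(n+1)q}-H_nH_n^\gi)(\Iu{(n+1)q}-H_n^\mpi H_n)=\Iu{(n+1)q}-H_nH_n^\gi$ from Remark~\ref{SBNNN}, together with its analogue for $H_{\at n}$.

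I first set $X\defeq\mat{\Iu{(n+1)q},\Tqn H_n}(\Iu{2}\otimes v_{q,n})$ and $Y\defeq\mat{\Tqn H_n,-\Iu{(n+1)q}}(\Iu{2}\otimes v_{q,n})$ and, combining parts~(a) and~(c) of Remark~\ref{lemC21-1}, observe the key auxiliary identity $XY^\ad=u_nv_{q,n}^\ad-v_{q,n}u_n^\ad=H_n\Tqn^\ad-\Tqn H_n$. Substituting the factorization $\Theta_{n,\alpha}=U_{n,\alpha}B_{n,\alpha}$ and the explicit form \eqref{U1} of $U_{n,\alpha}$ into the left-hand side of \eqref{N264-1} and denoting $P\defeq\Iu{(n+1)q}-H_n^\mpi H_n$, the left-hand side becomes
\[
 PR_{\Tqn}(z)XB_{n,\alpha}+(z-\alpha)PR_{\Tqn}(z)(H_n\Tqn^\ad-\Tqn H_n)R_{\Tqn^\ad}(z)H_n^\gi R_{\Tqn}(\alpha)XB_{n,\alpha}.
\]
Expanding the right-hand side of \eqref{N264-1} and invoking Remark~\ref{SBNNN} to collapse $(\Iu{(n+1)q}-H_nH_n^\gi)P$ back to $\Iu{(n+1)q}-H_nH_n^\gi$, the right-hand side transforms into $PR_{\Tqn}(\alpha)XB_{n,\alpha}+(z-\alpha)P\Tqn R_{\Tqn}(z)(\Iu{(n+1)q}-H_nH_n^\gi)R_{\Tqn}(\alpha)XB_{n,\alpha}$.

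Computing the difference of the two sides and applying the resolvent identity to the zeroth-order contribution $P[R_{\Tqn}(z)-R_{\Tqn}(\alpha)]XB_{n,\alpha}$, everything reduces, after cancelling the common summand $(z-\alpha)P\Tqn R_{\Tqn}(z)R_{\Tqn}(\alpha)XB_{n,\alpha}$ and dividing by $(z-\alpha)$ (which is legitimate since both sides are matrix polynomials in $z$), to the assertion that
\[
 P\bigl[\Tqn R_{\Tqn}(z)H_n+R_{\Tqn}(z)H_n\Tqn^\ad R_{\Tqn^\ad}(z)-\Tqn R_{\Tqn}(z)H_nR_{\Tqn^\ad}(z)\bigr]H_n^\gi R_{\Tqn}(\alpha)XB_{n,\alpha}=\NM.
\]
The expression in brackets telescopes: using $\Iu{(n+1)q}-R_{\Tqn^\ad}(z)=-z\Tqn^\ad R_{\Tqn^\ad}(z)$ to combine the last two terms and then $(\Iu{(n+1)q}-z\Tqn)R_{\Tqn}(z)=\Iu{(n+1)q}$ to collapse what remains, the bracket reduces to $H_n\Tqn^\ad R_{\Tqn^\ad}(z)$. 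The resulting expression $PH_n\Tqn^\ad R_{\Tqn^\ad}(z)H_n^\gi R_{\Tqn}(\alpha)XB_{n,\alpha}$ then vanishes because $PH_n=\NM$, a consequence of $H_n^\ad=H_n$ (Remark~\ref{1582012-2}).

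The proof of \eqref{N264-2} runs on the same template, with $X,Y,H_n,H_n^\gi,U_{n,\alpha},B_{n,\alpha}$ replaced by their tilde-analogues $\tilde X\defeq\mat*{\Iu{(n+1)q},\ek*{R_{\Tqn}(\alpha)}^\inv H_n}(\Iu{2}\otimes v_{q,n})$, $\tilde Y\defeq\mat*{\ek*{R_{\Tqn}(\alpha)}^\inv H_n,-\Iu{(n+1)q}}(\Iu{2}\otimes v_{q,n})$, $H_{\at n}$, $H_{\at n}^\gi$, $\tilde U_{n,\alpha}$, $\tilde B_{n,\alpha}$. The corresponding auxiliary identity $\tilde X\tilde Y^\ad=H_{\at n}\Tqn^\ad-\Tqn H_{\at n}$ is furnished by equation~\eqref{FIDW3} of Remark~\ref{ZF1} (with $z=0$), and the second relation in Remark~\ref{SBNNN} bridges $H_{\at n}^\mpi$ and $H_{\at n}^\gi$. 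The main technical delicacy of the argument is the careful management of the interaction between the Moore--Penrose inverse $H_n^\mpi$ (producing an orthogonal projection) and the Dubovoj-adapted $(1,2)$-inverse $H_n^\gi$ (with range equal to the canonical Dubovoj subspace $\cD_n$): although their ranges differ, they share the kernel $\nul{H_n}$, which is precisely what makes the bridging identity of Remark~\ref{SBNNN} applicable and the final cancellation $PH_n=\NM$ possible.
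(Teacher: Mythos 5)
Your proof is correct and takes essentially the same route as the paper's: both rest on the factorization \(\Theta_{n,\alpha}=U_{n,\alpha}B_{n,\alpha}\) (and \(\tilde{\Theta}_{n,\alpha}=\tilde{U}_{n,\alpha}\tilde{B}_{n,\alpha}\)) from \rlem{RF1}, the Ljapunov-type identity for the rank-two correction term, the resolvent identity \(R_{\Tqn}(z)\ek{R_{\Tqn}(\alpha)}^\inv=\Iu{(n+1)q}+(z-\alpha)\Tqn R_{\Tqn}(z)\), the bridging identity of \rrem{SBNNN}, and the annihilation \((\Iu{(n+1)q}-H_n^\mpi H_n)H_n=\NM\); the only difference is bookkeeping, in that you compute the difference of the two sides and telescope, whereas the paper simplifies \(\Phi(z)\) in \eqref{125-t} directly into the right-hand form. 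One small imprecision in your closing remark: \(H_n^\mpi\) and \(H_n^\gi\) do not literally share the kernel \(\nul{H_n}\) (the kernel of \(H_n^\gi\) is \(\cD_n^\orth\)); what is true, and what your argument actually uses, is that \(H_n^\mpi H_n\) and \(H_n^\gi H_n\) are both projections along \(\nul{H_n}\), which is precisely the content of \rrem{SBNNN}.
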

\begin{proof} 
 Let \(n \in \NO \) be such that \(2n+1 \leq \kappa\) and let \(z \in \C\).
 \rremss{ZS}{1582012-2} yield \(H_n^\ad =H_n\) and \(H_{\at{n}}^\ad =H_{\at{n}}\).
 Using \eqref{TUB-1}, \eqref{U1}, and \(R_{\Tqn^\ad } (z)=\ek{R_{\Tqn } (\ko{z})}^\ad \), we have
\beql{N265-1}\begin{split}
 &(\Iu{(n+1)q}-H_n^\mpi  H_n) R_{\Tqn}(z) \mat{ \Iu{(n+1)q}, \Tqn H_n } (\Iu{2} \otimes v_{q,n} ) \Theta_{n,\alpha}(z) \\
 &=(\Iu{(n+1)q} -H_n^\mpi  H_n) R_{\Tqn}(z) \mat{ \Iu{(n+1)q}, \Tqn H_n } (\Iu{2} \otimes v_{q,n} ) U_{n,\alpha}(z) B_{n,\alpha} \\
 &=(\Iu{(n+1)q} -H_n^\mpi  H_n) R_{\Tqn}(z) \mat{ \Iu{(n+1)q}, \Tqn H_n } (\Iu{2} \otimes v_{q,n} ) \\
 &\qquad\times \biggl\{\Iu{2q} + (z-\alpha) (\Iu{2} \otimes v_{q,n} )^\ad \mat{\Tqn H_n, -\Iu{(n+1)q}}^\ad \\
 &\qquad\qquad\times \ek*{R_{\Tqn } (\ko{z})}^\ad H_n^\gi  R_{\Tqn} (\alpha) \mat{ \Iu{(n+1)q}, \Tqn H_n} (\Iu{2} \otimes v_{q,n} )\biggr\} B_{n,\alpha} \\
 &=(\Iu{(n+1)q} -H_n^\mpi  H_n) \Phi(z) R_{\Tqn} (\alpha) \mat{ \Iu{(n+1)q}, \Tqn H_n } (\Iu{2} \otimes v_{q,n} ) B_{n,\alpha} 
\end{split}\eeq
 where
\begin{multline} \label{phi-t} 
 \Phi(z)
 \defeq R_{\Tqn}(z)\ek*{R_{\Tqn} (\alpha)}^\inv + (z-\alpha) R_{\Tqn}(z) \mat{ \Iu{(n+1)q}, \Tqn H_n }(\Iu{2} \otimes v_{q,n} ) \\
 \times  (\Iu{2} \otimes v_{q,n} )^\ad \mat{\Tqn H_n, -\Iu{(n+1)q}}^\ad \ek*{R_{\Tqn } (\ko{z})}^\ad H_n^\gi .
\end{multline} 
 Taking into account equation~\eqref{SKP4} in \rrem{JN}, \(H_n^\ad =H_n\), and \eqref{B204}, we obtain 
\beql{NM12Z} \begin{split} 
  & R_{\Tqn}(z)\mat{ \Iu{(n+1)q}, \Tqn H_n } (\Iu{2} \otimes v_{q,n} ) (\Iu{2} \otimes v_{q,n} )^\ad \mat{\Tqn H_n, -\Iu{(n+1)q}}^\ad \ek*{R_{\Tqn } (\ko{z})}^\ad \\
 & = R_{\Tqn}(z) \rk{v_{q,n}v_{q,n}^\ad H_n\Tqn^\ad -\Tqn H_n v_{q,n}v_{q,n}^\ad } \ek*{R_{\Tqn } (\ko{z})}^\ad \\
 & = R_{\Tqn}(z) \rk*{\ek*{R_{\Tqn}(z)}^\inv H_n \Tqn^\ad - \Tqn H_n \ek*{R_{\Tqn}(\ko{z})}^\invad } \ek*{R_{\Tqn } (\ko{z})}^\ad \\
 & = H_n \Tqn^\ad \ek*{R_{\Tqn } (\ko{z})}^\ad - R_{\Tqn}(z) \Tqn H_n.
\end{split}\eeq
 From \eqref{NM12Z}, \eqref{phi-t}, \eqref{N52}, and the identity \(R_{\Tqn}(z) \Tqn= \Tqn R_{\Tqn}(z) \) we get
\beql{125-t}\begin{split} 
 \Phi(z)
 &= R_{\Tqn}(z)\ek*{R_{\Tqn} (\alpha)}^\inv + (z-\alpha) \rk*{H_n \Tqn^\ad \ek*{R_{\Tqn } (\ko{z})}^\ad - R_{\Tqn}(z) \Tqn H_n } H_n^\gi  \\
 & = \Iu{(n+1)q}+(z-\alpha) \Tqn R_{\Tqn}(z) +(z-\alpha) H_n \Tqn^\ad \ek*{R_{\Tqn } (\ko{z})}^\ad H_n^\gi\\
 &\qquad- (z-\alpha) \Tqn R_{\Tqn}(z) H_nH_n^\gi  \\
 & = \Iu{(n+1)q}+(z-\alpha) \Tqn R_{\Tqn}(z) (\Iu{(n+1)q} -H_nH_n^\gi  ) +(z-\alpha) H_n \Tqn^\ad \ek*{R_{\Tqn } (\ko{z})}^\ad H_n^\gi .
\end{split}\eeq
 In view of \rrem{SBNNN}, consequently,
\beql{125-t-3}
 (z-\alpha)\rk{\Iu{(n+1)q}-H_n^\mpi  H_n} H_n \Tqn^\ad \ek*{R_{\Tqn } (\ko{z})}^\ad H_n^\gi  = 0.
\eeq
 By virtue of \eqref{125-t}, \rrem{SBNNN}, and \eqref{125-t-3}, we conclude
\beql{125-t-2}\begin{split} 
 &\rk{\Iu{(n+1)q}-H_n^\mpi  H_n} \Phi(z) R_{\Tqn} (\alpha) \mat{ \Iu{(n+1)q}, \Tqn H_n } (\Iu{2} \otimes v_{q,n} ) B_{n,\alpha} \\
 &= \rk{\Iu{(n+1)q}-H_n^\mpi  H_n} \\
 & \qquad\times\ek*{\Iu{(n+1)q}+(z-\alpha) \Tqn R_{\Tqn}(z) (\Iu{(n+1)q} -H_nH_n^\gi  ) +(z-\alpha) H_n \Tqn^\ad \ek{R_{\Tqn } (\ko{z})}^\ad H_n^\gi } \\
 & \qquad\times R_{\Tqn} (\alpha) \mat{ \Iu{(n+1)q}, \Tqn H_n } (\Iu{2} \otimes v_{q,n} ) B_{n,\alpha} \\
 &=\biggl \{\rk{\Iu{(n+1)q}-H_n^\mpi  H_n} \\
 & \qquad\qquad +(z-\alpha) \rk{\Iu{(n+1)q}-H_n^\mpi  H_n} \Tqn R_{\Tqn}(z) (\Iu{(n+1)q} -H_nH_n^\gi ) \rk{\Iu{(n+1)q}-H_n^\mpi  H_n} \\
 & \qquad\qquad+(z-\alpha) \rk{\Iu{(n+1)q}-H_n^\mpi  H_n} H_n \Tqn^\ad \ek*{R_{\Tqn } (\ko{z})}^\ad H_n^\gi\biggr \} \\
 & \qquad\times R_{\Tqn} (\alpha) \mat{ \Iu{(n+1)q}, \Tqn H_n } (\Iu{2} \otimes v_{q,n} ) B_{n,\alpha} \\ 
 &=\ek*{ \Iu{(n+1)q}+(z-\alpha) \rk{\Iu{(n+1)q}-H_n^\mpi  H_n} \Tqn R_{\Tqn}(z) (\Iu{(n+1)q} -H_nH_n^\gi  )} \\
 & \qquad\times \rk{\Iu{(n+1)q}-H_n^\mpi  H_n} R_{\Tqn} (\alpha) \mat{ \Iu{(n+1)q}, \Tqn H_n } (\Iu{2} \otimes v_{q,n} ) B_{n,\alpha}.
\end{split}\eeq
 The combination of \eqref{N265-1} and \eqref{125-t-2} yields \eqref{N264-1}.
 Furthermore, using \eqref{TUB-1}, \eqref{U2}, and \(R_{\Tqn^\ad } (z)=\ek{R_{\Tqn } (\ko{z})}^\ad \), we infer
\beql{TJ2-27}\begin{split} 
 &( \Iu{(n+1)q} - H_{\at n}^\mpi H_{\at n}) R_{\Tqn}(z) \mat*{ \Iu{(n+1)q}, \ek*{R_{\Tqn}(\alpha)}^\inv H_n } (\Iu{2} \otimes v_{q,n} ) \tilde{\Theta}_{n,\alpha}(z) \\
 &= (\Iu{(n+1)q} -H_{\at n}^\mpi H_{\at n}) R_{\Tqn}(z) \mat*{ \Iu{(n+1)q}, \ek*{R_{\Tqn}(\alpha)}^\inv H_n } (\Iu{2} \otimes v_{q,n} ) \tilde{U}_{n,\alpha}(z) \tilde{B}_{n,\alpha} \\
 &= (\Iu{(n+1)q} -H_{\at n}^\mpi H_{\at n}) R_{\Tqn}(z) \mat*{ \Iu{(n+1)q}, \ek*{R_{\Tqn}(\alpha)}^\inv H_n } (\Iu{2} \otimes v_{q,n} ) \\
 &\qquad\times \biggl\{\Iu{2q} + (z-\alpha) (\Iu{2} \otimes v_{q,n} )^\ad \mat*{\ek*{R_{\Tqn}(\alpha)}^\inv H_n, -\Iu{(n+1)q}}^\ad\ek*{R_{\Tqn} (\ko{z})}^\ad H_{\at{n}}^\gi \\
 & \qquad \qquad \times   R_{\Tqn} (\alpha) \mat*{ \Iu{(n+1)q},\ek*{R_{\Tqn}(\alpha)}^\inv H_n} (\Iu{2} \otimes v_{q,n} )\biggr\}\tilde{B}_{n,\alpha} \\
 &= (\Iu{(n+1)q} -H_{\at n}^\mpi H_{\at n}) \tilde{\Phi}(z) R_{\Tqn} (\alpha) \mat*{ \Iu{(n+1)q}, \ek*{R_{\Tqn}(\alpha)}^\inv H_n } (\Iu{2} \otimes v_{q,n} ) \tilde{B}_{n,\alpha}
\end{split}\eeq
 where 
\begin{multline} \label{ltr}
 \tilde{\Phi}(z)
 \defeq R_{\Tqn}(z)\ek*{R_{\Tqn} (\alpha)}^\inv + (z-\alpha) R_{\Tqn}(z) \mat*{ \Iu{(n+1)q}, \ek*{R_{\Tqn}(\alpha)}^\inv H_n } (\Iu{2} \otimes v_{q,n} ) \\
 \times(\Iu{2} \otimes v_{q,n} )^\ad \mat*{\ek*{R_{\Tqn}(\alpha)}^\inv H_n, -\Iu{(n+1)q}}^\ad \ek*{R_{\Tqn} (\ko{z})}^\ad H_{\at{n}}^\gi .
\end{multline}
 Because of identity~\eqref{SKP4} in \rrem{JN}, \(H_n^\ad =H_n\), and equation~\eqref{FIDW3} in \rrem{ZF1}, we obtain 
\[\begin{split}
 &R_{\Tqn}(z)  \mat*{ \Iu{(n+1)q}, \ek*{R_{\Tqn}(\alpha)}^\inv H_n }(\Iu{2} \otimes v_{q,n} )\\
 &\qquad\times(\Iu{2} \otimes v_{q,n} )^\ad\mat*{\ek*{R_{\Tqn}(\alpha)}^\inv H_n, -\Iu{(n+1)q}}^\ad \ek*{R_{\Tqn} (\ko{z})}^\ad \\
 & = R_{\Tqn}(z) \rk*{ v_{q,n} v_{q,n}^\ad H_n \ek*{R_{\Tqn}(\alpha)}^\invad - \ek*{R_{\Tqn}(\alpha)}^\inv H_n v_{q,n} v_{q,n}^\ad } \ek*{R_{\Tqn} (\ko{z})}^\ad \\ 
 &= R_{\Tqn}(z) \rk*{ \ek*{R_{\Tqn}(z)}^\inv H_{\at n} \Tqn ^\ad -\Tqn H_{\at n} \ek*{R_{\Tqn}(\ko{z})}^\invad } \ek*{R_{\Tqn} (\ko{z})}^\ad \\ 
 &= H_{\at n} \Tqn ^\ad \ek*{R_{\Tqn} (\ko{z})}^\ad - R_{\Tqn}(z)\Tqn H_{\at n},
\end{split}\]
 which, in view of \eqref{ltr}, \eqref{N52}, and the identity \(R_{\Tqn}(z) \Tqn= \Tqn R_{\Tqn}(z)\), implies
\beql{ltr-2}\begin{split} 
 &\tilde{\Phi}(z)\\
 & = R_{\Tqn}(z)\ek*{R_{\Tqn} (\alpha)}^\inv +(z-\alpha) \rk*{ H_{\at n} \Tqn ^\ad \ek*{R_{\Tqn} (\ko{z})}^\ad - R_{\Tqn}(z)\Tqn H_{\at n} } H_{\at{n}}^\gi  \\
 & = \Iu{(n+1)q}+(z-\alpha) \Tqn R_{\Tqn}(z) +(z-\alpha) H_{\at n} \Tqn ^\ad \ek*{R_{\Tqn} (\ko{z})}^\ad H_{\at{n}}^\gi\\
 &\qquad-(z-\alpha) \Tqn R_{\Tqn}(z) H_{\at n} H_{\at{n}}^\gi  \\
 & = \Iu{(n+1)q}+(z-\alpha) \Tqn R_{\Tqn}(z) (\Iu{(n+1)q} - H_{\at n} H_{\at{n}}^\gi  )\\
 &\qquad+(z-\alpha) H_{\at n} \Tqn ^\ad \ek*{R_{\Tqn} (\ko{z})}^\ad H_{\at{n}}^\gi .
\end{split}\eeq
 From \rrem{SBNNN} we see that
\beql{125-t4}
 (z-\alpha)(\Iu{(n+1)q}-H_{\at{n}}^\mpi  H_{\at{n}}) H_{\at n} \Tqn ^\ad \ek*{R_{\Tqn} (\ko{z})}^\ad H_{\at{n}}^\gi
 =\NM
\eeq
 is true.
 Using \eqref{ltr-2}, \rrem{SBNNN}, and \eqref{125-t4}, we get 
\beql{TJ2-30} \begin{split} 
 &(\Iu{(n+1)q} -H_{\at n}^\mpi H_{\at n}) \tilde{\Phi}(z) R_{\Tqn} (\alpha) \mat*{ \Iu{(n+1)q}, \ek*{R_{\Tqn}(\alpha)}^\inv H_n } (\Iu{2} \otimes v_{q,n} ) \tilde{B}_{n,\alpha} \\
 &=(\Iu{(n+1)q} -H_{\at n}^\mpi H_{\at n})\biggl\{\Iu{(n+1)q}+(z-\alpha) \Tqn R_{\Tqn}(z) (\Iu{(n+1)q} - H_{\at n} H_{\at{n}}^\gi  )\\
 &\qquad\qquad+(z-\alpha) H_{\at n} \Tqn ^\ad \ek*{R_{\Tqn} (\ko{z})}^\ad H_{\at{n}}^\gi\biggr \} \\ 
 &\qquad\times R_{\Tqn} (\alpha) \mat*{ \Iu{(n+1)q}, \ek*{R_{\Tqn}(\alpha)}^\inv H_n } (\Iu{2} \otimes v_{q,n} ) \tilde{B}_{n,\alpha} \\ 
 &=\biggl \{(\Iu{(n+1)q} -H_{\at n}^\mpi H_{\at n}) +(z-\alpha) (\Iu{(n+1)q} -H_{\at n}^\mpi H_{\at n}) \Tqn R_{\Tqn}(z) \\ &\qquad\qquad\qquad\times(\Iu{(n+1)q} - H_{\at n} H_{\at{n}}^\gi  )(\Iu{(n+1)q} -H_{\at n}^\mpi H_{\at n}) \\ 
 & \qquad\qquad +(z-\alpha) (\Iu{(n+1)q} -H_{\at n}^\mpi H_{\at n}) H_{\at n} \Tqn ^\ad \ek*{R_{\Tqn} (\ko{z})}^\ad H_{\at{n}}^\gi \biggr \} \\ 
 & \qquad\times R_{\Tqn} (\alpha) \mat{ \Iu{(n+1)q}, \ek{R_{\Tqn}(\alpha)}^\inv H_n } (\Iu{2} \otimes v_{q,n} ) \tilde{B}_{n,\alpha} \\ 
 & =\ek*{\Iu{(n+1)q}+(z-\alpha) (\Iu{(n+1)q} -H_{\at n}^\mpi H_{\at n})\Tqn R_{\Tqn}(z) (\Iu{(n+1)q} - H_{\at n} H_{\at{n}}^\gi  )} \\ 
 & \qquad\times (\Iu{(n+1)q} -H_{\at n}^\mpi H_{\at n}) R_{\Tqn} (\alpha) \mat*{ \Iu{(n+1)q}, \ek*{R_{\Tqn}(\alpha)}^\inv H_n } (\Iu{2} \otimes v_{q,n} ) \tilde{B}_{n,\alpha}.
\end{split}\eeq
 The combination of \eqref{TJ2-27} and \eqref{TJ2-30} provides us \eqref{N264-2}.
 \end{proof}

\begin{lem} \label{133} 
 Let \(\alpha \in \R\), let \(\kappa \in \Ninf \), and let \(\seqska  \in \Kggeqka \).
 For each \(n\in\NO \) such that \(2n+1 \leq \kappa\), then 
\begin{multline*} 
 (\Iu{(n+1)q} -H_n^\mpi  H_n) R_{\Tqn}(\alpha) \mat{ \Iu{(n+1)q}, \Tqn H_n} (\Iu{2} \otimes v_{q,n} ) B_{n,\alpha} \\
 = (\Iu{(n+1)q} -H_n^\mpi  H_n)R_{\Tqn}(\alpha) \mat*{ \Iu{(n+1)q}, \Tqn (\Iu{(n+1)q} -H_{\at n}H_{\at n}^\gi ) H_n } (\Iu{2} \otimes v_{q,n} )
\end{multline*} 
 and 
\begin{multline*} 
 (\Iu{(n+1)q} -H_{\at n}^\mpi  H_{\at n}) R_{\Tqn}(\alpha) \mat*{ \Iu{(n+1)q}, \ek*{R_{\Tqn}(\alpha)}^\inv H_n} (\Iu{2} \otimes v_{q,n} ) \tilde{B}_{n,\alpha} \\
 = (\Iu{(n+1)q} -H_{\at n}^\mpi  H_{\at n}) \mat*{(\Iu{(n+1)q} -H_n H_n^\gi ) R_{\Tqn}(\alpha), H_n } (\Iu{2} \otimes v_{q,n} ).
\end{multline*} 
\end{lem}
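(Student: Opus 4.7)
The plan is to verify both identities by direct block matrix computation, using \rrem{115} to absorb the factors $B_{n,\alpha}$ and $\tilde B_{n,\alpha}$ and then reducing the resulting discrepancies to expressions annihilated by the oblique projections $\Iu{(n+1)q}-H_n^\mpi H_n$ and $\Iu{(n+1)q}-H_{\at n}^\mpi H_{\at n}$.

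\emph{First identity.} I would apply \rrem{115} to rewrite
\[
 \mat{\Iu{(n+1)q}, \Tqn H_n}(\Iu{2}\otimes v_{q,n})B_{n,\alpha}
 = \mat{\Iu{(n+1)q},\rk{v_{q,n}v_{q,n}^\ad H_n H_{\at n}^\gi+\Tqn}H_n}(\Iu{2}\otimes v_{q,n}).
\]
The first block column is identical on both sides, so only the second block column has to be examined. Reading off the second column of $(\Iu{2}\otimes v_{q,n})$ gives a product in which $H_n v_{q,n}$ appears on the right. Here the key step is to invoke \rremp{lemC21-1}{lemC21-1.b}, namely
\[
 v_{q,n}v_{q,n}^\ad H_n=\ek*{R_{\Tqn}(\alpha)}^\inv H_n-\Tqn H_{\at n},
\]
so that
\[
 \rk{v_{q,n}v_{q,n}^\ad H_n H_{\at n}^\gi+\Tqn}H_nv_{q,n}
 =\ek*{R_{\Tqn}(\alpha)}^\inv H_n H_{\at n}^\gi H_n v_{q,n}+\Tqn(\Iu{(n+1)q}-H_{\at n}H_{\at n}^\gi)H_nv_{q,n}.
\]
Multiplying from the left by $(\Iu{(n+1)q}-H_n^\mpi H_n)R_{\Tqn}(\alpha)$, the first summand becomes $(\Iu{(n+1)q}-H_n^\mpi H_n)H_n H_{\at n}^\gi H_nv_{q,n}=0$ by \eqref{N73}, while the second is exactly the corresponding block on the right-hand side.

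\emph{Second identity.} Here I would again first apply \rrem{115} to obtain
\[
 \mat*{\Iu{(n+1)q},\ek*{R_{\Tqn}(\alpha)}^\inv H_n}(\Iu{2}\otimes v_{q,n})\tilde B_{n,\alpha}
 =\ek*{R_{\Tqn}(\alpha)}^\inv\mat*{\rk{\Iu{(n+1)q}-H_nv_{q,n}v_{q,n}^\ad R_{\Tqn^\ad}(\alpha)H_n^\gi}R_{\Tqn}(\alpha),H_n}(\Iu{2}\otimes v_{q,n}),
\]
so that the factor $R_{\Tqn}(\alpha)$ cancels out. The second block column already matches the asserted right-hand side, so the task reduces to showing that
\[
 (\Iu{(n+1)q}-H_{\at n}^\mpi H_{\at n})\rk*{\Iu{(n+1)q}-H_nv_{q,n}v_{q,n}^\ad R_{\Tqn^\ad}(\alpha)H_n^\gi}R_{\Tqn}(\alpha)v_{q,n}
\]
equals the corresponding block of the right-hand side. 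Taking adjoints in the identity from \rremp{lemC21-1}{lemC21-1.b} yields $H_nv_{q,n}v_{q,n}^\ad R_{\Tqn^\ad}(\alpha)=H_n-H_{\at n}\Tqn^\ad R_{\Tqn^\ad}(\alpha)$, whence
\[
 \Iu{(n+1)q}-H_nv_{q,n}v_{q,n}^\ad R_{\Tqn^\ad}(\alpha)H_n^\gi
 =(\Iu{(n+1)q}-H_nH_n^\gi)+H_{\at n}\Tqn^\ad R_{\Tqn^\ad}(\alpha)H_n^\gi.
\]
Multiplying from the left by $(\Iu{(n+1)q}-H_{\at n}^\mpi H_{\at n})$ kills the contribution $H_{\at n}\Tqn^\ad R_{\Tqn^\ad}(\alpha)H_n^\gi R_{\Tqn}(\alpha)v_{q,n}$ via \eqref{PPI-2}, and leaves $(\Iu{(n+1)q}-H_{\at n}^\mpi H_{\at n})(\Iu{(n+1)q}-H_nH_n^\gi)R_{\Tqn}(\alpha)v_{q,n}$, which is the right-hand side.

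The whole argument is essentially bookkeeping; the only non-obvious ingredient is recognising that the Ljapunov-type identity $v_{q,n}v_{q,n}^\ad H_n=\ek{R_{\Tqn}(\alpha)}^\inv H_n-\Tqn H_{\at n}$ from \rrem{lemC21-1} connects the column $H_nv_{q,n}$ with $\Tqn H_{\at n}$ (and dually, $H_nv_{q,n}v_{q,n}^\ad R_{\Tqn^\ad}(\alpha)$ with $H_{\at n}\Tqn^\ad R_{\Tqn^\ad}(\alpha)$), after which the oblique projections erase the unwanted terms via \eqref{N73} and \eqref{PPI-2}. I do not anticipate any real obstacle beyond carefully organising these block computations.
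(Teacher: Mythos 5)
Your proof is correct and follows essentially the same route as the paper's: \rrem{115} absorbs \(B_{n,\alpha}\) and \(\tilde{B}_{n,\alpha}\), the Ljapunov identity \(v_{q,n}v_{q,n}^\ad H_n=\ek{R_{\Tqn}(\alpha)}^\inv H_n-\Tqn H_{\at n}\) from \rrem{lemC21-1} (and its adjoint) rewrites the relevant block, and the oblique projections annihilate the unwanted terms. The only slip is bibliographic: the annihilations \((\Iu{(n+1)q}-H_n^\mpi  H_n)H_n=\NM\) and \((\Iu{(n+1)q}-H_{\at n}^\mpi  H_{\at n})H_{\at n}=\NM\) should be cited from \rrem{SBNNN} (they concern the Moore--Penrose inverse), not from \eqref{N73} and \eqref{PPI-2}, which are statements about \(H_n^\gi\) and \(H_{\at n}^\gi\).
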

\begin{proof}
 Let \(n\in \NO \) be such that \(2n+1\leq \kappa\).
 From \rremss{ZS}{1582012-2} we get \(H_n^\ad =H_n\) and \(H_{\at{n}}^\ad =H_{\at{n}}\).
 Because of the \rremss{lemC21-1}{SBNNN}, we have 
\beql{N247-1} \begin{split} 
 &(\Iu{(n+1)q}  - H_n^\mpi  H_n) R_{\Tqn}(\alpha)\rk{v_{q,n} v_{q,n}^\ad H_n H_{\at{n}}^\gi +\Tqn} \\
 &= (\Iu{(n+1)q} - H_n^\mpi  H_n) R_{\Tqn}(\alpha) \ek*{ \rk*{ \ek{R_{\Tqn} (\alpha)}^\inv H_n -\Tqn H_{\at{n}}} H_{\at{n}}^\gi  +\Tqn } \\
 &=(\Iu{(n+1)q} - H_n^\mpi  H_n) H_n H_{\at{n}}^\gi  - (\Iu{(n+1)q} - H_n^\mpi  H_n) R_{\Tqn}(\alpha) \Tqn H_{\at{n}} H_{\at{n}}^\gi\\
 &\qquad+ (\Iu{(n+1)q} - H_n^\mpi  H_n) R_{\Tqn}(\alpha) \Tqn \\
 &=(\Iu{(n+1)q} - H_n^\mpi  H_n) R_{\Tqn}(\alpha)\Tqn (\Iu{(n+1)q}-H_{\at{n}} H_{\at{n}}^\gi ).
\end{split}\eeq
 Applying \rrem{115} and \eqref{N247-1}, we conclude
\[\begin{split}
 & (\Iu{(n+1)q} - H_n^\mpi  H_n) R_{\Tqn}(\alpha) \mat{ \Iu{(n+1)q}, \Tqn H_n} (\Iu{2} \otimes v_{q,n} ) B_{n,\alpha} \\
 & = (\Iu{(n+1)q} -H_n^\mpi  H_n) R_{\Tqn}(\alpha) \mat*{ \Iu{(n+1)q}, \rk{v_{q,n} v_{q,n}^\ad H_n H_{\at{n}}^\gi  +\Tqn} H_n } (\Iu{2} \otimes v_{q,n} ) \\
 & =\mat*{(\Iu{(n+1)q} -H_n^\mpi  H_n) R_{\Tqn}(\alpha),(\Iu{(n+1)q} - H_n^\mpi  H_n) R_{\Tqn}(\alpha) \rk{v_{q,n} v_{q,n}^\ad H_n H_{\at{n}}^\gi +\Tqn} H_n}\\
 &\qquad\times(\Iu{2} \otimes v_{q,n} ) \\ 
 & =\mat*{(\Iu{(n+1)q} -H_n^\mpi  H_n) R_{\Tqn}(\alpha), (\Iu{(n+1)q} - H_n^\mpi  H_n) R_{\Tqn}(\alpha) \Tqn (\Iu{(n+1)q}-H_{\at{n}} H_{\at{n}}^\gi ) H_n}\\
 &\qquad\times(\Iu{2} \otimes v_{q,n} ) \\
 & = (\Iu{(n+1)q} -H_n^\mpi  H_n) R_{\Tqn}(\alpha) \mat*{ \Iu{(n+1)q}, \Tqn (\Iu{(n+1)q} -H_{\at n}H_{\at n}^\gi ) H_n } (\Iu{2} \otimes v_{q,n} ).
\end{split}\]
 Taking into account \(H_n^\ad =H_n\), \(H_{\at{n}}^\ad =H_{\at{n}}\), and \rrem{lemC21-1}, we obtain \(H_n v_{q,n} v_{q,n}^\ad =H_n \ek{R_{\Tqn}(\alpha)}^\invad - H_{\at n} \Tqn ^\ad \) and, hence,
\begin{multline} \label{NK122} 
 \Iu{(n+1)q} - H_n v_{q,n} v_{q,n}^\ad \ek*{R_{\Tqn}(\alpha)}^\ad  H_n^\gi\\
 =\Iu{(n+1)q} -\rk*{H_n \ek*{R_{\Tqn}(\alpha)}^\invad - H_{\at n} \Tqn ^\ad } \ek*{R_{\Tqn}(\alpha)}^\ad  H_n^\gi  \\
 =\Iu{(n+1)q} -H_n H_n^\gi  + H_{\at n} \Tqn ^\ad \ek*{R_{\Tqn}(\alpha)}^\ad  H_n^\gi.
\end{multline}
 Let \(P\defeq\Iu{(n+1)q} -H_{\at n}^\mpi  H_{\at n}\).
 From \eqref{NK122} and \rrem{SBNNN} we see that
\begin{multline} \label{TT-115}
 P \rk*{ \Iu{(n+1)q} - H_n v_{q,n} v_{q,n}^\ad \ek*{R_{\Tqn}(\alpha)}^\ad  H_n^\gi  } R_{\Tqn}(\alpha) \\
 =P \rk*{\Iu{(n+1)q} -H_n H_n^\gi  + H_{\at n} \Tqn^\ad \ek*{R_{\Tqn}(\alpha)}^\ad  H_n^\gi } R_{\Tqn}(\alpha)\\
 =P (\Iu{(n+1)q} -H_n H_n^\gi )R_{\Tqn}(\alpha) 
\end{multline}
 holds true.
 Using \rrem{115} and \eqref{TT-115}, we obtain finally
\[\begin{split} 
 &P R_{\Tqn}(\alpha) \mat*{ \Iu{(n+1)q}, \ek*{R_{\Tqn}(\alpha)}^\inv H_n} (\Iu{2} \otimes v_{q,n} ) \tilde{B}_{n,\alpha}\\
 &=P R_{\Tqn}(\alpha)\ek*{R_{\Tqn}(\alpha)}^\inv \mat*{\ek*{\Iu{(n+1)q} - H_n v_{q,n} v_{q,n}^\ad R_{\Tqn^\ad }(\alpha) H_n^\gi} R_{\Tqn}(\alpha), H_n } (\Iu{2} \otimes v_{q,n} )\\
 &=\mat*{P\ek*{\Iu{(n+1)q} - H_n v_{q,n} v_{q,n}^\ad R_{\Tqn^\ad }(\alpha) H_n^\gi} R_{\Tqn}(\alpha),P H_n}(\Iu{2} \otimes v_{q,n} )\\
 &= \mat*{ P(\Iu{(n+1)q} -H_n H_n^\gi )R_{\Tqn}(\alpha), P H_n}(\Iu{2} \otimes v_{q,n} )\\
 &= P \mat*{(\Iu{(n+1)q} -H_n H_n^\gi ) R_{\Tqn}(\alpha), H_n } (\Iu{2} \otimes v_{q,n} ).\qedhere
\end{split}\] 
\end{proof}

\begin{lem} \label{140} 
 Let \(\alpha \in \R\), let \(\kappa \in \Ninf \), and let \(\seqska  \in \Kggeqka \).
 Let \(n\in\NO \) be such that \(2n+1 \leq \kappa\) and let the matrix-valued functions \(P_{n,\alpha}\), \(Q_{n,\alpha}\), and \(S_{n,\alpha}\) be defined on \(\C\) and, for every choice of \(z\in\C\), be given by 
\begin{align} 
 P_{n,\alpha}(z)&\defeq \Iu{(n+1)q} +(z- \alpha )(\Iu{(n+1)q} -H_n^\mpi  H_n) \Tqn R_{\Tqn}(z) (\Iu{(n+1)q} -H_n H_n^\gi ), \label{V1} \\
 Q_{n,\alpha}(z)&\defeq \Iu{(n+1)q} +(z- \alpha )(\Iu{(n+1)q} -H_{\at n}^\mpi  H_{\at n}) \Tqn R_{\Tqn}(z) (\Iu{(n+1)q} -H_{\at n} H_{\at n}^\gi ) \label{V2}, 
\intertext{and} 
 S_{n,\alpha}(z)&\defeq \Iu{(n+1)q} -(z- \alpha )(\Iu{(n+1)q} -H_{\at n}^\mpi  H_{\at n}) R_{\Tqn} (\alpha) \Tqn (\Iu{(n+1)q} -H_{\at n} H_{\at n}^\gi ).\label{V3} 
\end{align}
 For each \(z \in \C\), then
\beql{MU124}\begin{split} 
 &\diag\mat*{P_{n,\alpha}(z), Q_{n,\alpha}(z) } \\
 & \qquad\times
 \begin{pmat}[{|}]
  \Iu{(n+1)q} & \Ouu{(n+1)q}{(n+1)q} \cr\-
  (z- \alpha )(\Iu{(n+1)q} -H_{\at n}^\mpi  H_{\at n})(\Iu{(n+1)q} -H_n H_n^\gi ) & S_{n,\alpha}(z)\cr 
 \end{pmat}\\ 
 & \qquad\times
 \begin{pmat}[{|}]
  \Iu{(n+1)q} & (\Iu{(n+1)q} -H_n^\mpi  H_n)R_{\Tqn}(\alpha) \Tqn (\Iu{(n+1)q} -H_{\at n } H_{\at n }^\gi ) \cr\-
  \Ouu{(n+1)q}{(n+1)q} & \Iu{(n+1)q}\cr
 \end{pmat}\\
 & \qquad\times \diag\rk*{(\Iu{(n+1)q} -H_n^\mpi  H_n) R_{\Tqn}(\alpha) v_{q,n}, (\Iu{(n+1)q} -H_{\at n }^\mpi  H_{\at n }) H_n v_{q,n} } \\
 &=
 \begin{pmat}[{}]
  (\Iu{(n+1)q} -H_n^\mpi  H_n) R_{\Tqn}(z) \mat{ \Iu{(n+1)q}, \Tqn H_n} (\Iu{2} \otimes v_{q,n} ) \Theta_{n,\alpha} (z)\cr\-
  \begin{gathered}
   (\Iu{(n+1)q} -H_{\at n}^\mpi  H_{\at n}) R_{\Tqn}(z) \mat{ \Iu{(n+1)q}, \ek{R_{\Tqn}(\alpha)}^\inv H_n} (\Iu{2} \otimes v_{q,n} )\\
   \times\tilde{\Theta}_{n,\alpha}(z)\diag\rk{ (z-\alpha)\Iq,\Iq }
  \end{gathered}\cr
 \end{pmat}. 
\end{split}\eeq
\end{lem}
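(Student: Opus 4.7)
The plan is to compute the left-hand side of \eqref{MU124} as a $2\times 2$ block array and then match entries against the right-hand side using \rlem{125} and \rlem{133} as the principal inputs. Abbreviating
\begin{align*}
 X&\defeq (z-\alpha)(\Iu{(n+1)q} -H_{\at n}^\mpi H_{\at n})(\Iu{(n+1)q} -H_n H_n^\gi),\\
 K&\defeq (\Iu{(n+1)q} -H_n^\mpi H_n)R_{\Tqn}(\alpha)\Tqn (\Iu{(n+1)q} -H_{\at n} H_{\at n}^\gi),\\
 A&\defeq (\Iu{(n+1)q} -H_n^\mpi H_n)R_{\Tqn}(\alpha)v_{q,n},\\
 B&\defeq (\Iu{(n+1)q} -H_{\at n}^\mpi H_{\at n})H_n v_{q,n},
\end{align*}
multiplying the three inner block factors on the left of \eqref{MU124} and applying $\diag(P_{n,\alpha}(z),Q_{n,\alpha}(z))$ produces
\[
 \bMat P_{n,\alpha}(z) A & P_{n,\alpha}(z)KB \\ Q_{n,\alpha}(z)XA & Q_{n,\alpha}(z)\rk*{XK+S_{n,\alpha}(z)}B \eMat.
\]

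For the right-hand side I would first apply \rlem{125} in each row to replace $R_{\Tqn}(z)$ by $P_{n,\alpha}(z)R_{\Tqn}(\alpha)$ and $Q_{n,\alpha}(z)R_{\Tqn}(\alpha)$, respectively, and then invoke \rlem{133} to collapse the residual $B_{n,\alpha}$- and $\tilde{B}_{n,\alpha}$-factors. Absorbing the trailing $\diag((z-\alpha)\Iq,\Iq)$ into $(\Iu{2}\otimes v_{q,n})$ in the second row puts the right-hand side into $2\times 2$ block form with entries $P_{n,\alpha}(z)A$, $P_{n,\alpha}(z)(\Iu{(n+1)q}-H_n^\mpi H_n)R_{\Tqn}(\alpha)\Tqn(\Iu{(n+1)q}-H_{\at n}H_{\at n}^\gi)H_n v_{q,n}$, $(z-\alpha)Q_{n,\alpha}(z)(\Iu{(n+1)q}-H_{\at n}^\mpi H_{\at n})(\Iu{(n+1)q}-H_n H_n^\gi)R_{\Tqn}(\alpha)v_{q,n}$, and $Q_{n,\alpha}(z)B$.

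The $(1,1)$-entries coincide immediately. For the $(1,2)$- and $(2,1)$-entries one rewrites $KB$ and $XA$ using the absorption identities $(\Iu{(n+1)q}-H_{\at n}H_{\at n}^\gi)(\Iu{(n+1)q}-H_{\at n}^\mpi H_{\at n})=\Iu{(n+1)q}-H_{\at n}H_{\at n}^\gi$ and $(\Iu{(n+1)q}-H_n H_n^\gi)(\Iu{(n+1)q}-H_n^\mpi H_n)=\Iu{(n+1)q}-H_n H_n^\gi$ from \rrem{SBNNN}. The main obstacle is the $(2,2)$-entry: expanding $XK+S_{n,\alpha}(z)$ and applying \rrem{SBNNN} once more reduces it to the vanishing
\[
 (\Iu{(n+1)q}-H_{\at n}^\mpi H_{\at n}) H_n H_n^\gi R_{\Tqn}(\alpha)\Tqn(\Iu{(n+1)q}-H_{\at n}H_{\at n}^\gi) H_n v_{q,n} = \NM.
\]
To dispatch this I would rearrange \rlem{ML9-1b} as $H_n^\gi R_{\Tqn}(\alpha)\Tqn = H_{\at n}^\gi - H_n^\gi R_{\Tqn}(\alpha) v_{q,n}v_{q,n}^\ad H_n H_{\at n}^\gi$; both resulting summands then carry a right factor of $H_{\at n}^\gi(\Iu{(n+1)q}-H_{\at n}H_{\at n}^\gi)$, which is $\NM$ by the pseudoinverse identity $H_{\at n}^\gi H_{\at n} H_{\at n}^\gi = H_{\at n}^\gi$ from \eqref{PPI-2} in \rlem{311-1}. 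This reduction is the key mechanism driving the entire block identity.
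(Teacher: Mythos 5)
Your proposal is correct and follows essentially the same route as the paper's proof: both compute the left-hand side as a \(2\times2\) block product, reduce the right-hand side via \rlem{125} and \rlem{133}, and match the block entries using the absorption identities of \rrem{SBNNN}. The only (harmless) deviation is at the key vanishing \(H_n^\gi R_{\Tqn}(\alpha)\Tqn(\Iu{(n+1)q}-H_{\at n}H_{\at n}^\gi)=\NM\), which the paper obtains directly from \rlem{ML8-1}, whereas you rederive it from \rlem{ML9-1b} together with \eqref{PPI-2} — both are valid.
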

\begin{proof} 
 Let \(z \in \C\).
 Obviously, the matrix on the left-hand side of \eqref{MU124} coincides with 
\[
 R_{n,\alpha} (z)
 \defeq \diag\rk*{ P_{n,\alpha}(z), Q_{n,\alpha}(z) } 
 \bMat
  \Psi_{n, \alpha}^{(1,1)}(z) &\Psi_{n, \alpha}^{(1,2)}(z) \\
  \Psi_{n, \alpha}^{(2,1)}(z) &\Psi_{n, \alpha}^{(2,2)}(z)
 \eMat(\Iu{2} \otimes v_{q,n} )
\]
 where
\begin{align} 
 \Psi_{n, \alpha}^{(1,1)}(z)
 &\defeq (\Iu{(n+1)q} -H_n^\mpi  H_n) R_{\Tqn}(\alpha), \label{F11} \\
 \Psi_{n, \alpha}^{(1,2)}(z)
 &\defeq (\Iu{(n+1)q} -H_n^\mpi  H_n) R_{\Tqn}(\alpha) \Tqn\notag\\
 &\qquad\times(\Iu{(n+1)q} -H_{\at n } H_{\at n }^\gi ) (\Iu{(n+1)q} -H_{\at n }^\mpi  H_{\at n }) H_n, \label{F12} \\
 \Psi_{n, \alpha}^{(2,1)}(z)
 &\defeq (z- \alpha )(\Iu{(n+1)q} -H_{\at n}^\mpi  H_{\at n})\notag\\
 &\qquad\times(\Iu{(n+1)q} -H_n H_n^\gi ) (\Iu{(n+1)q} -H_n^\mpi  H_n) R_{\Tqn}(\alpha), \label{F21}
\intertext{and}
 \Psi_{n, \alpha}^{(2,2)}(z)
 &\defeq (z- \alpha )(\Iu{(n+1)q} -H_{\at n}^\mpi  H_{\at n})(\Iu{(n+1)q} -H_n H_n^\gi ) (\Iu{(n+1)q} -H_n^\mpi  H_n)\notag \\
 &\qquad\qquad\times R_{\Tqn} (\alpha) \Tqn (\Iu{(n+1)q} -H_{\at n } H_{\at n }^\gi ) (\Iu{(n+1)q} -H_{\at n }^\mpi  H_{\at n }) H_n\notag \\
 &\qquad+S_{n,\alpha}(z)(\Iu{(n+1)q} -H_{\at n }^\mpi  H_{\at n }) H_n\label{F22}. 
\end{align}
 Because of \eqref{F12} and \rrem{SBNNN}, we have
\beql{F12-2} 
 \Psi_{n, \alpha}^{(1,2)}(z)
 = (\Iu{(n+1)q} -H_n^\mpi  H_n) R_{\Tqn}(\alpha) \Tqn (\Iu{(n+1)q} -H_{\at n } H_{\at n }^\gi ) H_n. 
\eeq
 Furthermore, \eqref{F21} and \rrem{SBNNN} yield
\beql{F21-2} 
 \Psi_{n, \alpha}^{(2,1)}(z)
 = (z- \alpha )(\Iu{(n+1)q} -H_{\at n}^\mpi  H_{\at n}) (\Iu{(n+1)q} -H_n H_n^\gi ) R_{\Tqn}(\alpha).
\eeq
 From \rrem{SBNNN}, \rlem{ML8-1}, and \eqref{V3} we conclude 
\beql{F22-2-t}\begin{split}
 &(z - \alpha ) (\Iu{(n+1)q} -H_{\at n}^\mpi  H_{\at n}) (\Iu{(n+1)q} -H_n H_n^\gi )(\Iu{(n+1)q} -H_n^\mpi  H_n) \\
 & \qquad\times R_{\Tqn} (\alpha) \Tqn (\Iu{(n+1)q} -H_{\at n } H_{\at n }^\gi ) \\
 &=(z- \alpha ) (\Iu{(n+1)q} -H_{\at n}^\mpi  H_{\at n}) (\Iu{(n+1)q} -H_n H_n^\gi ) \\
 & \qquad\times  R_{\Tqn} (\alpha) \Tqn (\Iu{(n+1)q} -H_{\at n } H_{\at n }^\gi ) \\ 
 &= (z- \alpha ) (\Iu{(n+1)q} -H_{\at n}^\mpi  H_{\at n}) R_{\Tqn} (\alpha) \Tqn (\Iu{(n+1)q} -H_{\at n } H_{\at n }^\gi )\\
 & \qquad -(z- \alpha ) (\Iu{(n+1)q} -H_{\at n}^\mpi  H_{\at n}) H_n H_n^\gi  R_{\Tqn} (\alpha) \Tqn (\Iu{(n+1)q} -H_{\at n } H_{\at n }^\gi ) \\ 
 & = (z- \alpha ) (\Iu{(n+1)q} -H_{\at n}^\mpi  H_{\at n}) R_{\Tqn} (\alpha) \Tqn (\Iu{(n+1)q} -H_{\at n } H_{\at n }^\gi )\\ 
 & = \Iu{(n+1)q}-S_{n,\alpha}(z). 
\end{split}\eeq
 Combining \eqref{F22} and \eqref{F22-2-t}, we obtain 
\beql{F22-2}\begin{split}
  &\Psi_{n, \alpha}^{(2,2)}(z)\\
 &= (\Iu{(n+1)q}-S_{n,\alpha}(z))(\Iu{(n+1)q} -H_{\at n }^\mpi  H_{\at n }) H_n +S_{n,\alpha}(z)(\Iu{(n+1)q} -H_{\at n }^\mpi  H_{\at n }) H_n\\
 &= (\Iu{(n+1)q} -H_{\at n }^\mpi  H_{\at n })H_n.
\end{split}\eeq  
 By virtue of \rlem{125}, \eqref{V1}, \rlem{133}, \eqref{F11}, and \eqref{F12-2}, we get
\beql{F31}\begin{split}
 &(\Iu{(n+1)q} -  H_n^\mpi  H_n) R_{\Tqn}(z) \mat{ \Iu{(n+1)q}, \Tqn H_n } (\Iu{2} \otimes v_{q,n} ) \Theta_{n,\alpha}(z) \\
 &=\ek*{\Iu{(n+1)q}+(z- \alpha )(\Iu{(n+1)q} -H_n^\mpi  H_n) \Tqn R_{\Tqn}(z) (\Iu{(n+1)q} -H_n H_n^\gi )} \\
 &\qquad\times (\Iu{(n+1)q} -H_n^\mpi  H_n) R_{\Tqn}(\alpha) \mat{ \Iu{(n+1)q}, \Tqn H_n } (\Iu{2} \otimes v_{q,n} ) B_{n,\alpha} \\ 
 &=P_{n,\alpha}(z) (\Iu{(n+1)q} -H_n^\mpi  H_n) R_{\Tqn}(\alpha) \mat{ \Iu{(n+1)q}, \Tqn H_n } (\Iu{2} \otimes v_{q,n} ) B_{n,\alpha} \\ 
 &=P_{n,\alpha}(z) (\Iu{(n+1)q} -H_n^\mpi  H_n)R_{\Tqn}(\alpha) \mat{ \Iu{(n+1)q}, \Tqn (\Iu{(n+1)q} -H_{\at n}H_{\at n}^\gi ) H_n }(\Iu{2} \otimes v_{q,n} )\\
 &=P_{n,\alpha}(z) \mat*{ \Psi_{n,\alpha}^{(1,1)}(z), \Psi_{n,\alpha}^{(1,2)}(z) } (\Iu{2} \otimes v_{q,n} ).
\end{split}\eeq
 Similarly, \rlem{125}, \eqref{V2}, \rlem{133}, \eqref{F21-2}, and \eqref{F22-2} provide us
\beql{F32}\begin{split} 
 &(\Iu{(n+1)q} -H_{\at n}^\mpi  H_{\at n}) R_{\Tqn}(z) \mat*{ \Iu{(n+1)q}, \ek*{R_{\Tqn}(\alpha)}^\inv H_n}\\
 &\qquad\times(\Iu{2} \otimes v_{q,n} )\tilde{\Theta}_{n,\alpha}(z)\diag\rk*{(z-\alpha)\Iq,\Iq} \\
 &=\ek*{\Iu{(n+1)q}+(z- \alpha )(\Iu{(n+1)q} -H_{\at n}^\mpi  H_{\at n}) \Tqn R_{\Tqn}(z) (\Iu{(n+1)q} -H_{\at n} H_{\at n}^\gi )} \\
 &\qquad\times (\Iu{(n+1)q} -H_{\at n}^\mpi  H_{\at n}) R_{\Tqn}(\alpha) \mat*{ \Iu{(n+1)q}, \ek*{R_{\Tqn}(\alpha)}^\inv H_n }\\
 &\qquad\times(\Iu{2} \otimes v_{q,n} )\tilde{B}_{n,\alpha} \diag\rk*{(z-\alpha)\Iq,\Iq} \\
 &=Q_{n,\alpha}(z) (\Iu{(n+1)q} -H_{\at n}^\mpi  H_{\at n}) R_{\Tqn}(\alpha) \mat*{ \Iu{(n+1)q}, \ek*{R_{\Tqn}(\alpha)}^\inv H_n }\\
 &\qquad\times(\Iu{2} \otimes v_{q,n} )\tilde{B}_{n,\alpha}  \diag\rk*{(z-\alpha)\Iq,\Iq} \\
 &=Q_{n,\alpha}(z) (\Iu{(n+1)q} -H_{\at n}^\mpi  H_{\at n}) \mat*{(\Iu{(n+1)q} -H_n H_n^\gi ) R_{\Tqn}(\alpha), H_n }\\
 &\qquad\times(\Iu{2} \otimes v_{q,n} )\diag\rk*{(z-\alpha)\Iq,\Iq} \\ 
 &=Q_{n,\alpha}(z) (\Iu{(n+1)q} -H_{\at n}^\mpi  H_{\at n}) \mat*{(z-\alpha)(\Iu{(n+1)q} -H_n H_n^\gi ) R_{\Tqn}(\alpha), H_n }(\Iu{2} \otimes v_{q,n} ) \\
 &=Q_{n,\alpha}(z) \mat*{ \Psi_{n,\alpha}^{(2,1)}(z), \Psi_{n,\alpha}^{(2,2)}(z) } (\Iu{2} \otimes v_{q,n} ). 
\end{split}\eeq
 Since \(R_{n,\alpha} (z)\) coincides with the matrix on the left-hand side of \eqref{MU124} from \eqref{F31} and \eqref{F32}, equation~\eqref{MU124} follows.
\end{proof}

 If \(\cG\) is a \tne{} subset of \(\C\) and if \(f\colon\cG\to\C\) is a function, then let \(\zer{f}\defeq \setaca{z\in\cG}{f(z) = 0}\).

\begin{lem} \label{145} 
 Let \(\alpha \in \R\), let \(\kappa \in \Ninf \), let \(\seqska  \in \Kggeqka \), and let \(n\in\NO \) be such that \(2n+1 \leq \kappa\).
 Then:
\begin{enui}
 \item\label{145.a} The set \(\zer{\det P_{n,\alpha}} \cup \zer{\det Q_{n,\alpha}} \cup \zer{\det S_{n,\alpha}}\) is finite.
 \item\label{145.b} Let \(x \in \Cqq\) and let \(y \in \Cqq\).
 Then the following statements are equivalent: 
\begin{aeqii}{0}
 \item\label{145.i} For each \(z \in \C \setminus (\zer{\det P_{n,\alpha}} \cup \zer{\det Q_{n,\alpha}} \cup \zer{\det S_{n,\alpha}})\), the equations
\beql{L145.ia}
 (\Iu{(n+1)q} -H_n^\mpi  H_n) R_{\Tqn}(z) \mat{ \Iu{(n+1)q}, \Tqn H_n }(\Iu{2} \otimes v_{q,n} ) \Theta_{n,\alpha} (z) \matp{ x}{ y } 
 =\NM
\eeq
 and
\begin{multline}\label{L145.ib}
 (\Iu{(n+1)q} - H_{\at n}^\mpi  H_{\at n}) R_{\Tqn}(z) \mat*{ \Iu{(n+1)q}, \ek*{R_{\Tqn}(\alpha)}^\inv H_n }\\
 \times(\Iu{2} \otimes v_{q,n} )\tilde{\Theta}_{n,\alpha}(z)\diag\rk*{ (z-\alpha)\Iq,\Iq } \matp{x}{y }
 =\NM
\end{multline} 
 hold true.
 \item\label{145.ii} There exists a number \(z \in \C \setminus (\zer{\det P_{n,\alpha}} \cup \zer{\det Q_{n,\alpha}} \cup \zer{\det S_{n,\alpha}})\) such that \eqref{L145.ia} and \eqref{L145.ib} are valid.
 \item\label{145.iii} The equations 
\begin{align} 
 (\Iu{(n+1)q} -H_n^\mpi  H_n) R_{\Tqn}(\alpha) v_{q,n} x&=\NM\label{L145.iia} 
\intertext{and} 
 (\Iu{(n+1)q} -H_{\at n }^\mpi  H_{\at n }) H_n v_{q,n} y&=\NM\label{L145.iib}
\end{align} 
 are fulfilled.
\end{aeqii}
\end{enui} 
\end{lem}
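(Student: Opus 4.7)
The strategy is to exploit the factorisation identity \eqref{MU124} from \rlem{140}: the $2(n+1)q\times 2q$ matrix appearing on the right of \eqref{MU124}, whose action on $\col(x,y)$ governs conditions~\ref{145.i} and~\ref{145.ii}, equals an explicit product
\[
 \diag\!\rk*{P_{n,\alpha}(z),Q_{n,\alpha}(z)}\cdot L_{n,\alpha}(z)\cdot U_{n,\alpha}(z)\cdot\diag\!\rk*{V_1,V_2},
\]
where $L_{n,\alpha}(z)$ is the block lower triangular factor with diagonal blocks $\Iu{(n+1)q}$ and $S_{n,\alpha}(z)$, where $U_{n,\alpha}(z)$ is the block upper triangular factor with identity diagonal, and where
\begin{align*}
 V_1&\defeq(\Iu{(n+1)q}-H_n^\mpi H_n)R_{\Tqn}(\alpha)v_{q,n},&
 V_2&\defeq(\Iu{(n+1)q}-H_{\at n}^\mpi H_{\at n})H_n v_{q,n}.
\end{align*}

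For \rpart{145.a}, I would observe that \(P_{n,\alpha},Q_{n,\alpha},S_{n,\alpha}\) are matrix polynomials in $z$ (as \(R_{\Tqn}\) is polynomial by \rrem{21112N}), hence \(\det P_{n,\alpha},\det Q_{n,\alpha},\det S_{n,\alpha}\) are scalar polynomials. Their values at $z=\alpha$ are, by direct inspection of \eqref{V1}, \eqref{V2}, \eqref{V3}, exactly \(\det\Iu{(n+1)q}=1\), since the prefactor \((z-\alpha)\) kills the perturbation. Thus none of these scalar polynomials vanishes identically, and each has only finitely many zeros. This gives \rpart{145.a}, and in particular shows the exceptional set is a proper subset of~$\C$.

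For \rpart{145.b}, the implication \impl{145.i}{145.ii} is trivial by \rpart{145.a}. For \impl{145.iii}{145.i}, I would note that \eqref{L145.iia} and \eqref{L145.iib} are precisely $V_1 x=\NM$ and $V_2 y=\NM$, so that $\diag(V_1,V_2)\col(x,y)=\NM$; inserting $\col(x,y)$ on the right of the factorisation and then applying \eqref{MU124} makes the whole right-hand side vanish, which is exactly \eqref{L145.ia} and \eqref{L145.ib}. For \impl{145.ii}{145.iii}, I would use that for $z$ outside \(\zer{\det P_{n,\alpha}}\cup\zer{\det Q_{n,\alpha}}\cup\zer{\det S_{n,\alpha}}\) the factor \(\diag(P_{n,\alpha}(z),Q_{n,\alpha}(z))\) is invertible and so is \(L_{n,\alpha}(z)\) (since its block triangular structure reduces invertibility to that of $S_{n,\alpha}(z)$), while \(U_{n,\alpha}(z)\) is always invertible, being block upper triangular with identity diagonal. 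Hence if \eqref{L145.ia} and \eqref{L145.ib} hold at such a $z$, applying these inverses to \eqref{MU124} evaluated on $\col(x,y)$ yields \(\diag(V_1,V_2)\col(x,y)=\NM\), which is \eqref{L145.iia} and \eqref{L145.iib}.

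The main obstacle is not the logic of the three equivalences but the bookkeeping: one has to be sure that the four factors produced by \rlem{140} really do line up as a clean product with the claimed invertibility properties, in particular that the exceptional set governing invertibility of \(\diag(P,Q)\cdot L\cdot U\) is exactly the one singled out in the statement. Once \rpart{145.a} guarantees that this exceptional set is finite (hence not equal to~$\C$), the three-way equivalence in \rpart{145.b} follows directly from left-multiplication by the inverse of the three leftmost factors of the right-hand side of \eqref{MU124}.
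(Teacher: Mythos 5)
Your proposal is correct and follows essentially the same route as the paper's own proof: part~\eqref{145.a} via the observation that \(P_{n,\alpha}(\alpha)=Q_{n,\alpha}(\alpha)=S_{n,\alpha}(\alpha)=\Iu{(n+1)q}\) so the determinant polynomials do not vanish identically, and part~\eqref{145.b} by applying the factorisation \eqref{MU124} of \rlem{140} to \(\col(x,y)\) and using invertibility of the three leftmost factors off the exceptional set for the implication \impl{145.ii}{145.iii}, with \impl{145.iii}{145.i} obtained by inserting the vanishing rightmost factor. No gaps.
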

\begin{proof} 
 By virtue of \rrem{21112N}, \eqref{V1}, \eqref{V2}, and \eqref{V3}, we see that \(P_{n,\alpha}\), \(Q_{n,\alpha}\), and \(S_{n,\alpha}\) are matrix polynomials with \(P_{n,\alpha}(\alpha)= \Iu{(n+1)q}\), \(Q_{n,\alpha}(\alpha)= \Iu{(n+1)q}\), and \(S_{n,\alpha}(\alpha)= \Iu{(n+1)q}\).
 In particular, \(\det P_{n,\alpha}\), \(\det Q_{n,\alpha}\), and \(\det S_{n,\alpha}\) are polynomials which do not vanish identically.
 In view of the fundamental theorem of algebra, the proof of \rpart{145.a} is complete.
 For each \(z \in \C\), \rlem{140} provides us 
\beql{N286-1}\begin{split} 
 &
 \begin{pmat}[{}]
  (\Iu{(n+1)q} -H_n^\mpi  H_n) R_{\Tqn}(z) \mat{ \Iu{(n+1)q}, \Tqn H_n } (\Iu{2} \otimes v_{q,n} ) \Theta_{n,\alpha} (z) \tmatp{x}{y } \cr\-
  \begin{gathered}
   (\Iu{(n+1)q} -H_{\at n}^\mpi  H_{\at n}) R_{\Tqn}(z) \mat{ \Iu{(n+1)q}, \ek{R_{\Tqn}(\alpha)}^\inv H_n }\\
   \times(\Iu{2} \otimes v_{q,n} )\tilde{\Theta}_{n,\alpha}(z)\diag\rk{ (z-\alpha)\Iq,\Iq  }\tmatp{x}{y }
  \end{gathered}\cr
 \end{pmat}\\
 &=
 \begin{pmat}[{}]
  (\Iu{(n+1)q} -H_n^\mpi  H_n) R_{\Tqn}(z) \mat{ \Iu{(n+1)q}, \Tqn H_n} (\Iu{2} \otimes v_{q,n} ) \Theta_{n,\alpha} (z) \cr\-
 \begin{gathered}
  (\Iu{(n+1)q} -H_{\at n}^\mpi  H_{\at n}) R_{\Tqn}(z) \mat{ \Iu{(n+1)q}, \ek{R_{\Tqn}(\alpha)}^\inv H_n}  \\
 \times (\Iu{2} \otimes v_{q,n} )\tilde{\Theta}_{n,\alpha}(z)\diag\rk{ (z-\alpha)\Iq,\Iq } 
 \end{gathered}\cr
 \end{pmat}
 \matp{x}{y }\\ 
 &=
 \diag\rk*{P_{n,\alpha}(z), Q_{n,\alpha}(z)} \\
 & \qquad\times
 \begin{pmat}[{|}]
  \Iu{(n+1)q} & \Ouu{(n+1)q}{(n+1)q} \cr\-
 (z- \alpha )(\Iu{(n+1)q} -H_{\at n}^\mpi  H_{\at n})(\Iu{(n+1)q} -H_n H_n^\gi ) & S_{n,\alpha}(z)\cr
 \end{pmat}\\
 & \qquad\times
 \begin{pmat}[{|}]
  \Iu{(n+1)q} & (\Iu{(n+1)q} -H_n^\mpi  H_n)R_{\Tqn}(\alpha) \Tqn (\Iu{(n+1)q} -H_{\at n } H_{\at n }^\gi ) \cr\-
 \Ouu{(n+1)q}{(n+1)q} & \Iu{(n+1)q}\cr
 \end{pmat}\\
 & \qquad\times
 \begin{pmat}[{}]
 (\Iu{(n+1)q} -H_n^\mpi  H_n) R_{\Tqn}(\alpha) v_{q,n} x \cr\-
 (\Iu{(n+1)q} -H_{\at n }^\mpi  H_{\at n }) H_n v_{q,n} y \cr
 \end{pmat}.
\end{split}\eeq

\begin{imp}{145.i}{145.ii}
 This implication is trivial.
\end{imp}

\begin{imp}{145.ii}{145.iii}
 According to~\ref{145.ii}, there exists a  number
\beql{N284-33}
 z
 \in \C \setminus (\zer{\det P_{n,\alpha}} \cup \zer{\det Q_{n,\alpha}} \cup \zer{\det S_{n,\alpha}})
\eeq
 such that \eqref{L145.ia} and \eqref{L145.ib} hold true.
 Using \eqref{L145.ia}, \eqref{L145.ib}, and \eqref{N286-1}, we get 
\beql{N284-2}\begin{split} 
 &\matp{\Ouu{(n+1)q}{q}}{\Ouu{(n+1)q}{q} }\\
 &= \diag\rk*{P_{n,\alpha}(z), Q_{n,\alpha}(z) } \\
 & \qquad\times
 \begin{pmat}[{|}]
  \Iu{(n+1)q} & \Ouu{(n+1)q}{(n+1)q} \cr\-
 (z- \alpha )(\Iu{(n+1)q} -H_{\at n}^\mpi  H_{\at n})(\Iu{(n+1)q} -H_n H_n^\gi ) & S_{n,\alpha}(z) \cr
 \end{pmat}\\ 
 & \qquad\times
 \begin{pmat}[{|}]
  \Iu{(n+1)q} & (\Iu{(n+1)q} -H_n^\mpi  H_n)R_{\Tqn}(\alpha) \Tqn (\Iu{(n+1)q} -H_{\at n } H_{\at n }^\gi ) \cr\-
\Ouu{(n+1)q}{(n+1)q} & \Iu{(n+1)q} \cr
 \end{pmat}\\
 & \qquad\times
 \begin{pmat}[{}]
  (\Iu{(n+1)q} -H_n^\mpi  H_n) R_{\Tqn}(\alpha) v_{q,n} x \cr\-
  (\Iu{(n+1)q} -H_{\at n }^\mpi  H_{\at n }) H_n v_{q,n} y \cr
 \end{pmat}. 
\end{split}\eeq
 Because of \eqref{N284-33}, the first three factors of the matrix product on the right-hand side of equation~\eqref{N284-2} are non-singular matrices.
 Thus, \eqref{N284-2} implies \eqref{L145.iia} and \eqref{L145.iib}.
\end{imp}

\begin{imp}{145.iii}{145.i}
 Taking into account \eqref{L145.iia}, \eqref{L145.iib}, and \eqref{N286-1}, we conclude that
\[\begin{split}
 &\matp{\Ouu{(n+1)q}{q}}{\Ouu{(n+1)q}{q} }\\
 &= \diag\rk*{ P_{n,\alpha}(z), Q_{n,\alpha}(z) } \\ 
 &\qquad\times
 \begin{pmat}[{|}]
  \Iu{(n+1)q} & \Ouu{(n+1)q}{(n+1)q} \cr\-
  (z- \alpha )(\Iu{(n+1)q} -H_{\at n}^\mpi  H_{\at n})(\Iu{(n+1)q} -H_n H_n^\gi ) & S_{n,\alpha}(z)\cr
 \end{pmat}\\ 
 &\qquad\times
 \begin{pmat}[{|}]
  \Iu{(n+1)q} & (\Iu{(n+1)q} -H_n^\mpi  H_n)R_{\Tqn}(\alpha) \Tqn (\Iu{(n+1)q} -H_{\at n } H_{\at n }^\gi ) \cr\-
  \Ouu{(n+1)q}{(n+1)q} & \Iu{(n+1)q}\cr
 \end{pmat}\\
 &\qquad\times
 \begin{pmat}[{}]
  (\Iu{(n+1)q} -H_n^\mpi  H_n) R_{\Tqn}(\alpha) v_{q,n} x \cr\-
 (\Iu{(n+1)q} -H_{\at n }^\mpi  H_{\at n }) H_n v_{q,n} y \cr
 \end{pmat}\\
 &=
 \begin{pmat}[{}]
  (\Iu{(n+1)q} -H_n^\mpi  H_n) R_{\Tqn}(z) \mat{ \Iu{(n+1)q}, \Tqn H_n } (\Iu{2} \otimes v_{q,n} ) \Theta_{n,\alpha} (z) \tmatp{x}{y } \cr\-
  \begin{gathered}
   (\Iu{(n+1)q} -H_{\at n}^\mpi  H_{\at n}) R_{\Tqn}(z) \mat{ \Iu{(n+1)q}, \ek{R_{\Tqn}(\alpha)}^\inv H_n }\\
 \times(\Iu{2} \otimes v_{q,n} )\tilde{\Theta}_{n,\alpha}(z)\diag\rk{ (z-\alpha)\Iq,\Iq }\tmatp{x}{y } 
  \end{gathered}\cr
 \end{pmat}
\end{split}\]
 and, consequently, \eqref{L145.ia} and \eqref{L145.ib} hold true for each \(z \in \C\).
\end{imp}
\end{proof}

 If \(\cU\) is a subspace of \(\Cq\), then by \(\OPu{\cU}\) we denote the complex \tqqa{matrix} which represents the orthogonal projection onto \(\cU\), with respect to the standard basis of \(\Cq\) \tie{}, \(\OPu{\cU}\) is the unique complex \tqqa{matrix} which fulfills the three conditions \(\OPu{\cU}^2 = \OPu{\cU}\), \(\OPu{\cU}^\ad = \OPu{\cU}\), and \(\cR (\OPu{\cU}) = \cU\).
 In this case, we have \(\cN (\OPu{\cU}) = \cU^\orth\).
 
\begin{lem} \label{170}
 Let \(\alpha \in \R\), let \(\kappa \in \Ninf \), let \(\seqska  \in \Kggeqka \), and let \(n\in\NO \) be such that \(2n+1 \leq \kappa\).
 Then:
\benui
 \item\label{170.a} The sets
\begin{align} 
 \cU_{n, \alpha}&\defeq \ek*{\Nul{\rk{\Iu{(n+1)q}-H_n^\mpi  H_n} R_{\Tqn}(\alpha) v_{q,n} }}^\orth \label{170-v1}
\intertext{and}
 \cV_{n, \alpha}&\defeq \ek*{\Nul{\rk{\Iu{(n+1)q}-H_{\at n}^\mpi H_{\at n}}H_nv_{q,n} }}^\orth .\label{170-v2}
\end{align} 
 are orthogonal subspaces of \(\Cq\) with
\begin{align} 
 \dim \cU_{n, \alpha}&= \rank\ek*{\rk{\Iu{(n+1)q}-H_n^\mpi  H_n} R_{\Tqn}(\alpha) v_{q,n}}\label{N174}
\intertext{and} 
 \dim \cV_{n, \alpha}&= \rank\ek*{\rk{\Iu{(n+1)q}-H_{\at n}^\mpi H_{\at n}}H_nv_{q,n}}.\label{N174-1}
\end{align} 
 \item\label{170.b} Let \(A \in \Cqp\).
 Then \(\rk{\Iu{(n+1)q}-H_n^\mpi  H_n} R_{\Tqn}(\alpha) v_{q,n} A =\NM\) if and only if \(\OPu{\cU_{n, \alpha}}A=\NM\).
 Moreover, \(\rk{\Iu{(n+1)q}-H_{\at n}^\mpi H_{\at n}}H_nv_{q,n}A =\NM\) if and only if \(\OPu{\cV_{n, \alpha}}A=\NM\).
\eenui
\end{lem}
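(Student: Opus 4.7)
The plan is to treat both claims as straightforward consequences of standard finite-dimensional linear algebra applied to the matrices $M_1 \defeq \rk{\Iu{(n+1)q}-H_n^\mpi H_n} R_{\Tqn}(\alpha) v_{q,n}$ and $M_2 \defeq \rk{\Iu{(n+1)q}-H_{\at n}^\mpi H_{\at n}} H_n v_{q,n}$, each of which belongs to $\Coo{(n+1)q}{q}$. Since $\nul{M_i}$ is automatically a linear subspace of $\Cq$ for $i\in\set{1,2}$, and the orthogonal complement of any subspace of $\Cq$ is again a subspace, the sets $\cU_{n,\alpha}$ and $\cV_{n,\alpha}$ from \eqref{170-v1} and \eqref{170-v2} are linear subspaces of $\Cq$.

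For the dimension formulas \eqref{N174} and \eqref{N174-1} in part~\eqref{170.a}, I would apply the rank-nullity theorem to the linear mappings $\Cq \to \Co{(n+1)q}$ represented by $M_1$ and $M_2$, obtaining $\dim \nul{M_i} = q - \rank M_i$, and then use the standard identity $\dim \ek{\nul{M_i}}^\orth = q - \dim \nul{M_i}$ for the dimension of an orthogonal complement in $\Cq$. Combining these with the definitions of $\cU_{n,\alpha}$ and $\cV_{n,\alpha}$ gives $\dim \cU_{n,\alpha} = \rank M_1$ and, analogously, $\dim \cV_{n,\alpha} = \rank M_2$, as claimed.

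For part~\eqref{170.b}, the key observation is that for any $A \in \Cqp$ the equation $M_1 A = \NM$ is equivalent to every column of $A$ lying in $\nul{M_1}$, while $\OPu{\cU_{n,\alpha}} A = \NM$ is equivalent to every column of $A$ lying in $\nul{\OPu{\cU_{n,\alpha}}} = \cU_{n,\alpha}^\orth$. The double-orthogonal-complement identity valid for subspaces of the finite-dimensional space $\Cq$ yields $\cU_{n,\alpha}^\orth = \ek*{\ek*{\nul{M_1}}^\orth}^\orth = \nul{M_1}$, so the two conditions coincide and the first equivalence follows. Applying the same chain of reasoning with $M_2$ and $\cV_{n,\alpha}$ in the roles of $M_1$ and $\cU_{n,\alpha}$ gives the second equivalence. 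I do not expect a serious obstacle here: the lemma is a purely formal reformulation whose role is, presumably, to permit the conditions \eqref{L145.iia} and \eqref{L145.iib} in \rlem{145} to be rephrased in terms of the orthogonal projections $\OPu{\cU_{n,\alpha}}$ and $\OPu{\cV_{n,\alpha}}$, which will likely be convenient for stating the parametrization in \rsec{S1313}.
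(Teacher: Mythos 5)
Your argument for the dimension formulas \eqref{N174}, \eqref{N174-1} and for part~\eqref{170.b} is correct and essentially the same as the paper's (the paper reaches the dimensions via \(\cU_{n,\alpha}=\ran{M_1^\ad}\), using \(H_n^\ad=H_n\) to see that \(M_1\defeq\rk{\Iu{(n+1)q}-H_n^\mpi H_n}R_{\Tqn}(\alpha)v_{q,n}\) has the form ``Hermitian idempotent times matrix''; your rank--nullity route gives the same thing). However, you have missed the one genuinely nontrivial claim of part~\eqref{170.a}: the assertion is not merely that \(\cU_{n,\alpha}\) and \(\cV_{n,\alpha}\) are each subspaces, but that they are orthogonal \emph{to each other}, \tie{}, \(f^\ad g=0\) for all \(f\in\cU_{n,\alpha}\) and \(g\in\cV_{n,\alpha}\). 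This mutual orthogonality is what later yields \(m+\ell\le q\) in \rsec{S1313} and makes the simultaneous block-diagonalization by a single unitary \(W\) in \rlemss{N11.7}{N11.8} possible; it is not a formal consequence of the definitions and cannot be obtained from rank--nullity or double orthogonal complements.

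The proof of this orthogonality uses the specific Hankel structure. Writing \(f=M_1^\ad x\) and \(g=\ek*{\rk{\Iu{(n+1)q}-H_{\at n}^\mpi H_{\at n}}H_nv_{q,n}}^\ad y\) (which is where \eqref{170-1b}-type range descriptions are needed), one computes
\[
 f^\ad g
 = x^\ad \rk{\Iu{(n+1)q}-H_n^\mpi H_n} R_{\Tqn}(\alpha)\, v_{q,n}v_{q,n}^\ad H_n\, \rk{\Iu{(n+1)q}-H_{\at n}^\mpi H_{\at n}} y
\]
and then substitutes the Ljapunov-type identity \(v_{q,n}v_{q,n}^\ad H_n=\ek{R_{\Tqn}(\alpha)}^\inv H_n-\Tqn H_{\at n}\) from \rremp{lemC21-1}{lemC21-1.b}. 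The term involving \(\Tqn H_{\at n}\) is annihilated on the right because \(H_{\at n}\rk{\Iu{(n+1)q}-H_{\at n}^\mpi H_{\at n}}=\NM\) (using \(H_{\at n}^\ad=H_{\at n}\), cf.\ \rrem{SBNNN}), and the remaining term \(x^\ad\rk{\Iu{(n+1)q}-H_n^\mpi H_n}H_n\rk{\dotso}y\) vanishes because \(\rk{\Iu{(n+1)q}-H_n^\mpi H_n}H_n=\NM\). Without this computation -- which requires \(\seqska\in\Kggeqka\) to guarantee the Hermiticity of \(H_n\) and \(H_{\at n}\) and the relation between \(H_n\) and the \(\alpha\)-shifted matrix \(H_{\at n}\) -- the lemma is not proved. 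Please add this step.
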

\begin{proof} 
 \eqref{170.a} Because of \rremss{ZS}{1582012-2}, we have \(H_n^\ad =H_n\) and \(H_{\at{n}}^\ad =H_{\at{n}}\).
 In particular, \(H_n^\mpi  H_n = H_nH_n^\mpi \).
 Obviously, \((H_n^\mpi  H_n)^\ad = H_n^\mpi  H_n\) and \((H_{\at n}^\mpi  H_{\at n})^\ad = H_{\at n}^\mpi  H_{\at n}\).
 Thus,
\beql{170-1b}
 \cU_{n, \alpha}
 = \Ran{\ek*{\rk{\Iu{(n+1)q}-H_n^\mpi  H_n} R_{\Tqn}(\alpha) v_{q,n}} ^\ad  } 
\eeq
 and
\beql{170-2b}
 \cV_{n, \alpha}
 = \Ran{\ek*{\rk{\Iu{(n+1)q}-H_{\at n}^\mpi H_{\at n}}H_nv_{q,n} } ^\ad  }
 = \Ran{ v_{q,n}^\ad  H_n \rk{\Iu{(n+1)q}-H_{\at n}^\mpi H_{\at n}} }.
\eeq
 In particular, \eqref{N174} and \eqref{N174-1} hold true.
 Let \(f\in \cU_{n,\alpha}\) and \(g\in\cV_{n,\alpha}\) be arbitrary chosen.
 According to \eqref{170-1b} and \eqref{170-2b}, there are \(x,y\in\Co{(n+1)q}\) such that \(f = \ek{(\Iu{(n+1)q} - H_n^\mpi  H_n) R_{\Tqn} (\alpha) v_{q,n}}^\ad  x\) and \(g= v_{q,n}^\ad  H_n (\Iu{(n+1)q} - H_{\at n}^\mpi  H_{\at n}) y\).
 By virtue of the \rremss{lemC21-1}{SBNNN}, we have 
\[\begin{split} 
 f^\ad g
 & = x^\ad (\Iu{(n+1)q}- H_n^\mpi  H_n) R_{\Tqn}(\alpha) v_{q,n} v_{q,n}^\ad  H_n (\Iu{(n+1)q}- H_{\at n}^\mpi  H_{\at n}) y \\
 &= x^\ad (\Iu{(n+1)q}- H_n^\mpi  H_n) R_{\Tqn}(\alpha) \rk*{ \ek*{R_{\Tqn}(\alpha)}^\inv H_n - \Tqn H_{\at n}} (\Iu{(n+1)q}- H_{\at n}^\mpi  H_{\at n}) y \\
 &= x^\ad (\Iu{(n+1)q}- H_n^\mpi  H_n) H_n (\Iu{(n+1)q}- H_{\at n}^\mpi  H_{\at n}) y
 =\NM.
\end{split}\]
 Consequently, the subspaces \(\cU_{n, \alpha}\) and \(\cV_{n, \alpha}\) are orthogonal.

 \eqref{170.b} Use the equations \(\nul{\rk{\Iu{(n+1)q}-H_n^\mpi  H_n} R_{\Tqn}(\alpha) v_{q,n}} =\cU_{n, \alpha}^\orth  = \cN(\OPu{\cU_{n, \alpha}})\) and \( \nul{\rk{\Iu{(n+1)q}-H_{\at n}^\mpi H_{\at n}}H_nv_{q,n} } =\cV_{n, \alpha}^\orth  = \cN(\OPu{\cV_{n, \alpha}})\).
\end{proof}

\section{Nevanlinna and Stieltjes pairs of meromorphic matrix-valued functions}\label{S1223}
 In this section, we consider special classes of pairs of meromorphic matrix-valued functions.
 This leads us to the set of parameters which occur in our description of the solution set of the power moment problem in question.

\begin{nota} \label{def-nev-paar}
 A pair \(\tmatp{\phi}{\psi}\) of \tqqa{matrix-valued} functions \(\phi\) and \(\psi\) meromorphic in \(\uhp\) is called a \emph{\tqNp{}} if there exists a discrete subset \(\cD\) of \(\uhp\) such that the following three conditions are fulfilled:
 \begin{aeqi}{0} 
 \item\label{def-nev-paar.i} \(\phi\) and \(\psi\) are holomorphic in \(\uhp \setminus \cD\).
 \item\label{def-nev-paar.ii} \(\rank \tmatp{\phi (w)}{\psi(w)} =q\) for all \(w\in \uhp \setminus \cD\).
 \item\label{def-nev-paar.iii} \(\tmatp{\phi (w)}{\psi(w)}^\ad (-\Jimq) \tmatp{\phi (w)}{\psi(w)} \geq \Oqq \) for each \(w\in \uhp \setminus \cD\).
 \end{aeqi} 
 The set of all \tqNps{} will be denoted by \(\qNp\).
\end{nota}

 Note the well-known fact that, for each \(S \in \RFq \), the pair \(\tmatp{S}{\Iq}\) belongs to \(\qNp \).

\begin{rem} \label{Ma09_1.11}
 If \(\tmatp{\phi}{\psi} \in \qNp \), then it is readily checked that, for each \tqqa{matrix-valued} function \(g\) which is meromorphic in \(\uhp\) and for which the function \(\det g\) does not vanish identically, the pair \(\tmatp{\phi g}{\psi g }\) belongs to \(\qNp \) as well.
 Two \tqN{s} \(\tmatp{\phi_1}{\psi_1}\) and \(\tmatp{\phi_2}{\psi_2}\) in \(\uhp\) are said to be \emph{equivalent} if there are a \tqqa{matrix-valued} function \(g\) which is meromorphic in \(\uhp\) and a discrete subset \(\cD\) of \(\uhp\) such that \(\phi_{1}\), \(\psi_{1}\), \(\phi_{2}\), \(\psi_{2}\), and \(g\) are holomorphic in \(\pid\) and that \(\det g(w) \neq 0\) and \(\tmatp{\phi_{2}(w)}{\psi_{2}(w)}= \tmatp{\phi_{1}g(w)}{\psi_{1}g(w)} \) hold true for all \(w\in\pid\).
 One can easily see that this implies an equivalence relation on \(\qNp \).
 For each \(\tmatp{\phi}{\psi} \in \qNp \), let \( \tmatpc{\phi}{\psi}\) be the equivalence class generated by \(\tmatp{\phi}{\psi}\).
\end{rem}

 Let us recall a well-known interrelation between the classes \(\qNp \) and \(\SchF{q}{q}{\uhp} \):

\begin{lem} \label{Bi13_5.23} 
\begin{enui} 
 \item\label{Bi13_5.23.a} For each \(\tmatp{\phi}{\psi}\in \qNp \), the function \(\det (\psi-\iu\phi)\) does not vanish identically and \(S\defeq (\psi+\iu\phi)(\psi-\iu\phi)^\inv \) belongs to the Schur class \(\SchF{q}{q}{\uhp} \).
 \item\label{Bi13_5.23.b} For each \(S \in \SchF{q}{q}{\uhp} \), the pair \(\tmatp{\phi}{\psi}\) given by \(\phi\defeq \iu(\Iq-S)\) and \(\psi \defeq \Iq+S\) belongs to the class \(\qNp \), the functions \(\phi\) and \(\psi\) are holomorphic in \(\uhp\), and \(\det\ek{\psi (w)-\iu\phi(w)} \neq 0\) and \(S(w)=\ek{\psi(w) +\iu\phi(w)}\ek{\psi(w) -\iu\phi(w)}^\inv \) hold true for each \(w\in\uhp\).
 \item\label{Bi13_5.23.c} Two \tqN{s} \(\tmatp{\phi_1}{\psi_1}\) and \(\tmatp{\phi_2}{\psi_2}\) in \(\uhp\) are equivalent if and only if 
\((\psi_1+\iu\phi_1)(\psi_1 -\iu\phi_1)^\inv =(\psi_2+\iu\phi_2)(\psi_2 -\iu\phi_2)^\inv \).
\end{enui}
\end{lem}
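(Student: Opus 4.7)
The plan is to execute the classical Cayley transform between upper half-plane and unit disc, where the algebraic workhorse is the identity
\[
 (\psi-\iu\phi)^\ad(\psi-\iu\phi)-(\psi+\iu\phi)^\ad(\psi+\iu\phi)
 =2\iu(\phi^\ad\psi-\psi^\ad\phi)
 =2\matp{\phi}{\psi}^\ad(-\Jimq)\matp{\phi}{\psi},
\]
obtained by direct expansion and by \rrem{J-1}. This single identity drives all three parts.

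For part~\eqref{Bi13_5.23.a}, I would first show that $\det(\psi-\iu\phi)$ does not vanish identically by a pointwise argument: if $u\in\Cq$ satisfies $[\psi(w)-\iu\phi(w)]u=\Ouu{q}{1}$ at some $w\in\uhp\setminus\cD$, then the above identity together with condition~\ref{def-nev-paar.iii} forces also $[\psi(w)+\iu\phi(w)]u=\Ouu{q}{1}$, whence $\phi(w)u=\psi(w)u=\Ouu{q}{1}$; condition~\ref{def-nev-paar.ii} then yields $u=\Ouu{q}{1}$. Hence $\det(\psi-\iu\phi)(w)\neq 0$ for every $w\in\uhp\setminus\cD$, so $S\defeq(\psi+\iu\phi)(\psi-\iu\phi)^\inv$ is well-defined and holomorphic on $\uhp\setminus\cD$. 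Rewriting the key identity as $\Iq-S^\ad S=2(\psi-\iu\phi)^{-\ad}\matp{\phi}{\psi}^\ad(-\Jimq)\matp{\phi}{\psi}(\psi-\iu\phi)^\inv\lgeq\NM$ gives $\normS{S(w)}\leq 1$ on $\uhp\setminus\cD$, and the Riemann removable singularity theorem (applied entrywise) promotes $S$ to a holomorphic contraction on all of $\uhp$.

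For part~\eqref{Bi13_5.23.b}, a direct substitution yields $\psi-\iu\phi=2\Iq$ (so the denominator is globally invertible) and $\psi+\iu\phi=2S$, which immediately delivers the reconstruction formula. The rank condition holds because $u\in\nul{\matp{\phi(w)}{\psi(w)}}$ would force both $S(w)u=u$ and $S(w)u=-u$, hence $u=\Ouu{q}{1}$. Condition~\ref{def-nev-paar.iii} reduces, via the master identity, to $2(\Iq-S^\ad S)\lgeq\NM$, which is exactly the Schur property of $S$.

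For part~\eqref{Bi13_5.23.c}, the forward direction is trivial: equivalence through a meromorphic $g$ with $\det g\not\equiv 0$ gives $\psi_2\pm\iu\phi_2=(\psi_1\pm\iu\phi_1)g$, and the factor $g$ cancels in the quotient. For the converse, I would set $g\defeq(\psi_1-\iu\phi_1)^\inv(\psi_2-\iu\phi_2)$, which is meromorphic in $\uhp$ by part~\eqref{Bi13_5.23.a}, and check $\psi_2=\psi_1 g$ and $\phi_2=\phi_1 g$: the first is the definition of $g$, while the second follows from multiplying the common relation $\psi_j+\iu\phi_j=S(\psi_j-\iu\phi_j)$ on the right by $g$ for $j=1$ and comparing with $j=2$. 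Non-vanishing of $\det g$ is immediate since $\det(\psi_2-\iu\phi_2)\not\equiv 0$ by part~\eqref{Bi13_5.23.a}. The main technical nuisance, rather than a genuine obstacle, is the bookkeeping of the discrete exceptional sets $\cD_1,\cD_2$ when defining $g$, but since everything is meromorphic and only nonidentical vanishing of certain determinants is needed, this causes no real difficulty.
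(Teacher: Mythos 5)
The paper offers no proof of this lemma at all: immediately after the statement it refers the reader to \cite[Lemma~1.7]{Thi06}, so there is no internal argument to compare yours against. Your proof is the standard Cayley-transform argument and it is correct. The master identity
\[
 (\psi-\iu\phi)^\ad(\psi-\iu\phi)-(\psi+\iu\phi)^\ad(\psi+\iu\phi)
 =2\iu(\phi^\ad\psi-\psi^\ad\phi)
 =2\matp{\phi}{\psi}^\ad(-\Jimq)\matp{\phi}{\psi}
\]
checks out against \rrem{J-1}; the pointwise injectivity argument for $\psi-\iu\phi$ correctly combines conditions~\ref{def-nev-paar.ii} and~\ref{def-nev-paar.iii}; and the removable-singularity step is legitimate because the entries of $S$ are bounded by $1$ near the discrete exceptional set, after which contractivity on all of $\uhp$ follows by continuity. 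The only flaw is a wording slip in part~\eqref{Bi13_5.23.c}: with $g\defeq(\psi_1-\iu\phi_1)^\inv(\psi_2-\iu\phi_2)$, the equation $\psi_2=\psi_1 g$ is \emph{not} ``the definition of $g$''; the definition gives $(\psi_1-\iu\phi_1)g=\psi_2-\iu\phi_2$, and one must combine this with $(\psi_1+\iu\phi_1)g=S(\psi_1-\iu\phi_1)g=S(\psi_2-\iu\phi_2)=\psi_2+\iu\phi_2$ and then add and subtract the two relations to obtain both $\psi_2=\psi_1 g$ and $\phi_2=\phi_1 g$ simultaneously. Since this is exactly the computation you already carry out for $\phi_2$, the defect is one of phrasing rather than substance, and the proof as a whole stands.
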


 A detailed proof of \rlem{Bi13_5.23} can be found, \teg{}, in~\cite[Lemma~1.7]{Thi06}.

\begin{prop} \label{NL1-1}
 Let \(\tmatp {\phi}{\psi} \in \qNp \).
 Then there exists a discrete subset \(\cD\) of \(\uhp\) such that \(\phi\) and \(\psi\) are holomorphic in \(\uhp \setminus \cD\) and that, for every choice of \(w\) and \(z\) in \(\uhp \setminus \cD\), the following four equations hold true: 
\begin{align} 
 \Ran{\phi(w)}&= \Ran{\phi(z)},&
&& 
 \Ran{\psi(w)}&= \Ran{\psi(z)}, \label{TJ2}\\ 
 \psi(w)\Nul{\phi(w)}&= \psi(z) \Nul{\phi(z)},&
&\text{and}&
 \phi(w)\Nul{\psi(w)}&= \phi(z) \Nul{\psi(z)}.\label{TJ3}
\end{align} 
\end{prop}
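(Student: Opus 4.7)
The plan is to exploit the bijection between $\qNp$ and $\SchF{q}{q}{\uhp}$ recorded in \rlem{Bi13_5.23} in order to reduce the statement to properties of Schur functions already established in \rlem{TS417}. By \rlem{Bi13_5.23}, $\det(\psi - \iu\phi)$ does not vanish identically, and $S \defeq (\psi + \iu\phi)(\psi - \iu\phi)^\inv$ belongs to $\SchF{q}{q}{\uhp}$. Setting $g \defeq \tfrac{1}{2}(\psi - \iu\phi)$, the identities $(\Iq \pm S)(\psi - \iu\phi) = (\psi - \iu\phi) \pm (\psi + \iu\phi)$ yield the factorisations $\phi = \iu(\Iq - S)g$ and $\psi = (\Iq + S)g$. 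Let $\cD$ be the union of the sets of poles of $\phi$ and $\psi$ in $\uhp$ together with the zero set of $\det g$; since $\det g$ is a meromorphic scalar function on $\uhp$ which does not vanish identically, the set $\cD$ is discrete in $\uhp$, and on $\uhp \setminus \cD$ the functions $\phi, \psi, g$ are holomorphic while $g(w)$ is invertible.

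The key input is \rlem{TS417} applied to $S$. Taking $V \defeq \pm \Iq$ in \rlemp{TS417}{TS417.b} gives that $\Ran{\Iq + S(w)}$ and $\Ran{\Iq - S(w)}$ are both independent of $w \in \uhp$; taking $U \defeq \pm \Iq$ in \rlemp{TS417}{TS417.a} yields the analogous constancy of $\Nul{\Iq + S(w)}$ and $\Nul{\Iq - S(w)}$. On $\uhp \setminus \cD$, invertibility of $g(w)$ translates the factorisation into
\begin{align*}
 \Ran{\phi(w)} &= \Ran{\Iq - S(w)}, & \Ran{\psi(w)} &= \Ran{\Iq + S(w)},\\
 \Nul{\phi(w)} &= g(w)^\inv\Nul{\Iq - S(w)}, & \Nul{\psi(w)} &= g(w)^\inv\Nul{\Iq + S(w)},
\end{align*}
so the two range identities of the proposition follow immediately.

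For the remaining two equations, observe that
\[
 \psi(w)\Nul{\phi(w)} = (\Iq + S(w))\Nul{\Iq - S(w)}
 \quad\text{and}\quad
 \phi(w)\Nul{\psi(w)} = \iu(\Iq - S(w))\Nul{\Iq + S(w)}.
\]
The elementary observation is that every $v \in \Nul{\Iq - S(w)}$ fulfills $S(w)v = v$, hence $(\Iq + S(w))v = 2v$, so $(\Iq + S(w))\Nul{\Iq - S(w)} = \Nul{\Iq - S(w)}$; dually, $(\Iq - S(w))\Nul{\Iq + S(w)} = \Nul{\Iq + S(w)}$. Combined with the constancy of $\Nul{\Iq \mp S(w)}$ supplied by \rlemp{TS417}{TS417.a} (and using that multiplication by the non-zero scalar $\iu$ leaves a subspace unchanged), this establishes the last two equations.

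The only delicate point I foresee is the correct bookkeeping of the exceptional set $\cD$ — in particular, justifying its discreteness from the non-vanishing of $\det(\psi - \iu\phi)$ guaranteed by \rlem{Bi13_5.23}. Everything else reduces to routine linear algebra together with the constancy statements for $\Iq \pm S(w)$ taken from \rlem{TS417}.
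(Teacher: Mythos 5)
Your proposal is correct and follows essentially the same route as the paper: pass to the Schur function $S=(\psi+\iu\phi)(\psi-\iu\phi)^\inv$ via \rlem{Bi13_5.23}, factor $\phi$ and $\psi$ through $\Iq\mp S$, and invoke \rlem{TS417} for the constancy of the ranges and null spaces of $\Iq\pm S(w)$. The only cosmetic difference is that you conclude $\psi(w)\Nul{\phi(w)}=\Nul{\Iq-S(w)}$ by the eigenvector observation $(\Iq+S(w))v=2v$, whereas the paper reaches the same identity via its Remarks on $\nul{BC}=C^\inv\nul{B}$ and $(\alpha A+\beta B)\nul{B}=A\nul{B}$; these are interchangeable.
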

\begin{proof}
 Because of \rlem{Bi13_5.23}, the function \(\det (\psi-\iu\phi)\) does not vanish identically and \(S\defeq (\psi+\iu\phi)(\psi-\iu\phi)^\inv \) belongs to \(\SchF{q}{q}{\uhp} \).
 Consequently, there is a discrete subset \(\cD\) of \(\uhp\) such that \(\phi\) and \(\psi\) are holomorphic in \(\uhp \setminus \cD\) and that \(\det\ek{\psi(w)-\iu\phi(w)}\neq 0\) holds true for each \(w \in \uhp \setminus \cD\).
 For each \(w \in \uhp \setminus \cD\), we obtain then
\beql{N257-11}
 \phi(w)
 = \frac{\iu}{2} \rk*{\psi(w) -\iu\phi(w) -\ek*{\psi(w) +\iu\phi(w)} }
 = \frac{\iu}{2} \ek*{\Iq-S(w)}\ek*{\psi(w) -\iu\phi(w)} 
\eeq
 and, analogously,
\beql{N257-12}
 \psi(w)
 = \frac{1}{2}\ek*{\Iq+S(w)}\ek*{\psi(w) -\iu\phi(w)}.
\eeq
 In particular, for each \(w \in \uhp \setminus \cD\), we have
\begin{align} \label{N259-1} 
 \Ran{\phi(w)}&=\Ran{\Iq -S(w)}&
&\text{and}&
 \Ran{\psi(w)}&=\Ran{\Iq +S(w)}. 
\end{align}
 Thus, in view of \eqref{N259-1}, we see from \rlem{TS417} that \eqref{TJ2} holds true for every choice of \(w\) and \(z\) in \(\uhp \setminus \cD\).
 For each \(w \in \uhp \setminus \cD\), from \(\det\ek{\psi(w)-\iu\phi(w)}\neq 0\), \eqref{N257-11}, \eqref{N257-12}, and \rrem{PBT} we get \(\nul{\Iq -S(w)}= [\psi(w)-\iu\phi(w)]\nul{\phi(w)}\) and \(\cN(\Iq +S(w))= [\psi(w)-\iu\phi(w)]\cN(\psi(w))\).
 Thus, \rrem{ZLM} implies \(\cN(\Iq -S(w))= \psi(w)\cN(\phi(w))\) and \(\cN(\Iq +S(w))= \phi(w)\cN(\psi(w))\) for each \(w\in\uhp \setminus \cD\).
 Hence, \(S \in \SchF{q}{q}{\uhp} \) and \rlem{TS417} show then that \eqref{TJ3} holds true for every choice \(w\) and \(z\) in \(\uhp \setminus \cD\).
\end{proof}

 Now we introduce the class of pairs of meromorphic matrix-valued functions, which will be the set of parameters in the description of the solution set of the Stieltjes moment problem under consideration.
 
\begin{defn} \label{def-sp}
 Let \(\alpha \in \R\).
 Let \(\phi\) and \(\psi\) be \tqqa{matrix-valued} functions meromorphic in \(\Cs \).
 Then \(\tmatp{\phi}{\psi}\) is called a \emph{\tqSp{}} if there exists a discrete subset \(\cD\) of \(\Cs \) such that the following three conditions are fulfilled: 
 \begin{aeqi}{0} 
 \item\label{def-sp.i} \(\phi\) are \(\psi\) are holomorphic in \(\C \setminus (\rhl \cup \cD)\). 
 \item\label{def-sp.ii} \(\rank \tmatp{\phi (z)}{\psi(z)} =q\) for each \(z\in \C \setminus (\rhl \cup \cD)\).
 \item\label{def-sp.iii} For each \(z\in \C \setminus (\R \cup \cD)\),
\begin{align} 
 \matp{\phi (z)}{\psi(z)}^\ad\rk*{\frac{-\Jimq}{2 \Im z}}\matp{\phi (z)}{\psi(z)}
 &\lgeq\NM\label{KD1}
\intertext{and} 
 \matp{(z-\alpha) \phi (z)}{\psi(z)}^\ad\rk*{\frac{-\Jimq}{2 \Im z}}\matp{(z-\alpha) \phi (z)}{ \psi(z)}
 &\lgeq\NM.\label{KD2} 
 \end{align}
 \end{aeqi} 
 The set of all \tqSps{} will be denoted by \(\qSp\).
 
 A pair \(\tmatp{\phi}{\psi} \in \qSp\) is said to be a \emph{proper \tqSp{}} if \(\det\psi\) does not vanish identically in \(\Cs\).
\end{defn}

\begin{rem} \label{B238B}
 Let \(\alpha \in \R\) and let \(\tmatp{\phi}{\psi} \in \qSp \).
 Then one can easily see that \(\tmatp{\tilde \phi}{\tilde \psi} \) given by \(\tilde \phi\defeq \Rstr_{\uhp \cap \hol{\phi}} \phi\) and \(\tilde \psi\defeq \Rstr_{\uhp \cap \hol{\psi}} \psi\) belongs to \(\qNp \).
\end{rem}

\begin{rem} \label{SP.1}
 Let \(\alpha \in \R\), let \(\tmatp{\phi}{\psi} \in \qSp \), and let \(g\) be a \tqqa{matrix-valued} function which is meromorphic in \(\Cs \) such that \(\det g\) does not vanish identically.
 Then it is readily checked that \(\tmatp{\phi g}{\psi g} \in \qSp \).
\end{rem} 

\begin{rem} \label{def-aq}
 Two \tqS{s} \(\tmatp{\phi_1}{\psi_1}\) and \(\tmatp{\phi_2}{\psi_2}\) in \(\Cs \) are said to be \emph{equivalent} if there exist a \tqqa{matrix-valued} function \(g\) which is meromorphic in \(\Cs\) and a discrete subset \(\cD\) of \(\Cs\) such that \(\phi_1\), \(\phi_2\), \(\psi_1\), \(\psi_2\), and \(g\) are holomorphic in \(\C\setminus (\rhl \cup \cD)\) and that \( \det g(z) \neq 0\) and \( \tmatp{\phi_2 (z)}{\psi_2(z)}= \tmatp{\phi_1 (z) g (z)}{\psi_1(z) g (z)}\) hold true for each \(z\in \C \setminus (\rhl \cup \cD)\).
 It is readily checked that this implies an equivalence relation on \(\qSp \).
 For each \(\tmatp{\phi}{\psi} \in \qSp \), by \(\tmatpc{\phi}{\psi}\) we denote the equivalence class generated by \(\tmatp{\phi}{\psi}\).
\end{rem}

\begin{rem} \label{NR86.} 
 Let \(\alpha \in \R\).
 For each \(j\in\set{1,2}\),  let \(\tmatp{\phi_j}{\psi_j} \in \qSp \), let \(\tilde{\phi}_j \defeq  \Rstr_{\uhp  }\phi_j\) and let \(\tilde{\psi}_j \defeq  \Rstr_{\uhp  }\psi_j\).
 Then it is readily checked that \(\tmatpc{\phi_1}{\psi_1}=\tmatpc{\phi_2}{\psi_2}\) if and only if  \(\tmatpc{\tilde \phi_1}{\tilde \psi_1}=\tmatpc{\tilde \phi_2}{\tilde \psi_2}\).
\end{rem}

\begin{exam} \label{SP.2} 
 Let \(\alpha \in \R\) and let \(f \in \SFq \).
 Using~\cite[\cprop{4.4} and \clem{4.2}]{MR3380267}, one can easily check then that \(\tmatp{f}{\Iq} \) belongs to \(\qSp \).
 In particular,~\cite[\cexam{2.2}]{MR3380267} shows that \(\qSp \ne \emptyset\).
 Furthermore, if \(f,g\in\SFq\) are such that the pairs \(\tmatp{f}{\Iq}\) and \(\tmatp{g}{\Iq}\) are equivalent, then \(f=g\).
\end{exam} 

\begin{rem} \label{BW} 
 Let \(\alpha \in \R\) and let \(\tmatp{\phi}{\psi} \in \qSp \).
 Using a classical result of complex analysis (see, \teg{}~\cite[\cthm{11.46}, \cpage{395}]{MR555733}), one can prove that there is a \((\C\setminus\set{0})\)~valued function \(g\) holomorphic in \(\Cs \) such that \(\tilde \phi\defeq g \phi\) and \(\tilde \psi \defeq g \psi\) are holomorphic in \(\Cs \).
 In particular, \(\tmatp{\tilde \phi}{\tilde \psi}\) belongs to \(\qSp \) with \(\tmatpc{\tilde \phi}{\tilde \psi}=\tmatpc{ \phi}{ \psi}\).
\end{rem}

\begin{rem}\label{R1544}
 Let \(\alpha \in \R\) and let \(\tmatp{\phi}{\psi} \in \qSp \) be proper.
 Applying~\zitaa{MR3644521}{\cprop{4.3}} one can show then, that \(S\defeq\phi\psi^\inv\) belongs to \(\SFq\) and that \(\tmatp{S}{\Iq}\) is a proper \tqSp{} which is equivalent to \(\tmatp{\phi}{\psi}\).
 (Since we do not use this result in the following, we omit a detailed proof.)
\end{rem}
 
\begin{lem} \label{LHA} 
 Let \(\alpha \in \R\) and let \(\tmatp{\phi}{\psi} \in \qSp \).
 Then there exists a discrete subset \(\cD\) of \(\Cs \) such that the conditions~\ref{def-sp.i},~\ref{def-sp.ii}, and~\ref{def-sp.iii} of \rdefn{def-sp} are fulfilled and that the following statements hold true:
\begin{aeqi}{3} 
 \item\label{LHA.iv} \(\frac{1}{\Im z} \Im\ek{\psi^\ad (z) \phi(z)}\in \Cggq\) for each \(z \in \C \setminus (\R \cup \cD)\).
 \item\label{LHA.v} \(\frac{1}{\Im z} \Im [(z-\alpha)\psi^\ad (z) \phi(z) ] \in \Cggq\) for each \(z \in \C \setminus (\R \cup \cD)\).
 \item\label{LHA.vi} \( \Re [\psi^\ad (z) \phi(z) ] \in \Cggq\) for each \(z \in \lehpa  \setminus \cD\).
 \end{aeqi} 
\end{lem}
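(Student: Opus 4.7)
The plan is to take for $\cD$ the very discrete exceptional subset of $\Cs$ already supplied by the hypothesis $\tmatp{\phi}{\psi}\in\qSp$, so that conditions \ref{def-sp.i}--\ref{def-sp.iii} of Definition~\ref{def-sp} hold automatically. Statements (iv) and (v) will then be immediate algebraic rewritings of the two inequalities \eqref{KD1} and \eqref{KD2}, whereas (vi) will follow from (iv) and (v) together with a continuity argument across the real axis.

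For (iv) and (v), I would apply the identity of Remark~\ref{J-1} with $(A,B)=(\phi(z),\psi(z))$ to obtain
\[
 \matp{\phi(z)}{\psi(z)}^\ad(-\Jimq)\matp{\phi(z)}{\psi(z)}
 =-\iu\bigl(\psi^\ad(z)\phi(z)-\phi^\ad(z)\psi(z)\bigr)
 =2\Im\bigl[\psi^\ad(z)\phi(z)\bigr],
\]
and analogously with $(A,B)=((z-\alpha)\phi(z),\psi(z))$ to obtain $2\Im[(z-\alpha)\psi^\ad(z)\phi(z)]$. Dividing \eqref{KD1} and \eqref{KD2} by the positive factor $2$ in the numerator leaves exactly the inequalities (iv) and (v) for every $z\in\C\setminus(\R\cup\cD)$.

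For (vi), I would first handle $z\in\lehpa\setminus(\R\cup\cD)$. Splitting $z-\alpha=(\Re z-\alpha)+\iu\Im z$ and using that $\Im(\iu r M)=r\Re M$ for real $r$ yields the Hermitian identity
\[
 \Im\bigl[(z-\alpha)\psi^\ad(z)\phi(z)\bigr]
 =(\Re z-\alpha)\Im\bigl[\psi^\ad(z)\phi(z)\bigr]+(\Im z)\Re\bigl[\psi^\ad(z)\phi(z)\bigr],
\]
so that
\[
 \Re\bigl[\psi^\ad(z)\phi(z)\bigr]
 =\frac{\Im[(z-\alpha)\psi^\ad(z)\phi(z)]}{\Im z}+(\alpha-\Re z)\cdot\frac{\Im[\psi^\ad(z)\phi(z)]}{\Im z}.
\]
The first summand is $\lgeq\NM$ by (v); in the second, $\alpha-\Re z>0$ because $\Re z\in\crhl=(-\infty,\alpha)$, and the matrix factor is $\lgeq\NM$ by (iv), so the product and hence the sum lies in $\Cggq$.

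It remains to extend (vi) to $x\in\lehpa\cap\R\setminus\cD=\crhl\setminus\cD$. Here $\phi$ and $\psi$ are holomorphic at $x$ (since $\crhl\subseteq\C\setminus\rhl$ and $x\notin\cD$), and hence $\psi^\ad\phi$ is continuous at $x$. Discreteness of $\cD$ lets me pick $y_n\searrow 0$ with $x+\iu y_n\in\lehpa\setminus(\R\cup\cD)$ for all large $n$; the case already settled gives $\Re[\psi^\ad(x+\iu y_n)\phi(x+\iu y_n)]\lgeq\NM$, and passing to the limit inside the closed cone $\Cggq$ yields $\Re[\psi^\ad(x)\phi(x)]\lgeq\NM$. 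The whole argument is essentially a bookkeeping exercise; no substantial obstacle is expected. The only point that needs a bit of care is the real-axis case of (vi), because \eqref{KD1} and \eqref{KD2} are formulated only off $\R$, and the inequality on $\crhl$ must be reached by this limiting argument.
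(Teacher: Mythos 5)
Your proposal is correct and follows essentially the same route as the paper's proof: the same choice of $\cD$ from Definition~\ref{def-sp}, the identity of Remark~\ref{J-1} to convert \eqref{KD1} and \eqref{KD2} into statements (iv) and (v), the decomposition $\Re[\psi^\ad(z)\phi(z)]=\frac{1}{\Im z}\Im[(z-\alpha)\psi^\ad(z)\phi(z)]+\frac{\alpha-\Re z}{\Im z}\Im[\psi^\ad(z)\phi(z)]$ for $z\in\lehpa\setminus(\R\cup\cD)$, and the same continuity/limit argument to reach the points of $\crhl\setminus\cD$.
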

\begin{proof}
 In view of \rdefn{def-sp}, there is a discrete subset \(\cD\) of \(\Cs \) such that~\ref{def-sp.i},~\ref{def-sp.ii}, and~\ref{def-sp.iii} of \rdefn{def-sp} hold true.
 For each \(z \in \C \setminus (\R \cup \cD)\), from \rrem{J-1} and \rdefnp{def-sp}{def-sp.iii} we get
\[
 \frac{1}{\Im z}\Im\ek*{\psi^\ad (z) \phi(z)}
 = \matp{\phi(z)}{\psi(z) }^\ad \rk*{\frac{-\Jimq}{2 \Im z}} \matp{ \phi(z)}{\psi(z)}
 \in \Cggq
\]
 and, consequently,~\ref{LHA.iv}.
 For each \(z\in \C \setminus (\R \cup \cD)\), \rrem{J-1} and \rdefnp{def-sp}{def-sp.iii} yield
\[
 \frac{1}{\Im z}\Im\ek*{(z-\alpha)\psi^\ad (z) \phi(z)}
 = \matp{(z-\alpha)\phi(z)}{\psi(z)}^\ad \rk*{\frac{-\Jimq}{2 \Im z}} \matp{(z-\alpha)\phi(z)}{\psi(z)} 
 \in \Cggq.
\]
 Hence,~\ref{LHA.v} is true.
 To verify~\ref{LHA.vi}, we consider an arbitrary \(z\in \lehpa  \setminus (\R \cup \cD)\).
 Then \( \alpha - \Re z >0\) and~\ref{LHA.iv} implies \(\frac{\alpha - \Re z }{\Im z} \Im [\psi ^\ad (z) \phi(z)] \in \Cggq\).
 Because of \rrem{A11} and~\ref{LHA.v}, this implies 
\beql{SBW793}\begin{split}
 \Re\ek*{\psi^\ad (z) \phi(z)}
 &= \frac{1}{\Im z} \Im\ek*{z \psi^\ad (z) \phi(z)}- \frac{\Re z }{\Im z} \Im\ek*{\psi^\ad (z) \phi(z)}\\
 &= \frac{1}{\Im z} \Im\ek*{z \psi^\ad (z) \phi(z)} + \frac{\alpha -\Re z }{\Im z} \Im\ek*{\psi^\ad (z) \phi(z)}- \frac{\alpha}{\Im z} \Im\ek*{\psi^\ad (z) \phi(z)}\\
 &\lgeq \frac{1}{\Im z} \Im\ek*{(z- \alpha) \psi^\ad (z) \phi(z)}
 \in \Cggq.
\end{split}\eeq
 Now we consider an arbitrary \(z \in (\lehpa  \setminus \cD) \cap \R\).
 Since \(\cD\) is a discrete subset of \(\Cs \), there is a sequence \((z_n)_{n=1}^{\infi}\) of numbers belonging to \(\lehpa  \setminus\rk{\R\cup\cD}\) with \(\lim_{n \to \infi} z_n=z\).
 For each \(n \in \N\), in view of \eqref{SBW793}, then \(\Re [\psi^\ad (z_n) \phi(z_n) ]\in \Cggq\).
 Thus, in view of \rdefnp{def-sp}{def-sp.i}, then 
\beql{SBWLN}
 \Re\ek*{\psi^\ad (z) \phi(z)}
 =\Re \rk*{\lim_{n\to \infi}\ek*{\psi^\ad (z_n) \phi(z_n)}}
 = \lim_{n\to \infi} \Re\ek*{\psi^\ad (z_n) \phi(z_n)}
 \in \Cggq
\eeq
 follows.
 Taking into account \(\lehpa  \setminus \cD = [ \lehpa  \setminus (\R \cup \cD)] \cup [(\lehpa  \setminus \cD) \cap \R ]\), \eqref{SBW793}, and \eqref{SBWLN}, statement~\ref{LHA.vi} is proved as well.
 \end{proof}

\begin{lem} \label{T01}
 Let \(\tmatp{\phi}{\psi} \in \qSp \).
 Then the function \(\det (\psi-\iu\phi)\) does not vanish identically and the function \(F\defeq (\psi+\iu\phi)(\psi-\iu\phi)^\inv \) is meromorphic in \(\Cs \) and fulfills \(\Rstr _{\uhp} F \in \SchF{q}{q}{\uhp} \).
 Furthermore, there exists a discrete subset \(\cD\) of \(\Cs \) such that \(\phi\), \(\psi\), \((\psi-\iu\phi)^\inv \), and \(F\) are holomorphic in \(\uhp \cup[\C \setminus (\rhl \cup \cD)]\) and that \( \det\ek{\psi(z) - \iu \phi(z)}\neq 0 \) and
\beql{T-N257-1}
 F(z)
 =\ek*{\psi(z)+\iu\phi(z)}\ek*{\psi(z)-\iu\phi(z)}^\inv 
\eeq
 hold true for each \(z \in \C \setminus (\rhl \cup \cD)\).
 Moreover, for each \(z \in \C \setminus (\rhl \cup \cD)\), the matrix-valued functions \(\phi\) and \(\psi\) admit the representations 
\begin{align} \label{T-B55-252}
 \phi(z)&= \frac{\iu}{2} \ek{\Iq-F(z)}\ek{\psi(z)-\iu\phi(z)}&
&\text{and}&
 \psi(z)&= \frac{1}{2} \ek*{\Iq+F(z)}\ek{\psi(z)-\iu\phi(z)}.
\end{align} 
\end{lem}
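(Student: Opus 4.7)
My plan is to reduce the assertion to the already-cited Nevanlinna-pair result \rlem{Bi13_5.23} by restricting to the upper half-plane, and then to extend the obtained identities meromorphically to all of $\Cs$ using the identity theorem for meromorphic functions.

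First, by \rrem{B238B}, the restriction $\tmatp{\tilde\phi}{\tilde\psi}$ of $\tmatp{\phi}{\psi}$ to the open subset of $\uhp$ on which both $\phi$ and $\psi$ are holomorphic belongs to $\qNp$. \rlemp{Bi13_5.23}{Bi13_5.23.a} then ensures that the meromorphic function $\det(\tilde\psi-\iu\tilde\phi)$ on $\uhp$ does not vanish identically, and that the associated function $(\tilde\psi+\iu\tilde\phi)(\tilde\psi-\iu\tilde\phi)^\inv$ belongs to $\SchF{q}{q}{\uhp}$. Since $\phi$ and $\psi$ are meromorphic in the connected open set $\Cs$, the function $\det(\psi-\iu\phi)$ is meromorphic in $\Cs$, and because it does not vanish identically on the open subset $\uhp\subseteq\Cs$, the identity theorem for meromorphic functions implies that it does not vanish identically in $\Cs$. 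Consequently $F\defeq(\psi+\iu\phi)(\psi-\iu\phi)^\inv$ is well-defined and meromorphic in $\Cs$, and $\Rstr_{\uhp}F$ coincides off a discrete set with $(\tilde\psi+\iu\tilde\phi)(\tilde\psi-\iu\tilde\phi)^\inv$. As a Schur function in $\uhp$ is bounded (hence has no poles), $\Rstr_{\uhp}F$ extends to the full Schur function constructed in step~2, so $\Rstr_{\uhp}F\in\SchF{q}{q}{\uhp}$.

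Next I would construct $\cD$ as the union of: (i) the common pole set of $\phi$ and $\psi$ in $\Cs$ (which is discrete since $\phi,\psi$ are meromorphic), and (ii) the set of zeros in $\Cs$ of the meromorphic function $\det(\psi-\iu\phi)$ off of (i), which is discrete because $\det(\psi-\iu\phi)$ does not vanish identically on the connected set $\Cs$. On $\Cs\setminus\cD=\C\setminus(\rhl\cup\cD)$ the matrix-valued functions $\phi$ and $\psi$ are holomorphic with $\det[\psi(z)-\iu\phi(z)]\neq 0$, so $(\psi-\iu\phi)^\inv$ and $F$ are holomorphic there too, and \eqref{T-N257-1} is valid. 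Finally, the representations~\eqref{T-B55-252} follow by a short algebraic computation: for every $z\in\C\setminus(\rhl\cup\cD)$ one has $F(z)[\psi(z)-\iu\phi(z)]=\psi(z)+\iu\phi(z)$, so
\[
 \tfrac{1}{2}[\Iq+F(z)][\psi(z)-\iu\phi(z)]=\tfrac{1}{2}\bigl([\psi(z)-\iu\phi(z)]+[\psi(z)+\iu\phi(z)]\bigr)=\psi(z)
\]
and
\[
 \tfrac{\iu}{2}[\Iq-F(z)][\psi(z)-\iu\phi(z)]=\tfrac{\iu}{2}\bigl([\psi(z)-\iu\phi(z)]-[\psi(z)+\iu\phi(z)]\bigr)=\phi(z).
\]

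The only minor technical point — and, to my mind, the main thing to handle carefully — is the transfer of the genericity condition ``$\det(\psi-\iu\phi)$ does not vanish identically'' from $\uhp$ to $\Cs$; this is immediate from connectedness of $\Cs$ and the identity theorem for meromorphic functions, but it is crucial because everything else in the lemma depends on the existence of the inverse $(\psi-\iu\phi)^\inv$ as a meromorphic function on $\Cs$. Once this is in place, the rest is a routine combination of \rrem{B238B}, \rlem{Bi13_5.23}, and elementary algebra.
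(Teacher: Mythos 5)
Your proposal is correct and follows exactly the route the paper indicates: the paper omits the details and merely cites \rrem{B238B}, \rlem{Bi13_5.23}, and \rdefn{def-sp}, and your argument (restrict to \(\uhp\) to obtain a Nevanlinna pair, apply \rlemp{Bi13_5.23}{Bi13_5.23.a}, propagate the non-vanishing of \(\det(\psi-\iu\phi)\) to the connected set \(\Cs\) by the identity theorem, and then assemble \(\cD\) from poles and zeros) is precisely that filling-in. One small remark on the statement rather than on your proof: for \(F\) the holomorphy on all of \(\uhp\) follows from the Schur-function identification as you note, but for \(\phi\), \(\psi\), and \((\psi-\iu\phi)^\inv\) your construction only yields holomorphy on \(\C\setminus(\rhl\cup\cD)\) — and this is in fact all that can be guaranteed, since by \rrem{SP.1} one may multiply a Stieltjes pair by a meromorphic \(g\) having poles or a vanishing determinant at points of \(\uhp\); so your (slightly weaker) conclusion is the correct one and is what the subsequent applications actually use.
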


 In view of \rrem{B238B}, \rlem{Bi13_5.23}, and \rdefn{def-sp}, one can easily prove \rlem{T01}.
 We omit the details.

\begin{prop}\label{NL1-2}
 Let \(\tmatp {\phi}{\psi} \in \qSp \).
 Then there exists a discrete subset \(\cD\) of \(\Cs\) such that \(\phi\) and \(\psi\) are holomorphic in \(\C \setminus (\rhl \cup \cD)\) and that \eqref{TJ2} and \eqref{TJ3} hold true for every choice of \(z\) and \(w\) in \(\C \setminus (\rhl \cup \cD)\).
\end{prop}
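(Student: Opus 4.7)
The strategy is to reduce the statement to its Nevanlinna counterpart (\rprop{NL1-1}) by passing to the Schur function \(F=(\psi+\iu\phi)(\psi-\iu\phi)^\inv\) produced by \rlem{T01}, and to propagate the constancy of the relevant null spaces and ranges from \(\uhp\) to \(\Cs\) by means of the identity theorem together with a standard rank argument.

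First, \rlem{T01} yields a discrete subset \(\cD_0\subseteq\Cs\) such that \(\phi\), \(\psi\), \((\psi-\iu\phi)^\inv\), and \(F\) are all holomorphic on the connected open set \(\Omega\defeq\uhp\cup[\C\setminus(\rhl\cup\cD_0)]\), with \(\det[\psi(z)-\iu\phi(z)]\ne 0\) and the representations~\eqref{T-B55-252} valid throughout \(\Omega\); moreover \(\Rstr_{\uhp}F\in\SchF{q}{q}{\uhp}\). Then \rlem{TS417}, applied to \(\Rstr_{\uhp}F\) with \(U,V\in\{\Iq,-\Iq\}\), shows that the four subspaces \(\cN_-\defeq\nul{\Iq-F(w_0)}\), \(\cN_+\defeq\nul{\Iq+F(w_0)}\), \(\cR_-\defeq\ran{\Iq-F(w_0)}\), and \(\cR_+\defeq\ran{\Iq+F(w_0)}\) of \(\Cq\) do not depend on the choice of \(w_0\in\uhp\).

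The second step is to extend these four constancy statements from \(\uhp\) to \(\Omega\) up to a discrete exceptional set. Let \(P_\mp\) be the orthogonal projection of \(\Cq\) onto \(\cN_\mp\); then \([\Iq\mp F(w)]P_\mp=\Oqq\) for \(w\in\uhp\), and since the left-hand side is holomorphic on the connected domain \(\Omega\), the identity theorem forces \([\Iq\mp F(z)]P_\mp=\Oqq\) for every \(z\in\Omega\). Hence \(\cN_\mp\subseteq\nul{\Iq\mp F(z)}\) and \(\rank[\Iq\mp F(z)]\le q-\dim\cN_\mp\) throughout \(\Omega\), with equality already holding on the non-empty open set \(\uhp\). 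The locus of strict inequality is the vanishing set of certain minors of a holomorphic matrix function on the one-dimensional complex manifold \(\Omega\), hence an analytic subvariety not containing \(\uhp\), and is therefore a discrete subset \(E_\mp\subseteq\Omega\); thus \(\nul{\Iq\mp F(z)}=\cN_\mp\) for \(z\in\Omega\setminus E_\mp\). A parallel argument for ranges (using the orthogonal projection \(Q_\pm\) onto \(\cR_\pm^\orth\) together with the relation \(Q_\pm[\Iq\pm F(w)]=\Oqq\) on \(\uhp\)) yields discrete sets \(E^\pm\subseteq\Omega\) with \(\ran{\Iq\pm F(z)}=\cR_\pm\) for \(z\in\Omega\setminus E^\pm\).

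Finally, \(\det[\psi(z)-\iu\phi(z)]\ne 0\) combined with~\eqref{T-B55-252} gives \(\ran{\phi(z)}=\ran{\Iq-F(z)}\) and \(\ran{\psi(z)}=\ran{\Iq+F(z)}\) for all \(z\in\Omega\). For the null-space identities, the observation that \(\phi(z)v=\Ouu{q}{1}\) implies \([\psi(z)-\iu\phi(z)]v=\psi(z)v\) shows that the bijection \(v\mapsto[\psi(z)-\iu\phi(z)]v\) carries \(\nul{\phi(z)}\) onto \(\nul{\Iq-F(z)}\) through the action of \(\psi(z)\), so \(\psi(z)\nul{\phi(z)}=\nul{\Iq-F(z)}\); the symmetric calculation, using that \(\psi(z)v=\Ouu{q}{1}\) implies \([\psi(z)-\iu\phi(z)]v=-\iu\phi(z)v\) together with the invariance of \(\nul{\Iq+F(z)}\) under nonzero scalar multiplication, yields \(\phi(z)\nul{\psi(z)}=\nul{\Iq+F(z)}\). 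Setting \(\cD\defeq\cD_0\cup E_-\cup E_+\cup E^-\cup E^+\) (still discrete in \(\Cs\)), the equalities~\eqref{TJ2} and~\eqref{TJ3} hold for every pair \(w,z\in\C\setminus(\rhl\cup\cD)\). The main obstacle is precisely the analytic-subvariety step in the second paragraph, which converts ``equality on \(\uhp\)'' into ``equality off a discrete set in \(\Omega\)''; the rest follows the algebraic template of the proof of \rprop{NL1-1}.
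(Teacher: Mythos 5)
Your proof is correct in substance, but it takes a genuinely different route from the paper's. The paper covers \(\C\setminus\rhl\) by three overlapping half\nobreakdash-planes --- \(\uhp\), \(\lhp\), and \(\lehpa\) --- and on each one transforms \(\tmatp{\phi}{\psi}\) into a \tqNp{} (via \(z\mapsto z\), \(z\mapsto -z\), and \(z\mapsto\alpha+\iu z\), respectively), applies \rprop{NL1-1} to each transformed pair, and then glues the three constancy statements together using the pairwise overlaps of the regions; the third substitution is exactly where condition~\ref{LHA.vi} of \rlem{LHA} (nonnegativity of \(\Re[\psi^\ad\phi]\) on \(\lehpa\), ultimately coming from \eqref{KD2}) enters. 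You instead work globally with the single function \(F=(\psi+\iu\phi)(\psi-\iu\phi)^\inv\) from \rlem{T01}: \rlem{TS417} fixes the four subspaces on \(\uhp\), and holomorphy of \(F\) on the connected set \(\Omega=\uhp\cup[\C\setminus(\rhl\cup\cD_0)]\), the identity theorem, and a rank\nobreakdash-drop (minor) argument propagate the constancy to the whole domain off a further exceptional set; the final translation back to \(\phi\) and \(\psi\) coincides with the algebra in the paper's proof of \rprop{NL1-1}. Your approach avoids the threefold case analysis and the verification of three separate Nevanlinna\nobreakdash-pair conditions, at the price of the analytic\nobreakdash-variety step. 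One point you must make explicit: the sets \(E_\mp\) and \(E^\pm\) are a priori only discrete in \(\Omega\), and a set discrete in \(\Omega\) could in principle accumulate at a point of \(\cD_0\setminus\uhp\), which would destroy the discreteness of \(\cD\) in \(\Cs\) that the proposition requires. The repair is immediate --- \rlem{T01} states that \(F\) is meromorphic on all of \(\Cs\), so each minor of \(\Iq\mp F\) is meromorphic on \(\Cs\), and the zero set of a not identically vanishing meromorphic function cannot accumulate at any point of \(\Cs\) (in particular not at a pole) --- but as written the parenthetical ``still discrete in \(\Cs\)'' is an unjustified assertion. With that one sentence added, the argument is complete.
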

\begin{proof}
 The proof is divided into five steps.

 (I) Because of \(\tmatp {\phi}{\psi} \in \qSp \) and \rlem{LHA}, there is a discrete subset \(\tilde\cD\) of \(\Cs \) such that the following four conditions hold true:
\begin{aeqi}{0}
 \item\label{NL1-2.i} \(\phi\) and \(\psi\) are holomorphic in \(\C \setminus (\rhl \cup \tilde\cD)\).
 \item\label{NL1-2.ii} \(\rank \tmatp {\phi(z)}{\psi(z)}=q \) for each \(z \in \C \setminus (\rhl \cup \tilde\cD)\).
 \item\label{NL1-2.iii} The inequalities \eqref{KD1} and \eqref{KD2} hold true for each \(z \in \C \setminus (\R \cup \tilde\cD)\).
 \item\label{NL1-2.iv} \(\Re[\psi^\ad (z) \phi(z)] \in \Cggq\) for each \(z \in \lehpa  \setminus \tilde\cD\).
\end{aeqi}

 (II) Let \(\Pi_1\defeq \uhp \cap \hol{\phi} \cap \hol{\psi}\).
 Then \(\tilde\cD_1\defeq \tilde\cD \cap \uhp\) is a discrete subset of \(\uhp\) with \(\Pi_1 \supseteq \uhp \setminus \tilde\cD_1\).
 Because of~\ref{NL1-2.i}, the functions \(\phi_1\defeq \Rstr _{\Pi_1} \phi\) and \(\psi_1\defeq \Rstr _{\Pi_1} \psi\) are holomorphic in \(\Pi_1\) as well as in \(\uhp \setminus \tilde\cD_1\).
 If \(k=1\), then~\ref{NL1-2.ii} and~\ref{NL1-2.iii} imply 
\beql{HF}
 \rank \matp {\phi_k(z)}{\psi_k(z)}
 =q 
\eeq
 and 
\beql{WL17}
 \matp{\phi_k (z)}{\psi_k(z)}^\ad \rk*{\frac{-\Jimq}{2 \Im z}}\matp{\phi_k (z)}{\psi_k(z)}
 \lgeq \Oqq 
\eeq
 for each \(z\in \uhp \setminus \tilde\cD_1\).
 Thus, \rnota{def-nev-paar} shows that \(\tmatp {\phi_1}{\psi_1} \in \qNp \).
 From \rprop{NL1-1} we know that there is a discrete subset \(\cD_1\) of \(\uhp\) such that \(\phi_1\) and \(\psi_1\) are holomorphic in \(\uhp \setminus \cD_1\) and that 
\begin{align} 
 \Ran{\phi_k(w)}&= \Ran{\phi_k(z)},&
&&
 \Ran{\psi_k(w)}&= \Ran{\psi_k(z)},\label{VI1}\\ 
 \psi_k(w)\Nul{\phi_k(w)}&= \psi_k(z) \Nul{\phi_k(z)},&
&\text{and}& 
 \phi_k(w)\Nul{\psi_k(w)}&= \phi_k(z) \Nul{\psi_k(z)}\label{VI2}
\end{align} 
 hold true for \(k=1\) and for every choice of \(w\) and \(z\) in \(\uhp \setminus \cD_1\).
 Consequently, \eqref{TJ2} and \eqref{TJ3} are fulfilled for each \(w \in \uhp \setminus \cD_1\) and each \(z \in \uhp \setminus \cD_1\).
 
 (III) Let \(\Pi_2\defeq \setaca{z \in\uhp}{-z \in \hol{\phi} \cap \hol{\psi}}\) and let \(\tilde\cD_2\defeq \setaca{ z \in \uhp}{ -z \in \tilde\cD}\).
 Obviously, \(\tilde\cD_2\) is a discrete subset of \(\uhp\) with \(\Pi_2 \supseteq \uhp \setminus \tilde\cD_2\).
 The functions \(\phi_2\colon\Pi_2 \to \Cqq\) defined by \(\phi_2(z)\defeq -\phi(-z)\) and \(\psi_2\colon\Pi_2 \to \Cqq\) defined by \(\psi_2(z)\defeq \psi(-z)\) are holomorphic in \(\uhp \setminus \tilde\cD_2\).
 For each \(z \in \uhp \setminus \tilde\cD_2\), we have \(-z \in \C \setminus (\rhl \cup \tilde\cD)\) and \( \tmatp{\phi_2(z)}{\psi_2(z)} = \diag(-\Iq,\Iq) \cdot \tmatp{\phi(-z)}{\psi(-z)}\).
 Consequently, \eqref{HF} holds true for \(k=2\) and each \(z \in \uhp \setminus \tilde\cD_2\) and, in view of~\ref{NL1-2.iii}, furthermore, 
\[
 \matp{\phi_2(z)}{\psi_2(z)}^\ad (-\Jimq) \matp{\phi_2(z)}{\psi_2(z)} 
 = -2 \Im (-z) \matp{\phi (-z)}{\psi(-z)}^\ad \ek*{\frac{-\Jimq}{2 \Im (- z)}}\matp{\phi (-z)}{\psi(-z)}
 \in \Cggq
\]
 is fulfilled for each \(z \in\uhp \setminus \tilde\cD_2\).
 Thus, according to \rnota{def-nev-paar}, the pair \(\tmatp{\phi_2}{\psi_2}\) belongs to \(\qNp \).
 \rprop{NL1-1} shows that there exists a discrete subset \(\cD_2\) of \(\uhp\) such that \(\phi_2\) and \(\psi_2\) are holomorphic in \(\uhp \setminus \cD_2\) and that \eqref{VI1} and \eqref{VI2} are valid for \(k=2\) and every choice of \(w\) and \(z\) in \(\uhp \setminus \cD_2\).
 Hence, \(\cR(-\phi(-w))= \cR(-\phi(-z))\) and \(\cR(\psi(-w))= \cR(\psi(-z))\), and \( \psi(-w)\cN(-\phi(-w))= \psi(-z) \cN(-\phi(-z)) \) and \( -\phi(-w)\cN(\psi(-w))= -\phi(-z) \cN(\psi(-z)) \) for each \(w\in\uhp \setminus \cD_2\) and each \(z\in\uhp \setminus \cD_2\).
 Therefore, \(\hat \cD_2\defeq \setaca{z \in \lhp}{ -z \in \cD_2}\) is a discrete subset of \(\lhp\) such that \eqref{TJ2} and \eqref{TJ3} are fulfilled for every choice of \(w\) and \(z\) in \(\lhp \setminus \hat \cD_2\). 

 (IV) Obviously, \(\tilde\cD_3\defeq \setaca{ z \in \uhp}{ \alpha+\iu z \in \tilde\cD}\) is a discrete subset of \(\uhp\) and \( \uhp \setminus \tilde\cD_3\) is a subset of \(\Pi_3\defeq \setaca{z \in\uhp}{ \alpha+\iu z \in \hol{\phi} \cap \hol{\psi}}\).
 Because of~\ref{NL1-2.i}, the functions \(\phi_3\colon\Pi_3 \to \Cqq\) defined by \(\phi_3(z)\defeq\iu\phi(\alpha+\iu z)\) and \(\psi_3\colon\Pi_3 \to \Cqq\) defined by \(\psi_3(z) \defeq \psi(\alpha+\iu z)\) are holomorphic in \(\uhp \setminus \tilde\cD_3\).
 For each \(z \in \uhp \setminus \tilde\cD_3\), we have \(\alpha+\iu z \in \C \setminus (\rhl \cup \tilde\cD)\).
 Thus, for each \(z \in \uhp \setminus \tilde\cD_3\), from \(\tmatp{\phi_3 (z)}{\psi_3 (z)} = \diag (\iu\Iq,\Iq) \cdot \tmatp{\phi(\alpha+\iu z) }{\psi(\alpha+\iu z)}\) and~\ref{NL1-2.ii} we see that \eqref{HF} holds true for \(k=3\) and each \(z \in \uhp \setminus \tilde\cD_3\).
 Obviously, \( [\diag (\iu\Iq, \Iq )]^\ad(-\Jimq)\ek{\diag (\iu\Iq, \Iq )} =\tmat{ \Oqq&\Iq\\ \Iq&\Oqq}\).
 For each \(z \in \uhp \setminus \tilde\cD_3\), then 
\beql{RSWN9}\begin{split}
 &\matp{\phi_3(z)}{\psi_3(z)}^\ad \rk*{\frac{-\Jimq}{2 \Im z}} \matp{\phi_3(z)}{\psi_3(z)}\\
 &= \frac{1}{2 \Im z} \matp{\phi(\alpha+\iu z)}{\psi(\alpha+\iu z)}^\ad \ek*{\diag (\iu\Iq,\Iq)}^\ad(-\Jimq)\ek*{\diag (\iu\Iq, \Iq )}\matp{\phi(\alpha+\iu z)}{\psi(\alpha+\iu z)}\\
 &= \frac{1}{ \Im z} \Re\ek*{\psi^\ad (\alpha+\iu z)\phi(\alpha+\iu z)}.
\end{split}\eeq
 For each \(z\in \uhp\setminus \tilde\cD_3\), we have \(\alpha+\iu z \in \lehpa \setminus \tilde\cD\) and, consequently, \(\Re\ek{ \psi^\ad (\alpha +\iu z) \phi (\alpha +\iu z)} \in \Cggq\).
 Thus, \eqref{RSWN9} implies \eqref{WL17} for \(k=3\) and each \(z\in \uhp\setminus \tilde\cD_3\).
 Hence, in view of \rnota{def-nev-paar}, the pair \(\tmatp{\phi_3}{\psi_3}\) belongs to \(\qNp \).
 \rprop{NL1-1} shows then that there is a discrete subset \(\cD_3\) of \(\uhp\) such that \(\phi_3\) and \(\psi_3\) are holomorphic in \(\uhp \setminus \cD_3\) and that the equations in \eqref{VI1} and \eqref{VI2} hold true for \(k=3\) and every choice of \(z\) in \(\uhp \setminus \cD_3\).
 Therefore, for all \(w, z \in \uhp \setminus \cD_3\), we obtain 
\begin{align} 
 \Ran{\phi(\alpha+\iu w)}   &= \Ran{\iu\phi(\alpha+\iu w)}   =\Ran{\iu\phi(\alpha+\iu z)}  =\Ran{\phi(\alpha+\iu z)}, \label{TBK3}\\
 \Ran{\psi(\alpha+\iu w)}   &=\Ran{\psi(\alpha+\iu z)}, \notag\\
 \psi(\alpha+\iu w)\Nul{\phi(\alpha+\iu w)}  &=\psi(\alpha+\iu w)\Nul{\iu\phi(\alpha+\iu w)} =\psi(\alpha+\iu z)\Nul{\iu\phi(\alpha+\iu z)}\notag\\
 & =\psi(\alpha+\iu z)\Nul{\phi(\alpha+\iu z)}, \notag
\intertext{and}
 \phi(\alpha+\iu w)\Nul{\psi(\alpha+\iu w)}
 &= \phi(\alpha+\iu z)\Nul{\psi(\alpha+\iu z)}.\label{TBK2}
\end{align}
 Since \(\cD_3\) is a discrete subset of \(\uhp\), we know that \(\hat \cD_3\defeq \setaca{\alpha +\iu z }{ z \in \cD_3 }\) is a discrete subset of \(\lehpa \).
 Obviously, \(z \in \lehpa  \setminus \hat \cD_3\) if and only if \(\alpha+\iu z\) belongs to \(\uhp \setminus \cD_3\).
 Thus, \eqref{TBK3} and \eqref{TBK2} imply \eqref{TJ2} and \eqref{TJ3} for all \(w, z\in \lehpa  \setminus \hat \cD_3\).
 
 (V) We easily see that \(\cD\defeq \cD_1 \cup \hat \cD_2 \cup \hat \cD_3\) is a discrete subset of \(\C\), that \((\uhp \setminus \cD_1) \cup (\lhp \setminus \hat \cD_2) \cup (\lehpa  \setminus \hat \cD_3)=\C \setminus(\rhl \cup \cD)\), that \((\uhp \setminus \cD_1) \cap (\lehpa  \setminus \hat \cD_3) \neq \emptyset\), and that \((\lhp \setminus \hat \cD_2) \cap (\lehpa  \setminus \hat \cD_3) \neq \emptyset \).
 Hence, the equations in \eqref{TJ2} and \eqref{TJ3} hold true for every choice of \(w\) and \(z\) in \(\C \setminus (\rhl \cup \cD)\).
\end{proof}

\begin{prop} \label{S51-1}
 Let \(\alpha \in \R\), let \(\Theta \in \PJisf \), and let
\beql{TBD}
 \Theta
 =\mat{\Theta_{jk}}_{j,k=1}^2
\eeq
 be the \tqqa{block} representation of \(\Theta\).
 Then:
\begin{enui}
 \item\label{S51-1.a} The function \(\det \Theta\) does not vanish identically and the matrix-valued function \(\Theta^\inv \) is meromorphic in \(\Cs \).
 \item\label{S51-1.b} Let \(f\) be a \tqqa{matrix-valued} function meromorphic in \(\Cs \).
 Suppose that there is a discrete subset \(\cD\) of \(\Cs \) such that \(f\) and \(\Theta\) are holomorphic in \(\C \setminus (\rhl \cup \cD)\), that \(\det \Theta(z) \neq 0\) holds true for each \(z \in \C \setminus (\rhl \cup \cD)\), and that 
\beql{B5-1}
 \matp{f(z)}{\Iq}^\ad \Theta^\invad (z) \rk*{\frac{-\Jimq}{2\Im z}} \Theta^\inv (z) \matp{f(z)}{\Iq}
 \lgeq \Oqq 
\eeq
 and
\begin{multline}\label{B5-2}
 \matp{f(z)}{\Iq}^\ad \Theta^\invad (z)\rk*{\diag\ek*{(z-\alpha) \Iq,\Iq} }^\ad\\
 \times\rk*{\frac{-\Jimq}{2\Im z}}\rk*{\diag\ek*{(z-\alpha) \Iq,\Iq}}\Theta^\inv (z) \matp{f(z)}{\Iq}
 \lgeq \Oqq
\end{multline}
 are fulfilled for each \(z \in \C \setminus(\R \cup \cD)\).
 For every such discrete subset \(\cD\) of \(\C\setminus \rhl \),  there exists a pair \(\tmatp{\phi}{\psi} \in \qSp \) such that \(\phi\) and \(\psi\) are holomorphic in \(\C \setminus (\rhl \cup \cD)\) and that
\beql{PKW}
 \det\ek*{\Theta_{21}(z)\phi(z)+\Theta_{22}(z)\psi (z)}
 \neq 0
\eeq
 and 
\beql{GFZ}
 f(z)
 = \ek*{\Theta_{11}(z)\phi(z)+\Theta_{12}(z)\psi(z)} \ek*{\Theta_{21}(z)\phi(z)+\Theta_{22}(z)\psi (z)}^\inv 
\eeq
 hold true for each \(z \in \C \setminus (\rhl \cup \cD)\).
 \item\label{S51-1.c} Let \(\tmatp{\phi}{\psi} \in \qSp \) be such that \(\det (\Theta_{21} \phi + \Theta_{22} \psi)\) does not vanish identically.
 Then there exists a discrete subset \(\cD\) of \(\Cs \) such that the following three statements are valid:
\begin{Aeqii}{0}
 \il{S51-1.I} The the matrix-valued functions \(\Theta\), \(\phi\), and \(\psi\) are holomorphic in \(\C\setminus (\rhl \cup \cD)\).
 \il{S51-1.II} The inequalities  \(\det\Theta(z)\neq0\) and \eqref{PKW} hold true for each \(z\in \C \setminus (\rhl \cup \cD)\).
 \il{S51-1.III} The function
\beql{DF}
 f
 \defeq \rk{\Theta_{11} \phi +\Theta_{12} \psi} \rk{\Theta_{21} \phi +\Theta_{22} \psi}^\inv 
\eeq
 is holomorphic in \(\C \setminus (\rhl \cup \cD)\), the inequalities \eqref{B5-1} and \eqref{B5-2} hold true for each \(z \in \C \setminus (\R \cup \cD)\) and \eqref{GFZ} is fulfilled for each \(z \in \C \setminus(\rhl \cup \cD)\).
\end{Aeqii}
 \item\label{S51-1.d} For each \(k \in \set{1,2}\), let \(\tmatp{\phi_k}{\psi_k} \in \qSp \) be such that \(\det (\Theta_{21}\phi_k+\Theta_{22} \psi_k)\) does not vanish identically.
 Then \(\tmatpc{\phi_1}{\psi_1}=\tmatpc{\phi_2}{\psi_2}\) if and only if 
\[
 \rk{\Theta_{11} \phi_1 + \Theta_{12} \psi_1}\rk{\Theta_{21} \phi_1 + \Theta_{22} \psi_1}^\inv
 =\rk{\Theta_{11} \phi_2 + \Theta_{12} \psi_2}\rk{\Theta_{21} \phi_2 + \Theta_{22} \psi_2}^\inv.
\]
 \end{enui}
\end{prop}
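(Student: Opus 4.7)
The core idea is that $\Theta$ induces a $\Jimq$-linear fractional transformation between equivalence classes of Stieltjes pairs and functions $f$ satisfying the Potapov-type positivity conditions \eqref{B5-1}, \eqref{B5-2}, governed by the algebraic identity
\[
 \matp{\phi(z)}{\psi(z)}^\ad X\matp{\phi(z)}{\psi(z)}
 = \matp{f(z)}{\Iq}^\ad\Theta^\invad(z)\,X\,\Theta^\inv(z)\matp{f(z)}{\Iq}
 \qquad(X\in\Coo{2q}{2q}),
\]
valid whenever $\Theta\tmatp{\phi}{\psi}=\tmatp{f}{\Iq}$. The four parts will follow once the bookkeeping of exceptional discrete sets is handled. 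Part~\ref{S51-1.a} is immediate from \rlem{NIN}: since $\Theta\in\PJisf$, there is a discrete $\cD_0\subseteq\Cs$ such that $\Theta$ is holomorphic on $\C\setminus(\rhl\cup\cD_0)$ with $\det\Theta(z)\neq 0$ there and $\Theta^\inv$ meromorphic in $\Cs$.

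For part~\ref{S51-1.b}, after enlarging $\cD$ to contain $\cD_0$, define $\tmatp{\phi(z)}{\psi(z)}\defeq\Theta^\inv(z)\tmatp{f(z)}{\Iq}$ on $\C\setminus(\rhl\cup\cD)$. Conditions~\ref{def-sp.i} (holomorphy) and~\ref{def-sp.ii} (nonsingularity of $\Theta^\inv(z)$ preserves rank $q$) are immediate. For~\ref{def-sp.iii}, the fundamental identity above with $X=-\Jimq/(2\Im z)$ converts \eqref{B5-1} into \eqref{KD1}; using $\alpha\in\R$ to write $\matp{(z-\alpha)\phi(z)}{\psi(z)}=\diag((z-\alpha)\Iq,\Iq)\matp{\phi(z)}{\psi(z)}$, the choice $X=\diag((\ko z-\alpha)\Iq,\Iq)(-\Jimq/(2\Im z))\diag((z-\alpha)\Iq,\Iq)$ converts \eqref{B5-2} into \eqref{KD2}. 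Finally, $\Theta\tmatp{\phi}{\psi}=\tmatp{f}{\Iq}$ gives $\Theta_{21}\phi+\Theta_{22}\psi=\Iq$ (hence \eqref{PKW}) and $\Theta_{11}\phi+\Theta_{12}\psi=f$ (hence \eqref{GFZ}).

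For part~\ref{S51-1.c}, let $D\defeq\Theta_{21}\phi+\Theta_{22}\psi$. Since $\det D$ is meromorphic in $\Cs$ and not identically zero, its zero set is discrete in $\Cs$; enlarging $\cD$ to include it (and $\cD_0$), $D(z)$ is invertible on $\C\setminus(\rhl\cup\cD)$. Defining $f$ by \eqref{DF}, a direct computation yields $\Theta\tmatp{\phi}{\psi}=\tmatp{f}{\Iq}D$, hence $\tmatp{\phi}{\psi}=\Theta^\inv\tmatp{f}{\Iq}D$. Substituting into the sesquilinear identity gives, for the two choices of $X$ above,
\[
 \matp{\phi(z)}{\psi(z)}^\ad X\matp{\phi(z)}{\psi(z)}
 = D^\ad(z)\matp{f(z)}{\Iq}^\ad\Theta^\invad(z)\,X\,\Theta^\inv(z)\matp{f(z)}{\Iq}D(z),
\]
and cancelling the invertible $D(z)$ on both sides transforms~\ref{def-sp.iii} (applied to $\tmatp{\phi}{\psi}\in\qSp$) into \eqref{B5-1} and \eqref{B5-2}. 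For part~\ref{S51-1.d}, the ``if'' direction is immediate: equivalence provides meromorphic $g$ with $\det g\not\equiv 0$ and $\tmatp{\phi_2}{\psi_2}=\tmatp{\phi_1}{\psi_1}g$, so the $g$-factors cancel in \eqref{DF}, giving $f_1=f_2$. For ``only if'', writing $D_k\defeq\Theta_{21}\phi_k+\Theta_{22}\psi_k$ and $f_1=f_2=:f$, the identities $\Theta\tmatp{\phi_k}{\psi_k}=\tmatp{f}{\Iq}D_k$ and invertibility of $\Theta$ yield $\tmatp{\phi_2}{\psi_2}=\tmatp{\phi_1}{\psi_1}(D_1^\inv D_2)$, so $g\defeq D_1^\inv D_2$ witnesses the equivalence.

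The only non-algebraic matter is the bookkeeping of discrete exceptional sets — combining those from \rlem{NIN}, from the holomorphy hypotheses, and (in part~\ref{S51-1.c}) from the zero set of $\det D$ — into a single discrete subset of $\Cs$ on which every operation makes sense. Since finite unions of discrete sets are discrete and the zero set of a nonzero meromorphic function on $\Cs$ is discrete, this presents no real obstacle; the substantive content is entirely the sesquilinear identity linking the forms built from $\tmatp{\phi}{\psi}$ and $\tmatp{f}{\Iq}$ via $\Theta$ and its inverse.
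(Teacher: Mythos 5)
Your proposal is correct and follows essentially the same route as the paper: part~\eqref{S51-1.a} via \rlem{NIN}, part~\eqref{S51-1.b} by setting \(\tmatp{\phi}{\psi}=\Theta^\inv\tmatp{f}{\Iq}\) and reading off \(\Theta_{21}\phi+\Theta_{22}\psi=\Iq\), and parts~\eqref{S51-1.c} and~\eqref{S51-1.d} from the identity \(\Theta^\inv\tmatp{f}{\Iq}=\tmatp{\phi}{\psi}D^\inv\) together with the sesquilinear transfer of the positivity conditions. The only cosmetic difference is that in part~\eqref{S51-1.b} you enlarge \(\cD\) by the exceptional set from \rlem{NIN}, which is unnecessary since the hypothesis already grants holomorphy and invertibility of \(\Theta\) on \(\C\setminus(\rhl\cup\cD)\) for the given \(\cD\).
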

\begin{proof} 
 \eqref{S51-1.a} Use \rlem{NIN}.
 
 \eqref{S51-1.b} Let \(\cD\) be a discrete subset of \(\Cs \) such that \(f\) and \(\Theta\) are holomorphic in \(\C \setminus (\rhl \cup \cD)\), that \(\det \Theta (z)\ne 0\) is valid for each \(z\in\C\setminus(\rhl \cup\cD)\), and that \eqref{B5-1} and \eqref{B5-2} are fulfilled for each \(z \in \C \setminus (\R \cup \cD)\).
 Then \(\Theta^\inv \), \(\phi\defeq \mat{\Iq,\Oqq} \Theta^\inv \tmatp{f}{\Iq}\), and \(\psi\defeq \mat{\Oqq,\Iq} \Theta^\inv \tmatp{f}{\Iq}\) are holomorphic in \(\C \setminus (\rhl \cup \cD)\) and, for each \(z\in\C \setminus (\rhl \cup \cD)\), we have
\beql{SMG} 
 \matp{\phi(z)}{\psi(z)}
 = \Theta^\inv (z) \matp{f(z)}{\Iq},
\eeq
 consequently, 
\beql{TF1-1} 
 \Theta_{11}(z)\phi(z)+\Theta_{12}(z) \psi(z)
 = \mat{\Iq,\Oqq} \Theta(z) \matp{\phi(z)}{\psi(z)}
 = f(z) 
\eeq
 and, analogously,
\beql{TF1-2} 
 \Theta_{21}(z) \phi(z) +\Theta_{22}(z) \psi(z)
 =\Iq. 
\eeq
 In particular, \eqref{TF1-2} implies \eqref{PKW} as well as
\[
 q 
 \geq \rank \matp{\phi(z)}{\psi(z)}
 \geq \rank \rk*{ \mat*{ \Theta_{21}(z), \Theta_{22}(z) } \matp{\phi(z)}{\psi(z)}} 
 =\rank \Iq 
 =q
\]
 and, hence, \(\rank \tmatp{\phi(z)}{\psi(z)}=q\) for each \(z \in \C \setminus (\rhl \cup \cD)\).
 In view of \eqref{SMG} and \eqref{B5-1}, we conclude that
\[
 \matp{\phi(z)}{\psi(z)}^\ad \rk*{ \frac{-\Jimq}{2\Im z}}\matp{\phi(z)}{\psi(z)}
 = \matp{f(z)}{\Iq}^\ad \frac{\Theta^\invad (z) (-\Jimq) \Theta^\inv (z)}{2\Im z} \matp{f(z)}{\Iq}
 \lgeq\NM
\]
 holds true for each \(z \in \C \setminus (\R \cup \cD)\).
 From \eqref{SMG} we obtain the equation \(\tmatp{(z-\alpha)\phi(z)}{\psi(z)} = \rk{\diag \ek{(z-\alpha)\Iq,\Iq}}\Theta^\inv (z) \tmatp{f(z)}{\Iq}\) for each \(z \in \C \setminus (\rhl \cup \cD)\), and, according to \eqref{B5-2}, consequently, \eqref{KD2} for each \(z \in \C \setminus (\R \cup \cD)\).
 Thus, we proved that \(\tmatp{\phi}{\psi}\) is a \tqSp{}.
 From \eqref{TF1-1} and \eqref{TF1-2} we get \eqref{GFZ} for each \(z \in \C \setminus (\rhl \cup \cD)\).
 
 \eqref{S51-1.c} Since \(\Theta\) belongs to \(\PJisf\), \rlem{NIN} shows that there is a discrete subset \(\cD_1\) of \(\Cs \) such that \(\Theta\) is holomorphic in \(\C \setminus (\rhl \cup \cD_1)\) and that \(\det \Theta (z) \neq 0\) is valid for each \(z \in \C \setminus (\rhl \cup \cD_1)\).
 Because of \(\tmatp{\phi}{\psi} \in\qSp \), there is a discrete subset \(\cD_2\) of \(\Cs \) such that \(\phi\) and \(\psi\) are holomorphic in \(\C \setminus (\rhl \cup \cD_2)\) and that \eqref{KD1} and \eqref{KD2} hold true for each \(z\in \C\setminus (\R \cup \cD_2)\).
 Since the meromorphic function \(\det (\Theta_{21} \phi+\Theta_{22} \psi)\) does not vanish identically, there is a discrete subset \(\cD_3\) of \(\Cs \) such that \(\det (\Theta_{21} \phi+\Theta_{22} \psi)\) is holomorphic in \(\C\setminus (\rhl \cup \cD_3)\) and that \(\det (\Theta_{21} \phi+\Theta_{22} \psi)(z) \neq 0\) holds true for all \(z \in \C\setminus (\rhl \cup \cD_3)\).
 Thus, the set \(\cD\defeq \cD_1 \cup \cD_2 \cup \cD_3\) is a discrete subset of \(\Cs \) and we see that \(\Theta\), \(\phi\), and \(\psi\) are holomorphic in \(\C\setminus (\rhl \cup \cD)\) and that the inequalities \eqref{KD1}, and \eqref{KD2} hold true for each \(z\in \C\setminus (\R \cup \cD)\).
 Furthermore, \(\det \Theta (z) \neq 0\) and \eqref{PKW} are valid for all \(z\in \C\setminus (\rhl \cup \cD)\).
 Consequently, \(f\) defined by \eqref{DF} is holomorphic in \(\C \setminus (\rhl \cup \cD)\) and \eqref{GFZ} is valid for each \(z\in \C\setminus (\rhl \cup \cD)\).
 Now we consider an arbitrary \(z\in \C\setminus (\R \cup \cD)\).
 Because of \rpart{S51-1.a}, \eqref{PKW}, \eqref{GFZ}, and \eqref{TBD}, we have
\beql{PM1}\begin{split}
 \Theta^\inv (z) \matp{f(z)}{\Iq}
 &= \Theta^\inv(z)\matp{ \Theta_{11} (z) \phi (z) + \Theta_{12} (z) \psi (z)}{\Theta_{21} (z) \phi (z) + \Theta_{22} (z) \psi (z)} \ek*{\Theta_{21}(z)\phi(z)+\Theta_{22}(z)\psi (z)}^\inv \\
 &= \matp{\phi(z)}{\psi(z)} \ek*{\Theta_{21}(z)\phi(z)+\Theta_{22}(z)\psi (z)}^\inv 
\end{split}\eeq
 and, consequently,
\begin{multline} \label{PLN} 
 \matp{f(z)}{\Iq}^\ad\frac{\Theta^\invad (z) (-\Jimq) \Theta^\inv (z)}{2\Im z} \matp{f(z)}{\Iq} \\
 = \ek{\Theta_{21}(z)\phi(z)+\Theta_{22}(z)\psi (z)}^\invad \matp{\phi(z)}{\psi(z)}^\ad \rk*{ \frac{-\Jimq}{2\Im z}}\matp{\phi(z)}{\psi(z)} \ek{\Theta_{21}(z)\phi(z)+\Theta_{22}(z)\psi (z)}^\inv.
\end{multline}
 In view of \eqref{KD1}, the matrix on the right-hand side of \eqref{PLN} is \tnnH{}.
 Thus, \eqref{B5-1} holds true.
 Using \eqref{PM1}, we get
\[
 \rk*{\diag \ek*{(z-\alpha)\Iq,\Iq}}\Theta^\inv (z) \matp{f(z)}{\Iq}
 = \matp{(z-\alpha)\phi(z)}{\psi(z)} \ek*{\Theta_{21}(z)\phi(z)+\Theta_{22}(z)\psi (z)}^\inv, 
\]
 which implies
\begin{multline} \label{PRL}
 \matp{f(z)}{\Iq}^\ad\Theta^\invad (z) \rk*{\diag\ek{(z-\alpha)\Iq,\Iq}}^\ad\rk*{\frac{-\Jimq}{2\Im z}}\rk*{\diag \ek*{(z-\alpha)\Iq,\Iq}}\Theta^\inv (z) \matp{f(z)}{\Iq}\\
 = \ek*{\Theta_{21}(z)\phi(z)+\Theta_{22}(z)\psi (z)}^\invad \matp{(z-\alpha)\phi(z)}{\psi(z)}^\ad\\
 \times\rk*{\frac{-\Jimq}{2\Im z}}\matp{(z-\alpha)\phi(z)}{\psi(z)} \ek*{\Theta_{21}(z)\phi(z)+\Theta_{22}(z)\psi (z)}^\inv.
\end{multline}
 Because of \eqref{KD2}, the matrix on the right-hand side of \eqref{PRL} is \tnnH{}.
 Consequently, \eqref{B5-2} is proved as well.

 \eqref{S51-1.d} In view of \rpart{S51-1.c} and \rlem{NIN}, the proof of \rpart{S51-1.d} is straightforward.
\end{proof}

\begin{lem} \label{160}
 Let \(\alpha \in \R\), let \(\kappa \in \Ninf \), and let \(\seqska  \in \Kggeqka \).
 Let \(\phi\) and \(\psi\) be \tqqa{matrix-valued} functions which are meromorphic in \(\Cs \).
 Let \(n\in\NO \) be such that \(2n+1 \leq \kappa\) and let \( \Theta _{n, \alpha}\colon\C \to \Coo{2q}{2q}\) be defined by \eqref{TD2-2}.
 Let \(\hat{\Theta}_{n,\alpha}\defeq \Rstr _{\Cs } \Theta _{n, \alpha}\) and let 
\beql{Qblock-2} 
 \hat{\Theta}_{n, \alpha}
 =\mat{\hat{\Theta}_{n, \alpha}^{(j,k)}}_{j,k=1}^2
\eeq
 be the \tqqa{block} representation of \(\hat{\Theta}_{n, \alpha}\).
 Let
\begin{align} 
 \tilde{\phi}&\defeq \hat{\Theta}_{n, \alpha}^{(1,1)}\phi + \hat{\Theta}_{n, \alpha}^{(1,2)} \psi&\label{sphi}
&\text{and}&
 \tilde{\psi}&\defeq \hat{\Theta}_{n, \alpha}^{(2,1)} \phi+ \hat{\Theta}_{n, \alpha}^{(2,2)} \psi.
\end{align} 
 Furthermore, let \(z \in (\hol{\phi} \cap \hol{\psi})\setminus \R\) be such that
\beql{KD1-v7}
 \matp{\phi (z)}{\psi(z)}^\ad \rk*{\frac{-\Jimq}{2 \Im z}} \matp{\phi (z)}{\psi(z)}
 \lgeq\NM
\eeq
 and
\beql{160-v8}
 \rk{\Iu{(n+1)q}-H_n^\mpi  H_n} R_{\Tqn}(\alpha) v_{q,n} \phi(z)
 =\NM
\eeq
 hold true.
 Then \(\nul{\tilde{\psi}(z)} \subseteq \cN(\tilde{\phi}(z))\).
 Moreover, if
\beql{160-v6}
 \rank\matp{\phi(z)}{\psi(z)}
 =q
\eeq
 holds true, then \(\det \tilde{\psi}(z) \neq 0\).
\end{lem}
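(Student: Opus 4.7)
The plan is to exploit \rlem{QB-BQq} together with condition \eqref{KD1-v7} to force the vanishing of a specific vector, and then to use condition \eqref{160-v8} combined with the scalar identity $v_{q,n}^\ad R_{\Tqn}(\alpha)v_{q,n} = \Iq$ to identify $\tilde{\phi}(z)v$ with $v_{q,n}^\ad$ applied to that vanishing vector.

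Since $\matp{\tilde{\phi}(z)}{\tilde{\psi}(z)} = \hat{\Theta}_{n,\alpha}(z)\matp{\phi(z)}{\psi(z)}$, a straightforward algebraic manipulation yields
\begin{align*}
 &\matp{\tilde{\phi}(z)}{\tilde{\psi}(z)}^\ad \frac{-\Jimq}{2\Im z}\matp{\tilde{\phi}(z)}{\tilde{\psi}(z)}\\
 &= \matp{\phi(z)}{\psi(z)}^\ad \frac{-\Jimq}{2\Im z}\matp{\phi(z)}{\psi(z)}
 + \matp{\phi(z)}{\psi(z)}^\ad \frac{\Jimq - \hat{\Theta}_{n,\alpha}^\ad(z)\Jimq\hat{\Theta}_{n,\alpha}(z)}{2\Im z}\matp{\phi(z)}{\psi(z)}.
\end{align*}
Condition \eqref{KD1-v7} makes the first summand \tnnH{} while \rlem{QB-BQq} makes the second one \tnnH{}. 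For $v \in \nul{\tilde{\psi}(z)}$, the identity $\matp{a}{\NM}^\ad(-\Jimq)\matp{a}{\NM} = \Oqq$ shows that the full quadratic form vanishes when contracted with $v$ on both sides, forcing both summands to vanish at $v$. By \rlem{QB-BQ} (with $w=z$) together with \rrem{155}, the second summand applied to $v$ equals $\normEs{H_n^{1/2}[R_{\Tqn^\ad}(\alpha)]^\inv R_{\Tqn^\ad}(z)\,k(v)}$ with $k(v) \defeq H_n^\gi R_{\Tqn}(\alpha)v_{q,n}\phi(z)v + H_{\at{n}}^\gi H_n v_{q,n}\psi(z)v$; hence $H_n [R_{\Tqn^\ad}(\alpha)]^\inv R_{\Tqn^\ad}(z)\,k(v) = \NM$.

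The key step is to reduce $\tilde{\phi}(z)v$ exactly to $v_{q,n}^\ad$ applied to the vector just annihilated. Using \eqref{T-11} and \eqref{T-12} together with the substitution $(z-\alpha)\Tqn^\ad R_{\Tqn^\ad}(z) = [R_{\Tqn^\ad}(\alpha)]^\inv R_{\Tqn^\ad}(z) - \Iu{(n+1)q}$ from \rrem{lemmart}, one obtains
\begin{align*}
 \tilde{\phi}(z)v
 = \phi(z)v - v_{q,n}^\ad H_n H_n^\gi R_{\Tqn}(\alpha)v_{q,n}\phi(z)v + v_{q,n}^\ad H_n [R_{\Tqn^\ad}(\alpha)]^\inv R_{\Tqn^\ad}(z)\,k(v).
\end{align*}
Condition \eqref{160-v8} says $R_{\Tqn}(\alpha)v_{q,n}\phi(z)v \in \ran{H_n}$. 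Since $H_nH_n^\gi H_n = H_n$ by \rlem{311-1}, the (possibly oblique) projector $H_nH_n^\gi$ fixes $\ran{H_n}$ pointwise; combined with $v_{q,n}^\ad R_{\Tqn}(\alpha)v_{q,n} = \Iq$ (immediate from $v_{q,n}^\ad\Tqn^j v_{q,n} = \Kronu{j,0}\Iq$), the first two terms telescope to $\NM$, and the third term is $\NM$ by the previous paragraph. Thus $\tilde{\phi}(z)v = \NM$, which proves $\nul{\tilde{\psi}(z)} \subseteq \nul{\tilde{\phi}(z)}$.

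For the final assertion, assume additionally \eqref{160-v6} and let $v \in \nul{\tilde{\psi}(z)}$. The first part gives $\tilde{\phi}(z)v = \NM$, so $\hat{\Theta}_{n,\alpha}(z)\matp{\phi(z)v}{\psi(z)v} = \NM$; as $\det\hat{\Theta}_{n,\alpha}(z) \ne 0$ by \rlemp{BLR4.3-1}{BLR4.3-1.b}, it follows that $\matp{\phi(z)}{\psi(z)}v = \NM$, and \eqref{160-v6} then forces $v = \NM$. Hence $\nul{\tilde{\psi}(z)} = \set{\NM}$ and $\det\tilde{\psi}(z)\ne 0$. The main obstacle is the telescoping cancellation: it hinges on recognizing that even though $H_nH_n^\gi$ is oblique, it is still the identity on $\ran{H_n}$, and that the precise combination of \rrem{lemmart} with $v_{q,n}^\ad R_{\Tqn}(\alpha)v_{q,n} = \Iq$ is what matches the $\phi(z)v$-contribution in $\tilde{\phi}(z)v$ under \eqref{160-v8}.
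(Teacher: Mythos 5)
Your proof is correct and follows essentially the same route as the paper's: the same quadratic-form argument via \rlem{QB-BQ}/\rlem{QB-BQq} to force \(H_n[R_{\Tqn^\ad}(\alpha)]^\inv R_{\Tqn^\ad}(z)k(v)=\NM\), the same reduction of \(\tilde{\phi}(z)v\) via \eqref{T-11}, \eqref{T-12}, \rrem{lemmart}, and \(v_{q,n}^\ad R_{\Tqn}(\alpha)v_{q,n}=\Iq\), and the same conclusion from \(\det\Theta_{n,\alpha}(z)\neq 0\). Your observation that \(H_nH_n^\gi\) fixes \(\ran{H_n}\) is exactly the content of the paper's appeal to \rrem{SBNNN}, so the two arguments coincide.
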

\begin{proof}
 Because of \(\Kggeqka \subseteq\Kggqka \) and \rlem{311-1}, the equations in \eqref{N39-RI} are true.
 
 We consider an arbitrary \(y \in \nul{\tilde{\psi}(z)}\).
 Because of \rrem{J-1}, we have then 
\beql{WOW}
 y^\ad \matp{\tilde{\phi}(z)}{\tilde{\psi}(z)}^\ad \Jimq \matp{\tilde{\phi}(z)}{\tilde{\psi}(z)}y
 = \iu y^\ad\ek*{\tilde {\psi}^\ad (z) \tilde {\phi}(z) -\tilde {\phi}^\ad (z) \tilde {\psi}(z)}y
 =0. 
\eeq
 Obviously, \(\Theta _{n, \alpha}(z)=\hat{\Theta}_{n,\alpha}(z) \).
 From \eqref{Qblock-2} and \eqref{sphi} we conclude
\beql{160-5}
 \hat{\Theta}_{n,\alpha}(z) \matp{ \phi(z)}{\psi(z)}
 =  \matp{\tilde{\phi}(z)}{\tilde{\psi}(z)}. 
\eeq
 Using \eqref{160-5} and \eqref{WOW}, we conclude 
\beql{JQ-160} 
 y^\ad \matp{ \phi(z)}{\psi(z)} ^\ad \ek*{ \Jimq - \Theta_{n,\alpha} ^\ad (z) \Jimq \Theta_{n,\alpha} (z)} \matp{ \phi(z)}{\psi(z)} y 
 = - y^\ad \matp{ \phi(z)}{\psi(z)} ^\ad ( - \Jimq) \matp{ \phi(z)}{\psi(z)} y.
\eeq
 Because of \rlem{311-1}, \rrem{21112N}, \rlem{QB-BQ}, \eqref{JQ-160}, and \eqref{KD1-v7}, we obtain
\[\begin{split} 
 0
 &\leq\normE*{\sqrt{H_n}\ek*{R_{\Tqn^\ad}(\alpha)}^\inv R_{\Tqn^\ad }(z) H_n^\gi  R_{\Tqn}(\alpha) \mat{ \Iu{(n+1)q}, \Tqn H_n}(\Iu{2} \otimes v_{q,n} ) B_{n,\alpha} \matp{ \phi(z)}{\psi(z)} y}^2 \\
 &= y^\ad \matp{ \phi(z)}{\psi(z)} ^\ad B_{n,\alpha}^\ad (\Iu{2} \otimes v_{q,n} )^\ad \mat{ \Iu{(n+1)q}, \Tqn H_n}^\ad\ek*{R_{\Tqn}(\alpha)}^\ad \\
 &\qquad\times \rk{H_n^\gi}^\ad \ek*{R_{\Tqn^\ad}(z)}^\ad \ek*{R_{\Tqn^\ad}(\alpha)}^\invad \sqrt{H_n}^\ad \sqrt{H_n} \ek*{R_{\Tqn^\ad}(\alpha)}^\inv R_{\Tqn^\ad }(z) H_n^\gi  \\
 &\qquad\times R_{\Tqn}(\alpha) \mat{ \Iu{(n+1)q}, \Tqn H_n} (\Iu{2} \otimes v_{q,n} ) B_{n,\alpha} \matp{ \phi(z)}{\psi(z)} y \\
 &= y^\ad \matp{ \phi(z)}{\psi(z)} ^\ad B_{n,\alpha}^\ad (\Iu{2} \otimes v_{q,n} )^\ad \mat{ \Iu{(n+1)q}, \Tqn H_n}^\ad \ek*{R_{\Tqn}(\alpha)}^\ad \\
 &\qquad\times H_n^\gi  \ek*{R_{\Tqn^\ad}(z)}^\ad \ek*{R_{\Tqn}(\alpha)}^\inv H_n \ek*{R_{\Tqn^\ad}(\alpha)}^\inv R_{\Tqn^\ad }(z) H_n^\gi  \\
 &\qquad\times R_{\Tqn}(\alpha) \mat{ \Iu{(n+1)q}, \Tqn H_n} (\Iu{2} \otimes v_{q,n} ) B_{n,\alpha} \matp{ \phi(z)}{\psi(z)} y \\ 
 & = y^\ad \matp{ \phi(z)}{\psi(z)} ^\ad \frac{1}{\iu(\ko{z}-z)} \ek*{ \Jimq - \Theta_{n,\alpha} ^\ad (z) \Jimq \Theta_{n,\alpha} (z)} \matp{ \phi(z)}{\psi(z)} y\\
 &=-y^\ad \matp{ \phi(z)}{\psi(z)} ^\ad \rk*{\frac{-\Jimq}{2 \Im z}} \matp{ \phi(z)}{\psi(z)} y
 \leq0
\end{split}\]
 and, consequently, 
\beql{N292-1} 
 \sqrt{H_n} \ek*{R_{\Tqn^\ad}(\alpha)}^\inv R_{\Tqn^\ad }(z) H_n^\gi  R_{\Tqn}(\alpha) \mat{\Iu{(n+1)q},\Tqn H_n} (\Iu{2}\otimes v_{q,n} ) B_{n,\alpha} \matp{\phi(z)}{\psi(z)} y
 =\NM. 
\eeq
 Multiplying equation~\eqref{N292-1} from the left by \(\sqrt{H_n}\) and using \rrem{155}, we get 
\[
 H_n \ek*{R_{\Tqn^\ad}(\alpha)}^\inv R_{\Tqn^\ad }(z) \mat*{H_n^\gi R_{\Tqn}(\alpha),H_{\at n}^\gi H_n} (\Iu{2} \otimes v_{q,n} ) \matp{ \phi(z)}{\psi(z)} y
 =\NM
\]
 and, hence, 
\begin{multline} \label{160-6}
 \mat*{H_n \ek*{R_{\Tqn^\ad}(\alpha)}^\inv R_{\Tqn^\ad }(z) H_n^\gi  R_{\Tqn}(\alpha),H_n \ek*{R_{\Tqn^\ad}(\alpha)}^\inv R_{\Tqn^\ad }(z) H_{\at n}^\gi  H_n }\\
 \times(\Iu{2} \otimes v_{q,n} ) \matp{ \phi(z)}{\psi(z)} y
 =\NM. 
\end{multline} 
 Let \(X\defeq H_n \ek{R_{\Tqn^\ad }(\alpha)}^\inv R_{\Tqn^\ad }(z)H_{\at n}^\gi H_n\).
 Because of \eqref{160-5}, \(\Theta_{n,\alpha} (z) =\hat{\Theta}_{n,\alpha} (z)\), \rlem{RF1}, and \(v_{q,n}^\ad R_{\Tqn}(\alpha) v_{q,n}= \Iq\), we have
\beql{160-7}\begin{split}
 &\tilde{\phi}(z) y
 = \mat{\Iq,\Oqq} \matp{\tilde{\phi}(z)}{\tilde{\psi}(z)} y
 = \mat{\Iq,\Oqq} \Theta_{n,\alpha}(z) \matp{ \phi(z)}{\psi(z)} y \\
 &=\mat*{\Iq +(z-\alpha) v_{q,n}^\ad H_n \Tqn^\ad R_{\Tqn^\ad }(z)H_n^\gi  R_{\Tqn}(\alpha) v_{q,n},v_{q,n}^\ad  X  v_{q,n}}\matp{ \phi(z)}{\psi(z)} y \\
 &= v_{q,n}^\ad\mat*{R_{\Tqn}(\alpha) +(z-\alpha) H_n \Tqn^\ad R_{\Tqn^\ad }(z)H_n^\gi  R_{\Tqn}(\alpha), X }(\Iu{2} \otimes v_{q,n}) \matp{ \phi(z)}{\psi(z)} y.
\end{split}\eeq
 From \rrem{lemmart} we see that \eqref{215-1} holds true, which implies
\begin{multline} \label{S1-7}
 (z-\alpha)H_n \Tqn^\ad R_{\Tqn^\ad }(z)H_n^\gi  R_{\Tqn}(\alpha)
 =H_n \rk*{ \ek*{R_{\Tqn^\ad}(\alpha)}^\inv R_{\Tqn^\ad }(z)-\Iu{(n+1)q}}H_n^\gi  R_{\Tqn}(\alpha) \\
 =H_n \ek*{R_{\Tqn^\ad}(\alpha)}^\inv R_{\Tqn^\ad }(z)H_n^\gi  R_{\Tqn}(\alpha)-H_n H_n^\gi  R_{\Tqn}(\alpha).
\end{multline} 
 Taking \eqref{160-7}, \eqref{S1-7}, and \eqref{160-6} into account, we obtain  
\beql{N299-1}\begin{split} 
 &\tilde{\phi}(z) y\\
 &= v_{q,n}^\ad\mat*{R_{\Tqn}(\alpha) + H_n \ek*{R_{\Tqn^\ad}(\alpha)}^\inv R_{\Tqn^\ad }(z)H_n^\gi  R_{\Tqn}(\alpha)-H_n H_n^\gi  R_{\Tqn}(\alpha), X }\\
 &\qquad\times(\Iu{2} \otimes v_{q,n})\matp{ \phi(z)}{\psi(z)} y\\
 &= v_{q,n}^\ad\mat*{R_{\Tqn}(\alpha) -H_n H_n^\gi  R_{\Tqn}(\alpha), \Ouu{(n+1)q}{(n+1)q} }(\Iu{2} \otimes v_{q,n})\matp{ \phi(z)}{\psi(z)} y\\
 &\qquad+ v_{q,n}^\ad\mat*{H_n \ek*{R_{\Tqn^\ad}(\alpha)}^\inv R_{\Tqn^\ad }(z)H_n^\gi  R_{\Tqn}(\alpha), X }(\Iu{2} \otimes v_{q,n})\matp{ \phi(z)}{\psi(z)} y\\
 &= v_{q,n}^\ad\mat*{ R_{\Tqn}(\alpha) -H_n H_n^\gi  R_{\Tqn}(\alpha), \Ouu{(n+1)q}{(n+1)q}}(\Iu{2} \otimes v_{q,n})\matp{ \phi(z)}{\psi(z)} y. 
\end{split}\eeq
 Thus, using \eqref{N299-1}, \rrem{SBNNN}, and \eqref{160-v8}, we infer
\[\begin{split} 
 \tilde{\phi}(z) y 
 &= \mat*{v_{q,n}^\ad R_{\Tqn} (\alpha) v_{q,n} - v_{q,n}^\ad H_nH_n^\gi  R_{\Tqn}(\alpha)v_{q,n},\Oqq }\matp{\phi(z)y}{\psi(z)y} \\
 &= v_{q,n}^\ad\rk{ \Iu{(n+1)q} - H_nH_n^\gi } R_{\Tqn}(\alpha)v_{q,n} \phi(z) y \\
 &= v_{q,n}^\ad \rk{ \Iu{(n+1)q} - H_nH_n^\gi } \rk{ \Iu{(n+1)q} - H_n^\mpi H_n} R_{\Tqn}(\alpha)v_{q,n} \phi(z) y   = 0
\end{split}\]
 and, consequently, \(y \in \nul{\tilde{\phi}(z)}\).
 Hence \(\nul{\tilde{\psi}(z)} \subseteq \nul{\tilde{\phi}(z)}\) is proved.

 Now we suppose \eqref{160-v6}.
 We consider again an arbitrary \(y\in\nul{\tilde{\psi}(z)}\).
 Then we already know that \(y\in\nul{\tilde{\phi}(z)}\).
 In view of \rlemp{BLR4.3-1}{BLR4.3-1.b} and \eqref{160-5}, we get then 
\[
 \matp{\phi(z)}{\psi(z)} y
 = \ek*{\Theta_{n,\alpha}(z)}^\inv \hat{\Theta}_{n, \alpha}(z) \matp{\phi(z)}{\psi(z)} y 
 = \ek*{\Theta_{n,\alpha}(z)}^\inv \matp{\tilde{\phi}(z)y}{\tilde{\psi}(z)y}
 =\NM.
\]
 Because of \eqref{160-v6}, this implies \(y =  \Ouu{q}{1}\), and hence, \(\det \tilde{\psi}(z) \neq 0\).
\end{proof}

\begin{lem} \label{165}
 Let \(\alpha \in \R\), let \(\kappa \in \Ninf \), let \(\seqska  \in \Kggeqka \), and let \(\tmatp{\phi}{\psi} \in \qSp \) be such that \(\rk{\Iu{(n+1)q}-H_n^\mpi  H_n} R_{\Tqn}(\alpha) v_{q,n}\phi =\Ouu{(n+1)q}{q}\).
 Let \(n\in\NO \) be such that \(2n+1 \leq \kappa\), let \(\Theta_{n,\alpha}\colon\C\to \Coo{2q}{2q}\) be defined by \eqref{TD2-2},  and let \eqref{Qblock-2} be the \tqqa{block} partition of \(\hat{\Theta}_{n,\alpha}\defeq \Rstr _{\Cs } \Theta _{n, \alpha}\).
 Then there is a discrete subset \(\cD\) of \(\Cs \) such that \(\phi\) and \(\psi\) are holomorphic in \(\C \setminus (\rhl \cup \cD)\) and that
\beql{KTP}
 \det \ek*{\hat \Theta_{n, \alpha}^{(2,1)}(z) \phi(z) +\hat \Theta_{n, \alpha}^{(2,2)}(z) \psi(z)}
 \neq 0
\eeq
 holds true for each \(z \in \C \setminus (\R \cup \cD)\).
\end{lem}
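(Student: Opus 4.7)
The plan is to derive the statement directly from Lemma~\ref{160}, which is tailor-made for this situation. Observing that the matrix-valued function $\tilde{\psi} \defeq \hat\Theta_{n,\alpha}^{(2,1)}\phi+\hat\Theta_{n,\alpha}^{(2,2)}\psi$ appearing in the Lemma~\ref{160} setup is precisely the function whose determinant we wish to show is not identically zero, the task reduces to exhibiting a single discrete set $\cD$ compatible with both the pair structure of $\tmatp{\phi}{\psi}$ and the hypotheses of Lemma~\ref{160} at every point of $\C\setminus(\R\cup\cD)$.

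Concretely, I would first invoke Definition~\ref{def-sp} applied to $\tmatp{\phi}{\psi}\in\qSp$ to obtain a discrete subset $\cD$ of $\Cs$ satisfying conditions~\ref{def-sp.i},~\ref{def-sp.ii}, and~\ref{def-sp.iii}. Note that $\hat\Theta_{n,\alpha}^{(2,1)}$ and $\hat\Theta_{n,\alpha}^{(2,2)}$ are restrictions of matrix polynomials, hence they are holomorphic on all of $\Cs$, so no enlargement of $\cD$ is needed from their side. I would then claim that this $\cD$ is the desired set.

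To verify the claim, I would fix an arbitrary $z\in\C\setminus(\R\cup\cD)$. Since $\C\setminus(\R\cup\cD)\subseteq\C\setminus(\rhl\cup\cD)$, the point $z$ lies in $(\hol{\phi}\cap\hol{\psi})\setminus\R$, and condition~\ref{def-sp.ii} yields $\rank\tmatp{\phi(z)}{\psi(z)}=q$, \tie{}, \eqref{160-v6} holds at $z$. Moreover, condition~\ref{def-sp.iii} specialized to $z$ is exactly \eqref{KD1-v7}. Finally, the standing hypothesis $(\Iu{(n+1)q}-H_n^\mpi H_n)R_{\Tqn}(\alpha)v_{q,n}\phi=\Ouu{(n+1)q}{q}$, read at the point $z\in\hol{\phi}$, is precisely \eqref{160-v8}. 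Therefore all hypotheses of Lemma~\ref{160} are met at $z$, and its conclusion delivers $\det\tilde\psi(z)\neq 0$, which is the assertion.

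There is no serious obstacle here: the statement is essentially a book-keeping consequence of Lemma~\ref{160}. The only modest care required is in checking that the single discrete set $\cD$ coming from the pair definition simultaneously accommodates holomorphy, rank, and the Potapov-type inequality at each relevant $z$, and in noting that the polynomial nature of $\Theta_{n,\alpha}$ makes the $\hat\Theta_{n,\alpha}^{(j,k)}$ contributions automatic.
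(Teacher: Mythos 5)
Your proposal is correct and follows exactly the paper's own (one-line) proof, which simply invokes Definition~\ref{def-sp} and Lemma~\ref{160}; you have merely spelled out the routine verification that the discrete set from the definition of \(\qSp\) serves simultaneously for holomorphy, the rank condition \eqref{160-v6}, the inequality \eqref{KD1-v7}, and the hypothesis \eqref{160-v8} at each \(z\in\C\setminus(\R\cup\cD)\).
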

\begin{proof}
 Use \rdefn{def-sp} and \rlem{160}.
\end{proof}

\section{A particular subclass of Stieltjes pairs}\label{S1321}
 In this section, we study a particular subclass of Stieltjes pairs.

\begin{nota} \label{bez-DT1}
 Let \(\alpha \in \R\), let \(\kappa \in \Ninf \) and let \(\seqska \) be a sequence of complex \tqqa{matrices}.
 For each \(n\in\NO \) with \(2n+1 \leq \kappa\), let \(\qSps{2n+1}\) be the set of all \(\tmatp{\phi}{\psi} \in \qSp \) such that
\begin{align} 
 \rk{\Iu{(n+1)q}-H_n^\mpi  H_n} R_{\Tqn}(\alpha) v_{q,n} \phi&=\NM\label{175-v7}
\intertext{and}
 \rk{\Iu{(n+1)q}-H_{\at n}^\mpi H_{\at n}}H_n v_{q,n} \psi& =\NM.\label{175-v8}
 \end{align} 
\end{nota}

\begin{rem}\label{R1604}
 Let \(\alpha \in \R\), let \(\seqs{2n+1}\) be a sequence of complex \tqqa{matrices}, let \(\tmatp{\phi}{\psi} \in \qSps{2n+1} \), and let \(g\) be a \tqqa{matrix-valued} function which is meromorphic in \(\Cs \) such that \(\det g\) does not vanish identically.
 Then it is readily checked that \(\tmatp{\phi g}{\psi g} \in \qSps{2n+1} \).
\end{rem}

\begin{lem} \label{9.36N} 
 Let \(\alpha \in \R\), let \(\kappa \in \Ninf \), let \(\seqska  \in \Kggeqka \), and let \(n\in\NO \) be such that \(2n+1 \leq \kappa\).
 Let \(\Theta _{n, \alpha}\colon\C \to \Coo{2q}{2q}\) be defined by \eqref{TD2-2}, let \(\hat{\Theta}_{n, \alpha}\defeq \Rstr_{\Cs } \Theta_{n, \alpha}\), let \eqref{Qblock-2} be the \tqqa{block} representation of \(\hat{\Theta}_{n, \alpha}\), and let \(\hat{R}_{\Tqn}\defeq \Rstr_{\Cs } R_{\Tqn}\).
 Let \(\tmatp{\phi}{\psi} \in \qSp \) be such that \(\det (\hat \Theta_{n, \alpha}^{(2,1)} \phi +\hat \Theta_{n, \alpha}^{(2,2)} \psi) \) does not vanish identically and let
\beql{N292}
 \hat{S}_{n, \alpha}
 \defeq \rk{\hat\Theta_{n, \alpha}^{(1,1)} \phi+\hat\Theta_{n, \alpha}^{(1,2)} \psi} \rk{\hat\Theta_{n, \alpha}^{(2,1)} \phi+\hat\Theta_{n, \alpha}^{(2,2)} \psi}^\inv.
\eeq
 Then the following statements~\eqref{9.36N.a} and~\eqref{9.36N.b} are equivalent:
\begin{enui}
 \item\label{9.36N.a} The equations
\beql{nN309-1}
 \rk{\Iu{(n+1)q}-H_n^\mpi  H_n}\hat R_{\Tqn} \mat{\Iu{(n+1)q},\Tqn H_n} (\Iu{2} \otimes v_{q,n} ) \matp{\hat{S}_{n, \alpha}}{\Iq}
 =\NM
\eeq
 and 
\begin{multline} \label{nN309-2}
 \rk{\Iu{(n+1)q}-H_{\at n}^\mpi H_{\at n}} \hat R_{\Tqn} \mat*{\Iu{(n+1)q},\ek*{R_{\Tqn}(\alpha)}^\inv H_n}\\
 \times(\Iu{2} \otimes v_{q,n} ) \matp{(\hat{\cE} -\alpha)\hat{S}_{n, \alpha}}{\Iq}
 =\NM
\end{multline} 
 hold true, where \(\hat{\cE}\colon\Cs \to \C\) is defined by \(\hat{\cE} (z)\defeq z\).
 \item\label{9.36N.b} \(\tmatp{\phi}{\psi} \in \qSps{2n+1}\).
 \end{enui} 
\end{lem}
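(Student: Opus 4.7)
The plan is to reduce the equivalence to Lemma~\ref{145} by rewriting equations \eqref{nN309-1} and \eqref{nN309-2} in terms of $\phi$ and $\psi$ via the linear fractional transformation encoded by $\hat{\Theta}_{n,\alpha}$. Setting $\tilde{\phi} \defeq \hat{\Theta}_{n,\alpha}^{(1,1)}\phi + \hat{\Theta}_{n,\alpha}^{(1,2)}\psi$ and $\tilde{\psi} \defeq \hat{\Theta}_{n,\alpha}^{(2,1)}\phi + \hat{\Theta}_{n,\alpha}^{(2,2)}\psi$, the block partition of $\hat{\Theta}_{n,\alpha}$ gives $\matp{\tilde{\phi}}{\tilde{\psi}} = \hat{\Theta}_{n,\alpha}\matp{\phi}{\psi}$ and, by \eqref{N292}, $\hat{S}_{n,\alpha} = \tilde{\phi}\tilde{\psi}^\inv$. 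Since $\det\tilde{\psi}$ does not vanish identically, I would fix a discrete subset $\cD$ of $\Cs$ off which $\phi$, $\psi$, $\tilde{\phi}$, $\tilde{\psi}$ are holomorphic and $\det\tilde{\psi}(z)\neq 0$; this yields the identity $\matp{\hat{S}_{n,\alpha}(z)}{\Iq} = \hat{\Theta}_{n,\alpha}(z)\matp{\phi(z)}{\psi(z)}\tilde{\psi}(z)^\inv$ for every $z \in \Cs\setminus\cD$.

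Next, I would invoke Lemma~\ref{BL4.2-1} to derive the conjugation identity $\tilde{\Theta}_{n,\alpha}(z)\diag((z-\alpha)\Iq,\Iq) = \diag((z-\alpha)\Iq,\Iq)\Theta_{n,\alpha}(z)$ for all $z\in\Cs$, which also gives $\matp{(z-\alpha)\hat{S}_{n,\alpha}(z)}{\Iq} = \tilde{\Theta}_{n,\alpha}(z)\diag((z-\alpha)\Iq,\Iq)\matp{\phi(z)}{\psi(z)}\tilde{\psi}(z)^\inv$ on $\Cs\setminus\cD$. Defining $\mathcal{E}\defeq\zer{\det P_{n,\alpha}}\cup\zer{\det Q_{n,\alpha}}\cup\zer{\det S_{n,\alpha}}$, which is finite by part~(a) of Lemma~\ref{145}, and fixing $z_0\in\Cs\setminus(\cD\cup\mathcal{E})$, the invertibility of $\tilde{\psi}(z_0)$ permits cancellation on the right, so that \eqref{nN309-1} and \eqref{nN309-2} evaluated at $z_0$ become, respectively, \eqref{L145.ia} and \eqref{L145.ib} evaluated at $z=z_0$ with the choice $x = \phi(z_0)$ and $y = \psi(z_0)$.

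At this point I would apply the equivalence of parts~(ii) and~(iii) of Lemma~\ref{145}(b), with $z_0$ serving as the distinguished point required by~(ii), to conclude that the two equations of~(a) hold at $z_0$ if and only if $(\Iu{(n+1)q}-H_n^\mpi H_n)R_{\Tqn}(\alpha)v_{q,n}\phi(z_0)=\NM$ and $(\Iu{(n+1)q}-H_{\at{n}}^\mpi H_{\at{n}})H_n v_{q,n}\psi(z_0)=\NM$. Since $\Cs\setminus(\cD\cup\mathcal{E})$ is dense in $\Cs$ and the left-hand sides of \eqref{175-v7} and \eqref{175-v8} are meromorphic functions on $\Cs$, the identity theorem promotes these pointwise identities to the functional identities \eqref{175-v7} and \eqref{175-v8}; by Notation~\ref{bez-DT1}, the latter characterize $\tmatp{\phi}{\psi}\in\qSps{2n+1}$, establishing the equivalence.

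The main obstacle will be the careful bookkeeping of the several discrete exceptional sets---the poles of $\phi$, $\psi$, and $\tilde{\psi}^\inv$ together with the finite set $\mathcal{E}$---and the verification that cancellation by $\tilde{\psi}(z_0)^\inv$ on the right genuinely converts \eqref{nN309-1} and \eqref{nN309-2} into the pointwise instances of \eqref{L145.ia} and \eqref{L145.ib}. Once this reduction is in place, the algebraic input from Lemma~\ref{145} together with the conjugation identity from Lemma~\ref{BL4.2-1} finishes the argument.
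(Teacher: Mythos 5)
Your proposal is correct and follows essentially the same route as the paper's proof: rewriting \eqref{nN309-1} and \eqref{nN309-2} through the identities $\matp{\hat{S}_{n,\alpha}(z)}{\Iq}=\hat{\Theta}_{n,\alpha}(z)\matp{\phi(z)}{\psi(z)}\tilde{\psi}(z)^\inv$ and its $\tilde{\Theta}_{n,\alpha}$-analogue via \rlem{BL4.2-1}, cancelling the invertible factor $\tilde{\psi}(z_0)^\inv$, and reducing pointwise to the equivalence in \rlem{145}, with the identity theorem handling the passage between pointwise and functional statements. The paper merely organizes the same bookkeeping of exceptional sets into an explicit chain of intermediate equivalent statements.
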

\begin{proof}
 The proof is partitioned into twelve steps.
 
 (I) Since \(\det \rk{\hat \Theta_{n, \alpha}^{(2,1)} \phi +\hat \Theta_{n, \alpha}^{(2,2)} \psi} \) does not vanish identically, there is a discrete subset \(\cD\) of \(\Cs \) such that the conditions~\ref{def-sp.i},~\ref{def-sp.ii}, and~\ref{def-sp.iii} of \rdefn{def-sp} hold true and that \eqref{KTP} is fulfilled for each \(z \in \C \setminus (\rhl \cup \cD)\). 

 (II) In view of \rdefnp{def-sp}{def-sp.i}, \eqref{N292}, and \eqref{KTP}, the function \(\hat{S}_{n, \alpha}\) admits the representation
\beql{N293}
 \hat{S}_{n, \alpha}(z)
 =\ek*{\hat\Theta_{n, \alpha}^{(1,1)}(z) \phi(z)+\hat\Theta_{n, \alpha}^{(1,2)}(z) \psi(z)} \ek*{\hat\Theta_{n, \alpha}^{(2,1)}(z) \phi(z)+\hat\Theta_{n, \alpha}^{(2,2)}(z) \psi(z)}^\inv 
\eeq
 for each \(z\in \C \setminus (\rhl \cup \cD)\).
 Because of \rdefnp{def-sp}{def-sp.i}, \eqref {KTP}, \eqref{Qblock-2}, and \eqref{N293}, we see that 
\begin{multline} \label{150-1}
 \hat{\Theta}_{n,\alpha}(z) \matp{\phi(z)}{\psi(z)} \ek*{\hat{\Theta}_{n,\alpha}^{(2,1)}(z) \phi(z)+\hat{\Theta}_{n,\alpha}^{(2,2)}(z) \psi(z)}^\inv \\
 =\matp{\ek{\hat{\Theta}_{n,\alpha}^{(1,1)}(z) \phi(z)+\hat{\Theta}_{n,\alpha}^{(1,2)}(z) \psi(z)}\ek{\hat{\Theta}_{n,\alpha}^{(2,1)}(z) \phi(z)+\hat{\Theta}_{n,\alpha}^{(2,2)}(z) \psi(z)}^\inv}{\ek{\hat{\Theta}_{n,\alpha}^{(2,1)}(z) \phi(z)+\hat{\Theta}_{n,\alpha}^{(2,2)}(z) \psi(z)}\ek{\hat{\Theta}_{n,\alpha}^{(2,1)}(z) \phi(z)+\hat{\Theta}_{n,\alpha}^{(2,2)}(z) \psi(z)}^\inv} 
 = \matp{\hat{S}_{n, \alpha}(z)}{\Iq}
\end{multline}
 holds true for each \(z \in \C \setminus (\rhl \cup \cD )\).
 Let \(\tilde{\Theta}_{n,\alpha}\colon\C\to \Coo{2q}{2q}\) given by \eqref{TD2-1}.
 Taking into account \eqref{KTP}, \rlem{BL4.2-1}, and \eqref{150-1}, for each \(z \in \C \setminus (\rhl \cup \cD)\), this implies
\beql{150-2}\begin{split}
 &\ek*{\Rstr_{\Cs } \tilde{\Theta}_{n,\alpha}(z) } \matp{\ek{\hat{\cE} (z)-\alpha}\phi(z)}{\psi(z)} \ek*{\hat{\Theta}_{n,\alpha}^{(2,1)}(z) \phi(z)+\hat{\Theta}_{n,\alpha}^{(2,2)}(z) \psi(z)}^\inv \\
 & =\ek*{\diag\rk*{\ek*{\hat{\cE}(z)-\alpha}\Iq, \Iq }}\hat{\Theta}_{n,\alpha} (z)\ek*{\diag\rk*{(z - \alpha)^\inv \Iq, \Iq}}\\
 & \qquad\times \matp{(z-\alpha)\phi(z)}{\psi(z)} \ek*{\hat{\Theta}_{n,\alpha}^{(2,1)}(z) \phi(z)+\hat{\Theta}_{n,\alpha}^{(2,2)}(z) \psi(z)}^\inv \\
 & =\ek*{\diag \rk*{ \ek*{\hat{\cE}(z)-\alpha}\Iq, \Iq }}\hat{\Theta}_{n,\alpha} (z)\matp{\phi(z)}{\psi(z)} \ek*{\hat{\Theta}_{n,\alpha}^{(2,1)}(z) \phi(z)+\hat{\Theta}_{n,\alpha}^{(2,2)}(z) \psi(z)}^\inv \\
 & =\ek*{\diag \rk*{ \ek*{\hat{\cE}(z)-\alpha}\Iq, \Iq }}\matp{\hat{S}_{n, \alpha}(z)}{\Iq}
 = \matp{\ek{\hat{\cE}(z)-\alpha}\hat{S}_{n, \alpha}(z)}{\Iq}.
\end{split}\eeq

 (III) Since the functions \(\hat{\cE}\) and \(\hat{R}_{\Tqn}\) are holomorphic in \(\Cs \), statement~\eqref{9.36N.a} is equivalent to the following statement:
\begin{enui}\setcounter{enumi}{2}
 \item\label{9.36N.c} There exists a discrete subset \(\tilde\cD\) of \(\Cs \) such that \(\hat S_{n, \alpha}\) is holomorphic in \(\C \setminus (\rhl \cup \tilde\cD)\) and that
\beql{GL293}
 \rk{\Iu{(n+1)q}-H_n^\mpi  H_n} \hat{R}_{\Tqn}(z) \mat{\Iu{(n+1)q},\Tqn H_n} (\Iu{2} \otimes v_{q,n} ) \matp{\hat{S}_{n, \alpha}(z)}{\Iq}
 =\NM
\eeq
 and 
\begin{multline} \label{GL295}
 \rk{\Iu{(n+1)q}-H_{\at n}^\mpi H_{\at n}}\hat{R}_{\Tqn}(z) \mat*{\Iu{(n+1)q},\ek{R_{\Tqn}(\alpha)}^\inv H_n}\\
 \times (\Iu{2} \otimes v_{q,n} ) \matp{\ek{\hat{\cE}(z)-\alpha}\hat{S}_{n, \alpha}(z)}{\Iq}
 =\NM
\end{multline} 
 hold true for each \(z \in \C \setminus (\rhl \cup \tilde\cD)\). 
\end{enui} 

 (IV) In this step of the proof, we suppose~\eqref{9.36N.c}.
 We are going to prove that the following statement holds true:
\begin{enui}\setcounter{enumi}{3}
 \item\label{9.36N.d} There is a discrete subset \(\hat \cD\) of \(\Cs \) such that \(\phi\) and \(\psi\) are holomorphic in \(\C \setminus (\rhl \cup \hat \cD)\) and that 
\begin{multline} \label{FV1}
 (\Iu{(n+1)q}-H_n^\mpi  H_n) \hat{R}_{\Tqn} (z) \mat{\Iu{(n+1)q},\Tqn H_n}\\
 \times (\Iu{2} \otimes v_{q,n} )  \hat{\Theta}_{n,\alpha}(z) \matp{\phi(z)}{\psi(z)} \ek*{\hat{\Theta}_{n,\alpha}^{(2,1)}(z) \phi(z)+\hat{\Theta}_{n,\alpha}^{(2,2)}(z) \psi(z)}^\inv
 =\NM
\end{multline} 
 and 
\begin{multline} \label{FV2}
 (\Iu{(n+1)q}-H_{\at n}^\mpi  H_{\at n}) \hat{R}_{\Tqn} (z ) \mat*{\Iu{(n+1)q},\ek{R_{\Tqn}(\alpha)}^\inv H_n}\\
 \times (\Iu{2} \otimes v_{q,n} )  \ek*{\Rstr_{\Cs } \tilde{\Theta}_{n,\alpha}(z) } \matp{\ek{\hat{\cE} (z)-\alpha}\phi(z)}{\psi(z)}\\
 \times\ek*{\hat{\Theta}_{n,\alpha}^{(2,1)}(z) \phi(z)+\hat{\Theta}_{n,\alpha}^{(2,2)}(z) \psi(z)}^\inv
 =\NM
\end{multline} 
 are fulfilled for each \(z \in \C \setminus (\rhl \cup \hat \cD)\). 
\end{enui}
 First we observe that \(\cD_\#\defeq \cD \cup \tilde\cD\) is a discrete subset of \(\Cs \).
 Since \eqref{GL293} and \eqref{GL295} are valid for each \(z \in \C \setminus(\rhl \cup \cD_\#)\) and since~(II) shows that \eqref{150-1} and \eqref{150-2} are fulfilled for each \(z \in \C \setminus(\rhl \cup \cD_\#)\), we get that \eqref{FV1} and \eqref{FV2} hold true for each \(z \in \C \setminus(\rhl \cup \cD_\#)\).
 Setting \(\hat \cD =\cD_\#\), statement~\eqref{9.36N.d} is proved. 

 (V) In this step of the proof, we suppose~\eqref{9.36N.d}.
 We are going to prove that~\eqref{9.36N.c} holds true.
 Obviously, \(\cD_{\Box}\defeq \cD \cup \hat \cD\) is a discrete subset of \(\Cs \).
 According to~(I) and~(II), we get \eqref{KTP}, \eqref{150-1}, and \eqref{150-2} for each \(z \in \C \setminus(\rhl \cup \cD_\square)\).
 Using these arguments and \eqref{FV1} and \eqref{FV2}, we see that \eqref{GL293} and \eqref{GL295} are fulfilled for each \(z \in \C \setminus(\rhl \cup \cD_\square)\).
 Consequently, statement~\eqref{9.36N.c}  holds true with \(\tilde\cD = \cD_\square\).

 (VI) Now we verify that statement~\eqref{9.36N.d} implies the following statement: 
\begin{enui}\setcounter{enumi}{4}
 \item\label{9.36N.e} There is a discrete subset \(\tilde\cD_\#\) of \(\Cs\) such that the functions \(\phi\) and \(\psi\) are holomorphic in \(\C \setminus (\rhl \cup \tilde\cD_\#)\) and that 
 \beql{PKZ1-1}
  (\Iu{(n+1)q}-H_n^\mpi  H_n) \hat{R}_{\Tqn} (z) \mat{\Iu{(n+1)q},\Tqn H_n}(\Iu{2} \otimes v_{q,n} ) \hat{\Theta}_{n,\alpha}(z) \matp{\phi(z)}{\psi(z)}
  =\NM
\eeq
 and 
\begin{multline} \label{PKZ1-2}
 (\Iu{(n+1)q}-H_{\at n}^\mpi  H_{\at n}) \hat{R}_{\Tqn} (z ) \mat*{ \Iu{(n+1)q}, \ek{R_{\Tqn}(\alpha)}^\inv H_n}\\
 \times(\Iu{2} \otimes v_{q,n} )\ek*{\Rstr_{\Cs } \tilde{\Theta}_{n,\alpha}(z) } \matp{\ek{\hat{\cE} (z)-\alpha}\phi(z)}{\psi(z)}
 =\NM
\end{multline} 
 hold true for each \(z \in \C\setminus (\rhl \cup \tilde\cD_\#)\).
 \end{enui}
 Let us assume that~\eqref{9.36N.d} is fulfilled.
 Because of~(I), we know that \(\tilde\cD_\square\defeq \cD\cup \hat \cD\) is a discrete subset of \(\Cs \).
 From~(I) and~\eqref{9.36N.d} we see that \eqref{KTP}, \eqref{FV1}, and \eqref{FV2} are valid for each \(\C \setminus (\rhl \cup \tilde\cD_\square)\), which implies \eqref{PKZ1-1} and \eqref{PKZ1-2} for each \(z \in \C (\rhl \cup \tilde\cD_\square)\).
 Consequently,~\eqref{9.36N.e} holds true with \(\tilde\cD_\# = \tilde\cD_\square\). 

 (VII) Now we show that~\eqref{9.36N.e} implies~\eqref{9.36N.d}.
 Let~\eqref{9.36N.e} be fulfilled.
 Obviously, \(\hat \cD_\#\defeq \tilde\cD_\# \cup \cD\) is a discrete subset of \(\Cs \).
 Because of~(I) and~\eqref{9.36N.e}, we know that \eqref{KTP}, \eqref{PKZ1-1}, and \eqref{PKZ1-2} are valid for each \(z\in\C \setminus (\rhl \cup \hat \cD_\#)\).
 Consequently, \eqref{FV1} and \eqref{FV2} hold true for each \(z \in \C \setminus (\rhl \cup \hat \cD_\#)\).
 Hence,~\eqref{9.36N.d} is fulfilled with \(\hat \cD = \hat \cD_\#\).

 (VIII) Since \(\hat{R}_{T_{q,n}}\) is the restriction of \(R_{T_{q,n}}\) onto \(\Cs \), we see that~\eqref{9.36N.e} is equivalent to the following statement:
\begin{enui}\setcounter{enumi}{5}
 \item\label{9.36N.f} There is a discrete subset \(\cD'\) of \(\Cs\) such that \(\phi\) and \(\psi\) are holomorphic in \(\C \setminus (\rhl \cup \cD')\) and that
\beql{N313-1}
 (\Iu{(n+1)q}- H_n^\mpi  H_n) R_{\Tqn} (z) \mat{ \Iu{(n+1)q}, \Tqn H_n} (\Iu{2} \otimes v_{q,n} )\Theta_{n,\alpha}(z) \matp{\phi(z)}{\psi(z)}
 =\NM
\eeq
 and 
\begin{multline} \label{N313-2}
 (\Iu{(n+1)q}-H_{\at n}^\mpi  H_{\at n}) R_{\Tqn} (z ) \mat*{ \Iu{(n+1)q}, \ek{R_{\Tqn}(\alpha)}^\inv H_n}\\
 \times(\Iu{2} \otimes v_{q,n} )\tilde{\Theta}_{n,\alpha}(z)\ek*{\diag\rk*{(z-\alpha)\Iq, \Iq}}\matp{\phi(z)}{\psi(z)}
 =\NM
\end{multline} 
 hold true for each \(z \in \C \setminus (\rhl \cup \cD')\).
\end{enui}
 
 (IX) Let \(P_{n, \alpha}\), \(Q_{n, \alpha}\), and \(S_{n, \alpha}\) be the matrix-valued functions defined (on \(\C\)) by \eqref{V1}, \eqref{V2}, and \eqref{V3}.
 According to \rlemp{145}{145.a}, we see that \(\cN\defeq \zer{\det P_{n, \alpha}} \cup \zer{\det Q_{n, \alpha}} \cup \zer{\det S_{n, \alpha}} \) is a finite and, in particular, discrete subset of \(\C\). 

 (X) By virtue of~(IX), we know that \(\cN\) is a discrete subset of \(\C\).
 We suppose now~\eqref{9.36N.f}.
 Then \(\cN'\defeq \cN \cup \cD'\) is a discrete subset of \(\C\), too.
 From \rlemp{145}{145.b} we see then that the following statement holds true: 
\begin{enui}\setcounter{enumi}{6}
 \item\label{9.36N.g} There is a discrete subset \(\cD''\) of \(\Cs \) such that \(\phi\) and \(\psi\) are holomorphic in \(\C \setminus (\rhl \cup \cD'')\) and that 
\begin{align} 
 \rk{\Iu{(n+1)q}-H_n^\mpi  H_n} R_{\Tqn}(\alpha) v_{q,n} \phi(z)&=\NM\label{N314-1}
\intertext{and}
 \rk{\Iu{(n+1)q}-H_{\at n}^\mpi H_{\at n}} H_n v_{q,n}\psi (z)&=\NM\label{N292-4}
\end{align}
 are fulfilled for each \(z \in \C \setminus (\rhl \cup \cD'')\).
\end{enui}

 (XI) Conversely, now we suppose~\eqref{9.36N.g}.
 We are going to prove~\eqref{9.36N.f}.
 From~(IX) we see that \(\cN\) is a discrete subset of \(\C\).
 Hence, \(\tilde \cN \defeq \cN \cap\rk{\Cs}\) and \(\cD_\square '\defeq \cD'' \cup \tilde \cN\) are discrete subsets of \(\Cs \).
 Because of~\eqref{9.36N.g}, the functions \(\phi\) and \(\psi\) are holomorphic in \(\C \setminus(\rhl\cup \cD_\square ')\) and \eqref{N314-1} and \eqref{N292-4} are valid for each \(z\in\C (\rhl \cup \cD_\square ')\).
 Let us consider an arbitrary \(z \in \C \setminus(\rhl \cup \cD_\square ')\).
 From \eqref{N314-1} and \eqref{N292-4} we get then that \(x\defeq \phi(z)\) and \(y\defeq \psi (z)\) fulfill \eqref{L145.iia} and \eqref{L145.iib}.
 Consequently, \rlem{145} yields then that \eqref{L145.ia} and \eqref{L145.ib} hold true.
 Thus, we see that \eqref{N313-1} and \eqref{N313-2} are true.
 Hence,~\eqref{9.36N.f} is valid with \(\cD'=\cD_\square '\). 

 (XII) In view of \rnota{bez-DT1},~\eqref{9.36N.g} and~\eqref{9.36N.b} are equivalent.

 From~(III)--(VIII) and~(X)--(XII) we see that the statements~\eqref{9.36N.a} and~\eqref{9.36N.b} are equivalent.
\end{proof}

\begin{prop} \label{LB53L} 
 Let \(\alpha \in \R\), let \(\kappa \in \Ninf \), let \(\seqska  \in \Kggeqka \), and let \(n\in \NO \) be such that \(2n+1 \leq \kappa\).
 Let \eqref{Qblock-2} be the \tqqa{block} representation of \(\hat{\Theta}_{n, \alpha}\defeq \Rstr_{\Cs } \Theta_{n, \alpha}\).
 Then:
\begin{enui}
 \item\label{LB53L.a} For each \( \tmatp{\phi}{\psi} \in \qSps{2n+1}\), the function \(\det (\hat \Theta_{n, \alpha}^{(2,1)} \phi +\hat \Theta_{n, \alpha}^{(2,2)} \psi)\) does not vanish identically in \(\Cs \) and 
\beql{NT}
 \hat{S}_{n, \alpha}
 \defeq \rk{\hat\Theta_{n, \alpha}^{(1,1)} \phi+\hat\Theta_{n, \alpha}^{(1,2)} \psi}\rk{\hat\Theta_{n, \alpha}^{(2,1)} \phi+\hat\Theta_{n, \alpha}^{(2,2)} \psi}^\inv 
\eeq
 belongs to the class \(\SFOqskg{2n+1}\).
 \item\label{LB53L.b} For each \(S \in \SFOqskg{2n+1}\), there exists a pair \(\tmatp{\phi}{\psi}\in\qSps{2n+1}\) consisting of two in \(\Cs \) holomorphic \tqqa{matrix-valued} functions \(\phi\) and \(\psi\) such that 
\beql{RN322}
 \det \ek*{\hat \Theta_{n, \alpha}^{(2,1)}(z) \phi(z) +\hat \Theta_{n, \alpha}^{(2,2)}(z) \psi(z)}
 \neq 0
\eeq
 and 
\beql{KPL}
 S(z)
 =\ek*{\hat\Theta_{n, \alpha}^{(1,1)}(z) \phi(z)+\hat\Theta_{n, \alpha}^{(1,2)}(z) \psi(z)} \ek*{\hat\Theta_{n, \alpha}^{(2,1)} (z)\phi(z)+\hat\Theta_{n, \alpha}^{(2,2)}(z) \psi(z)}^\inv 
\eeq
 hold true for each \(z \in \Cs \).
 \item\label{LB53L.c} Let \(\tmatp{\phi_1}{\psi_1}, \tmatp{\phi_2}{\psi_2} \in \qSps{2n+1}\).
 Then \(\tmatpc{\phi_1}{\psi_1} =\tmatpc{\phi_2}{\psi_2}\) if and only if 
\begin{multline}\label{KL681} 
 \rk{\hat\Theta_{n, \alpha}^{(1,1)} \phi_1+\hat\Theta_{n, \alpha}^{(1,2)} \psi_1}\rk{\hat\Theta_{n, \alpha}^{(2,1)} \phi_1+\hat\Theta_{n, \alpha}^{(2,2)} \psi_1}^\inv \\
 =\rk{\hat\Theta_{n, \alpha}^{(1,1)} \phi_2+\hat\Theta_{n, \alpha}^{(1,2)} \psi_2}\rk{\hat\Theta_{n, \alpha}^{(2,1)} \phi_2+\hat\Theta_{n, \alpha}^{(2,2)} \psi_2}^\inv.
\end{multline} 
\end{enui}
\end{prop}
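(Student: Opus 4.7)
The strategy is to combine three ingredients already available: (i) the characterization of $\SFOqskg{2n+1}$ via the Potapov fundamental matrix inequalities from \rthm{MT4315}, (ii) the Schur-complement expressions in \rlem{OQ-1}, which recast the inequalities $P_{2n}^{[f]}(z)\lgeq\NM$ and $P_{2n+1}^{[f]}(z)\lgeq\NM$ as \eqref{B5-1} and \eqref{B5-2} in the sense of \rprop{S51-1}, and (iii) the kernel-condition reformulation of membership in $\qSps{2n+1}$ supplied by \rlem{9.36N}. The reason (iii) works is that, using $\Tqn H_n v_{q,n} = -u_n$, the identity $H_n v_{q,n} = y_{0,n}$, and $\ek{R_{\Tqn}(\alpha)}^\inv H_n v_{q,n} = \alpha u_n + y_{0,n}$ from \rrem{ZF1}, the conditions \eqref{nN309-1} and \eqref{nN309-2} unfold into exactly the range conditions~(ii) and~(v) of \rrem{remark4.5}, given that $H_n$ and $H_{\at n}$ are Hermitian and nonnegative so that $(\Iu{(n+1)q}-H_n^\mpi H_n)x=\NM$ is equivalent to $x\in\ran{H_n}$ (similarly for $H_{\at n}$).

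For part~\eqref{LB53L.a}, start with $\tmatp{\phi}{\psi}\in\qSps{2n+1}$. Since \eqref{175-v7} holds, \rlem{165} yields that $\det(\hat\Theta_{n,\alpha}^{(2,1)}\phi+\hat\Theta_{n,\alpha}^{(2,2)}\psi)$ is not identically $0$, so $\hat S_{n,\alpha}$ defined by \eqref{NT} is a well-defined meromorphic function on $\Cs$. By \rrem{L238-1}, $\hat\Theta_{n,\alpha}\in\PJisf$, whence \rprop{S51-1}\eqref{S51-1.c} applies and produces the inequalities \eqref{B5-1} and \eqref{B5-2} for $f=\hat S_{n,\alpha}$ on a cofinite set of $\C\setminus\R$. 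Translating via \rlem{OQ-1}, one obtains $\Sigma_{2n}^{[\hat S_{n,\alpha}]}(z)\lgeq\NM$ and $\Sigma_{2n+1}^{[\hat S_{n,\alpha}]}(z)\lgeq\NM$. Simultaneously, \rlem{9.36N} combined with the translation in (iii) above converts \eqref{175-v7} and \eqref{175-v8} into the requisite range conditions from \rrem{remark4.5}, so that $P_{2n}^{[\hat S_{n,\alpha}]}(z)\lgeq\NM$ and $P_{2n+1}^{[\hat S_{n,\alpha}]}(z)\lgeq\NM$ on $\uhp\setminus\cD$ for some discrete $\cD\subseteq\uhp$. \rthm{MT4315} then furnishes a unique $S\in\SFOqskg{2n+1}$ extending $\hat S_{n,\alpha}|_{\uhp\setminus\cD}$; by the identity theorem $S$ coincides with $\hat S_{n,\alpha}$ on the latter's domain, proving~\eqref{LB53L.a}.

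For part~\eqref{LB53L.b}, start with $S\in\SFOqskg{2n+1}$. By \rthm{MT4315}, $P_{2n}^{[S]}(z)\lgeq\NM$ and $P_{2n+1}^{[S]}(z)\lgeq\NM$ for all $z\in\C\setminus\R$. Via \rrem{remark4.5}, this yields both the Schur-complement inequalities $\Sigma_{2n}^{[S]}(z)\lgeq\NM$, $\Sigma_{2n+1}^{[S]}(z)\lgeq\NM$ and the corresponding range conditions. Applying \rlem{OQ-1} converts the $\Sigma$-inequalities into \eqref{B5-1} and \eqref{B5-2} for $f=S$ and $\Theta=\hat\Theta_{n,\alpha}\in\PJisf$. \rprop{S51-1}\eqref{S51-1.b} then produces a pair $\tmatp{\phi_0}{\psi_0}\in\qSp$, holomorphic off $\rhl\cup\cD$, such that the normalized identity $\hat\Theta_{n,\alpha}^{(2,1)}\phi_0+\hat\Theta_{n,\alpha}^{(2,2)}\psi_0=\Iq$ (in particular \eqref{PKW}) and \eqref{GFZ} hold with $f=S$. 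Multiplying $\phi_0,\psi_0$ by a common scalar holomorphic nonvanishing factor provided by \rrem{BW} yields a pair $\tmatp{\phi}{\psi}$ with $\phi,\psi$ holomorphic on all of $\Cs$, in the same equivalence class, satisfying \eqref{RN322} and \eqref{KPL}. Finally, the range conditions on $S$ are exactly \eqref{nN309-1} and \eqref{nN309-2} (through the unfolding in (iii)); hence \rlem{9.36N} delivers $\tmatp{\phi}{\psi}\in\qSps{2n+1}$.

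Part~\eqref{LB53L.c} is then immediate from \rprop{S51-1}\eqref{S51-1.d}: the pairs lie in $\qSp\supseteq\qSps{2n+1}$, and \rlem{165} guarantees the nonvanishing of the denominator determinants needed to apply that result. The main obstacle, which is handled essentially at the outset, is to make sure that the three apparently different range-type conditions mentioned in the first paragraph really line up identifier-by-identifier; once this algebraic dictionary is set, the remainder of the argument is a clean combination of \rthm{MT4315}, \rprop{S51-1}, \rlem{OQ-1}, \rlem{165}, and \rlem{9.36N}.
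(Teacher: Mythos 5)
Your architecture is the same as the paper's: part~\eqref{LB53L.a} via \rlem{165}, \rrem{L238-1}, \rprop{S51-1}\eqref{S51-1.c}, \rlem{OQ-1}, \rlem{9.36N}, \rrem{remark4.5}, and the extension theorem; part~\eqref{LB53L.b} by reversing the dictionary and invoking \rprop{S51-1}\eqref{S51-1.b}; part~\eqref{LB53L.c} from \rprop{S51-1}\eqref{S51-1.d}. Your "algebraic dictionary" between \eqref{nN309-1}, \eqref{nN309-2} and the range conditions of \rrem{remark4.5} (via \(\Tqn H_n v_{q,n}=-u_n\) and \(\ek{R_{\Tqn}(\alpha)}^\inv H_n v_{q,n}=\alpha u_n+y_{0,n}\)) is exactly the computation the paper carries out. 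One cosmetic point in part~\eqref{LB53L.a}: \rthm{MT4315} presupposes a function already holomorphic on \(\Cs\), whereas at that stage \(\hat S_{n,\alpha}\) is only known to be holomorphic off a discrete set; the correct citation is \rthm{T1747F}, which is what performs the extension you describe.

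The genuine gap is in part~\eqref{LB53L.b}, in the step where you pass from the pair \(\tmatp{\phi_0}{\psi_0}\) (holomorphic off \(\rhl\cup\cD\)) to a pair holomorphic on all of \(\Cs\) satisfying \eqref{RN322} at \emph{every} \(z\in\Cs\). A common scalar factor \(g\) from \rrem{BW} cannot do this: if \(g\) is genuinely nonvanishing it removes no singularities, and if it vanishes on \(\cD\) so as to clear poles, then \(\det\ek{\hat\Theta_{n,\alpha}^{(2,1)}(z)(g\phi_0)(z)+\hat\Theta_{n,\alpha}^{(2,2)}(z)(g\psi_0)(z)}=g(z)^q\det\ek{\dotsc}\) vanishes at those points, destroying \eqref{RN322} there. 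The paper avoids the issue entirely by noting that \rprop{S51-1}\eqref{S51-1.b} may be applied with \(\cD=\emptyset\): since \(S\) is holomorphic on all of \(\Cs\), the inequalities \eqref{B5-1} and \eqref{B5-2} hold on all of \(\C\setminus\R\) by \rprop{lemM4213-2}, and \(\det\Theta_{n,\alpha}(z)\neq 0\) for every \(z\in\C\) by \rlemp{BLR4.3-1}{BLR4.3-1.b}, the explicit construction \(\tmatp{\phi}{\psi}=\hat\Theta_{n,\alpha}^\inv\tmatp{S}{\Iq}\) already yields functions holomorphic on \(\Cs\) with \(\hat\Theta_{n,\alpha}^{(2,1)}\phi+\hat\Theta_{n,\alpha}^{(2,2)}\psi=\Iq\) everywhere, so \eqref{RN322} is automatic. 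Replace your detour through \rrem{BW} by this observation and the proof closes.
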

\begin{proof}
 Since \(\seqska \) belongs to \(\Kggeqka \), we have \(s_j^\ad =s_j \) for all \(j\in \mn{0}{ \kappa}\) and 
\beql{NB151}
 \set{H_n,H_{\at n}} 
 \subseteq \Cggo{(n+1)q}. 
\eeq
 \rrem{SBNNN} yields then \eqref{BL}.
 \rrem{L238-1} provides us \( \hat \Theta_{n,\alpha}\in \PJisf \).
 From \rlem{BLR4.3-1} we get \(\det \tilde \Theta_{n,\alpha} (z) \neq 0\) and \(\det \Theta_{n,\alpha} (z) \neq 0\) for each \(z\in\C \).

 \eqref{LB53L.a} Let \( \tmatp{\phi}{\psi} \in \qSps{2n+1}\).
 According to \rnota{bez-DT1}, the equations  \eqref{175-v7} and \eqref{175-v8} are fulfilled.
 Using \rlem{165}, we see that there is a discrete subset \(\tilde\cD\) of \(\Cs \) such that \(\phi\) and \(\psi\) are holomorphic in \(\C \setminus (\rhl \cup \tilde\cD)\) and that \eqref{KTP} holds true for each \(z \in \C \setminus (\R\cup \tilde\cD)\).
 Because of \(\hat \Theta_{n,\alpha} \in  \PJisf \) and \rpropp{S51-1}{S51-1.c}, the following three statements are valid:
\begin{Aeqi}{0}
 \item\label{LB53L.I} There is a discrete subset \(\cD\) of \(\Cs \) such that \(\hat \Theta_{n,\alpha}\), \(\phi\), and \(\psi\) are holomorphic in \(\C \setminus (\rhl \cup \cD)\).
 \item\label{LB53L.II} The matrix-valued function \(\hat{S}_{n, \alpha}\) is holomorphic in \(\C \setminus (\rhl \cup \cD)\), and \eqref{KTP} as well as the representation 
\[
 \hat{S}_{n, \alpha}(z)
 =\ek*{\hat\Theta_{n, \alpha}^{(1,1)}(z) \phi(z)+\hat\Theta_{n, \alpha}^{(1,2)}(z) \psi(z)} \ek*{\hat\Theta_{n, \alpha}^{(2,1)} (z)\phi(z)+\hat\Theta_{n, \alpha}^{(2,2)}(z) \psi(z)}^\inv 
\]
 of \(S_{n,\alpha}\) are fulfilled for each \(z\in\C \setminus (\rhl \cup \cD)\).
 \item\label{LB53L.III} For each \(z\in\C \setminus (\R \cup \cD)\), 
\beql{9N24}
 \matp{\hat{S}_{n, \alpha}(z)}{\Iq}^\ad \hat\Theta_{n, \alpha}^\invad (z) \rk*{\frac{-\Jimq}{2\Im z}} \hat\Theta_{n, \alpha}^\inv (z) \matp{\hat{S}_{n, \alpha}(z)}{\Iq} 
 \geq\NM
\eeq
 and
\begin{multline} \label{TB8}
 \matp{\hat{S}_{n, \alpha}(z)}{\Iq}^\ad \hat\Theta_{n, \alpha}^\invad (z)\ek*{\diag\rk*{(z-\alpha) \Iq,\Iq} }^\ad \rk*{\frac{-\Jimq}{2\Im z}} \\ 
 \times\ek*{\diag\rk*{(z-\alpha) \Iq,\Iq}}\hat\Theta_{n, \alpha}^\inv (z) \matp{\hat{S}_{n, \alpha}(z)}{\Iq}
 \lgeq\NM.
\end{multline}
\end{Aeqi}
 In view of \eqref{175-v7} and \eqref{175-v8}, \rlem{9.36N} provides us \eqref{nN309-1} and \eqref{nN309-2}, where \(\hat R_{\Tqn} \defeq \Rstr_{\Cs } R_{T_{q,n}}\) and where \(\hat{\cE}\colon\Cs \to \C\) is given by \(\hat \cE(z)\defeq z\).
 Using \eqref{nN309-1} and \eqref{nN309-2}, we obtain 
\[
 \hat R_{\Tqn}\mat{\Iu{(n+1)q},\Tqn H_n} (\Iu{2} \otimes v_{q,n}) \matp{\hat{S}_{n, \alpha}}{\Iq}
 = H_n^\mpi H_n \hat R_{\Tqn} \mat{\Iu{(n+1)q},\Tqn H_n} (\Iu{2} \otimes v_{q,n}) \matp{\hat{S}_{n, \alpha}}{\Iq}
\]
 and 
\begin{multline*}
 \hat R_{\Tqn}  \mat*{\Iu{(n+1)q},\ek{R_{\Tqn}(\alpha)}^\inv H_n} (\Iu{2} \otimes v_{q,n} ) \matp{(\hat{\cE} -\alpha)\hat{S}_{n, \alpha}}{\Iq} \\ 
 =H_{\at n}^\mpi  H_{\at n} \hat R_{\Tqn} \mat*{\Iu{(n+1)q},\ek{R_{\Tqn}(\alpha)}^\inv H_n} (\Iu{2} \otimes v_{q,n} ) \matp{(\hat{\cE} -\alpha)\hat{S}_{n, \alpha}}{\Iq}.
\end{multline*}
 Consequently, from \eqref{BL} and~\ref{LB53L.II} we get then
\beql{N324-1}
 \Ran{\hat R_{\Tqn} (z)\mat{\Iu{(n+1)q}, \Tqn H_n} (\Iu{2} \otimes v_{q,n} ) \matp{\hat{S}_{n, \alpha}(z)}{\Iq}}
 \subseteq \ran{H_n}
\eeq
 and 
\beql{N324-2}
 \Ran{ \hat R_{\Tqn} (z)\mat*{\Iu{(n+1)q}, \ek{R_{\Tqn}(\alpha)}^\inv H_n}(\Iu{2} \otimes v_{q,n} ) \matp{(z -\alpha)\hat{S}_{n, \alpha}(z)}{\Iq} } 
 \subseteq \ran{H_{\at n}}
\eeq
 for each \(z \in \C \setminus (\R \cup \cD)\).
 From \rrem{lemC21-1} we know that \(T_{q,n} H_n v_{q,n} = -u_n\).
 Therefore, for all \(z \in \C \setminus (\R \cup \cD)\), we get
\beql{NMK}
 \hat R_{\Tqn} (z) \mat{\Iu{(n+1)q},\Tqn H_n} (\Iu{2} \otimes v_{q,n}) \matp{\hat{S}_{n, \alpha}(z)}{\Iq}
 = \hat R_{\Tqn} (z) \ek*{v_{q,n}\hat{S}_{n, \alpha}(z)-u_n}
\eeq
 and 
\[
 \ek*{R_{\Tqn}(\alpha)}^\inv H_nv_{q,n}
 =(\Iu{(n+1)q}-\alpha \Tqn) H_n v_{q,n}
 = H_n v_{q,n}-\alpha \Tqn H_n v_{q,n}
 = y_{0,n}+\alpha u_n
\]
 and, hence,
\beql{NML}\begin{split} 
 &\hat R_{\Tqn}(z)\mat*{\Iu{(n+1)q},\ek{R_{\Tqn}(\alpha)}^\inv H_n} (\Iu{2} \otimes v_{q,n} ) \matp{(z -\alpha)\hat{S}_{n, \alpha}(z)}{\Iq} \\
 &= \hat R_{\Tqn} (z) \rk*{v_{q,n}\ek*{(z -\alpha)\hat{S}_{n, \alpha}(z)}+\ek*{R_{\Tqn}(\alpha)}^\inv H_n v_{q,n}} \\ 
 &= \hat R_{\Tqn} (z) \rk*{v_{q,n}\ek*{(z-\alpha)\hat{S}_{n, \alpha}(z)} -(-\alpha u_n -y_{0,n} )}.
\end{split}\eeq
 For all \(z \in \C \setminus (\R \cup \cD)\), the equations \eqref{NMK} and \eqref{N324-1} imply 
\beql{TX}
 \Ran{R_{\Tqn} (z) \ek*{v_{q,n}\hat{S}_{n, \alpha}(z)-u_n} }
 = \Ran{\hat R_{\Tqn} (z) \ek*{v_{q,n}\hat{S}_{n, \alpha}(z)-u_n} }
 \subseteq \ran{H_n} 
\eeq
 and, in view of \eqref{NML} and \eqref{N324-2}, furthermore, 
\begin{multline} \label{TQ} 
 \Ran{ R_{\Tqn} (z) \rk*{v_{q,n}\ek*{(z-\alpha)\hat{S}_{n, \alpha}(z)} -(-\alpha u_n -y_{0,n} )} } \\
 = \Ran{ \hat R_{\Tqn} (z) \rk*{v_{q,n}\ek*{(z-\alpha)\hat{S}_{n, \alpha}(z)} -(-\alpha u_n -y_{0,n} )} }
 \subseteq \ran{H_{\at n}}. 
\end{multline} 
 \rlem{OQ-1} shows that, for each \(z \in \C \setminus (\R \cup \cD)\), the matrix \(\Sigma_{2n}^{[\hat{S}_{n, \alpha}]}(z)\) given by \eqref{N54} admits the representation
\beql{11.35A}
 \Sigma_{2n}^{[\hat{S}_{n, \alpha}]}(z)
 = \matp{\hat{S}_{n, \alpha}(z)}{\Iq}^\ad\hat\Theta_{n, \alpha}^\invad (z) \rk*{\frac{-\Jimq}{2\Im z}} \hat\Theta_{n, \alpha}^\inv (z) \matp{\hat{S}_{n, \alpha}(z)}{\Iq}.
\eeq
 In view of \(\det \Theta_{n,\alpha} (z) \neq 0\), for each \(z \in \Cs \), we have 
\begin{multline} \label{N325N}
 \rk*{\ek*{\diag \rk*{(z - \alpha)\Iq,\Iq}}\Theta_{n,\alpha} (z)\ek*{\diag \rk*{(z - \alpha)^\inv\Iq,\Iq} }}^\inv \\
 =\ek*{\diag \rk*{(z - \alpha)\Iq,\Iq}} \hat \Theta_{n,\alpha}^\inv (z)\ek*{\diag \rk*{(z - \alpha)^\inv\Iq,\Iq}}. 
\end{multline}
 Taking into account  \rlem{OQ-1}, we see that, for each \(z \in \C \setminus (\R \cup \cD)\), the matrix \(\Sigma_{2n+1}^{[\hat{S}_{n, \alpha}]}(z)\) given by \eqref{N55} can be represented by 
\beql{TKZW1}
 \Sigma_{2n+1}^{[\hat{S}_{n, \alpha}]}(z)
 =\matp{(z-\alpha)\hat{S}_{n, \alpha}(z)}{\Iq}^\ad  \tilde\Theta_{n, \alpha}^\invad (z) \rk*{\frac{-\Jimq}{2\Im z}} \tilde\Theta_{n, \alpha}^\inv (z) \matp{(z-\alpha)\hat{S}_{n, \alpha}(z)}{\Iq}.
\eeq
 For each \(z \in \Cs\), \rlem{BL4.2-1} and \eqref{N325N} yield 
\[
 \tilde\Theta_{n, \alpha}^\inv (z) \matp{(z-\alpha)\hat{S}_{n, \alpha}(z)}{\Iq}
 =\ek*{\diag \rk*{(z - \alpha)\Iq,\Iq}}\hat \Theta_{n,\alpha}^\inv (z) \matp{\hat{S}_{n, \alpha}(z)}{\Iq}, 
\]
 which, because of \eqref{TKZW1}, implies 
\begin{multline} \label{TKZW2}
 \Sigma_{2n+1}^{[\hat{S}_{n, \alpha}]}(z)
 =\matp{\hat{S}_{n, \alpha}(z)}{\Iq}^\ad \hat\Theta_{n, \alpha}^\invad (z) \ek*{\diag\rk*{(z - \alpha)\Iq,\Iq} }^\ad\rk*{\frac{-\Jimq}{2\Im z}} \\ 
 \times\ek*{\diag \rk*{(z - \alpha)\Iq,\Iq}}\hat\Theta_{n, \alpha}^\inv (z) \matp{\hat{S}_{n, \alpha}(z)}{\Iq} 
\end{multline}
 for each \(z \in \C \setminus (\R \cup \cD)\).
 From \eqref{11.35A}, \eqref{9N24}, \eqref{TKZW2}, and \eqref{TB8} it follows 
\beql{S71}
 \set*{ \Sigma_{2n}^{[\hat{S}_{n, \alpha}]}(z), \Sigma_{2n+1}^{[\hat{S}_{n, \alpha}]}(z)}
 \subseteq \Cggq 
\eeq
 for each \(z \in \C \setminus (\R \cup \cD)\).
 Thus, for all \(z \in \C \setminus (\R \cup \cD)\), by virtue of \eqref{N52}, \eqref{NB151}, \eqref{TX}, \eqref{TQ}, \eqref{S71}, and \rrem{remark4.5}, we conclude then \(\set{P_{2n}^{[\hat{S}_{n, \alpha}]}(z), P_{2n+1}^{[\hat{S}_{n, \alpha}]}(z)} \subseteq \Cggo{(n+2)q}\).
 Obviously, \(\tilde\cD\defeq \cD \cap \uhp\) is a discrete subset of \(\uhp\) and \(f_{n, \alpha}\defeq \Rstr_{\uhp\setminus \tilde\cD} \hat{S}_{n, \alpha}\) is holomorphic in \(\uhp\setminus \tilde\cD\).
 For each \(z \in \uhp\setminus \tilde\cD\), then \(\set{P_{2n}^{[f_{n,\alpha}]} (z), P_{2n+1}^{[f_{n,\alpha}]} (z)} \subseteq \Cggo{(n+2)q}\).
 Thus, \rthm{T1747F} provides us that there is a unique \(S \in \SFOqskg{2n+1}\) such that \(\Rstr_{\uhp\setminus \tilde\cD} S= f_{n, \alpha}\).
 Consequently, for each \(z \in \uhp\setminus \tilde\cD\), we have \(S(z)= f_{n, \alpha}(z)= \hat{S}_{n, \alpha}(z)\).
 Since \(S\) is holomorphic in \(\Cs \), we get \(S=\hat{S}_{n, \alpha}\).

 \eqref{LB53L.b} Now we consider an arbitrary \(S \in \SFOqskg{2n+1}\).
 Then \rprop{lemM4213-2} yields \(\set{ P_{2n}^{[S]}(z), P_{2n+1}^{[S]}(z)} \subseteq \Cggo{(n+2)q}\) for each \(z \in \C \setminus \R\).
 Consequently, \rrem{remark4.5} shows that \eqref{NB151} is valid and that, for each \(z\in\C\setminus\R\), the following statements hold true: 
\begin{aeqi}{0}
 \item\label{LB53L.i} The inclusions \(\ran{R_{\Tqn} (z) \ek{v_{q,n}S(z)-u_n}} \subseteq \ran{H_n}\) and
\[
 \Ran{R_{\Tqn} (z) \rk*{v_{q,n}\ek*{(z-\alpha)S(z)}-(-\alpha u_n -y_{0,n})}}
 \subseteq \ran{H_{\at n}}
\]
 are valid.
 \item\label{LB53L.ii} The matrices \( \Sigma_{2n}^{[S]}(z) \) and \( \Sigma_{2n+1}^{[S]}(z)\) are both \tnnH{}.
\end{aeqi}
 For each \(z \in \Cs \), from \rrem{21112N} we get
\beql{TVR1}\begin{split}
 R_{\Tqn} (z) \ek*{v_{q,n} S(z) -u_n}
 &=\hat R_{\Tqn} (z) \mat{v_{q,n},-u_n} \matp{S(z)}{\Iq}\\
 &=\hat R_{\Tqn} (z) \mat{\Iu{(n+1)q},\Tqn H_n} (\Iu{2} \otimes v_{q,n} ) \matp{S(z)}{\Iq} 
\end{split}\eeq
 and, because of the second equation in \eqref{RHV-1}, furthermore
\beql{TVR2}\begin{split}
 &R_{\Tqn}(z) \rk*{v_{q,n}\ek*{(z -\alpha)S(z)}-(-\alpha u_n-y_{0,n})}\\
 & = R_{\Tqn} (z)\rk*{v_{q,n}\ek*{(z -\alpha)S(z)}+\ek*{R_{\Tqn}(\alpha)}^\inv H_n v_{q,n}}\\ 
 &= \hat R_{\Tqn} (z)\mat*{\Iu{(n+1)q}, \ek*{R_{\Tqn}(\alpha)}^\inv H_n}(\Iu{2} \otimes v_{q,n} ) \matp{(z -\alpha)S(z)}{\Iq}.
\end{split}\eeq
 Using~\ref{LB53L.ii} and \rlemss{BLR4.3-1}{OQ-1}, we see that
\begin{align}
 \matp{S(z)}{\Iq}^\ad \hat\Theta_{n, \alpha}^\invad (z) \rk*{\frac{-\Jimq}{2\Im z}} \hat\Theta_{n, \alpha}^\inv (z) \matp{S(z)}{\Iq} 
 &\lgeq\NM\label{TSW1}
\intertext{and}
 \matp{(z-\alpha)S(z)}{\Iq}^\ad \tilde\Theta_{n, \alpha}^\invad (z) \rk*{\frac{-\Jimq}{2\Im z}} \tilde\Theta_{n, \alpha}^\inv (z) \matp{(z-\alpha)S(z)}{\Iq} 
 &\lgeq\NM\label{TSW2}
\end{align}
 hold true for each \(z \in \C \setminus \R\).
 In view of \rlem{BL4.2-1}, we have
\[
 \tilde\Theta_{n, \alpha}^\inv (z) \matp{(z-\alpha)S(z)}{\Iq}
 = \ek*{\diag \rk*{(z - \alpha)\Iq,\Iq}}\hat \Theta_{n,\alpha}^\inv (z) \matp{S(z)}{\Iq} 
\]
 for each \(z \in \C \setminus \R\).
 Consequently, from \eqref{TSW2} it follows 
\beql{TSW3}
 \matp{S(z)}{\Iq}^\ad \hat\Theta_{n, \alpha}^\invad (z)\ek*{\diag\rk*{(z - \alpha)\Iq,\Iq} }^\ad  \rk*{\frac{-\Jimq}{2\Im z}}\ek*{\diag\rk*{(z - \alpha)\Iq,\Iq}}\hat\Theta_{n, \alpha}^\inv (z) \matp{S(z)}{\Iq}
 \lgeq\NM
\eeq
 for all \(z \in \C \setminus \R\).
 Since \rlem{BLR4.3-1} shows that \(\det \hat\Theta_{n, \alpha}(z) \neq 0\) holds true for each \(z \in \Cs \), we get from  \(\hat\Theta_{n, \alpha} \in \PJisf \), \eqref{Qblock-2}, \rlem{RF1}, \eqref{TSW1}, \eqref{TSW3}, and \rpropp{S51-1}{S51-1.b} (with \(\cD =\emptyset\)) that there is a pair \(\tmatp{\phi}{\psi} \in \qSp \) of in \(\Cs \) holomorphic matrix-valued functions \(\phi\) and \(\psi\) such that \eqref{RN322} and \eqref{KPL} hold true for each \(z \in \Cs \).
 Because of \eqref {TVR1} and~\ref{LB53L.i}, we have
\[
 \Ran{ \hat R_{\Tqn} (z) \mat{\Iu{(n+1)q}, \Tqn H_n } (\Iu{2} \otimes v_{q,n} ) \matp{S(z)}{\Iq} } 
 \subseteq \ran{H_n}
\]
 for each \(z \in \C \setminus \R\).
 Consequently, from \rlem{Gl,3.1.10} and \eqref{BL} it follows
\[
 (\Iu{(n+1)q} -H_n^\mpi  H_n)\hat R_{\Tqn} (z) \mat{\Iu{(n+1)q},\Tqn H_n} (\Iu{2} \otimes v_{q,n} ) \matp{S(z)}{\Iq}
 =\NM
\]
 for each \(z \in \C \setminus \R\).
 Hence, the identity theorem for holomorphic functions yields
\beql{11.50A}
 (\Iu{(n+1)q} -H_n^\mpi  H_n)\hat R_{\Tqn} \mat{\Iu{(n+1)q},\Tqn H_n} (\Iu{2} \otimes v_{q,n} ) \matp{S}{\Iq} 
 =\NM.
\eeq
 Because of \eqref{TVR2} and~\ref{LB53L.i}, we obtain
\[
 \Ran{ \hat R_{\Tqn} (z) \mat*{ \Iu{(n+1)q}, \ek{R_{\Tqn}(\alpha)}^\inv H_n }(\Iu{2} \otimes v_{q,n} ) \matp{(z-\alpha)S(z)}{\Iq} }
 \subseteq \ran{H_{\at n}}
\]
 for each \(z \in \C \setminus \R\).
 Thus, for each \(z \in \C \setminus \R\), \rlem{Gl,3.1.10} and \eqref{BL} imply
\[
 (\Iu{(n+1)q} -H_{\at n}^\mpi  H_{\at n}) \hat R_{\Tqn} (z) \mat*{\Iu{(n+1)q},\ek{R_{\Tqn}(\alpha)}^\inv H_n} (\Iu{2} \otimes v_{q,n} ) \matp{(z -\alpha)S(z)}{\Iq} 
 =\NM.
\]
 Applying again the identity theorem for holomorphic functions, it follows 
\beql{11.51A}
 (\Iu{(n+1)q} -H_{\at n}^\mpi  H_{\at n}) \hat R_{\Tqn}(z) \mat*{\Iu{(n+1)q},\ek{R_{\Tqn}(\alpha)}^\inv H_n} (\Iu{2} \otimes v_{q,n} ) \matp{(z -\alpha)S(z)}{\Iq} =0
\eeq
 for all \(z\in\C\setminus\R\).
 In view of \eqref{Qblock-2}, \eqref{RN322}, \eqref{KPL}, \eqref{11.50A}, and \eqref{11.51A}, then \rlem{9.36N} shows that \(\tmatp{\phi}{\psi}\) belongs to \(\qSps{2n+1}\). 

 \eqref{LB53L.c} In view of \rpart{LB53L.a}, we know that, for each \(k \in \set{1,2}\), the function \(\det (\hat \Theta_{n, \alpha}^{(2,1)} \phi_k +\hat \Theta_{n, \alpha}^{(2,2)} \psi_k)\) does not vanish identically in \(\Cs \).
 Because of \(\hat\Theta_{n, \alpha} \in \PJisf \), the application of \rpropp{S51-1}{S51-1.d} provides us the asserted equivalence.
\end{proof}

\section{Parametrization of the solution set of the truncated matricial Stieltjes moment problem in the non-degenerate and degenerate cases}\label{S1313}
 In this section, we state a parametrization of the solution set of the matricial truncated Stieltjes moment problem~\iproblem{\rhl}{2n+1}{\lleq} in the non-degenerate and degenerate cases.
 First we recall that, in view of \rthmss{T1122}{NT1}, one can suppose that the given sequence \(\seqs{2n+1}\) of  complex \tqqa{matrices} belongs to the set \(\Kggeq{2n+1}\).
 
 Let \(\alpha \in \R\), let \(\kappa \in \Ninf \), let \(\seqska  \in \Kggeqka \), and let \(n\) be a \tnn{} integer with \(2n+1 \leq \kappa\).
 According to \rlem{170}, the \tnn{} integers
\begin{align} 
  m & \defeq \rank \ek*{\rk{\Iu{(n+1)q}-H_n^\mpi  H_n} R_{\Tqn}(\alpha) v_{q,n}} \label{175-v1} 
\intertext{and} 
  \ell&\defeq \rank \ek*{\rk{\Iu{(n+1)q}-H_{\at n}^\mpi H_{\at n}}H_nv_{q,n} } \label{175-v2}
\end{align}
 fulfill \(m+\ell\leq q\).
 In particular, \(0 \leq m \leq q\) and \(0 \leq\ell\leq q\).
 We consider separately the following three cases:
\begin{Aeqi}{0}
 \item\label{S1313.I} \(m+\ell=0\), \tie{}, \(m=0\) and \(\ell=0\).
  \item\label{S1313.II} \(1 \leq m+\ell \leq q-1\). 
  \item\label{S1313.III} \(m+\ell = q\).
 \end{Aeqi} 

 Throughout this section, let \(\Theta _{n, \alpha}\colon\C \to \Coo{2q}{2q}\) be defined by \eqref{TD2-2}, let \(\hat{\Theta}_{n, \alpha}\defeq \Rstr_{\Cs } \Theta_{n, \alpha}\), and let \eqref{Qblock-2} be the \tqqa{block} partition of \( \hat{\Theta}_{n, \alpha}\).

\subsection{The non-degenerate case}\label{sub1258} 
 First we study the so-called non-degenerate case~\ref{S1313.I}, \tie{}, we consider the situation that \( \rk{\Iu{(n+1)q}-H_n^\mpi  H_n} R_{\Tqn}(\alpha) v_{q,n}=\NM\) and \( \rk{\Iu{(n+1)q}-H_{\at n}^\mpi H_{\at n}}H_nv_{q,n}=\NM\).
 
\begin{rem} \label{r-q-175}%
 Let \(\alpha \in \R\), let \(\kappa \in \Ninf \), and let \(\seqska  \in \Kggeqka \).
 Let \(n\in\NO \) be such that \(2n+1 \leq \kappa\) and let \(m\) and \(\ell\) be given by \eqref{175-v1} and \eqref{175-v2}, respectively.
 If \(m=0\) and \(\ell=0\), then \rlem{170} and \rnota{bez-DT1} show that \(\cU_{n, \alpha} =\set{\Ouu{q}{1}}\), \(\cV_{n, \alpha}=\set{\Ouu{q}{1}}\), and \(\qSps{2n+1}=\qSp \) hold true.
\end{rem}

 Thus, in the non-degenerate case~\ref{S1313.I}, we get immediately a parametrization of the set \(\SFOqskg{2n+1}\):

\begin{thm} \label{T1} 
 Let \(\alpha \in \R\), let \(\kappa \in \Ninf \), let \(\seqska  \in \Kggeqka \), and let \(n \in \NO \) be such that \(2n+1 \leq \kappa\).
 Suppose that \(m\) and \(\ell\) given by \eqref{175-v1} and \eqref{175-v2} fulfill \(m=0\) and \(\ell=0\).
 Then the following statements hold true:
\begin{enui}
 \item\label{T1.a} For each pair \( \tmatp{\phi}{\psi} \in \qSp \), the meromorphic function \(\det (\hat \Theta_{n, \alpha}^{(2,1)} \phi +\hat \Theta_{n, \alpha}^{(2,2)} \psi)\) does not vanish identically in \(\Cs \) and the matrix-valued function 
 \( \hat{S}_{n, \alpha}\defeq \rk{\hat\Theta_{n, \alpha}^{(1,1)} \phi+\hat\Theta_{n, \alpha}^{(1,2)} \psi} \rk{\hat\Theta_{n, \alpha}^{(2,1)} \phi+\hat\Theta_{n, \alpha}^{(2,2)} \psi}^\inv \) belongs to \(\SFOqskg{2n+1}\).
 \item\label{T1.b} For each \(S \in \SFOqskg{2n+1}\), there is a pair \(\tmatp{\phi}{\psi} \in \qSp \) of \tqqa{matrix-valued} functions \(\phi\) and \(\psi\) which are holomorphic in \(\Cs \) such that \eqref{RN322} and \eqref{KPL} hold true for each \(z \in \Cs \).
 \item\label{T1.c} Let \( \tmatp{\phi_1}{\psi_1}, \tmatp{\phi_2}{\psi_2} \in \qSp \).
 Then \(\tmatpc{\phi_1}{\psi_1} =\tmatpc{\phi_2}{\psi_2}\) if and only if \eqref{KL681} holds true.
\end{enui}
\end{thm}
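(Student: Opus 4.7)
The plan is to recognize that \rthm{T1} is essentially a direct specialization of \rprop{LB53L} to the non-degenerate setting, combined with the identification of the relevant parameter class provided by \rrem{r-q-175}. The heavy technical work (the $\Jimq$\nobreakdash-form identities for $\Theta_{n,\alpha}$, the equivalence of the two range conditions characterizing $\qSps{2n+1}$ with the validity of Potapov's system for the corresponding linear-fractional transformation, and the invocation of \rthm{MT4315} to pass between the moment problem and the system of fundamental matrix inequalities) has already been carried out in \rsecss{S1711}{S1321}, and the hypothesis $m=\ell=0$ is precisely what collapses the subclass $\qSps{2n+1}$ to the full class $\qSp$.

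First I would invoke \rrem{r-q-175}, which under the hypotheses $m=0$ and $\ell=0$ yields the equality $\qSps{2n+1}=\qSp$. Once this identification is in hand, each of the three statements of \rthm{T1} becomes a literal restatement of the corresponding statement of \rprop{LB53L}. Specifically, \rpart{T1.a} is \rpartss{LB53L.a}{LB53L.b} applied to an arbitrary $\tmatp{\phi}{\psi}\in\qSp$, since such a pair now automatically lies in $\qSps{2n+1}$ so that $\det(\hat\Theta_{n,\alpha}^{(2,1)}\phi+\hat\Theta_{n,\alpha}^{(2,2)}\psi)$ does not vanish identically and the associated function $\hat S_{n,\alpha}$ belongs to $\SFOqskg{2n+1}$.

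Part \eqref{T1.b} is a direct transcription of \rpartp{LB53L}{LB53L.b}: for each $S\in\SFOqskg{2n+1}$, that proposition produces a pair $\tmatp{\phi}{\psi}\in\qSps{2n+1}$ of matrix-valued functions holomorphic in $\Cs$ satisfying \eqref{RN322} and \eqref{KPL} pointwise on $\Cs$, and by \rrem{r-q-175} this pair belongs to $\qSp$. Finally, \rpart{T1.c} follows verbatim from \rpartp{LB53L}{LB53L.c} applied to $\tmatp{\phi_1}{\psi_1}$ and $\tmatp{\phi_2}{\psi_2}$, both of which lie in $\qSps{2n+1}=\qSp$.

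Since all non-trivial content is imported from \rprop{LB53L} and \rrem{r-q-175}, there is no genuine obstacle; the only point requiring care is ensuring that the ranks in \eqref{175-v1} and \eqref{175-v2} are indeed those used to define $\cU_{n,\alpha}$ and $\cV_{n,\alpha}$ in \rlem{170}, so that the hypothesis $m=\ell=0$ translates into $\cU_{n,\alpha}=\cV_{n,\alpha}=\set{\Ouu{q}{1}}$ and hence into the vanishing of the two defining equations \eqref{175-v7} and \eqref{175-v8} of \rnota{bez-DT1} for every pair $\tmatp{\phi}{\psi}\in\qSp$. This verification, already recorded in \rrem{r-q-175}, is a one-line consequence of \rlemp{170}{170.b}.
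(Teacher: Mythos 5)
Your proposal is correct and follows exactly the paper's own argument, which consists of the single line ``Apply \rprop{LB53L} and \rrem{r-q-175}''; you have merely spelled out the specialization $\qSps{2n+1}=\qSp$ and matched each part of the theorem to the corresponding part of \rprop{LB53L}. The only slip is cosmetic: part~\eqref{T1.a} needs only \rpartp{LB53L}{LB53L.a}, not also \rpartp{LB53L}{LB53L.b}.
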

\begin{proof}
 Apply \rprop{LB53L} and \rrem{r-q-175}.
\end{proof}

\subsection{The degenerate, but not completely degenerate case}\label{sub1310}
 Now we turn our attention to case~\ref{S1313.II}.

\begin{rem} \label{WN2}
 Let \(\alpha \in \R\).
 Let \(V\) and \(W\) be complex \tqqa{matrices} with \(W^\ad V=\Iq\).
 Then it is readily checked that the following statements hold true:
\begin{enui}
 \item\label{WN2.a} If \(\tmatp{\phi}{\psi} \in \qSp \), then the pair \(\tmatp{\phi^\square }{\psi^\square } \) given by \(\phi^\square \defeq V \phi\) and \( \psi^\square \defeq W \psi\) belongs to \( \qSp \).
 \item\label{WN2.b} Let \(\tmatp{\phi_1}{\psi_1}, \tmatp{\phi_2}{\psi_2} \in \qSp \) and let \(\phi_1^\square  \defeq V\phi_1\), \(\psi_1^\square  \defeq W\psi_1\), \(\phi_2^\square  \defeq V \phi_2\), and \(\psi_2^\square  \defeq W\psi_2\).
 Then \(\tmatpc{\phi_1^\square }{\psi_1^\square } =\tmatpc{\phi_2^\square }{\psi_2^\square }\) if and if \(\tmatpc{\phi_1}{\psi_1} =\tmatpc{\phi_2}{\psi_2}\).
\end{enui}
\end{rem} 

\begin{lem} \label{WN1}
 Let \(\alpha \in \R\) and let \(r \in \N\) be such that \(r<q\).
 Let \(U\) and \(V\) be complex \taaa{(q-r)}{(q-r)}{matrices} with \(\rank \tmatp{U}{V}=q-r\) and \(V^\ad U= \Ouu{(q-r)}{(q-r)}\).
 Let \(\cU\) (\tresp{}\ \(\cV\)) be the constant matrix-valued function (defined on \(\Cs \)) with value \(U\) (\tresp{}\ \(V\)).
 Then: 
\begin{enui}
 \item\label{WN1.a} If \(\tmatp{\phi}{\psi} \in \aSp{r}\), then the pair \(\tmatp{\phi^\square }{\psi^\square }\) given by
 \(\phi^\square \defeq \diag (\phi, \cU) \) and \(\psi^\square \defeq \diag (\psi, \cV)\) belongs to \( \qSp \).
 \item\label{WN1.b} Let \(\tmatp{\phi_1}{\psi_1}, \tmatp{\phi_2}{\psi_2} \in \aSp{r}\).
 For each \(k \in \set{1,2}\), let \(\phi_k^\square \defeq \diag (\phi_k, \cU) \) and \(\psi_k^\square \defeq \diag (\psi_k, \cV) \).
 Then \(\tmatpc{\phi_1^\square }{\psi_1^\square } =\tmatpc{\phi_2^\square }{\psi_2^\square }\) if and only if \(\tmatpc{\phi_1}{\psi_1} =\tmatpc{\phi_2}{\psi_2}\).
\end{enui}
\end{lem}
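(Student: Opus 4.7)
The plan is to verify the defining conditions of \rdefn{def-sp} directly for the blown-up pairs, exploiting the block-diagonal structure $\phi^\square=\diag(\phi,\cU)$ and $\psi^\square=\diag(\psi,\cV)$ together with the two hypotheses $\rank\tmatp{U}{V}=q-r$ and $V^\ad U=\NM$.

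For \rpart{WN1.a}, I would pick a discrete $\cD\subseteq\Cs$ witnessing that $\tmatp{\phi}{\psi}$ is an $r$-Stieltjes pair; holomorphy of $\phi^\square,\psi^\square$ on $\C\setminus(\rhl\cup\cD)$ is then immediate because $\cU,\cV$ are constants. A row permutation turns $\tmatp{\phi^\square(z)}{\psi^\square(z)}$ into $\diag\bigl(\tmatp{\phi(z)}{\psi(z)},\tmatp{U}{V}\bigr)$, so the rank equals $r+(q-r)=q$. For the two Stieltjes inequalities, I would invoke \rrem{J-1}: the quadratic form $\tmatp{\phi^\square}{\psi^\square}^\ad(-\Jimq)\tmatp{\phi^\square}{\psi^\square}$ reduces to $2\Im\bigl[(\psi^\square)^\ad\phi^\square\bigr]$, which is block-diagonal with upper-left block $2\Im(\psi^\ad\phi)$ and lower-right block $2\Im(V^\ad U)=\NM$ by the second hypothesis; after division by $2\Im z$, the result is non-negative by the $r$-Stieltjes condition for $\tmatp{\phi}{\psi}$. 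The $(z-\alpha)$-weighted inequality \eqref{KD2} follows by the same block computation, since the weight only modifies the upper-left block.

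For \rpart{WN1.b}, the forward direction is direct: if $g$ links $\tmatp{\phi_{1}}{\psi_{1}}$ and $\tmatp{\phi_{2}}{\psi_{2}}$ in $\aSp{r}$, then $g^\square\defeq\diag(g,\Iu{q-r})$ has $\det g^\square=\det g\not\equiv0$ and satisfies $\phi_{2}^\square=\phi_{1}^\square g^\square$, $\psi_{2}^\square=\psi_{1}^\square g^\square$. For the converse, I would take an arbitrary $q\times q$ meromorphic linking function $g^\square$ and partition it as $g^\square=\mat{g_{jk}}_{j,k=1}^{2}$ in $r,q-r$ block form. Comparing blocks in $\phi_{1}^\square g^\square=\phi_{2}^\square$ and $\psi_{1}^\square g^\square=\psi_{2}^\square$ yields $\tmatp{\phi_{1}(z)}{\psi_{1}(z)}g_{12}(z)=\NM$, $\tmatp{U}{V}g_{21}(z)=\NM$, and $\tmatp{U}{V}\bigl(g_{22}(z)-\Iu{q-r}\bigr)=\NM$ on the common domain of holomorphy. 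Generic rank $r$ of $\tmatp{\phi_{1}}{\psi_{1}}$ together with the identity theorem forces $g_{12}\equiv\NM$; analogously, $\rank\tmatp{U}{V}=q-r$ forces $g_{21}\equiv\NM$ and $g_{22}\equiv\Iu{q-r}$. Hence $g^\square=\diag(g_{11},\Iu{q-r})$, so $\det g_{11}=\det g^\square\not\equiv0$, and $g_{11}$ witnesses $\tmatpc{\phi_{1}}{\psi_{1}}=\tmatpc{\phi_{2}}{\psi_{2}}$ in $\aSp{r}$.

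The main technical obstacle is this converse in \rpart{WN1.b}: one must force the off-diagonal blocks of $g^\square$ to vanish identically and its $(2,2)$ block to be the identity. This combines the generic full rank condition from \rdefnp{def-sp}{def-sp.ii} for $\tmatp{\phi_{1}}{\psi_{1}}$, the full column-rank hypothesis on $\tmatp{U}{V}$, and the identity theorem for meromorphic functions.
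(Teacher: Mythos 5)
The paper declares this proof ``straightforward'' and omits it, and your direct block-by-block verification --- reducing the rank condition to $\rank\diag\bigl(\tmatp{\phi(z)}{\psi(z)},\tmatp{U}{V}\bigr)=r+(q-r)$, killing the lower-right block of the $\Jimq$-forms via $V^\ad U=\Ouu{(q-r)}{(q-r)}$, and in the converse of~\eqref{WN1.b} forcing $g_{12}\equiv\NM$, $g_{21}\equiv\NM$, $g_{22}\equiv\Iu{q-r}$ from the generic rank conditions and the identity theorem --- is exactly the intended argument and is correct.
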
 

 The proof of \rlem{WN1} is straightforward.
 We omit the details.
 
 In  the following, we will use again \(\OPu{\cU}\) to denote the complex \tqqa{matrix} which represents the orthogonal projection onto a given subspace \(\cU\) of \(\Cq \) with respect to the standard basis of \(\Cq \), \tie{}, for each subspace \(\cU\) of \(\Cq \), the matrix \(\OPu{\cU}\) is the unique complex \tqqa{matrix} \(P\) which fulfills the three conditions \(P^2 = P\), \(P^\ad = P\), and \(\cR (P) = \cU\).
 
\begin{lem}\label{N11.7}
 Let \(m\) and \(\ell\) be \tnn{} integers such that 
\beql{175-v3}
 r
 \defeq q - (m+\ell)
\eeq
 fulfills \(1\le r\le q - 1\).
 Let \(\cU\) and \(\cV\) be orthogonal subspaces of \(\Cq \) with \(\dim \cU = m\) and \(\dim \cV =\ell\).
 Then:
\begin{enui}
 \item\label{N11.7.a} There exists a unitary complex \tqqa{matrix} \(W\) such that
\beql{N11.16}
 W^\ad \OPu{\cU} W
 =
 \begin{cases}
  \diag (\Ouu{r}{r}, \Iu{m}, \Ouu{\ell}{\ell} ),\ifa{m\ge 1\text{ and }\ell\ge 1}\\
  \diag (\Ouu{r}{r}, \Iu{m}),\ifa{m\ge 1\text{ and }\ell = 0}
 \end{cases}
\eeq 
 and
\beql{N11.17}
 W^\ad \OPu{\cV} W
 =
 \begin{cases}
  \diag (\Ouu{r}{r}, \Ouu{m}{m}, \Iu{\ell}),\ifa{m\ge 1\text{ and }\ell\ge 1}\\
  \diag (\Ouu{r}{r}, \Iu{\ell}),\ifa{m= 0\text{ and }\ell\ge 1}
 \end{cases}.
\eeq 
 \item\label{N11.7.b} Let \(\alpha\in\R\) and let \(W\) be a unitary complex \tqqa{matrix} such that \eqref{N11.16} and \eqref{N11.17} are fulfilled.
 \begin{enuii}
  \item\label{N11.7.b1} If \(\tmatp{\tilde{\phi}}{\tilde{\psi}} \in \qSp \) is such that
\begin{align}\label{Puv}
 \OPu{\cU} \tilde{\phi}&= \Oqq&%
&\text{and}&
 \OPu{\cV} \tilde{\psi}&= \Oqq,%
\end{align}
 then there exists a pair \(\tmatp{\phi}{\psi} \in \rSp \) such that \(\phi\) and \(\psi\) and the functions
\begin{align}
 \phi^\square
 &\defeq
 \begin{cases}
   W \cdot \diag (\phi,\Ouu{m}{m}, \Iu{\ell}),\ifa{m\ge 1\text{ and }\ell\ge 1}\\
   W \cdot \diag (\phi, \Ouu{m}{m}),\ifa{m\ge 1 \text{ and }\ell=0}\\
   W \cdot \diag (\phi, \Iu{\ell}),\ifa{m=0 \text{ and }\ell\ge 1}\label{NN232}
 \end{cases}
\intertext{and}
 \psi^\square
 &\defeq
 \begin{cases}
   W \cdot \diag (\psi, \Iu{m}, \Ouu{\ell}{\ell} ),\ifa{m\ge 1\text{ and }\ell\ge 1}\\
   W \cdot \diag (\psi, \Iu{m}),\ifa{m\ge 1 \text{ and }\ell=0}\\
   W \cdot \diag (\psi, \Ouu{\ell}{\ell} ),\ifa{m=0 \text{ and }\ell\ge 1}\label{NM232}
 \end{cases}
\end{align}
 fulfill the following three conditions:
\begin{aeqiii}{0}
 \item\label{N11.7.i} \(\phi\), \(\psi\), \(\phi^\square\), and \(\psi^\square\) are holomorphic in \(\uhp \).
 \item\label{N11.7.ii} \(\tmatp{\phi^\square}{\psi^\square} \in \qSp \).
 \item\label{N11.7.iii} \(\tmatpc{ \tilde{\phi}}{\tilde{\psi}}=\tmatpc{\phi^\square}{\psi^\square}\).
\end{aeqiii}
 \item\label{N11.7.b2} For each pair \(\tmatp{\phi}{\psi} \in \rSp \), the functions \(\phi^\square\) and \(\psi^\square\) given by \eqref{NN232} and \eqref{NM232} fulfill~\ref{N11.7.ii}.
 \item\label{N11.7.b3} Let \(\tmatp{\phi}{\psi} \in \rSp \).
 Let \(\phi^\square\) and \(\psi^\square\) be defined by \eqref{NN232} and \eqref{NM232}.
 Then every pair \(\tmatp{\tilde{\phi}}{\tilde{\psi}}\in \qSp \) for which~\ref{N11.7.iii} holds true fulfills necessarily \eqref{Puv}.
\end{enuii}
\end{enui}
\end{lem}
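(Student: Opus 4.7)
For part~\eqref{N11.7.a}, I would use that by orthogonality of $\cU$ and $\cV$ and $\dim\cU+\dim\cV=q-r$, the subspace $\cW \defeq (\cU+\cV)^\orth$ has dimension $r$. Taking orthonormal bases of $\cW$, $\cU$, $\cV$ in the case-dependent order prescribed by~\eqref{N11.16} and~\eqref{N11.17}, their concatenation is an orthonormal basis of $\Cq$, so the matrix $W$ with these columns is unitary; since $W^\ad\OPu{\cX}W$ is the matrix of the orthogonal projection onto $W^{-1}\cX$ and by construction $W^{-1}\cU$ and $W^{-1}\cV$ are coordinate subspaces, the block-diagonal identities follow.

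The routine parts of~\eqref{N11.7.b} are~\eqref{N11.7.b2} and~\eqref{N11.7.b3}. For~\eqref{N11.7.b2}, since $W^\ad W=\Iu{q}$, direct computation gives $\psi^{\square\ad}\phi^\square=\diag(\psi^\ad\phi,\NM,\NM)$ together with the analogous $(z-\alpha)$-weighted identity; by~\rrem{J-1}, the Stieltjes-type inequalities~\eqref{KD1}-\eqref{KD2} for $\tmatp{\phi^\square}{\psi^\square}$ then reduce to those for $\tmatp{\phi}{\psi}\in\rSp$, while the identity sub-blocks $\Iu{m}$, $\Iu{\ell}$ supply $m+\ell$ linearly independent columns in rows disjoint from the rank-$r$ part, yielding rank~$q$. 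For~\eqref{N11.7.b3}, part~\eqref{N11.7.a} gives $\OPu{\cU}=W\diag(\NM,\Iu{m},\NM)W^\ad$, and block multiplication produces $\OPu{\cU}\phi^\square=W\diag(\NM,\Iu{m},\NM)\diag(\phi,\NM,\Iu{\ell})=\NM$, hence $\OPu{\cU}\tilde\phi=\OPu{\cU}\phi^\square g=\NM$; the argument for $\OPu{\cV}\tilde\psi=\NM$ is symmetric.

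The substantive step is~\eqref{N11.7.b1}. Passing to rotated coordinates $\chi\defeq W^\ad\tilde\phi$ and $\omega\defeq W^\ad\tilde\psi$, the projection conditions~\eqref{Puv} force $\chi$ to have a vanishing middle block of $m$ rows and $\omega$ a vanishing bottom block of $\ell$ rows; write the remaining row blocks of $\chi$ as $\chi_1$ (size $r\times q$) and $\chi_3$ (size $\ell\times q$), and of $\omega$ as $\omega_1$ ($r\times q$) and $\omega_2$ ($m\times q$). I would then construct an invertible meromorphic $g\colon\Cs\to\Cqq$ partitioned into column blocks $g=(g_1\mid g_2\mid g_3)$ of widths $r$, $m$, $\ell$, so that $\chi g=\diag(\phi,\NM,\Iu{\ell})$ and $\omega g=\diag(\psi,\Iu{m},\NM)$, with the columns of $g_1$ in the joint kernel of $\chi_3$ and $\omega_2$, with $g_2$ a right inverse of $\omega_2$ restricted to the joint kernel of $\chi_1,\chi_3,\omega_1$, and with $g_3$ a right inverse of $\chi_3$ on the joint kernel of $\chi_1,\omega_1,\omega_2$. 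Setting $\phi\defeq\chi_1 g_1$ and $\psi\defeq\omega_1 g_1$, the identity $\omega_1^\ad\chi_1=\tilde\psi^\ad\tilde\phi$ yields $\psi^\ad\phi=g_1^\ad(\tilde\psi^\ad\tilde\phi)g_1$ and its $(z-\alpha)$-analogue, so the Stieltjes-type inequalities for $\tmatp{\phi}{\psi}$ descend from those for $\tmatp{\tilde\phi}{\tilde\psi}$; the rank of $\tmatp{\phi(z)}{\psi(z)}$ equals $r$ by the same right-multiplication argument. Since $\phi^\square=W\chi g=\tilde\phi g$ and $\psi^\square=W\omega g=\tilde\psi g$, the equivalence~\eqref{N11.7.iii} holds, and an application of~\rrem{BW} to $\tmatp{\phi}{\psi}$ provides a scalar meromorphic factor restoring holomorphy of all four functions on $\Cs$, in particular on $\uhp$.

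The hard part will be verifying that, on a co-discrete subset of $\Cs$, the three joint null spaces entering the construction have dimensions exactly $r$, $m$, $\ell$, so that $g_1$ of rank $r$ and right inverses $g_2,g_3$ exist meromorphically and the composite $g=(g_1\mid g_2\mid g_3)$ is invertible. This has to be teased out of the rank condition $\rank\tmatp{\tilde\phi(z)}{\tilde\psi(z)}=q$, which in rotated coordinates is the full-rank-$q$ property of the stacked matrix with row blocks $\chi_1,\chi_3,\omega_1,\omega_2$, together with the Stieltjes-type inequalities~\eqref{KD1}-\eqref{KD2}, which rule out the pathological configurations in which one row sub-block would be linearly dependent on the other three.
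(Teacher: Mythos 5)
The paper gives no proof of this lemma at all: the sentence following it refers the reader to Bolotnikov~\cite[Lemma~5.2]{MR1362524} and to~\cite[Lemma~11.7]{Mak14}, and the route used there (and in the paper's own proof of the completely degenerate analogue, \rlem{r-0-B55}) is the Cayley transform: one passes to \(F=(\tilde\psi+\iu\tilde\phi)(\tilde\psi-\iu\tilde\phi)^\inv\), uses \eqref{Puv} to show that the Schur function \(W^\ad FW\) has \(m\) rows equal to rows of \(\Iq\) and \(\ell\) rows equal to rows of \(-\Iq\), invokes contractivity to force \(W^\ad FW=\diag(F_0,\Iu{m},-\Iu{\ell})\) with an \taaa{r}{r}{Schur} function \(F_0\), and reads off \(\phi\) and \(\psi\) from \(F_0\). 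Your parts~\eqref{N11.7.a},~\eqref{N11.7.b2} and~\eqref{N11.7.b3} are correct, and your direct construction of the intertwining function \(g\) in~\eqref{N11.7.b1} is a genuinely different route from the cited one. But the step you defer to the end is not a verification to be ``teased out'' later --- it is the entire content of~\eqref{N11.7.b1}, and without it the proposal is incomplete: with \(\chi=W^\ad\tilde\phi=\col(\chi_1,\NM,\chi_3)\) and \(\omega=W^\ad\tilde\psi=\col(\omega_1,\omega_2,\NM)\), your \(g_2\) exists only if \(\dim\rk{\cN(\chi_1)\cap\cN(\chi_3)\cap\cN(\omega_1)}=m\), whereas naive row counting only gives the lower bound \(m-r\), which may be negative.

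The missing ingredient is the pointwise bound \(\rank\col\rk{\chi_1(z),\omega_1(z)}\le r\), equivalently \(\dim\rk{\cN(\chi_1(z))\cap\cN(\omega_1(z))}\ge m+\ell\); once this is known, all three joint null spaces get the dimensions \(r\), \(m\), \(\ell\) by intersecting with \(\cN(\chi_3)\), \(\cN(\omega_2)\) and using \(\cN(\chi)\cap\cN(\omega)=\set{\Ouu{q}{1}}\), which is the rank-\(q\) condition. Your instinct that \eqref{KD1} is responsible is correct, and the bound does follow from it: since \(\tilde\psi^\ad\tilde\phi=\omega^\ad\chi=\omega_1^\ad\chi_1\), condition \eqref{KD1} gives \((\Im z)^\inv\Im\ek{\omega_1^\ad(z)\chi_1(z)}\lgeq\NM\); for \(v\in\cN(\chi_1(z))\) one has \(v^\ad\Im\ek{\omega_1^\ad\chi_1}v=0\), hence \(\Im\ek{\omega_1^\ad\chi_1}v=\NM\) by nonnegativity, hence \(\chi_1^\ad\omega_1v=\NM\), so that \(\omega_1\rk{\cN(\chi_1)}\subseteq\cN(\chi_1^\ad)\), a space of dimension \(r-\rank\chi_1(z)\), and the dimension count follows. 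This argument (or the Cayley-transform detour, which packages it as the statement that \(\col(\Iq-F_0,\Iq+F_0)\) has trivial kernel for a contraction \(F_0\)) must be supplied; as written, the proposal asserts the conclusion of the lemma's only nontrivial step rather than proving it. A further small point: after invoking \rrem{BW} to restore holomorphy of \(\phi\) and \(\psi\), the scalar factor does not multiply \(\phi^\square\) and \(\psi^\square\) by a scalar but by \(\diag(g_0\Iu{r},\Iu{m},\Iu{\ell})\) from the right, so condition~\ref{N11.7.iii} needs a one-line re-check; this is harmless but should be said.
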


 \rlem{N11.8} is substantially proved in~\cite[\clem{5.2}, \cpage{459/460}]{MR1362524}.
 (A detailed proof for the case that \(m\ge 1\) and \(\ell\ge 1\) is also given in~\cite[\clem{11.7}]{Mak14}.)

\begin{lem}\label{N11.8}
 Let \(\alpha\in\R\), let \(\kappa\in\Ninf \), let \(\seqska \in\Kggeqka \), and let \(n\in\NO \) be such that \(2n+1 \le \kappa\).
 Let \(m\), \(\ell\), and \(r\) be given by \eqref{175-v1}, \eqref{175-v2}, and \eqref{175-v3}.
 Suppose \(r\ge 1\).
 Let \(\cU_{n,\alpha}\) and \(\cV_{n,\alpha}\) be given by \eqref{170-v1} and \eqref{170-v2}.
 Then:
\begin{enui}
 \item\label{N11.8.a} There exists a unitary complex \tqqa{matrix} \(W\) such that
\begin{align}
 W^\ad \OPu{\cU_{n,\alpha}} W
 &=
 \begin{cases}
  \diag (\Ouu{r}{r}, \Iu{m}, \Ouu{\ell}{\ell} ),\ifa{ m\ge 1\text{ and }\ell\ge 1}\\
  \diag (\Ouu{r}{r}, \Iu{m}),\ifa{m \ge 1\text{ and }\ell=0}
 \end{cases} \label{N11.53}
\intertext{and}
 W^\ad \OPu{\cV_{n,\alpha}} W
 &=
 \begin{cases}
  \diag (\Ouu{r}{r}, \Ouu{m}{m}, \Iu{\ell}),\ifa{ m\ge 1\text{ and }\ell\ge 1}\\
  \diag (\Ouu{r}{r}, \Iu{\ell}),\ifa{ m=0\text{ and }\ell\ge 1}.
 \end{cases} \label{N11.54}
\end{align}
 \item\label{N11.8.b} Let \(W\) be a unitary complex \tqqa{matrix} such that \eqref{N11.53} and \eqref{N11.54} are valid.
\begin{enuii}
 \item\label{N11.8.b1} Let \(\tmatp{\tilde{\phi}}{\tilde{\psi}} \in \qSps{2n+1}\).
 Then there exists a pair \(\tmatp{\phi}{\psi} \in \rSp \) such that the conditions~\ref{N11.7.i}--\ref{N11.7.iii} of \rlem{N11.7} hold true with \(\phi^\square\) and \(\psi^\square\) given by \eqref{NN232} and \eqref{NM232}.
 \item\label{N11.8.b2} If \(\tmatp{\phi}{\psi} \in \rSp \), then \(\phi^\square\) and \(\psi^\square\) be given by \eqref{NN232} and \eqref{NM232} fulfill condition~\ref{N11.7.ii} of  \rlem{N11.7}.
 \item\label{N11.8.b3} Let \(\tmatp{\phi}{\psi}\in\rSp\) and let \(\phi^\square\) and \(\psi^\square\) be given by \eqref{NN232} and \eqref{NM232}.
 If \(\tmatp{\tilde{\phi}}{\tilde{\psi}}\in \qSp \) fulfills condition~\ref{N11.7.iii} of \rlem{N11.7}, then \(\tmatp{\tilde{\phi}}{\tilde{\psi}}\) belongs to \(\qSps{2n+1}\).
\end{enuii}
\end{enui}
\end{lem}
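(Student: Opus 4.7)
The plan is to reduce \rlem{N11.8} to the already-established \rlem{N11.7} by taking $\cU \defeq \cU_{n,\alpha}$ and $\cV \defeq \cV_{n,\alpha}$, where these subspaces are defined in \eqref{170-v1} and \eqref{170-v2}. The bridge between the abstract Stieltjes-pair condition defining $\qSps{2n+1}$ and the projector conditions \eqref{Puv} in \rlem{N11.7} is supplied precisely by \rlemp{170}{170.b}, which translates the matrix equations \eqref{175-v7} and \eqref{175-v8} into $\OPu{\cU_{n,\alpha}}\phi = \NM$ and $\OPu{\cV_{n,\alpha}}\psi = \NM$, respectively.

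For \rpart{N11.8.a} I would first invoke \rlemp{170}{170.a} to see that $\cU_{n,\alpha}$ and $\cV_{n,\alpha}$ are orthogonal subspaces of $\Cq$, and to read off from \eqref{N174} and \eqref{N174-1} the dimensions $\dim\cU_{n,\alpha} = m$ and $\dim\cV_{n,\alpha} = \ell$. Since by assumption $r = q-(m+\ell) \geq 1$, all hypotheses of \rlemp{N11.7}{N11.7.a} are satisfied with $\cU = \cU_{n,\alpha}$ and $\cV = \cV_{n,\alpha}$, yielding a unitary $W \in \Cqq$ such that \eqref{N11.53} and \eqref{N11.54} hold.

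For \rpart{N11.8.b}, fix such a $W$. The central, purely notational observation to verify first is that by \rnota{bez-DT1} together with \rlemp{170}{170.b}, a pair $\tmatp{\tilde\phi}{\tilde\psi} \in \qSp$ belongs to $\qSps{2n+1}$ if and only if $\OPu{\cU_{n,\alpha}}\tilde\phi = \NM$ and $\OPu{\cV_{n,\alpha}}\tilde\psi = \NM$, i.e.\ if and only if \eqref{Puv} holds for $\cU = \cU_{n,\alpha}$ and $\cV = \cV_{n,\alpha}$. With this equivalence in hand, each of the three sub-assertions becomes a direct citation of the corresponding sub-part of \rlemp{N11.7}{N11.7.b}: For~\eqref{N11.8.b1}, a given $\tmatp{\tilde\phi}{\tilde\psi}\in\qSps{2n+1}$ satisfies \eqref{Puv}, so \rlemp{N11.7}{N11.7.b1} yields the desired $\tmatp{\phi}{\psi}\in\rSp$ with the required properties~\ref{N11.7.i}--\ref{N11.7.iii}. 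Assertion~\eqref{N11.8.b2} is literally \rlemp{N11.7}{N11.7.b2} applied to our $\cU,\cV$. For~\eqref{N11.8.b3}, any $\tmatp{\tilde\phi}{\tilde\psi}\in\qSp$ with $\tmatpc{\tilde\phi}{\tilde\psi} = \tmatpc{\phi^\square}{\psi^\square}$ satisfies \eqref{Puv} by \rlemp{N11.7}{N11.7.b3}, and hence lies in $\qSps{2n+1}$ by the equivalence noted above.

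Since the substantive technical work (constructing $W$ and analyzing the lifted pairs) is entirely contained in \rlem{N11.7}, and the characterization of $\qSps{2n+1}$ via orthogonal projections is already available from \rlem{170}, no new computation is needed. The only mild obstacle is bookkeeping: one must verify that the orthogonality hypothesis required by \rlem{N11.7} really is what \rlemp{170}{170.a} delivers, and that the dimension constraints $0\leq m,\ell$ with $m+\ell = q-r \leq q-1$ match the case distinctions in \eqref{N11.16}--\eqref{N11.17}. Both verifications are immediate, so the proof will be short and consist essentially of assembling references to the two preceding lemmas.
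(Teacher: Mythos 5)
Your proposal is correct and follows essentially the same route as the paper: both reduce \rlem{N11.8} to \rlem{N11.7} by taking \(\cU=\cU_{n,\alpha}\) and \(\cV=\cV_{n,\alpha}\), using \rlem{170} to establish orthogonality and the dimensions \(m\), \(\ell\), and to translate the defining equations \eqref{175-v7}--\eqref{175-v8} of \(\qSps{2n+1}\) into the projector conditions \eqref{Puv}. The only cosmetic difference is that the paper writes out the case \(m\ge1\), \(\ell\ge1\) and declares the boundary cases analogous, while you treat all cases uniformly.
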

\begin{proof}
 Let us consider the case that \(\ell\ge 1\) and \(m\ge 1\) hold true.
 (If \(m=0\) or if \(\ell=0\), then the assertions can be checked analogously, so that we omit the details of the proof in these cases.)
 In view of \eqref{170-v1}, \eqref{170-v2}, \rlem{170}, \eqref{175-v1}, and \eqref{175-v2}, we obtain that \(\cU_{n, \alpha}\) and \(\cV_{n, \alpha}\) are orthogonal subspaces of \(\Cq\) with \(\dim \cU_{n, \alpha}=m \geq 1\) and \(\dim \cV_{n, \alpha}=\ell\geq 1\).
 Taking into account that \(r=q-(m+\ell)\leq q-2\), we see that we can apply \rlem{N11.7} with \(\cU=\cU_{n,\alpha}\) and \(\cV=\cV_{n,\alpha}\).

 \eqref{N11.8.a} Use \rlemp{N11.7}{N11.7.a}.

 \eqref{N11.8.b1} Let \(\tmatp{\tilde{\phi}}{\tilde{\psi}} \in \qSp \) be such that \eqref{175-v7} and \eqref{175-v8} hold true.
 Because of \eqref{175-v7}, \eqref{170-v1}, and \rlem{170}, we have \(\OPu{\cU_{n, \alpha}} \phi=\NM\).
 Using \eqref{175-v8}, \eqref{170-v2}, and \rlem{170}, we get \(\OPu{\cV_{n, \alpha}} \psi=\NM\).
 In view of \rlem{N11.7}, \rpart{N11.8.b1} is proved.

 \eqref{N11.8.b2} Apply \rlemp{N11.7}{N11.7.b2}.

 \eqref{N11.8.b3} Use \rlemp{N11.7}{N11.7.b3} and \rlem{170}.
\end{proof}

 Now we obtain a parametrization of the solution set of the matricial truncated Stieltjes moment problem in the so-called degenerate, but not completely degenerate case.

\begin{thm}\label{N11.9}
 Let \(\alpha\in\R\), let \(\kappa\in\Ninf \), let \(\seqska \in\Kggeqka \), and let \(n\in\NO \) be such that \(2n+1\le\kappa\).
 Let the integers \(m\), \(\ell\), and \(r\) be given by \eqref{175-v1}, \eqref{175-v2}, and \eqref{175-v3}.
 Suppose \(r\ge 1\).
 Let \(\cU_{n,\alpha}\) and \(\cV_{n,\alpha}\) be the subspaces of \(\Cq \) which are defined in \eqref{170-v1} and \eqref{170-v2}.
 Let \(W\) be a unitary complex \tqqa{matrix} such that \eqref{N11.53} and \eqref{N11.54} hold true.
 Then:
\begin{enui}
 \item\label{N11.9.a} Let \(\tmatp{\phi }{\psi} \in \rSp \) and let \(\phi^\square\) and \(\psi^\square\) be defined by \eqref{NN232} and \eqref{NM232}.
 Then the function \( \det \rk{\hat{\Theta}_{n,\alpha}^{(2,1)} \phi^\square + \hat{\Theta}_{n,\alpha}^{(2,2)} \psi^\square}\) does not vanish identically and
\[
 S
 \defeq \rk{\hat{\Theta}_{n,\alpha}^{(1,1)} \phi^\square + \hat{\Theta}_{n,\alpha}^{(1,2)} \psi^\square} \rk{\hat{\Theta}_{n,\alpha}^{(2,1)} \phi^\square + \hat{\Theta}_{n,\alpha}^{(2,2)} \psi^\square}^\inv 
\]
 belongs to the class \(\SFOqskg{2n+1}\).
 \item\label{N11.9.b} For each \(S\in\SFOqskg{2n+1}\), there exists a pair \(\tmatp{\phi}{\psi}\in \rSp \) such that the function \(\det\rk{\hat{\Theta}_{n,\alpha}^{(2,1)} \phi^\square + \hat{\Theta}_{n,\alpha}^{(2,2)} \psi^\square}\) does not vanish identically and that \(S\) admits the representation
\beql{Nr.11.55}
 S
 = \rk{\hat{\Theta}_{n,\alpha}^{(1,1)} \phi^\square + \hat{\Theta}_{n,\alpha}^{(1,2)} \psi^\square} \rk{\hat{\Theta}_{n,\alpha}^{(2,1)} \phi^\square + \hat{\Theta}_{n,\alpha}^{(2,2)} \psi^\square}^\inv 
\eeq
 where \(\phi^\square\) and \(\psi^\square\) are given by \eqref{NN232} and \eqref{NM232}.
 \item\label{N11.9.c} Let \(\tmatp{\phi_1}{\psi_1}, \tmatp{\phi_2}{\psi_2} \in \rSp \).
 For each \(k\in\set{1,2}\), let \(\phi_k^\square\) be defined as in \eqref{NN232} where \({\phi}\) is replaced by \(\phi_k\) and let \(\psi_k^\square\) be defined as in \eqref{NM232} where \(\psi\) is replaced by \(\psi_k\).
 Then the following statements are equivalent:
\begin{aeqii}{0}
 \item\label{N11.9.i} \(\tmatpc{\phi_1}{\psi_1}=\tmatpc{\phi_2}{\psi_2}\).
 \item\label{N11.9.ii} \(\rk{\hat{\Theta}_{n,\alpha}^{(1,1)} \phi_1^\square + \hat{\Theta}_{n,\alpha}^{(1,2)} \psi_1^\square} \rk{\hat{\Theta}_{n,\alpha}^{(2,1)} \phi_1^\square + \hat{\Theta}_{n,\alpha}^{(2,2)} \psi_1^\square}^\inv\\=\rk{\hat{\Theta}_{n,\alpha}^{(1,1)} \phi_2^\square + \hat{\Theta}_{n,\alpha}^{(1,2)} \psi_2^\square} \rk{\hat{\Theta}_{n,\alpha}^{(2,1)} \phi_2^\square + \hat{\Theta}_{n,\alpha}^{(2,2)} \psi_2^\square}^\inv\).
\end{aeqii}
\end{enui}
\end{thm}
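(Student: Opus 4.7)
\medskip

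The plan is to assemble the parametrization by composing two correspondences already set up in the paper: Proposition~\ref{LB53L}, which parametrizes $\SFOqskg{2n+1}$ via equivalence classes of pairs in $\qSps{2n+1}$, and Lemma~\ref{N11.8}, which establishes a bijection between equivalence classes in $\qSps{2n+1}$ and equivalence classes in $\rSp$ through the map $\tmatp{\phi}{\psi}\mapsto\tmatp{\phi^\square}{\psi^\square}$ defined by \eqref{NN232} and \eqref{NM232}. Concretely, I would verify first that for every $\tmatp{\phi}{\psi}\in\rSp$ the pair $\tmatp{\phi^\square}{\psi^\square}$ lies in $\qSps{2n+1}$: Lemma~\ref{N11.8}\eqref{N11.8.b2} yields $\tmatp{\phi^\square}{\psi^\square}\in\qSp$, and a direct multiplication of the block-diagonal structure of $\phi^\square,\psi^\square$ with the expressions \eqref{N11.53}, \eqref{N11.54} gives $\OPu{\cU_{n,\alpha}}\phi^\square=\Oqq$ and $\OPu{\cV_{n,\alpha}}\psi^\square=\Oqq$, which by Lemma~\ref{170}\eqref{170.b} translates into \eqref{175-v7} and \eqref{175-v8}.

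With this at hand, part~\eqref{N11.9.a} is immediate from Proposition~\ref{LB53L}\eqref{LB53L.a}. For part~\eqref{N11.9.b}, given $S\in\SFOqskg{2n+1}$, Proposition~\ref{LB53L}\eqref{LB53L.b} supplies a pair $\tmatp{\tilde\phi}{\tilde\psi}\in\qSps{2n+1}$ holomorphic in $\Cs$ producing $S$ via the linear-fractional transformation built from $\hat\Theta_{n,\alpha}$; Lemma~\ref{N11.8}\eqref{N11.8.b1} then furnishes a pair $\tmatp{\phi}{\psi}\in\rSp$ with $\tmatpc{\tilde\phi}{\tilde\psi}=\tmatpc{\phi^\square}{\psi^\square}$, and Proposition~\ref{LB53L}\eqref{LB53L.c} guarantees that equivalent parameters give the same $S$, hence \eqref{Nr.11.55}.

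For part~\eqref{N11.9.c}, Proposition~\ref{LB53L}\eqref{LB53L.c} reformulates condition~\eqref{N11.9.ii} as $\tmatpc{\phi_1^\square}{\psi_1^\square}=\tmatpc{\phi_2^\square}{\psi_2^\square}$, so the content is the equivalence
\[
 \tmatpc{\phi_1}{\psi_1}=\tmatpc{\phi_2}{\psi_2}
 \iff
 \tmatpc{\phi_1^\square}{\psi_1^\square}=\tmatpc{\phi_2^\square}{\psi_2^\square}.
\]
The forward direction is a direct check: if $\phi_2=\phi_1g$ and $\psi_2=\psi_1g$ with $\det g\not\equiv 0$, then $G\defeq\diag(g,\Iu{m},\Iu{\ell})$ (with the obvious adjustment if $m=0$ or $\ell=0$) satisfies $\phi_2^\square=\phi_1^\square G$ and $\psi_2^\square=\psi_1^\square G$, and $\det G=\det g\not\equiv 0$. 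The converse is the main technical point: writing the connecting meromorphic $G$ in $(r,m,\ell)$-block form, the zero blocks appearing in $\diag(\phi_k,\Ouu{m}{m},\Iu{\ell})$ and $\diag(\psi_k,\Iu{m},\Ouu{\ell}{\ell})$ force the lower-left blocks of $G$ to vanish and its diagonal $(2,2),(3,3)$-blocks to equal the identity, while the defining property $\rank\tmatp{\phi_1(z)}{\psi_1(z)}=r$ of an $r$-Stieltjes pair makes the upper off-diagonal blocks of $G$ vanish as well. One is left with $G=\diag(g,\Iu{m},\Iu{\ell})$ for some meromorphic $g$ with $\det g\not\equiv 0$ satisfying $\phi_2=\phi_1 g$ and $\psi_2=\psi_1g$, as required.

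The routine parts are the two compositions and the forward implication in~\eqref{N11.9.c}; the only step requiring some care is the converse implication in~\eqref{N11.9.c}, where the block-matrix analysis combined with the rank condition built into the definition of $\rSp$ extracts the small parameter $g$ from the full $q\times q$ equivalence $G$. Once this is handled, the theorem follows by transporting statements~\eqref{LB53L.a}--\eqref{LB53L.c} of Proposition~\ref{LB53L} through the bijection provided by Lemma~\ref{N11.8}.
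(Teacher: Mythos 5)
Your proposal is correct and follows essentially the same route as the paper: both parts transport Proposition~\ref{LB53L} through the correspondence $\tmatp{\phi}{\psi}\mapsto\tmatp{\phi^\square}{\psi^\square}$ set up in Lemma~\ref{N11.8}. The only notable difference is cosmetic: your block-matrix analysis of the connecting function $G$ in part~\eqref{N11.9.c} (forcing $G=\diag(g,\Iu{m},\Iu{\ell})$ via the zero blocks and the rank-$r$ condition) is precisely the content of \rlemp{WN1}{WN1.b} together with \rremp{WN2}{WN2.b}, which the paper invokes but whose proofs it omits as straightforward.
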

\begin{proof}
 Let us consider the case that \(m\ge 1\) and \(\ell\ge 1\) hold true.
 (If \(m=0\) or if \(\ell=0\), then the assertions can be proved analogously.)

 \eqref{N11.9.a} Let \( \tmatp{\phi}{\psi} \in \aSp{r}\).
 \rPartss{N11.8.b3}{N11.8.b2} of \rlem{N11.8} and \rnota{bez-DT1} show that \(\tmatp{ \phi^\square}{ \psi^\square }\) belongs to \(\qSps{2n+1}\).
 Applying \rpropp{LB53L}{LB53L.a} completes the proof of \rpart{N11.9.a}. 

 \eqref{N11.9.b} Let \(S \in \SFOqskg{2n+1}\).
 According to \rpropp{LB53L}{LB53L.b}, then there is a pair \(\tmatp{\phi_\#}{\psi_\#} \in \qSps{2n+1}\), where \(\phi_\#\) and \(\psi_\#\) are matrix-valued functions which are holomorphic in \(\Cs \) and which fulfill 
\beql{TRB}
 \det \ek*{\hat \Theta_{n, \alpha}^{(2,1)} (z) \phi_\#(z) +\hat \Theta_{n, \alpha}^{(2,2)}(z) \psi_\#(z)}
 \neq 0
\eeq
 and 
\beql{QX}
 S(z)
 =\ek*{\hat\Theta_{n, \alpha}^{(1,1)}(z) \phi_\#(z)+\hat\Theta_{n, \alpha}^{(1,2)}(z) \psi_\#(z)}\ek*{\hat\Theta_{n, \alpha}^{(2,1)} (z)\phi_\#(z)+\hat\Theta_{n, \alpha}^{(2,2)}(z) \psi_\#(z)}^\inv 
\eeq
 for all \(z\in\Cs \).
 In view of \rnota{bez-DT1} and \rlemp{N11.8}{N11.8.b1}, there is a pair \( \tmatp{\phi}{\psi} \in\aSp{r}\) such that \(\tmatp{ \phi^\square}{\psi^\square} \in \qSp \) and \(\tmatpc{ \phi_\#}{\psi_\# }=\tmatpc{\phi^\square}{\psi^\square}\) hold true.
 Consequently, there are a discrete subset \(\cD\) of \(\Cs \) and a \tqqa{matrix-valued} function \(g\) which is meromorphic in \(\Cs \) such that \(\phi_\#\), \(\psi_\#\), \(\phi\), \(\psi\), and \(g\) are holomorphic in \(\C \setminus (\rhl \cup \cD)\) and that \(\det g(z) \neq 0\) and
\beql{VZ1}
 \matp{\phi_\# (z)}{\psi_\#(z)}
 =\matp{\phi^\square(z)g(z)}{\psi^\square(z)g(z)}
\eeq
 hold true for each \(z \in \C \setminus (\rhl \cup \cD)\).
 Therefore, for each \(z \in \C \setminus (\rhl \cup \cD)\), it follows from \eqref{TRB} that \( 0\neq \det \ek{\hat \Theta_{n, \alpha}^{(2,1)} (z) \psi^\square (z) +\hat \Theta_{n, \alpha}^{(2,2)} (z) \psi^\square (z) } \cdot \det g(z)\).
 In particular, the function \( \det\rk{\hat \Theta_{n, \alpha}^{(2,1)} \phi^\square +\hat \Theta_{n, \alpha}^{(2,2)} \psi^\square } \) does not vanish identically in \(\Cs \).
 Because of \eqref{QX} and \eqref{VZ1}, for all \(z \in \C \setminus (\rhl \cup \cD)\), we get furthermore
\[\begin{split}
 S(z)
 & = \ek*{ \hat \Theta_{n, \alpha}^{(1,1)} (z) \phi^\square (z)g(z) +\hat \Theta_{n, \alpha}^{(1,2)} (z) \psi^\square (z)g(z) }\ek*{ \hat \Theta_{n, \alpha}^{(2,1)} (z) \phi^\square (z)g(z) +\hat \Theta_{n, \alpha}^{(2,2)} (z) \psi^\square (z)g (z) }^\inv \\
 & = \ek*{ \hat \Theta_{n, \alpha}^{(1,1)} (z) \phi^\square (z) +\hat \Theta_{n, \alpha}^{(1,2)} (z) \psi^\square (z) }\ek*{ \hat \Theta_{n, \alpha}^{(2,1)} (z) \phi^\square (z) +\hat \Theta_{n, \alpha}^{(2,2)} (z) \psi^\square (z) }^\inv.
\end{split}\]
 In particular, \eqref{Nr.11.55} holds true.

 \eqref{N11.9.c} The matrices \(U\defeq \diag (\Ouu{m}{m}, \Iu{\ell} )\) and \(V\defeq \diag (\Iu{m}, \Ouu{\ell}{\ell}) \) fulfill \(\rank \tmatp{U}{V} = m +\ell = q - r\) and \( V^\ad U = \Ouu{(q-r)}{(q-r)}\).
 For each \(k\in\set{1,2}\), let \(\phi_k^\#= \diag (\phi_k, U) \) and \(\psi_k^\#= \diag (\psi_k, V) \).
 From \rlem{WN1} we see that \(\tmatp{\phi_1^\# }{\psi_1^\# } \) and \(\tmatp{\phi_2^\# }{\psi_2^\# } \) belong to \( \qSp \) and that~\ref{N11.9.i} is equivalent to \(\tmatpc{\phi_1^\# }{\psi_1^\# } =\tmatpc{\phi_2^\# }{\psi_2^\# }\).
 Obviously, \(\phi_k^\square\defeq W \phi_k^\# \) and \(\psi_k^\square\defeq W \psi_k^\# \) for each \(k\in\set{1,2}\).
 Taking into account \(W^\ad W =\Iq\) and \rrem{WN2}, we get that \(\tmatp{\phi_1^\square}{\psi_1^\square} \)  and \(\tmatp{\phi_2^\square}{\psi_2^\square} \) belong to \( \qSp \) and that \(\tmatpc{\phi_1^\# }{\psi_1^\# } =\tmatpc{\phi_2^\# }{\psi_2^\# }\) is equivalent to \(\tmatpc{\phi_1^\square}{\psi_1^\square}=\tmatpc{\phi_2^\square}{\psi_2^\square}\).
 Because of \rlemp{N11.7}{N11.7.b3}, we obtain \(\tmatp{\phi_k^\square}{\psi_k^\square}\in\qSps{2n+1}\) for each \(k \in \set{1,2}\).
 Using \rpropp{LB53L}{LB53L.c}, we see that \(\tmatpc{\phi_1^\square}{\psi_1^\square}=\tmatpc{\phi_2^\square}{\psi_2^\square}\) and~\ref{N11.9.ii} are equivalent.
 Consequently,~\ref{N11.9.i} holds true if and only~\ref{N11.9.ii} is fulfilled.
\end{proof}

\subsection{The completely degenerate case}\label{sub1431}
 Now we consider the so-called completely degenerate case~\ref{S1313.III}.
 We will see that, in this situation, the problem in question has a unique solution.

\begin{lem} \label{r-0-B55}
 Let \(m,\ell \in \N\) be such that \(m+\ell= q\).
 Let \(\cU\) and \(\cV\) be orthogonal subspaces of \(\Cq\) with \(\dim \cU=m\) and \(\dim \cV=\ell\).
 Then:
\begin{enui}
 \item\label{r-0-B55.a} There exists a unitary complex \tqqa{matrix} \(W\) such that
\begin{align} \label{r-0-BL55-2}
 W^\ad \OPu{\cU} W&=\diag ( \Iu{m}, \Ouu{\ell}{\ell} )&
&\text{and}&
 W^\ad \OPu{\cV} W&=\diag ( \Ouu{m}{m}, \Iu{\ell} ).
\end{align}
 \item\label{r-0-B55.b} Let \(W\) be a unitary complex \tqqa{matrix} such that \eqref{r-0-BL55-2} is fulfilled.
 Let \(\phi_\#\) and \(\psi_\#\) be the constant matrix-valued functions defined on \(\Cs \) given by 
\begin{align} \label{r-0-DC}
 \phi_\#(z)&\defeq W \cdot \diag (\Ouu{m}{m},\Iu{\ell})&
&\text{and}&
 \psi_\#(z)&\defeq W \cdot \diag (\Iu{m},\Ouu{\ell}{\ell})
\end{align}
 for all \(z\in\Cs\).
 Then:
\begin{enuii}
 \item\label{r-0-B55.b1} The pair \( \tmatp{\phi_\#}{\psi_\#}\) belongs to \(\qSp \).
 Furthermore, \(\OPu{\cU}\phi_\#=\NM\) and \(\OPu{\cV}\psi_\#=\NM\).
 \item\label{r-0-B55.b2} If \( \tmatp{\phi}{\psi} \in \qSp \) fulfills
\beql{N11-112-1}
 \matpc{\phi}{\psi}
 =\matpc{\phi_\#}{\psi_\#},
\eeq
 then 
\begin{align} \label{r-0-BL55-1}
 \OPu{\cU} \phi&=\NM&
&\text{and}&
 \OPu{\cV} \psi&=\NM.
\end{align}
 \item\label{r-0-B55.b3} If \(\tmatp{\phi}{\psi}\) belongs to \(\qSp \) and fulfills \eqref{r-0-BL55-1}, then \eqref{N11-112-1} is valid.
\end{enuii} 
\end{enui} 
\end{lem}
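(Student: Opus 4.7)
The plan is to treat the four parts in the order stated, with the main work concentrated in part~(b3).

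For part~(a), since $\cU\perp\cV$ and $\dim\cU+\dim\cV=m+\ell=q$, we have the orthogonal direct sum decomposition $\Cq=\cU\oplus\cV$. Choosing orthonormal bases of $\cU$ and $\cV$ and assembling them, in this order, as columns of $W$ yields a unitary matrix for which both equations in \eqref{r-0-BL55-2} follow by direct computation.

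For part~(b1), holomorphy is immediate since $\phi_\#$ and $\psi_\#$ are constant on $\Cs$. The identities $\OPu{\cU}\phi_\#=\NM$ and $\OPu{\cV}\psi_\#=\NM$ follow by direct block-matrix computation using \eqref{r-0-BL55-2} and \eqref{r-0-DC}. Moreover, $\psi_\#^\ad\phi_\#=\diag(\Iu{m},\Ouu{\ell}{\ell})W^\ad W\diag(\Ouu{m}{m},\Iu{\ell})=\Oqq$ and analogously $\phi_\#^\ad\psi_\#=\Oqq$, so \rrem{J-1} shows that the left-hand sides of \eqref{KD1} and \eqref{KD2} both vanish identically on $\C\setminus\R$. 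The rank condition is verified by left-multiplying $\matp{\phi_\#}{\psi_\#}$ by the non-singular block matrix $\diag(W^\ad,W^\ad)$ and inspecting the resulting block structure. For part~(b2), by \rrem{def-aq}, the equivalence \eqref{N11-112-1} provides a meromorphic matrix-valued function $g$ with $\det g\not\equiv 0$ and a discrete subset $\cD$ of $\Cs$ such that $\phi=\phi_\# g$ and $\psi=\psi_\# g$ on $\C\setminus(\rhl\cup\cD)$. Applying $\OPu{\cU}$ and $\OPu{\cV}$ and using part~(b1) yields \eqref{r-0-BL55-1} off $\rhl\cup\cD$, and the identity theorem for meromorphic functions extends it to all of $\Cs$.

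The heart of the lemma is part~(b3), which requires producing a meromorphic matrix-valued $g$ that implements the equivalence \eqref{N11-112-1}. Writing $W^\ad\phi=\matp{\phi_1}{\phi_2}$ and $W^\ad\psi=\matp{\psi_1}{\psi_2}$ in $(m,\ell)$-block-row form, the hypothesis \eqref{r-0-BL55-1} combined with \eqref{r-0-BL55-2} gives $W^\ad\OPu{\cU}\phi=\diag(\Iu{m},\Ouu{\ell}{\ell})W^\ad\phi=\NM$ and $W^\ad\OPu{\cV}\psi=\diag(\Ouu{m}{m},\Iu{\ell})W^\ad\psi=\NM$, forcing $\phi_1=\NM$ and $\psi_2=\NM$. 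I would then set $g\defeq\matp{\psi_1}{\phi_2}$; the block formulas \eqref{r-0-DC} yield $\phi_\# g=W\diag(\Ouu{m}{m},\Iu{\ell})\matp{\psi_1}{\phi_2}=W\matp{\NM}{\phi_2}=W W^\ad\phi=\phi$ and $\psi_\# g=\psi$ by the symmetric computation. The main obstacle is then to verify that $\det g$ does not vanish identically. For this I would invoke condition~(ii) in \rdefn{def-sp}, giving $\rank\matp{\phi(z)}{\psi(z)}=q$ off some discrete set. This rank is preserved under left multiplication by the non-singular $\diag(W^\ad,W^\ad)$ and under row permutations; permuting away the $q$ zero rows of $\diag(W^\ad,W^\ad)\matp{\phi}{\psi}=\matp{\matp{\NM}{\phi_2}}{\matp{\psi_1}{\NM}}$ identifies the rank with that of $\matp{\phi_2(z)}{\psi_1(z)}$, which coincides with the rank of $g(z)$ up to a row permutation. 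Hence $\rank g(z)=q$ at some point, so $\det g\not\equiv 0$, and \eqref{N11-112-1} is established.
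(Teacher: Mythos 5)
Your proof is correct, and parts~(a), (b1), (b2) coincide with the paper's own argument. For the main part~(b3), however, you take a genuinely different and more elementary route. The paper passes through the Cayley transform: it invokes \rlem{T01} to form \(F=(\psi+\iu\phi)(\psi-\iu\phi)^\inv\), rewrites the pair as \(\tilde\phi=\frac{\iu}{2}(\Iq-F)W\) and \(\tilde\psi=\frac{1}{2}(\Iq+F)W\), uses the hypothesis \eqref{r-0-BL55-1} to kill two block rows of \(G\defeq W^\ad FW\), and then applies the identity theorem for holomorphic functions to conclude \(G\equiv\diag(\Iu{m},-\Iu{\ell})\), so that \(F\) is constant and the pair becomes equivalent to \(\tmatp{\phi_\#}{\psi_\#}\) after a final right multiplication by \(\diag(\Iu{m},-\iu\Iu{\ell})\). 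You instead read off \(\phi_1=\NM\) and \(\psi_2=\NM\) directly from \eqref{r-0-BL55-1} and \eqref{r-0-BL55-2}, assemble the intertwining function \(g=\matp{\psi_1}{\phi_2}\) explicitly, verify \(\phi_\# g=\phi\) and \(\psi_\# g=\psi\) by block multiplication, and obtain \(\det g\not\equiv 0\) from the rank condition in \rdefn{def-sp} after deleting the two zero block rows of \(\diag(W^\ad,W^\ad)\tmatp{\phi}{\psi}\); since \(\det g\) is meromorphic and not identically zero, enlarging the discrete exceptional set makes \(g\) a valid witness for the equivalence in the sense of \rrem{def-aq}. This avoids \rlem{T01} and the identity-theorem step entirely and is shorter; what the paper's route buys in exchange is the explicit identification of the constant Schur function \(F=W\cdot\diag(\Iu{m},-\Iu{\ell})\cdot W^\ad\) attached to the pair, which is not needed for the lemma itself. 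Both arguments handle the exceptional discrete set correctly.
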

\begin{proof}
 \eqref{r-0-B55.a} Let \(\set{u_1,u_1,\dotsc,u_m}\) be an orthonormal basis of \(\cU\) and let \(\set{v_1,v_2,\dotsc,v_{\ell}}\) be an orthonormal basis of \(\cV\).
 Let \(U\defeq\mat{u_1, u_2,\dotsc,u_m}\), let \(V\defeq\mat{v_1, v_2,\dotsc,v_{\ell}}\), and let \(W \defeq\mat{U,V}\).
 Because of \(m+\ell=q\) and since \(\cU\) and \(\cV\) are orthogonal subspaces, the matrix \(W\) is unitary.
 Obviously, we have \(\OPu{\cU} U=U\), \(\OPu{\cU} V =\NM\), \(U^\ad U = \Iu{m} \), and \(V^\ad U = \NM\).
 Consequently, \( W^\ad \OPu{\cU} W = \diag (\Iu{m}, \Ouu{\ell}{\ell} )\).
 Analogously, \(\OPu{\cV} U =\NM\), \(\OPu{\cV} V = V\), \(U^\ad V =\NM\), and \(V^\ad V = \Iu{\ell}\) imply the second equation in \eqref{r-0-BL55-2}.
 
 \eqref{r-0-B55.b1} Clearly, the constant matrix-valued functions \(\phi_\#\) and \(\psi_\#\) are holomorphic in \(\Cs \).
 Since the matrix \(W\) is non-singular, we have 
\[
 \rank \matp{ \phi_\#(z)}{ \psi_\#(z) }
 = \rank \matp{\diag ( \Ouu{m}{m}, \Iu{\ell} ) } {\diag ( \Iu{m}, \Ouu{\ell}{\ell} )}
 =m+\ell
 =q
\]
 for each \(z \in \Cs \).
 For every choice of \(k \in \set{0,1}\) and \(z\in \C \setminus\R \), from \rrem{J-1} and \(W^\ad W = \Iq \), we conclude 
\[\begin{split} 
 &\matp{(z-\alpha)^k \phi_\#(z)}{ \psi_\#(z)}^\ad\rk*{\frac{-\Jimq}{2 \Im z}}\matp{(z-\alpha)^k \phi_\#(z)}{ \psi_\#(z)}\\
 &= \frac{-\iu}{2 \Im z}\rk*{\psi_\#^\ad (z)\ek*{(z- \alpha)^k \phi_\#(z)}-\ek*{(\ko z -\alpha)^k\phi_\#^\ad (z)}\psi_\#(z)}\\
 &=\frac{-\iu}{2 \Im z}\bigl\{ ( z-\alpha)^k\cdot \diag (\Iu{m}, \Ouu{\ell}{\ell}) \cdot W^\ad W \cdot \diag (\Ouu{m}{m}, \Iu{\ell})\\
 &\qquad- (\ko z-\alpha)^k\cdot \diag ( \Ouu{m}{m}, \Iu{\ell} ) \cdot W^\ad W \cdot \diag ( \Iu{m}, \Ouu{\ell}{\ell} ) \bigr\}\\
 &=\Oqq.
\end{split}\]
 In view of \rdefn{def-sp}, then \(\tmatp{ \phi_\#}{ \psi_\#}\) belongs to \(\qSp \).
 Further, from \eqref{r-0-BL55-2} we obtain \( \OPu{\cU} \phi_\# = \Iq \OPu{\cU} W \cdot \diag (\Ouu{m}{m},\Iu{\ell}) = WW^\ad \OPu{\cU} W \cdot \diag (\Ouu{m}{m},\Iu{\ell}) = W \cdot \diag (\Iu{m}, \Ouu{\ell}{\ell} ) \cdot \diag (\Ouu{m}{m}, \Iu{\ell}) =\Oqq\) and, analogously, \(\OPu{\cV} \psi_\# =\Oqq\).

 \eqref{r-0-B55.b2} Let \( \tmatp {\phi}{\psi} \in \qSp \) be such that \eqref{N11-112-1} holds true.
 According to \rrem{def-aq}, there are a discrete subset \(\cD\) of \(\Cs \) and a matrix-valued function \(g\) meromorphic in \(\Cs \) such that \(\phi\), \(\psi\), and \(g\) are holomorphic in \(\C \setminus (\rhl \cup \cD)\) and that \(\det g(z) \neq 0\) as well as \(\phi(z)=W \cdot \diag ( \Ouu{m}{m}, \Iu{\ell} ) \cdot g(z)\) and \(\psi(z)=W \cdot \diag ( \Iu{m}, \Ouu{\ell}{\ell} ) \cdot g(z)\) hold true for each \(z \in \C \setminus (\rhl \cup \cD)\).
 Taking into account \eqref{r-0-BL55-2} and \(WW^\ad =\Iq\), for each \(z \in \C \setminus (\rhl \cup \cD)\), we get 
\begin{multline*} 
 \OPu{\cU} \phi(z) 
 = WW^\ad \OPu{\cU} W \cdot\diag\rk{\Ouu{m}{m}, \Iu{\ell}}\cdot g(z) \\
 =W \cdot\diag\rk{\Iu{m}, \Ouu{\ell}{\ell}}\cdot \diag\rk{\Ouu{m}{m}, \Iu{\ell}}\cdot g(z) 
 =\Oqq
\end{multline*}
 and, analogously \(\OPu{\cV} \psi(z) =\Oqq\).
 This implies \eqref{r-0-BL55-1}.
 
 \eqref{r-0-B55.b3} Let \(\tmatp{\phi}{\psi}\in\qSp \) be such that \eqref{r-0-BL55-1} holds true.
 According to \rlem{T01}, we see that the function \(\det (\psi-\iu\phi)\) does not vanish identically.
 Let \(F\defeq (\psi+\iu\phi)(\psi-\iu\phi)^\inv \).
 \rlem{T01} shows that there is a discrete subset \(\cD\) of \(\Cs \) such that the following three conditions are fulfilled: 
\begin{aeqi}{0}
 \item\label{r-0-B55.i} \(F\) is holomorphic in \(\uhp \cup [\C \setminus (\rhl \cup \cD)]\).
 \item\label{r-0-B55.ii} The matrix-valued functions \(\phi\), \(\psi\) and \((\psi -\iu\phi)^\inv \) are holomorphic in \(\uhp\cup\ek{\C \setminus (\rhl \cup \cD)}\).
 \item\label{r-0-B55.iii} For each \(z \in \C \setminus (\rhl \cup \cD)\), the inequality \(\det [\psi (z) -\iu\phi (z)] \ne 0\) and the equations in \eqref{T-N257-1} and \eqref{T-B55-252} hold true.
\end{aeqi} 
 Obviously, because of~\ref{r-0-B55.i}, the  functions \( \tilde{\phi}\defeq \frac{\iu}{2} (\Iq-F) W\) and \( \tilde{\psi}\defeq \frac{1}{2} (\Iq+F) W\) are meromorphic in \(\Cs \) and holomorphic in \(\uhp \cup [\C \setminus (\rhl \cup \cD)]\).
 In view of~\ref{r-0-B55.ii}, the functions \(\phi\), \(\psi\), \(\tilde{\phi}\), \(\tilde{\psi}\) and \((\psi -\iu\phi)^\inv W\) are holomorphic in \(\uhp\cup\ek{\C \setminus (\rhl \cup \cD)}\).
 From~\ref{r-0-B55.iii} we see that 
\begin{align} \label{D3}
 \tilde{\phi}(z)&= \phi(z) \ek*{\psi(z)-\iu\phi(z)}^\inv W&
&\text{and}&
 \tilde{\psi} (z)&= \psi(z) \ek*{\psi(z)-\iu\phi(z)}^\inv W
\end{align}
 hold true for each \(z \in \C \setminus (\rhl \cup \cD)\).
 In view of~\ref{r-0-B55.ii}, the matrix-valued functions \((\psi+\iu\phi)\) and \((\psi-\iu\phi)^\inv W\) are meromorphic in \(\Cs \).
 Since the matrix \(W\) is unitary, for each \(z\in \C \setminus (\rhl \cup \cD)\), we have \(\det ([\psi(z)-\iu\phi(z)]^\inv W) \neq 0\) by~\ref{r-0-B55.iii}.
 Consequently, \eqref{D3} and \rrem{SP.1} imply that \(\tmatp{\tilde{\phi}}{\tilde{\psi}}\) belongs to \(\qSp \).
 Furthermore, from \rrem{def-aq} we get 
\beql{N198N}
 \matpc{\tilde{\phi}}{\tilde{\psi}}
 =\matpc{\phi}{\psi}.
\eeq
 By virtue of \(W^\ad W = \Iq \), \eqref{D3}, \eqref{r-0-BL55-2}, and \eqref{r-0-BL55-1}, we conclude
\beql{r-0-MV1}\begin{split}
 (\Iu{m},\Ouu{m}{\ell}) (\Iq-W^\ad F W)
 &= (\Iu{m},\Ouu{m}{\ell}) W^\ad (\Iq- F )W
 = - 2\iu (\Iu{m},\Ouu{m}{\ell}) W^\ad \tilde{\phi}\\
 &= - 2\iu (\Iu{m},\Ouu{m}{\ell}) W^\ad \phi(\psi-\iu\phi)^\inv W\\
 &= - 2\iu (\Iu{m},\Ouu{m}{\ell}) \cdot \diag ( \Iu{m}, \Ouu{\ell}{\ell} ) \cdot W^\ad \phi(\psi-\iu\phi)^\inv W \\
 &= - 2\iu (\Iu{m},\Ouu{m}{\ell}) W^\ad \OPu{\cU} \phi(\psi-\iu\phi)^\inv W\\
 &= - 2\iu (\Iu{m},\Ouu{m}{\ell}) W^\ad \Oqq (\psi-\iu\phi)^\inv W
 =\Ouu{m}{q} 
\end{split}\eeq
 and, analogously, 
\beql{r-0-MV2} 
 (\Ouu{\ell}{m}, \Iu{\ell}) (\Iq+W^\ad F W)
 =  \Ouu{\ell}{q}. 
\eeq
 Because of~\ref{r-0-B55.i}, we see that \(G\defeq W^\ad F W\) is a matrix-valued function which is meromorphic in \(\Cs \) and holomorphic in \(\uhp \cup [\C \setminus (\rhl \cup \cD)]\).
 From \eqref{r-0-MV1} and \eqref{r-0-MV2} we obtain \(G(w)=\diag ( \Iu{m}, -\Iu{\ell} )\) for each \(w\in \uhp\).
 Hence, \(G=\diag(\Iu{m},-\Iu{\ell})\) by the identity theorem for holomorphic functions.
 Thus, since the matrix \(W\) is unitary, this implies \( F= W \cdot \diag ( \Iu{m}, - \Iu{\ell} ) \cdot W^\ad \).
 Then
\beql{r-0-D33} \begin{split}
 \tilde{\phi} 
 =\frac{\iu}{2} (\Iq-F) W
 &=\frac{\iu}{2} \ek*{\Iq-W \cdot \diag ( \Iu{m}, - \Iu{\ell} ) \cdot W^\ad } W \\
 &=\frac{\iu}{2} W \ek*{ \diag( \Iu{m}, \Iu{\ell} )-\diag ( \Iu{m}, - \Iu{\ell} )}
 =W \cdot \diag ( \Ouu{m}{m},\iu\Iu{\ell} ) 
\end{split}\eeq
 and, analogously,
\beql{r-0-D34} 
 \tilde{\psi}
 = W \cdot \diag ( \Iu{m}, \Ouu{\ell}{\ell} ). 
\eeq
 Since \(\tilde{\phi}\) and \(\tilde{\psi}\) are holomorphic in \(\uhp\cup\ek{\C\setminus(\rhl \cup \cD)}\), the matrix-valued functions \( \phi_\square \defeq \tilde{\phi} \cdot \diag ( \Iu{m}, -\iu\Iu{\ell} )\) and \( \psi_\square \defeq \tilde{\psi} \cdot \diag ( \Iu{m}, -\iu\Iu{\ell} ) \) are holomorphic in \(\uhp\cup\ek{\C\setminus(\rhl \cup \cD)}\).
 From \(\det (\Iu{m}, -\iu\Iu{\ell}) \neq 0\), \rrem{SP.1}, \rrem{def-aq}, and \eqref{N198N} we get
\begin{align} \label{r-0-B55-N256}
 \matp{\phi_\square}{\psi_\square}&\in \qSp&
&\text{and}&
 \matpc{\phi_\square}{\psi_\square}&=\matpc{\tilde{\phi}}{\tilde{\psi}}=\matpc{\phi}{\psi}.
\end{align}
 Because of \eqref{r-0-D33} and \eqref{r-0-DC}, we have 
\[
 \phi_\square
 =\tilde{\phi} \cdot \diag ( \Iu{m}, -\iu\Iu{\ell})
 = W \cdot \diag ( \Ouu{m}{m}, \iu \Iu{\ell}) \cdot \diag ( \Iu{m}, -\iu\Iu{\ell})
 =\phi_\#.
\]
 Analogously, \eqref{r-0-D34} and \eqref{r-0-DC} imply \(\psi_\square =\psi_\#\).
 Thus, \eqref{N11-112-1} follows from \eqref{r-0-B55-N256}. 
\end{proof}

\begin{lem} \label{r-0-175} %
 Let \(\alpha \in \R\), let \(\kappa \in \Ninf \), and let \(\seqska  \in \Kggeqka \).
 Let \(n\in\NO \) be such that \(2n+1 \leq \kappa\).
 Suppose that the integers \(m\) and \(\ell\) given by \eqref{175-v1} and \eqref{175-v2} fulfill \( m+\ell=q\), \(m \geq 1\), and \(\ell \geq 1\).
 Let \(\cU_{n, \alpha}\) and \( \cV_{n, \alpha}\) be given by \eqref{170-v1} and \eqref{170-v2}.
 Then:
\begin{enui}
 \item\label{r-0-175.a} There exists a unitary complex \tqqa{matrix} \(W\) such that 
\begin{align} \label{r-0-c175-1}
 W^\ad \OPu{\cU_{n, \alpha}} W&=\diag ( \Iu{m}, \Ouu{\ell}{\ell} )&
&\text{and}&
 W^\ad \OPu{\cV_{n, \alpha}} W&=\diag ( \Ouu{m}{m}, \Iu{\ell} ).
\end{align}
 \item\label{r-0-175.ba} Let \(W\) be a unitary complex \tqqa{matrix} such that \eqref{r-0-c175-1} holds true.
 Furthermore, let \(\phi_\square\) and \(\psi_\square\) be the constant matrix-valued functions defined on \(\Cs \) given by \(\phi_\square(z)\defeq W \cdot \diag (\Ouu{m}{m},\Iu{\ell}) \) and \({\psi_\square}(z)\defeq W \cdot \diag (\Iu{m},\Ouu{\ell}{\ell})\) for all \(z\in\Cs\).
 Then:
\begin{enuii}
 \item\label{r-0-175.b1} \( \tmatp{\phi_\square}{\psi_\square} \in \qSp \).
 \item\label{r-0-175.b2} Each pair \(\tmatp{{\phi}}{{\psi}} \in \qSp \) with
\beql{R83} 
 \matpc{\phi}{\psi }
 =\matpc{ \phi_\square}{\psi_\square }
\eeq
 belongs to \( \qSps{2n+1}\).
 \item\label{r-0-175.b3} Each \(\tmatp{\phi}{\psi} \in \qSps{2n+1}\) fulfills \eqref{R83}.
\end{enuii}
\end{enui}
\end{lem}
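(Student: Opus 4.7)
The proof plan is to reduce everything to \rlem{r-0-B55} by identifying $\cU_{n,\alpha}$ and $\cV_{n,\alpha}$ as the appropriate orthogonal subspaces. First I would observe that, by \rlemp{170}{170.a}, the sets $\cU_{n,\alpha}$ and $\cV_{n,\alpha}$ defined in \eqref{170-v1} and \eqref{170-v2} are orthogonal subspaces of $\Cq$ with $\dim\cU_{n,\alpha} = m$ and $\dim\cV_{n,\alpha} = \ell$. Since by hypothesis $m+\ell = q$ with $m\geq 1$ and $\ell\geq 1$, the assumptions of \rlem{r-0-B55} are fulfilled with $\cU = \cU_{n,\alpha}$ and $\cV = \cV_{n,\alpha}$.

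Part~\eqref{r-0-175.a} then follows immediately from \rlemp{r-0-B55}{r-0-B55.a}, since \eqref{r-0-c175-1} is exactly \eqref{r-0-BL55-2} under this identification. For part~\eqref{r-0-175.b1}, I would note that the functions $\phi_\square$ and $\psi_\square$ coincide with the functions $\phi_\#$ and $\psi_\#$ of \rlem{r-0-B55} built from the same $W$ via \eqref{r-0-DC}, and then invoke \rlemp{r-0-B55}{r-0-B55.b1} to get $\tmatp{\phi_\square}{\psi_\square}\in\qSp$ together with the projection identities $\OPu{\cU_{n,\alpha}}\phi_\square = \Oqq$ and $\OPu{\cV_{n,\alpha}}\psi_\square = \Oqq$.

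For part~\eqref{r-0-175.b2}, I start from an arbitrary $\tmatp{\phi}{\psi}\in\qSp$ satisfying \eqref{R83}. Applying \rlemp{r-0-B55}{r-0-B55.b2} yields the projection equations $\OPu{\cU_{n,\alpha}}\phi = \Oqq$ and $\OPu{\cV_{n,\alpha}}\psi = \Oqq$. Then \rlemp{170}{170.b} translates these exactly into $\rk{\Iu{(n+1)q}-H_n^\mpi H_n}R_{\Tqn}(\alpha)v_{q,n}\phi = \NM$ and $\rk{\Iu{(n+1)q}-H_{\at n}^\mpi H_{\at n}}H_n v_{q,n}\psi = \NM$, which by \rnota{bez-DT1} is precisely the statement that $\tmatp{\phi}{\psi}\in\qSps{2n+1}$.

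Part~\eqref{r-0-175.b3} is the reverse implication: for $\tmatp{\phi}{\psi}\in\qSps{2n+1}$, \rnota{bez-DT1} supplies \eqref{175-v7} and \eqref{175-v8}, \rlemp{170}{170.b} converts these into \eqref{r-0-BL55-1}, and then \rlemp{r-0-B55}{r-0-B55.b3} delivers \eqref{R83}. There is no substantive obstacle here; the lemma is a pure bookkeeping statement, and the only point that merits a bit of care is the verification that the hypotheses $\dim\cU_{n,\alpha} = m$, $\dim\cV_{n,\alpha} = \ell$, and the orthogonality of $\cU_{n,\alpha}$ and $\cV_{n,\alpha}$ required by \rlem{r-0-B55} are indeed all provided by \rlem{170}, so that the reduction is legitimate in this completely degenerate regime.
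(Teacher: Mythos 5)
Your proposal is correct and follows essentially the same route as the paper's own proof: both reduce the lemma to \rlem{r-0-B55} via the identification $\cU=\cU_{n,\alpha}$, $\cV=\cV_{n,\alpha}$ justified by \rlem{170}, and then translate between the projection conditions and \eqref{175-v7}, \eqref{175-v8} using \rlemp{170}{170.b} and \rnota{bez-DT1}. No gaps.
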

\begin{proof}
 From \eqref{170-v1}, \eqref{170-v2}, \rlem{170}, \eqref{175-v1} and \eqref{175-v2} we see that \(\cU_{n, \alpha}\) and \(\cV_{n, \alpha}\)  are orthogonal subspaces of \(\Cq\) with \(\dim \cU_{n, \alpha}=m \) and \(\dim \cV_{n, \alpha}=\ell\).
 Since \(m\) and \(\ell\) are positive integers with \(m+\ell=q\), we can apply \rlem{r-0-B55} with \(\cU=\cU_{n,\alpha}\) and \(\cV=\cV_{n,\alpha}\).

 \eqref{r-0-175.a} Use \rlemp{r-0-B55}{r-0-B55.a}.
 
 \eqref{r-0-175.b1} Apply \rlemp{r-0-B55}{r-0-B55.b1}.

 \eqref{r-0-175.b2} Suppose that \(\tmatp{\phi}{\psi} \in \qSp \) is such that \eqref{R83} holds true.
 \rlemp{r-0-B55}{r-0-B55.b2} shows then that \(\OPu{\cU_{n, \alpha}} \phi=\NM\) and \(\OPu{\cV_{n, \alpha}} \psi=\NM\).
 Thus, \rlem{170} implies \eqref{175-v7} and \eqref{175-v8}.
 In view of \rnota{bez-DT1}, \rpart{r-0-175.b2} is proved.

 \eqref{r-0-175.b3} Let \(\tmatp{\phi}{\psi} \in \qSps{2n+1}\).
 Then \eqref{175-v7} and \eqref{175-v8} hold true.
 Because of \eqref{170-v1}, \eqref{175-v7} and \rlem{170}, we have \(\OPu{\cU_{n, \alpha}} \phi=\NM\), whereas \eqref{170-v2}, \eqref{175-v8}, and \rlem{170} yield \(\OPu{\cV_{n, \alpha}} \psi=\NM\).
 Since \(W\) is a unitary matrix which fulfills \eqref{r-0-c175-1}, \rlemp{r-0-B55}{r-0-B55.b3} implies \eqref{R83}.
\end{proof}

\begin{rem} \label{rm-0-B55}
 Let \(W\) be a non-singular complex \tqqa{matrix} and let \(\cW\) be the constant function with value \(W\) defined on \(\Cs \).
 Then it is readily checked that the following statements hold true:
\begin{enui}
 \item\label{rm-0-B55.a} The pairs \(\tmatp{\Oqq }{\cW}\) and \(\tmatp{\cW}{\Oqq }\) belong to \(\qSp \).
 \item\label{rm-0-B55.b} Each pair \(\tmatp{\phi}{\psi} \in \qSp \) with 
\beql{N201M}
 \matpc{\phi}{\psi}
 =\matpc{\Oqq}{\cW }
\eeq
 fulfills \(\phi =\Oqq\).
 Conversely, if \(\tmatp{\phi}{\psi} \in \qSp \) is such that \(\phi =\Oqq \) holds true, then \(\det \psi\) does not vanish identically and \eqref{N201M} is valid.
 \item\label{rm-0-B55.c} Each pair \(\tmatp{\phi}{\psi} \in \qSp \) with
\beql{N201K}
 \matpc{\phi}{\psi }
 =\matpc{ \cW}{\Oqq }
\eeq  
 fulfills \(\psi =\Oqq \).
 Conversely, if \(\tmatp{\phi}{\psi} \in \qSp \) is such that \(\psi =\Oqq \) holds true, then \(\det \phi\) does not vanish identically and \eqref{N201K} is valid.
\end{enui} 
\end{rem}

\begin{lem} \label{N11.20}
 Let \(\alpha \in \R\), let \(\kappa \in \Ninf \), and let \(\seqska \in \Kggeqka \).
 Let \(n\in\NO \) be such that \(2n+1 \leq \kappa\).
 Suppose that \(\ell\) given by \eqref{175-v2} fulfills \(\ell=q\).
 Then \(m\) given by \eqref{175-v1} fulfills \(m=0\) and:
\begin{enui}
 \item\label{N11.20.a} \(\cV_{n,\alpha}\) defined by \eqref{170-v2} fulfills \(\cV_{n, \alpha} =\Cq \) and, in particular, \(\OPu{\cV_{n,\alpha}} = \Iq \).
 \item\label{N11.20.b} Let \(W\) be a non-singular complex \tqqa{matrix} and let \(\cW\) be the constant function with value \(W\) defined on \(\Cs \).
 Then \( \tmatp{ \cW}{\Oqq }\) belongs to \(\qSp \) and each pair \(\tmatp{\phi}{\psi} \in \qSp \) with \eqref{N201K} belongs to the class \(\qSps{2n+1}\).
 \item\label{N11.20.c} If \(\tmatp{\phi}{\psi} \in \qSp \) fulfills \eqref{175-v8}, then \eqref{N201K} holds true.
\end{enui}
\end{lem}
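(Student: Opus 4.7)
My plan is to proceed in four short steps, chaining results that have already been established in the excerpt.

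First, I will establish the opening claim $m=0$ and part~\eqref{N11.20.a} together by invoking \rlemp{170}{170.a}, which shows that $\cU_{n,\alpha}$ and $\cV_{n,\alpha}$ are orthogonal subspaces of $\Cq$ with $\dim\cU_{n,\alpha}=m$ and $\dim\cV_{n,\alpha}=\ell$. Since orthogonality forces $m+\ell\le q$, the hypothesis $\ell=q$ immediately yields $m=0$ and $\cV_{n,\alpha}=\Cq$. The identity $\OPu{\cV_{n,\alpha}}=\Iq$ follows from the defining properties of the orthogonal projection onto $\Cq$.

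Next, for part~\eqref{N11.20.b}, that $\tmatp{\cW}{\Oqq}\in\qSp$ is precisely \rremp{rm-0-B55}{rm-0-B55.a}. Consider any $\tmatp{\phi}{\psi}\in\qSp$ satisfying \eqref{N201K}. By \rremp{rm-0-B55}{rm-0-B55.c}, necessarily $\psi=\Oqq$, whence $(\Iu{(n+1)q}-H_{\at n}^\mpi H_{\at n})H_nv_{q,n}\psi=\NM$, i.e.~\eqref{175-v8} holds trivially. For \eqref{175-v7}, I use the already-proved fact $m=0$: since $\rank[(\Iu{(n+1)q}-H_n^\mpi H_n)R_{\Tqn}(\alpha)v_{q,n}]=m=0$, the matrix $(\Iu{(n+1)q}-H_n^\mpi H_n)R_{\Tqn}(\alpha)v_{q,n}$ is the zero matrix, so \eqref{175-v7} is automatic. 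Then \rnota{bez-DT1} gives $\tmatp{\phi}{\psi}\in\qSps{2n+1}$.

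Finally, for part~\eqref{N11.20.c}, let $\tmatp{\phi}{\psi}\in\qSp$ fulfill \eqref{175-v8}. Applying \rlemp{170}{170.b} with $A=\psi$ (on the appropriate domain where $\psi$ is holomorphic; more precisely, at each point in the common domain of holomorphy one applies the equivalence pointwise and then uses the identity theorem), this is equivalent to $\OPu{\cV_{n,\alpha}}\psi=\Oqq$. Since part~\eqref{N11.20.a} gives $\OPu{\cV_{n,\alpha}}=\Iq$, it follows that $\psi=\Oqq$. Then the converse part of \rremp{rm-0-B55}{rm-0-B55.c} delivers \eqref{N201K}.

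The argument is essentially a direct deduction from \rlem{170} and \rrem{rm-0-B55}; there is no technical obstacle of substance. The only point requiring slight care is to make sure that the pointwise statement ``$\OPu{\cV_{n,\alpha}}\psi=\Oqq$'' from \rlemp{170}{170.b} is meaningful for a matrix-valued function meromorphic in $\Cs$, which is handled by applying the equivalence at each regular point of $\psi$ and using that the zero identity persists after meromorphic continuation.
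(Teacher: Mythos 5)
Your proof is correct and follows essentially the same route as the paper's: the bound \(m+\ell\le q\) from the orthogonality in \rlem{170} gives \(m=0\) and \(\cV_{n,\alpha}=\Cq\), part~\eqref{N11.20.b} reduces to \(\psi=\Oqq\) via \rremp{rm-0-B55}{rm-0-B55.c} together with the rank-zero observation for \eqref{175-v7}, and part~\eqref{N11.20.c} chains \rlemp{170}{170.b} with \(\OPu{\cV_{n,\alpha}}=\Iq\) exactly as the paper does. The only cosmetic difference is that in \eqref{N11.20.b} you note \eqref{175-v8} holds trivially from \(\psi=\Oqq\) rather than routing through \(\OPu{\cV_{n,\alpha}}\psi=\NM\) and \rlem{170}, which changes nothing of substance.
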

\begin{proof}
 Because of \(0\leq m\leq m+\ell\leq q=\ell\), we have \(m=0\).
 
 \eqref{N11.20.a} From \eqref{170-v2}, \eqref{175-v2}, \(\ell=q\), and \rlem{170} we see that~\eqref{N11.20.a} is valid.

 \eqref{N11.20.b} \rrem{rm-0-B55} shows that \(\tmatp{\cW }{ \Oqq }\) belongs to \( \qSp \).
 Let \(\tmatp{\phi}{\psi} \in \qSp \) fulfill \eqref{N201K}.
 Then \rpart{N11.20.a} and \rremp{rm-0-B55}{rm-0-B55.c} yield \(\OPu{\cV_{n,\alpha}} \psi =\NM\) and, in view of \rlem{170}, consequently \eqref{175-v8}.
 Since \(m=0\) holds, we get from \eqref{175-v1} that \eqref{175-v7} is true.
 In view of \rnota{bez-DT1}, \rpart{N11.20.b} is proved.

 \eqref{N11.20.c} Let \(\tmatp{\phi}{\psi} \in \qSp \) be such that \eqref{175-v8} holds true.
 Because of \eqref{175-v8} and \rlem{170}, we have \(\OPu{\cV_{n,\alpha}} \psi =\NM\).
 Thus, \rpart{N11.20.a} implies \(\psi =\NM\).
 From \rremp{rm-0-B55}{rm-0-B55.c} it follows \eqref{N201K}.
\end{proof}

\begin{lem} \label{N11.22}
 Let \(\alpha \in \R\), let \(\kappa \in \Ninf \), and let \(\seqska\in \Kggeqka \).
 Let \(n\in\NO \) be such that \(2n+1 \leq \kappa\).
 Suppose that \(m\) given by \eqref{175-v1} fulfills \(m=q\).
 Let \(\cU_{n,\alpha}\) be defined by \eqref{170-v1}.
 Then \(\ell\) given by \eqref{175-v2} fulfills \(\ell=0\) and the following statements hold true:
\begin{enui}
 \item\label{N11.22.a} \(\cU_{n, \alpha} =\Cq \) and, in particular, \(\OPu{\cU_{n,\alpha}} = \Iq \).
 \item\label{N11.22.b} Let \(W\) be a non-singular complex \tqqa{matrix} and let \(\cW\) be the constant function with value \(W\) defined on \(\Cs \).
 Then \( \tmatp{ \Oqq }{\cW} \in \qSp \) and each pair \(\tmatp{\phi}{\psi} \in \qSp \) with \eqref{N201M} belongs to the class \(\qSps{2n+1}\).
 \item\label{N11.22.c} If \(\tmatp{\phi}{\psi} \in \qSp \) fulfills \eqref{175-v7}, then \eqref{N201M} holds true.
\end{enui}
\end{lem}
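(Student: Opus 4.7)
The plan is to treat \rlem{N11.22} as the natural mirror image of the completely degenerate case already handled in \rlem{N11.20}, with the roles of $m$ and $\ell$ interchanged throughout. Since \rlem{170} establishes $m + \ell \le q$, the hypothesis $m = q$ immediately forces $\ell = 0$, which settles the preliminary assertion.

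For \eqref{N11.22.a}, the definition \eqref{170-v1} together with the rank formula \eqref{N174} in \rlemp{170}{170.a} gives $\dim \cU_{n,\alpha} = m = q$. Consequently $\cU_{n,\alpha} = \Cq$, and the orthogonal projection onto the whole space equals $\Iq$.

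For \eqref{N11.22.b}, the inclusion $\tmatp{\Oqq}{\cW} \in \qSp$ is supplied by \rremp{rm-0-B55}{rm-0-B55.a}. Given any $\tmatp{\phi}{\psi} \in \qSp$ satisfying \eqref{N201M}, \rremp{rm-0-B55}{rm-0-B55.b} forces $\phi = \Oqq$, whence \eqref{175-v7} holds trivially. On the other hand, the fact that $\ell$, defined by \eqref{175-v2} as a rank, equals $0$ means precisely that the matrix $\rk{\Iu{(n+1)q}-H_{\at n}^\mpi H_{\at n}}H_n v_{q,n}$ is the zero matrix, so \eqref{175-v8} holds for every $\psi$. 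By \rnota{bez-DT1}, the pair lies in $\qSps{2n+1}$.

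For \eqref{N11.22.c}, if $\tmatp{\phi}{\psi} \in \qSp$ satisfies \eqref{175-v7}, then \rlemp{170}{170.b} converts this into $\OPu{\cU_{n,\alpha}}\phi = \Oqq$, and \eqref{N11.22.a} reduces the left-hand side to $\phi$. Hence $\phi = \Oqq$, and a final invocation of \rremp{rm-0-B55}{rm-0-B55.b} yields \eqref{N201M}. There is no substantive obstacle in this proof: every step is supplied by an already-established piece (\rlem{170}, \rrem{rm-0-B55}, \rnota{bez-DT1}) in its dual form to the argument for \rlem{N11.20}. The only point demanding care is the translation of the rank equation $\ell = 0$ into the vanishing of the underlying matrix, which is immediate from the definition of rank.
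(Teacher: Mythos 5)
Your proof is correct and follows exactly the route the paper intends: the paper's own proof of \rlem{N11.22} consists of the single remark that it ``can be proved analogous to \rlem{N11.20}'' using \rlem{170} and \rrem{rm-0-B55}, and your argument is precisely that dual argument written out, with each step (the forced vanishing of the rank-$\ell$ matrix, the projection identities from \rlem{170}, and \rremp{rm-0-B55}{rm-0-B55.b}) matching the corresponding step in the proof of \rlem{N11.20}.
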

\begin{proof} 
 Using \rlem{170} and \rrem{rm-0-B55}, \rlem{N11.22} can be proved analogous to \rlem{N11.20}.
 We omit the details
\end{proof}

\begin{thm} \label{B53-r-0} 
 Let \(\alpha \in \R\), let \(\kappa \in \Ninf \), let \(\seqska  \in \Kggeqka \), and let \(n \in \NO \) be such that \(2n+1 \leq \kappa\).
 Suppose that the integers \(m\) and \(\ell\) given by \eqref{175-v1} and \eqref{175-v2} fulfill \(m+\ell=q\).
 If \(m \geq 1\) and \(\ell \geq 1\), then let \(W\) be a unitary complex \tqqa{matrix} such that the equations in \eqref{r-0-c175-1} hold true where \(\cU_{n,\alpha}\) and \(\cV_{n,\alpha}\) are given by \eqref{170-v1} and \eqref{170-v2}.
\begin{enui}
 \item\label{B53-r-0.a} If \(\phi\) and \(\psi\) are the matrix-valued functions defined on \(\Cs \) by
\begin{align} 
 \phi(z)
 &\defeq
 \begin{cases}
  W \cdot \diag (\Ouu{m}{m}, \Iu{\ell}),\ifa{m \geq 1\text{ and }\ell \geq 1}\\
  \Iq,\ifa{m =0}\\
  \Oqq,\ifa{\ell=0} 
 \end{cases}\label{TL1}
\intertext{and}
 \psi(z)   &\defeq
 \begin{cases}
  W \cdot \diag (\Iu{m}, \Ouu{\ell}{\ell}),\ifa{ m \geq 1\text{ and }\ell \geq 1} \\
  \Oqq,\ifa{m =0}\\
  \Iu{q},\ifa{\ell=0}
 \end{cases},\label{TL2}
\end{align}
 then the function \(\det \rk{\hat \Theta_{n, \alpha}^{(2,1)} \phi +\hat \Theta_{n, \alpha}^{(2,2)} \psi }\) does not vanish identically.
 \item\label{B53-r-0.b} The set \(\SFOqskg{2n+1}\) consists of exactly one element, namely the matrix-valued function 
\beql{FDG}
 S
 \defeq
 \begin{cases}
  \rk{\hat \Theta_{n, \alpha}^{(1,1)}\mathbb{V}+\hat \Theta_{n, \alpha}^{(1,2)}\mathbb{U}}\rk{\hat \Theta_{n, \alpha}^{(2,1)}\mathbb{V}+\hat \Theta_{n, \alpha}^{(2,2)}\mathbb{U}}^\inv,\ifa{m \geq 1 \text{ and }\ell \geq 1}\\
  \hat \Theta_{n, \alpha}^{(1,1)} \rk{\hat \Theta_{n, \alpha}^{(2,1)} }^\inv,\ifa{m =0}\\
  \hat \Theta_{n, \alpha}^{(1,2)} \rk{\hat \Theta_{n, \alpha}^{(2,2)} } ^\inv,\ifa{\ell=0}
 \end{cases},
\eeq
 where, in the case \(m\geq1\) and \(\ell\geq1\), the matrices \(\mathbb{U}\defeq\mat{U,\Ouu{q}{\ell}}\) and \(\mathbb{V}\defeq\mat{\Ouu{q}{m},V}\) are built with the \taaa{q}{m}{block} \(U\) and the \taaa{q}{\ell}{block} \(V\) from the  block partition \(W=\mat{U,V}\) of \(W\).
\end{enui}
\end{thm}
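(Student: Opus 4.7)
The argument proceeds by invoking \rprop{LB53L} in conjunction with the three case-specific lemmas \rlemss{r-0-175}{N11.22} which together show that, under the hypothesis $m+\ell=q$, the parameter set $\qSps{2n+1}$ collapses to a single equivalence class. The case split follows the dichotomy already embedded in \eqref{TL1} and \eqref{TL2}: either $m\geq 1$ and $\ell\geq 1$, or $m=0$ (so $\ell=q$), or $\ell=0$ (so $m=q$).

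First I would verify that the pair $\tmatp{\phi}{\psi}$ defined by \eqref{TL1} and \eqref{TL2} belongs to $\qSps{2n+1}$ in each subcase. When $m\geq 1$ and $\ell\geq 1$, this pair coincides with the pair $\tmatp{\phi_\square}{\psi_\square}$ constructed in \rlemp{r-0-175}{r-0-175.ba}, so \rlem{r-0-175}\eqref{r-0-175.b1}--\eqref{r-0-175.b2} (applied to $\tmatp{\phi_\square}{\psi_\square}$ itself, which trivially satisfies \eqref{R83}) gives the claim. When $m=0$, the pair $\tmatp{\Iq}{\Oqq}$ (viewed as constant matrix-valued functions on $\Cs$) is of the form $\tmatp{\cW}{\Oqq}$ with $W=\Iq$, so \rlemp{N11.20}{N11.20.b} yields $\tmatp{\phi}{\psi}\in\qSps{2n+1}$. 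The case $\ell=0$ is handled symmetrically by \rlemp{N11.22}{N11.22.b}.

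Having placed $\tmatp{\phi}{\psi}$ in $\qSps{2n+1}$, \rpropp{LB53L}{LB53L.a} immediately yields statement~\eqref{B53-r-0.a} together with the fact that $S$, as defined by $(\hat\Theta_{n,\alpha}^{(1,1)}\phi+\hat\Theta_{n,\alpha}^{(1,2)}\psi)(\hat\Theta_{n,\alpha}^{(2,1)}\phi+\hat\Theta_{n,\alpha}^{(2,2)}\psi)^{-1}$, belongs to $\SFOqskg{2n+1}$. The identification of this expression with the formula \eqref{FDG} of the theorem in the case $m\geq 1,\ell\geq 1$ relies on the elementary observation that, writing $W=\mat{U,V}$ with the indicated block sizes, one has $W\cdot\diag(\Ouu{m}{m},\Iu{\ell})=\mathbb{V}$ and $W\cdot\diag(\Iu{m},\Ouu{\ell}{\ell})=\mathbb{U}$; the cases $m=0$ and $\ell=0$ reduce at once to the displayed forms.

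The remaining, and conceptually decisive, point is uniqueness in statement~\eqref{B53-r-0.b}. Given any $S^\star\in\SFOqskg{2n+1}$, \rpropp{LB53L}{LB53L.b} furnishes some $\tmatp{\tilde\phi}{\tilde\psi}\in\qSps{2n+1}$ generating $S^\star$. The uniqueness parts of the case lemmas --- \rlemp{r-0-175}{r-0-175.b3}, \rlemp{N11.20}{N11.20.c}, and \rlemp{N11.22}{N11.22.c}, respectively --- assert that every element of $\qSps{2n+1}$ is equivalent to our chosen $\tmatp{\phi}{\psi}$, so $\matpc{\tilde\phi}{\tilde\psi}=\matpc{\phi}{\psi}$, and \rpropp{LB53L}{LB53L.c} then forces $S^\star=S$. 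The main (but minor) obstacle is purely bookkeeping: keeping the three subcases and the three associated canonical pairs aligned with the matching conventions in \rprop{LB53L}, in particular checking the $\mathbb{U},\mathbb{V}$ identification above; no further technical work is required.
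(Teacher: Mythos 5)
Your proposal is correct and follows essentially the same route as the paper's own proof: membership of the pair from \eqref{TL1}--\eqref{TL2} in \(\qSps{2n+1}\) via \rlemsss{r-0-175}{N11.20}{N11.22}, existence via \rpropp{LB53L}{LB53L.a}, and uniqueness via the parts \eqref{r-0-175.b3}, \eqref{N11.20.c}, \eqref{N11.22.c} of those lemmas combined with \rpropp{LB53L}{LB53L.c}. The block identification \(W\cdot\diag(\Ouu{m}{m},\Iu{\ell})=\mathbb{V}\), \(W\cdot\diag(\Iu{m},\Ouu{\ell}{\ell})=\mathbb{U}\) is also exactly what the paper uses implicitly to match \eqref{NT} with \eqref{FDG}.
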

\begin{proof}
 Let \(\phi\) and \(\psi\) be given by \eqref{TL1} and \eqref{TL2}.
 Then \rlemsss{r-0-175}{N11.20}{N11.22} yield \( \tmatp{\phi}{\psi} \in \qSps{2n+1}\).
 Thus, \rpropp{LB53L}{LB53L.a} shows that~\eqref{B53-r-0.a} is valid and that \(\hat{S}_{n, \alpha}\) given by \eqref{NT} belongs to \(\SFOqskg{2n+1}\).
 From \eqref{NT}, \eqref{TL1}, \eqref{TL2}, and \eqref{FDG} we get \(S= \hat{S}_{n, \alpha}\) and, consequently, \(\set{S} \subseteq \SFOqskg{2n+1}\).
 Now we consider an arbitrary \(S_\#\) belonging to \(\SFOqskg{2n+1}\).
 By virtue of \rpropp{LB53L}{LB53L.b}, there exists a pair \( \tmatp{\phi_\#}{\psi_\#} \in \qSps{2n+1}\) of in \(\Cs \) holomorphic \tqqa{matrix-valued} functions \(\phi_\#\) and \(\psi_\#\) which fulfill \eqref{TRB} and 
\beql{LS2}
 S_\#(z)
 =\ek*{\hat\Theta_{n, \alpha}^{(1,1)}(z) \phi_\#(z)+\hat\Theta_{n, \alpha}^{(1,2)}(z) \psi_\#(z)} \ek*{\hat\Theta_{n, \alpha}^{(2,1)} (z)\phi_\#(z)+\hat\Theta_{n, \alpha}^{(2,2)}(z) \psi_\#(z)}^\inv 
\eeq
 for each \(z\in \Cs \).
 If \(m\ge 1\) and \(\ell\ge 1\) hold true, then \eqref{N11-112-1} follows from \rlemp{r-0-175}{r-0-175.b3}.
 If \(m=0\), then \(\ell=q-m=q\) and \eqref{N11-112-1} is a consequence of \rlemp{N11.20}{N11.20.c}.
 If \(\ell=0\), then \(m=q-\ell=q\), and \eqref{N11-112-1} follows from \rlemp{N11.22}{N11.22.c}.
 Thus, \eqref{N11-112-1} is fulfilled in each case.
 Because of \( \set{ \tmatp{\phi}{\psi},\tmatp{\phi_\#}{\psi_\#}} \subseteq \qSps{2n+1} \), \eqref{N11-112-1}, \rpropp{LB53L}{LB53L.c}, \eqref{TRB},~\eqref{B53-r-0.a}, \eqref{LS2}, \eqref{NT}, and \(S = \hat{S}_{n,\alpha}\), then \(S_\#=\hat{S}_{n, \alpha}=S\) follows.
 Consequently, \(\SFOqskg{2n+1} \subseteq \set{S}\).
 Thus, \(\SFOqskg{2n+1} = \set{S}\).
\end{proof}

\begin{rem} \label{B12L}
 Under the assumptions of \rthm{B53-r-0}, we see from \rthm{B53-r-0},~\cite[\cthmss{6.5}{6.4}]{MR2735313},~\cite[\cdefn{4.10}]{MR2570113}, and~\cite[\cthm{5.1}]{MR3644521} that \(S\) given by \eqref{FDG} is exactly the \taSt{} of the restriction onto \(\BorK \) of the completely degenerate \tnnH{} measure corresponding to \(\seqs{2n+1}\).
\end{rem}

 Observe that using~\rprop{lemM4213-2},~\cite[\cthm{5.2}]{MR2735313}, \rexam{SP.2}, \rthmsss{TRB}{T1}{N11.9}, and~\cite[\cthm{5.1}]{MR3380267}, one can obtain a self-contained proof of \rthm{T1122} in the case of a positive odd integer \(m\).
 We omit the details.

\appendix
\section{Particular results of matrix theory}\label{A1239}

\begin{rem}\label{A.1.}
 If \(A\in\Cpq \), then \(\nul{A} =\ran{A^\ad}^\orth\) and \(\ran{A} =\nul{A^\ad}^\orth\).
\end{rem}

\begin{lem}[{\cite{MR0203464}}]\label{Gl,3.1.10} 
 Let \(A\in\Cpq\), let \(B\in\Coo{p}{r}\), and let \(B^{(1)} \in B\set{1}\).
 Then the following statements are equivalent:
\begin{aeqi}{0}
 \item\label{Gl,3.1.10.i} \(\ran{A}\subseteq\ran{B}\).
 \item\label{Gl,3.1.10.ii} There is a matrix \(X\in\Coo{r}{q}\) such that \(A=BX\).
 \item\label{Gl,3.1.10.iii} \(BB^{(1)}A=A\).
 \item\label{Gl,3.1.10.iv} There is a positive real number \(\beta\) such that \(AA^\ad \lleq\beta BB^\ad \).
 \item\label{Gl,3.1.10.v} \(\nul{B^\ad}\subseteq\nul{A^\ad}\).
\end{aeqi}
\end{lem}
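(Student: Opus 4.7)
The plan is to establish the equivalences by running the chain $(\mathrm{i})\Leftrightarrow(\mathrm{ii})\Leftrightarrow(\mathrm{iii})$, $(\mathrm{i})\Leftrightarrow(\mathrm{v})$, and $(\mathrm{i})\Leftrightarrow(\mathrm{iv})$, since each of these is short and elementary. The result is a well-known characterization going back to Penrose, so nothing deep is needed, but one has to be careful to use only properties of a generic $\{1\}$-inverse, not of the Moore--Penrose inverse.

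First I would observe that $(\mathrm{i})\Leftrightarrow(\mathrm{ii})$ is just the defining property of the column space: $\ran{A}\subseteq\ran{B}$ means every column of $A$ lies in $\ran{B}$, and writing each column as $Bx_j$ for suitable $x_j\in\C^r$ and collecting these into $X=\row(x_j)_{j=1}^q\in\C^{r\times q}$ gives $A=BX$, while the converse is immediate. Next, $(\mathrm{ii})\Rightarrow(\mathrm{iii})$ follows from $BB^{(1)}B=B$: if $A=BX$, then $BB^{(1)}A=BB^{(1)}BX=BX=A$. Conversely, $(\mathrm{iii})\Rightarrow(\mathrm{ii})$ by choosing $X\defeq B^{(1)}A$. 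The equivalence $(\mathrm{i})\Leftrightarrow(\mathrm{v})$ is immediate from \rrem{A.1.}: taking orthogonal complements turns $\ran{A}\subseteq\ran{B}$ into $\nul{B^\ad}=\ran{B}^\orth\subseteq\ran{A}^\orth=\nul{A^\ad}$, and conversely.

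It remains to tie condition $(\mathrm{iv})$ to the others. For $(\mathrm{ii})\Rightarrow(\mathrm{iv})$, given $A=BX$, I would choose $\beta\defeq\normS{X}^2$ (or any upper bound for the largest eigenvalue of $XX^\ad$). Then $\beta I_r-XX^\ad\lgeq\Ouu{r}{r}$, so conjugating by $B$ yields
\[
 \beta BB^\ad-AA^\ad
 =\beta BB^\ad-BXX^\ad B^\ad
 =B(\beta I_r-XX^\ad)B^\ad
 \lgeq\NM,
\]
which is $(\mathrm{iv})$. For the converse direction $(\mathrm{iv})\Rightarrow(\mathrm{v})$, I would test the L\"owner inequality against an arbitrary $v\in\nul{B^\ad}$: from $AA^\ad\lleq\beta BB^\ad$ it follows that
\[
 0
 \leq\normEs{A^\ad v}
 =v^\ad AA^\ad v
 \leq\beta v^\ad BB^\ad v
 =\beta\normEs{B^\ad v}
 =0,
\]
hence $A^\ad v=\Ouu{q}{1}$, that is, $v\in\nul{A^\ad}$. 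This proves $(\mathrm{v})$, and since $(\mathrm{v})\Leftrightarrow(\mathrm{i})$, the cycle closes.

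There is no substantive obstacle here; the only point requiring a modicum of care is to make sure that throughout one uses only the single identity $BB^{(1)}B=B$ defining $B\{1\}$, and not stronger Moore--Penrose relations such as $B^{(1)}B$ being \tH{} (which need not hold for an arbitrary $\{1\}$-inverse). The choice of $\beta$ in $(\mathrm{ii})\Rightarrow(\mathrm{iv})$ is the only step that is not a one-line manipulation, and even there the spectral bound $\normS{X}^2$ on $XX^\ad$ does the job.
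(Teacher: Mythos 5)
Your proof is correct and complete. The paper itself offers no proof of this lemma -- it is stated with a citation to the literature (the matrix version of the Douglas range-inclusion theorem), so there is nothing to compare against; your chain $(\mathrm{i})\Leftrightarrow(\mathrm{ii})\Leftrightarrow(\mathrm{iii})$, $(\mathrm{i})\Leftrightarrow(\mathrm{v})$, $(\mathrm{ii})\Rightarrow(\mathrm{iv})\Rightarrow(\mathrm{v})$ is the standard elementary argument, and you correctly restrict yourself to the single identity $BB^{(1)}B=B$ and to the spectral bound $\beta=\normS{X}^2$, both of which suffice.
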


 It seems to be useful to state the following dual reformulation of \rlem{Gl,3.1.10}:

\begin{lem} \label{GL3.1.11}
 Let \(A\in\Cpq \), let \(C\in\Coo{r}{q}\), and let \(C^{(1)} \in C\set{1}\).
 Then the following statements are equivalent:
\begin{aeqi}{0}
 \item\label{GL3.1.11.i} \(\nul{C} \subseteq \nul{A}\).
 \item\label{GL3.1.11.ii} There is a matrix \(Y\in\Coo{p}{r}\) such that \(A=YC\).
 \item\label{GL3.1.11.iii} \(AC^{(1)}C=A\).
 \item\label{GL3.1.11.iv} There is a positive real number \(\gamma\) such that, \(A^\ad A \lleq \gamma C^\ad C\).
 \item\label{GL3.1.11.v} \(\ran{A^\ad} \subseteq\ran{C^\ad}\).
\end{aeqi}
\end{lem}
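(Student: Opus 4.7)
The plan is to obtain Lemma~\ref{GL3.1.11} as an immediate dual of the already cited Lemma~\ref{Gl,3.1.10} by applying the latter to the adjoints $A^\ad$ and $C^\ad$. The first preparatory step is to verify that a $\{1\}$-inverse adjoints well: from $CC^{(1)}C=C$ one obtains, by taking adjoints, $C^\ad (C^{(1)})^\ad C^\ad=C^\ad$, so $(C^{(1)})^\ad\in C^\ad\{1\}$. This is essentially the only non-trivial ingredient needed.

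Now I would apply Lemma~\ref{Gl,3.1.10} with $A$ there replaced by $A^\ad\in\Coo{q}{p}$, with $B$ there replaced by $C^\ad\in\Coo{q}{r}$, and with $B^{(1)}$ there taken to be $(C^{(1)})^\ad\in\Coo{r}{q}$ (which is admissible by the preceding paragraph). Dimensions match, and the five conditions (i)--(v) of Lemma~\ref{Gl,3.1.10} translate into statements about the original data $A$, $C$, and $C^{(1)}$ as follows: condition~(i) of Lemma~\ref{Gl,3.1.10} becomes $\ran{A^\ad}\subseteq\ran{C^\ad}$, which is \ref{GL3.1.11.v}; condition~(ii) produces an $X\in\Coo{r}{p}$ with $A^\ad=C^\ad X$, and setting $Y\defeq X^\ad$ yields $A=YC$, which is \ref{GL3.1.11.ii}; condition~(iii) reads $C^\ad(C^{(1)})^\ad A^\ad=A^\ad$, which after taking adjoints is exactly \ref{GL3.1.11.iii}; condition~(iv) becomes the existence of a $\gamma\in(0,\infty)$ with $(A^\ad)(A^\ad)^\ad\lleq\gamma (C^\ad)(C^\ad)^\ad$, that is, $A^\ad A\lleq\gamma C^\ad C$, which is \ref{GL3.1.11.iv}; finally, condition~(v) becomes $\nul{(C^\ad)^\ad}\subseteq\nul{(A^\ad)^\ad}$, i.e., $\nul{C}\subseteq\nul{A}$, which is \ref{GL3.1.11.i}.

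Since Lemma~\ref{Gl,3.1.10} asserts the mutual equivalence of its five conditions, the translated conditions are likewise mutually equivalent, and this is precisely the content of Lemma~\ref{GL3.1.11}. There is no serious obstacle here; the only points requiring a line of verification are the transfer $(C^{(1)})^\ad\in C^\ad\{1\}$ and the correct passage between existence of $X$ in the factorization statement and existence of $Y$ via $Y=X^\ad$ (together with the trivial involution $(M^\ad)^\ad=M$ used in translating the range and kernel conditions). Accordingly, a complete proof amounts to writing out a single short paragraph citing Lemma~\ref{Gl,3.1.10} and performing these routine adjoint manipulations.
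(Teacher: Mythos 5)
Your proof is correct and is exactly the argument the paper intends: the paper presents Lemma~\ref{GL3.1.11} without proof as the ``dual reformulation'' of Lemma~\ref{Gl,3.1.10}, i.e.\ the result obtained by applying that lemma to $A^\ad$ and $C^\ad$ with the $\{1\}$-inverse $(C^{(1)})^\ad$. All your translations of the five conditions check out, so nothing further is needed.
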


\begin{lem}[\cite{MR0245582,MR0394287}] \label{DFK1} 
 Let \(E\in \Coo{(p+q)}{ (p+q)}\) and let \( E=\tmat{ A & B \\ C & D }\) be the block partition of \(E\) with \tppa{block} \(A\).
 Let \(A^{(1)}\in A \set{1}\) and let \(D^{(1)} \in D \set{1}\).
 Furthermore, let \(L \defeq D - CA^{(1)} B\) and let \(R\defeq A-BD^{(1)} C\).
 Then: 
\begin{enui} 
 \item\label{DFK1.a} The following statements are equivalent:
\begin{aeqii}{0} 
 \item\label{DFK1.i} \(E \in \Cggo{(p+q)}\).
 \item\label{DFK1.ii} \(A\in\Cggp\), \(\ran{B}\subseteq \ran{A}\), \(C=B^\ad \), and \(L\in\Cggq\).
 \item\label{DFK1.iii} \(D\in\Cggq\), \(\ran{C}\subseteq \ran{D}\), \(B=C^\ad \), and \(R\in\Cggp\).
\end{aeqii}
 \item\label{DFK1.b} If~\ref{DFK1.i} is fulfilled, then \(L = D-CA^\mpi D\) and \(R=A-BD^\mpi C\).
 \item\label{DFK1.c} If~\ref{DFK1.i} holds true, then \(\rank E = \rank A + \rank L\) and \(\rank E = \rank D + \rank R\).
\end{enui}
\end{lem}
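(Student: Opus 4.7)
\medskip

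\noindent\textbf{Proof proposal.} The plan is to reduce everything to a Schur-type block factorization of \(E\) with respect to generalized inverses. The main algebraic input will be \rlem{Gl,3.1.10} and \rlem{GL3.1.11} from the appendix, which characterize range and null-space inclusions in terms of \(\set{1}\)\nobreakdash-inverses. First I would verify the key identity
\[
 E
 =
 \begin{bmatrix} I_p & \Oup \\ CA^{(1)} & \Iq \end{bmatrix}
 \begin{bmatrix} A & \Oup \\ \Ouq & L \end{bmatrix}
 \begin{bmatrix} I_p & A^{(1)} B \\ \Ouq & \Iq \end{bmatrix}
\]
under the hypotheses \(C = B^\ad\) and \(\ran{B}\subseteq\ran{A}\); by \rlem{Gl,3.1.10} the latter condition gives \(AA^{(1)}B = B\) and (taking adjoints together with \(A^\ad=A\)) \(B^\ad A^{(1)}A = B^\ad\), which are precisely what one needs to multiply out the right-hand side and get \(E\).

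Using this factorization, I would obtain both implications in (a). For the implication \ref{DFK1.ii}\(\Rightarrow\)\ref{DFK1.i}, congruence by the non-singular outer factor yields \(E\lgeq\NM\) from \(A\lgeq\NM\) and \(L\lgeq\NM\). For the reverse, \ref{DFK1.i}\(\Rightarrow\)\ref{DFK1.ii}, I would first obtain \(A\lgeq\NM\) and \(C=B^\ad\) directly from \(E=E^\ad\lgeq\NM\); then show \(\nul{A}\subseteq\nul{B^\ad}\) by noting that any \(v\in\nul{A}\) gives \(\smatp{v}{0}^\ad E\smatp{v}{0} = 0\) and hence, by the Cauchy--Schwarz inequality for the \tnnH{} form induced by \(E\), that \(E\smatp{v}{0}=\NM\); \rlem{Gl,3.1.10} then delivers \(\ran{B}\subseteq\ran{A}\). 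The factorization can now be applied, and testing the congruence on vectors \(\smatp{-A^{(1)}By}{y}\) yields \(y^\ad Ly\geq 0\). The equivalence \ref{DFK1.i}\(\Leftrightarrow\)\ref{DFK1.iii} will follow by applying the same argument to the permuted block matrix with the roles of \(A\) and \(D\) interchanged.

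For part (b), the strategy is to establish that \(CA^{(1)}B\) is independent of the choice of \(\set{1}\)\nobreakdash-inverse once \ref{DFK1.i} holds. Indeed, by \rlem{Gl,3.1.10} applied to \(\ran{B}\subseteq\ran{A}\), there is some \(X\) with \(B=AX\); together with \(C=B^\ad\) and \(A^\ad=A\), this gives
\[
 CA^{(1)}B
 = X^\ad A A^{(1)} AX
 = X^\ad AX,
\]
which is independent of \(A^{(1)}\). In particular, choosing \(A^{(1)}=A^\mpi\) delivers \(L = D - CA^\mpi B\), and symmetrically for \(R\). Part (c) will then be a direct consequence of the factorization in part (a), since the outer block-triangular factors are non-singular so that \(\rank E = \rank\ek{\diag (A,L)} = \rank A + \rank L\); the second rank formula follows from the dual factorization used for \ref{DFK1.iii}.

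The main obstacle I expect is bookkeeping around the use of general \(\set{1}\)\nobreakdash-inverses rather than the Moore--Penrose inverse: the identities \(AA^{(1)}B=B\) and \(B^\ad A^{(1)}A=B^\ad\) must both be invoked at the right places in the factorization, and the argument for invariance of \(CA^{(1)}B\) must be executed before relying on the specific form \(L = D - CA^\mpi B\). Once these checks are done, the Schur-complement structure does all the real work.
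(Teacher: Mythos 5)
Your proof is correct, and it is the standard Schur--complement factorization argument; the paper does not prove this lemma itself but refers for parts~(a) and~(c) to exactly this argument in the cited literature, and it derives part~(b), as you do, from \rlem{Gl,3.1.10} and \rlem{GL3.1.11}. Two small repairs. First, the identity \(B^\ad A^{(1)}A=B^\ad\) does not follow by taking adjoints of \(AA^{(1)}B=B\) together with \(A^\ad=A\): that yields the identity with \((A^{(1)})^\ad\) in place of \(A^{(1)}\). Obtain it instead directly from \rlem{GL3.1.11}, since \(\ran{B}\subseteq\ran{A}=\ran{A^\ad}\) is equivalent to \(\nul{A}\subseteq\nul{B^\ad}\). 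Second, and for the same reason, the two outer factors in your factorization are not adjoints of one another for a general \(A^{(1)}\), so "congruence by the non-singular outer factor" needs a word of care: either establish the invariance \(CA^{(1)}B=X^\ad AX\) from part~(b) first and then run the congruence with \(A^\mpi\) (which is Hermitian, so the factorization becomes a genuine \(\ast\)-congruence), or verify that replacing the left factor by the adjoint of the right factor does not change the product; you anticipate this ordering issue in your closing remark, and your direct test vectors \(\bigl(-A^{(1)}By,\,y\bigr)\) for the implication from~\ref{DFK1.i} to~\ref{DFK1.ii} sidestep it entirely. Finally, note that the formula \(L=D-CA^\mpi D\) in part~(b) of the statement is a typo for \(L=D-CA^\mpi B\), which is what you actually prove.
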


 A detailed proof of \rpartss{DFK1.a}{DFK1.c} of \rlem{DFK1} is given, \teg{}, in~\cite[\clemss{1.1.9}{1.1.7}]{MR1152328}.
 \rPart{DFK1.b} of \rlem{DFK1} is a consequence of the \rlemss{Gl,3.1.10}{GL3.1.11}.

\begin{rem} \label{CDFK06}
 Let \(n\in\N\) and let \((d_j)_{j=0}^{2n}\) be a sequence of complex \tqqa{matrices}.
 If \(d_0 = \Oqq\) and if the block Hankel matrix \(\mat{d_{j+k}}_{j,k=0}^n\) is \tnnH{}, then, in view of \rlem{DFK1}, it is readily proved by induction that \(d_j = \Oqq\) for all \(j\in\mn{0}{2n-1}\).
\end{rem}

\begin{rem} \label{NR41N}
 It is readily checked that if \(E\) is \tnnH{}, then \(\normS{B}^2\le \normS{A}\cdot \normS{D}\) (see, \teg{}~\cite[proof of Lemma~1.1.10]{MR1152328}).
\end{rem}

\begin{rem} \label{PBT}
 Let \(C\) be a non-singular complex \tqqa{matrix}, let \(B \in \Cqq\) and let \(A\defeq BC\).
 Then \(\ran{B}=\ran{A}\) and \(\nul{B}=C\nul{A}\).
\end{rem}

\begin{rem} \label{ZLM}
 Let \(A,B\in \Cqq\) and let \(\alpha,\beta \in \C\).
 Then \(( \alpha A +\beta B) \nul{B} \subseteq A \nul{B}\).
 Furthermore, if \(\alpha \neq 0\), then \(( \alpha A +\beta B) \nul{B} = A \nul{B}\).
\end{rem}

\begin{rem}\label{A11}
 Let \(A \in \Cqq\).
 For all \(z\in\C\), then \(\Re(zA) = \Re(z)\Re(A) - \Im(z)\Im(A)\) and \(\Im(zA)=\Re(z)\Im(A)+\Im(z)\Re(A)\).
\end{rem}

\section{A particular generalized inverse of a complex matrix}\label{A1556}
 In this section, we state some useful identities for the particular generalized inverse of a \tH{} complex matrix, which is introduced in \rrem{8B}.

\begin{lem}\label{A.40}
 Let \(A\) be a \tH{} complex \tqqa{matrix} and let \(\cU\) be a subspace of \(\Cq \) such that \(\nul{A} \msum \cU =\Cq \).
 Then \((A_\cU^\gi )^\ad = A_\cU^\gi\), \(\cR(A_\cU^\gi ) = \cU\), \(\cN (A_\cU^\gi ) = \cU^\orth\),
\begin{align}
 AA_\cU^\gi  A=A, & & A_\cU^\gi  AA_\cU^\gi = A_\cU^\gi,\label{D1N}\\
 \dim \cR (A_\cU^\gi )= \rank A, & \qquad \text{and} & \dim\cN (A_\cU^\gi ) = q - \rank A.\label{D3N}
\end{align}
 In particular, if \(A\) is \tnnH{}, then \(A_\cU^\gi\) is \tnnH{}, too.
\end{lem}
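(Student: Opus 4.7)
The plan is to derive all claims directly from Remark 8B (the definition of $A_{\cU,\cV}^{(1,2)}$) specialized to the Hermitian case, and then handle the additional self-adjointness and non-negativity assertions by uniqueness.

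First I would verify that Remark 8B is applicable with $\cV = \cU^\orth$, i.e., check $\ran{A}\msum\cU^\orth = \Cq$. Since $A$ is Hermitian, Remark A.1 gives $\ran{A} = \nul{A}^\orth$. From $\nul{A}\msum\cU=\Cq$, a dimension count yields $\dim\nul{A}^\orth + \dim\cU^\orth = q$, so it suffices to show $\nul{A}^\orth\cap\cU^\orth = \{0\}$: any vector orthogonal to both summands of a decomposition $\nul{A}\msum\cU=\Cq$ is orthogonal to all of $\Cq$, hence zero. Thus $\ran{A}\msum\cU^\orth = \Cq$, and Remark 8B provides the unique matrix $X \defeq A_{\cU,\cU^\orth}^{(1,2)} = A_\cU^\gi$ satisfying $AXA=A$, $XAX=X$, $\cR(X)=\cU$, $\cN(X)=\cU^\orth$; in particular \eqref{D1N} follows at once.

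Next I would establish $(A_\cU^\gi)^\ad = A_\cU^\gi$ by showing $X^\ad$ solves the same four conditions defining $X$ and invoking uniqueness. Taking adjoints of $AXA=A$ and $XAX=X$, together with $A^\ad = A$, yields $AX^\ad A = A$ and $X^\ad AX^\ad = X^\ad$. By Remark A.1, $\cR(X^\ad) = \cN(X)^\orth = (\cU^\orth)^\orth = \cU$ and $\cN(X^\ad) = \cR(X)^\orth = \cU^\orth$. By the uniqueness in Remark 8B, $X^\ad = X$. The dimension identities in \eqref{D3N} then follow from $\dim\cR(A_\cU^\gi) = \dim\cU = q-\dim\nul{A} = \rank A$ (using the direct sum decomposition $\nul{A}\msum\cU=\Cq$) and $\dim\cN(A_\cU^\gi) = \dim\cU^\orth = q-\rank A$.

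Finally, if $A\in\Cggq$, I would write $A = C^\ad C$ (take $C$ to be the non-negative Hermitian square root of $A$, for instance). Using the already-established identities $X = XAX$ and $X = X^\ad$, we obtain
\[
 A_\cU^\gi
 = X
 = XAX
 = X^\ad C^\ad C X
 = (CX)^\ad(CX)
 \lgeq \NM,
\]
which delivers the non-negativity. The only step that requires a small argument beyond bookkeeping is the verification that $\ran{A}\msum\cU^\orth=\Cq$, which I expect to be the cleanest hurdle; every other assertion is a direct consequence of the uniqueness clause in Remark 8B.
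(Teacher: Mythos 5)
Your proof is correct and follows essentially the same route as the paper: every identity is read off from the uniqueness clause of \rrem{8B}, and the self-adjointness is obtained by checking that \(X^\ad\) satisfies the same four defining conditions as \(X = A_{\cU,\cU^\orth}^{(1,2)}\). You are in fact slightly more complete than the paper's own proof, which takes the decomposition \(\ran{A}\msum\cU^\orth=\Cq\) for granted (it is asserted without proof in \rrem{8B}) and does not spell out the non-negativity argument via the factorization \(A=C^\ad C\).
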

\begin{proof}
 By definition of \(A_\cU^\gi \), we have \eqref{D1N} as well as \(\cR (A_\cU^\gi ) = \cU\) and \(\cN (A_\cU^\gi ) =\cU^\orth\).
 In view of \(\nul{A}\msum \cU =\Cq \), then \eqref{D3N} follows.
 Since \(A^\ad = A\) is supposed, \eqref{D1N} implies \(A (A_\cU^\gi )^\ad A=A\) and \((A_\cU^\gi )^\ad A (A_\cU^\gi )^\ad =(A_\cU^\gi )^\ad\).
 Moreover, \(\cN ((A_\cU^\gi )^\ad) =\cR (A_\cU^\gi )^\orth =\cU^\orth\) and \(\cR ((A_\cU^\gi )^\ad) = \cN (A_\cU^\gi )^\orth = (\cU^\orth)^\orth = \cU\).
 Consequently, \((A_\cU^\gi )^\ad = A_{\cU, \cU^\orth}^{(1,2)} = A_\cU^\gi \).
\end{proof}

\begin{rem}\label{A.41}
 Let \(A\in\CHq \) and let \(\cU\) be a subspace of \(\Cq \) such that \(\nul{A} \msum \cU =\Cq \).
 Then \(AA^\mpi  = A^\mpi A\) and, in view of \rlem{A.40}, one can easily check that
\begin{gather*}
 (A_\cU^\gi  A)^\ad= AA_\cU^\gi,\quad
 (AA_\cU^\gi )^\ad=A_\cU^\gi A,\quad
 (A_\cU^\gi  A)^2= A_\cU^\gi  A,\quad
 (AA_\cU^\gi )^2= AA_\cU^\gi,\\
 \cR(A_\cU^\gi  A)= \cU,\quad
 \cR (AA_\cU^\gi )= \ran{A},\quad
 \cN (AA_\cU^\gi )= \cU^\orth,\quad
 \cN (A_\cU^\gi A)= \nul{A},\\
 \dim\cR (A_\cU^\gi  A)= \dim \cR (AA_\cU^\gi ) = \rank A,\quad
 \dim \cN (AA_\cU^\gi )= \dim \cN (A_\cU^\gi  A) = q - \rank A,\\
 A^\mpi  AA_\cU^\gi  A= A^\mpi  A = AA^\mpi  = AA^\mpi  A_\cU^\gi  A,\quad\text{and}\quad
 AA_\cU^\gi  AA^\mpi= AA^\mpi  = A^\mpi A = AA_\cU^\gi  A^\mpi A.
\end{gather*}
\end{rem}

 Parts of the following result are already contained in~\cite[\clem{2.3}]{MR1362524}.

\begin{lem}\label{A.42}
 Let \(A\) be a \tH{} complex \tqqa{matrix} and let \(\cU\) be a subspace of \(\Cq \) such that \(\nul{A} \msum \cU =\Cq \).
 Then
\begin{align*}
 (\Iq - AA_\cU^\gi )^\ad&= \Iq - A_\cU^\gi A,&&&
 (\Iq - AA_\cU^\gi )^2&= \Iq -AA_\cU^\gi,\\
 \cN (\Iq - AA_\cU^\gi  )&= \ran{A},&&&
 \cR (\Iq - AA_\cU^\gi )&= \cU^\orth,\\ 
 (\Iq -AA_\cU^\gi ) AA^\mpi&=\NM&&&
 (\Iq - AA_\cU^\gi ) A^\mpi A&=\NM,\\
 (\Iq - AA_\cU^\gi ) (\Iq - AA^\mpi )&= \Iq -AA_\cU^\gi,&
&\text{and}&
 (\Iq -AA_\cU^\gi )(\Iq - A^\mpi A)&=\Iq -AA_\cU^\gi.
\end{align*}
\end{lem}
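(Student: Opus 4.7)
My plan is to derive all eight identities as direct consequences of the algebraic and adjoint identities already collected in \rlem{A.40} and \rrem{A.41}, together with the self-adjointness $A^\ad = A$ and the standard fact $AA^\mpi = A^\mpi A$ (which holds because $A$ is Hermitian). The crucial observation is that $\rrem{A.41}$ already furnishes $(AA_\cU^\gi)^\ad = A_\cU^\gi A$, $(AA_\cU^\gi)^2 = AA_\cU^\gi$, $\cR(AA_\cU^\gi) = \ran A$, $\cN(AA_\cU^\gi) = \cU^\orth$, as well as the absorption equations $AA^\mpi A_\cU^\gi A = A^\mpi A$ and $AA_\cU^\gi AA^\mpi = AA^\mpi$. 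Once these are in hand, nothing really new has to be proved.

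First I would read off the adjoint identity $(\Iq - AA_\cU^\gi)^\ad = \Iq - (AA_\cU^\gi)^\ad = \Iq - A_\cU^\gi A$ from \rrem{A.41}. Next I would verify the idempotency by expanding
\[
 (\Iq - AA_\cU^\gi)^2 = \Iq - 2AA_\cU^\gi + (AA_\cU^\gi)^2 = \Iq - 2AA_\cU^\gi + AA_\cU^\gi = \Iq - AA_\cU^\gi,
\]
again using \rrem{A.41}. With idempotency in hand, the two range/kernel statements follow from the general fact that for an idempotent $P$ one has $\cN(\Iq - P) = \cR(P)$ and $\cR(\Iq - P) = \cN(P)$; specializing to $P = AA_\cU^\gi$ and invoking $\cR(AA_\cU^\gi) = \ran A$ and $\cN(AA_\cU^\gi) = \cU^\orth$ from \rrem{A.41} gives the third and fourth identities.

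For the fifth identity I would just compute $(\Iq - AA_\cU^\gi)AA^\mpi = AA^\mpi - AA_\cU^\gi AA^\mpi = AA^\mpi - AA^\mpi = \NM$ by \rrem{A.41}. The sixth identity then follows from the fifth by noting that $A^\mpi A = AA^\mpi$ since $A$ is Hermitian. The last two identities are obtained by expanding the products: for instance,
\[
 (\Iq - AA_\cU^\gi)(\Iq - AA^\mpi) = \Iq - AA^\mpi - AA_\cU^\gi + AA_\cU^\gi AA^\mpi = \Iq - AA^\mpi - AA_\cU^\gi + AA^\mpi = \Iq - AA_\cU^\gi,
\]
and the eighth identity follows identically after replacing $AA^\mpi$ by $A^\mpi A$.

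I do not foresee any genuine obstacle: the entire lemma is a direct algebraic consequence of the identities already established in \rlem{A.40} and \rrem{A.41}. The only point deserving a small verbal remark in the write-up is that \rrem{A.41} is tacitly using $A = A^\ad$ when it states $AA^\mpi = A^\mpi A$ and the two absorption identities $A^\mpi A A_\cU^\gi A = A^\mpi A$ and $A A_\cU^\gi A A^\mpi = A A^\mpi$, since these are the only non-trivial pieces actually being used.
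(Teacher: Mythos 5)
Your proof is correct and follows essentially the same route as the paper's: both derive the first two identities from \rrem{A.41}, obtain the kernel and range statements from the identities in \rlem{A.40}/\rrem{A.41}, and dispose of the remaining four equations using \(AA^\mpi = A^\mpi A\) (a consequence of \(A^\ad = A\)) together with the absorption identities. Your use of the general idempotent fact \(\cN(\Iq - P) = \cR(P)\), \(\cR(\Iq - P) = \cN(P)\) is a clean way to package the "straightforward calculations" the paper alludes to.
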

\begin{proof} 
 The first two equations follow from \rrem{A.41}.
 From \rlem{A.40} we know that \eqref{D1N} is true.
 Using \eqref{D1N}, the equation \(\cN (\Iq - AA_\cU^\gi ) = \ran{A}\) can be easily checked by straightforward calculations.
 In order to prove \(\cR (\Iq - AA_\cU^\gi ) = \cU^\orth\), one shows that \eqref{D1N} implies \(\cR (\Iq - AA_\cU^\gi ) = \cN (AA_\cU^\gi )\) and one applies the equation \(\cN (AA_\cU^\gi ) = \cU^\orth\) stated in \rrem{A.41}.
 Because of \(A^\ad = A\), we have \(AA^\mpi  = A^\mpi A\).
 Thus, from \eqref{D1N} we easily see that the remaining equations hold true.
\end{proof}

\begin{lem}\label{A.43}
 Let \(A\) be a \tH{} complex \tqqa{matrix} and let \(\cU\) be a subspace of \(\Cq \) such that \(\nul{A} \msum \cU =\Cq \).
 Then
\begin{gather*}
 \cR(\Iq - A_\cU^\gi  A)= \cN (A_\cU^\gi  A) = \nul{A},\quad  \cN (\Iq - A_\cU^\gi  A)= \cR (A_\cU^\gi  A) =\cU,\\
 \dim \cR (A_\cU^\gi  A)=\rank A,\quad A^\mpi  AA_\cU^\gi  A=A^\mpi  A=AA^\mpi =AA^\mpi A_\cU^\gi  A,\\
\text{and }(\Iq-A^\mpi A)A_\cU^\gi  A=A_\cU^\gi  A-A^\mpi  A=A_\cU^\gi  A-AA^\mpi =(\Iq-AA^\mpi )A_\cU^\gi  A.
\end{gather*}
\end{lem}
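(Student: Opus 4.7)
The plan is to deduce essentially everything from \rrem{A.41} together with the idempotence of $A_\cU^\gi A$, so the lemma serves mainly as a bookkeeping consequence of the preceding results. First I would observe that \rrem{A.41} already records $(A_\cU^\gi A)^2 = A_\cU^\gi A$, $\cR(A_\cU^\gi A) = \cU$, $\cN(A_\cU^\gi A) = \nul{A}$, and $\dim \cR(A_\cU^\gi A) = \rank A$. Since a square matrix $P$ with $P^2 = P$ is a (not necessarily orthogonal) projection and therefore satisfies $\cR(\Iq - P) = \cN(P)$ and $\cN(\Iq - P) = \cR(P)$, applying this to $P = A_\cU^\gi A$ yields $\cR(\Iq - A_\cU^\gi A) = \nul{A}$ and $\cN(\Iq - A_\cU^\gi A) = \cU$. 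This settles the first two displayed lines.

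Next I would note that the chain $A^\mpi A A_\cU^\gi A = A^\mpi A = AA^\mpi = AA^\mpi A_\cU^\gi A$ is already stated in \rrem{A.41} (it is a direct consequence of \eqref{D1N} together with the fact that $A^\mpi A = AA^\mpi$ because $A$ is Hermitian). Hence no further work is needed for this assertion.

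For the final chain, the argument is pure algebra built on what we have just collected. Expanding gives
\[
 (\Iq - A^\mpi A) A_\cU^\gi A = A_\cU^\gi A - A^\mpi A A_\cU^\gi A = A_\cU^\gi A - A^\mpi A,
\]
where the second equality uses $A^\mpi A A_\cU^\gi A = A^\mpi A$. The middle identity $A_\cU^\gi A - A^\mpi A = A_\cU^\gi A - A A^\mpi$ is immediate from $A^\mpi A = AA^\mpi$ (which holds since $A^\ad = A$). Finally,
\[
 (\Iq - AA^\mpi) A_\cU^\gi A = A_\cU^\gi A - AA^\mpi A_\cU^\gi A = A_\cU^\gi A - AA^\mpi,
\]
again by invoking $AA^\mpi A_\cU^\gi A = AA^\mpi$ from \rrem{A.41}.

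There is no substantial obstacle here; the lemma is essentially a consolidation lemma. The only mild care required is to make explicit the idempotence argument $\cR(\Iq - P) = \cN(P)$ and $\cN(\Iq - P) = \cR(P)$, which one could either quote or prove in a single line (for $x = (\Iq-P)y$ one has $Px = Py - P^2 y = 0$, and conversely $x \in \nul{P}$ gives $x = (\Iq - P)x$; the dual statement is analogous).
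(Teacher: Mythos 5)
Your proof is correct and follows essentially the same route as the paper: the paper derives \(\cN(A_\cU^\gi A)=\nul{A}\), \(\cR(A_\cU^\gi A)=\cU\), and the Moore--Penrose chain directly from \eqref{D1N} and \(AA^\mpi=A^\mpi A\), and then says the remaining identities "immediately follow," which is exactly the idempotence/expansion argument you spell out (citing \rrem{A.41}, where these facts are already recorded). Making the projection argument \(\cR(\Iq-P)=\cN(P)\), \(\cN(\Iq-P)=\cR(P)\) explicit is a harmless elaboration of what the paper leaves implicit.
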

\begin{proof}
 Because of \rlem{A.40}, we get \eqref{D1N}.
 From \eqref{D1N} we obtain \(\cN (A_\cU^\gi  A) = \nul{A}\) and \(\cR (A_\cU^\gi  A) = \cR (A_\cU^\gi ) = \cU\).
 In particular, \(\dim \cR (A_\cU^\gi  A) = \dim \cU = \dim \Cq  - \dim \nul{A} = \rank A\).
 Because of \(A^\ad = A\), we have \(AA^\mpi  = A^\mpi  A\).
 Therefore, \eqref{D1N} shows that \(A^\mpi  AA_\cU^\gi  A = A^\mpi A = AA^\mpi \) and \(A^\mpi  AA_\cU^\gi  A = AA^\mpi \).
 Thus, the remaining equations immediately follow.
\end{proof}

\begin{rem}\label{A.44}
 Let \(A\) be a \tH{} complex \tqqa{matrix} and let \(\cU\) be a subspace of \(\Cq \) such that \(\nul{A} \msum \cU =\Cq \).
 In view of the \rlemss{A.43}{A.40}, it is readily checked that 
\begin{align*}
 (\Iq - A_\cU^\gi  A)^\ad&= \Iq - AA_\cU^\gi,&&&
 (\Iq - A_\cU^\gi  A)^2&= \Iq - A_\cU^\gi  A,\\
 \cR (\Iq - A_\cU^\gi  A)&=\nul{A},&&&
 \cN (\Iq - A_\cU^\gi  A)&=\cU,\\
 A^\mpi A (\Iq - A_\cU^\gi  A)&=\NM,&&&
 AA^\mpi  (\Iq - A_\cU^\gi  A)&=\NM,\\
 (\Iq - A^\mpi A) (\Iq - A_\cU^\gi  A)&= \Iq - A_\cU^\gi  A,&
&\text{and}&
 (\Iq - AA^\mpi ) (\Iq - A_\cU^\gi  A)&= \Iq- A_\cU^\gi  A.
\end{align*}
\end{rem}

\begin{rem}\label{A.45}
 Let \(T\in\Cqq \) and let \(\cU\) and \(\cV\) be subspaces of \(\Cq \) with \(T^\ad (\cU) \subseteq \cV\subseteq\cU\).
 Then it is readily checked that \((T^\ad)^k (\cU) \subseteq \cV\subseteq \cU\) and \(T^k (\cV^\orth) \subseteq \cU^\orth \subseteq \cV^\orth\) is valid for each \(k\in\N \) and that \((T^\ad)^\ell (\cV) \subseteq \cU\) and \(T^\ell (\cU^\orth) \subseteq \cU^\orth \subseteq\cV^\orth\) for each \(\ell\in\NO \) hold true (see also~\cite[\ccor{3.3}]{MR1362524}, where a special case is discussed).
\end{rem}

 The following lemma is a generalization of~\cite[\clem{4.1}]{MR1362524}, where special pairs of block Hankel matrices are considered.

\begin{lem}\label{A.46}
 Let \(A\) and \(B\) be \tH{} complex \tqqa{matrices} and let \(T\in\Cqq \).
 Suppose that \(\cU\) and \(\cV\) are subspaces of \(\Cq \) such that \(\nul{A}\msum \cU =\Cq \) and \(\nul{B}\msum\cV =\Cq \) and \(T^\ad (\cU) \subseteq\cV\subseteq \cU\) hold true.
 Then 
\begin{align}\label{MU1}
 A_\cU^\gi  T^\ell (\Iq - AA_\cU^\gi )&=\NM&
&\text{and}&
 B_\cV^\gi  T^\ell (\Iq - AA_\cU^\gi )&=\NM
\end{align}
 for each \(\ell\in\NO \) and, for each \(k\in\N \), furthermore
\begin{align}\label{MU2}
 A_\cU^\gi  T^k (\Iq - BB_\cV^\gi )&=\NM&
&\text{and}&
 B_\cV^\gi  T^k (\Iq - BB_\cV^\gi )&=\NM.
\end{align}
\end{lem}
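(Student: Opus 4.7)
The plan is to reduce all four matrix identities to purely set-theoretic range/null-space inclusions, by combining three earlier results that have already done the heavy lifting: \rlem{A.40} (which identifies the null space of the generalized inverses), \rlem{A.42} (which identifies the range of the complementary projectors $I_q - AA_\cU^\gi$ and, analogously, $I_q - BB_\cV^\gi$), and \rrem{A.45} (which propagates the invariance $T^\ad(\cU)\subseteq\cV\subseteq\cU$ to statements about powers of $T$ acting on the orthogonal complements).

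Concretely, I would first invoke \rlem{A.40} to record that $\nul{A_\cU^\gi}=\cU^\orth$ and $\nul{B_\cV^\gi}=\cV^\orth$, and then invoke \rlem{A.42} (and its obvious analogue applied to $B$ and $\cV$) to record that $\ran{I_q-AA_\cU^\gi}=\cU^\orth$ and $\ran{I_q-BB_\cV^\gi}=\cV^\orth$. Next, I would apply \rrem{A.45} to obtain the two crucial chains
\[
 T^\ell(\cU^\orth)\subseteq\cU^\orth\subseteq\cV^\orth\qquad\text{for all }\ell\in\NO
\]
and
\[
 T^k(\cV^\orth)\subseteq\cU^\orth\subseteq\cV^\orth\qquad\text{for all }k\in\N.
\]

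With these four pieces in hand, each of the four asserted identities reduces to checking that the range of the right-hand factor is contained in the null space of the left-hand factor. For \eqref{MU1}, the range of $T^\ell(I_q-AA_\cU^\gi)$ lies in $T^\ell(\cU^\orth)$, which is contained in $\cU^\orth=\nul{A_\cU^\gi}$ (giving the first equation) and in $\cV^\orth=\nul{B_\cV^\gi}$ (giving the second). For \eqref{MU2}, the range of $T^k(I_q-BB_\cV^\gi)$ lies in $T^k(\cV^\orth)$, which by the second chain is contained in $\cU^\orth=\nul{A_\cU^\gi}$ and in $\cV^\orth=\nul{B_\cV^\gi}$, yielding the two remaining identities.

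There is no real obstacle: once the three earlier results are cited, the argument is a one-line composition of range-null set containments. The only point worth flagging in the write-up is the asymmetry in the hypotheses on the exponent (here $\ell\ge 0$ is allowed for the $(I_q-AA_\cU^\gi)$-identities but only $k\ge 1$ for the $(I_q-BB_\cV^\gi)$-identities), which reflects exactly the asymmetry in \rrem{A.45}: the invariance $T^\ad(\cU)\subseteq\cU$ is stronger for $\cU$ than for $\cV$, so no power of $T$ is needed to push $\cU^\orth$ into $\cV^\orth$, whereas pushing $\cV^\orth$ into $\cU^\orth$ does require at least one application of $T$.
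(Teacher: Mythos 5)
Your proposal is correct and follows essentially the same route as the paper's own proof: both reduce the four identities to range--null-space containments via \rlem{A.40} ($\cN(A_\cU^\gi)=\cU^\orth$, $\cN(B_\cV^\gi)=\cV^\orth$), \rlem{A.42} ($\cR(\Iq-AA_\cU^\gi)=\cU^\orth$, $\cR(\Iq-BB_\cV^\gi)=\cV^\orth$), and the two inclusion chains from \rrem{A.45}. Your closing observation about the asymmetry between $\ell\in\NO$ and $k\in\N$ correctly identifies why the exponent hypotheses differ.
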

\begin{proof}
 \rlem{A.42} yields \(\cR (\Iq - AA_\cU^\gi ) = \cU^\orth\).
 Hence, from \rrem{A.45} we conclude
\beql{MU3}
 T^\ell\rk*{\cR (\Iq - AA_\cU^\gi )}
 = T^\ell(\cU^\orth)
 \subseteq \cU^\orth
 \subseteq \cV^\orth
\eeq
 for each \(\ell\in\NO \).
 Since we know from \rlem{A.40} that \(\cN (A_\cU^\gi ) = U^\orth\) is valid, it follows \(T^\ell(\Iq - AA_\cU^\gi ) x\in\cN (A_\cU^\gi )\) for each \(\ell\in\NO \) and each \(x\in\Cq \).
 Consequently, the first equation in \eqref{MU1} is fulfilled for each \(\ell\in\NO \).
 \rlem{A.40} yields \(\cN (B_\cV^\gi ) =\cV^\orth\).
 Thus, we obtain from \eqref{MU3} that \(T^\ell(\Iq - AA_\cU^\gi ) x\in\cN (B_\cV^\gi )\) is fulfilled for every choice of \(\ell\) in \(\NO \) and \(x\) in \(\Cq \).
 Therefore, the second equation in \eqref{MU1} is proved for each \(\ell\in\NO \).
 \rlem{A.42} provides us \(\cR (\Iq - BB_\cV^\gi ) = \cV^\orth\).
 Hence, \rrem{A.45} yields
\beql{MU}
 T^k\rk*{\cR (\Iq - BB_\cV^\gi )}
 = T^k (\cV^\orth)
 \subseteq 
 \cU^\orth 
 \subseteq \cV^\orth
\eeq
 for each \(k\in\N \).
 Since \rlem{A.40} shows that \(\cN (A_\cU^\gi ) = \cU^\orth\) is true, we obtain then \(T^k (\Iq - BB_\cV^\gi ) x\in\cN (A_\cU^\gi )\) for every choice of \(k\in\N \) and \(x \) in \(\Cq \).
 Consequently, the first equation in \eqref{MU2} is true for each \(k\in\N \).
 Using \eqref{MU} and the equation \(\cN (B_\cV^\gi ) = \cV^\orth\), which is proved in \rlem{A.40}, we get \(T^k (\Iq - BB_\cV^\gi ) x \in\cN (B_\cV^\gi )\) for each \(k\in\N\) and each \(x\in \Cq \).
 Thus, the second equation in \eqref{MU2} is verified for each \(k\in\N \) as well.
\end{proof}

 Now we state some more or less known identities for the matrix-valued functions defined in \rrem{21112N}.
 
\begin{rem} \label{lemmart}
 Let \(n\in \NO \) and let \(w, z\in\C\).
 Then one can easily see that the equations 
\begin{align} 
 R_{\Tqn}(z)(\Iu{(n+1)q}-w\Tqn)&=(\Iu{(n+1)q}-w\Tqn)R_{\Tqn}(z),\notag \\ 
 R_{\Tqn^\ad }(z)(\Iu{(n+1)q}-w\Tqn^\ad )&=(\Iu{(n+1)q}-w\Tqn^\ad )R_{\Tqn^\ad }(z),\notag \\
 R_{\Tqn}(z)-R_{\Tqn}(w)&=(z-w)R_{\Tqn}(w)\Tqn R_{\Tqn}(z),\notag \\ 
 R_{\Tqn^\ad }(z)-R_{\Tqn^\ad }(w)&=(z-w)R_{\Tqn^\ad }(z)\Tqn^\ad R_{\Tqn^\ad }(w),\notag \\
 \ek*{R_{\Tqn}(w)}^\inv - \ek*{R_{\Tqn}(z)}^\inv &= (z-w) \Tqn,\notag \\
 R_{\Tqn}(z)+(w-z)R_{\Tqn}(z) \Tqn R_{\Tqn}(w)&=R_{\Tqn}(w),\notag \\
 zR_{\Tqn}(z)+(w-z)R_{\Tqn}(z) R_{\Tqn}(w)&=wR_{\Tqn}(w), \notag\\
 (z-\ko{w}) \ek*{R_{T^\ad _{q,n}}(w)}^\ad \Tqn R_{\Tqn}(z) &= R_{\Tqn}(z) - \ek*{R_{T^\ad _{q,n}}(w)}^\ad,\notag \\
 (z-w) \Tqn R_{\Tqn}(z) &= R_{\Tqn}(z)\ek*{R_{\Tqn}(w)}^\inv -\Iu{(n+1)q}, \label{N52} \\
 (z-w) \Tqn^\ad R_{\Tqn^\ad }(z) &=\ek*{R_{\Tqn^\ad }(w)}^\inv R_{\Tqn^\ad }(z)-\Iu{(n+1)q}, \label{S1-1}
\intertext{and}
 (z- w) R_{\Tqn}(z) \Tqn &= R_{\Tqn}(z)\ek*{R_{\Tqn}(w)}^\inv -\Iu{(n+1)q} \label{S1-2} 
\end{align} 
 hold true.
 Furthermore, for each \(\ell\in \NO \), it is readily checked that 
\begin{align*}
 \Tqn^\ell R_{\Tqn}(z)& = R_{\Tqn}(z)\Tqn^\ell,&
 R_{\Tqn}(z)\Tqn^\ell R_{\Tqn}(w)&=R_{\Tqn}(w)\Tqn^\ell R_{\Tqn}(z),
\intertext{and}
 (\Tqn^\ad )^\ell  R_{\Tqn^\ad }(z)&=R_{\Tqn^\ad }(z)(\Tqn^\ad )^\ell,&
 R_{\Tqn^\ad }(z)(\Tqn^\ad )^\ell  R_{\Tqn^\ad }(w)&=R_{\Tqn^\ad }(w)(\Tqn^\ad )^\ell  R_{\Tqn^\ad }(z).
\end{align*}
\end{rem}

\section{Some considerations on \tnnH{} measures}\label{A1244}
 In this appendix, we summarize some facts of the integration theory of \tnnH{} measures.
 We consider a measurable space \((\Omega,\gA)\) and use the notation \(\Mggqa{\Omega,\gA}\) to denote the set of all \tnnH{} \tqqa{measures} on \((\Omega,\gA)\).

\begin{rem} \label{B3}
 Let \(\mu \colon\gA\to \Cpp\) be a mapping.
 Then \(\mu\in\Mggqa{\Omega,\gA}\) if and only if \(B^\ad\mu B\colon\gA\to\Cpp\) defined by  \((B^\ad\mu B) (A)\defeq B^\ad\mu (A) B\) belongs to \(\Mggoa{p}{\Omega,\gA}\) for all \(B\in\Cqp \).
\end{rem}

\begin{rem} \label{B8}
 Let \(\mu\in\Mggqa{\Omega,\gA}\) and let \(f\colon\Omega\to \C \) be a function.
 Then it is readily checked by standard arguments of measure and integration theory that the following statements are equivalent:
\begin{aeqi}{0}
 \item\label{B8.i} \(f\in \LoaaaC{1}{\Omega}{\gA}{\mu}\).
 \item\label{B8.ii} \(f\in \LoaaaC{1}{\Omega}{\gA}{B^\ad\mu B}\) for all \(B\in\Cqp\).
 \item\label{B8.iii} \(f\in \LoaaaC{1}{\Omega}{\gA}{\tau}\) where \(\tau \defeq \tr \mu\) is the trace measure of \(\mu\).
\end{aeqi}
 If~\ref{B8.i} holds true, then \(\int_{A} f \dif(B^\ad\mu B) = B^\ad \rk{\int_A f\dif\mu} B\) for all \(A\in \gA\) and all \(B\in\Cqp\).
\end{rem}

\begin{lem} \label{B26}
 Let \(\mu\in\Mggqa{\Omega,\gA}\) and let \(\mu'_\tau\) be a version of the Radon--Nikodym derivative of \(\mu\) with respect to the trace measure \(\tau \defeq  \tr \mu\) of \(\mu\).
 Let \(f \colon\Omega \to \C\) and \(g\colon\Omega\to\C\) be \(\gA\)\nobreakdash-\(\BorC\)\nobreakdash-measurable functions.
 Then the following statements are equivalent:
\begin{aeqi}{0}
 \item\label{B26.i} \(f\ko{g} \in  \LoaaaC{1}{\Omega}{\gA}{\mu}\).
 \item\label{B26.ii} The pair \([f\Iq, g\Iq]\) is left-integrable with respect to \(\mu\).
\end{aeqi}
 If~\ref{B26.i} is fulfilled, then \(\int_\Omega f\ko{g} \dif\mu = \int_\Omega (f\Iq) \dif\mu (g\Iq)^\ad\).
\end{lem}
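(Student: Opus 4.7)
The plan is to prove Lemma~\ref{B26} by directly unwinding the definitions and reducing everything to classical Radon--Nikodym theory for complex measures. The hypothesis $\mu \in \Mggqa{\Omega,\gA}$ forces each entry $\mu_{jk}$ to be a complex measure absolutely continuous with respect to the trace measure $\tau = \tr\mu$, with Radon--Nikodym density $(\mu'_\tau)_{jk}$. In particular, the total variation of $\mu_{jk}$ is given by $\tilde{\mu}_{jk}(A) = \int_A |(\mu'_\tau)_{jk}|\dif\tau$ for every $A\in\gA$, which is the key bridge between the two notions of integrability.

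First I would translate both sides. According to the definition recalled in the paper, statement~\ref{B26.i} is equivalent to $\int_\Omega |f\ko{g}|\dif\tilde{\mu}_{jk} < \infty$ for every $j,k \in \mn{1}{q}$. By the variation formula above, this is precisely $\int_\Omega |f\ko{g}|\cdot|(\mu'_\tau)_{jk}|\dif\tau < \infty$ for every $j,k$. On the other hand, left-integrability of the pair $[f\Iq, g\Iq]$ with respect to $\mu$ means, by definition, that $(f\Iq)\mu'_\tau(g\Iq)^\ad = f\ko{g}\,\mu'_\tau$ belongs to $[\LoaaaC{1}{\Omega}{\gA}{\tau}]^{q\times q}$, i.e.\ each entry $f\ko{g}\,(\mu'_\tau)_{jk}$ lies in $\LoaaaC{1}{\Omega}{\gA}{\tau}$. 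Since $L^1$-integrability of a complex-valued function is integrability of its modulus, and $|f\ko{g}\,(\mu'_\tau)_{jk}| = |f\ko{g}|\cdot|(\mu'_\tau)_{jk}|$, the two integrability conditions coincide term by term. This yields the equivalence \ref{B26.i}~$\Leftrightarrow$~\ref{B26.ii}.

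Assuming \ref{B26.i}, I would verify the asserted identity entrywise. The $(j,k)$-entry of $\int_\Omega f\ko{g}\dif\mu$ equals $\int_\Omega f\ko{g}\dif\mu_{jk}$, which by the Radon--Nikodym theorem for complex measures equals $\int_\Omega f\ko{g}\,(\mu'_\tau)_{jk}\dif\tau$. On the other hand, from the convention $\int_\Omega\Phi\dif\mu\,\Psi^\ad = \int_\Omega\Phi\mu'_\tau\Psi^\ad\dif\tau$, the $(j,k)$-entry of $\int_\Omega (f\Iq)\dif\mu\,(g\Iq)^\ad$ equals $\int_\Omega f\ko{g}\,(\mu'_\tau)_{jk}\dif\tau$ as well. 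The two matrices therefore coincide.

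There is no real obstacle here; the argument is a routine reduction via the scalar Radon--Nikodym theorem once the two definitions are written in parallel form. The only minor subtlety is justifying the variation formula $\dif\tilde{\mu}_{jk} = |(\mu'_\tau)_{jk}|\dif\tau$, which however is a standard fact for complex measures with a scalar dominating measure. Accordingly, the proof can be kept very short and is routinely omitted in the paper.
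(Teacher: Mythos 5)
Your argument is correct and is exactly the routine entrywise reduction the paper has in mind when it omits the proof with the remark that Lemma~\ref{B26} follows by standard methods of measure and integration theory: absolute continuity of each $\mu_{jk}$ with respect to $\tau=\tr\mu$, the variation formula $\tilde{\mu}_{jk}(A)=\int_A\abs{(\mu'_\tau)_{jk}}\dif\tau$, and the identification of both integrals with $\int_\Omega f\ko{g}\,(\mu'_\tau)_{jk}\dif\tau$. Nothing further is needed.
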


 \rlem{B26} can be proved by standard methods of measure and integration theory.
 
\begin{rem} \label{M22}
 Let \(\mu\in\Mggqa{\Omega,\gA}\) and let \(m,n\in\N\).
 For each \(j\in \mn{1}{m}\), let \(p_j\in\N\) and let \(\Phi_j \colon\Omega\to \Coo{p_j}{q}\)  be an \(\gA\)\nobreakdash-\(\Boruu{p_j}{q}\)\nobreakdash-measurable matrix-valued function.
 For each \(k\in\mn{1}{n}\), let \(r_k\in\N\) and let \(\Psi_k \colon\Omega\to\Coo{r_k}{q}\) be an \(\gA\)\nobreakdash-\(\Boruu{r_k}{q}\)\nobreakdash-measurable matrix-valued function.
 Suppose that, for every choice of \(j\in\mn{1}{m}\) and \(k\in\mn{1}{n}\) the pair \([\Phi_j, \Psi_k]\) is left-integrable with respect to \(\mu\).
 Let \(s,t\in\N\).
 For each \(j\in\mn{1}{m}\), let \(A_j\in\Coo{s}{p_j}\), and, for each \(k\in\mn{1}{n}\), let \(B_k\in\Coo{t}{r_k}\).
 Then it is readily checked that the pair \(\ek{ \sum_{j=1}^m A_j\Phi_j, \sum_{k=1}^n B_k \Psi_k}\) is left-integrable with respect to \(\mu\) and that
\[
 \int_\Omega \rk*{\sum_{j=1}^m A_j\Phi_j} \dif\mu   \rk*{\sum_{k=1}^m B_k\Psi_k}^\ad
 = \sum_{j=1}^m \sum_{k=1}^n A_j \rk*{ \int_\Omega\Phi \dif\mu\Psi^\ad }B_k^\ad.
\]
\end{rem}

\begin{prop} \label{M24}
 Let \(\mu\in\Mggqa{\Omega,\gA}\), let \(\tau \defeq  \tr\mu\) be the trace measure of \(\mu\), and let \(\mu'_\tau\) be a  version of the Radon--Nikodym derivative of \(\mu\) with respect to \(\tau\).
 Furthermore, let \(\Theta \in \pqLsaaaC{\Omega}{\gA}{\mu}\).
 Then:
\begin{enui}
 \item \(\mu_\Theta \colon\gA\to\Cpp\) defined by \(\mu_\Theta (A) \defeq  \int_A \Theta \dif\mu \Theta^\ad\) belongs to \(\Mggoa{p}{\Omega,\gA}\).
 \item The \tnnH{} measure \(\mu_\Theta\) is absolutely continuous with respect to \(\tau\) and \(\Theta \mu'_\tau \Theta^\ad\) is a version of the Radon--Nikodym derivative of \(\mu_\Theta\) with respect to \(\tau\).
 \item Let \(r,s\in\N\), let \(\Phi \colon\Omega\to \Coo{r}{p}\) be an \(\gA\)\nobreakdash-\(\Boruu{r}{p}\)\nobreakdash-measurable function and let \(\Psi \colon\Omega\to\Coo{s}{p}\) be an \(\gA\)\nobreakdash-\(\Boruu{s}{p}\)\nobreakdash-measurable function.
 Then the pair \([\Phi, \Psi]\) is left-integrable with respect to \(\mu_\Theta\) if and only if the pair \([\Phi\Theta, \Psi\Theta]\) is left-integrable with respect to \(\mu\).
 In this case, 
\(
 \int_\Omega \Phi \dif\mu_\Theta \Psi^\ad
 = \int_\Omega (\Phi\Theta) \dif\mu (\Psi\Theta)^\ad
\).
\end{enui}
\end{prop}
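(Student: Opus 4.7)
\textbf{Proof plan for \rprop{M24}.}

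The plan is to reduce everything to the defining formula of left-integrability, namely that for a pair $[\Phi,\Psi]$ the symbol $\int_\Omega\Phi\dif\mu\Psi^\ad$ stands for $\int_\Omega\Phi\mu'_\tau\Psi^\ad\dif\tau$ and requires $\Phi\mu'_\tau\Psi^\ad\in[\LoaaaC{1}{\Omega}{\gA}{\tau}]^\xx{p}{r}$. Since $\Theta\in\pqLsaaaC{\Omega}{\gA}{\mu}$ by assumption, the integrand $\Theta\mu'_\tau\Theta^\ad$ already lies in $[\LoaaaC{1}{\Omega}{\gA}{\tau}]^\xx{p}{p}$, so the formula $\mu_\Theta(A)=\int_A\Theta\mu'_\tau\Theta^\ad\dif\tau$ makes pointwise sense for each $A\in\gA$ and serves as a common point of departure for all three parts.

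First I would establish part~(a). Standard measure-theoretic arguments (applied entrywise via \rrem{B8}) show that $A\mapsto\int_A\Theta\mu'_\tau\Theta^\ad\dif\tau$ defines a countably additive $\Cpp$-valued mapping on $\gA$; here I use dominated convergence applied to $1_{A_n}\Theta\mu'_\tau\Theta^\ad$ against the dominating $\LoaaaC{1}{\Omega}{\gA}{\tau}$-function obtained from the entrywise absolute values. For nonnegative Hermiticity, recall that $\mu'_\tau$ can be chosen $\tau$-a.e.\ to take values in $\Cggq$ (this being the standard choice of matricial Radon--Nikodym derivative); then $\Theta(\omega)\mu'_\tau(\omega)\Theta^\ad(\omega)\in\Cggp$ for $\tau$-a.e.\ $\omega$, so the integral over any $A\in\gA$ belongs to $\Cggp$. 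This yields $\mu_\Theta\in\Mggoa{p}{\Omega,\gA}$.

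Part~(b) is then essentially bookkeeping: the very formula $\mu_\Theta(A)=\int_A\Theta\mu'_\tau\Theta^\ad\dif\tau$ exhibits $\mu_\Theta$ as absolutely continuous with respect to $\tau$ with density $\Theta\mu'_\tau\Theta^\ad$, so by the uniqueness (up to $\tau$-null sets) of the matricial Radon--Nikodym derivative with respect to $\tau$, this matrix function is a valid version of $(\mu_\Theta)'_\tau$. For part~(c) I exploit part~(b) in the following way. Letting $\tau_\Theta\defeq\tr\mu_\Theta$, one has $\tau_\Theta\ll\tau$ (with density $\tr(\Theta\mu'_\tau\Theta^\ad)$), hence $\aaLsaaaC{r}{p}{\Omega}{\gA}{\mu_\Theta}$-questions referring to $(\mu_\Theta)'_{\tau_\Theta}$ reduce via the chain rule to $\tau$-integrability of $\Phi\Theta\mu'_\tau\Theta^\ad\Psi^\ad=(\Phi\Theta)\mu'_\tau(\Psi\Theta)^\ad$; alternatively, one works directly with $(\mu_\Theta)'_\tau=\Theta\mu'_\tau\Theta^\ad$ as justified in part~(b). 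Either way, left-integrability of $[\Phi,\Psi]$ with respect to $\mu_\Theta$ is equivalent to $(\Phi\Theta)\mu'_\tau(\Psi\Theta)^\ad\in[\LoaaaC{1}{\Omega}{\gA}{\tau}]^\xx{r}{s}$, i.e.\ to left-integrability of $[\Phi\Theta,\Psi\Theta]$ with respect to $\mu$, and the two integrals coincide by the very definition.

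The only genuinely subtle point is the use of the matricial Radon--Nikodym derivative with respect to $\tau$ in part~(c), since a priori left-integrability with respect to $\mu_\Theta$ is phrased in terms of $(\mu_\Theta)'_{\tau_\Theta}$ and not $(\mu_\Theta)'_\tau$. I would dispatch this obstacle by noting that $\tau_\Theta\ll\tau$ with a scalar density $\varphi=\tr(\Theta\mu'_\tau\Theta^\ad)$, so $(\mu_\Theta)'_\tau=\varphi\cdot(\mu_\Theta)'_{\tau_\Theta}$ $\tau$-a.e.; consequently $\Phi(\mu_\Theta)'_{\tau_\Theta}\Psi^\ad\in L^1(\tau_\Theta)$ if and only if $\Phi(\mu_\Theta)'_\tau\Psi^\ad\in L^1(\tau)$, and the integrals against $\tau_\Theta$ and $\tau$ are equal. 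Everything else is a routine invocation of \rrem{B8}, \rrem{B26}, and \rrem{M22}.
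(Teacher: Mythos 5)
Your plan is correct, and it fills in exactly the "standard arguments of measure and integration theory" that the paper invokes without writing out (the paper gives no proof of Proposition~\ref{M24} beyond that remark). In particular, you correctly identify and resolve the one genuinely delicate point, namely that left-integrability with respect to \(\mu_\Theta\) is defined via \((\mu_\Theta)'_{\tau_\Theta}\) rather than \((\mu_\Theta)'_\tau\), which you handle by the scalar chain rule \(\dif\tau_\Theta=\tr(\Theta\mu'_\tau\Theta^\ad)\dif\tau\).
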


 \rprop{M24} can be proved by standard arguments of measure and integration theory.

\bibliography{170arxiv_ams,170arxiv_unpub}
\bibliographystyle{bababbrv}

\vfill
\begin{minipage}{0.48\textwidth}
 Universit\"at Leipzig\\
 Fakult\"at f\"ur Mathematik und Informatik\\
 PF~10~09~20\\
 D-04009~Leipzig
\end{minipage}
\begin{minipage}{0.48\textwidth}
 \begin{flushright}
  \texttt{
   fritzsche@math.uni-leipzig.de\\
   kirstein@math.uni-leipzig.de\\
   maedler@math.uni-leipzig.de\\
   makarevichtanya@yahoo.com
  } 
 \end{flushright}
\end{minipage}

\end{document}